 \theoremstyle{plain}
 \newtheorem{thm1}{Theorem}
\newtheorem{thm}{Theorem}[section]
\newtheorem{theo}{Theorem}[section]
\newtheorem{lemma}[thm]{Lemma}
\newtheorem{prop}[thm]{Proposition}
\newtheorem{cor}[thm]{Corollary}
\newtheorem{fact}[thm]{Fact}
\theoremstyle{definition}
\newtheorem{defn}[thm]{Definition}
\newtheorem{remark}[thm]{Remark}
\newtheorem{example}[thm]{Example}
\numberwithin{equation}{section}
\def\sA{\mathsf{A}}
\def\sB{\mathsf{B}}
\def\sC{\mathsf{C}}
\def\sD{\mathsf{D}}
\def\sE{\mathsf{E}}
\def\sF{\mathsf{F}}
\def\sG{\mathsf{G}}
\def\sX{\mathsf{X}}
\def\su{\mathsf{u}}
\def\cA{\mathcal{A}}
\def\cE{\mathcal{E}}
\def\cF{\mathcal{F}}
\def\cO{\mathcal{O}}
\def\cH{\mathcal{H}}
\def\cI{\mathcal{I}}
\def\cL{\mathcal{L}}
\def\cO{\mathcal{O}}
\def\cV{\mathcal{V}}
\def\wE{\widetilde{E}}
\def\AA{\mathbb{A}}
\def\FF{\mathbb{F}}
\def\HH{\mathbb{H}}
\def\KK{\mathbb{K}}
\def\LL{\mathbb{L}}
\def\OO{\mathbb{O}}
\def\ZZ{\mathbb{Z}}
\def\K{\mathbb{K}}
\def\PG{\mathsf{PG}}
\def\PSL{\mathsf{PSL}}
\def\uniclass{uniclass\ }
\DeclareMathOperator\Aut{\mathsf{Aut}}
\DeclareMathOperator\kar{\mathsf{char}}
\DeclareMathOperator\diam{\mathsf{diam}}
\DeclareMathOperator\Res{\mathsf{Res}}
\DeclareMathOperator\OOpp{\mathsf{Opp}}
\DeclareMathOperator\Diag{\mathsf{Diag}}
\DeclareMathOperator\disp{\mathsf{Disp}}
\newcommand{\pperp}{\perp\hspace{-0.15cm}\perp}
\def\<{\langle}
\def\>{\rangle}
\renewcommand{\@makefnmark}{\mbox{\textsuperscript{}}}
\title{Automorphisms and opposition in spherical buildings of exceptional type, IV: The $\sE_7$ case}
\author{Yannick Neyt \and James Parkinson 
\and
Hendrik Van Maldeghem \and Magali Victoor}
\date{\today}
\begin{document}

\maketitle

\begin{abstract} An automorphism of a spherical building is called \textit{domestic} if it maps no chamber onto an opposite chamber. This paper forms a significant part of a large project classifying domestic automorphisms of spherical buildings of exceptional type. In previous work the classifications for $\mathsf{G}_2$, $\mathsf{F}_4$ and $\mathsf{E}_6$ have been completed, and the present work provides the classification for buildings of type~$\mathsf{E}_7$. In many respects this case is the richest amongst all exceptional types.
%
%
%
%To each domestic automorphism there is naturally associated an \textit{opposition diagram}, which encodes the types of the lower dimensional simplices that are mapped onto opposite simplices. In this paper we continue our investigation of domestic automorphisms of buildings of exceptional type, focusing on buildings of type $\mathsf{E_7}$. Our main result is a complete classification of all domestic automorphisms of large buildings of type~$\sE_7$. 
\end{abstract}

\tableofcontents

\section{Introduction}

The study of the geometry of fixed elements of automorphisms of spherical buildings is a well-established and beautiful topic (see~\cite{PMW:15}). Over the past decade a complementary theory concerning the ``opposite geometry'', consisting of those elements mapped to opposite elements by an automorphism of a spherical building, has been developed (see~\cite{DPVM,Lam-Mal:24,PVM,PVMsmall,PVMexc,PVMclass,TTVM3,TTVM2,Mal:12}). A starting point for this theory is the fundamental result of Abramenko and Brown~\cite[Proposition 4.2]{AB:09}, stating that if $\theta$ is a nontrivial automorphism of a thick spherical building then the opposite geometry $\OOpp(\theta)$ is necessarily nonempty. Indeed the generic situation is that $\OOpp(\theta)$ is rather large, and typically contains many chambers of the building (\textit{chambers} are the simplices of maximal dimension). The more special situation is when $\OOpp(\theta)$ contains no chamber, in which case $\theta$ is called \textit{domestic}. 

In \cite{PVMexc} the second and third authors initiated an intensive investigation of domestic automorphisms of buildings of exceptional type, with an overarching objective being to obtain a complete classification of such automorphisms. In \cite{PVMexc} we were able to classify all domestic automorphisms of thick buildings of type $\mathsf{E_6}$, and split buildings of types $\sF_4$ and $\sG_2$, and in \cite{Lam-Mal:24} Lambrecht and the third author obtained the classification for all spherical buildings of type~$\sF_4$. Moreover, partial results were obtained in \cite{PVMexc} for buildings of types $\sE_7$ and $\sE_8$, including the classification of domestic homologies of these buildings (a homology is a collineation pointwise fixing an apartment and a panel; it fixes a full weak subbuilding), and we also exhibited examples of unipotent domestic automorphisms.

In the present paper we provide the classification of domestic automorphisms of~$\sE_7$ buildings. Both the statement of the classification, and its proof, are considerably more intricate and involved than the $\sE_6$ and (split) $\sF_4$ and $\sG_2$ cases, and indeed we discover new and beautiful behaviour not present in these  lower rank cases. The classification for $\sE_8$ will be dealt with in future work.

Before stating our classification theorems we first recall some preliminary notation and definitions. In the sequel, we will use the Bourbaki labelling (see \cite{Bou:02}) of the nodes in the $\mathsf{E_7}$ diagram:
$$\begin{tikzpicture}[scale=0.3]
\node at (0,0.3) {};
\node [inner sep=0.8pt,outer sep=0.8pt] at (-6,0) (1) {$\bullet$};
\node [inner sep=0.8pt,outer sep=0.8pt] at (-6,0.8) (1) {\footnotesize 1};
%\node [inner sep=0.8pt,outer sep=0.8pt] at (-6,-0.6) (1) {\footnotesize $\K$};

\node [inner sep=0.8pt,outer sep=0.8pt] at (-3,0) (3) {$\bullet$};
\node [inner sep=0.8pt,outer sep=0.8pt] at (-3,0.8) (3) {\footnotesize 3};
%\node [inner sep=0.8pt,outer sep=0.8pt] at (-3,-0.6) (3) {\footnotesize $\K$};

\node [inner sep=0.8pt,outer sep=0.8pt] at (0,0) (4) {$\bullet$};
\node [inner sep=0.8pt,outer sep=0.8pt] at (0,0.8) (4) {\footnotesize 4};
%\node [inner sep=0.8pt,outer sep=0.8pt] at (0.6,-0.6) (4) {\footnotesize $\K$};

\node [inner sep=0.8pt,outer sep=0.8pt] at (3,0) (5) {$\bullet$};
\node [inner sep=0.8pt,outer sep=0.8pt] at (3,0.8) (5) {\footnotesize 5};
%\node [inner sep=0.8pt,outer sep=0.8pt] at (3,-0.6) (5) {\footnotesize $\K$};

\node [inner sep=0.8pt,outer sep=0.8pt] at (6,0) (6) {$\bullet$};
\node [inner sep=0.8pt,outer sep=0.8pt] at (6,0.8) (6) {\footnotesize 6};
%\node [inner sep=0.8pt,outer sep=0.8pt] at (6,-0.6) (6) {\footnotesize $\K$};

\node [inner sep=0.8pt,outer sep=0.8pt] at (9,0) (7) {$\bullet$};
\node [inner sep=0.8pt,outer sep=0.8pt] at (9,0.8) (7) {\footnotesize 7};
%\node [inner sep=0.8pt,outer sep=0.8pt] at (9,-0.6) (7) {\footnotesize $\K$};

\node [inner sep=0.8pt,outer sep=0.8pt] at (0,-2) (2) {$\bullet$};
\node [inner sep=0.8pt,outer sep=0.8pt] at (0,-2.8) (2) {\footnotesize 2};
%\node [inner sep=0.8pt,outer sep=0.8pt] at (0.6,-2) (2) {\footnotesize $\K$};

\phantom{\draw [line width=0.5pt,line cap=round,rounded corners] (1.north west)  rectangle (1.south east);}
\phantom{\draw [line width=0.5pt,line cap=round,rounded corners] (6.north west)  rectangle (6.south east);}
\draw (-6,0)--(9,0);
\draw (0,0)--(0,-2);
\end{tikzpicture}$$

An irreducible thick spherical building of rank at least $3$ is called \textit{large} if it contains no Fano plane residues. By Tits' classification of spherical buildings~\cite{Tits:74}, a large building of type $\sE_7$ is any building $\sE_7(\KK)$ over a field $\KK$ with at least $3$ elements. By the main result of \cite{PVM}, each automorphism $\theta$ of a large spherical building is \textit{capped}, meaning that it satisfies the following property: If $\theta$ maps a flag of type $J_1$ to an opposite flag, and another flag of type $J_2$ to an opposite flag, then $\theta$ maps a flag of type $J_1\cup J_2$ to an opposite flag. Hence in a large spherical building, in order to know the types of all flags mapped to an opposite it suffices to know the types of the minimal ones. Since these minimal types are orbits of the induced action of the automorphism on the Dynkin (or Coxeter) diagram, and since that action is always trivial in the case of $\mathsf{E_7}$, it suffices to know the types of the vertices mapped to an opposite. Encircling those types on the Coxeter diagram gives the \emph{opposition diagram} of the automorphism. In \cite{PVM}, all possible opposition diagrams are classified, and the list for $\mathsf{E_7}$ is given in Figure~\ref{fig:Dynkin}. 

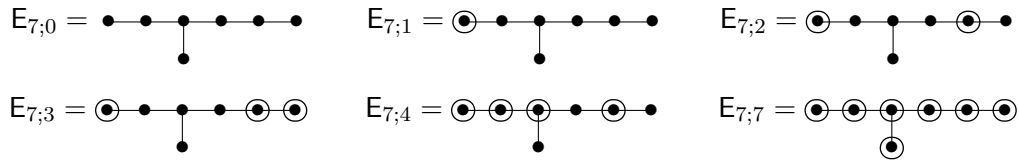
\begin{figure}[h!]
\begin{center}
$\sE_{7;0}=\begin{tikzpicture}[scale=0.5,baseline=-0.5ex]
\node at (0,0.8) {};
\node at (0,-0.8) {};
\node [inner sep=0.8pt,outer sep=0.8pt] at (-2,0) (1) {$\bullet$};
\node [inner sep=0.8pt,outer sep=0.8pt] at (-1,0) (3) {$\bullet$};
\node [inner sep=0.8pt,outer sep=0.8pt] at (0,0) (4) {$\bullet$};
\node [inner sep=0.8pt,outer sep=0.8pt] at (1,0) (5) {$\bullet$};
\node [inner sep=0.8pt,outer sep=0.8pt] at (2,0) (6) {$\bullet$};
\node [inner sep=0.8pt,outer sep=0.8pt] at (3,0) (7) {$\bullet$};
\node [inner sep=0.8pt,outer sep=0.8pt] at (0,-1) (2) {$\bullet$};
\draw (-2,0)--(3,0);
\draw (0,0)--(0,-1);
\phantom{\draw [line width=0.5pt,line cap=round,rounded corners] (1.north west)  rectangle (1.south east);}
\phantom{\draw [line width=0.5pt,line cap=round,rounded corners] (7.north west)  rectangle (7.south east);}
%\draw [line width=0.5pt,line cap=round,rounded corners] (1.north west)  rectangle (1.south east);
\end{tikzpicture}$\qquad 
$\sE_{7;1}=\begin{tikzpicture}[scale=0.5,baseline=-0.5ex]
\node at (0,0.8) {};
\node at (0,-0.8) {};
\node [inner sep=0.8pt,outer sep=0.8pt] at (-2,0) (1) {$\bullet$};
\node [inner sep=0.8pt,outer sep=0.8pt] at (-1,0) (3) {$\bullet$};
\node [inner sep=0.8pt,outer sep=0.8pt] at (0,0) (4) {$\bullet$};
\node [inner sep=0.8pt,outer sep=0.8pt] at (1,0) (5) {$\bullet$};
\node [inner sep=0.8pt,outer sep=0.8pt] at (2,0) (6) {$\bullet$};
\node [inner sep=0.8pt,outer sep=0.8pt] at (3,0) (7) {$\bullet$};
\node [inner sep=0.8pt,outer sep=0.8pt] at (0,-1) (2) {$\bullet$};
\draw (-2,0)--(3,0);
\draw (0,0)--(0,-1);
\draw [line width=0.5pt,line cap=round,rounded corners] (1.north west)  rectangle (1.south east);
\end{tikzpicture}$\qquad
$\sE_{7;2}=\begin{tikzpicture}[scale=0.5,baseline=-0.5ex]
\node at (0,0.8) {};
\node at (0,-0.8) {};
\node [inner sep=0.8pt,outer sep=0.8pt] at (-2,0) (1) {$\bullet$};
\node [inner sep=0.8pt,outer sep=0.8pt] at (-1,0) (3) {$\bullet$};
\node [inner sep=0.8pt,outer sep=0.8pt] at (0,0) (4) {$\bullet$};
\node [inner sep=0.8pt,outer sep=0.8pt] at (1,0) (5) {$\bullet$};
\node [inner sep=0.8pt,outer sep=0.8pt] at (2,0) (6) {$\bullet$};
\node [inner sep=0.8pt,outer sep=0.8pt] at (3,0) (7) {$\bullet$};
\node [inner sep=0.8pt,outer sep=0.8pt] at (0,-1) (2) {$\bullet$};
\draw (-2,0)--(3,0);
\draw (0,0)--(0,-1);
\draw [line width=0.5pt,line cap=round,rounded corners] (1.north west)  rectangle (1.south east);
%\draw [line width=0.5pt,line cap=round,rounded corners] (3.north west)  rectangle (3.south east);
%\draw [line width=0.5pt,line cap=round,rounded corners] (4.north west)  rectangle (4.south east);
\draw [line width=0.5pt,line cap=round,rounded corners] (6.north west)  rectangle (6.south east);
\end{tikzpicture}$\\
$\sE_{7;3}=\begin{tikzpicture}[scale=0.5,baseline=-0.5ex]
\node at (0,0.8) {};
\node at (0,-0.8) {};
\node [inner sep=0.8pt,outer sep=0.8pt] at (-2,0) (1) {$\bullet$};
\node [inner sep=0.8pt,outer sep=0.8pt] at (-1,0) (3) {$\bullet$};
\node [inner sep=0.8pt,outer sep=0.8pt] at (0,0) (4) {$\bullet$};
\node [inner sep=0.8pt,outer sep=0.8pt] at (1,0) (5) {$\bullet$};
\node [inner sep=0.8pt,outer sep=0.8pt] at (2,0) (6) {$\bullet$};
\node [inner sep=0.8pt,outer sep=0.8pt] at (3,0) (7) {$\bullet$};
\node [inner sep=0.8pt,outer sep=0.8pt] at (0,-1) (2) {$\bullet$};
\draw (-2,0)--(3,0);
\draw (0,0)--(0,-1);
\draw [line width=0.5pt,line cap=round,rounded corners] (1.north west)  rectangle (1.south east);
\draw [line width=0.5pt,line cap=round,rounded corners] (6.north west)  rectangle (6.south east);
\draw [line width=0.5pt,line cap=round,rounded corners] (7.north west)  rectangle (7.south east);
\end{tikzpicture}$\qquad 
$\sE_{7;4}=\begin{tikzpicture}[scale=0.5,baseline=-0.5ex]
\node at (0,0.8) {};
\node at (0,-0.8) {};
\node [inner sep=0.8pt,outer sep=0.8pt] at (-2,0) (1) {$\bullet$};
\node [inner sep=0.8pt,outer sep=0.8pt] at (-1,0) (3) {$\bullet$};
\node [inner sep=0.8pt,outer sep=0.8pt] at (0,0) (4) {$\bullet$};
\node [inner sep=0.8pt,outer sep=0.8pt] at (1,0) (5) {$\bullet$};
\node [inner sep=0.8pt,outer sep=0.8pt] at (2,0) (6) {$\bullet$};
\node [inner sep=0.8pt,outer sep=0.8pt] at (3,0) (7) {$\bullet$};
\node [inner sep=0.8pt,outer sep=0.8pt] at (0,-1) (2) {$\bullet$};
\draw (-2,0)--(3,0);
\draw (0,0)--(0,-1);
\draw [line width=0.5pt,line cap=round,rounded corners] (1.north west)  rectangle (1.south east);
\draw [line width=0.5pt,line cap=round,rounded corners] (3.north west)  rectangle (3.south east);
\draw [line width=0.5pt,line cap=round,rounded corners] (4.north west)  rectangle (4.south east);
\draw [line width=0.5pt,line cap=round,rounded corners] (6.north west)  rectangle (6.south east);
\end{tikzpicture}$\qquad
$\sE_{7;7}=\begin{tikzpicture}[scale=0.5,baseline=-0.5ex]
\node at (0,0.8) {};
\node at (0,-0.8) {};
\node [inner sep=0.8pt,outer sep=0.8pt] at (-2,0) (1) {$\bullet$};
\node [inner sep=0.8pt,outer sep=0.8pt] at (-1,0) (3) {$\bullet$};
\node [inner sep=0.8pt,outer sep=0.8pt] at (0,0) (4) {$\bullet$};
\node [inner sep=0.8pt,outer sep=0.8pt] at (1,0) (5) {$\bullet$};
\node [inner sep=0.8pt,outer sep=0.8pt] at (2,0) (6) {$\bullet$};
\node [inner sep=0.8pt,outer sep=0.8pt] at (3,0) (7) {$\bullet$};
\node [inner sep=0.8pt,outer sep=0.8pt] at (0,-1) (2) {$\bullet$};
\draw (-2,0)--(3,0);
\draw (0,0)--(0,-1);
\draw [line width=0.5pt,line cap=round,rounded corners] (1.north west)  rectangle (1.south east);
\draw [line width=0.5pt,line cap=round,rounded corners] (3.north west)  rectangle (3.south east);
\draw [line width=0.5pt,line cap=round,rounded corners] (4.north west)  rectangle (4.south east);
\draw [line width=0.5pt,line cap=round,rounded corners] (6.north west)  rectangle (6.south east);
\draw [line width=0.5pt,line cap=round,rounded corners] (2.north west)  rectangle (2.south east);
\draw [line width=0.5pt,line cap=round,rounded corners] (5.north west)  rectangle (5.south east);
\draw [line width=0.5pt,line cap=round,rounded corners] (7.north west)  rectangle (7.south east);
\end{tikzpicture}$
\end{center}
\caption{The opposition diagrams of type $\mathsf{E_7}$}\label{fig:Dynkin}
\end{figure}

On the other hand, the \textit{fixed element diagram} of an automorphism of an $\sE_7$ building is given by encircling the types of the vertices of the building fixed by~$\theta$. We will use the same symbols $\sE_{7;j}$ for fixed diagrams.

The opposition diagram $\sE_{7;7}$ is the opposition diagram of any non-domestic automorphism, and hence we shall not be concerned with it. Also, since the opposite geometry of a nontrivial automorphism is never empty (by \cite{AB:09}), the only automorphism with opposition diagram $\sE_{7;0}$ is the identity automorphism. Moreover, in \cite[Theorems~1 and 4]{PVMexc} we proved that each automorphism with opposition diagram $\sE_{7;1}$ is a nontrivial central collineation (and vice-versa), and that each automorphism with opposition diagram $\sE_{7;2}$ is the product of two nontrivial perpendicular root elations (and vice-versa). Thus the focus of the present paper is to classify the automorphisms with opposition diagrams $\sE_{7;3}$ and $\sE_{7;4}$. 

In \cite[Theorems~5 and~6]{PVMexc} we provided examples of automorphisms with opposition diagrams $\sE_{7;3}$ and $\sE_{7;4}$. These examples were certain products of $3$ or $4$ perpendicular nontrivial central collineations, respectively, or certain homologies. In particular, all of these examples fix a chamber of the building (equivalently, they are conjugate to members of the Borel subgroup $B$). Moreover, in the case of $\mathsf{E_6}$, it is shown in \cite[Theorem 8]{PVMexc} that all domestic automorphisms of a thick $\sE_6$ building fix a chamber of the building.

In contrast, in the present paper we shall see that in $\sE_7$ buildings there exist automorphisms with opposition diagrams $\sE_{7;3}$ and $\sE_{7;4}$ fixing no chamber, provided the underlying field admits certain extensions. The complete classification of such automorphisms is given in the following theorem (see Remark~\ref{E7im} for the definition of equator geometries for part ($iii$)). 
%
%
% However, a lot of domestic collineations of classical buildings are known not to belong to any conjugate of $B$, see \cite{PVMclass}. Moreover, in the classical case, we were able to provide a classification result of domestic collineations that do not fix a chamber. We distinguished between opposition diagrams in which the encircled nodes form a connected subdiagram, and the complementary case. In the exceptional cases, only the full opposition diagram has a connected set of encircles nodes, hence for $\mathsf{E_7}$ we only deal with the second case. This was the case in which for the classical case an explicit classification could be given. This is also true for $\mathsf{E_7}$. We will show:

\begin{thm1}\label{thm:nonchamberfixing} Let $\Delta=\sE_7(\K)$ with $|\K|>2$. 
\begin{compactenum}[$(i)$] \item For each quadratic extension $\LL$ of $\K$, there exists, up to conjugacy, a unique subgroup $H$ of the automorphism group of $\Delta$ each nontrivial element of which is a domestic automorphism with opposition diagram  $\sE_{7;3}$. %Conversely, every domestic collineation of $\sE_7(\K)$ with opposition diagram $\sE_{7;3}$ is conjugate to a member of $H$. 
Moreover, as an abstract group, $H$ is isomorphic to $\LL^\times/\K^\times$, the fixed diagram of each nontrivial element of $H$ is $\sE_{7;4}$, and the fixed structure is a building of type $\sF_4$.
\item For each quaternion division algebra $\HH$ over $\K$, there exists, up to conjugacy, a unique subgroup $H$ of the automorphism group of $\Delta$ each nontrivial element of which is a domestic automorphism with opposition diagram $\sE_{7;4}$.  As an abstract group, $H$ is isomorphic to $\HH^\times/\K^\times$, the fixed diagram of each nontrivial element of $H$ is $\sE_{7;3}$, and the fixed structure is a building of type $\sC_3$. 
\item For each quadratic extension $\LL$ of $\K$, every member of the pointwise stabiliser of a subbuilding of type $\mathsf{D}_{6}$ obtained as equator geometry $\Gamma=E(p_1,p_2)$ of the parapolar space $\sE_{7,1}(\K)$ acting without fixed points on the imaginary line  defined by $p_1,p_2$, is a domestic automorphism with opposition diagram $\sE_{7;4}$. 
\end{compactenum}
Conversely, every domestic automorphism of $\Delta$ fixing no chamber is conjugate to some collineation as in $(i)$, $(ii)$ or $(iii)$ above.  
\end{thm1}

In Theorem~\ref{thm:nonchamberfixing}, quadratic extensions of $\K$ are not assumed to be separable; likewise, with quaternion algebra over $\K$ we mean a $4$-dimensional associative quadratic division algebra over $\K$; hence it can also be an inseparable field extension of degree 4 of a field in characteristic 2, in which case we refer to it as an \emph{inseparable quaternion division algebra over $\K$}. 

The automorphisms listed in Theorem~\ref{thm:nonchamberfixing} parts~($i$) and~($ii$) exhibit a particularly attractive duality in the sense that the automorphisms with opposition diagram $\sE_{7;3}$ have fixed diagram $\sE_{7;4}$, and vice-versa. This phenomenon is not random, but fits in the context of the \emph{Freudenthal-Tits Magic Square}, where this behaviour is systematic for cells lying symmetric with respect to the main diagonal. This, and more background, is explained in \cite{Mal:23}, see in particular Section 9.4 therein. 

To complete the classification of domestic automorphisms of large $\sE_7$ buildings we must classify the domestic automorphisms that fix a chamber (equivalently, that are conjugate to a member of the Borel subgroup~$B$). Let $\Phi$ denote the root system of type $\sE_7$ with simple roots $\alpha_1,\ldots,\alpha_7$ (in Bourbaki labelling). For $\alpha\in\Phi$ and $a\in\K$ let $x_{\alpha}(a)$ denote the standard Chevalley generator and for $\lambda$ in the coweight lattice of $\Phi$ and $c\in\K\backslash\{0\}$ let $h_{\lambda}(c)$ denote the standard torus elements (see Section~\ref{sec:chamberfixing} for definitions). 

We first classify the chamber fixing automorphisms with opposition diagram~$\sE_{7;3}$. 

\begin{thm1}\label{thm:E73Classification1}
An automorphism $\theta$ of the building $\Delta=\sE_7(\K)$ with $|\K|>2$ has opposition diagram $\sE_{7;3}$ and fixes a chamber if and only if it is conjugate to
\begin{compactenum}[$(i)$]
\item the unipotent element $x_{\alpha_2}(1)x_{\alpha_5}(1)x_{\alpha_7}(1)$;
\item an homology whose fixed structure is a weak building with thick frame of type $\sE_6$; such elements are conjugate to $h_{\omega_7}(c)$ with $c\in\K\backslash\{0,1\}$.
\end{compactenum}
\end{thm1}

Finally, we classify the chamber fixing automorphisms with opposition diagram $\sE_{7;4}$. Let $\Phi_{\sD_4}$ be the sub-root system of $\Phi$ of type $\sD_4$ (generated by $\alpha_2,\alpha_3,\alpha_4,\alpha_5$). If $\alpha=i\alpha_2+j\alpha_4+k\alpha_3+l\alpha_5\in\Phi_{\sD_4}$ we write $x_{\alpha}(a)=x_{ijkl}(a)$ for the associated Chevalley generators (with $a\in\K$). 

\goodbreak

\begin{thm1}\label{thm:E74chamberfixingmain1}
An automorphism $\theta$ of the building $\Delta=\sE_7(\K)$ with $|\K|>2$ has opposition diagram $\sE_{7;4}$ and fixes a chamber if and only if it is conjugate to
\begin{compactenum}[$(i)$]
\item $x_{0100}(a)x_{1110}(1)x_{1101}(1)x_{0111}(1)$ with $a\in\K\backslash\{0\}$; 
\item $x_{1111}(1)x_{0100}(a)x_{1110}(1)x_{1101}(1)x_{0111}(1)$ with $a\in\K\backslash\{0\}$; 
\item an homology whose fixed structure is a weak building with thick frame of type $\sD_6$ or $\sD_6\times \sA_1$; such elements are conjugate to $h_{\alpha^{\vee}}(c)$ with $\alpha\in\Phi$ and $c\in\K\backslash\{0,1\}$;
\item $x_{\alpha}(1)h_{\alpha^{\vee}}(-1)$ with $\kar(\K)\neq 2$, for any root $\alpha\in\Phi$.
\end{compactenum}
\end{thm1}

The automorphisms appearing in Theorem~\ref{thm:E73Classification1} were already known in~\cite{PVMexc} (however, of course, the work here is proving that the classification is complete). The automorphisms appearing in Theorem~\ref{thm:E74chamberfixingmain1} parts ($i$) and ($iii$) were known from~\cite{PVMexc}, however the automorphisms appearing in ($ii$) and ($iv$) are new, and arose in the course of the classification. In particular, the automorphism in case~($iv$) is interesting in that it is conjugate to neither a unipotent element nor a homology (such elements were never domestic in the lower rank $\sE_6$, $\sF_4$, and $\sG_2$ cases; see Proposition~\ref{prop:extraauto}).

Theorems~\ref{thm:nonchamberfixing}, \ref{thm:E73Classification1}, and~\ref{thm:E74chamberfixingmain1}, combined with \cite[Theorems~1 and 4]{PVMexc}, gives the complete classification of domestic automorphisms of large buildings of type~$\sE_7$. The unique small building of type $\sE_7$ (that is, the building $\Delta=\sE_7(\FF_2)$) behaves differently due to the existence of uncapped automorphisms (see \cite{PVMsmall}), and the full classification of domestic automorphisms of this building is currently unknown, and will be addressed in future work. Thus, in the present paper, we will assume $|\KK|>2$ throughout.

Before outlining the structure of this paper, we outline some additional motivation for studying domestic automorphisms by illustrating that domesticity is intimately connected to beautiful geometric and algebraic phenomena. Firstly there is a remarkable connection with the Freudenthal-Tits Magic Square, already mentioned above (see \cite[Section~9.4]{Mal:23}). The case of $\sE_7$ is particularly interesting here since it is the only case appearing twice in the Magic Square, both times neither split nor quasisplit, resulting in two diagrams leaving nodes uncircled (the $\sE_{7;3}$ and $\sE_{7;4}$ diagrams). This allows for a rich domestic behaviour of buildings of type $\sE_7$, and the nontrivial duality between opposition and fixed diagrams seen in Theorem~\ref{thm:nonchamberfixing}. 

Secondly there is a connection with \emph{linear descent groups}, that is, groups pointwise fixing a subbuilding. \textit{Galois descent} (see~\cite{PMW:15}) provides one source of such groups; but when the companion field automorphisms are trivial we speak instead of \emph{linear descent}. Contrary to the Galois descent case (where the groups are small), not all linear descent groups (which are much larger) are classified, even when the fixed building is known.   Linear descent provides a way to see natural and rather large inclusions of buildings, explaining some of the structure of both the ambient and embedded building. For example in the present paper, in order to understand domesticity in building of type $\sE_7$ we are led to determine the descent group in buildings of type $\sE_6$ of the (fixed) quaternion projective planes (the octonion counterpart coming from Galois descent). The fix group  will turn out to be abstractly isomorphic to the multiplicative group of norm 1 elements of the corresponding quaternion algebra, see Subsection~\ref{Qp}. Along the way, we will show a certain rigidity result for such quaternion projective planes embedded in buildings of type~$\sE_6$:  they are determined up to a unique twin by any anti-flag, that is, a non-incident point-line pair (see Subsection~\ref{fixQP}).

We conclude this introduction with a brief outline of the structure and strategy of the paper. The analysis naturally divides into two cases, depending on whether the domestic automorphism fixes a chamber of not. In the case that the automorphism does not fix a chamber (Theorem~\ref{thm:nonchamberfixing}), our arguments are of a geometric flavour, and the necessary background material on polar and parapolar spaces is given in Section~\ref{sec:parapolar}. In particular, Section~\ref{paras} gives precise and specific information required on various parapolar spaces of exceptional type that are required for this paper. To prove Theorem~\ref{thm:nonchamberfixing} we must first introduce and classify collineations in buildings of types $\sD_n$ and $\sE_6$ whose displacement spectra skips certain values (we call these \textit{kangaroo collineations}). This analysis is given in Sections~\ref{sec:kang}, \ref{sec:kang2}, and~\ref{existenceE74E6}. The classification of kangaroo collineations is then applied in Sections~\ref{sec:E74nochamber} and~\ref{sec:E73nochamber} to prove Theorem~\ref{thm:nonchamberfixing}. 

It then remains to classify domestic automorphisms fixing a chamber. Here our arguments are more algebraic, relying on commutator relations and ultimately reducing to specific calculations in the~$\sD_4$ Chevalley group. This analysis is given in Section~\ref{sec:chamberfixing}. We setup the preliminary arguments of Section~\ref{sec:chamberfixing} in sufficient generality so that they can be applied in future work on~$\sE_8$.

\section{Preliminaries on polar and parapolar spaces}\label{sec:parapolar}
Our approach to buildings of type $\mathsf{E_7}$ is via their related geometries known as parapolar spaces. Everything we say below can be found  in the standard books \cite{BuekCoh} and \cite{Shult}. Although we assume a certain familiarity with this theory, we recall some basic definitions in order to settle notation. We shall use the words ``automorphism'' and ``collineation'' interchangeably (with the former more connected to the building language, and the latter more connected to the incidence geometry language).

\subsection{Abstract definitions}
Let $\Gamma=(X,\cL)$ be a point-line geometry (if the incidence relation is not mentioned, we assume it is induced by containment). 
Points $x,y\in X$ contained in a common line are called \emph{collinear}, denoted as $x\perp y$;  the set of all points collinear to $x$ is denoted by $x^\perp$, the \emph{perp} of $x$. We will always deal with situations where every point is contained in at least one line, so $x\in x^\perp$. Also, for $S\subseteq X$, we denote $S^\perp:=\{x\in X\mid x\perp s\mbox{ for all }s\in S\}$. Likewise we write $S\perp T$ for subsets $S,T\subseteq X$ if $s\perp t$ for each $s\in S$ and each $ t\in T$. If each line has at least three points, we call the geometry \emph{thick}. 

The \emph{point graph} of $\Gamma$ is the graph on $X$ with collinearity as adjacency relation. The \emph{distance $\delta$} between two points $p,q \in X$ (denoted $\delta_\Gamma(p,q)$, or $\delta(p,q)$ if no confusion is possible) is the distance between $p$ and $q$ in the collinearity graph, where $\delta(p,q)=\infty$ if $p$ and $q$ are contained in distinct connected components of the point graph; If $\delta:=\delta(p,q)$ is finite, then a \emph{geodesic path} or a \emph{shortest path} between $p$ and $q$ is a path between them in the point graph of length $\delta$.  The \emph{diameter} of  $\Gamma$ (denoted $\diam \Gamma$) is the diameter of the point graph. We say that $\Gamma$ is \emph{connected} if every pair of vertices is at finite distance from one another. The point-line geometry $\Gamma$ is called a \emph{partial linear space} if each pair of distinct points is contained in at most one line. In this case we usually denote the unique line containing two distinct collinear points $x$ and $y$ by $xy$. 

A \emph{subspace} of $\Gamma$ is a subset $A$ of $X$ such that, if $x,y\in A$ are collinear and distinct, then all lines containing both $x$ and $y$ are contained in $A$. A subspace $A$ is called \emph{convex} if, for any pair of points $\{p,q\}\subseteq A$, every point occurring in a geodesic between $p$ and $q$ is contained in $A$; it is \emph{singular} if $\delta(p,q)\leq 1$ for all $p,q\in A$. The intersection of all convex subspaces of $\Gamma$ containing a given subset $B\subseteq X$ is called the \emph{convex subspace closure} of $B$.  A proper subspace $H$ is called a \emph{geometric hyperplane} if each line of $\Gamma$ has either one or all its points contained in~$H$.  

A \emph{full subgeometry} $\Gamma'=(X',\cL')$ of $\Gamma$ is a geometry with $X'\subseteq X$ and $\cL'\subseteq\cL$. This implies that all points of $\Gamma$ on a line of $\Gamma'$ are points of $\Gamma'$ and explains the adjective `full'. Full subgeometries need not be subspaces.

Now a \emph{polar space} is a thick point-line geometry in which the perp of every point is a geometric hyperplane. This forces all singular subspaces to be projective spaces. In our case the polar spaces will have finite rank, that is, there is a natural number $r\geq 2$ such all singular subspaces (which are projective spaces) have dimension $\leq r-1$, and there exist singular subspaces of dimension $r-1$. A prominent notion in polar geometry is \emph{opposition}. Two singular subspaces $U,W$ are \emph{opposite} if no point of $U\cup W$ is collinear to all points of $U\cup W$. Opposite subspaces automatically have the same dimension. Opposite points are just non-collinear ones. The singular subspaces of dimension $r-2$ are called \emph{submaximal}. 

Now a parapolar space is a point-line geometry satisfying the following four axioms:
\begin{compactenum}[(PPS1)]
\item There is line $L$ and a point $p$ such that no point of $L$ is collinear to $p$. 
\item The geometry is connected.
\item Let $x,y$ be two points at distance 2. Then either there is a unique point collinear to both---and then the pair $\{x,y\}$ is called \emph{special}---or the convex subspace closure of $\{x,y\}$ is a polar space $\xi(x,y)$---and then the pair $\{x,y\}$ is called a \emph{symplectic pair}. Such polar spaces are called \emph{symplecta}, or \emph{symps} for short.
\item Each line is contained in a symplecton.
\end{compactenum}

The parapolar spaces we will encounter all have the rather peculiar property that all symps have the same rank, which is then called the (uniform) \emph{symplectic rank} of the parapolar space. In contrast, the maximal singular subspaces (which will be projective spaces) will not all have the same dimension. The \emph{singular ranks} of a parapolar space with only projective spaces as singular subspaces (which is automatic if the symplectic rank is at least 3) are the dimensions of the maximal singular subspaces. 

A parapolar space without special pairs is called \emph{strong}. %And a parapolar space of uniform symplectic rank $\geq3$ is called \emph{locally connected} if for every point $p$, the graph on the lines passing through $p$, adjacent when contained in a common symp (or, equivalently, a common singular plane), is connected. Equivalently, one can require that for every point $p$, the graph on the lines passing through $p$, adjacent when contained in a common singular plane, is connected. 

Now let $\Gamma=(X,\cL)$ be a parapolar space all of whose symps have rank at least 3.  Let $x\in X$. Then we define the geometry $\Res_\Gamma(x)=(\cL_x,\Pi_x)$ as the geometry with point set the set of lines $\cL_x$ through $x$ and set of lines the set $\Pi_p$ of planar line pencils with vertex $x$ and call it the \emph{residue at $x$}, or the \emph{residual geometry at $x$}. 

In the present paper we will mainly deal with buildings of type $\mathsf{B}_n$, $\mathsf{D}_n$, $\mathsf{E_6}$,  $\mathsf{E_7}$ and $\mathsf{F_4}$. The parapolar spaces we will be concerned with are dual polar spaces, half spin geometries, metasymplectic spaces and the exceptional geometries of type $\mathsf{E_{6,1}}$, $\mathsf{E_{7,1}}$ and $\mathsf{E_{7,7}}$, which we now briefly introduce.
 
\subsection{Lie incidence geometries}
Let $\Delta$ be an irreducible thick spherical building. Let $n$ be its rank, let $S$ be its type set and let $s\in S$. Then we define a point-line geometry $\Gamma=(X,\cL,*)$ as follows. The point set $X$ is just the set of vertices of $\Delta$ of type $s$; the set $\cL$ of lines are the flags of type $s^\sim$, where $s^\sim$ is the set of types adjacent to $s$ in the Coxeter diagram of $\Delta$. If $x$ is a vertex of type $s$ and $F$ a flag of type $s^\sim$, then $x*F$ if $F\cup \{x\}$ is a flag. The geometry $\Gamma$ is called a \emph{Lie incidence geometry}. For instance, if $\Delta$ has type $\mathsf{A}_n$, and $s=1$ (remember we use Bourbaki labelling), then $\Gamma$ is the point-line geometry of a projective space. If $\mathsf{X}_n$ is the {Coxeter} type of $\Delta$ and $\Gamma$ is defined using $s\in S$ as above, then we say that $\Gamma$ has \emph{type} $\mathsf{X}_{n,s}$. Another example: Geometries of type $\mathsf{B}_{n,1}$ and $\mathsf{D}_{n,1}$ are polar spaces. 

For the classical diagrams $\mathsf{X}\in\{\mathsf{A,B,D}\}$, the geometries of type $\mathsf{X}_{n,k}$ are the projective and polar Grassmannians. Geometries of type $\mathsf{B}_{n,n}$ are more specifically called \emph{dual polar spaces} and there is a huge literature about them. Geometries of type $\mathsf{D}_{n,n}$ are more specifically called \emph{half spin geometries}. They are in fact Grassmannians of the corresponding oriflamme geometries (see the beginning of Section~\ref{sec:kang}). Many properties of dual polar spaces and half spin geometries can be deduced from the underlying polar spaces.

Buildings of type $\mathsf{A,D,E}$ are uniquely defined by their underlying field $\K$ (or skew field in the case of $\mathsf{A}$), provided the rank is at least 3. We denote the corresponding building of type $\mathsf{X}_n$ by $\mathsf{X}_n(\K)$, and the corresponding Lie incidence geometries of type $\mathsf{X}_{n,k}$ by $\mathsf{X}_{n,k}(\K)$.

We now describe the Lie incidence geometries $\mathsf{E_{6,1}}(\K)$, $\mathsf{E_{7,1}}(\K)$ and $\mathsf{E_{7,7}}(\K)$. This is best done by displaying the possible mutual positions of the symps, the points and, in case of $\mathsf{E_{7,1}}(\K)$, the class of convex subgeometries isomorphic to $\mathsf{E_{6,1}}(\K)$, called \emph{paras}. 

\subsection{Some parapolar spaces of exceptional type}\label{paras}

%\framebox{$\mathsf{E_{6,1}}(\K)$}

\subsubsection{$\sE_{6,1}(\K)$}

First let $\Delta=(X,\cL)$ be the parapolar space $\mathsf{E_{6,1}}(\K)$, for some field $\K$. The elements of the corresponding building of types $1,2,3,4,5,6$,  are the \emph{points, $5$-spaces, lines, planes, $4$-spaces} and \emph{symps}, respectively.  The symps are isomorphic to polar spaces $\mathsf{D_{5,1}}(\K)$. The hyperplanes of the $5$-spaces are called \emph{$4'$-spaces} to distinguish them from the $4$-spaces. The $4'$-spaces correspond in the building to flags of type $\{2,6\}$. Hence every $4'$-space is contained in a unique $5$-space, and also in a unique symp. All singular $3$-spaces form one orbit (of $\Aut\Delta$) and correspond to flags of type $\{2,5,6\}$; they all arise as the intersection of a $4$-space and a $4'$-space, or of two $4'$-spaces. 

The mutual positions of the elements can be deduced from a model of an apartment of the corresponding building, more precisely the graph on the vertices of type $1$ of a Coxeter complex of type $\mathsf{E_6}$, adjacent when contained in adjacent chambers. This is exactly the complement of the point graph of the generalised quadrangle of order $(2,4)$ (three points per line, five lines through each point). A concrete model is the following: Let $V$ be the set of all pairs of the set $\{1,2,3,4,5,6\}$, plus two copies of the latter, denoted $\{1,2,3,4,5,6\}$ and $\{1',2',3',4',5',6'\}$. Adjacency is given by $\{i,j\}\sim\{k,\ell\}$ if $|\{i,j,k,\ell\}|=3$; $i\sim\{j,k\}\sim i'$ if $i\notin\{j,k\}$; $i\sim i'$; $i\sim j$ and $i'\sim j'$ if $i\neq j$. As an example how to use such a model, we deduce the mutual positions of points and $5$-space. We may fix a $5$-space $\{1,2,3,4,5,6\}$. Then a point is either of the form $i$, or of the form $(i,j)$ or of the form $i'$. In the first case the point belongs to the $5$-space; in the second case it is collinear to a $3$-space (given by the four vertices of $\{1,2,3,4,5,6\}$ distinct from $i,j$); in the third case it is collinear to a unique point of the $5$-space. 

We now mention all such relevant possible mutual positions, introducing some more terminology.

\begin{fact}\label{factE6}
Let $\Delta=(X,\cL)$ be the parapolar space $\mathsf{E_{6,1}}(\K)$. Then the following holds.
\begin{compactenum}[$(i)$]
\item $\Delta$ is strong and has diameter $2$, that is, two distinct points either lie in a unique symplecton, or are collinear (and then lie in many symplecta).
\item The geometry with point set the set of symps of $\Delta$, and where the lines are the sets of symps containing a fixed $4$-space, is isomorphic to $\Delta$ itself (and we denote it by $\mathsf{E_{6,6}}(\K)$); this is the duality principle, which implies that two distinct symps in $\Delta$ either intersect in a unique point, or in a unique $4$-space.  
\item A point not contained in a symp is collinear to either no points of that symp---then we say that the point and the symp are \underline{far}---or to all points of a unique $4'$-space of that symp---the point and the symp are \underline{close}.
\item A point not contained in a $5$-space is collinear to either a unique point of that $5$-space, or to a $3$-space contained in that $5$-space. 
\item A symp and a $5$-space intersect in either a $4'$-space, a line, or the empty set.
\item Two $5$-spaces are either disjoint or intersect in a point or plane.
\end{compactenum}
\end{fact}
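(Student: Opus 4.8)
The plan is to derive every assertion from two standard facts about the building $\Delta'$ of type $\sE_6$ underlying $\Delta=\sE_{6,1}(\K)$: first, that any two simplices (and more generally any two residues) lie in a common apartment; and second, that an apartment is isometrically embedded, so that both the collinearity distance between two points and the intersection pattern of two singular subspaces may be computed inside a single apartment. Since all apartments are isomorphic Coxeter complexes of type $\sE_6$, on which the stabiliser induces the full Weyl group $W(\sE_6)$, each mutual-position statement reduces to determining the $W(\sE_6)$-orbits of pairs of vertices of the relevant types in the explicit $27$-vertex model described above, exactly as in the worked example for points and $5$-spaces preceding the statement. Each orbit corresponds to one admissible relative position, so the content of $(i)$ and $(iii)$--$(vi)$ is a finite orbit count that can be read directly off the model.

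First I would treat the diameter and strongness in $(i)$. Inspecting the model, two distinct points are either collinear or at distance $2$, giving $\diam\Delta=2$. For a pair at distance $2$ (for instance $i$ and $j'$ with $i\neq j$, or $i$ and $\{i,k\}$) one reads off that they have more than one common neighbour; since $\Delta$ has uniform symplectic rank and the convex subspace closure of such a pair is a residue of type $\mathsf{D}_5$, this closure is a symp isomorphic to $\mathsf{D}_{5,1}(\K)$, and there are no special pairs, so $\Delta$ is strong. Statements $(iv)$, $(v)$ and $(vi)$ are then handled by the identical orbit analysis: for $(iv)$ I would reproduce the displayed computation (a point outside a $5$-space is either of ``type $i'$'', collinear to a single point, or of ``type $\{i,j\}$'', collinear to a $3$-space); for $(v)$ and $(vi)$ I would list the orbits of (symp, $5$-space) and ($5$-space, $5$-space) pairs and match each to the stated intersection.

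For the duality in $(ii)$ I would use the nontrivial diagram automorphism of $\sE_6$ interchanging nodes $1$ and $6$ (and $3$ with $5$, fixing $2,4$). This induces an isomorphism $\sE_{6,1}(\K)\cong\sE_{6,6}(\K)$ carrying points to symps and the lines of one geometry to those of the other; transporting the dichotomy of $(i)$ through this isomorphism yields that two distinct symps meet in a unique point or a unique $4$-space. The step I expect to be the main obstacle is the passage from the apartment-local picture to the global statements: a priori the intersection of two symps, or the set of points of a symp collinear to an external point, could be larger than what a single apartment exhibits. Closing this gap requires the convexity of apartments together with transitivity of $\Aut\Delta$ on configurations of a given $W(\sE_6)$-type, so that the local computation is genuinely exhaustive, and, for $(iii)$, identifying the ``close'' collinearity set as a full $4'$-space rather than merely its trace on the apartment. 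Once this transitivity and convexity input is in place, each part follows mechanically from the orbit bookkeeping.
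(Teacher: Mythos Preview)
Your proposal is correct and follows exactly the approach the paper indicates. The paper does not give a formal proof of this Fact at all: it is stated as background material taken from the standard references \cite{BuekCoh,Shult}, and the paragraph immediately preceding it explains that such mutual-position statements are read off from the $27$-vertex apartment model via the $W(\sE_6)$-action, with the point/$5$-space case worked as an illustrative example---precisely the orbit bookkeeping you describe. Your identification of the local-to-global passage (convexity of apartments, transitivity on Weyl-equivalent configurations) as the one genuine technical ingredient is apt; the paper takes this for granted as part of the standard building-theoretic toolkit.
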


For each point $x\in X$, the residual geometry $\Res_\Delta(x)$ is isomorphic to  the half spin geometry $\mathsf{D_{5,5}}(\K)$.

We will need the following specific property.

\begin{lemma}\label{closefar}
Let $\xi$ and $\xi'$ be two distinct symps of $\mathsf{E_{6,1}}(\K)$. Then there exists a point $x$ far from $\xi$ and close to $\xi'$ (and hence not contained in either).  
\end{lemma}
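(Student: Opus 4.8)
The plan is to use Fact~\ref{factE6}$(iii)$ to turn the conclusion into a more flexible condition, and then to split according to the duality principle Fact~\ref{factE6}$(ii)$. First I would record the elementary reduction: by Fact~\ref{factE6}$(iii)$ a point $x\notin\xi'$ is automatically close to $\xi'$ as soon as it is collinear to at least one point of $\xi'$. Hence it suffices to produce a point $x$ that is far from $\xi$ (so in particular $x\notin\xi$), lies outside $\xi'$, and is collinear to some point of $\xi'$. I would also note that far points exist and are concrete: a point is far from $\xi$ exactly when, as a vertex of type $1$, it is opposite the type $6$ vertex $\xi$; in any apartment containing $\xi$ such an opposite vertex occurs and is collinear to no point of $\xi$.

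By Fact~\ref{factE6}$(ii)$ the distinct symps $\xi,\xi'$ meet either in a single point $z$ or in a $4$-space $\Sigma$. I would fix an apartment $A$ containing both $\xi$ and $\xi'$ (any two vertices of a building lie in a common apartment) and let $x_0$ be the unique point of $A$ far from $\xi$. Since the mutual positions of $\xi,\xi',x_0$ inside $A$ are governed purely by the Coxeter combinatorics, the behaviour of $x_0$ relative to $\xi'$ is forced by the intersection type. In the case $\xi\cap\xi'=\Sigma$, a direct check in $A$ (using the explicit apartment model described above) shows that $x_0\notin\xi'$ while $x_0$ is collinear to a point of $\xi'$; by the reduction $x_0$ is then far from $\xi$ and close to $\xi'$, settling this case.

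The remaining, genuinely harder, case is $\xi\cap\xi'=\{z\}$. Here the same computation in $A$ shows that $x_0$ in fact lies in $\xi'$, so it cannot itself be close to $\xi'$; this is exactly where a single apartment is insufficient and where the thickness hypothesis $|\K|>2$ must enter. My plan is to keep $x_0\in\xi'$ (far from $\xi$) and perturb it along a line. I would choose a line $\ell$ through $x_0$ with $\ell\not\subseteq\xi'$, so that $\ell\cap\xi'=\{x_0\}$ and every point of $\ell\setminus\{x_0\}$ lies outside $\xi'$ but remains collinear to $x_0\in\xi'$, hence is close to $\xi'$. It then remains to find on such a line a point that is also far from $\xi$. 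For this I would pass to the residue $\Res_\Delta(x_0)\cong\mathsf{D}_{5,5}(\K)$: because $x_0$ is opposite $\xi$, projection identifies $\Res_\Delta(x_0)$ with $\Res_\Delta(\xi)$, and through this identification the condition ``far from $\xi$'' becomes an explicit, generic condition on the lines through $x_0$. Comparing it with the subgeometry of lines through $x_0$ contained in $\xi'$, and using $|\K|>2$, one selects $\ell$ and a point $x\in\ell\setminus\{x_0\}$ that is simultaneously outside $\xi'$ and far from $\xi$.

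The main obstacle is precisely this last step: controlling two opposition-type conditions at once, keeping the perturbed point far from $\xi$ (a maximal-distance, hence delicate, requirement) while forcing it off $\xi'$. Concretely, one must show that along a suitable $\ell$ the points which fail to be far from $\xi$ (those collinear to some point of $\xi$) are too few to exhaust $\ell\setminus\{x_0\}$ once $|\K|>2$ --- equivalently, that the ``bad'' directions in $\Res_\Delta(x_0)$ do not cover all lines through $x_0$ leaving $\xi'$. Establishing this quantitative statement, most cleanly via the opposition isomorphism of residues (or, more bare-handedly, by bounding the points of $\xi$ collinear to points of $\ell$ using Fact~\ref{factE6}$(iii)$ inside the symp containing $\ell$), is the crux of the proof.
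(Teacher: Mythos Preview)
Your strategy is correct and yields a valid proof, though by a somewhat different route than the paper. The paper's argument is uniform (no case split): it picks any $x\in\xi'$ close to but not contained in $\xi$, then invokes a result from \cite{Sch-Sch-Mal:24} guaranteeing that such an $x$ lies on a line containing a point far from $\xi$, with enough freedom among such lines to choose one outside $\xi'$; the far point on that line is then close to $\xi'$ (via $x$) and far from $\xi$. You instead locate, via an apartment, a point $x_0\in\xi'$ already \emph{far} from $\xi$ (in the single-point-intersection case) and perturb it. Your apartment computations in both cases are correct, and your version trades the external citation for a self-contained argument plus a case split; the direct apartment handling of the $4$-space case is pleasant.

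Your ``crux'' admits a cleaner resolution than the residue analysis you sketch: the set $C_\xi$ of points contained in or close to $\xi$ is a geometric hyperplane of $\mathsf{E_{6,1}}(\K)$. Indeed, suppose a line $\ell$ carries a far point; then any symp $\zeta\supseteq\ell$ meets $\xi$ in a single point $w$ (a $4$-space intersection would force the far point collinear to $\xi$), and the unique $y_0\in\ell$ with $y_0\perp w$ lies in $C_\xi$. If some other $y\in\ell$ were close to $\xi$ with $U=y^\perp\cap\xi$ a $4'$-space, then $w\notin U$; Fact~\ref{factE6}$(iv)$ forces $y$ into the unique $5$-space $W\supseteq U$, and $W\cap\zeta$ (a line or $4'$-space by Fact~\ref{factE6}$(v)$, nonempty since it contains $y$) meets the hyperplane $U$ of $W$ in a point of $\zeta\cap\xi=\{w\}$, contradicting $w\notin U$. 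Hence on \emph{every} line through $x_0$ exactly one point lies in $C_\xi$, so choosing any $\ell\not\subseteq\xi'$ through $x_0$ gives $|\K|-1\geq 1$ points far from $\xi$ and outside $\xi'$. In particular, no hypothesis $|\K|>2$ is needed---the lemma carries none, and your belief that thickness beyond $|\K|=2$ must enter is mistaken.
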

\begin{proof}
The set of points close to $\xi'$ is precisely the union of all lines having at least one point in common with $\xi'$. It follows from Section~3.2 of \cite{Sch-Sch-Mal:24} that each point $x$ of that set not contained in $\xi'$ is contained in a line containing a point far from $\xi'$, and all such lines are not contained in one single symp.  The assertion now follows from applying this to a point $x$ of $\xi'$ close to (so not contained in) $\xi$.
\end{proof}

%\framebox{$\mathsf{E_{7,1}}(\K)$}

\subsubsection{$\sE_{7,1}(\K)$}

Let $\Delta$ be the Lie incidence geometry $\mathsf{E_{7,1}}(\K)$, for some field $\K$. This is sometimes also referred to as the \emph{long root (subgroup) geometry} related to the building $\mathsf{E_7}(\K)$; the node 1 is the so-called \emph{polar node}, see \cite{PVMexc}.  Then $\Delta$ is a parapolar space, which has diameter 3 and is non-strong. 

The elements of the corresponding building of types $1,2,3,4,5,6,7$,  are the \emph{points, $6$-spaces, lines, planes, $4$-spaces, symps} and \emph{paras}, respectively.  The symps are isomorphic to polar spaces $\mathsf{D_{5,1}}(\K)$, and the paras are strong parapolar spaces isomorphic to $\mathsf{E_{6,1}}(\K)$. The other types are singular (projective) subspaces of $\Delta$. Besides those, $\Delta$ also contains singular subspaces of dimension $5$, which do not correspond to a type but each of them is the intersection of a unique para and a unique  $6$-space, that is, it corresponds in the building to a flag of type $\{2,7\}$. The $4$-dimensional subspaces contained in those $5$-spaces are also singular subspaces of $\Delta$ not corresponding to a single type of $\Delta$; those are referred to as $4'$-spaces and correspond in the building to flags of type $\{2,6,7\}$. 

Again, one can deduce the possible mutual positions of points, symps and paras, etc., by considering an appropriate model of an apartment of a building of type  $\mathsf{E}_{7}$. Such models are given in \cite{HVM-MV}. We limit ourselves here to mentioning that the root system of type $\mathsf{E_7}$ provides a good such model: the points of the apartment are the roots; two points are collinear if the corresponding roots form an angle of 60 degrees; two points are special if the corresponding roots form an angle of 120 degrees; two points are symplectic if the corresponding roots are perpendicular; two points are opposite if the corresponding roots are opposite.  The symps are the subsystems of type $\mathsf{D_5}$. The paras those of type $\mathsf{E_6}$.

%\begin{fact}[Point-point relations]\label{M5}
%Let $x$ and $y$ be two  points of $\Gamma$. Then $\delta_\Gamma(x,y)\leq 3$ (and distance $3$  occurs and corresponds to opposite points) and if $\delta_\Gamma(x,y)=2$, then either $x$ and $y$ are contained in a unique symp $x\Diamond y$, or there is a unique point $x \JJoin y$ collinear to both $x$ and $y$.
%\end{fact}

\begin{fact}[Point-symp relations] \label{factE71} If $p$ is a point and $\xi$ a symp of $\Delta$ with $p \notin \xi$, then precisely one of the following occurs.

\begin{compactenum}[$(i)$]
\item $p$ is symplectic to a unique point $q\in\xi$. In this case, $p$ and $x$ are special for all $x\in \xi \cap (q^\perp\setminus\{q\})$, and $p$ and $x$ are opposite for all $x\in \xi\setminus q^\perp$. 
\item$p$ is collinear to a $4$-space $U$ of $\xi$; also $p$ and $\xi$ are contained in a unique para $\Pi$. 
In this case, $p$ and $x$ are symplectic if $x\in \xi\setminus U$. 
\item $p$ is symplectic to each point of a $4$-space $U$ of $\xi$;  in this case, $p$ and $x$ are special if $x\in \xi\setminus U$. 
\item   there is a unique line $L\subseteq\xi$ with $p\perp L$. In this case, $p$ and $x$ are symplectic if $x\in \xi \cap (L^\perp\setminus L)$ and $p$ and $x$ are special if $x\in \xi\setminus L^\perp$.
\item $p$ is symplectic to all points of $\xi$. In this case, $p$ and $\xi$ are contained in a unique para $\Pi$, in which they are $\Pi$-opposite. 
\end{compactenum}
In Cases~$(i)$, $(iii)$ and $(iv)$, the point $p$ and the symp $\xi$ are not contained in a common para.
\end{fact}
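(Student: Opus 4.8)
The statement to prove is the case analysis for point-symp relations in $\sE_{7,1}(\K)$ (Fact~\ref{factE71}), together with the concluding remark about when $p$ and $\xi$ lie in a common para. Let me sketch how I would approach this.
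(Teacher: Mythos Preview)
Your proposal is empty: you announce that you will sketch an approach, but no argument follows. There is nothing to evaluate.

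For context, in the paper this statement is labelled a \emph{Fact} and is not given a proof in the text. The authors explain just before it that all such point--symp (and point--para, para--para) relations in $\mathsf{E}_{7,1}(\K)$ can be read off from a model of an apartment, namely the root system of type $\mathsf{E}_7$: points correspond to roots, collinearity to an angle of $60^\circ$, symplectic pairs to perpendicularity, special pairs to $120^\circ$, opposition to opposite roots, symps to $\mathsf{D}_5$-subsystems, and paras to $\mathsf{E}_6$-subsystems. The five cases of Fact~\ref{factE71} are then obtained by fixing a $\mathsf{D}_5$-subsystem and running through the possible positions of a root relative to it; the concluding remark about paras follows by checking which of these configurations sit inside an $\mathsf{E}_6$-subsystem. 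If you intend to supply an actual proof, this apartment/root-system verification is the standard route and is what the paper points to; alternatively one can cite the references \cite{HVM-MV} or \cite{Shult} where such enumerations are carried out. Either way, you need to actually perform (or at least outline) the case analysis --- as it stands, your proposal contains no mathematical content.
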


\begin{fact}[Point-para relations] \label{factE73} If $p$ is a point and $\Pi$ a para of $\Delta$ with $p\notin \Pi$, then precisely one of the following occurs.
\begin{compactenum}[$(i)$]
\item  $p$ is collinear to a unique $5$-space $W$ in $\Pi$. In this case, $p$ is said to be \underline{close} to $\Pi$. The point $p$ is symplectic or special to the points of $\Pi\setminus W$; it is special to $x\in\Pi$ precisely when $x$ is collinear to a unique point of $W$.
\item  $p$ is not collinear to any point of $\Pi$, but it is contained in a unique para $\Pi'$ that intersects $\Pi$ in a symp. In this case, $p$ and $\Pi$ are said to be \underline{far} from each other. The point $p$ and the symp $\Pi\cap\Pi'$ are opposite in $\Pi'$. 
\end{compactenum}
\end{fact}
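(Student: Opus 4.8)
My plan is to exploit that the mutual position of a point $p$ and a para $\Pi$ is an invariant of the $\Aut\Delta$-orbit of the pair. Since any two simplices of a building lie in a common apartment, and a point (type $1$) and a para (type $7$) are simplices, there is an apartment $\Sigma$ containing both; by strong transitivity the $\Aut\Delta$-orbits of pairs $(p,\Pi)$ correspond to the $W(\sE_7)$-orbits of such pairs inside $\Sigma$. Working in the root-system model of \cite{HVM-MV}, this reduces the classification of positions to a finite computation, which I expect to return precisely two orbits with $p\notin\Pi$: these are distinguished by whether or not $p$ is collinear to some point of $\Pi$, and I will identify them with the \emph{close} and \emph{far} positions of $(i)$ and $(ii)$. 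This already yields the ``precisely one of the following'' assertion; it then remains to verify the internal descriptions in each case, for which I take a single representative and invoke Facts~\ref{factE6} and~\ref{factE71}.

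\textbf{Close case.} Suppose $W:=p^\perp\cap\Pi\neq\emptyset$. First I would show $W$ is a singular subspace of $\Pi$: if $y_1,y_2\in W$ are collinear, the line $y_1y_2$ lies in a symp $\zeta\subseteq\Pi$, and since $p\notin\Pi\supseteq\zeta$ the point--symp trichotomy of Fact~\ref{factE71} applies to $(p,\zeta)$; the only cases compatible with $p^\perp\cap\zeta\neq\emptyset$ are $(ii)$ and $(iv)$, in which $p^\perp\cap\zeta$ is a $4$-space or a line, hence singular and containing $y_1,y_2$, so $y_1y_2\subseteq p^\perp$. Thus $W$ is singular, and the apartment computation pins down $\dim W=5$; uniqueness of $W$ is then automatic, since any $5$-space $W'$ with $p\perp W'$ satisfies $W'\subseteq p^\perp\cap\Pi=W$ and so $W'=W$ by equality of dimensions. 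Finally, for $x\in\Pi\setminus W$ Fact~\ref{factE6}$(iv)$ gives that $x$ is collinear either to a unique point $w\in W$ or to a $3$-space $T\subseteq W$; in the first case $w$ is a common neighbour of $p$ and $x$, and $p,x$ cannot be symplectic (that would produce a second common neighbour), so they are special; in the second case $p\perp T\perp x$ furnishes many common neighbours, forcing $p,x$ to be symplectic. This is exactly the dichotomy of $(i)$, and in particular no point of $\Pi$ is opposite $p$.

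\textbf{Far case.} Suppose $p^\perp\cap\Pi=\emptyset$. Here the strategy is to realise $(ii)$ through Fact~\ref{factE71}$(v)$. For a symp $\xi\subseteq\Pi$ the hypothesis $p^\perp\cap\xi=\emptyset$ leaves only cases $(i)$, $(iii)$, $(v)$ of Fact~\ref{factE71} available, and I would show (via the apartment model) that the points of $\Pi$ symplectic to $p$ fill out a unique symp $\xi$ of $\Pi$ to which $p$ stands in position $(v)$. Fact~\ref{factE71}$(v)$ then supplies a unique para $\Pi'$ containing $p$ and $\xi$ in which $p$ and $\xi$ are $\Pi'$-opposite. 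It remains to identify $\Pi'\cap\Pi$ with $\xi$ and to prove that $\Pi'$ is the \emph{only} para through $p$ meeting $\Pi$ in a symp: a second such para would, by the intersection pattern of two paras, force either a point of $\Pi$ collinear to $p$ (excluded in the far case) or coincide with $\Pi'$, giving uniqueness.

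\textbf{Main obstacle.} The formal steps above reduce everything to two structural facts that are cleanest to read off from the apartment but whose transfer to the whole geometry is the real work: the dimension count $\dim W=5$ in the close case (which is precisely what makes the two cases exhaustive, since ``collinear to some point'' must upgrade to ``collinear to a $5$-space''), and the existence and uniqueness of the symp $\xi$, equivalently of $\Pi'$, in the far case. The crux is to guarantee that each mutual position is a single $\Aut\Delta$-orbit, so that the relational data computed from root angles in the model (collinear/special/symplectic/opposite) is valid for all pairs and not merely for apartment pairs; this is exactly the output of the finite $W(\sE_7)$-enumeration. I expect the far-case uniqueness of $\Pi'$ to be the most delicate point, since $\Pi'$ is invisible until constructed and controlling it relies on the para--para intersection pattern, an ingredient that is not recorded in the present excerpt and would need to be established or invoked separately.
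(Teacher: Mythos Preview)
Your approach is essentially the paper's own: the statement is recorded as a \emph{Fact}, with the paper explicitly saying (just before Fact~\ref{factE71}) that such mutual-position statements are deduced from the root-system model of an apartment of type $\mathsf{E_7}$, exactly the reduction you propose. So at the level of strategy you are aligned with the paper, which does not give a detailed proof either.

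Two small cautions about your supplementary synthetic arguments. In the close case you only argue that $W=p^\perp\cap\Pi$ is a \emph{subspace} (lines between collinear points stay in $W$), not that it is \emph{singular}; you must also exclude two non-collinear points $y_1,y_2\in W$, which needs an extra step (e.g.\ via Fact~\ref{factE71} applied to the symp $\xi(y_1,y_2)\subseteq\Pi$ and then the para--para relations to force $p\in\Pi$). Likewise, your justification that ``$x$ collinear to a unique $w\in W$ implies $p,x$ special'' is incomplete: a putative second common neighbour of $p$ and $x$ need not lie in $W$, so symplecticity is not excluded by the uniqueness of $w$ alone. Both points are, as you say, most cleanly settled by the apartment enumeration, and the para--para intersection pattern you flag as missing is in fact available as Fact~\ref{factE72}.
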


\begin{fact}[Para-para relations]\label{factE72} If $\Pi$ and $\Pi'$ are distinct paras, then precisely one of the following occurs.

\begin{compactenum}[$(i)$]
\item $\Pi\cap\Pi'$ is a {symp}; 
\item $\Pi\cap\Pi'$ is a point;
\item $\Pi\cap\Pi'=\emptyset$ and each point $x\in \Pi$ is far from $\Pi'$. Let $\xi_x$ be the unique symp of $\Pi'$ contained in a para with $x$, unique by \emph{Fact~\ref{factE73}$(ii)$}. Then each point of $\Pi'\setminus\xi_x$ collinear to a point of $\xi_x$ is special to $x$, and each point in $\Pi'$ which is $\Pi'$-opposite $\xi_x$ is at distance $3$ from $x$. The correspondence $\Pi\longrightarrow\Pi':x\mapsto\xi_x$ induces an isomorphism of $\pi$ onto the dual of $\Pi'$. 
\end{compactenum} 
\end{fact}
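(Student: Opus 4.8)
The plan is to reduce the classification to a computation inside a single apartment and then to promote each apartment-level relative position to a global geometric configuration using the point--para and point--symp dictionaries already recorded in Facts~\ref{factE73} and~\ref{factE71}. Since any two simplices of a building lie in a common apartment, the two paras $\Pi,\Pi'$ (vertices of type $7$) may be taken in a common apartment $\Sigma$, where, in the root-system model described above, they correspond to two subsystems of type $\sE_6$ inside the $\sE_7$ root system. Because $\sE_7(\K)$ is a Moufang building, its automorphism group is transitive on ordered pairs of paras at a given $W$-distance, so the relative position of $\Pi$ and $\Pi'$ is a complete invariant and is determined by the orbit of the stabiliser $W_{\sE_6}$ on the set of paras, equivalently by a double coset in $W_{\sE_6}\backslash W(\sE_7)/W_{\sE_6}$. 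First I would enumerate these orbits, check that for distinct paras there are exactly three, and read off in each case whether $\Pi\cap\Pi'$ is a symp, a point, or empty; this can be cross-checked against the apartment model of \cite{HVM-MV}.

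With the three apartment-orbits in hand, cases $(i)$ and $(ii)$ require little beyond transport of structure: the intersection pattern is visible in $\Sigma$, and the point--para relation (Fact~\ref{factE73}) forces it globally once one observes that in these cases no point of $\Pi$ can be close to $\Pi'$. The substance lies in case $(iii)$, where $\Pi\cap\Pi'=\emptyset$. Here I would first show that every $x\in\Pi$ is \emph{far} from $\Pi'$: if some $x$ were close, then by Fact~\ref{factE73}$(i)$ it would be collinear to a $5$-space of $\Pi'$, and chasing this collinearity back into $\Pi$ would produce either a common point or a common symp, contradicting the apartment analysis. Once farness is established, Fact~\ref{factE73}$(ii)$ furnishes, for each $x\in\Pi$, the unique symp $\xi_x$ of $\Pi'$ lying in a common para with $x$, and the distance data (special to points of $\Pi'$ collinear to $\xi_x$, distance $3$ to points $\Pi'$-opposite $\xi_x$) then follows by applying Facts~\ref{factE71} and~\ref{factE73} inside that common para.

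The main obstacle, and the step demanding the most care, is proving that $x\mapsto\xi_x$ is an \emph{isomorphism} of $\Pi$ onto the dual of $\Pi'$. By the duality principle of Fact~\ref{factE6}$(ii)$, the symps of $\Pi'\cong\mathsf{E_{6,1}}(\K)$ are exactly the points of the dual geometry $\mathsf{E_{6,6}}(\K)$, so $x\mapsto\xi_x$ is a well-defined map of point sets. I would establish injectivity by noting that two points of $\Pi$ sharing the same symp $\xi_x$ would be forced into a common para with $\Pi'$, contradicting farness, and surjectivity by exploiting the symmetry of the \emph{far} relation between $\Pi$ and $\Pi'$ (running the same construction from $\Pi'$ back to $\Pi$). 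The key structural verification is that a collinear pair $x\perp y$ in $\Pi$ produces symps $\xi_x,\xi_y$ meeting in a $4$-space, i.e.\ collinear points of the dual: this I would check by taking the para through $x$ and $y$ and tracking how the line $xy$ meets $\Pi'$, again using Fact~\ref{factE6}$(ii)$ to translate the resulting $4$-space intersection into collinearity in $\mathsf{E_{6,6}}(\K)$. Dually, a symplectic pair in $\Pi$ should map to symps of $\Pi'$ meeting in a single point, i.e.\ a symplectic pair of the dual, so that the map reverses the role of points and symps and is an isomorphism onto the dual. I expect the bookkeeping in this last verification -- consistently separating the collinear, special, and symplectic cases and invoking the correct point--para subcase at each stage -- to be the most delicate part of the argument.
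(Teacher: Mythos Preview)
Your proposal is correct and takes essentially the same approach as the paper: deduce the relative positions from the apartment model (the $\sE_7$ root system, with paras as $\sE_6$ subsystems), which the paper cites from \cite{HVM-MV} without giving any further argument. The paper treats Fact~\ref{factE72} as background material and supplies no proof beyond the remark that ``one can deduce the possible mutual positions \ldots\ by considering an appropriate model of an apartment''; your double-coset enumeration $W_{\sE_6}\backslash W(\sE_7)/W_{\sE_6}$ is exactly this, and your detailed verification of the isomorphism claim in case~$(iii)$ via Facts~\ref{factE71}, \ref{factE73} and~\ref{factE6}$(ii)$ goes well beyond what the paper actually writes out.
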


\begin{fact}\label{6paras}
Let $\Pi$ be a para of $\Delta$ and let $W,W'$ be two $\Pi$-opposite singular $5$-spaces of $\Pi$. Let $U$ and $U'$ be the unique singular $6$-spaces containing $W$ and $W'$, respectively. Then every point $u\in W$ is collinear to a unique point $\theta(u)$ of $W'$.  Let $u\in U$ and $u'\in U'$. Then \begin{compactenum}[$(i)$]\item $u\perp u'$ if, and only if,  $u\in W$ and $u'=\theta(u)$,\item $u\pperp u'$ if, and only if, $u\in W$, $u'\in W'$ and $u'\neq\theta(u)$,\item $u$ and $u'$ are special if, and only if, either $u\in U\setminus W$ and $u'\in W'$, or $u\in W$ and $u'\in U'\setminus W'$, \item $u$ is opposite $u'$ if, and only if, $u\in U\setminus W$ and $u'\in U'\setminus W'$.\end{compactenum} 
Conversely, let $W$ and $W'$ be two singular $5$-spaces such that each point of $W$ is collinear to a unique point of $W'$. Then $W$ and $W'$ are contained in a unique para $\Pi$, where they are $\Pi$-opposite. 
\end{fact}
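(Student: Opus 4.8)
The plan is to treat the four incidence relations separately, exploiting that the whole configuration is controlled by a single collinearity bijection between the opposite $5$-spaces together with the point--para relations of Fact~\ref{factE73}. First I would establish the map $\theta\colon W\to W'$. Since $W$ and $W'$ are $\Pi$-opposite they are disjoint (Fact~\ref{factE6}$(vi)$ leaves only the empty intersection once opposition is imposed), so for $u\in W\subseteq\Pi\setminus W'$ Fact~\ref{factE6}$(iv)$ gives that $u$ is collinear either to a unique point of $W'$ or to a whole $3$-space of $W'$; the latter contradicts opposition (it would force $W'$ to be ``too close'' to $W$), leaving a well-defined map $u\mapsto\theta(u)$, which by the symmetric argument is a bijection (indeed a projectivity $W\to W'$). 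Because $\Pi\cong\sE_{6,1}(\K)$ is strong and of diameter $2$ (Fact~\ref{factE6}$(i)$), any two distinct points of $\Pi$ are collinear or symplectic; hence for $u\in W$ and $u'\in W'$ one has $u\perp u'$ exactly when $u'=\theta(u)$ and $u\pperp u'$ exactly when $u'\neq\theta(u)$, which is precisely $(i)$ and $(ii)$.

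For $(iii)$ I would apply Fact~\ref{factE73} to a point $u\in U\setminus W$ and the para $\Pi$. Since $U$ is singular, $u$ is collinear to every point of $W=U\cap\Pi$, so $u$ is \underline{close} to $\Pi$ and $W$ is the unique $5$-space of collinearity; Fact~\ref{factE73}$(i)$ then says that $u$ is special to $x\in\Pi\setminus W$ precisely when $x$ is collinear to a unique point of $W$. As $\theta$ is a bijection, every $u'\in W'$ is collinear to exactly the point $\theta^{-1}(u')$ of $W$, so $u$ is special to each point of $W'$; the case $u\in W$, $u'\in U'\setminus W'$ is identical with the roles of $U,W$ and $U',W'$ interchanged. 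This gives $(iii)$.

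The relation $(iv)$, for $u\in U\setminus W$ and $u'\in U'\setminus W'$, is the crux. The upper bound $\delta(u,u')\le 3$ is immediate: for any $w\in W$ the path $u\perp w\perp\theta(w)\perp u'$ (the first and last steps because $U,U'$ are singular) has length $3$. It remains to rule out $\delta(u,u')\le 2$, and here the conclusion of $(iii)$ is essential, namely that $u$ is special to \emph{all} of $W'$ while $u'$ is collinear to all of $W'$. The case $u\perp u'$ is excluded quickly: fixing $w\in W$ and $w'=\theta(w)$, the point $w$ is the unique common neighbour of the special pair $\{u,w'\}$, yet $u'$ would be a second common neighbour (as $u'\perp w'$ and, by assumption, $u'\perp u$), forcing $u'=w\in\Pi$, contrary to $u'\notin\Pi$. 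The symplectic case $u\pperp u'$ I would eliminate by applying the point--symp trichotomy of Fact~\ref{factE71} to the point $w'\in W'$ and the hypothetical symp $\xi(u,u')$: since $w'$ is collinear to $u'$ but special to $u$, only case $(iv)$ of Fact~\ref{factE71} can occur, and running this simultaneously over two independent points of $W'$ (equivalently, intersecting the resulting lines) contradicts the opposition of $W$ and $W'$ inside $\Pi$. I expect this symplectic sub-case to be the main obstacle, and as a robust alternative I would verify $(iv)$ directly in the root-system apartment model of \cite{HVM-MV}, where $u$ and $u'$ are represented by opposite roots, transporting the conclusion to all points via the transitivity of the stabiliser of $(W,W')$ on $(U\setminus W)\times(U'\setminus W')$.

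Finally, for the converse I would use that every singular $5$-space lies in a \emph{unique} para (it is the flag of type $\{2,7\}$), so $W$ and $W'$ determine paras $\Pi_W$ and $\Pi_{W'}$; the task reduces to showing $W'\subseteq\Pi_W$, after which $\Pi_{W'}=\Pi_W=:\Pi$ by uniqueness, and the $\sE_{6,1}(\K)$-internal converse (a perfect collinearity matching between two $5$-spaces forces them to be opposite) finishes the proof. To see $W'\subseteq\Pi_W$ I would feed the hypothesis that each point of $W$ is collinear to a unique point of $W'$ into the point--para relations of Fact~\ref{factE73} and the para--para relations of Fact~\ref{factE72}: a point $w'=\theta(w)\in W'$ is collinear to $w\in\Pi_W$, and the uniqueness of the match rules out $w'$ being far from $\Pi_W$ or lying in a second para meeting $\Pi_W$ in a symp, leaving only $w'\in\Pi_W$. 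The recognition step $W'\subseteq\Pi_W$, together with the symplectic sub-case of $(iv)$, are the two places where the routine facts must be pushed hardest.
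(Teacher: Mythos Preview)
The paper does not actually prove this statement: it is listed among several ``Facts'' in \S\ref{paras} that, as the authors say, ``can be deduced \ldots\ by considering an appropriate model of an apartment of a building of type $\mathsf{E}_7$'' (the root-system model of \cite{HVM-MV}). So your proposal is already far more detailed than what the paper offers, and your fallback for part~$(iv)$ --- verifying opposition in the root-system apartment model and transporting by transitivity --- \emph{is} the paper's implicit argument.

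Your treatment of $(i)$--$(iii)$ via Facts~\ref{factE6} and~\ref{factE73} is clean and correct; together with $(iv)$ it partitions $U\times U'$, so the ``only if'' directions fall out automatically. Two places deserve tightening. First, in $(iv)$ your Fact~\ref{factE71} analysis correctly isolates case~$(iv)$ there (so each $w'\in W'$ is collinear to a line $L_{w'}\subseteq\xi(u,u')$ through $u'$), but the sentence ``running this simultaneously over two independent points of $W'$ \ldots\ contradicts the opposition of $W$ and $W'$'' is not yet a proof: you need to say what geometric object in $\Pi$ is being overdetermined. One way to finish is to observe that the lines $L_{w'}$ assemble into a singular subspace of $\xi(u,u')$ through $u'$ of dimension too large to be compatible with the $5'$-space structure, or simply to retreat to the apartment model as you propose. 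Second, in the converse your case split is incomplete: Fact~\ref{factE73} gives three possibilities for $w'$ relative to $\Pi_W$ (in $\Pi_W$; close; far), and you only argue against ``far'' and ``a second para meeting $\Pi_W$ in a symp''. The substantive alternative is $w'$ \emph{close} to $\Pi_W$, i.e.\ collinear to a full $5$-space $V_{w'}\subseteq\Pi_W$ with $V_{w'}\cap W=\{\theta^{-1}(w')\}$. To exclude this you must use the hypothesis globally (for all $w'\in W'$ at once), not pointwise; for instance, two collinear points $w_1',w_2'\in W'$ would force $V_{w_1'}$ and $V_{w_2'}$ to meet in a plane (Fact~\ref{factE6}$(vi)$), which is hard to reconcile with $V_{w_i'}\cap W$ being single (distinct) points. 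Filling in these two steps would turn your outline into a self-contained proof independent of the apartment model.
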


We also record the following property of $\Delta$ (which in fact holds for all long root geometries related to spherical buildings):

\begin{fact}\label{dist3fact}
Let $p\perp x\perp y\perp q$ be a path in $\Delta$ with $(p,y)$ and $(q,x)$ special. Then $p$ and $q$ are opposite, i.e., $\delta(p,q)=3$. Conversely, if for some points $p,q,r$ holds $p\pperp r\perp q$, then $p$ is never opposite $q$.
\end{fact}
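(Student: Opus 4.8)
The plan is to treat the two assertions separately, in each case controlling the point-graph distance $\delta(p,q)$, which lies in $\{0,1,2,3\}$ since $\diam\Delta=3$. The conceptual heart of the forward implication is a computation in the root model of an apartment. Normalising all roots of $\sE_7$ to squared length $2$, collinearity, speciality, symplecticity and opposition of two points correspond to the inner products $1,-1,0,-2$ of the associated roots. The hypotheses thus read $\langle p,x\rangle=\langle x,y\rangle=\langle y,q\rangle=1$ and $\langle p,y\rangle=\langle q,x\rangle=-1$. Since $|p+y|^2=|p|^2+|y|^2+2\langle p,y\rangle=2=|x|^2$ while $\langle p+y,x\rangle=\langle p,x\rangle+\langle y,x\rangle=2=|p+y|\,|x|$, equality in Cauchy--Schwarz forces $x=p+y$; the symmetric computation gives $y=x+q$. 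Eliminating $x$ yields $q=-p$, that is, $p$ and $q$ are opposite. The same model settles the converse at once: were $p$ opposite $q$ we would have $q=-p$, whence $\langle r,q\rangle=-\langle r,p\rangle=0\neq 1$, contradicting $r\perp q$.

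The delicate point in the forward direction is the right to carry out this computation, i.e.\ to realise $p,x,y,q$ in a single apartment; this, together with excluding the value $\delta(p,q)=2$ by purely geometric means, is where I expect the main work to lie. I would first dispose of the small values combinatorially. If $p=q$, then $\delta(y,q)=\delta(y,p)=2$ contradicts $y\perp q$; and if $p\perp q$, then $q$ is collinear to both $p$ and $y$, so $q$ is a common neighbour of the special pair $(p,y)$ and hence $q=x$, contradicting that $(q,x)$ is a genuine special pair. Thus $\delta(p,q)\in\{2,3\}$.

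To exclude $\delta(p,q)=2$ I would use the point--symp relations of Fact~\ref{factE71}. Suppose first that $(p,q)$ is symplectic and let $\zeta=\xi(p,q)$. Since symps are polar spaces they contain no special pairs, so $x\notin\zeta$ and $y\notin\zeta$ (otherwise $(q,x)$ resp.\ $(p,y)$ would be a special pair inside $\zeta$, contradicting uniqueness of its centre). As $x$ is collinear to $p\in\zeta$ and special to $q\in\zeta$, Fact~\ref{factE71} puts $x$ in case~$(iv)$: there is a line $L=x^\perp\cap\zeta$ with $p\in L$. Symmetrically $y$ lies in case~$(iv)$ for $\zeta$, giving a line $L'=y^\perp\cap\zeta\ni q$ with $p\notin (L')^{\perp}$. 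Since $p$ is not collinear to all of $L'$, its perp meets $L'$ in a unique point $z_0$; then $z_0\perp p$ and $z_0\perp y$ (as $z_0\in L'\subseteq y^\perp$), so $z_0\in\zeta$ is a common neighbour of $p$ and $y$ distinct from $x\notin\zeta$ --- contradicting that $x$ is the unique centre of $(p,y)$. The remaining subcase, where $(p,q)$ is itself special with some centre $w$, is the genuine obstacle: here $(p,q)$ lies in no symp, so the ``second common neighbour'' device is unavailable, and one must instead exploit the full (highly symmetric) five-point configuration on $p,x,y,q,w$ --- equivalently, pin down a common apartment and invoke the root computation above.

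For the converse I would argue geometrically, avoiding apartments entirely. Assume $p\pperp r$ and $r\perp q$ but, for contradiction, that $p$ is opposite $q$, i.e.\ $\delta(p,q)=3$. Let $\xi=\xi(p,r)$. If $q\in\xi$, then $p,q$ lie in the convex diameter-$2$ symp $\xi$, so $\delta(p,q)\le2$, a contradiction. Otherwise $q\notin\xi$ is collinear to $r\in\xi$, so by Fact~\ref{factE71} the pair $(q,\xi)$ falls under case~$(ii)$ or case~$(iv)$, the only cases in which the external point has a collinear point in $\xi$. In both of these cases every point of $\xi$ is collinear, symplectic or special to $q$, hence at distance at most $2$ from $q$; applying this to $p\in\xi$ gives $\delta(p,q)\le2$, the desired contradiction.
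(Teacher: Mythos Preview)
The paper states this as a Fact without proof, remarking only that it holds in every long root geometry, so there is no argument in the paper to compare against. Your converse is clean and correct: once $q$ is collinear to a point of $\xi(p,r)$, Fact~\ref{factE71} forces case~$(ii)$ or~$(iv)$, and in neither of those does the symp contain a point opposite~$q$.

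In the forward direction your exclusion of $\delta(p,q)\in\{0,1\}$ and of the symplectic subcase of $\delta(p,q)=2$ is correct (the ``second common neighbour'' trick via case~$(iv)$ is nice). The remaining special subcase, however, is a genuine gap rather than a formality you can wave through. Your proposed fix---place $p,x,y,q$ in a common apartment and run the root computation---is not justified: an apartment through $p$ and $q$ certainly exists, and any apartment through a special pair automatically contains its centre, but nothing forces the apartment through $p$ and $q$ to contain $y$, nor the apartment through $p$ and $y$ (which does contain $x=p+y$) to contain~$q$. So the root identity $q=-p$ remains heuristic. The standard way to close this in long root geometries is algebraic, via root groups: the hypotheses translate globally (no apartment needed) into $[U_p,U_y]=U_x$ and $[U_q,U_x]=U_y$, and from these commutator relations one shows that $\langle U_p,U_q\rangle$ cannot sit inside a unipotent subgroup, which forces $p$ opposite~$q$. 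A purely incidence-geometric finish is also possible, but it requires a further case split on the relations between the putative centre $w$ of $(p,q)$ and the points $x,y$ (one first checks $w\not\perp x$ and $w\not\perp y$, then branches on whether $(w,x)$ is symplectic or special), and is noticeably more laborious than what you have written.
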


For each point $x\in X$, the residual geometry $\Res_\Delta(x)$ is isomorphic to  the half spin geometry $\mathsf{D_{6,6}}(\K)$.

%\framebox{$\mathsf{E_{7,7}}(\K)$}

\subsubsection{$\sE_{7,7}(\K)$}

Let $\Delta$ be the long root geometry $\mathsf{E_{7,1}}(\K)$. 
We shall use the notation $\Delta^*$ for the point-line geometry $\mathsf{E_{7,7}}(\K)$ obtained from $\Delta$ by taking as points the paras of $\Delta$ and as lines the symps of $\Delta$, with obvious incidence relation. We refer to $\Delta^*$ as the \emph{dual} of $\Delta$. 

Then $\Delta^*$ is a strong parapolar space of diameter 3; points at distance 3 are called \emph{opposite}. A maximal singular subspace has either dimension $5$ (in this case occurring as an intersection of two symps and corresponding to a type $3$ element in the Dynkin diagram) or dimension $6$ (type $2$ in the Dynkin diagram). The $5$-dimensional subspaces of a $6$-space will be called \emph{$5'$-spaces}. They do not correspond to a single node of the Dynkin diagram, but rather to a flag of type $\{1,2\}$.  Each symp of $\Delta^*$ is isomorphic to the polar space $\mathsf{D_{6,1}}(\K)$ (corresponding to the residue of an element of type $1$ in the underlying spherical building). Furthermore, the lines, planes, $3$-dimensional singular subspaces and $4$-dimensional subspaces correspond to types 6, 5, 4 and $\{2,3\}$ in the Dynkin diagram. 

We now review the point-symp and symp-symp relations. As in the previous cases, they  can be deduced  by considering an appropriate model of an apartment (the ``thin version'') of a building of type  $\mathsf{E}_{7}$, as given in \cite{HVM-MV}.  Here, such a model can be given by the \emph{Gosset graph}, which has in turn many descriptions and constructions. One of them is as the $1$-skeleton of the $3_{21}$ polytope (see \cite{BCN}, pages 103 and 104). A traditional construction runs as follows. The {56} vertices are the pairs
from the respective $8$-sets $\{1,2,\ldots,8\}$ and $\{1',2',\ldots,8'\}$.
Two pairs from the same set are adjacent if they intersect in precisely
one element; two pairs $\{a,b\}$ and $\{c',d'\}$ from different sets
are adjacent if $\{a,b\}$ and $\{c,d\}$ are disjoint. The symps correspond to cross-polytopes of size 12 (so-called \emph{hexacrosses} or \emph{$6$-orthoplexes}) contained in the Gosset graph. There are 126 such, and 56 of these are determined by an ordered pair $(i,j)$ with $i, j\in\{1,2,3,4,5,6,7,8\}$, $i\neq j$, and induced on the vertices $\{i,k\}$ and $\{j',k'\}$, $k\notin\{i,j\}$, whereas the other 70 are determined by a $4$-set $\{i,j,k,\ell\}\subseteq\{1,2,3,4,5,6,7,8\}$ and are induced on the vertices $\{s,t\}\subseteq\{i,j,k,\ell\}$, $s\neq t$, and $\{u',v'\}\subseteq\{1',2',3',4',5',6',7',8'\}\setminus\{i',j',k',\ell'\}$, $u\neq v$.

Armed with this description of the thin version, one can verify the following facts.

\begin{fact}[Point-symp relations] \label{factE77} If $p$ is a point and $\xi$ a symp of $\Delta^*$ with $p\notin \xi$, then precisely one of the following occurs.
\begin{compactenum}[$(i)$]
\item  $p$ is collinear to a unique point $q\in\xi$. In this case, $p$ and $x$ are symplectic if $x\in \xi\cap (q^\perp\setminus\{q\})$ and $\delta(p,x)=3$ for $x\in \xi\setminus q^\perp$. We say that $p$ is \underline{far} from $\xi$.
\item  $p$ is collinear to a $5'$-space $U$ of $\xi$. In this case, $x$ and $p$ are symplectic if $x\in \xi\setminus U$. We say that $p$ is \underline{close} to $\xi$.
\end{compactenum}
\end{fact}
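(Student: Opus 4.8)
The plan is to reduce the statement to a purely combinatorial computation in a single apartment and then to carry that computation out in the Gosset graph model introduced above. A point $p$ of $\Delta^*$ is a vertex of type $7$ of the building $\mathsf{E_7}(\K)$ and a symp $\xi$ is a vertex of type $1$, so by the standard building-theoretic fact that any two vertices lie in a common apartment we may fix an apartment $\Sigma$ containing both $p$ and $\xi$. Every relation appearing in the statement---the containment $p\in\xi$, collinearity, symplecticity, and the distances $\delta(p,x)$---is an invariant of the relative position of the two vertices (the Weyl-group double coset they determine) and is therefore detected inside $\Sigma$; it is moreover constant along the orbit of $(p,\xi)$ under the stabiliser of $\xi$. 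Hence it suffices to enumerate the orbits of vertices $p\notin\xi$ of the thin geometry $\Sigma$, as modelled by the Gosset graph, and to read off $p^\perp\cap\xi$ in each case.

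Concretely, I would fix $\xi$ to be the hexacross determined by an ordered pair $(i,j)$, so that its $12$ vertices are $\{i,k\}$ and $\{j',k'\}$ with $k\notin\{i,j\}$, and then run through the $44$ external vertices using the two adjacency rules (two pairs from the same $8$-set are adjacent when they meet in exactly one element; a primed and an unprimed pair are adjacent when the underlying pairs are disjoint). Organising the external vertices according to how the underlying pair meets $\{i,j\}$ (respectively $\{i',j'\}$) isolates exactly two behaviours: either $p$ is adjacent to a single vertex $q$ of $\xi$, or $p$ is adjacent to precisely six vertices of $\xi$ that are pairwise adjacent. In the latter case these six vertices form a maximal clique of the hexacross, that is, a maximal singular subspace of $\xi\cong\mathsf{D_{6,1}}(\K)$. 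This produces the \emph{far}/\emph{close} dichotomy, and the same enumeration shows that no external vertex is adjacent to zero vertices of $\xi$, consistently with $\diam\Delta^*=3$.

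It then remains to pin down the finer assertions about $\xi\setminus q^\perp$ and $\xi\setminus U$. Here I would invoke that $\Delta^*$ is a \emph{strong} parapolar space, so that no pair is special and every pair at distance $2$ is automatically symplectic; it is therefore enough to separate the points of $\xi$ at graph-distance $2$ from $p$ from those at graph-distance $3$. In the far case $q^\perp\cap\xi$ is the geometric hyperplane of the polar space $\xi$ cut out by $q$, and the Gosset graph computation shows that $p$ is symplectic to the points of $\xi\cap(q^\perp\setminus\{q\})$ and opposite, i.e. $\delta(p,x)=3$, for the points of $\xi\setminus q^\perp$; in the close case one checks directly that every point of $\xi\setminus U$ lies at distance $2$ from $p$ and hence is symplectic to it.

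The step I expect to require the most care is the justification of the reduction itself, namely that the thin apartment picture faithfully records the thick relations. I would anchor this on the principle that the relative position of two vertices of a building is the same in every apartment containing them and governs opposition, collinearity and symplecticity, together with the explicit apartment models of \cite{HVM-MV} underlying the Gosset-graph description. A secondary point needing attention is the correct identification of the six-vertex clique in the close case as a genuine $5'$-space---a dimension-$5$ maximal singular subspace, equivalently a flag of type $\{1,2\}$ of $\Delta^*$---rather than some other singular subspace of $\xi$; this is settled by tracking which of the two oriflamme families of maximal singular subspaces of the hexacross the clique belongs to.
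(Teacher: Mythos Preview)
Your proposal is correct and follows precisely the approach indicated in the paper, which does not supply a detailed proof but simply states that ``armed with this description of the thin version, one can verify the following facts'' using the Gosset graph model of an apartment. Your write-up is a faithful and careful expansion of that verification, including the appropriate caveat about identifying the six-clique as a $5'$-space rather than a $5$-space.
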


This fact implies that on each line $L$, there is at least one point symplectic to a given point $p$ (unique when $L$ contains at least one point opposite $p$). We will use this without reference. 

\begin{fact}[Symp-symp relations] \label{factE77symp} If $\xi$ and $\xi'$ are two symps of $\Delta^*$, then precisely one of the following occurs.
\begin{compactenum}[$(i)$]
\item $\xi=\xi'$;
\item $\xi\cap\xi'$ is a $5$-space. We call the symps \underline{adjacent}.
\item $\xi\cap\xi'$ is a line $L$. Then points $x\in\xi\setminus L$ and $x'\in\xi'\setminus L$ are never collinear and $\delta(x,x')=3$ if, and only if, $x^\perp\cap L$ is disjoint from ${x'}^\perp\cap L$. We call $\{\xi,\xi'\}$ \underline{symplectic}. 
\item $\xi\cap\xi'=\emptyset$ and there is a unique symp $\xi''$ intersecting $\xi$ in a $5$-space $U$ and intersecting $\xi'$ in a $5$-space $U'$, with $U$ and $U'$ opposite in $\xi''$. All points of $\xi\setminus U$ are far from $\xi'$ and each point of $U$ is close to $\xi'$. Also, each line connecting a point of $\xi$ with a point of $\xi'$ contains a point of $U\cup U'$. We call $\{\xi,\xi'\}$ \underline{special}.
\item $\xi\cap\xi'=\emptyset$ and every point of $\xi$ is collinear to a unique point of $\xi'$. In this situation, $\xi$ and $\xi'$ are \underline{opposite}. Each point of $\xi$ is far from $\xi'$.
\end{compactenum}
\end{fact}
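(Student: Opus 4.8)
The plan is to verify the classification in the thin building and then transfer it to the thick case by the standard apartment argument. Since any two symps of $\Delta^*$ lie in a common apartment, and since the mutual position of two symps (their intersection, together with the collinearity and distance relations between their points) is an invariant of the Coxeter complex, it suffices to carry out the verification in the thin version of $\Delta^*$, that is, in the Gosset graph together with its $126$ hexacrosses described above. Here $\xi$ and $\xi'$ are two of these $126$ hexacrosses, each a set of $12$ Gosset vertices (six antipodal pairs), and all relations to be checked are entirely combinatorial.

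First I would exploit the transitivity of the Weyl group $W(\sE_7)=\Aut(\text{Gosset graph})$ on the set of hexacrosses. Recalling that the symps of $\Delta^*$ are in bijection with the points of $\mathsf{E_{7,1}}(\K)$, equivalently with the roots of $\sE_7$, I would fix one hexacross $\xi$ (say one of the type indexed by an ordered pair $(1,2)$) and classify the remaining $125$ by the orbits of the pair $\{\xi,\xi'\}$ under the stabiliser of $\xi$. The root model predicts exactly five relations, namely equality together with the four relations realised by roots at angle $60^\circ$, $90^\circ$, $120^\circ$ and $180^\circ$ to a fixed root; since the orthogonal subsystem of a root is of type $\sD_6$ with $60$ roots, the corresponding orbit sizes are $1,32,60,32,1$, summing to $126$. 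This both fixes the number of cases and predicts which configuration to expect in each.

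For each orbit I would then read the geometry directly off the Gosset graph. Using the explicit vertex sets of the two families of hexacrosses, I compute $\xi\cap\xi'$ as a set of Gosset vertices: the $60^\circ$-orbit yields six common vertices forming a $5$-face of the orthoplex, i.e. a maximal singular $5$-space, giving case $(ii)$; the $90^\circ$-orbit yields two common vertices spanning a line $L$, giving case $(iii)$; and the $120^\circ$ and $180^\circ$ orbits yield $\xi\cap\xi'=\emptyset$. In the two disjoint cases I examine the adjacencies between $\xi\setminus\xi'$ and $\xi'\setminus\xi$: for the $180^\circ$ (antipodal) orbit the antipodal map sends each vertex of $\xi$ to a unique adjacent vertex of $\xi'$, giving the bijective collinearity of case $(v)$, whereas for the $120^\circ$ orbit I locate the unique third hexacross $\xi''$ meeting $\xi$ and $\xi'$ in opposite $5$-spaces $U,U'$ and check that every connecting edge passes through $U\cup U'$, giving case $(iv)$. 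The finer assertions inside each case, namely in $(iii)$ the criterion $\delta(x,x')=3$ if and only if $x^\perp\cap L$ is disjoint from ${x'}^\perp\cap L$, in $(iv)$ the far/close dichotomy and the uniqueness of $\xi''$, and in $(v)$ that every point of $\xi$ is far from $\xi'$, are all extracted from the same adjacency bookkeeping.

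The main obstacle is precisely this bookkeeping in the two disjoint cases: because hexacrosses come in two combinatorially different families (those indexed by an ordered pair $(i,j)$ and those indexed by a $4$-set), separating the special orbit from the opposite orbit, and in particular confirming the bijective collinearity and the uniqueness of the connecting symp $\xi''$, requires a careful case analysis over the mixed adjacency rules of the Gosset graph. Once the five thin configurations are established, the transfer to the thick building $\Delta^*=\mathsf{E_{7,7}}(\K)$ is automatic: every quantity involved (the dimension of $\xi\cap\xi'$, collinearity of points, and distances up to $3$) is computed inside an apartment containing both symps and is independent of the chosen apartment, so the thin verification yields the stated facts verbatim.
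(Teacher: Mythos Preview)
Your proposal is correct and follows exactly the approach the paper indicates: the paper states these facts without detailed proof, saying only that ``as in the previous cases, they can be deduced by considering an appropriate model of an apartment (the `thin version')'' and, after describing the Gosset graph with its $126$ hexacrosses, that ``armed with this description of the thin version, one can verify the following facts.'' Your write-up is a faithful expansion of precisely this verification, including the root-system bookkeeping and the apartment-transfer step.
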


At each point $x\in X$, the residual geometry $\Res_\Delta(x)$ is isomorphic to  the geometry $\mathsf{E_{6,1}}(\K)$.

The following well-known property can be deduced from Propositions 4.4 and 4.7 of \cite{Sch-Mal:23}.

\begin{fact}\label{imE7}
Let $\xi_1$ and $\xi_2$ be two opposite symps of $\Delta^*$. Let $\cL$ be the set of lines $L$ that contain a point of $\xi_1$ and one of $\xi_2$. Then for each point $p$ on each member of $\cL$ there exists a symp $\xi_p$ intersecting each member of $\cL$. The set of all such symps $\xi_p$ forms a partition of the union of $\cL$. 
\end{fact}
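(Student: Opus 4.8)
The plan is to first make the ``ruling'' between $\xi_1$ and $\xi_2$ explicit, then construct a single transversal symp through a prescribed point by a local argument, and finally assemble these symps into the claimed partition. Throughout I work in $\Delta^*=\mathsf{E}_{7,7}(\K)$, using that it is strong (so no special pairs) and that symps are polar spaces $\mathsf{D}_{6,1}(\K)$.

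First I would record the ruling. By Fact~\ref{factE77symp}$(v)$, opposition of $\xi_1,\xi_2$ gives a bijection $\theta\colon\xi_1\to\xi_2$ with $x\perp\theta(x)$ for every $x\in\xi_1$, where $\theta(x)$ is the only point of $\xi_2$ collinear to $x$ (and the matching from $\xi_2$ is $\theta^{-1}$). Thus $\cL=\{\langle x,\theta(x)\rangle : x\in\xi_1\}$, and I would check that distinct members are disjoint: a common point of $\langle x,\theta(x)\rangle$ and $\langle y,\theta(y)\rangle$ would be collinear to the distinct points $x,y\in\xi_1$, hence, since a perp is a subspace, collinear to all of a line meeting $\xi_2$ in two points, contradicting the uniqueness in Fact~\ref{factE77symp}$(v)$. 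I would also note the two ``boundary'' sections: $\xi_1$ meets each $\langle x,\theta(x)\rangle$ in $x$ and $\xi_2$ meets it in $\theta(x)$, so $\xi_1,\xi_2$ are themselves transversal symps.

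Next, fix an interior point $p$ on a transversal $L_0=\langle x_0,\theta(x_0)\rangle$. The key local lemma is that $p$ is \emph{far} from $\xi_1$ (and from $\xi_2$), with unique collinear point $x_0$ (resp.\ $\theta(x_0)$): if $p$ were close, then by Fact~\ref{factE77}$(ii)$ it would be collinear to a generator $U\ni x_0$ of $\xi_1$, and any $u\in U\setminus\{x_0\}$ would be collinear to the two points $x_0,p$ of $L_0$, hence to $\theta(x_0)$, contradicting opposition. Consequently $p\not\perp x$ for every $x\in\xi_1\setminus\{x_0\}$. Now for each $x\perp x_0$ in $\xi_1$ the quadrangle $x\perp x_0\perp\theta(x_0)\perp\theta(x)\perp x$ has both diagonals $\{x,\theta(x_0)\}$ and $\{x_0,\theta(x)\}$ at distance $2$; strongness places each in a symp, and $\zeta_x:=\xi(x,\theta(x_0))$ contains by convexity both $L_0$ and the transversal $M_x=\langle x,\theta(x)\rangle$. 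Inside the polar space $\zeta_x$ the point $p$ (being collinear to neither endpoint of $M_x$) is collinear to a unique point $p_x$ of $M_x$. Letting $x$ range over $\xi_1$ defines a section map $\phi_p\colon x\mapsto p_x$ with $p_{x_0}=p$, and I would show $\phi_p$ preserves collinearity and symplecticity, so that its image is a sub-polar-space isomorphic to $\xi_1\cong\mathsf{D}_{6,1}(\K)$, hence a symp $\xi_p$ meeting every member of $\cL$ in the single point $p_x$.

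Finally I would prove that the family $\{\xi_p\}$ partitions $\bigcup\cL$. Each transversal symp meets $L_0$ in exactly one point (again by the far-relation argument), so (transversal symp)$\,\mapsto\,$(its point on $L_0$) is a well-defined map to $L_0$; covering follows since the symp constructed through any $q\in\bigcup\cL$ meets $L_0$, and disjointness follows because if $q\in\xi_p\cap\xi_{p'}$ with $p\neq p'$ on $L_0$, then two transversal symps would share $q$ yet meet $L_0$ in distinct points, which Fact~\ref{factE77symp} rules out unless $\xi_p=\xi_{p'}$. The main obstacle is the middle step, namely the \emph{global coherence} of the section: that the points $p_x$ built in the various $\zeta_x$ genuinely lie on one symp, i.e.\ that $\phi_p$ is a polar embedding. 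Pairwise relations among the $p_x$ are controlled inside the individual $\zeta_x$ only for $x\perp x_0$; promoting this to a single symp, and in particular handling $x$ at distance $2$ from $x_0$ in $\xi_1$, requires the finer structure of $\Delta^*$. This is exactly where Propositions~4.4 and~4.7 of \cite{Sch-Mal:23} (equivalently, a direct verification in the Gosset-graph apartment combined with the homogeneity of $\mathsf{E}_7(\K)$) are needed, and I would either invoke them or reproduce the embedding check in that remaining case.
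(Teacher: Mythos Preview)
The paper does not give a proof of this statement: it is recorded as a \emph{Fact} and simply pointed to Propositions~4.4 and~4.7 of \cite{Sch-Mal:23}. Your proposal is therefore considerably more detailed than anything the paper offers, but in the end you defer to precisely the same external reference for the one genuinely nontrivial step (global coherence of the section map $\phi_p$, i.e.\ that the locally constructed points $p_x$ assemble into a single symp). So at the level of actual content the two approaches coincide.

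Two minor comments on the sketch itself. First, your disjointness argument for members of $\cL$ is garbled as written: a common point $q$ being collinear to $x,y\in\xi_1$ does not produce ``a line meeting $\xi_2$ in two points'' (recall $\xi_1\cap\xi_2=\emptyset$). The clean argument is the one you effectively give in the next paragraph: once you know that any interior point of a transversal is \emph{far} from $\xi_2$ (your ``key local lemma''), a common interior point would be collinear to the two distinct points $\theta(x),\theta(y)\in\xi_2$, contradicting farness. So just reorder: prove the local lemma first and deduce disjointness from it. Second, your construction of $\zeta_x=\xi(x,\theta(x_0))$ and the point $p_x$ only works for $x\perp x_0$ in $\xi_1$; you correctly flag that extending to $x$ at distance~$2$ from $x_0$ is where \cite{Sch-Mal:23} is needed, which is exactly the paper's citation.
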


\begin{remark}\label{E7im}The set of symps in \cref{imE7} is called a \emph{full imaginary set of symps}. Since the type of the symps is $1$ in the $\mathsf{E_7}$ diagram, and this is precisely the polar type, a root elation pointwise fixes a symp $\xi$ and stabilises all symps nondisjoint from $\xi$ (also all points close to $\xi$ are fixed). The corresponding root group acts sharply transitively on each full imaginary set of symps containing $\xi$, except for $\xi$ itself, which is obviously fixed. The root groups thus corresponding to two opposite symps $\xi_1$, $\xi_2$ generate a collineation group acting as $\PSL_2(\K)$ on the corresponding full imaginary set of symps. Each collineation of that group pointwise fixes the set $E(\xi_1,\xi_2)$ of points close to both $\xi_1$ and $\xi_2$, hence close to each member of  the corresponding full imaginary set of symps. The set $E(\xi_1,\xi_2)$ endowed with all lines contained in it, is called the \emph{equator geometry (with poles $\xi_1$ and $\xi_2$)} in \cite{DSV} and \cite{Sch-Mal:23}. 
\end{remark}

%\framebox{Types $\mathsf{F_{4,1}}$ and $\mathsf{F_{4,4}}$}

\subsubsection{Types $\sF_{4,1}$ and $\sF_{4,4}$}

The Lie incidence geometries of types $\mathsf{F_{4,1}}$ and $\mathsf{F_{4,4}}$ are the main examples of the so-called \emph{metasymplectc spaces}. Buildings of type $\mathsf{F_4}$ are not determined by a field $\K$ alone, but they also need a quadratic alternative division algebra $\AA$ over $\K$. The planes of type $\{1,2\}$ are then projective planes over $\K$, and those of type $\{3,4\}$ are projective planes over $\AA$. We denote such a building by $\mathsf{F_4}(\K,\AA)$, and the related metasymplectic spaces by  $\mathsf{F_{4,1}}(\K,\AA)$ and $\mathsf{F_{4,4}}(\K,\AA)$. The latter two are dual to each other in the sense that the geometry deduced from one of them by declaring the symps as new points, and the planes as new lines, with natural incidence, provides the other geometry. 

Let $\Gamma=(X,\cL)$ either be $\mathsf{F_{4,1}}(\K,\AA)$ or $\mathsf{F_{4,4}}(\K,\AA)$, for a field $\K$ and $\AA$ as above. Then $\Gamma$ is a non-strong parapolar space of diameter 3.  Moreover, $\Gamma$ has the following properties.

\begin{fact}[Point-symp relations]\label{M7}
Let $p$ be a point and $\xi$ a symp of $\Gamma$ with $p\notin \xi$. Then one
of the following occurs: 
\begin{compactenum}[$(i)$]
\item $p^\perp \cap \xi$ is line $L$. In this case, $p$ and $x$ are symplectic for all $x\in \xi \cap (L^\perp\setminus L)$, and $p$ and $x$ are special for all $x\in \xi \setminus L^\perp$. We say that $p$ and $\xi$ are \underline{close};
\item $p^\perp \cap \xi$ is empty, but there is a unique point $u$ of $\xi$ symplectic to $p$. Then $x$ and $p$ are special for all $x \in \xi \cap (u^\perp\setminus\{u\})$, and $x$ and $p$ are opposite if $x\in \xi\setminus u^\perp$. We say that $p$ and $\xi$ are \underline{far}.
\end{compactenum}
\end{fact}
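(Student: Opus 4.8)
The plan is to treat $\mathsf{F}_{4,1}(\K,\AA)$ and $\mathsf{F}_{4,4}(\K,\AA)$ simultaneously, since they are dual metasymplectic spaces and the asserted pattern is local to a point and a symp; I would work in an abstract diameter-$3$ metasymplectic space $\Gamma$, using only that its symps are rank-$3$ polar spaces, that each residue $\Res_\Gamma(u)$ is again a rank-$3$ polar space, and that $\Gamma$ is a gamma space (a point is collinear to $0$, $1$, or all points of a line). The idea is to push the fine structure into these two local polar spaces, and to invoke a common apartment (modelled as in \cite{HVM-MV}) only to read off the coarse collinear/symplectic/special/opposite relation between two fixed points, which is a Weyl-codistance and hence constant across all apartments containing them.

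First I would record the structural fact, standard for parapolar spaces \cite{BuekCoh,Shult}, that $A:=p^{\perp}\cap\xi$ is a (possibly empty) singular subspace of $\xi$: two points of $A$ cannot be a non-collinear (symplectic) pair of $\xi$, for then $p$ would be a common neighbour lying on a geodesic between them, hence in the convex closure $\xi(x,y)=\xi$, against $p\notin\xi$; and collinear points of $A$ span a line inside $A$ by the gamma-space property. Since $\xi$ has rank $3$, $A$ is empty, a point, a line, or a plane, and the dichotomy of the statement is exactly $A\neq\emptyset$ against $A=\emptyset$. To analyse $A\neq\emptyset$ I would pick $u\in A$ and pass to $P:=\Res_\Gamma(u)$: the symp $\xi$ (a type-$4$ object through $u$) becomes a maximal singular subspace $\bar\xi$ of $P$, and the line $pu$ a point $\bar p\notin\bar\xi$. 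In a polar space a point off a maximal singular subspace is collinear to a geometric hyperplane of it, here a line $\bar L\subset\bar\xi$; pulling back shows $p$ is collinear to exactly a line $L\subseteq\xi$ through $u$. This at once forces $A=L$ and excludes the point and plane cases (a plane $A$ would make $\langle p,A\rangle$ a singular $3$-space, which $\Gamma$ does not contain), giving case $(i)$. The relations are then routine: if $x\in L^{\perp}\setminus L$ then $p$ and $x$ share the whole line $L$ as common neighbours and so are symplectic, while if $x\in\xi\setminus L^{\perp}$ then $x$ is collinear to a single point of $L$, which by inspection in a common apartment is their unique common neighbour, so $p$ and $x$ are special.

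For $A=\emptyset$, which is case $(ii)$, a finite verification in the apartment model of \cite{HVM-MV} exhibits, in this coarse position, a point $u\in\xi$ symplectic to $p$ together with the pattern that $x\in(u^{\perp}\cap\xi)\setminus\{u\}$ is special to $p$ and $x\in\xi\setminus u^{\perp}$ is opposite to $p$; since relation-types are Weyl-codistances, this pattern holds in $\Gamma$ once I know $u$ is the unique such point. The substantive content is therefore uniqueness of $u$, which I would prove by contradiction. If two symplectic points $u,u'$ were collinear, then applying case $(i)$ to $(u',\zeta)$ with $\zeta:=\xi(p,u)$ and $M:=u'^{\perp}\cap\zeta\ni u$ gives, from the symplecticity of $(p,u')$, that $p$ is collinear to all of $M$, in particular $p\perp u$, contradicting that $p$ and $u$ are symplectic. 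If instead $u,u'$ were opposite in $\xi$, I would derive a contradiction by producing a point $r\notin\xi$ collinear to both $u$ and $u'$, obtained from the special pairs that arise when relating a common neighbour $w\in\xi$ of $u,u'$ to the symps $\xi(p,u)$ and $\xi(p,u')$; since then $r^{\perp}\cap\xi$ would contain the non-collinear pair $\{u,u'\}$, this contradicts that $r^{\perp}\cap\xi$ is a singular subspace.

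\textbf{The main obstacle} is precisely what a single apartment cannot see, where $\xi$ contributes only six of its points: forcing $A$ to be a line rather than a point or a plane, and, above all, proving uniqueness of the symplectic point in case $(ii)$. The first is handled by the rank-$3$ residue computation above; for the second, the collinear sub-case is immediate but the opposite sub-case is delicate, as the contradiction must be manufactured by stepping outside any one apartment into the symps $\xi(p,u)$ and $\xi(p,u')$ and exploiting convexity and the singular-subspace property of perps. Once these two points are secured, the coarse relation-types are the only remaining data, and they are read off thinly where they are determined by Weyl-codistance.
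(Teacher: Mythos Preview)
The paper does not actually prove this statement: it is recorded as a \textbf{Fact}, with the surrounding text indicating only that such mutual-position results are standard (see \cite{BuekCoh,Shult}) and can be read off from an apartment model. So there is no argument in the paper against which to compare yours directly.

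Your proposal, however, contains a concrete error. You assume that the point residue $\Res_\Gamma(u)$ is a rank-$3$ polar space in which the symp $\xi$ becomes a maximal singular subspace. As the paper records immediately after Fact~\ref{M6}, the point residue of a metasymplectic space is a \emph{dual} polar space of rank~$3$; in it, the symp $\xi$ through $u$ corresponds to a quad (a convex sub-generalised-quadrangle), not to a maximal singular subspace, and your ``point off a maximal singular subspace meets it in a hyperplane'' step is a polar-space fact that simply does not apply here. The conclusion $A=L$ can be rescued, but by a different mechanism: dual polar spaces are near polygons with classical point--quad relations, so $\bar p$ is collinear to a \emph{unique} point of the quad $\bar\xi$, and this pulls back to the unique line $L$ through $u$ in $\xi$ with $p\perp L$; from there your argument that $A=L$ (via $A$ being a singular subspace) goes through.

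Your case~$(ii)$ argument is also incomplete. In the sub-case where the two putative symplectic points $u,u'$ are opposite in $\xi$, you merely promise to manufacture a point $r\notin\xi$ collinear to both, ``obtained from the special pairs'' arising from a common neighbour $w$ and the symps $\xi(p,u)$, $\xi(p,u')$. Neither the construction of $r$ nor the reason it must exist is given; this is precisely the step you yourself flag as the main obstacle, and as written it is a gap rather than an argument.
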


\begin{fact}[Symp-symp relations]\label{M6}
The intersection of two symps is either empty, or a
point, or a plane.  If two symps are opposite, then they are disjoint and every point of one symp is far from the other symp. If two disjoint symps are not opposite, then there exists a unique third symp intersecting both symps in planes. 
\end{fact}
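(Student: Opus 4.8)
The plan is to follow the same template used for the other mutual-position facts in this section, reducing everything to an inspection of the thin metasymplectic space (an apartment of $\mathsf{F_4}$) together with the point-symp dichotomy of Fact~\ref{M7}. Since $\mathsf{F_{4,1}}(\K,\AA)$ and $\mathsf{F_{4,4}}(\K,\AA)$ are dual, it suffices to treat one of them, and I would work in $\Gamma=\mathsf{F_{4,1}}(\K,\AA)$, where the symps are the type-$4$ vertices and the planes are the maximal singular subspaces of the symps. The governing principle is that any two symps $\xi,\xi'$ lie in a common apartment, and that the isomorphism type of the pair $(\xi,\xi')$---in particular the intersection $\xi\cap\xi'$ and the collinearity relations between their points---depends only on their relative position, i.e.\ on the double coset of $W(\mathsf{F_4})$ it determines. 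First I would enumerate the finitely many relative positions of two type-$4$ vertices (the minimal-length representatives of $W_J\backslash W/W_J$ with $J=\{1,2,3\}$), and for each one read off the intersection in a single apartment.

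For the first assertion this enumeration shows directly that the only intersection patterns occurring are the empty set, a single point, and a plane, and crucially that no relative position yields a line. Geometrically the exclusion of a line can be seen as follows: if $\xi\cap\xi'$ contained a line $L$, then taking $x\in\xi\setminus\xi'$ and analysing $x$ against $\xi'$ via Fact~\ref{M7} forces $x$ to be collinear to a plane of $\xi'$ meeting $L$, which by convexity of symps pushes the common part up to a full plane, so a bare line never survives. For the second assertion, the longest relative position is the unique one giving $\xi\cap\xi'=\emptyset$ with no point of $\xi$ close to $\xi'$; then Fact~\ref{M7} leaves only the \underline{far} relation for every point of $\xi$, which is exactly the opposite case, and the symmetry of the position gives the statement for both symps.

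The substantive part is the third assertion, and here I would argue geometrically rather than purely by apartment bookkeeping. Given disjoint, non-opposite $\xi,\xi'$, non-oppositeness produces a point $p\in\xi$ that is \emph{not} far from $\xi'$, hence close, so by Fact~\ref{M7} the set $L'=p^\perp\cap\xi'$ is a line. Choosing $p'\in L'$ and applying the same dichotomy to $p'$ against $\xi$ (using $p'\perp p\in\xi$, so $p'$ is close to $\xi$) yields a line $L={p'}^\perp\cap\xi$ through $p$. The planes $\pi=\langle p,L'\rangle$ (singular, since $p\perp L'$ and $p\notin\xi'$) and $\pi'=\langle p',L\rangle$ both contain the line $pp'$ but cannot be jointly singular, as the maximal singular subspaces are planes; hence choosing $a\in\pi\setminus pp'$ and $b\in\pi'\setminus pp'$ with $a\not\perp b$ gives a symplectic pair, since $a$ and $b$ are both collinear to the whole line $pp'$. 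Its symp $\eta:=\xi(a,b)$ then contains $pp'$, $a$ and $b$, hence contains $\pi=\langle a,pp'\rangle$ and $\pi'=\langle b,pp'\rangle$ by convexity. Consequently $\eta\cap\xi\supseteq L$ and $\eta\cap\xi'\supseteq L'$, and by the first assertion each of these intersections is a plane, producing the desired bridging symp.

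The hard part will be the uniqueness of $\eta$. For this I would show that the plane $\eta\cap\xi$ is forced: any bridging symp must meet $\xi$ in a plane consisting of points close to $\xi'$, and the analysis of Fact~\ref{M7}, applied now to planes rather than single points, should pin this plane down uniquely in terms of the pair $(\xi,\xi')$; the same for $\eta\cap\xi'$, after which $\eta$ is determined as the symp spanned by two opposite planes it contains. I expect this bookkeeping to confirm uniqueness, but the genuine obstacle is matching existence and uniqueness to a \emph{single} combinatorial configuration: one must verify that exactly one relative position of symps is the ``disjoint, non-opposite'' one and that it carries a unique common neighbour, so the apartment enumeration of the first paragraph has to be carried out with enough care that the geometric construction above cannot be realised in two inequivalent ways.
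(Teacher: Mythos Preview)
The paper does not supply a proof of this statement: it is recorded as a \emph{Fact}, with the surrounding text indicating that such mutual-position results are standard (with reference to \cite{BuekCoh} and \cite{Shult}) and can in any case be read off from an apartment model of $\mathsf{F_4}$. So there is no argument in the paper to compare yours against beyond the implicit ``check in an apartment''.

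That said, your template---enumerate the $W_J\backslash W/W_J$ double cosets for $J=\{1,2,3\}$ and inspect each in a single apartment---is precisely the mechanism the paper has in mind, and your geometric construction of the bridging symp via Fact~\ref{M7} is a correct and pleasant refinement. Two remarks. First, in your aside excluding a line intersection you write that Fact~\ref{M7} ``forces $x$ to be collinear to a plane of $\xi'$''; in the metasymplectic setting a close point is collinear to a \emph{line} of the symp, not a plane, so that sentence is wrong as written (but since your primary argument is the apartment enumeration, this side remark is inessential and can simply be dropped). Second, your existence argument for the bridging symp $\eta$ is sound: the step ``$\eta$ contains $pp'$, $a$, $b$, hence $\pi$ and $\pi'$'' uses that $\eta$ is a polar space of rank~$3$, so three pairwise collinear points of $\eta$ span a plane of $\eta$, which must coincide with the ambient plane. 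For uniqueness, the cleanest route is indeed the one you list first: there is a single ``disjoint, non-opposite'' double coset, and in any apartment containing $\xi$ and $\xi'$ one sees exactly one symp adjacent (via planes) to both; your alternative plan of pinning down $\eta\cap\xi$ and $\eta\cap\xi'$ geometrically would rederive this, but the apartment check is shorter.
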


At each point $x\in X$ the residue $\Res_\Gamma(x)$ is a dual polar space of rank 3.

We can be more precise about the isomorphism classes of the symps as follows. The symps of $\mathsf{F_{4,1}}(\K,\AA)$ are all isomorphic to the polar space denoted by $\mathsf{B_{3,1}}(\K,\AA)$ and arising from the quadric in $\mathsf{PG}(n,\K)=\mathsf{PG(V)}$, with $n=2r-1+\mathsf{dim_{\K}(\AA)}$ and $V=\K^{2r}\oplus\AA$, with equation 
    \[x_{-r}x_r + \cdots+x_{-2}x_2 + x_{-1}x_1 = \mathsf{n}(x_0),\]
    where $x_{-r},x_r,\ldots,x_{-2},x_2,x_{-1},x_1 \in \K$, $x_0\in \AA$ and $\mathsf{n}$ the natural norm form of $\AA$.

In contrast, the symps of $\mathsf{F_{4,4}}(\K,\AA)$ are all isomorphic to the polar spaces denoted by $\mathsf{C_{3,1}}(\AA,\K)$ and arising from a symplectic polarity if $\K=\AA$, or isomorphic to the unique non-embeddable polar space over $\AA$ if $\AA$ is octonion (see chapter 9 of \cite{Tits:74}), or else, arising from the pseudo-quadratic form  in $\mathsf{PG(5,\AA)}$ given by    \[{\overline x_{-3}}x_3 + {\overline x_{-2}}x_2 + {\overline x_{-1}}x_1 \in \K,\]
    where $x_{-3},x_3,x_{-2},x_2,x_{-1},x_1 \in \AA$ and $ x \mapsto \overline{x}$ is the standard involution of $\AA$ (this includes the case of inseparable field extensions, where the standard involution is trivial).

\begin{remark}\label{oppositionU}
For each of the Lie incidence geometries $\mathsf{E_{7,1}}(\K)$, $\mathsf{E_{7,7}}(\K)$, $\mathsf{F_{4,1}}(\K,\AA)$ and $\mathsf{F_{4,4}}(\K,\AA)$ (and in fact, for each Lie incidence geometry arising from a spherical building in such a way that ``points'' are a self-opposite type), the fact that in an apartment each point has a unique opposite implies readily that a singular subspace $U$ is opposite another singular subspace $U'$ of opposite type if, and only if, each point of $U$ is opposite some point of $U'$. This also holds for symps and paras. 
\end{remark}

\subsection{Quaternion and octonion Veronese varieties}\label{secQOVV}

We will also have to deal with Veronese varieties. Let us briefly introduce these. Let $\K$ be a field and $\AA\supseteq\K$ a quadratic alternative division algebra over $\K$. Recall that such an algebra admits a unique involution $\AA\rightarrow\AA: x\mapsto\overline{x}$ such that $x\overline{x}\in\K$ and $x+\overline{x}\in\K$. This involution is called the \emph{standard involution}. It is trivial if and only if $\AA=\K$ or $\AA$ is an inseparable field extension of $\K$ in characteristic 2. In any case, the dimension of $\AA$ over $\K$ is either infinite or a power of $2$. If the standard involution is not the identity, then $\AA$ is either a quadratic separable field extension of $\K$, a quaternion algebra, or an octonion algebra. The dimensions are then 2,4 and 8, respectively. 
Set $d=\dim_{\K}\AA$.

The \emph{Veronese variety $\cV(\K,\AA)$} is the following set of points of $\PG(2+3d,\K)$:
$$\{(x\overline{x},y\overline{y},1,y,\overline{x},x\overline{y})\mid x,y\in\AA\}\cup\{(x\overline{x},1,0,0,0,x)\mid x\in\AA\}\cup\{(1,0,0,0,0,0)\}.$$ 
 
If $\AA$ is associative, this coincides with the set
$\{(x\overline{x},y\overline{y},z\overline{z},y\overline{z},z\overline{x},x\overline{y})\mid x,y,z\in\AA\}$.

The set of points of $\cV(\K,\AA)$ can be identified with the points of the projective plane $\PG(2,\AA)$ in such a way that lines bijectively correspond to the quadrics of Witt index 1 contained in subspaces of dimension $d+1$ of $\PG(2+3d,\K)$. Such subspaces are called \emph{host spaces} of $\cV(\K,\AA)$. 

If $\AA$ is a quaternion division algebra, then $\cV(\K,\AA)$ lives in $\PG(14,\K)$. We call this Veronese variety a \emph{(separable) quaternion Veronesean}, or \emph{(separable) quaternion Veronese variety}. We add the adjective ``separable'' in order to distinguish it from the other case in which the variety spans a $14$-dimensional projective space, namely, the case where $\AA$ is an inseparable field extension of degree 4 of $\K$. In this case, the corresponding Veronese variety will be---slightly abusively---called an \emph{inseparable quaternion Veronesean} or \emph{inseparable quaternion Veronese variety}. Likewise, we will call the corresponding algebra an \emph{inseparable quaternion algebra}. A ``true'' quaternion division algebra will be called a \emph{separable quaternion algebra}. This (nonstandard) terminology will add to the clarity and brevity of statements. 

Similar considerations hold for the octonion case, but we will never need the inseparable case here.  However, for clarity, we will sometimes add the adjective ``separable'' when we deal with \emph{octonion Veroneseans}, or \emph{octonion Veronese varieties}, meaning that $\AA$ is a non-associative alternative division algebra over $\K$.

\subsection{Opposition}\label{oppsec}
For a given automorphism $\theta$ of a spherical building $\Omega$, we say that a vertex $v$ is \emph{non-domestic} (\emph{domestic}) if $\theta$ maps (does not map) $v$ to an opposite. If all vertices of type $t$ are domestic, we say that $\theta$ is $t$-domestic. We sometimes make $t$ explicit by calling it by its geometric name, like \emph{point-domestic} if we are dealing with a certain point-line geometry related to $\Omega$ (like in \cref{paras}).  

If $v$ is non-domestic, then the automorphism $\theta_v$ of $\Res_\Omega(v)$ maps by definition each chamber $C$ through $v$ to the projection of $C^\theta$ onto $v$; this has the property that two elements $w,w^{\theta_v}$ of $\Res_\Omega(v)$ are opposite in the residue if and only if $w$ and $w^\theta$ are opposite in $\Omega$, see Proposition~3.29 of \cite{Tits:74}. 

A \emph{panel} is a simplex of size $r-1$, where $r$ is the rank of the building.

\section{Kangaroos in oriflamme geometries} \label{sec:kang}
Oriflamme geometries are related to the polar spaces with the property that each submaximal singular subspace (next-to-maximal singular subspace) is contained in exactly two maximal subspaces; it follows that there are two natural classes of maximal singular spaces so that adjacent ones are in a different class. The oriflamme geometry treats these two classes as elements of different type, and ignores the submaximal subspaces, just like the corresponding building of type $\mathsf{D}_n$.   The polar spaces themselves are sometimes called \emph{hyperbolic}, or \emph{of hyperbolic type}. Disjoint maximal singular subspaces are \emph{opposite} in the building theoretical sense. Opposite maximal singular subspaces have the same type if and only if $n$ is even. Maximal subspaces are also sometimes called \emph{generators}.

An automorphism of an oriflamme geometry is called an \emph{oppomorphism} if its natural action on the Dynkin diagram agrees with the opposition. Otherwise, it is called an \emph{anti-oppomorphism}.

%A \emph{collineation} is an automorphism of a projective/polar space that is faithful on the point set, as well as on the (singular) subspaces. All collineations are (semi)linear maps.

A collineation of an oriflamme geometry, or hyperbolic polar space, is called \emph{type-preserving} if it stabilises the two natural systems of generators; hence if it induces a type-preserving automorphism of the corresponding spherical building. A collineation which is not type-preserving is called \emph{type-interchanging}.

As (anti-)oppomorphisms of polar spaces act faithfully on the point set, they are collineations of the polar space. For hyperbolic polar spaces we have:
\begin{compactenum}[$\bullet$]
	\item oppomorphisms are type-preserving if $n$ is even and type-interchanging if $n$ is odd;
	\item anti-oppomorphisms are type-preserving if $n$ is odd and type-interchanging if $n$ is even.
\end{compactenum}

We now introduce the notion of a kangaroo. 
\subsection{Kangaroos and general properties}

\begin{defn}
Let $\theta$ be a non-trivial collineation of a polar space $\Gamma=(X,\cL)$ of type $\mathsf{D}_n$, $n\geq 2$.  If $\theta$ maps no point of $X$ to another collinear point, and $\theta$ has at least one fixed point, then we call $\theta$ a \emph{kangaroo (collineation)} of $\Gamma$.
If $\theta$ has at least one pointwise fixed line, we call it \emph{lazy}. If $\theta$ has no fixed lines, we call it \emph{diligent}.
\end{defn}

Although we are mainly interested in diligent kangaroos, we prove some general properties of also lazy ones, as this helps us recognising diligent kangaroos easier. 

Our first aim is to show a characterisation of kangaroos, providing alternative definitions. Therefore, we prove the following lemmas. 

We assume throughout that $\Gamma=(X,\cL)$ is a polar space of type $\mathsf{D}_{n}$, $n\geq 2$. 

\begin{lemma}\label{capfix}
Let $\theta$ be a kangaroo of $\Gamma$ and let $U$ be any singular subspace of $\Gamma$. Then $U\cap U^\theta$ is fixed pointwise.
\end{lemma}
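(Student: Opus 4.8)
The plan is to argue pointwise: I would take an arbitrary point $x\in U\cap U^\theta$ and show directly that $x$ is fixed by $\theta$. The whole argument rests on playing the two defining features of a kangaroo against the singularity of $U^\theta$.

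First I would record that since $\theta$ is a collineation of the polar space, it carries singular subspaces to singular subspaces, so $U^\theta$ is again a singular subspace; in particular any two of its points are collinear or equal. Now fix $x\in U\cap U^\theta$. From $x\in U$ we get $x^\theta\in U^\theta$, and by hypothesis we also have $x\in U^\theta$. Thus both $x$ and $x^\theta$ lie in the singular subspace $U^\theta$, which forces $x\perp x^\theta$ (or $x=x^\theta$).

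At this point I would invoke the kangaroo hypothesis, which says precisely that $\theta$ maps no point to a \emph{distinct} collinear point: for every point $y$ one has either $y^\theta=y$ or $y\not\perp y^\theta$. Applying this to $y=x$ and comparing with the collinearity $x\perp x^\theta$ just established leaves only the possibility $x^\theta=x$. Since $x\in U\cap U^\theta$ was arbitrary, $U\cap U^\theta$ is fixed pointwise.

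I do not expect any genuine obstacle here: the statement is a short, formal consequence of the definition once one notices the correct way to place $x$ and $x^\theta$ inside the same singular subspace. The only point requiring a moment's care is the asymmetry built into the kangaroo condition (it concerns $x$ versus $x^\theta$, not $U$ versus $U^\theta$ globally), so I would make sure to use $x\in U^\theta$ rather than $x\in U$ when exhibiting the common singular subspace containing both $x$ and $x^\theta$.
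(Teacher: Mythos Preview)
Your argument is correct and is essentially identical to the paper's proof: both take an arbitrary point $x\in U\cap U^\theta$, observe that $x$ and $x^\theta$ lie together in the singular subspace $U^\theta$, and then invoke the kangaroo condition to conclude $x^\theta=x$. The paper phrases it as a short contradiction argument, but the underlying logic is the same.
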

\begin{proof}
Suppose $p\in U\cap U^\theta$ is mapped to $p^\theta\neq p$. As $p\in U$, we know that $p^\theta\in U^\theta\ni p$, but then $p\perp p^\theta$,  a contradiction. Hence $p=p^\theta$ and the lemma follows.
\end{proof}

\begin{cor}
The fixed point set of a kangaroo in a generator of $\Gamma$ is a singular subspace.
\end{cor}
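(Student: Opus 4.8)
The plan is to prove that the fixed point set $F$ of $\theta$ inside a generator $G$ is a subspace; since $G$ is a singular subspace, any two points of $F$ are automatically collinear, so $F$ will then be singular for free and the only real content is subspace closure. Thus I would reduce everything to the following claim: if two distinct points $x,y\in G$ are fixed by $\theta$, then the whole line $L=xy$ is fixed pointwise. Here $L$ is the unique line through $x$ and $y$, which exists and lies in $G$ because $\Gamma$ is a partial linear space and $G$, being a singular subspace, contains the line joining any two of its collinear points.

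To establish the claim, first I would observe that $L^\theta=L$. Indeed, $\theta$ is a collineation, so $L^\theta$ is again a line, and it passes through $x^\theta=x$ and $y^\theta=y$; as a polar space is a partial linear space, $L$ is the unique line through $x$ and $y$, forcing $L^\theta=L$. Now $L$ is itself a singular subspace, so \cref{capfix} applied to $U=L$ yields that $L\cap L^\theta$ is fixed pointwise. Since $L\cap L^\theta=L$, this says precisely that every point of $L$ is fixed, proving the claim.

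Assembling these, $F=G\cap\mathrm{Fix}(\theta)$ contains the line joining any two of its distinct points, hence is a subspace of $G$, and being contained in the singular subspace $G$ it is itself singular. The argument is short because the kangaroo hypothesis has already been packaged into \cref{capfix}; the only step requiring care is the identity $L^\theta=L$, which is the main (mild) obstacle and rests on the partial linearity of polar spaces rather than on the kangaroo property itself. The degenerate cases where $F$ is empty or a single point are trivially singular subspaces and need no separate treatment.
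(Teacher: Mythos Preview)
Your proof is correct. The paper gives no explicit proof (the corollary is simply stated), but the intended argument is even more direct than yours: apply \cref{capfix} to the generator $G$ itself to see that $G\cap G^\theta$ is pointwise fixed, and observe conversely that any fixed point of $G$ trivially lies in $G^\theta$; hence the fixed set in $G$ equals $G\cap G^\theta$, which is a singular subspace as the intersection of two singular subspaces. Your route---applying \cref{capfix} to each individual line $L=xy$ through two fixed points after first arguing $L^\theta=L$---is a mild detour: you verify the subspace axiom line by line rather than identifying the fixed set globally as an intersection. Both arguments are short and rest on the same lemma; the global identification just saves the separate step of checking $L^\theta=L$.
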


\begin{lemma}\label{par}
The parity of the dimension of the fixed point set of a kangaroo in a generator is the same as
\begin{compactenum}
	\item[$\bullet$] $n$ for type-interchanging kangaroos;
	\item[$\bullet$] $n-1$ for type-preserving kangaroos.
\end{compactenum}
\end{lemma}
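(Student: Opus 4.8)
The plan is to relate the dimension of the fixed point set inside a generator to the type (class) of that generator, and then to use the opposition behaviour of the kangaroo to pin down the parity. The key observation is that a kangaroo $\theta$ sends no point to a collinear point; in particular, for any generator $U$, the image $U^\theta$ is a generator with $U \cap U^\theta$ fixed pointwise (by \cref{capfix}), and moreover $U$ and $U^\theta$ must be \emph{disjoint outside the fixed subspace}, i.e.\ the only collinearities between $U$ and $U^\theta$ are those forced by the common fixed part. I would first argue that because no point goes to a collinear point, a generator $U$ and its image $U^\theta$ intersect precisely in the fixed point set $F_U := \mathrm{Fix}(\theta)\cap U$, and that $U$ and $U^\theta$ are ``as far apart as possible'' given this intersection: concretely, $F_U$ is a singular subspace (by the Corollary) and $U^\theta$ meets $U$ exactly in $F_U$.

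The main engine is a dimension/opposition count in the $\mathsf{D}_n$ polar space. Let $U$ be a generator, so $\dim U = n-1$ (projective dimension), and write $f = \dim F_U$. I would consider the residue, or rather the quotient polar space $U^{\perp}/U$-type construction: since $\theta$ fixes $F_U$ pointwise and sends $U$ to $U^\theta$ meeting it only in $F_U$, the induced configuration on the ``complementary'' polar space of rank $n-1-f$ (obtained by taking the residue of $F_U$, which is again a hyperbolic polar space of type $\mathsf{D}_{n-f-1}$) has $\theta$ acting with \emph{no} fixed points and sending the image-generator to one disjoint from the original. The crucial arithmetic fact for hyperbolic polar spaces is that two generators are opposite (disjoint) exactly when their types agree iff $n$ is even, as recalled in the bulleted list before the definition; the parity of the rank of this residual polar space therefore records whether $U$ and $U^\theta$ are of the same type or of opposite type.

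Concretely I would carry out the following steps, in order. First, establish $U \cap U^\theta = F_U$ and that $\theta$ acts fixed-point-freely on the residue polar space $\Res(F_U)$, which is hyperbolic of type $\mathsf{D}_{n-1-f}$ with $U/F_U$ and $U^\theta/F_U$ a pair of disjoint (hence opposite) generators. Second, read off from the two bullet points in the excerpt that in a hyperbolic polar space of rank $m := n-1-f$, disjoint (opposite) generators of \emph{the same} type occur precisely when $m$ is even, and of \emph{opposite} type precisely when $m$ is odd. Third, translate the type-relation between $U$ and $U^\theta$ (same type versus interchanged) into the type-preserving versus type-interchanging dichotomy for $\theta$, noting that the class of $U^\theta$ equals the class of $U$ iff $\theta$ is type-preserving. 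Combining: for a type-preserving kangaroo, $U$ and $U^\theta$ have the same class, forcing $m = n-1-f$ even, i.e.\ $f \equiv n-1 \pmod 2$; for a type-interchanging kangaroo, the classes differ, forcing $m$ odd, i.e.\ $f \equiv n \pmod 2$. This is exactly the claimed parity statement.

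The step I expect to be the main obstacle is the first one: rigorously justifying that $U \cap U^\theta$ equals the \emph{entire} fixed point set $F_U$ (not merely contains it) and that $\theta$ descends to a genuinely fixed-point-free oppomorphism/anti-oppomorphism of the residual hyperbolic polar space $\Res(F_U)$. The inclusion $F_U \subseteq U \cap U^\theta$ is immediate; the reverse inclusion uses \cref{capfix}. The subtler point is verifying that the residual action is again a kangaroo-type configuration with disjoint image, i.e.\ that no point of $U/F_U$ is collinear to its $\theta$-image in the quotient — this should follow from the ``no point maps to a collinear point'' hypothesis together with the fact that collinearity in the residue lifts to collinearity in $\Gamma$, but one must check that $\theta$ genuinely stabilises $\Res(F_U)$ (which it does since it fixes $F_U$ pointwise and hence fixes $F_U^\perp$ setwise) and that the oppomorphism/anti-oppomorphism dichotomy is preserved under passing to this residue. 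Once that reduction is clean, the parity conclusion is a one-line consequence of the bulleted classification of oppomorphisms versus anti-oppomorphisms recalled just above the definition.
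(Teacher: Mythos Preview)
Your argument is correct and rests on the same underlying idea as the paper's, namely that $\mathrm{Fix}(\theta)\cap U = U\cap U^\theta$ (immediate from \cref{capfix} together with the trivial observation that a fixed point of $U$ lies in $U^\theta$), combined with the standard parity fact for intersections of generators in $\mathsf{D}_n$.

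However, your detour through the residue $\Res(F_U)$ is entirely unnecessary, and the ``main obstacle'' you worry about simply does not arise. The paper's proof is two lines: the dimension of the intersection of two generators has the same parity as $n$ if and only if they are of different type; combined with $\mathrm{Fix}(\theta)\cap U = U\cap U^\theta$ from \cref{capfix}, this gives the claim immediately. There is no need to verify that $\theta$ descends to a kangaroo on the residue, nor to track oppomorphism/anti-oppomorphism behaviour there---you only need the pair $U$, $U^\theta$ and the parity of their intersection, which is a statement purely in the ambient $\mathsf{D}_n$.
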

\begin{proof}
The dimension of the intersection of two generators has the same parity as $n$ if and only if they are of different type. The assertion now follows from \cref{capfix}.
\end{proof}

\begin{lemma}\label{fixed_subspace}
The dimension of the fixed point set of a kangaroo in a generator is the same for every generator.
\end{lemma}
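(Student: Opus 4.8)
The plan is to reduce the statement to the case of two \emph{adjacent} generators---meaning two generators $U,V$ whose intersection $W=U\cap V$ is a submaximal subspace (of projective dimension $n-2$)---and to handle that case by a dimension count combined with the parity constraint of \cref{par}. Write $F$ for the set of points fixed by $\theta$, and for a generator $U$ set $f(U):=\dim(F\cap U)$; by the Corollary to \cref{capfix} the set $F\cap U$ is a singular subspace, so $f(U)$ is well defined. Since any two generators of $\Gamma$ are joined by a chain of generators in which consecutive members meet in a submaximal subspace (connectivity of the generator graph of a polar space; in type $\mathsf{D}_n$ this graph is bipartite with respect to the two types, consecutive members being of opposite type, but it is nevertheless connected), it suffices to prove $f(U)=f(V)$ for adjacent $U,V$.

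So fix adjacent generators $U,V$ with $W=U\cap V$ submaximal. Since $W\subseteq U$ and $W\subseteq V$ we have $F\cap W\subseteq F\cap U$ and $F\cap W\subseteq F\cap V$, giving the lower bounds $f(U),f(V)\geq\dim(F\cap W)$. For the matching upper bound I would work inside $U$: the subspaces $F\cap U$ and $W$ both lie in the singular subspace $U$, so their span is again singular and hence of dimension at most $n-1$, while $(F\cap U)\cap W=F\cap W$ because $W\subseteq U$. The Grassmann identity then yields
\[
f(U)\;\leq\;(n-1)-\dim W+\dim(F\cap W)\;=\;\dim(F\cap W)+1,
\]
and the identical computation inside $V$ gives $f(V)\leq\dim(F\cap W)+1$. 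Hence both $f(U)$ and $f(V)$ lie in the two-element set $\{\dim(F\cap W),\,\dim(F\cap W)+1\}$.

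Now I would invoke \cref{par}: the parity of $f(U)$ is a single value (the parity of $n$ or of $n-1$, according as $\theta$ is type-interchanging or type-preserving), depending only on $\theta$ and $n$ and \emph{not} on the chosen generator, so $f(U)$ and $f(V)$ have the same parity. As the two integers $\dim(F\cap W)$ and $\dim(F\cap W)+1$ have opposite parities, exactly one of them has the prescribed parity, and therefore $f(U)=f(V)$ equals that one. Propagating this equality along a connecting chain of adjacent generators then gives $f(U)=f(V)$ for arbitrary generators, as required. The only genuinely external ingredient is the connectivity of the generator graph under submaximal adjacency; the core of the argument---squeezing $f$ into an interval of length one and then pinning it down by the fixed parity from \cref{par}---is elementary, and this squeeze-plus-parity mechanism is the decisive point I expect to carry the proof.
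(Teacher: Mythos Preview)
Your proof is correct and follows essentially the same strategy as the paper: reduce to adjacent generators by connectivity, squeeze the two fixed-dimension values into an interval of length one, and then invoke the parity constraint of \cref{par} to force equality. The paper's version starts from a generator with maximal fixed dimension $k$ and uses $S\cap K\subseteq S\cap S^\theta$ to get the lower bound $k-1$ for an adjacent $S$, whereas you argue symmetrically via the common hyperplane $W$ and Grassmann; these are cosmetic differences around the same squeeze-plus-parity mechanism.
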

\begin{proof}
Suppose $U$ is a generator of $\Gamma$ with a fixed $k$-space $K$, where $k$ is chosen maximal, $0\leq k<n$. Each generator $S$ adjacent to $U$ also contains a fixed $k$-space, as $S\cap S^\theta$ contains the fixed $(k-1)$-space $S\cap K$, and due to parity and \cref{par}, it has to be a $k$-space. A connectivity argument completes the proof.
\end{proof}

Since kangaroos have, by definition, at least one fixed point, it follows that kangaroos do not map generators to disjoint ones, hence they are domestic. The following characterization shows which domestic collineations are kangaroos. First a definition. Recall that line-domestic collineations of generalised quadrangles are characterised by the shape of their fixed point set: either the perp of a point is pointwise fixed, or an ovoid is pointwise fixed, or a large full subquadrangle is pointwise fixed. The latter is a full subquadrangle that has a non-empty intersection with each line.  There is some similarity with the following definition.

\begin{defn}
A subspace $\Gamma'=(X',\cL')$ of $\Gamma$ is called $i$-large, if every $i$-dimensional singular subspace intersects $X'$ nontrivially, and there exists an $(i-1)$-dimensional singular subspace disjoint from $X'$. 
\end{defn}

We are interested in a special class of $i$-large subspaces, namely, the nondegenerate ones of minimal rank.  It will turn out that those can also be characterised by the counterpart for polar subspaces of the so-called \emph{ideal subpolygons}. 

\begin{defn}
A subspace $\Gamma'=(X',\cL')$ of $\Gamma$ is called \emph{ideal} if the residue of each singular subspace $S$ of $\Gamma'$ in $\Gamma'$, where $$\dim S=\max\{\dim U\mid U\subseteq X'\mbox{ is a singular subspace of }\Gamma'\}-1,$$ is an ovoid in the corresponding residue in $\Gamma$.
\end{defn}

We now have the following equivalences. 

\begin{prop}\label{kangcharact}
Let $\Gamma$ have rank at least $3$. The following are equivalent.
\begin{compactenum}[$(i)$]
\item $\theta$ is a kangaroo or the identity\label{kang}
\item There exists $k$, $0\leq k\leq n-1$ such that for every generator $M$, the subspace $M\cap M^\theta$ is $k$-dimensional and globally fixed by $\theta$.\label{glob}
\item There exists $k$, $0\leq k\leq n-1$ such that for every generator $M$, the subspace $M\cap M^\theta$ is $k$-dimensional and pointwise fixed by $\theta$. \label{point}
\item $\theta$ has at least one fixed point, maps no line to a different but coplanar line, and no line to a disjoint but non-opposite and non-collinear one.  \label{longroot}
\item There exists $k$, $0\leq k\leq n-1$ such that the fixed point set of $\theta$ is an $(n-k-1)$-large nondegenerate polar subspace of rank $k+1$. \label{large}
\item The fixed point set of $\theta$ is a nonempty ideal subspace. \label{ideal}
\end{compactenum}
\end{prop}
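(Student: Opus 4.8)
The plan is to establish \cref{kangcharact} as a cycle of implications, organised so that each step exploits the structural lemmas already proved (Lemmas~\ref{capfix}--\ref{fixed_subspace}) about how $\theta$ acts on generators. I would prove the equivalences in the order $(\ref{kang})\Rightarrow(\ref{point})\Rightarrow(\ref{glob})\Rightarrow(\ref{kang})$ first, establishing the core equivalence of the three ``generator intersection'' characterisations, and then attach $(\ref{longroot})$, $(\ref{large})$ and $(\ref{ideal})$ to this core. The implication $(\ref{kang})\Rightarrow(\ref{point})$ is essentially already in hand: \cref{capfix} gives that $M\cap M^\theta$ is pointwise fixed for every generator $M$, while \cref{fixed_subspace} (via \cref{par}) supplies a uniform dimension $k$, so the only work is to observe that the identity gives $k=n-1$ and to rule out the degenerate edge cases. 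The step $(\ref{point})\Rightarrow(\ref{glob})$ is trivial since pointwise fixed implies globally fixed.

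The interesting direction of the core is $(\ref{glob})\Rightarrow(\ref{kang})$. Here I assume that $M\cap M^\theta$ is a globally fixed $k$-space for every generator $M$, and I must deduce that either $\theta=\mathrm{id}$ or $\theta$ maps no point to a collinear point and has a fixed point. A fixed point is immediate: pick any generator $M$; then $M\cap M^\theta\neq\emptyset$ since $k\geq 0$, and a globally fixed subspace of a generator (a projective space) always contains a fixed point by the usual projective-automorphism argument. The heart is showing that no point $p$ goes to a collinear distinct point $p^\theta$; suppose otherwise, embed $p$ and $p^\theta$ in a common generator $M$, and argue that $p,p^\theta\in M\cap M^\theta$, contradicting that this intersection is globally fixed (since $\theta$ swaps or moves $p$). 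I would have to be careful that $p,p^\theta$ collinear does indeed force a common generator containing both, which holds in a polar space since two collinear points span a line lying in some maximal singular subspace. The main obstacle I anticipate is handling the interface between ``globally fixed'' and ``pointwise fixed'': a globally fixed subspace need not be pointwise fixed, so closing the cycle requires showing that the kangaroo hypothesis forces global fixing to upgrade to pointwise fixing, which is exactly where \cref{capfix} is indispensable and why the cyclic ordering must route through $(\ref{point})$.

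For the remaining equivalences I would argue $(\ref{point})\Leftrightarrow(\ref{large})$ and $(\ref{large})\Leftrightarrow(\ref{ideal})$ and $(\ref{kang})\Leftrightarrow(\ref{longroot})$ somewhat independently, using the fixed point set $F$ directly. To identify $F$ as an $(n-k-1)$-large nondegenerate polar subspace of rank $k+1$, I would use that each generator meets $F$ in a fixed $k$-space (rank $k+1$ as a projective space, so $F$ has polar rank $k+1$), that $F$ is nondegenerate because a kangaroo fixing a point and its entire perp pointwise would fail the no-collinear-image condition, and the $(n-k-1)$-largeness comes from \cref{fixed_subspace}: every generator (an $(n-1)$-space) meets $F$ in dimension exactly $k$, so every $(n-k-1)$-space meets $F$ while some $(n-k-2)$-space misses it. The equivalence with idealness is then a translation: the condition that the residue of a top-dimensional-minus-one singular subspace of $F$ is an ovoid in the ambient residue is precisely the largeness condition rephrased residually, using that $\Res_\Gamma(S)$ is again a polar space and that largeness in the residue corresponds to the ovoid property. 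The step $(\ref{kang})\Leftrightarrow(\ref{longroot})$ I would treat as a reformulation of the no-collinear-image property: mapping a line to a coplanar different line, or to a disjoint non-opposite non-collinear line, are exactly the ways a collineation can send a point to a collinear image or fail the generator-intersection condition, so the two line-based prohibitions in $(\ref{longroot})$ repackage the kangaroo axiom. The genuine subtlety throughout is the rank hypothesis $n\geq 3$, which I would invoke to guarantee connectivity of the generator-adjacency graph (needed in \cref{fixed_subspace}) and to ensure the residual arguments for idealness make sense.
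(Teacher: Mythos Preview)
Your core cycle breaks at $(\ref{glob})\Rightarrow(\ref{kang})$. You propose: if $p\perp p^\theta\neq p$, take a generator $M$ containing both and conclude $p,p^\theta\in M\cap M^\theta$. But only $p^\theta$ lies in $M\cap M^\theta$: you have $p^\theta\in M$ by choice and $p^\theta\in M^\theta$ because $p\in M$, whereas $p\in M^\theta$ would need $p^{\theta^{-1}}\in M$, which nothing guarantees. So you only get $p^\theta$ in the globally fixed intersection, and global fixing then yields $p^{\theta^2}\in M\cap M^\theta$, which is no contradiction at all. Your fixed-point claim is also unjustified: a collineation of $\PG(k,\K)$ need not fix any point when $\K$ is not algebraically closed (already for $k=1$ a $2\times 2$ matrix with irreducible characteristic polynomial gives a fixed-point-free collineation), so ``the usual projective-automorphism argument'' does not exist in this generality.

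The paper avoids this trap by not attempting $(\ref{glob})\Rightarrow(\ref{kang})$ directly. Instead it proves $(\ref{glob})\Rightarrow(\ref{point})$ by a genuinely different mechanism: given $U=M\cap M^\theta$ of dimension $k$, take an arbitrary hyperplane $V\subset U$, choose a complement $W$ of $U$ in $M$, and pass to a second generator $N\neq M$ through $\langle W,V\rangle$; analysing $N\cap N^\theta$ (also $k$-dimensional and fixed) and using collinearity constraints forces $V=V^\theta$, so every hyperplane of $U$ is stabilised and $U$ is pointwise fixed. This is where the real work is, and it does not reduce to embedding $p,p^\theta$ in a single generator. Separately, your treatment of $(\ref{longroot})\Rightarrow(\ref{kang})$ as a ``reformulation'' understates the difficulty: in the paper this is the direction that genuinely uses rank~$\geq 3$ to build a three-point configuration around a fixed point and rule out collinear images, and it is the longest individual step of the proof.
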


\begin{proof}
$(\ref{glob})\Rightarrow(\ref{point})$. If $k=0$, the intersections are points themselves and are thus fixed pointwise.

Now let $k>0$, suppose $M$ is a generator and $U=M\cap M^\theta$ is a $k$-dimensional subspace, fixed by $\theta$. Let $V$ be a $(k-1)$-dimensional subspace of $U$.  Suppose for a contradiction that $V\neq V^\theta$. Thus $V\cap V^\theta$ is $(k-2)$-dimensional.

Pick a complement $W$ of $U$ in $M$; then $W$ is $(n-k-2)$-dimensional. Take a generator $N\neq M$ through $\< W,V\>$. We know that $N\cap N^\theta$ is $k$-dimensional and $V\cap V^\theta\subset N\cap N^\theta$. So there exists a line $L$ in $N\cap N^\theta$ disjoint from $V\cap V^\theta$. But $L$ is collinear with $V$ and $V^\theta$, so $L$ is collinear with $U$. Since $L$ is also collinear with $W\subseteq M$ and $W^\theta\subseteq M^\theta$, we infer that $L$ is collinear with $M$ and $M^\theta$.  So it is contained in $M$ and $M^\theta$, and hence in $M\cap M^\theta=U$.  But then $N\cap N^\theta=\< V\cap V^\theta,L\>\subseteq U$, a contradiction. Hence every $(k-1)$-dimensional subspace of $U$ is stabilised, implying that $U$ is pointwise fixed. 

$(\ref{kang})\Rightarrow(\ref{point})$. This follows from Lemma~\ref{fixed_subspace}.

$(\ref{point})\Rightarrow(\ref{kang})$. Obvious.

$(\ref{point})\Rightarrow(\ref{glob})$. Trivial. 

$(\ref{kang})\Rightarrow(\ref{longroot})$. If a line is mapped to a different but coplanar line, then clearly some points are mapped onto collinear points, a contradiction. Similar argument for a line $L$ mapped onto a disjoint but non-opposite line $L^\theta$.   Indeed, a point $x$ on $L$ collinear to all points of $L^\theta$ (which exists since $L$ and $L^\theta$ are not opposite) is mapped onto a collinear point.

$(\ref{longroot})\Rightarrow(\ref{kang})$. Let $x_0$ be a fixed point of $\theta$. We first show that no point collinear to $x_0$ is mapped onto a collinear point. Suppose for a contradiction that some point $x\perp x_0$ is mapped onto a collinear point $x'\neq x$. If $x'$ were not on the line $xx_0$, then the line $xx_0$ would be mapped onto a coplanar but distinct line. Hence $x'\in xx_0$.  Since the rank of $\Gamma$ is at least 3, there exists a point $y\perp xx_0$. If $y$ is mapped onto a collinear (or fixed) point $y'$, then again $y'\in yx_0$. But then the line $xy$ is distinct and coplanar with $x'y'$, a contradiction. Hence $y'=y^\theta$ is not collinear to $y$. But then the line $xy$ is mapped onto the disjoint but non-opposite and non-collinear line $x'y'$ ($x'\perp xy$), the final contradiction. Hence either $x=x^\theta$ or $x$ is mapped onto an opposite point. If there exists some point $x_1\perp x_0$, $x_1\ne x_0$, fixed under $\theta$, then the whole line $x_0x_1$ is fixed pointwise, every point is collinear to a fixed point and the previous argument yields ($\ref{kang}$). 

Hence we may assume that $\theta$ maps every point collinear to $x_0$ to an opposite. It follows that each generator $M$ through $x_0$ is mapped onto a generator $M^\theta$ with $|M\cap M^\theta|=1$. By parity, the intersection $S$ of any generator $N$ with its image is never empty. By the previous paragraph, and since each point of $S$ is mapped onto a collinear point, $S$ is a singleton $\{s\}$. Now suppose for a contradiction that $s\neq s^\theta$. Let $p\in N\cap x_0^\perp$ be arbitrary. Then $s$ is collinear to $ps\cup p^\theta s^\theta$, so $ps$ is not opposite $(ps)^\theta$. Since $p$ is opposite $p^\theta$ by our assumption, $ps$ is not collinear to $(ps)^\theta$ either, a contradiction. Hence $s=s^\theta$ and the fixed point set is an ovoid. In any case, each point is collinear to a fixed point and so by the first paragraph, $\theta$ is a kangaroo.  

$(\ref{point})\Rightarrow(\ref{large})$. Let $\Gamma'$ be the fixed point set. Every $(n-k-1)$-dimensional singular subspace lies in some generator $M$ and hence has at least one point $x$ in common with $M\cap M^\theta$, which has dimension $k$ by assumption.  But $x$ is fixed by assumption, hence $\Gamma'$ is at most $n-k-1$-large. It is exactly $n-k-1$-large since a dimension argument yields in each generator an $(n-k-2)$-dimensional subspace disjoint from $M\cap M^\theta$, and hence disjoint from $\Gamma'$. Since $\Gamma'$ obviously has rank $k+1$, it remains to show that $\Gamma'$ is nondegenerate. Suppose, for a contradiction, that $x\in\Gamma'$ is collinear to all points of $\Gamma'$. An arbitrary generator not through $x$ contains a $k$-dimensional subspace $S$ pointwise fixed under $\theta$. Then $\<x,S\>$ has dimension $k+1$ and is fixed, a contradiction. 

$(\ref{large})\Rightarrow(\ref{ideal})$. Let $S$ be a submaximal singular subspace of the fixed point set $\Gamma'$, hence of dimension $k-1$. Let $M$ be a maximal singular subspace of $\Gamma$ containing $S$. If $M$ did not contain a singular subspace $U$ of $\Gamma'$ of dimension $k$, then it would contain a subspace $W$ of $M$ complementary to $S$, hence of dimension $n-k-1$ and disjoint from $\Gamma'$. This contradicts $\Gamma'$ being $(n-k-1)$-large.

$(\ref{ideal})\Rightarrow(\ref{large})$. Let $\Gamma'$ again be the fixed point set. We first show that it is a nondegenerate subspace. Indeed, suppose for a contradiction that some point $x\in\Gamma'$ is collinear to all points of $\Gamma'$. Let $k\geq 1$ be the maximal dimension of singular subspaces in $\Gamma'$, and consider a subspace $U\subseteq\Gamma'$ of dimension $k$. Then $x\in U$. Let $W\subseteq U$ be a subspace of dimension $k-1$. By maximality of $k$, the singular subspace $W$ is contained in a unique singular subspace of dimension $k$ of $\Gamma'$ (namely, $U$), contradicting the fact that these should form an ovoid in the residue. Hence $\Gamma'$ is nondegenerate of rank $k+1$. We claim that every generator of $\Gamma$ contains a $k$-dimensional singular subspace of $\Gamma'$. Indeed, assume that some generator $M$ of $\Gamma$ intersects $\Gamma'$ in a subspace $U$ of dimension $i<k$ (possibly $i=-1$). By the nondegeneracy of $\Gamma'$, we find a singular subspace $S$ of dimension $k-1$ in $\Gamma'$ containing $U$. Then there is a unique generator $M'$ containing $S$ and intersecting $M$ in a subspace $S'$ of dimension $n-k-i$ containing $U$. By assumption $M'$ contains a unique subspace $K$ of $\Gamma'$ of dimension $k$ containing $S$. A dimension argument yields $(M\cap K)\setminus S\neq\emptyset$, a contradiction. The claim follows. Since every singular subspace of dimension $n-k-1$ is contained in a generator, we see that $\Gamma'$ is $(n-k-1)$-large.

$(\ref{ideal})\Rightarrow(\ref{kang})$. Suppose for a contradiction that some point $x$ is mapped onto a collinear point $x'\neq x$.  Let $M$ be a generator of $\Gamma$ containing $x$ and $x'$. We know from the previous paragraph that the fixed point set $\Gamma'$ is an $(n-k-1)$-large nondegenerate polar subspace of rank $k+1$, and hence that $M$ contains a $k$-dimensional subspace $S$ of $\Gamma'$. Let $R\subseteq S$ be a subspace of dimension $k-1$. Considering the residue of $R$, it suffices to show the assertion for $\Gamma'$ an ovoid. 

Now $x^\perp\cap\Gamma'=x'^\perp\cap\Gamma'=(xx')^\perp\cap\Gamma'$. But $x^\perp\cap\Gamma'$ defines an ovoid in the residue of $x$, whereas $(xx')^\perp\cap\Gamma'$ defines an ovoid in a singular hyperplane of that residue. Hence the assertion follows if we show that no ovoid of any polar space is contained in the perp of some point $p$. But this is obviously true as there exists at least one maximal singular subspace $M$ containing a given submaximal singular subspace $S$ in a maximal singular subspace through $p$, with $S$ and $M$ both disjoint from both the ovoid and $\{p\}$.  
\end{proof}

Note that $(\ref{longroot})$ expresses kangaroo behaviour of $\theta$ in the long root geometry of $\Gamma$, that is, the line Grassmannian geometry. In this geometry, there are five distinct mutual positions of two points: They can be equal, collinear, symplectic, special and opposite. Now $(\ref{longroot})$ says that points can be mapped only to itself, symplectic and opposite ones, skipping the possibilities of collinear and special. 

Whenever $\theta$ is lazy, the fixed point set is a polar space and, by the main result of \cite{CGP}, automatically arises as the intersection of the corresponding hyperbolic quadric with a subspace of the ambient projective space. We now investigate this behaviour for diligent kangaroos. 

\subsection{Diligent kangaroos}

Suppose $\theta$ is a diligent kangaroo of a polar space $\Gamma=(X,\cL)$ of type $\mathsf{D}_n$. We assume that $\Gamma$ is defined over a field $\K$ (hence it corresponds to a hyperbolic quadric in $\PG(2n-1,\K)$ and hence it can be viewed as a full subgeometry of this projective space). Note that $\theta$ extends to a collineation of $\PG(2n-1,\K)$ and as such is either \emph{linear} or \emph{semi-linear}, according to whether or not the companion field automorphism of the underlying vector space automorphism is trivial.

\begin{lemma}\label{busyovoid}
The fixed point set of $\theta$ is an ovoid.
\end{lemma}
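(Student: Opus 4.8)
The plan is to extract the precise shape of the fixed point set from the characterisation in Proposition~\ref{kangcharact} and then use diligence to eliminate the one free parameter appearing there. Write $F$ for the fixed point set of $\theta$. Since $\theta$ is a (nontrivial) kangaroo, the equivalence $(\ref{kang})\Leftrightarrow(\ref{large})$ supplies an integer $k$ with $0\le k\le n-1$ for which $F$ is an $(n-k-1)$-large nondegenerate polar subspace of $\Gamma$ of rank $k+1$. It then remains to show $k=0$ and that this description forces $F$ to be an ovoid.

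First I would argue that diligence forces $k=0$. A polar subspace of rank $k+1$ has maximal singular subspaces of projective dimension $k$; if $k\ge 1$ then $F$ contains a singular line, and this line, lying in $F$, is pointwise fixed by $\theta$. This contradicts the hypothesis that $\theta$ is diligent, i.e.\ has no pointwise fixed line. Hence $k=0$, so $F$ is a nondegenerate polar subspace of rank $1$---equivalently, a set of pairwise non-collinear points---which is moreover $(n-1)$-large.

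Next I would convert $(n-1)$-largeness into the defining property of an ovoid. The generators of $\Gamma$ are exactly its $(n-1)$-dimensional singular subspaces, so by the definition of $(n-1)$-large, every generator meets $F$ in at least one point. Conversely, two distinct points of $F$ are never collinear (this is the rank-$1$ condition), so no generator can contain two points of $F$; hence every generator meets $F$ in at most one point. Combining the two bounds, every generator of $\Gamma$ meets $F$ in exactly one point, which is precisely the definition of an ovoid.

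The only genuinely delicate point is the non-collinearity of distinct fixed points, and it is worth recording the direct reason, as this is exactly where diligence re-enters: if $p,q\in F$ were collinear, the line $pq$ would be stabilised by $\theta$ since $p^\theta q^\theta=pq$; any point $r$ on $pq$ would then map to a point of $pq$, and if $r^\theta\ne r$ we would have $r\perp r^\theta$, contradicting the kangaroo property. Thus $pq$ would be pointwise fixed, again contradicting diligence. Everything else is bookkeeping with dimensions and largeness, so I expect no serious obstacle beyond correctly matching the $(n-1)$-large, rank-$1$ description against the ovoid definition.
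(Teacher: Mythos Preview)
Your argument is correct. The paper's own proof is a one-liner invoking Proposition~\ref{kangcharact}$(\ref{ideal})$ rather than $(\ref{large})$: once the fixed point set is known to be an ideal subspace and diligence forces its maximal singular subspaces to be points, the defining ovoid condition is literally the case of the empty submaximal subspace. Your route via $(\ref{large})$ and the explicit translation of ``rank~$1$, $(n-1)$-large'' into ``meets every generator exactly once'' is equally valid and only marginally longer; both arguments are really the same idea filtered through different clauses of Proposition~\ref{kangcharact}. One small wording point: in this paper \emph{diligent} is defined as ``no fixed lines'' rather than ``no pointwise fixed line'', but for a kangaroo any stabilised line is automatically pointwise fixed (as you in fact argue in your final paragraph), so the two notions coincide here and your reasoning is unaffected.
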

\begin{proof}
This follows from Proposition~\ref{kangcharact}$(\ref{ideal})$. %Because of lemma \ref{fixed_subspace}, for every generator $U$ of $\Delta$, we must have $U\cap U^\theta=\{p_U\}$.
\end{proof}

\begin{lemma}\label{disjoint_msd}
Let $\Gamma=(X,\cL)$ be a full subgeometry of $\PG(2n-1,\K)$ as above, and let $S$ be a subspace of $\PG(2n-1,\K)$ with $\dim S < n$. Then there exists a generator $M$ of $\Gamma$ disjoint from $S$.
\end{lemma}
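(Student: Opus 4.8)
The plan is to induct on the rank $n$, using the residue of the hyperbolic quadric $Q$ (whose point–line geometry is $\Gamma$) at a carefully chosen point of $Q$. Recall that generators of $\Gamma$ are exactly the $(n-1)$-dimensional projective subspaces lying on $Q$, and for a subspace $T$ of $\PG(2n-1,\K)$ write $T^\perp$ for its polar image with respect to $Q$; a maximal singular subspace $M$ satisfies $M^\perp=M$. The base case $n=1$ is immediate, as is the subcase $S=\emptyset$ for every $n$. For the inductive step, assume the result for rank $n-1$ and let $\dim S\leq n-1$. The idea is to pick a point $p\in Q$ with $p\notin S$ and, when $\dim S=n-1$, also $p\notin S^\perp$, and then to work in the residual quadric $\bar Q$ at $p$, which is a hyperbolic quadric of rank $n-1$ in $p^\perp/p\cong\PG(2n-3,\K)$.

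Once such a $p$ is fixed, set $S'=S\cap p^\perp$ and let $\pi\colon p^\perp\setminus\{p\}\to p^\perp/p$ be the projection from $p$. Since $p\notin S'$, the image $\bar S=\pi(S')$ has dimension $\dim S'$. If $\dim S\leq n-2$ then $\dim\bar S\leq n-2$ automatically, while if $\dim S=n-1$ the condition $p\notin S^\perp$ (i.e. $S\not\subseteq p^\perp$) forces $\dim S'=n-2$, hence $\dim\bar S=n-2$; in either case $\dim\bar S< n-1=\operatorname{rank}\bar Q$. The induction hypothesis then yields a generator $\bar M$ of $\bar Q$ disjoint from $\bar S$. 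Lifting, $M:=\langle p,\pi^{-1}(\bar M)\rangle$ is an $(n-1)$-dimensional singular subspace, i.e. a generator of $\Gamma$ through $p$. To see $M\cap S=\emptyset$, note $M\subseteq p^\perp$, so $M\cap S=M\cap S'$; any $x\in M\cap S'$ satisfies $x\neq p$ (as $p\notin S'$), and then $\pi(x)\in\bar M\cap\bar S=\emptyset$, a contradiction. This completes the step modulo the existence of $p$.

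The \textbf{main obstacle} is producing the point $p$ in the critical case $\dim S=n-1$, where one needs $p\in Q\setminus(S\cup S^\perp)$ with $S^\perp$ also of dimension $n-1$. (When $\dim S\leq n-2$ it suffices that $p\notin S$, which holds because $Q$ spans $\PG(2n-1,\K)$ and $S$ is a proper subspace.) Here I would argue that $Q$ is not contained in the union of the two proper subspaces $S$ and $S^\perp$. The quadric $Q$ contains a pair of opposite (disjoint, spanning) generators $M_1,M_2$; if $Q\subseteq S\cup S^\perp$ then each $M_i$, being a projective space and hence not the union of the two proper subspaces $M_i\cap S$ and $M_i\cap S^\perp$, lies wholly inside $S$ or inside $S^\perp$. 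As $\dim M_i=\dim S=\dim S^\perp=n-1$, this forces each $M_i$ to equal $S$ or $S^\perp$, so $\{M_1,M_2\}=\{S,S^\perp\}$; in particular $S$ is a generator, whence $S^\perp=S$, contradicting that $S$ and $S^\perp=M_2$ are opposite (hence distinct). Therefore the desired $p$ exists, and the induction goes through. This argument is uniform in the field, including $|\K|=2$, since it only uses that a vector space over any field is not covered by two proper subspaces together with the self-perpendicularity of maximal singular subspaces.
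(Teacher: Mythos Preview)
Your proof is correct and follows the same overall strategy as the paper: induct on the rank, choose a suitable quadric point $p$ so that $S\cap p^\perp$ drops to dimension at most $n-2$, apply the induction hypothesis in the residual $\mathsf{D}_{n-1}$ quadric at $p$, and lift the resulting generator. The difference lies only in how the projection centre $p$ is produced. The paper first disposes trivially of the case $S\cap X=\emptyset$ (then every generator works), and otherwise picks $q\in S\cap X$ and takes $p$ opposite $q$, which immediately forces $S\not\subseteq p^\perp$. You instead handle the critical case $\dim S=n-1$ by the covering argument $Q\not\subseteq S\cup S^\perp$, using that a generator (indeed any projective space) is not the union of two proper subspaces. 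Your route is marginally slicker in that it avoids the case split on whether $S$ meets the quadric and starts the induction at $n=1$ rather than $n=2$; the paper's route is perhaps more direct once one has a point of $S$ on the quadric in hand. Both arguments are field-independent.
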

\begin{proof} We prove this by induction on $n$. 

%If $S\cap X$ is a singular subspace of $\Delta$, there exists a generator $M'$ through $S\cap X$, for which every opposite generator $M$ is disjoint from $S$.
%\noindent\begin{minipage}[t]{0.55\textwidth}
For $n=2$, we may assume that $S$ is a line. If $S$ belongs to $\Gamma$, then we find a disjoint line of $\Gamma$; if $S$ does not belong to $\Gamma$, then it intersects $\Gamma$ in at most two points and we find a line of $\Gamma$ containing neither. 
%\hfill
%\begin{tikzpicture}[baseline={([yshift={-\ht\strutbox}]current bounding box.north)},x=2.5cm,y=1.5cm]
%\draw[blue] (-0.25,-0.25)--(1.25,1.25)node[below]{$S$};
%\fill (0,0)circle(2pt) (1,1)circle(2pt) (-0.5,0.5)circle(2pt)node[below left]{$x$} (1.5,0.5)circle(2pt);
%\draw (-0.75,0.75)node[above right]{$K$}--(0.25,-0.25);
%\draw (0.75,1.25)--(1.75,0.25)node[below left]{$L$};
%\draw (-0.5,0.5)--(1.5,0.5)node[above right]{$M$};
%\fill[blue] (0.5,0.5)circle(2pt)node[below]{$p$};
%\end{tikzpicture}

%\vspace{5pt}

Now suppose $n\geq 3$. We may obviously assume that $S\cap X$ is nonempty. For an arbitrary point $q\in S\cap X$, we select a point $p\in X\setminus S$ opposite $q$.  Then $S\cap \< p^\perp\>$ is a hyperplane of $S$ and has hence dimension at most $n-2$. Let $H$ be a hyperplane of $\<p^\perp\>$ not containing $p$. Projecting $p^\perp$ and $S\cap\<p^\perp\>$ onto $H$ we obtain a polar space $\Gamma'$ type $\mathsf{D}_{n-1}$ in $H$ and a subspace $S'$ of $H$ with $\dim S'<n-1$. Applying induction we find a generator $M'$ of $\Gamma'$ disjoint from $S'$. It follows that $M=\<p,M'\>$ is a generator of $\Gamma$ disjoint from $S$.
\end{proof}

A \emph{skeleton} of a projective space of dimension $d$ is a set of $d+2$ points no $d+1$ of which are contained in a common hyperplane. The next lemma is proved in full generality, although we only need it for hyperbolic polar spaces. 

\begin{lemma}\label{skel}
Let $O$ be an ovoid of a polar space $\Gamma$ with point set $X$ of rank at least $2$, which is a full subgeometry of $\PG(d,\K)$. If $|\K|>2$, then $O$ contains a skeleton of $\<O\>$. 
\end{lemma}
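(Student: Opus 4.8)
The plan is to translate the ovoid hypothesis into projective-geometric language and then build a frame of $N:=\langle O\rangle$, where $d':=\dim N$, one point at a time. First I would record the two features of $O$ that drive everything. Since distinct points of $O$ are non-collinear in $\Gamma$, no three points of $O$ are collinear in $\PG(d,\K)$: a line carrying three points of $O$ would meet the quadric in three points, hence be a line of $\Gamma$, so that its points would be pairwise collinear, contradicting the ovoid property. Thus $O$ is a cap. Dually, writing $f$ for the associated bilinear form, we have $f(p,q)\neq 0$ for distinct $p,q\in O$, so the tangent hyperplane $T_p=p^\perp\cap N$ meets $O$ only in $p$. Finally, $O$ meets every generator in exactly one point, so $O$ is large.

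The construction of the frame is greedy. As $O$ spans $N$, I can choose $q_0,\dots,q_{d'}\in O$ forming a basis of $N$, by repeatedly adjoining a point of $O$ outside the current span (at each stage that span has dimension $<d'$, so $O$ is not contained in it and such a point exists). It then remains to adjoin one further point $r\in O$ in general position with this basis, that is, with all coordinates nonzero; equivalently, $r$ must avoid the $d'+1$ coordinate hyperplanes $H_j=\langle q_i:i\neq j\rangle$. Once such an $r$ is found, $\{q_0,\dots,q_{d'},r\}$ is a skeleton of $N$: any $d'+1$ of these $d'+2$ points span $N$, since omitting $r$ leaves the basis, while omitting $q_j$ leaves the $(d'+1)$-set $\{q_i:i\neq j\}\cup\{r\}$, which spans $N$ precisely because $r\notin H_j$.

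The crux, and the main obstacle, is producing this last point, i.e.\ showing $O\not\subseteq H_0\cup\cdots\cup H_{d'}$ for a suitable choice of basis. This is exactly where the hypothesis $|\K|>2$ and the full ovoid structure (rather than merely the cap property) are indispensable: a spanning cap may fail to contain a skeleton, as a conic together with one external point in $\PG(3,\K)$ is a spanning cap with no skeleton, and this configuration is excluded precisely because it is not an ovoid. I would attack the covering statement by induction on $d'$, peeling off a point $p\in O$ through its tangent hyperplane $T_p$, which meets $O$ only in $p$, so that every other point of $O$ lies off $T_p$. The large-field hypothesis then provides room -- each projective line carries at least four points -- to avoid the finitely many coordinate hyperplanes, while \cref{disjoint_msd} supplies generators disjoint from the lower-dimensional intersections $\bigcap_{j\in S}H_j$, whose unique ovoid points furnish candidates in general position. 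Carrying this induction through, and in particular verifying that the basis can be chosen so that no point of $O$ is forced onto every coordinate hyperplane, is the delicate part; it is precisely here that the assumption $|\K|>2$ cannot be dropped.
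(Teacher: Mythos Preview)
Your proposal correctly isolates the essential constraints on $O$ (it is a cap, and each tangent hyperplane meets $O$ only in the point of tangency), and your greedy reduction to finding a final point $r\in O$ avoiding all coordinate hyperplanes $H_0,\ldots,H_{d'}$ is sound. However, this last step is the entire content of the lemma, and you do not actually carry it out. The induction you sketch---``peeling off a point $p$ through its tangent hyperplane''---has no clear inductive object: passing to $T_p$ or to the quotient by $p$ destroys the ovoid (since $O\cap T_p=\{p\}$), so there is nothing for the inductive hypothesis to apply to. Likewise, \cref{disjoint_msd} (which in any case is stated only for hyperbolic polar spaces, whereas the present lemma is for arbitrary embedded polar spaces) produces a generator disjoint from a \emph{single} low-dimensional subspace, hence one ovoid point off that subspace; it does not yield a point off all $d'+1$ coordinate hyperplanes simultaneously. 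Your own counterexample (a conic together with an external point) shows precisely why a cap argument cannot succeed here, but you have not explained what additional leverage the ovoid property supplies. As written, this is a plan rather than a proof.

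The paper proceeds quite differently. Rather than constructing a frame directly, it assumes for contradiction that $O$ contains no skeleton of $\langle O\rangle$ and invokes a structural consequence: such a set must lie in $U\cup W$ for two complementary subspaces $U,W$ of $\langle O\rangle$. After projecting from a maximal singular subspace contained in $U\cap W$ and lying in $X$, one may assume $O=(U\cup W)\cap X$. A case analysis then follows---according to whether the resulting polar space has rank $1$ or at least $2$, and whether it is hyperbolic---and in each case one manufactures two collinear points of $O$ (or reaches a counting contradiction in small finite cases), violating the ovoid property. The argument genuinely uses the polar structure at every stage; your approach, by contrast, defers all the difficulty to the existence of $r$, and it is not clear that a direct construction of $r$ can avoid a reduction of this kind.
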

\begin{proof}
Suppose for a contradiction that $O$ does not contain a skeleton. Then the smallest closed substructure it is contained in is a degenerate projective space and is hence contained in two complementary subspaces $U$ and $W$. If $U\cup W$ contains points of $\Gamma$ outside $O$, then after projecting from a nonempty subspace $S$ entirely contained in $\Gamma$ maximal with respect to the property of being contained in $U\cap W$ and not containing a member of $O$, we may assume that $O=(U\cup W)\cap X$. 

There are two possibilities after this reduction: either the projection performed in the previous paragraph yields a polar space of rank 1, or  is again a polar space of rank at least 2. First suppose the latter.

Select $u\in U\cap O$ and $w\in W\cap O$. First suppose that the line $\<u,v\>$ of $\PG(d,\K)$ contains a least one more point $x\in X$.  Let $M$ be the intersection of an arbitrary maximal singular subspace of $\Gamma$ through $u$ and another arbitrary maximal singular subspace of $\Gamma$ through $w$. The maximal singular subspace $\<M,x\>$ generated by $M$ and $x$ contains a point of $O$, and, without loss of generality, we may assume it is a point $u'\in U$. The line $\<u,u'\>$ intersects $\<M,w\>$ in a point $u''\in X\cap U$ and hence belongs to $O$. Clearly $u''\perp w$ contradicts $O$ being an ovoid. 

So we may assume that $\Gamma$ is a quadric. Note that we may also assume that $\K$ is infinite, because in the finite case $|\K|=q$, one readily sees that $O=(U\cup W)\cap X$ can only happen if, for some $k,\ell,m$ with $k>\ell,m$, one has $q^k+1=(q^\ell+1)+(q^m+1)$, which is obviously impossible for $q>2$, except if $k=1$, $\ell=m=0$ and $q=3$. In the latter case $\Gamma$ is hyperbolic of rank $2$ and $X$ is a quadric in $\PG(3,3)$. However, every ovoid of $X$ lies in a plane, as is readily checked, and the four points of the ovoid form a skeleton of the plane.  

Assume now first that $\Gamma$ is not of hyperbolic type, that is, with the above notation, the singular subspace $M$ is contained in at least $3$ (and hence infinitely many) maximal singular subspaces.  Pick an arbitrary maximal singular subspace $A$ through $M$ not containing $u$ or $w$. As before we may assume that there exists $u'\in U\cap A$. The plane $\<u,u',w\>$ intersects $X$ in a conic $C$. Since $|C|$ is infinite, there exist three members $c_1,c_2,c_3\in C$ such that the intersections $d_i=\<M,c_i\>\cap O$ belong either to $U$ for all $i=1,2,3$, or to $W$, for all $i=1,2,3$.   Then all subspaces $\<M,c\>$, with $c\in C$, intersect $\<d_1,d_2,d_3\>$ and hence either $\<M,u\>$ intersects $W$ nontrivially, or $\<M,w\>$ intersects $U$ nontrivially, both contradictions. 

Finally we assume that $X$ is a hyperbolic quadric. In this case, we consider a hyperplane $H$ of $M$ (with notation as above). As before, we may assume that there exists three maximal singular subspaces through $H$, belonging to the same natural system (hence mutually intersecting precisely in $H$) intersecting $U$ in points (of $O$), say $u_1,u_2,u_3$. Any other maximal singular subspace through $H$ then intersects  the plane $\<u_1,u_2,u_3\>$ in a point, which necessarily belongs to $O$, contradicting the existence of $w\in W$ in one of these maximal singular subspaces. 

Now suppose that the reduction in the first paragraph yields a polar space of rank 1. We initially assume that the polar space is not hyperbolic. Then it contains at least three points. If $X$ is contained in a line, the assertion is trivial. So we may suppose that $W$ has dimension at least 1. We have $X\subseteq U\cup W$. Select $u\in U$ and let $H_u$ be the tangent hyperplane at $u$. Then $H_u$ does not contain any point of $X\cap W$, so it intersects it in a hyperplane. Pick $w\in W\setminus H_u$. then the line $\<u,w\>$ is not a tangent line and hence intersects $X$ in at least two points, implying $w\in X$. This of course contradicts $X$ being of rank 1 (remember we may assume $|\K|>2$). 

Now assume the polar space is hyperbolic. Then instead of projecting from a submaximal singular subspace $M$, we project from a hyperplane of $M$ and obtain a hyperbolic quadric $X$ in $\PG(3,\K)$ with an ovoid $O$ contained in either the union of two disjoint lines or the union of a point and a plane. In the former case, the ovoid only contains four points, which implies $|\K|=3$, and we noted above that in this case $O$ spans a plane. Hence all points of $O$ are contained in a plane $\pi$, except for one point $x$. Let $L$ be a lines of $\Gamma$ through $x$, and set $y=\pi\cap L$. Then $y\notin O$ and there is a unique line of $\Gamma$ through $y$ distinct from $L$.  Since $y\notin O$, the line $M$ contains a point $z$ of $O$ not contained in $\pi$. Obviously   $z\neq x$ and we obtain a contradiction. 

The assertion is proved. 
\end{proof}
We now return to our diligent kangaroo $\theta$. 
\begin{lemma}\label{linear_ovoid}
If $\theta$ is linear, then the fixed point set $O$ spans an $n$-dimensional subspace $S$ in $\PG(2n-1,\K)$, and $O=S\cap X$.
\end{lemma}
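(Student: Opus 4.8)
The plan is to combine the ovoid structure of the fixed point set with the linearity hypothesis and the two dimension lemmas proved just above. By Lemma~\ref{busyovoid} the fixed point set $O$ is an ovoid, so it is a cap (no two of its points are collinear in $\Gamma$) and it meets every generator of $\Gamma$ in exactly one point. Writing $S=\langle O\rangle$ for the subspace of $\PG(2n-1,\K)$ spanned by $O$, I must establish two things: that $\theta$ fixes $S$ pointwise (whence $O=S\cap X$), and that $\dim S=n$.

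For the first point, the key input is Lemma~\ref{skel}: since $|\K|>2$, the ovoid $O$ contains a skeleton of $S$, i.e.\ $\dim S+2$ points of $S$ in general position. As $\theta$ fixes every point of $O$ and $S=\langle O\rangle$, the subspace $S$ is $\theta$-invariant, and the restriction $\theta|_S$ is a linear projectivity of $S$ fixing this skeleton pointwise. By the fundamental theorem of projective geometry a linear collineation fixing a frame pointwise is the identity, so $\theta|_S=\mathrm{id}_S$. Consequently every point of $S\cap X$ is a fixed point lying on the quadric, hence belongs to $O$; since trivially $O\subseteq S\cap X$, we conclude $O=S\cap X$. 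I would emphasise that linearity is essential exactly here: a semilinear collineation can fix a frame without being the identity (think of a field automorphism fixing the standard frame), which is precisely why the semilinear case must be treated separately.

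It then remains to pin down $\dim S=n$ by two opposite bounds. For $\dim S\ge n$: were $\dim S<n$, Lemma~\ref{disjoint_msd} would produce a generator $M$ disjoint from $S$, hence disjoint from $O\subseteq S$, contradicting that the ovoid $O$ meets every generator. For $\dim S\le n$: passing to the underlying vector space, the quadratic form $Q$ defining $X$ has Witt index $n$ in dimension $2n$, so any vector subspace $\widetilde S$ of dimension $m$ satisfies, on intersecting with a maximal totally singular subspace $W$ (with $\dim W=n$), the inequality $\dim(\widetilde S\cap W)\ge m-n$; as $\widetilde S\cap W$ is totally singular, $Q|_{\widetilde S}$ has Witt index at least $m-n$. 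If $\dim S\ge n+1$, i.e.\ $m=\dim\widetilde S\ge n+2$, this Witt index is at least $2$, yielding a totally singular $2$-space, i.e.\ a line of $X$ contained in $S$; this contradicts $O=S\cap X$ being a cap. Hence $\dim S=n$.

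The two dimension bounds are routine once the earlier lemmas and standard Witt theory are in hand. The genuine content, and the step I expect to require the most care, is the passage from ``$O$ is fixed'' to ``$S$ is pointwise fixed'': this rests on Lemma~\ref{skel} (and hence on the hypothesis $|\K|>2$) together with the linearity of $\theta$, and the conclusion genuinely fails in the semilinear regime, which is why this lemma is stated only for linear $\theta$.
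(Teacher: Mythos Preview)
Your proof is correct and follows essentially the same route as the paper's: both use Lemma~\ref{skel} plus linearity to deduce that $S=\langle O\rangle$ is pointwise fixed (hence $O=S\cap X$), then Lemma~\ref{disjoint_msd} for $\dim S\geq n$, and a dimension count forcing a line of $X$ inside $S$ for $\dim S\leq n$. The only cosmetic differences are that the paper phrases the upper bound via generators intersecting $S$ in at least a line (rather than abstract Witt index), and it adds a one-line remark covering $|\K|=2$, which you omit but which is irrelevant for the paper's standing hypothesis.
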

\begin{proof} First we claim that the subspace $\<O\>$ of $\PG(2n-1,\K)$ generated by $O$ is pointwise fixed by (the extension of) $\theta$. Indeed, this follows from the fact that $O$ contains a skeleton of $\<O\>$ by \cref{skel} if $|\K|>2$. If $|\K|=2$, then this is trivial (every collineation fixing a basis fixes everything else). The claim follows. Then
\cref{disjoint_msd} implies that $S$ is at least $n$-dimensional. Now suppose $\dim S > n$, then $S\cap \Delta$ contains lines, as generators of $\Delta$ intersect $S$ in at least lines, contradicting the fact that $O$ spanning $S$ implies that $S$ is pointwise fixed under the unique extension of $\theta$ to $\PG(2n-1,\K)$.
\end{proof}

The situation in the semilinear case is completely different, in fact, opposite. In the linear case, the span of the ovoid has minimal dimension, in the semi-linear case it has maximal dimension. Also, only involutions qualify!

\begin{lemma}\label{semilinear_ovoid}
If $\theta$ is semilinear, then it is an involution and an anti-oppomorphism, and the fixed point ovoid $O$ spans $\PG(2n-1,\K)$.
\end{lemma}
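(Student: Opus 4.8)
The plan is to prove the three assertions in turn. The anti-oppomorphism statement is the quickest, and I would get it for free from the parity machinery. Since $\theta$ is a diligent kangaroo its fixed point set is the ovoid $O$ (\cref{busyovoid}), and an ovoid meets every generator $M$ in exactly one point, so the fixed point set of $\theta$ inside any generator is $0$-dimensional. By \cref{par} this common dimension has the parity of $n$ for a type-interchanging collineation and of $n-1$ for a type-preserving one; as $0$ is even this forces $\theta$ to be type-interchanging when $n$ is even and type-preserving when $n$ is odd. This is precisely the pattern characterising anti-oppomorphisms in the two displayed bullet points preceding the notion of a kangaroo, so $\theta$ is an anti-oppomorphism.

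For the remaining two claims I would first fix coordinates. Because $\sigma\neq\id$ we have $|\K|>2$, so by \cref{skel} the ovoid $O$ contains a skeleton of $S:=\<O\>$; as $\theta$ fixes $O$ pointwise it fixes this frame, and hence acts on $S$ as the standard semilinear map $v\mapsto v^{\sigma}$ in frame-adapted coordinates. Its fixed locus in $S$ is the subgeometry $\Sigma=\PG(\dim S,\K_0)$ over the fixed field $\K_0=\mathrm{Fix}(\sigma)$, with $O=\Sigma\cap X$. Since the skeleton lies on $X$, the Gram coefficients $a_{ij}$ of the form defining $X$ have zero diagonal and satisfy $\sigma(a_{ij})=\mu a_{ij}$ for a fixed $\mu$; thus all coefficient ratios lie in $\K_0$, the section $X\cap S$ is the base extension to $\K$ of a form $Q'$ defined over $\K_0$, and $O=Q'(\K_0)$, the pairwise non-collinearity of $O$ being equivalent to $Q'$ having Witt index $1$ over $\K_0$. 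I would also record the clean observation that, since every generator $M$ lies in $X$, one has $M\cap O=M\cap\Sigma$, so that the ovoid property becomes the purely incidence-geometric statement that $\Sigma$ meets every generator of $X$.

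The span is the heart of the matter, and the step I expect to be the main obstacle. From \cref{disjoint_msd} applied to $S\supseteq O$ one obtains $\dim S\geq n$ exactly as in the linear case (\cref{linear_ovoid}). The reverse bound $\dim S=2n-1$ is what requires work, and the plan is a semilinear strengthening of \cref{disjoint_msd}: assuming $\dim S<2n-1$, I would construct a maximal totally singular $\K$-subspace of $X$ whose trace on $S$ is a totally singular subspace of $Q'\otimes\K$ containing no $\K_0$-rational point, extending it off $S$ by repeatedly projecting from points opposite well-chosen fixed points, in the manner of the induction proving \cref{disjoint_msd}. Such a generator would miss $\Sigma$, hence $O$, contradicting that $O$ is an ovoid. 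The delicate point is controlling the $\K_0$-rational locus $Q'(\K_0)$: one must guarantee that the totally singular $\K$-subspaces produced genuinely avoid it, and this is exactly where the gap between the large Witt index of $Q'\otimes\K$ over $\K$ and the Witt index $1$ of $Q'$ over $\K_0$ must be exploited.

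Finally, with $S=\PG(2n-1,\K)$ established, $\theta$ acts as $v\mapsto v^{\sigma}$ on the whole space and $X=Q'\otimes\K$ descends to $\K_0$. The kangaroo property then reads: for a singular vector $v$ one has $B'(v,v^{\sigma})=0$ if and only if $v$ is $\K_0$-rational, where $B'$ is the polar form of $Q'$. I would show this is incompatible with $\sigma$ having order $\geq 3$: exploiting the large $\K$-Witt index of $Q'\otimes\K$ one produces a singular vector $v$ that is not $\K_0$-rational yet satisfies $B'(v,v^{\sigma})=0$, so that $\theta$ maps the corresponding singular point to a \emph{distinct collinear} one, which a kangaroo cannot do. Hence $\sigma^{2}=\id$, so $\theta$ is an involution and $[\K:\K_0]=2$; together with the spanning established above, this yields all three assertions.
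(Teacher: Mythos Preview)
Your argument for the anti-oppomorphism via \cref{par} is correct and in fact crisper than the paper's, which extracts that conclusion from the base case $n=2$.

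The difficulty is that the remaining two claims are only outlined, and the key steps you yourself flag as ``delicate'' are not carried out. For the span, the ``semilinear strengthening of \cref{disjoint_msd}'' is the entire content of the lemma, and you have not produced a generator avoiding $\Sigma$. Concretely, when $\dim S=m$ with $n\le m<2n-1$, you would need a totally isotropic $(m-n)$-dimensional $\K$-subspace $T$ of $X\cap S$ with $T\cap\Sigma=\emptyset$, and then to extend $T$ to a generator $M$ with $M\cap S=T$. Neither step is automatic: for the first you must avoid an arithmetic set ($\K_0$-rational points) while staying inside a quadric, and for the second you need to control the Witt index and radical of $Q|_S$, which may be degenerate. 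Your involution argument is then conditional on the span and is likewise only asserted (``one produces a singular vector\ldots''), not proved; it is not evident how to manufacture, for $\sigma$ of order $\ge 3$, an isotropic non-fixed $v$ with $B'(v,v^{\sigma})=0$.

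The paper sidesteps both gaps by a short induction on $n$ that proves span and involution simultaneously. For a non-fixed point $x$, the set $X'=x^{\perp}\cap(x^{\theta})^{\perp}$ is a hyperbolic polar space of type $\mathsf{D}_{n-1}$ in a $(2n-3)$-space; one checks $O'=O\cap X'$ is an ovoid of $X'$, defines the induced semilinear kangaroo $\theta'$ on $X'$ (by projecting the $\theta$-image back through $x$), and applies the inductive hypothesis. Then $O'$ spans $\langle X'\rangle$, so $\{x,x^{\theta}\}={O'}^{\perp}$ is $\theta$-stable, giving the involution; and every line of $\Gamma$ through $x$ meets $X'$ in a point of $\langle O'\rangle\subseteq\langle O\rangle$, which forces $\langle O\rangle$ to be all of $\PG(2n-1,\K)$. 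This avoids entirely the descent-to-$\K_0$ analysis and any need to construct generators avoiding a Baer-type subgeometry. I would recommend abandoning the coordinate approach for the span and involution and adopting this inductive residue argument instead.
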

\begin{proof}
We prove this statement by induction on $n$. First let $n = 2$. Let $x\in X$ be a point that is not fixed under $\theta$. Then $x^\perp\cap(x^\theta)^\perp$ is fixed pointwise by $\theta$ as every line $L$ of $\Gamma$ through $x$ carries a fixed point $p_L$ and $L= \< x,p_L\>$ is mapped onto $\< x^\sigma,p_L\>$. This already implies that $
\theta$ is an anti-oppomorphism. Hence $(x^\perp \cap (x^\theta)^\perp)^\perp$, which equals $\{x,x^\theta\}$, is preserved under the action of $\theta$, yielding $(x^\theta)^\theta = x$. We conclude that $\theta$ is an involution. Now assume that all fixed points are contained in some plane $\pi$, in which $\theta$ then induces a Baer involution. Clearly, $\pi$ intersects $X$ in a conic $C$ and since $O$ is an ovoid of $\Gamma$, we have $C = O$. But no conic in a Pappian projective plane is pointwise fixed under a Baer involution. This settles the case $n = 2$.

Now let $n > 2$. We claim that $\theta$ is involutive and that, for each point $x$ with $x\neq x^\sigma$, and for each line $L$ through $x$, there exists a point $x'\in L$, $x'\neq x$, contained in $\<\cO\>$.

Indeed, let $x$ be a point not fixed by $\theta$. Set $X'=x^\perp\cap(x^\sigma)^\perp$, and let $\Gamma'=(X',\cL')$ be the corresponding hyperbolic polar space. Then $X'$ spans a subspace of dimension $2n-3$. %a hyperbolic quadric in $\PG(2n-3,\K)$. 
We show first that $O'=O\cap X'$ is an ovoid of $\Gamma'$. Let $U'$ be a generator of $\Gamma'$. Then $U =\< x,U'\>$ is a generator of $\Gamma$ and so, by Proposition~\ref{kangcharact}$(\ref{point})$, $U\cap U^\sigma$ is a fixed point $p$, which is collinear to both $x$ (as $x,p \in U$) and $x^\theta$ (as $x^\theta,p \in U^\theta$). Hence $p\in U'$ (as $U'= U \cap (x^\theta)^\perp$). This shows that $O'$ is an ovoid of $\Gamma'$.

Now define the following collineation $\theta'$ of $\Gamma'$. Let $z\in X'$. Then $z^{\theta'}$ is by definition the unique point on $\<x^\theta ,z^\theta \>$ collinear to $x$. Since $\theta'$ is the composition of a semilinear map and a (linear) projection, it is a semilinear collineation of $\Gamma'$.  
Now $O'$ is exactly the set of fixed points of $\theta'$, since each fixed point $q$ is collinear to $q^\theta$, which is a contradiction to $\theta$ being a kangaroo if $q\neq q^\theta$. By induction, $O'$ generates $\<X'\>$. It follows that ${O'}^\perp = \{x,x^\theta\}$, and so $\{x,x^\theta\}^\theta = \{x,x^\theta\}$, implying $\theta$ is an involution (varying $x$). Also, every line $L\in\cL$ through $x$ has a unique point $x'$ in common with $X'$, and that point lies in $\<O'\>\subseteq \<O\>$. Our claim is proved.

Now suppose for a contradiction that $O$ is contained in some hyperplane $H$ of $\PG(2n-1,\K)$.
Obviously $H$ contains a point $x\notin O$ with $x^\perp\not\subseteq H$. Hence some line $L\in\cL$ through $x$ intersects $H$ in precisely $x$. But our claim above implies that $L$ contains a point $x'\in \<O\>\subseteq \cH$, with $x' \neq x$. Hence $L =\<x,x'\>\subseteq H$, a contradiction.

The fact that $\theta$ is an anti-oppomorphism follows from the fact that it maps each generator $M$ to a generator intersecting $M$ is precisely a point, hence adjacent to an opposite generator.
\end{proof}

\begin{remark}
Since the fixed point set of a lazy kangaroo contains full lines, a lazy kangaroo is always linear. Similarly as above one shows that, if the fixed point set has rank $k+1$ as a polar space, it arises as the intersection of $X\subseteq\PG(2n-1,\K)$ with a subspace of dimension $n+k$. This also follows directly from Lemma 3.1.2 of \cite{Lam-Mal:24}. 
\end{remark}

\begin{example}\label{residue2}
Let $\theta$ be a linear lazy kangaroo of $\mathsf{D_{5,1}}(\K)$ fixing points, lines and planes, but no $3$-spaces. The fix structure is a polar space of rank 3 embedded in a projective space of dimension $5+2=7$. Hence its equation is of the form $$X_{-3}X_3+X_{-2}X_2+X_{-1}X_1=X_0^2+aX_0X_0'+bX_0'^2,$$ with $a,b\in\K$. Since the polynomial $x^2+ax+b$ does not have solutions in $\K$, it defines a quadratic extension $\LL$ of $\K$ and we see that the fix structure is a polar space isomorphic to $\mathsf{B_{3,1}}(\K,\LL)$. 
\end{example}

\section{Kangaroos in $\mathsf{E_{6,1}}(\K)$}\label{sec:kang2}

In this section we let $\theta$ be a type preserving automorphism of $\mathsf{E_{6}}(\K)$ mapping no point of $\mathsf{E_{6,1}}(\K)$ to a collinear one, but having at least one fixed point and mapping at least one point to a point at distance 2. We call such an automorphism a \emph{kangaroo (collineation) of $\mathsf{E_{6,1}}(\K)$}. In order to describe the action of such a kangaroo, it is convenient to consider the standard representation of $\mathsf{E_{6,1}}(\K)$ as a full subgeometry in $\PG(26,\K)$. It is also the so-called \emph{universal one}, see \cite{Ron-Smi:85}, and as such every collineation of it extends to $\PG(26,\K)$. We denote it by $\cE_{6}(\K)$. Our aim is to prove the following classification.

\begin{theo}\label{kangaroo} Let $\theta$ be a collineation of $\mathsf{E_{6,1}}(\K)$ and consider it as a collineation of $\PG(26,\K)$ stabilizing $\cE_{6}(\K)$.
Then $\theta$ is a kangaroo if, and only if, its fixed point structure $F$ is a---separable or inseparable---quaternion Veronese variety in a $14$-dimensional subspace $U$ of $\PG(26,\K)$, with $F=U\cap\cE_6(\K)$, or a (separable) octonion Veronese variety (and then $\theta$ is a Galois involution) in a Baer subspace $\Sigma\cong\PG(26,\FF)$ of $\PG(26,\K)$ over a field $\FF$ with $\K/\FF$ a quadratic Galois extension and $F=\Sigma\cap\cE_6(\K)$. 
\end{theo}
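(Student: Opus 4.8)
The strategy is to read off the local behaviour of $\theta$ on the symps of $\mathsf{E_{6,1}}(\K)$, where the $\mathsf{D}_5$ results of \cref{sec:kang} apply directly, and then to reassemble the local pictures into a single Veronese variety. Since $\cE_6(\K)$ is the universal representation, $\theta$ extends to a collineation of $\PG(26,\K)$ stabilising $\cE_6(\K)$, hence is linear or semilinear, with nonempty fixed set $F$; this dichotomy is what will separate the quaternion from the octonion case. First I would show that $\theta$ fixes no line of $\cE_6(\K)$ pointwise: fixing $p\in F$ and passing to the residue $\Res_\Delta(p)\cong\mathsf{D_{5,5}}(\K)$, coplanar lines through $p$ correspond to collinear residue points, so the induced map sends no residue point to a collinear one, and a pointwise fixed line would then force (via \cref{factE6} and the strongness of $\mathsf{E_{6,1}}(\K)$) some point to be mapped to a collinear one. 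With fixed lines excluded, two collinear fixed points would generate such a line, so any two distinct fixed points are symplectic (\cref{factE6}) and span a stabilised symp $\xi\cong\mathsf{D_{5,1}}(\K)$ (one first checks $|F|\ge 2$, so that such symps exist). On each such $\xi$ the restriction $\theta|_\xi$ is a diligent kangaroo, so \cref{busyovoid}, \cref{linear_ovoid} and \cref{semilinear_ovoid} apply: $F\cap\xi$ is an ovoid cut out as a Witt-index-$1$ quadric, spanning a $\PG(5)$ with $4$-dimensional anisotropic (quaternion) norm form if $\theta$ is linear, and---if $\theta$ is semilinear, in which case $\theta$ is an involutory anti-oppomorphism---spanning the whole $\PG(9)$ with $8$-dimensional anisotropic (octonion) norm form.

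Next I would globalise. The stabilised symps through a fixed point $p$, together with their ovoid sections $F\cap\xi$, are precisely the candidate host-space ``conics'' of a Veronese variety through $p$, and the residue $\Res_\Delta(p)$ controls how these fit together and, crucially, certifies that one and the same algebra $\AA$ underlies every symp. The core of the proof is to verify that $F$ is a Mazzocca--Melone set: it spans a subspace $U$ of $\PG(26,\K)$ of the correct dimension, any two of its points lie on a unique Witt-index-$1$ quadric contained in $F$ (the section cut by the symp they span, using that $\mathsf{E_{6,1}}(\K)$ has diameter $2$), two such quadrics meet in at most a point, and at every point the union of tangent lines is a subspace meeting $F$ only there. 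Once these axioms are in place, the standard characterisation of Veronese varieties of Mazzocca--Melone type identifies $F$ with $\cV(\K,\AA)$.

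It remains to pin down $\AA$ and the ambient subspace in each case. In the linear case the sections have $4$-dimensional norm form, so $\dim_\K\AA=4$ and $F=\cV(\K,\HH)$ spans $U=\PG(14)$; since a linear map fixing the spanning set $F$ fixes $U$ pointwise, we get $F=U\cap\cE_6(\K)$, and $\HH$ is a separable or inseparable quaternion algebra according to whether its norm form is separable. In the semilinear case the companion automorphism $\sigma$ is involutory, so its fixed field $\FF$ satisfies $[\K:\FF]=2$ with $\K/\FF$ Galois (hence separable); $F$ lies in the Baer subspace $\Sigma\cong\PG(26,\FF)$, equals $\Sigma\cap\cE_6(\K)$, and is the (separable) octonion Veronesean, $\theta$ being a Galois involution. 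This also explains the absence of the two-dimensional (Hermitian) and split cases: those fixed structures are not caps, so the associated collineation would map some point to a collinear one and hence fail to be a kangaroo.

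For the converse one starts from a Veronese variety of the stated type and builds $\theta$: in the quaternion case the linear collineation fixing $U=\langle F\rangle=\PG(14)$ pointwise (realised through the embedding $H_3(\HH)\subseteq H_3(\OO)$), and in the octonion case the Galois involution of $\PG(26,\K)$ with fixed subspace $\Sigma$. Using \cref{factE6} and the fact that symps meet $U$ (respectively $\Sigma$) in Witt-index-$1$ quadrics, one checks directly that $\theta$ has fixed points, moves some point to distance $2$, and never maps a point to a collinear one. I expect the globalisation to be the main obstacle---in particular fixing the exact dimension $\dim\langle F\rangle=14$ in the quaternion case and establishing the well-definedness and correct dimension of the tangent spaces required by the Mazzocca--Melone characterisation, together with the uniformity of the algebra $\AA$ across all symps; ruling out pointwise fixed lines is a necessary but more elementary preliminary.
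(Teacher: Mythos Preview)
Your overall strategy matches the paper's: local $\mathsf{D}_5$ kangaroo analysis on fixed symps, followed by recognition of the Veronesean via the Mazzocca--Melone characterisation (the paper invokes Main Result~4.3 of De Schepper--Van Maldeghem, which delivers $\dim\langle F\rangle=14$ and the uniformity of $\AA$ automatically---so those are not the obstacles you flag). The forward direction is right in outline, but your argument against pointwise fixed lines is incomplete: a fixed line $L$ through $p$ merely gives a fixed point in $\Res_\Delta(p)$, which is harmless in itself. The paper instead first proves that $\theta$ is also a \emph{dual} kangaroo (no symp is mapped to an adjacent one); then every symp through $L$ is fixed, $\Res_\Delta(L)$ is fixed elementwise, and connectivity forces $\theta=\mathrm{id}$. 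Your appeal to ``Fact~\ref{factE6} and strongness'' does not obviously replace this chain.

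The genuine gap is in the converse. ``One checks directly that $\theta$ never maps a point to a collinear one'' hides the crux: you must show that \emph{every} point of $\mathsf{E_{6,1}}(\K)$ lies in some host symp of the Veronesean $Y$. The paper proves this density result by first establishing that the host symps themselves satisfy the dual of the ovoid axiom (so through any fixed point and any line containing it there passes a unique host symp), and then arguing that an arbitrary point $x$ is collinear to a $4'$-space of some host symp, hence to a member of $Y$, whence $x$ lies in a host symp through that neighbour. Once density holds, $x$ and $x^\theta$ live in a common stabilised symp on which $\theta$ restricts to a $\mathsf{D}_5$ kangaroo, and $x\not\perp x^\theta$ follows. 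Without this step you have no control over points not already collinear to $Y$, and the embedding $H_3(\HH)\subseteq H_3(\OO)$ alone does not supply it.
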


We prove some lemmas which ultimately will culminate in a proof of the theorem. %After that, we determine the group pointwise fixing any quaternion veronesean in a Cartan variety.
In the sequel, $\Delta=(X,\cL)$ is the parapolar space $\mathsf{E_{6,1}}(\K)$. Recall that the symp containing the noncollinear points $x,y$ is denoted by $\xi(x,y)$.
\subsection{General properties of kangaroo collineations}

\begin{lemma}\label{dualkangaroo}
The kangaroo collineation $\theta$ is also a dual kangaroo collineation. 
\end{lemma}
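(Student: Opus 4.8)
The plan is to unravel the meaning of ``dual kangaroo'' and verify the defining conditions in the dual geometry. Recall from \cref{factE6}$(ii)$ that $\mathsf{E_{6,6}}(\K)$, whose points are the symps of $\Delta$ and whose lines are the pencils of symps through a fixed $4$-space, is isomorphic to $\mathsf{E_{6,1}}(\K)$, and that two distinct symps are \emph{collinear} in it precisely when they meet in a $4$-space, while otherwise they meet in a point and are at distance $2$. Since $\theta$ is type-preserving on the building it is automatically type-preserving on $\mathsf{E_{6,6}}(\K)$, so being a dual kangaroo amounts to checking: (a) $\theta$ maps no symp to a symp meeting it in a $4$-space; (b) $\theta$ fixes some symp; (c) $\theta$ maps some symp to one meeting it in a single point. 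Condition (c) is essentially free: a nontrivial type-preserving $\theta$ must move some symp $\zeta$ (if all symps were fixed, then every point, being an intersection of two symps by \cref{factE6}$(ii)$, would be fixed and $\theta=\id$), and once (a) is known such a $\zeta$ meets $\zeta^\theta$ in a point. Thus the work concentrates on (a) and (b).

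For (a) I would argue by contradiction, assuming $\xi\cap\xi^\theta=U$ is a $4$-space. Inside the polar symp $\xi^\theta\cong\mathsf{D_{5,1}}(\K)$ the subspaces $U$ and $U^\theta$ are maximal singular subspaces of the \emph{same} natural family (as $\theta$ preserves type), so they meet in even codimension, and the ``no two distinct collinear points'' property forces $U\cap U^\theta$ (and, symmetrically inside $\xi$, the space $U\cap U^{\theta^{-1}}$) to be pointwise fixed: any point of such an intersection is the image of a point of $U$ collinear to it. The delicate residual configuration is that $U$ itself is $\theta$-stable and hence pointwise fixed. To locate an actual collinear image and close the contradiction, I would invoke \cref{closefar} to choose a point $x$ far from $\xi$ and close to $\xi^\theta$; transporting by $\theta^{-1}$ makes $x^{\theta^{-1}}$ close to $\xi$, and then \cref{factE6}$(iii)$ pins down the $4'$-spaces of collinearity $x^\perp\cap\xi^\theta$ and $(x^{\theta^{-1}})^\perp\cap\xi$ and their (odd-codimensional) position relative to $U$, forcing $x$ and $x^{\theta^{-1}}$ to be collinear, against $\theta$ being a kangaroo.

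For (b) the fixed point $p_0$ supplied by the kangaroo hypothesis is the key. Every symp $\xi$ through $p_0$ satisfies $p_0\in\xi\cap\xi^\theta$, so by part (a) either $\xi$ is fixed or $\xi\cap\xi^\theta=\{p_0\}$; hence it suffices to produce a single symp through $p_0$ not meeting its image in $p_0$ alone. I would obtain this by exhibiting two fixed points at mutual distance $2$, whose convex closure is then a $\theta$-fixed symp: the fixed structure $F$ is a nonempty $\theta$-stable subspace, and were it a singular subspace (all fixed points pairwise collinear) one contradicts the hypothesis that $\theta$ sends some point to a symplectic one. This yields a fixed symp and completes the verification of the three conditions.

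The main obstacle is unquestionably condition (a), and within it the elimination of the case where the common $4$-space $U$ is stabilised (equivalently pointwise fixed). The two genuinely delicate ingredients there are the family/parity bookkeeping for the generator pair $U,U^\theta$ inside the $\mathsf{D_5}$-symp, and converting the purely positional far/close data of \cref{closefar} and \cref{factE6} into an honest pair of points mapped onto collinear points; I expect this to require a careful case split according to whether the relevant $4'$-spaces meet $U$ in a $3$-space or only a point.
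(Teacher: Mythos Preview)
Your plan for (c) is fine, and your setup for (a) and (b) correctly identifies what must be shown. However, both of your main arguments contain gaps.

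\textbf{Part (b).} Your reduction ``if the fixed set $F$ is a singular subspace then this contradicts the hypothesis that $\theta$ sends some point to a symplectic one'' is not justified, and I do not see how to make it work. The existence of a point moved to distance~$2$ says only that $\theta\neq\id$; there is no obvious obstruction to $F$ being, say, a single point or a pointwise fixed line while all other points are moved to non-collinear ones. The paper proceeds entirely differently: it takes a fixed point $x$ and looks at $\Res_\Delta(x)$, identified with the polar space $\mathsf{D_{5,1}}(\K)$ whose ``points'' are the symps through~$x$. Having already established (a), every non-fixed symp through $x$ meets its image only in $\{x\}$, i.e.\ is mapped to an opposite point of this residual polar space. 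If \emph{no} symp through $x$ were fixed, every point of the $\mathsf{D}_5$ polar space would go to an opposite, which forces the induced collineation to interchange the two families of generators --- impossible, since $\theta$ is type-preserving on $\mathsf{E}_6(\K)$ and hence on the residue. This residue/parity argument is short and avoids any analysis of~$F$.

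\textbf{Part (a).} Your \cref{closefar} strategy is not the paper's, and as you yourself flag, the endgame is missing: after producing $x$ far from $\xi$ and close to $\xi^\theta$, and $x^{\theta^{-1}}$ close to $\xi$, you still need a concrete reason why $x\perp x^{\theta^{-1}}$. The ``odd-codimensional'' intersections $V\cap U$ and $V'\cap U$ can in fact be empty (different-type generators in $\mathsf{D}_5$ may be disjoint), so the sketch does not close. The paper instead never leaves $\xi\cup\xi^\theta$: knowing $U\cap U^\theta$ is pointwise fixed, it picks a $3$-space $W\subseteq U$, takes the unique $4'$-spaces $V\subseteq\xi$ and $V'\subseteq\xi^\theta$ through $W$, observes via \cref{factE6}$(iv)$ that $V$ and $V'$ lie in a common $5$-space (hence are mutually collinear), and deduces that $V'\cap V^\theta$ is pointwise fixed. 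Since the only fixed points of $\xi\cup\xi^\theta$ lie in $U$, this forces $V^\theta\cap V'=W\cap W^\theta$, whose dimension is then a point or a plane (two $4'$-spaces of $\xi^\theta$). Choosing $W$ to meet $U\cap U^\theta$ in codimension~$1$ violates this constraint. This is a clean internal parity argument and sidesteps the external-point difficulties you anticipated.
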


\begin{proof}
We have to show that $\theta$ does not map any symp to an adjacent one and that $\theta$ fixes at least one symp. 

Suppose first for a contradiction that $\theta$ maps the symp $\xi$ to an adjacent symp $\xi^\theta$. Set $U:=\xi\cap\xi^\theta$. Then $U^\theta$ is a $4$-space of $\xi^\theta$ and so $U\cap U^\theta\neq\emptyset$. But any point of $U\cap U^\theta$ is mapped onto a point of $U^\theta$, hence, if not fixed, sent to a collinear one, a contradiction. Consequently $U\cap U^\theta$ is fixed pointwise. 

%Now suppose that $U$ is not fixed globally. 
Pick a $3$-space $W\subseteq U$ and let $V,V'$ be the unique $4'$-space in $\xi,\xi^\theta$, respectively, which contains $W$. \cref{factE6}$(iv)$ implies that $V$ and $V'$ are contained in a common $5$-space and hence all points of $V$ are collinear to all points of $V'$. Consequently all points of $V'\cap V^\theta$ are fixed. But all fixed points of $\xi\cup\xi^\theta$ lie in $U$, hence those of $V\cup V'$ lie in $W$, and consequently those of $V^\theta$ in $W^\theta$. Therefore, $V^\theta\cap V'\subseteq W\cap W^\theta$, so $V^\theta\cap V'= W\cap W^\theta$. Hence $W\cap W^\theta$ is a point or a plane (since $V'$ and $V^\theta$ are both $4'$-spaces of $\xi^\theta$). But we violate this requirement if we choose $W$ so that it intersects $U\cap U^\theta$ in a subspace of codimension 1. We obtain the desired contradiction.

Now let $x$ be a fixed point of $\theta$ and suppose no symp through $x$ is fixed. Then, in $\Res(x)$ viewed as a polar space, every point is mapped onto an opposite, and hence $\theta$ induces a type reversing automorphism. This is a contradiction as types in $\mathsf{E_{6}}(\K)$ are preserved. 
\end{proof}

Thanks to this lemma we may from now on assume that also the dual of everything we prove, holds. 

The next lemma generates a lot of fixed points in a fixed symp.

\begin{lemma}\label{generator}
If a symp $\xi$ is fixed under $\theta$, then every generator of $\xi$ contains a fixed point. Moreover, $\theta$ restricted to $\xi$ is a kangaroo. 
\end{lemma}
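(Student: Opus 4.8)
The plan is to study the restriction $\theta|_\xi$, which is a collineation of the polar space $\xi\cong\mathsf{D_{5,1}}(\K)$, and first to pin down its type behaviour. In the residue $\Res(\xi)$ (the $\mathsf{D}_5$ building on $\mathsf{E}_6$-nodes $\{1,2,3,4,5\}$, with tail $1{-}3{-}4$ forking at node $4$ into nodes $2$ and $5$), the two families of generators of $\xi$ correspond to the fork ends, i.e.\ to $\mathsf{E}_6$-types $2$ and $5$: one family consists of the $4$-spaces (type $5$) contained in $\xi$, the other of the $4'$-spaces of $\xi$ (each of the form $U\cap\xi$ for a $5$-space $U$, recorded by a type-$2$ vertex, cf.\ \cref{factE6}$(v)$). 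Since $\theta$ is type-preserving on $\mathsf{E}_6(\K)$ it preserves types $2$ and $5$, hence stabilises each family; so $\theta|_\xi$ is a type-preserving collineation of $\xi$.

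Next I would prove the first assertion. Fix a generator $M$ of $\xi$; then $M^\theta$ is a generator of the same family, so $M$ and $M^\theta$ have the same type. By the parity fact underlying \cref{par} (the dimension of the intersection of two generators of a $\mathsf{D}_n$ polar space has the same parity as $n$ iff they are of different type), here with $n=5$, the subspace $M\cap M^\theta$ has even dimension and is therefore nonempty. Moreover $M\cap M^\theta$ is fixed pointwise by the one-line argument of \cref{capfix}: a point $p\in M\cap M^\theta$ satisfies $p^\theta\in M^\theta\ni p$, so $p$ and $p^\theta$ are collinear, whence $p=p^\theta$ because $\theta$ maps no point of $\xi$ to a collinear one (collinearity in the subspace $\xi$ agreeing with that in $\Delta$). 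Thus every generator of $\xi$ contains a fixed point, which is the first claim.

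For the second claim it remains to see that $\theta|_\xi$ is a genuine (nontrivial) kangaroo: it maps no point of $\xi$ to a collinear one (inherited from $\theta$) and has a fixed point (by the previous paragraph), so the only thing to rule out is $\theta|_\xi=\mathrm{id}$. I expect this nontriviality to be the main obstacle, and I would handle it by a propagation argument. Suppose for contradiction $\theta$ fixes $\xi$ pointwise; note first that the global fixed-point set is a subspace, since two collinear fixed points span a fixed line which, $\theta$ having no collinear images, is fixed pointwise. Let $\xi'$ be any symp meeting $\xi$ in a $4$-space $W$ (a $4'$-space of $\xi$, hence fixed pointwise). Then $\xi'^\theta$ contains $W^\theta=W$, so $\xi'$ and $\xi'^\theta$ would meet in a $4$-space unless they coincide; but by \cref{dualkangaroo} the collineation $\theta$ maps no symp to an adjacent one, forcing $\xi'^\theta=\xi'$. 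Now $\theta|_{\xi'}$ maps no point to a collinear one and fixes the generator $W$ pointwise, so it is a kangaroo or the identity and $W\cap W^\theta=W$ is $(n-1)$-dimensional; by \cref{kangcharact}$(\ref{point})$ this forces $k=4$ for every generator, hence $\theta|_{\xi'}=\mathrm{id}$, i.e.\ $\xi'$ is fixed pointwise as well.

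Finally I would close the argument by connectivity. The symps of $\Delta$, with ``meet in a $4$-space'' as adjacency, form the geometry $\mathsf{E_{6,6}}(\K)\cong\mathsf{E_{6,1}}(\K)$ (the duality of \cref{factE6}$(ii)$), which is connected; so the preceding step propagates pointwise-fixing from $\xi$ to every symp, and since every point lies in a symp, $\theta$ would fix $\Delta$ pointwise, contradicting that $\theta$ moves a point. Hence $\theta|_\xi\neq\mathrm{id}$, and therefore $\theta|_\xi$ is a (type-preserving) kangaroo of $\xi$, completing the proof.
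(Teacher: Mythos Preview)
Your proof is correct. The first assertion (every generator contains a fixed point) is argued exactly as in the paper: type-preservation plus the $\mathsf{D}_5$ parity forces $M\cap M^\theta\neq\emptyset$, and any point therein is fixed by the kangaroo condition. Your extra paragraph explaining why the two generator families of $\xi$ correspond to $\mathsf{E}_6$-types $2$ and $5$ is more detailed than the paper (which simply writes ``since $\theta$ preserves types''), but the content is the same.

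Where you genuinely differ is in the ``Moreover'' clause. The paper's proof is one line: ``Since $\theta$ does not map points to collinear ones, and since there are fixed points, $\theta$ is a kangaroo by definition.'' This glosses over nontriviality of $\theta|_\xi$; in the paper's logical flow, that gap is only closed by the first paragraph of the subsequent \cref{nocollE6}, which shows (independently of the present lemma) that two collinear fixed points would force $\theta$ to be the identity globally. You instead make the lemma self-contained by running a propagation argument immediately: if $\xi$ were fixed pointwise, then via \cref{dualkangaroo} every adjacent symp is stabilised, via \cref{kangcharact}$(\ref{point})$ with $k=4$ it is fixed pointwise, and connectivity of $\mathsf{E}_{6,6}(\K)$ gives $\theta=\mathrm{id}$. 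This is essentially the dual of the paper's argument in \cref{nocollE6} (you propagate along the symp graph, the paper propagates along the point graph through fixed lines), and it buys you a cleaner logical ordering at the cost of anticipating an argument the paper gives two lemmas later.
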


\begin{proof}
Let $U$ be a generator of the fixed symp $\xi$. Then, since $\theta$ preserves types, $U\cap U^\theta\neq\emptyset$ and any point in the intersection is mapped onto a point of $U^\theta$ and hence, if not fixed, to a collinear one, contradicting the definition of a kangaroo collineation. Since $\theta$ does not map points to collinear ones, and since there are fixed points, $\theta$ is a kangaroo by definition.
\end{proof}

The next lemma generates a lot of fixed symps, and hence also a lot of fixed points. 

\begin{lemma}\label{fixedsymp}
For every point $x$ with $x\neq x^\theta$, the symp $\xi(x,x^\theta)$ is fixed. Moreover, every point is collinear to a fixed point.
\end{lemma}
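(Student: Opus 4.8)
The first assertion is the substantive one; the ``moreover'' clause will follow immediately from it together with \cref{generator}, so I would organise the argument accordingly.

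\emph{Setup.} Since $\theta$ is a kangaroo it maps no point to a collinear one, so for $x\neq x^\theta$ the points $x$ and $x^\theta$ are non-collinear. As $\Delta=\mathsf{E_{6,1}}(\K)$ is strong and has diameter $2$ (\cref{factE6}$(i)$), the pair $\{x,x^\theta\}$ is symplectic and lies in a unique symp $\xi:=\xi(x,x^\theta)$. Applying the collineation $\theta$, the image $\xi^\theta$ is a symp containing $\theta(x)=x^\theta$ and $\theta(x^\theta)=x^{\theta^2}$; in particular $x^\theta\in\xi\cap\xi^\theta$. The goal is to prove $\xi^\theta=\xi$. (If $x^{\theta^2}=x$ this is immediate, since then $\xi^\theta=\xi(x^\theta,x)=\xi$.)

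\emph{Pinning down the intersection.} Suppose for contradiction that $\xi\neq\xi^\theta$. By the duality principle \cref{factE6}$(ii)$, two distinct symps meet either in a single point or in a single $4$-space. The second possibility would mean that $\xi^\theta$ is adjacent to $\xi$; but by \cref{dualkangaroo} the collineation $\theta$ is also a dual kangaroo, hence maps no symp to an adjacent one. Therefore $\xi\cap\xi^\theta=\{x^\theta\}$. This is the configuration that must now be excluded.

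\emph{The crux.} To contradict $\xi\cap\xi^\theta=\{x^\theta\}$ I would analyse the mutual position of the image points with the two symps via the point-symp dichotomy \cref{factE6}$(iii)$ (far versus close), exploiting the translation of collinearity across $\theta$, namely $p^\theta\perp x\iff p\perp x^{\theta^{-1}}$. Concretely, I would pick a midpoint $y\in\xi$ of the symplectic pair $\{x,x^\theta\}$ (collinear to both; such $y$ exist in abundance in the rank-$5$ polar space $\xi$), so that $y^\theta\in\xi^\theta$ is a midpoint of $\{x^\theta,x^{\theta^2}\}$ and is collinear to $x^\theta\in\xi$. Then $y^\theta\notin\xi$, yet it is collinear to a point of $\xi$, so by \cref{factE6}$(iii)$ it is \underline{close} to $\xi$, collinear to all points of a unique $4'$-space $V\ni x^\theta$ of $\xi$. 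The kangaroo property forbids $y\perp y^\theta$, forcing $y\notin V$; letting $y$ range over the midpoint polar space of $\{x,x^\theta\}$ and tracking the associated $4'$-spaces $V$ then produces a point of $\xi$ mapped by $\theta$ onto a collinear point, the desired contradiction. \textbf{This dimension/incidence bookkeeping in the crux is the step I expect to be the main obstacle}, the delicate part being to ensure that some choice of midpoint is genuinely forced to land in its own $4'$-space (or to reduce directly to the residue $\Res_\Delta(x^\theta)\cong\mathsf{D_{5,5}}(\K)$ and argue there).

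\emph{The ``moreover'' clause.} Let $p$ be any point. If $p=p^\theta$ then $p$ is collinear to itself, a fixed point. Otherwise the symp $\xi(p,p^\theta)$ is fixed by the first part, so by \cref{generator} the restriction of $\theta$ to it is a kangaroo and every generator of $\xi(p,p^\theta)$ contains a fixed point. Choosing a generator through $p$, the fixed point it contains is collinear to (or equal to) $p$, which proves that $p$ is collinear to a fixed point.
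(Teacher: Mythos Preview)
Your setup and the derivation of the ``moreover'' clause from the first assertion (via \cref{generator}) are fine. The gap is exactly where you flag it: the ``crux'' is not a proof but a hope. You have reduced to ruling out $\xi\cap\xi^\theta=\{x^\theta\}$, and you propose to track the $4'$-spaces $V(y)\subseteq\xi$ attached to midpoints $y$, looking for some $y$ with $y\in V(y)$. But the midpoints of $\{x,x^\theta\}$ inside $\xi$ form a $\mathsf{D_3}$-type subquadric of $(x^\theta)^\perp$, while the $4'$-spaces of $\xi$ through $x^\theta$ are parametrised by the points of a $4$-space; there is no a priori reason the map $y\mapsto V(y)$ should be forced to hit its own argument, and you give no mechanism to produce the required incidence. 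As stated, this step is a genuine obstacle, not just bookkeeping.

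The paper sidesteps this entirely by reversing the logical order you chose. It does \emph{not} attempt to exclude $\xi\cap\xi^\theta=\{x^\theta\}$ directly. Instead it first treats the special case where $x$ is collinear to some fixed point $p$: then both $\xi(x,x^\theta)$ and $\xi(x^\theta,x^{\theta^2})$ contain the line $px^\theta$, and the dual kangaroo property (\cref{dualkangaroo}) forces them to coincide, so $\xi(x,x^\theta)$ is fixed. Second, it shows that \emph{every} point is collinear to a fixed point: if some $x$ were not, pick any fixed point $p$ (one exists by definition of kangaroo) and any $y\in x^\perp\cap p^\perp$; then by the first step $\xi(y,y^\theta)$ is fixed, and since $x$ is close to this symp its $4'$-space $x^\perp\cap\xi(y,y^\theta)$ contains a fixed point by \cref{generator}, a contradiction. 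Thus the paper proves the ``moreover'' clause essentially \emph{first} and uses it to get the full statement, whereas you try to go the other way and get stuck.
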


\begin{proof}
Let $x$ be a point which is not fixed under the kangaroo collineation $\theta$. Suppose first that $x$ is collinear to some fixed point $p=p^\theta$. Then the symps $\xi(x,x^\theta)$ and $\xi(x^\theta,x^{\theta^2})$ have the line $px^\theta$ in common and hence coincide by Lemma~\ref{dualkangaroo}. Hence the symp $\xi(x,x^\theta)$ is fixed.

Now suppose that $x$ is not collinear to any fixed point. Let $p$ be any fixed point of $\theta$ and let $y$ be collinear to both $p$ and $x$. By the previous paragraph we know that $\xi:=\xi(y,y^\theta)$ is fixed. Since $x\perp y$, the intersection %Set $\zeta:=\xi(x,x^\theta)$. We claim that $\xi$ and $\zeta$ intersect in exactly one point $z$. 
%Indeed, if $\xi=\zeta$, then $\zeta$ is fixed and since $\theta$ induces a kangaroo in $\zeta$, $x$ is collinear to a fixed point.  Assume now for a contradiction that $\xi\cap\zeta=U$ is a $4$-space. Then 
$x^\perp\cap\xi$ is a $4'$-space by \cref{factE6}$(iii)$ and hence contains a fixed point by Lemma~\ref{generator}. This contradicts the assumption that $x$ is not collinear to a fixed point. The lemma is proved. 
%
%However, $x$ is collinear to a $4'$-space $W$ of $\xi$, so if $z$ were not collinear to $x$, there would be a plane in $x^\perp\cap z^\perp\subseteq\zeta$, a contradiction. Hence $z\perp x$. Likewise, arguing with $\theta^{-1}$, $z\perp x^\theta$. Hence $z\in W$ and since $\xi$ is fixed, $z\in W^\theta$, which implies that $z=z^\theta$ (as otherwise $z\perp z^\theta$). Hence $x$ is collinear to a fixed point after all. 
\end{proof}

We note the following direct implications of Lemmas~\ref{dualkangaroo} and~\ref{fixedsymp}.
\begin{cor}\label{handycor}
Each point is contained in a fixed symp and, dually, every symp contains a fixed point.
\end{cor}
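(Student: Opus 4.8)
The plan is to establish the first assertion — that each point lies on a fixed symp — and then obtain the second assertion, that every symp contains a fixed point, for free by dualising. This dualisation is legitimate precisely because \cref{dualkangaroo} guarantees that $\theta$ is simultaneously a kangaroo and a dual kangaroo, so every statement proved for points and symps has a valid counterpart with the roles of points and symps interchanged (passing to the dual geometry $\mathsf{E_{6,6}}(\K)$ of \cref{factE6}$(ii)$). Under this duality ``point contained in a symp'' is swapped with ``symp containing a point'', so the two assertions of the corollary are genuinely dual to one another, and it suffices to prove the first.

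For the first assertion, fix a point $x$ and split into two cases according to whether $x$ is moved or fixed. If $x\neq x^\theta$, then, since $\theta$ is a kangaroo, it sends $x$ to a non-collinear point, so $\delta(x,x^\theta)=2$ (the diameter being $2$ by \cref{factE6}$(i)$). By strongness the two points lie in the unique symp $\xi(x,x^\theta)$, which is fixed by \cref{fixedsymp}; as $x\in\xi(x,x^\theta)$, this case is complete.

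The remaining, and more delicate, case is that of a fixed point $x=x^\theta$, for which \cref{fixedsymp} does not directly supply a fixed symp. The main line I would take is the residue argument already used in the final paragraph of the proof of \cref{dualkangaroo}: passing to $\Res_\Delta(x)\cong\mathsf{D_{5,5}}(\K)$, if no symp through $x$ were fixed, then $\theta$ would act on the residue as a type-reversing automorphism, contradicting the fact that $\theta$ preserves types on $\mathsf{E_6}(\K)$. As a more hands-on alternative, one may instead use \cref{fixedsymp} together with connectivity to locate a point $y$ with $y\neq y^\theta$ collinear to $x$; then $x$ is collinear to both $y$ and $y^\theta$, which are at distance $2$, so the path $y,x,y^\theta$ is a geodesic and convexity of the symplecton $\xi(y,y^\theta)$ forces $x\in\xi(y,y^\theta)$, a symp fixed by \cref{fixedsymp}.

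I expect the fixed-point case to be the only genuine obstacle. The subtlety is that \cref{fixedsymp} manufactures fixed symps only out of moved points, so for a fixed $x$ one must either exhibit a moved neighbour — and then rule out the degenerate possibility that the entire perp $x^\perp$ is pointwise fixed — or else fall back on the type-preservation argument in the residue, which sidesteps this existence question entirely. Given the cleanliness of the latter, I would present the residue argument as the principal step and keep the convexity argument only as a secondary remark.
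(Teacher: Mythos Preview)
Your proof is correct, and since the paper gives no argument beyond the phrase ``direct implications of Lemmas~\ref{dualkangaroo} and~\ref{fixedsymp}'', you have supplied exactly the details the paper omits. Your treatment of the moved-point case via \cref{fixedsymp} and of the second assertion via duality through \cref{dualkangaroo} is precisely the intended reading. For the fixed-point case you correctly observe that the \emph{statement} of \cref{dualkangaroo} alone is not quite enough; what is actually needed is the residue argument from the final paragraph of its proof (no fixed symp through $x$ plus the dual-kangaroo property forces every point of the residual $\mathsf{D_5}$ polar space to an opposite, hence a type-reversing action, contradicting type-preservation). Your alternative convexity route --- picking a moved neighbour $y$ of $x$ and noting that $x$ lies on a geodesic from $y$ to $y^\theta$, hence in the fixed symp $\xi(y,y^\theta)$ --- is also valid and has the virtue of using only the statements of the two cited lemmas, with the caveat (which you flag) of the degenerate case $x^\perp$ pointwise fixed. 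One small presentational remark: the type-reversal argument lives at the level of the $\mathsf{D_5}$ building (equivalently, the polar space $\mathsf{D_{5,1}}$ with symps through $x$ as points), so writing $\Res_\Delta(x)\cong\mathsf{D_{5,5}}(\K)$ is correct but slightly tangential to the step you are invoking.
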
 

\begin{lemma}\label{nocollE6}
No two fixed points are collinear. In particular, the fixed points contained in a fixed symp form an ovoid of the symp. 
\end{lemma}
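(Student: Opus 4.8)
The statement to prove is Lemma~\ref{nocollE6}: \emph{no two fixed points of the kangaroo collineation $\theta$ are collinear}, and consequently the fixed points inside a fixed symp form an ovoid.

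\textbf{Overall approach.} The plan is to argue by contradiction: suppose $p$ and $q$ are two distinct collinear fixed points. The key idea is that the whole line $pq$ is then fixed pointwise (a collineation fixing two points of a line fixes the line setwise, and since both $p,q$ are fixed and a line is a projective line, every point of $pq$ is fixed). But a line carries at least three points (the building is thick, $|\K|>2$), so $pq$ is a pointwise-fixed line. I would then aim to produce from this fixed line a point mapped to a collinear point, contradicting the defining property of a kangaroo (that $\theta$ maps no point to a collinear one).

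\textbf{Key steps.} First I would fix the pointwise-fixed line $L=pq$ and pass to the residual geometry $\Res_\Delta(r)$ at some fixed point $r\in L$; recall that $\Res_\Delta(r)\cong\mathsf{D_{5,5}}(\K)$ (stated after Fact~\ref{factE6}). Alternatively, and more directly, I would use the symplecta: by Corollary~\ref{handycor} every point lies in a fixed symp, so I would locate a fixed symp $\xi$ containing a point of $L$. Inside the fixed symp $\xi$, $\theta$ restricts to a kangaroo of the polar space $\xi\cong\mathsf{D_{5,1}}(\K)$ by Lemma~\ref{generator}. The characterisation of kangaroos of polar spaces in Proposition~\ref{kangcharact} — in particular the equivalent condition \eqref{large}/\eqref{ideal} that the fixed point set is a nondegenerate ideal (equivalently $i$-large) polar subspace, and the proof of $(\ref{longroot})\Rightarrow(\ref{kang})$ showing that no point collinear to a fixed point is moved to a collinear point unless moved to an opposite — is the crucial tool. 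The essential point I want to extract is that a \emph{polar-space} kangaroo cannot fix two collinear points: by Lemma~\ref{busyovoid} (diligent case) or the general ideal-subspace description, the fixed point set of a kangaroo restricted to a symp is an ovoid, and an ovoid by definition contains no two collinear points. So two collinear fixed points $p,q$ lying in a common fixed symp already contradict the ovoid property.

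\textbf{The main obstacle.} The subtlety is that a priori the two collinear fixed points $p,q$ need not lie in a \emph{common fixed symp}; in $\mathsf{E_{6,1}}(\K)$ two collinear points lie in many symps, and I must ensure one of them is fixed (or otherwise route the contradiction through the residue). I expect this to be the crux of the argument. The plan to overcome it is to use Lemma~\ref{fixedsymp} together with Corollary~\ref{handycor}: take a point $z$ collinear to both $p$ and $q$ but with $z\neq z^\theta$ (such a point should exist generically; if every point collinear to $p$ were fixed then $p$ would have a huge fixed neighbourhood forcing fixed collinear points directly), so that $\xi(z,z^\theta)$ is a fixed symp, and then analyse the positions of $p,q$ relative to this symp via the point-symp relations in Fact~\ref{factE6}$(iii)$ to land a collinear image. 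If that routing is awkward, the cleaner alternative is to pass straight to $\Res_\Delta(p)\cong\mathsf{D_{5,5}}(\K)$: the fixed line $pq$ becomes a fixed point in the residue, fixed symps through $p$ become fixed objects in the residue, and I can invoke the polar/half-spin kangaroo structure (Lemmas~\ref{busyovoid}, \ref{linear_ovoid}, \ref{semilinear_ovoid} and Proposition~\ref{kangcharact}) in the residue to derive that a second fixed point collinear to $p$ would violate the ovoid (no-two-collinear-fixed-points) condition there. Either way, once the contradiction is secured, the ``in particular'' clause is immediate: the fixed points of a fixed symp $\xi$ form a set of pairwise non-collinear points meeting every generator (by Lemma~\ref{generator}), which is exactly an ovoid of $\xi$.
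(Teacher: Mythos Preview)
Your proposal has a genuine gap at its core step. You assert that ``a polar-space kangaroo cannot fix two collinear points'' and cite Lemma~\ref{busyovoid} and Proposition~\ref{kangcharact} for this, but that claim is false: a \emph{lazy} kangaroo of a polar space by definition has a pointwise fixed line, and Proposition~\ref{kangcharact} allows the fixed set to be a nondegenerate polar subspace of any rank $k+1$, not just an ovoid ($k=0$). Lemma~\ref{busyovoid} applies only to diligent kangaroos. So finding a fixed symp $\xi$ containing the collinear fixed points $p,q$ would only tell you that $\theta|_\xi$ is a lazy kangaroo, which is not in itself a contradiction. Your residual alternative via $\Res_\Delta(p)\cong\mathsf{D_{5,5}}(\K)$ has the same defect, since the polar kangaroo lemmas you invoke do not apply to a half-spin geometry, and in any case multiple fixed lines through $p$ are not forbidden a priori.

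The paper's argument avoids this entirely. The key observation you are missing is Lemma~\ref{dualkangaroo}: since $\theta$ is also a dual kangaroo, no symp is sent to an adjacent one, and two distinct symps through a common line necessarily share a $4$-space (Fact~\ref{factE6}$(ii)$) and are hence adjacent. Thus \emph{every} symp through the fixed line $xy$ is fixed, so $\Res(xy)$ is fixed elementwise. From this one deduces that every point collinear to $xy$ lies in a fixed singular subspace and is therefore itself fixed (else it would map to a collinear point, contradicting the kangaroo hypothesis). Connectivity then forces $\theta$ to be the identity. Once no two fixed points are collinear, the restriction of $\theta$ to each fixed symp is a \emph{diligent} kangaroo, and now Lemma~\ref{busyovoid} legitimately gives the ovoid.
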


\begin{proof}
Assume for a contradiction that $x$ and $y$ are two collinear points with $x=x^\theta$ and $y=y^\theta$. Every symp through the line $xy$ is fixed by Lemma~\ref{dualkangaroo}. Hence $\Res(xy)$ is fixed elementwise. Noting that points of a fixed singular subspace are either mapped to collinear ones or fixed, we see that all points collinear to $xy$ are fixed. By connectivity, all points are fixed and $\theta$ is the identity, a contradiction. 

Hence $\theta$ induces a diligent kangaroo in each fixed symp and the assertion follows from Lemma~\ref{busyovoid}. %Since every generator of a fixed symp contains at east one fixed point, we see that the fixed points of a fixed symp form an ovoid by the very definition of an ovoid.  
\end{proof}

We can now define a new incidence structure $(F,\Phi)$ as follows. The point set $F$ consists of all points fixed under $\theta$; the block set $\Phi$ consists of all symps fixed under $\theta$.  Incidence is natural.

\begin{lemma}
The point-block geometry $(F,\Phi)$ is a projective plane. 
\end{lemma}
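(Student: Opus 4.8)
The plan is to verify the three defining axioms of a (possibly non-Desarguesian) projective plane for $(F,\Phi)$: any two points lie on a unique block, any two blocks meet in a unique point, and there is a quadrangle (four points no three of which share a block). For the first axiom, take two distinct fixed points $x,y$. By \cref{nocollE6} they are not collinear, so by \cref{factE6}$(i)$ they lie at distance $2$ and are contained in a unique symp $\xi(x,y)$. Since $x^\theta=x$ and $y^\theta=y$ we get $\xi(x,y)^\theta=\xi(x^\theta,y^\theta)=\xi(x,y)$, so this symp is fixed; it is the unique block through $x$ and $y$ precisely because the symp through two non-collinear points is unique.

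For the second axiom, let $\xi,\xi'$ be distinct fixed symps. By \cref{factE6}$(ii)$ their intersection is a single point or a $4$-space, and the key move is to rule out the $4$-space. If $U=\xi\cap\xi'$ were a $4$-space, then $U^\theta=\xi^\theta\cap\xi'^\theta=U$, so $\theta$ stabilises the singular subspace $U$; for each $u\in U$ the image $u^\theta\in U$ is collinear to $u$, whence $u^\theta=u$ by the kangaroo property. Thus $U$ would be a pointwise-fixed singular subspace, contradicting \cref{nocollE6}. Hence $\xi\cap\xi'=\{p\}$ and $p^\theta=\xi^\theta\cap\xi'^\theta=p$, so $p$ is the unique common fixed point.

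It remains to establish non-degeneracy, which requires two distinct fixed symps and thick blocks. Starting from a fixed symp $\xi$ (which exists by \cref{dualkangaroo}), I take a point $q$ far from $\xi$, obtainable from \cref{closefar} applied to $\xi$ and any second symp. By \cref{fixedsymp}, $q$ is collinear to a fixed point $f$; as $q$ is collinear to no point of $\xi$, we must have $f\notin\xi$, and then for any fixed point $x\in\xi$ the symp $\xi(f,x)\neq\xi$ is a second fixed symp. Moreover, on each fixed symp $\eta$ the fixed points form an ovoid by \cref{nocollE6}, and $\theta|_\eta$ is a diligent kangaroo of $\eta\cong\mathsf{D}_{5,1}(\K)$ (nontrivial by \cref{generator}, diligent since no two fixed points are collinear, hence no fixed line); therefore \cref{busyovoid} together with \cref{linear_ovoid} and \cref{semilinear_ovoid} shows this ovoid spans a subspace of dimension at least $5$, so it has at least three points. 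Now let $\xi,\xi'$ be two distinct fixed symps meeting in the fixed point $p$; pick distinct fixed points $a,a'\in\xi\setminus\{p\}$ and $b,b'\in\xi'\setminus\{p\}$. Since $\xi\cap\xi'=\{p\}$, none of $a,a'$ lies in $\xi'$ and none of $b,b'$ lies in $\xi$, so the four points are distinct; the unique block through $a,a'$ is $\xi$ and through $b,b'$ is $\xi'$, so no three of $a,a',b,b'$ share a block, giving the required quadrangle.

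I expect the non-degeneracy step to be the main obstacle, specifically producing a second fixed symp distinct from $\xi$ through the interplay of far points with \cref{fixedsymp}, and certifying that every block is thick, for which the ovoid-span results \cref{linear_ovoid} and \cref{semilinear_ovoid} are essential. By contrast, the two incidence axioms follow quickly from the point-symp geometry of $\mathsf{E}_{6,1}(\K)$ recorded in \cref{factE6} combined with \cref{nocollE6}.
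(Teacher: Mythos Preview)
Your proof is correct and follows the paper's overall strategy. The treatment of the first axiom is identical. For the second axiom, the paper simply invokes duality: by \cref{dualkangaroo} and the remark following it, the dual of \cref{nocollE6} immediately gives that two distinct fixed symps are never adjacent, hence meet in a single (fixed) point. You instead argue directly that a shared $4$-space $U$ would satisfy $U^\theta=U$ and hence be pointwise fixed by the kangaroo property, contradicting \cref{nocollE6}; this is equally valid and more self-contained, at the cost of not exploiting the symmetry already set up.

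For non-degeneracy the paper is terse, asserting only that blocks contain at least three points and leaving the existence of a second fixed symp implicit (it follows from the dual of \cref{handycor}). You supply both pieces explicitly: the second fixed symp via a far point and \cref{fixedsymp}, and thickness via the span results \cref{linear_ovoid}/\cref{semilinear_ovoid}. Your argument here is sound; a slightly faster route to thickness would be to note that an ovoid of a rank-$5$ polar space must meet every generator, and since through any ovoid point there are generators in many directions one immediately gets more than two ovoid points, but your span argument is perfectly adequate.
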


\begin{proof}
Since two distinct fixed points are not collinear they define a unique fixed symp. Dually, two distinct fixed symps intersect in a unique fixed point. Since blocks contain at least three points, the lemma follows. 
\end{proof}

\subsection{Collineations of $\mathsf{E_{6,1}}(\K)$ that pointwise fix a quaternion Veronesean}

Now let $Y\subseteq X$ be a subset of points of the exceptional geometry $\mathsf{E_{6,1}}(\K)$ with the following properties: 
\begin{compactenum}[(VV1)]
\item Each symp $\xi$ containing at least two points of $Y$ intersects $Y$ in a ovoid of $\xi$; we call such an ovoid a $Y$-ovoid.
\item $Y$ is not contained in a symp. 
\end{compactenum}

Our main aim is to show that, if some collineation $\theta$ of $\mathsf{E_{6,1}}(\K)$ pointwise fixes $Y$, then it is a kangaroo. We proceed with a series of lemmas.

We will refer to the symps that contain a $Y$-ovoid as a \emph{host symp}. 

\begin{lemma}\label{twohost}
Two host symps intersect in a unique point, which automatically belongs to $Y$.
\end{lemma}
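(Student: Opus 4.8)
The plan is to show that two host symps $\xi$ and $\xi'$ meet in exactly one point and that this point lies in $Y$. First I would recall from \cref{factE6}$(ii)$ (the duality principle) that any two distinct symps of $\mathsf{E_{6,1}}(\K)$ meet either in a unique point or in a unique $4$-space. So the entire task reduces to excluding the possibility that $\xi\cap\xi'$ is a $4$-space, and then to checking that the unique intersection point belongs to $Y$.

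To rule out the $4$-space case I would argue by contradiction. Suppose $\xi\cap\xi'=U$ is a $4$-space. Each host symp carries a $Y$-ovoid, i.e. $\xi\cap Y=:O$ and $\xi'\cap Y=:O'$ are ovoids (of the rank-$5$ polar space $\mathsf{D_{5,1}}(\K)$) by (VV1). The key tension is that an ovoid of a polar space meets every maximal singular subspace (generator) in exactly one point, whereas the shared $4$-space $U$ is itself a generator of both $\xi$ and $\xi'$. Hence $O$ meets $U$ in a single point $y$, and $O'$ meets $U$ in a single point $y'$. Now $y,y'\in U$ lie in a common singular subspace, so if $y\neq y'$ they are collinear; but both belong to $Y$, and I would derive a contradiction from the fact that within a single symp (either $\xi$ or $\xi'$) two points of the $Y$-ovoid are never collinear. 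More carefully: $y\in O\subseteq\xi$ and $y'\in U\subseteq\xi$, so if $y'\in Y$ then $y'\in\xi\cap Y=O$, forcing $y=y'$ by the ovoid property in $\xi$; symmetrically $y\in O'$. Thus $y=y'$ is a point of $U$ lying in both $Y$-ovoids. The contradiction then comes from the rank of $U$: a single point of a $4$-space cannot be the intersection of that generator with an ovoid in the way required simultaneously for both symps while $U$ contains further generators (lines, planes) forcing additional ovoid points collinear to $y$, again violating the no-two-collinear-$Y$-points property inside a fixed symp. I expect this collinearity clash to be the crux, and it must be set up so that the $4$-space genuinely forces two distinct collinear points of some $Y$-ovoid.

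Once the $4$-space case is excluded, \cref{factE6}$(ii)$ guarantees $\xi\cap\xi'=\{z\}$ is a single point, and it remains to show $z\in Y$. For this I would use that $z$ lies on generators of both symps together with the ovoid structure: since $z\in\xi$, the generators of $\xi$ through $z$ each meet $O$, and I would show these forced ovoid points must coincide with $z$ itself, using that any $Y$-ovoid point of $\xi$ collinear to $z$ would be collinear to a $Y$-ovoid point of $\xi'$ through the singular structure at $z$, contradicting that no two points of $Y$ in a common symp are collinear unless equal. The cleanest route is to produce a host symp $\xi''$ through $z$ (using connectivity of $Y$ via (VV2), so that $Y$ is not confined to a single symp and such auxiliary symps exist) whose $Y$-ovoid forces $z\in Y$ directly.

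The main obstacle I anticipate is the $4$-space exclusion: I must convert the abstract statement ``ovoids meet generators in one point'' into a concrete collinear pair inside a fixed symp. The delicate point is that a $4$-space $U$ shared by $\xi$ and $\xi'$ contains a whole projective $3$-space of lines, and the two ovoids $O,O'$ each meet the many generators of $\xi$ (resp.\ $\xi'$) through sub-spaces of $U$; pinning down that this yields two genuinely distinct collinear $Y$-points (rather than everything collapsing to the single forced point) is where the argument has to be handled with care, likely by exhibiting two distinct generators of $\xi$ inside $U\cup(\text{nearby singular spaces})$ whose ovoid points are collinear and both in $Y$.
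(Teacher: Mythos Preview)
Your overall outline is right: reduce via \cref{factE6}$(ii)$ to excluding the $4$-space case, then show the unique intersection point lies in $Y$. But neither step is actually carried out, and the mechanisms you sketch do not work.

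For the $4$-space exclusion, you correctly get a single point $y=y'\in U\cap Y$, but then look for a contradiction inside $U$ by invoking ``further generators (lines, planes)'' of $U$. This fails: in the $\mathsf{D}_5$ polar space $\xi$, the generators are exactly the $4$-dimensional singular subspaces, and $U$ is one of them; its proper subspaces are not generators, and the ovoid $O$ is only guaranteed to meet generators. So nothing forces a second point of $O$ in or near $U$. The missing idea is to leave $U$ entirely: pick $y_2\in(\xi'\cap Y)\setminus\{y\}$; then $y_2\notin U$, so $y_2\notin\xi$, and since $y_2$ is collinear to a $3$-space of $U\subseteq\xi$, \cref{factE6}$(iii)$ gives that $y_2^\perp\cap\xi$ is a $4'$-space $V$ (a generator of $\xi$). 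Because $y_2\not\perp y$ (ovoid in $\xi'$), $y\notin V$, so the ovoid $O$ meets $V$ in some $y_1\neq y$. Now $y_1\perp y_2$ with $y_1,y_2\in Y$; any symp through $y_1,y_2$ contains two collinear points of $Y$, contradicting (VV1).

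For $z\in Y$, your proposal to conjure a host symp through $z$ via (VV2) is circular: at this stage you have no host symp through $z$ unless $z\in Y$. The same trick as above works instead. If $z\notin Y$, take $x_1\in\xi\cap Y$ with $x_1\perp z$ (any generator of $\xi$ through $z$ meets the ovoid); then $x_1\notin\xi'$, and $x_1^\perp\cap\xi'$ is a $4'$-space of $\xi'$ containing a point $x_2$ of the ovoid $\xi'\cap Y$. Again $x_1\perp x_2$ with both in $Y$, violating (VV1). This is exactly how the paper proceeds.
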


\begin{proof}
Suppose first for a contradiction that two host symps $\xi_1,\xi_2$ have a $4$-space $U$ in common. By (VV1), $U$ contains a unique point $y\in Y$. Select $y_2\in Y \cap\xi_2\setminus\{y\}$.  By (VV1), $y_2$ is not collinear to $y$, so $y_2^\perp\cap\xi_1$ is a $4'$-space which does not contain $y$, but which, by the definition of ovoid, contains another point $y_1\in Y$. Now any symp containing the collinear points $y_1,y_2\in Y$ violates (VV1).  Hence $\xi_1\cap\xi_2=\{p\}$, for some point $p\in X$. Suppose for a contradiction dat $p\notin Y$. Then there exists $x_1\in \xi_1\cap Y$ collinear to $p$, and $x_2\in\xi_2\cap x_1^\perp\cap Y$ (by (VV1)). Again any symp containing the collinear points $x_1,x_2\in Y$ violates (VV1). 
\end{proof}

Let $\Psi$ be the set of all host spaces. 

\begin{lemma}\label{PPP}
The incidence structure $(Y,\Psi)$, with natural incidence, is a projective plane.
\end{lemma}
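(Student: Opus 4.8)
The plan is to verify the two incidence axioms of a projective plane directly, and then to establish non-degeneracy from (VV2); I expect the non-degeneracy to be the only real obstacle.

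First I would record the basic observation that \emph{no two points of $Y$ are collinear}: if $y,y'\in Y$ were collinear, then any symp $\xi$ through the line $yy'$ would contain the two collinear points $y,y'\in Y$, contradicting (VV1), which forces $\xi\cap Y$ to be an ovoid and hence to contain no two collinear points. Consequently any two distinct points of $Y$ are at distance~$2$, and since $\mathsf{E_{6,1}}(\K)$ is strong of diameter~$2$ (Fact~\ref{factE6}$(i)$) they lie in a unique symp $\xi(y,y')$. This symp contains the two points $y,y'\in Y$, hence is a host symp, and it is the \emph{unique} host symp through $y$ and $y'$ because two distinct host symps meet in a single point by Lemma~\ref{twohost}. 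This yields the first axiom: two distinct points of $Y$ lie on a unique block of $\Psi$. For the dual axiom, two distinct host symps $\xi_1,\xi_2$ always intersect, either in a point or in a $4$-space, by Fact~\ref{factE6}$(ii)$; Lemma~\ref{twohost} excludes the $4$-space case and asserts that the unique common point lies in $Y$. Thus two distinct blocks of $\Psi$ meet in a unique point of $Y$.

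It remains to rule out degeneracy. Here I would invoke the standard characterisation that an incidence structure satisfying the two axioms above, possessing at least two lines each carrying at least three points, is automatically a (non-degenerate) projective plane; concretely, choosing two lines meeting in a point $R$ and two further points on each produces a quadrangle, no three of whose vertices are collinear. To supply the hypotheses I note two things. Each block of $\Psi$ meets $Y$ in an ovoid of the rank-$5$ polar space $\xi\cong\mathsf{D_{5,1}}(\K)$, and ovoids of polar spaces of rank at least~$2$ contain at least three points (a two-point set cannot meet every generator), so every block is thick. Moreover (VV2) guarantees at least two blocks: since $Y$ is not contained in a symp we may pick $y_1,y_2,y_3\in Y$ not all lying in one symp, whereupon the host symps $\xi(y_1,y_2)$ and $\xi(y_1,y_3)$ are necessarily distinct. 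Assembling these ingredients shows that $(Y,\Psi)$ is a projective plane.

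The delicate point is precisely this non-degeneracy step: all the genuine geometric content resides in (VV1)--(VV2) together with Lemma~\ref{twohost}, and the quadrangle must be produced without knowing a priori the exact size of the $Y$-ovoids beyond the mild thickness bound. I would therefore route the argument through the abstract characterisation rather than attempting to exhibit four points avoiding three prescribed host symps by hand, since that naive approach stumbles on the fact that any third point chosen on an already-selected side is automatically collinear (in the plane) with that side's two endpoints.
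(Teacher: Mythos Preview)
Your proof is correct and follows essentially the same route as the paper's one-line argument (``This follows from (VV1) and Lemma~\ref{twohost}''), just with the details spelled out. In fact you are more careful than the paper: you explicitly invoke (VV2) to rule out degeneracy, a point the paper's terse proof leaves implicit.
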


\begin{proof}
This follows from (VV1) and Lemma~\ref{twohost}.
\end{proof}

We now also have:

\begin{lemma}\label{Dual}
The set $\Psi$ satisfies {\em (VV1)} in the dual $\mathsf{E_{6,6}}(\K)$ of $\mathsf{E_{6,1}}(\K)$.  
\end{lemma}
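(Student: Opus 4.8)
\emph{The plan.} I would read the statement through the self-duality of Fact~\ref{factE6}$(ii)$: the dual $\mathsf{E_{6,6}}(\K)$ is again isomorphic to $\mathsf{E_{6,1}}(\K)$, its points being the symps of $\Delta$, two of them being collinear exactly when the corresponding symps meet in a $4$-space (rather than in a single point). A \emph{symp of the dual} then corresponds to a point $q$ of $\Delta$, with point set the set $\Xi_q$ of all symps of $\Delta$ through $q$, so that (VV1) for $\Psi$ in the dual becomes the assertion: whenever $\Xi_q$ contains at least two members of $\Psi$, the set $\Psi\cap\Xi_q$ of host symps through $q$ is an ovoid of $\Xi_q$. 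The first reduction I would make is that this hypothesis forces $q\in Y$: two host symps meet in a unique point of $Y$ (Lemma~\ref{twohost}), so if two of them pass through $q$ that common point is $q$; conversely every $q\in Y$ lies on at least two host symps since $(Y,\Psi)$ is a thick projective plane (Lemma~\ref{PPP}). Thus it suffices to fix $q\in Y$ and show that $\Psi_q:=\Psi\cap\Xi_q$ is an ovoid of $\Xi_q$.

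\emph{Pairwise non-collinearity.} Next I would record that two points of $Y$ are never collinear: collinear points of $Y$ would lie in a common symp, which would then be a host symp containing two collinear points of its $Y$-ovoid, against (VV1). Hence two host symps through $q$ meet only in $q$ (Lemma~\ref{twohost}), so they share a single point and are non-collinear in the dual. Therefore $\Psi_q$ consists of pairwise non-collinear points of $\Xi_q$, and in particular no generator of $\Xi_q$ contains two of them; only the covering half of the ovoid condition will remain.

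\emph{A perspectivity from a far symp.} To get the covering half I would choose a host symp $\xi_0$ with $q\notin\xi_0$ (a projective plane has lines off any point) and set $O_0:=Y\cap\xi_0$, an ovoid of $\xi_0$ by (VV1). First I would check that $q$ is \underline{far} from $\xi_0$: were it close, $q^\perp\cap\xi_0$ would be a $4'$-space, i.e.\ a generator of $\xi_0$ (Fact~\ref{factE6}$(iii)$), meeting the ovoid $O_0$ in a point $y_0\in Y$ with $y_0\perp q$, contradicting the previous paragraph. So $q$ is collinear to no point of $\xi_0$, each pair $q,z$ with $z\in\xi_0$ is symplectic, and I obtain a map
\[
\beta\colon \xi_0\longrightarrow\Xi_q,\qquad z\mapsto\xi(q,z).
\]
I would show $\beta$ is a bijection: for any $\eta\ni q$ with $\eta\neq\xi_0$ the intersection $\eta\cap\xi_0$ is a point or a $4$-space (Fact~\ref{factE6}$(ii)$), and the latter is excluded because $q\in\eta$ is collinear to a $3$-space of any generator of $\eta$, which here would lie in $\xi_0$ and violate farness; so $\eta\cap\xi_0$ is a single point $z$, and since $q\not\perp z$ the symp $\eta$ is the unique one through $q,z$, i.e.\ $\eta=\xi(q,z)$. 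Finally $\beta(O_0)=\Psi_q$, since for $y\in O_0$ the symp $\xi(q,y)$ contains the two $Y$-points $q,y$, while any $\eta\in\Psi_q$ meets the host symp $\xi_0$ in a point of $Y$ (Lemma~\ref{twohost}), hence in a point of $O_0$.

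\emph{Conclusion and the hard part.} Granting that $\beta$ is an isomorphism of polar spaces, it maps generators to generators and therefore carries the ovoid $O_0$ of $\xi_0$ onto an ovoid of $\Xi_q$, namely $\Psi_q$, which finishes the proof. The hard part will be exactly the collinearity-preservation of $\beta$, i.e.\ that $z,z'\in\xi_0$ lie on a common line of $\xi_0$ if and only if $\xi(q,z)$ and $\xi(q,z')$ meet in a $4$-space through $q$. I expect the cleanest route is to pass to the universal projective model $\cE_6(\K)\subseteq\PG(26,\K)$, where $q\notin\langle\xi_0\rangle$ and the assignment $z\mapsto\xi(q,z)$ is induced by a linear projection and hence automatically respects the quadric polar structure; failing that, one argues directly by running through the mutual-position cases of Fact~\ref{factE6} for $q$, the symps $\xi(q,z),\xi(q,z')$ and the line $zz'\subseteq\xi_0$. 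Once this is in hand, the dual of every subsequent statement about $Y$ may be used freely, exactly as in the kangaroo setting.
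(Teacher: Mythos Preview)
Your approach coincides with the paper's: fix $y\in Y$, pick a host symp $\xi_0\not\ni y$, note that $\Psi_y=\{\xi(y,x):x\in O_0\}$ where $O_0=\xi_0\cap Y$, and conclude that $\Psi_y$ is an ovoid of $\Xi_y$ because $O_0$ is an ovoid of~$\xi_0$. The paper's proof is two sentences and leaves your ``hard part'' (that $\beta$ is a polar-space isomorphism) entirely implicit.

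For that step there is a cleaner route than either of your suggestions. Since $y$ is far from $\xi_0$, the point $y$ and the symp $\xi_0$ are \emph{opposite} vertices of the building $\mathsf{E_6}(\K)$ (types $1$ and $6$ being opposite in $\mathsf{E_6}$), and the map $z\mapsto\xi(y,z)$ is nothing but the building projection $\Res(\xi_0)\to\Res(y)$ between their residues. Projections between residues of opposite simplices are isomorphisms by standard spherical-building theory (cf.\ Proposition~3.29 of \cite{Tits:74}, which the paper invokes elsewhere), so $\beta$ is automatically an isomorphism of $\mathsf{D_5}$ polar spaces and carries ovoids to ovoids. This dispenses with both the $\PG(26,\K)$ model and the case analysis via Fact~\ref{factE6}.
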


\begin{proof}
Let $y\in Y$ and select $\xi\in\Psi$ with $y\notin\xi$ (this exists by Lemma~\ref{PPP}). The members of $\Psi$ through $y$ are given by the symps $\xi(y,x)$, with $x\in \xi\cap Y$. Since $\xi\cap Y$ is an ovoid of $\xi$, the assertion follows. 
\end{proof}

Now comes the crux of the argument.

\begin{lemma}\label{dense}
Every point $x$ of $\mathsf{E_{6,1}}(\K)$ belongs to some host symp, unique if $x\notin Y$.
\end{lemma}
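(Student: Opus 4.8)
The goal is to show that every point $x$ of $\mathsf{E_{6,1}}(\K)$ lies in at least one host symp, and that this host symp is unique when $x\notin Y$. The plan is to separate the two claims and to exploit the projective plane structure $(Y,\Psi)$ established in Lemma~\ref{PPP}, together with its dual version from Lemma~\ref{Dual}, as the main combinatorial engine.

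\textbf{Existence.} First I would handle the case $x\in Y$: here $x$ lies on many host symps automatically, since through each point of the projective plane $(Y,\Psi)$ there pass at least three lines (members of $\Psi$). So assume $x\notin Y$. The idea is to produce a point $y\in Y$ collinear to $x$, or at least symplectic to $x$, and then build a host symp through $x$ from the $Y$-ovoid structure. More precisely, I would fix any host symp $\xi\in\Psi$ and analyse the mutual position of $x$ and $\xi$ using Fact~\ref{factE6}: the point $x$ is either contained in $\xi$, or close to $\xi$ (collinear to a $4'$-space), or far from $\xi$ (collinear to no point). If $x$ is close to $\xi$, then $x^\perp\cap\xi$ is a $4'$-space, which meets the $Y$-ovoid of $\xi$ in at least one point $y\in Y$; then $x\perp y$, and I would take a host symp through $y$ (there are several, by the plane structure) and argue that one of them, or the symp $\xi(x,\cdot)$ built from a second $Y$-point, passes through $x$. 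The delicate point is that collinearity $x\perp y$ does not by itself give a host symp through $x$, so I would instead aim to locate a second point $y'\in Y$ with $x\in\xi(y,y')$; this requires choosing $y'$ so that $y'$ is symplectic to $x$ and so that the unique symp $\xi(x,y')$ also contains $y$. Because $(Y,\Psi)$ is a projective plane, through $y$ there is a pencil of host symps, and I would use the freedom in this pencil (via the dual $Y$-ovoid property of Lemma~\ref{Dual}) to arrange that one of them contains $x$.

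\textbf{Uniqueness.} For $x\notin Y$, suppose $x$ lies in two distinct host symps $\xi_1,\xi_2\in\Psi$. By Lemma~\ref{twohost}, two host symps meet in a unique point which necessarily lies in $Y$. Hence $\xi_1\cap\xi_2=\{x\}$ would force $x\in Y$, contradicting $x\notin Y$. Thus the host symp through a point outside $Y$ is unique, which disposes of the uniqueness clause immediately.

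\textbf{Main obstacle.} I expect the genuine difficulty to lie entirely in the existence part, specifically in the ``far'' case of Fact~\ref{factE6}$(iii)$: if $x$ happens to be far from every host symp one starts with, there is no immediate $Y$-point collinear or symplectic to $x$ to anchor a host symp. To overcome this I would argue by a counting or covering argument showing that the host symps, being parametrised by the lines of the projective plane $(Y,\Psi)$, cover all points of $\mathsf{E_{6,1}}(\K)$; the diameter-$2$ property (Fact~\ref{factE6}$(i)$) guarantees every point is at distance at most $2$ from each $Y$-point, so the ``far'' configuration can be ruled out by moving to a suitably chosen member of $\Psi$ using Lemma~\ref{closefar}, which produces, for any two distinct symps, a point far from one and close to the other. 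The careful bookkeeping needed to convert ``close to some host symp'' into ``contained in some host symp'' is where the real work will be.
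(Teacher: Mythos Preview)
Your uniqueness argument is correct and identical to the paper's.  For existence, however, there are two gaps.

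First, your claim that ``collinearity $x\perp y$ does not by itself give a host symp through $x$'' is wrong---it does, directly via Lemma~\ref{Dual}.  Dualise: in $\mathsf{E_{6,6}}(\K)$ the point $y\in Y$ becomes a symp, the line $xy$ becomes a $4$-space (a generator) inside that symp, and by Lemma~\ref{Dual} the members of $\Psi$ through $y$ form an ovoid there.  Hence that generator meets the ovoid in exactly one point, which is the unique host symp containing the line $xy$, and in particular containing~$x$.  So once you have $x\perp y\in Y$ you are done in one step; your detour through a second $Y$-point $y'$ is unnecessary.

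Second, and more seriously, your plan for the ``far'' case is not a proof.  Neither a counting/covering heuristic nor Lemma~\ref{closefar} gives what you need.  The paper's resolution is a short bootstrap: assume for contradiction that $x$ is collinear to \emph{no} point of $Y$.  Pick any $y\in Y$ and any $z\in x^\perp\cap y^\perp$ (such $z$ exists since $\mathsf{E_{6,1}}(\K)$ has diameter~$2$).  Then $z\perp y\in Y$, so by the first step $z$ lies in some host symp $\xi$.  Now $x\perp z\in\xi$, so $x$ is close to $\xi$ and $x^\perp\cap\xi$ is a $4'$-space---a generator of $\xi$---which therefore meets the $Y$-ovoid of $\xi$.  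This gives a point of $Y$ collinear to $x$, contradicting the assumption.  Hence every point is collinear to some $Y$-point and the ``far'' case never arises.  This intermediate-point trick is the missing idea in your proposal.
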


\begin{proof}
We apply approximately the same technique as in the proof of Lemma~\ref{fixedsymp}. So suppose first that $x\perp y\in Y$. By Lemma~\ref{Dual}, there is a unique member $\xi\in\Psi$ containing the line $xy$ (in the residue of $y$, the line $xy$ is a $4$-space of $\mathsf{E_{6,6}}(\K)$). Hence we may assume $x$ is not collinear to any member of $Y$. Let $y\in Y$ be arbitrary and select $z\in x^\perp\cap y^\perp$. Then $z$ is contained in a host symp $\xi$ by the foregoing. But $x^\perp\cap\xi$ is a $4'$-space, and so contains a member of $Y$, contradicting our hypothesis. The lemma is proved, taking into account Lemma~\ref{twohost}..  
\end{proof}

We can now show:

\begin{prop}\label{convers}
The collineation $\theta$ is a kangaroo.
\end{prop}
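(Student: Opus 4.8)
The plan is to show that a collineation $\theta$ pointwise fixing the set $Y$ (satisfying (VV1) and (VV2)) maps no point to a distinct collinear point, while having fixed points and moving at least one point to distance $2$. The fixed points and the existence of a point moved to distance $2$ are immediate: $Y$ itself consists of fixed points, and by (VV2) the set $Y$ is not contained in a symp, so $\theta\neq\id$ (indeed some point must be moved). Thus the entire content is the ``no point is mapped to a collinear point'' condition, which by \cref{kangcharact}$(\ref{longroot})$ (or directly from the definition) is what makes $\theta$ a kangaroo. The key leverage is the \emph{density} result \cref{dense}: every point $x\notin Y$ lies in a \emph{unique} host symp $\xi_x$, and every point of $Y$ lies in a host symp as well.

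First I would suppose for a contradiction that some point $x$ is mapped to a distinct collinear point $x^\theta$, so $x\notin Y$ (points of $Y$ are fixed). By \cref{dense}, $x$ lies in a unique host symp $\xi_x$. The crucial observation is that $\theta$ stabilises the set of host symps: since $\theta$ pointwise fixes $Y$, it maps any $Y$-ovoid to a $Y$-ovoid, hence any host symp to a host symp, and by uniqueness $\xi_x^\theta=\xi_{x^\theta}$. But if $x\perp x^\theta$, then the line $xx^\theta$ meets $Y$ in at most a point (by (VV1), since $Y\cap\xi$ is an ovoid, collinear $Y$-points cannot occur), and I would analyse the position of $\xi_x$ relative to its image. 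The cleanest route is to restrict $\theta$ to a suitable fixed host symp through a point collinear to both $x$ and some element of $Y$, and invoke that the restriction is a diligent kangaroo there (whose fixed ovoid is exactly $Y\cap\xi$ by \cref{nocollE6}); the characterisation in \cref{kangcharact} then forbids a collinear image inside the symp.

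More precisely, I would run the following argument. Take $y\in Y$; if $x\perp y$ for some $y\in Y$ then by \cref{Dual} the line $xy$ lies in a unique host symp $\xi$, which is fixed by $\theta$ (its $Y$-ovoid is fixed pointwise), and inside $\xi$ the collineation $\theta$ fixes an ovoid and moves $x$ to a collinear point, contradicting \cref{busyovoid} together with \cref{kangcharact}$(\ref{ideal})$ applied to the restriction $\theta|_\xi$. If instead $x$ is collinear to no point of $Y$, I would choose $z\in x^\perp\cap y^\perp$ for some $y\in Y$; by \cref{dense}, $z$ lies in a host symp, and then $x^\perp$ meets that symp in a $4'$-space which by the ovoid property contains a point of $Y$, contradicting that $x$ is collinear to no point of $Y$. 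This reduces everything to the first case.

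The main obstacle I anticipate is the transition between the ``local'' statement (that $\theta|_\xi$ is a diligent kangaroo of a single fixed host symp) and the ``global'' conclusion. Specifically, one must be sure that $Y\cap\xi$ is genuinely the full fixed-point ovoid of $\theta|_\xi$ rather than a proper subset, and that no fixed point of $\theta$ inside $\xi$ lies outside $Y$; this is where \cref{nocollE6} (no two fixed points collinear) and the defining property (VV1) must be combined carefully. Once it is established that $\theta|_\xi$ fixes precisely an ovoid and that a point $x\in\xi$ is moved to a collinear point, \cref{busyovoid} and the equivalence $(\ref{ideal})\Rightarrow(\ref{kang})$ of \cref{kangcharact} deliver the contradiction, proving that $\theta$ maps no point to a collinear one, hence that $\theta$ is a kangaroo.
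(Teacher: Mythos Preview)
Your approach is essentially the paper's: place any point $x$ in a host symp $\xi$ via \cref{dense}, note that $\xi$ is fixed by $\theta$ (its $Y$-ovoid is fixed pointwise and determines it), and then invoke \cref{kangcharact} for $\theta|_\xi$ to conclude $x^\theta\not\perp x$. The paper does this in three lines, citing \cref{dense} directly rather than re-running its case split on whether $x$ is collinear to some $y\in Y$; your second case is literally re-proving half of \cref{dense}.

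Your ``main obstacle'' paragraph, however, introduces a spurious concern and a circular fix. You do \emph{not} need to know that $Y\cap\xi$ is the full fixed-point set of $\theta|_\xi$. The argument behind the implication $(\ref{ideal})\Rightarrow(\ref{kang})$ in \cref{kangcharact} (in the ovoid case $k=0$) only uses that an ovoid $O$ is pointwise fixed: from $\theta|_O=\id$ one gets $x^\perp\cap O=(x^\theta)^\perp\cap O=(xx^\theta)^\perp\cap O$, which places the ovoid $x^\perp\cap O$ of $\Res_\xi(x)$ inside the perp of the point $xx^\theta$, an impossibility. So simply cite that implication (or its proof) and stop. Your proposed remedy via \cref{nocollE6} would be circular, since that lemma is proved under the standing hypothesis that $\theta$ is already a kangaroo of $\mathsf{E_{6,1}}(\K)$. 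Likewise, \cref{busyovoid} points the wrong way---it deduces the ovoid from the diligent kangaroo, not conversely---so it should not be cited here.
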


\begin{proof}
Let $x\in X$ be arbitrary. By Lemma~\ref{dense}, $x$ belongs to a symp $\xi\in\Psi$. Since $\xi$ contains at least two non-collinear fixed points, it is fixed by $\theta$. Then Proposition~\ref{kangcharact} implies that $\theta$ restricted to $\xi$ is a kangaroo. So $x^\theta\not\perp x$. 
\end{proof}

We have now everything in place to prove Theorem~\ref{kangaroo}.

\subsection{Proof of Theorem~\ref{kangaroo}}

Since $\cE_{6}(\K)$ is, by \cite{Ron-Smi:85}, the so-called \emph{universal embedding} of $\mathsf{E_{6,1}}(\K)$, the collineation $\theta$ extends to a collineation of $\PG(26,\K)$. We distinguish between two cases.

\textit{Case 1: Suppose $\theta$ is a linear collineation.} By Lemmas~\ref{linear_ovoid},~\ref{generator} and~\ref{nocollE6}, each member of $\Phi$ is a quadric of Witt index 1 in some $5$-space of $\PG(26,\K)$. Since different such $5$-spaces are contained in distinct symps, they pairwise intersect in unique points; also, each such $5$-space intersects $X$ in that quadric of Witt index 1. Hence Main Result~4.3 of \cite{ADS-HVM} implies that $(F,\Phi)$ is a quaternion Veronese variety in a $14$-dimensional subspace $U$ of $\PG(26,\K)$. Since $\theta$ fixes $U$ pointwise, no other point of $X$ is contained in $U$ and so $F=U\cap\cE_6(\K)$. 

Now, conversely,  suppose some collineation $\theta$ pointwise fixes a quaternion Veronese variety on $\cE_6(\K)$ arising as intersection with a subspace $U$.  Then Proposition~\ref{convers} implies that $\theta$ is a kangaroo.

\begin{remark}\label{remexE74}
We will classify all linear kangaroo collineations in Section~\ref{existenceE74E6}, that is, we will show that such a collineation exists whenever the field $\K$ admits a quaternion division algebra, or an inseparable extension of degree 4, hence whenever there exists a separabale or inseparable quaternion Veronesean over $\K$ (and then, there also exists one inside $\mathsf{E_{6,1}}(\K)$, and it will turn out that isomorphic quaternion algebras give rise to projectively equivalent Veroneseans and conjugate collineations). Moreover,  we will also determine the abstract isomorphism type of the group pointwise fixing a given quaternion Veronesean. 
\end{remark}
%Let $(F,\Phi)$ be as above. For a fixed symp $\xi$, let $F_\xi$ be as above (namely, $F_\xi=F\cap\xi$). 

\textit{Case 2: Now let $\theta$ be a semilinear kangaroo collineation.} By Lemmas~\ref{semilinear_ovoid},~\ref{generator} and~\ref{nocollE6}, the restriction of $\theta$ to each subspace $\<\xi\>$ of $\PG(26,\K)$, with $\xi\in\Phi$, is a Baer involution whose fixed point set intersects $\xi$ in an ovoid spanning the $9$-space $\<\xi\>$. Let $x$ be any point of $X$. Lemma~\ref{fixedsymp} (for $x^\theta\neq x$) and the dual of Lemma~\ref{nocollE6}  (for $x^\theta=x$) imply that $x$ is contained in a fixed symp $\xi$. In the latter, $\theta$ induces an involution, hence $x^{\theta^2}=x$, implying that $\theta$ is a Baer involution in $\PG(26,\K)$, say fixing the projective space $\PG(26,\FF)$, with $\K/\FF$ quadratic. It follows that $F$ is the intersection of $X$ with $\PG(26,\FF)$, that each intersection $\xi\cap \PG(26.\FF)$ is a quadric $Q_\xi$ of Witt index 1 in some $9$-space, that, for each pair of such distinct quadrics $Q_\xi$ and $Q_\zeta$, the intersection $\<Q_\xi\>\cap\<Q_\zeta\>$ is a singleton, and that each quadric $Q_\xi$ has trivial nucleus.   Main Result~4.3 of \cite{ADS-HVM} again implies that $(F,\Phi)$ is a (separable) octonion Veronese variety in $\PG(26,\FF)$. The converse again follows from Proposition~\ref{convers}.

%The linear case is handled in Proposition~\ref{linearcase}. Now let $\theta$ be semilinear.  semilinear type preserving collineation fixing an ovoid $F_\xi$ (by Lemma~\ref{nocollE6} again). Now, by Lemma~\ref{semi-invol}, $(x^\theta)^\theta=x$ and so $\theta$ is an involution. The same reference implies that $F_\xi$ generates $\<\xi\>$. Now considering three fixed symps ``in a triangle'', we deduce that $F$ generates $\PG(26,\K)$. Hence $\theta$ is a Baer involution,  Galois. But $F\subseteq\PG(26,\FF)$ and for each fixed symp $\xi$, the set $F_\xi$ is precisely the intersection of $\xi$ with $\PG(26,\FF)$. Hence $F_\xi$ is a quadric in a $9$-space of $\PG(26,\FF)$. The ambient subspaces of two such quadrics intersect in exactly one point and so by  Note that indeed it cannot be an inseparable Veronese variety by Lemma~\ref{separable}.    

\section{Existence and uniqueness of quaternion Veroneseans in $\mathsf{E_{6,1}}(\K)$}  \label{existenceE74E6}

In this section we show a kind of converse, or rather `addition' to Theorem~\ref{kangaroo}, see also Remark~\ref{remexE74}. Roughly, our aim is to prove that every equivalence class of norm forms of a quaternion algebra over the field $\K$ gives rise to a projectively unique quaternion Veronese variety contained in  $\cE_6(\K)$ (via $\mathsf{E_{6,1}}(\K)$) and also in a subspace of dimension $14$ in $\PG(26,\K)$. Also, we determine the pointwise stabiliser, both as a subgroup of the automorphism group of $\mathsf{E_{6,1}}(\K)$ and as an abstract group. We start with the case of a separable quaternion Veronese variety.

Form here on, it is convenient to use the letter $X$ for elements of a quaternion algebra over $\K$, and not anymore for the point set of $\mathsf{E_{6,1}}(\K)$. This will cause no confusion. 

\subsection{Linear ovoids of polar spaces of type $\mathsf{D_5}$}

We have seen that a linear kangaroo collineation of $\mathsf{E_{6,1}}(\K)$ pointwise fixes a quaternion veronesean subvariety $\cV(\K,\HH)$ of $\cE_{6}(\K)$, with $\HH$ a quaternion algebra over $\K$. Recall the notion of \emph{host spaces} of $\cV(\K,\HH)$ from \cref{secQOVV}, which are $5$-dimensional subspaces of $\PG(26,\K)$ here.  Let $\PG(5,\K)$ be an arbitrary host space of $\cV(\K,\HH)$ with corresponding quadric $O$. Then we can choose coordinates in such a way that points are labeled with $(x_1,x_2,X)\in\K\times\K\times\HH$ and $O$ has equation $x_1x_2=X\overline{X}$. Let $\OO'$ be the split octonion algebra obtained by the Cayley-Dickson doubling process applied to $\HH$ with primitive element 1, that is, a generic element of $\OO'$ is a pair $(a,b)$ of quaternions, with multiplication $(a,b)\cdot(c,d)=(ac+d\overline{b},\overline{a}d+cb)$, where $x\mapsto\overline{x}$ is the standard involution in $\HH$. Consider the bilinear form $\beta:\OO'\times\OO'\rightarrow \K:(X,Y)\mapsto X\overline{Y}+Y\overline{X}$, where $\overline{(a,b)}=(\overline{a},-b)$, for all $a,b\in\HH$. Then $\beta$ is associated to---or is the linearization of---the quadratic form $q:\OO'\mapsto \K:X\mapsto X\overline{X}$, which is a split form. Hence coordinatizing $\PG(9,\K)$ as $(x_1,x_2,X)\in\K\times\K\times\OO'$, the equation $x_1x_2=X\overline{X}$ describes a hyperbolic polar space $Q$ isomorphic to $\mathsf{D_{5,1}}(\K)$. Clearly $O$ is contained in $Q$ in a standard way. By the free choice of coordinates we see that the embedding of $O$ in $Q$ is unique up to a collineation of $Q$.

\subsection{A construction of $\mathcal{E}_6(\K)$}\label{cartan}

Let $V$ be a $27$-dimensional vector space over the field $\K$, and write $V$ as $V=\K\times\K\times\K\times\OO'\times\OO'\times\OO'$. Write a generic vector in $V$ as $(x_1,x_2,x_3;X_1,X_2,X_3)$. Then, by the last section of \cite{SSMV}, the following equations determine 27 (degenerate) quadrics in $\PG(V)$, whose intersection is precisely $\mathcal{E}_6(\K)$:
$$\begin{array}{llll} &x_2x_3=X_1\overline{X}_1, \hspace{2cm} {}&X_2X_3=x_1 \overline{X}_1,\\
\mbox{}\hspace{3.5cm}\mbox{}&x_3x_1=X_2\overline{X}_2, &X_3X_1=x_2 \overline{X}_2,&\mbox{}\hspace{3.5cm}(*) \\
&x_1x_2=X_3\overline{X}_3, &X_1X_2=x_3 \overline{X}_3. 
\end{array}$$

We also use the notation $\mathcal{E}_6(\K)$ for the set of vectors of $V$ whose coordinates satisfy the 27 quadratic equations. It is shown in \cite{SSMV} for $|\K|>2$ that $\mathcal{E}_6(\K)$ is the \emph{projective closure} of the image of the (split octonion) Veronese map $\rho': \OO'\times\OO'\rightarrow V: (X,Y)\mapsto (X\overline{X},Y\overline{Y},1,Y,\overline{X}, X\overline{Y})$, that is, it is the smallest point set with the property that, whenever all points except possibly one of a line is contained in the point set, then all points of that line are contained in it. (This can also easily be seen directly: the given vectors satisfy the above equation, as can be checked by simple calculations, and since the $\mathcal{E}_6(\K)$ is projectively closed, the image of $\rho'$ is contained   in $\mathcal{E}_6(\K)$; a vector with coordinates $(*,*,1,Y,\overline{X},*)$ satisfying the above equations $(*)$ is readily seen to have coordinates $(X\overline{X},Y\overline{Y},1,Y,\overline{X}, X\overline{Y})$.)

Restricting $\OO'$ to $\HH$, we now see that the image $\cV(\K,\HH)$ of the  quaternion Veronesean map  $\rho:\HH\times\HH\times\HH\rightarrow V:(x,y,z)\mapsto (x\overline{x},y\overline{y},z\overline{z},y\overline{z},z\overline{x},x\overline{y})$ is fully contained in $\mathcal{E}_6(\K)$. Our goal is to show that this containment is projectively unique and that it is pointwise fixed by a non-trivial group of automorphisms isomorphic to the multiplicative group $\HH^\times$. We refer to this containment as a \emph{standard inclusion of the quaternion Veronese variety in $\mathsf{E_{6,1}}(\K)$}. Since it depends on the given representation of $\mathsf{E_{6,1}}(\K)$, the projective uniqueness is not obvious.
\subsection{Collineations fixing $O_1$ and $p_1$}\label{coll}
As a first step we determine the group of collineations of $\mathcal{E}_6(\K)$ fixing three points of a fixed ovoid $O_1$ of $\cV(\K,\HH)$ and an additional point $p_1$ of $\cV(\K,\HH)\setminus O_1$. We may choose $O_1$ to be the image $\{\rho(0,y,z)\mid y,z\in\HH\}$, the three points to be $\rho(0,1,0),\rho(0,0,1)$ and $\rho(0,1,1)$, and $p_1$ to be the point $(1,0,0,0,0,0)$. Then a general collineation of $\mathcal{E}_6(\K)$ fixing these four points can be written as $(x,y,z,X,Y,Z)\mapsto (kx,y,z,X^{\theta_1},Y^{\theta_2},Z^{\theta_3})$, with $k\in\K$ and $\theta_1,\theta_2,\theta_3$ linear maps acting on $\OO'$ (as a vector space over $\K$). In addition, $\theta_1$ stabilises $\HH\subseteq\OO'$ and fixes $1\in\HH$. Since the bilinear form $\beta$ must be preserved, up to a scalar, we can write $(a,b)^{\theta_1}=(a^\theta,b^{\theta^*})$. Then $(*)$ yields \begin{eqnarray}Y^{\theta_2}Z^{\theta_3}&=&k\overline{(\overline{YZ})^{\theta_1}}, \mbox{ for all }Y,Z\in\OO'.\label{eq1}\end{eqnarray} By the Principle of Triality and its consequences, see Theorem~3.2.1 and Lemma~3.3.2 of \cite{SV}, as soon as the first three equalities of $(*)$ are preserved, all equalities are preserved.  

Setting subsequently $Y=1$ and $Z=1$ yields $Y^{\theta_2}=\overline{\overline{Y}^{\theta_1}}C_2$ and $Z^{\theta_3}=C_3\overline{\overline{Z}^{\theta_1}}$, for all $Y,Z\in\OO'$, and some constants $C_2=k(1^{\theta_3})^{-1},C_3=k(1^{\theta_2})^{-1}\in\OO'$. Setting $Y=Z=1$ we achieve $C_2C_3=k$. We obtain, replacing $Z$ with $\overline{Z}$ and $Y$ with $\overline{Y}$ in Equation~\ref{eq1}, and setting $\overline{C}_3=C$,
$$(ZY)^{\theta_1}=(Z^{\theta_1}C)(C^{-1}Y^{\theta_1}), \mbox{ for all }Y,Z\in\OO'.$$
Set $C=(c_1,c_2)$, $c_1,c_2\in\HH$. Suppose first $Z=(z,0)\in\HH$ and $Y=(y,0)\in\HH$. Then
$$(zy)^\theta(c_1\overline{c}_1-c_2\overline{c}_2,0)=(z^\theta c_1,\overline{z^\theta}c_2)(\overline{c}_1y^\theta,-y^\theta c_2), \mbox{ for all }y,z\in\HH,$$  which yields \begin{eqnarray}(zy)^\theta(c_1\overline{c}_1-c_2\overline{c}_2)&=&z^\theta y^\theta c_1\overline{c_2}-y^\theta z^\theta c_2\overline{c}_2, \label{eq2}\\
\overline{c}_1(\overline{z^\theta}y^\theta-y^\theta\overline{z^\theta})c_2&=&0,\label{eq3}\end{eqnarray}
for all $y,z\in\HH$. Clearly, since $\HH^\times$ is not commutative, Equation~\ref{eq3} implies either $c_1=0$ or $c_2=0$. If $c_2=0$, then $\theta_1$ is an automorphism of $\HH$ and both $\theta_2$ and $\theta_3$ preserve $\HH$. If $c_1=0$, then $\theta$ is an anti-automorphism and both $\theta_2$ and $\theta_3$ interchange $\{(x,0)\mid x\in\HH\}$ and $\{(0,x)\mid x\in\HH\}$. Setting $c_2=k=1$, $c_1=0$ and $X^\theta=\overline{X}$, for all $X\in\OO'$, we see that we do obtain an automorphism of  $\mathcal{E}_6(\K)$ preserving $O_1\cup\{p_1\}$ and mapping the ovoid $O_2=\{\rho((x,0),0,(z,0))\mid x,z\in\HH\}$ to $O_2'=\{\rho((0,x),0,(0,z))\mid x,z\in\HH\}$. We denote the corresponding quaternion Veronesean by $\cV(\K,\HH)'$. 
\subsection{Quaternion Veroneseans on $\cE_6(\K)$ that contain $O_1$ and $p_1$}\label{Qp}
Next we show that there are precisely two quaternion Veronesean varieties contained in $\mathcal{E}_6(\K)$ that contain $O_1$ and $p_1$. One contains $O_2$ and the other one $O_2'$. Denote the symps of $\cE_6(\K)$ determined by $O_1,O_2,O_3$ by $\xi_1,\xi_2,\xi_3$, respectively (with $O_3=\{\rho((x,0),(y,0),0)\mid x,y\in\HH\}$). Also, for a point $p$ far from two symps $\zeta_1,\zeta_2$, we call the map $\zeta_1\to\zeta_2:q\mapsto\xi(p,q)\cap\zeta_2$ the \emph{projection} (of $\zeta_1$) from $p$ onto $\zeta_2$.

Indeed, we first claim that, if a quaternion Veronesean variety $\cH$ contained in $\mathcal{E}_6(\K)$ and containing itself $O_1\cup\{p_1\}$, contains any point $p$ of $O_2\setminus\{p_1,p_3\}$ (with $p_3=\rho(0,0,1)$), then $\cH=\cV(\K,\HH)$ as defined earlier. Likewise, if $\cH$ contains a point of $O'_2\setminus\{p_1,p_3\}$, then $\cH=\cV(\K,\HH)'$.  Indeed, for each point $q\in O_1$, the symp $\xi(p,q)$ intersects $\xi_3$ in a unique point of $\cH$ that belongs to $O_3$, and varying $q$, all points of $O_3$ are obtained. Likewise, all points of $O_2$ are also contained in $\cH$. An arbitrary point $r\in\cH\setminus(O_1\cup O_2\cup O_3)$ is the intersection of $\xi(p_1,q_1)$ and $\xi(p_2,q_2)$, with $q_i\in O_i$, $i=1,2$, such that $r\in\xi(p_i,q_i)$. But then $r\in\cV(\K,\HH)$. The claim follows. 

By the previous claim, it suffices to show that no point $t_2$ of $\xi_2\setminus(O_2\cup O_2'\cup p_1^\perp\cup p_3^\perp)$ lies in a quaternion Veronesean of $\cE_6(\K)$ together with $O_1$ and $p_1$. Such a point $t_2$ can be written as $(Y\overline{Y},0,1,0,\overline{Y},0)$, with $Y\in\OO'$ invertible. On the other hand, a generic point $t_1$ of $\xi_1\setminus (p_2^\perp\cup p_3^\perp)$ can be written as $(0,1,X\overline{X},\overline{X},0,0)$, with $X\in\OO'$ invertible. We now seek $Z\in\OO'$ such that $t_1,t_2$ and $t_3=(1,Z\overline{Z},0,0,0,\overline{Z})$ are contained in the same symp. 

Consider the cubic form $C$ on $V$ given by $$C(x_1,x_2,x_3,X_1,X_2,X_3)= x_1x_2x_3-x_1X_1\overline{X}_1-x_2X_2\overline{X}_2-x_3X_3\overline{X}_3+(X_1X_2)X_3+\overline{X}_3(\overline{X}_2\overline{X}_1).$$

A tedious calculation shows that the linear combination of any pair of images under $\rho'$ vanishes under $C$. Taking projective closure, this implies that $C$ is the cubic form associated to $\cE_6(\K)$ (see \cite{Asc:87} and \cite{Mal-Vic:22}), as $\cE_6(\K)$ is determined by its host spaces. One checks that, for a triple of non-collinear points $r_1,r_2,r_3$ of $\cE_6(\K)$, the symps $\xi(r_1,r_2),\xi(r_2,r_3)$ and $\xi(r_1,r_3)$ have a nontrivial common intersection if and only if $C(r_1+r_2+r_3)=0$. Applying this to $t_1,t_2,t_3$, we obtain, after an elementary calculation, 
$$C(t_1+t_2+t_3)=0 \Longleftrightarrow 1+X\overline{X}.Y\overline{Y}.Z\overline{Z}+(\overline{X}\overline{Y})\overline{Z}+Z(YX)=0,$$
which is equivalent to $(1+Z(YX))(1+\overline{Z(YX)})=0$. It follows that the set of points of $\xi_3$ collinear to $t_3$, but not to $p_2$, is given by $$\{(1,Z\overline{Z},0,0,0,\overline{Z})\mid \exists N\in\OO', N\overline{N}=0, Z=(N-1)X^{-1}Y^{-1}\}.$$ One sees that the point with $N=0$, namely $(1,Z\overline{Z},0,0,0,\overline{Z})$ with  $Z=-X^{-1}Y^{-1}$, is collinear to all others of that set, and so $t_3=(1,Z\overline{Z},0,0,0,\overline{Z})$, with $Z=-X^{-1}Y^{-1}$. Note that also the ``cyclically obtained'' conditions hold: $Y=-Z^{-1}X^{-1}$ and $X=-Y^{-1}Z^{-1}$ (each one is equivalent to $ZYX=-1$). 

It follows that the projection $O^*_3$ of $O_1$ from $t_2$ onto $\xi_3$ consists of the set of points $$\{(1,(Y\overline{Y}x\overline{x})^{-1},0,0,0,-\overline{Y}^{-1}\overline{x}^{-1})\mid x\in\HH^\times\}\cup\{(p_1,p_2\}.$$

Now we project $O_3^*$ from each point of $O_1$ back on $\xi_2$. We obtain the set of points
$$O_2^*=\{Y\overline{Y}x\overline{x}(x'\overline{x}')^{-1},0,1,0,\overline{x}'^{-1}(\overline{x}\overline{Y}),0)\mid x,x'\in\HH^\times\}\cup\{p_1,p_3\}.$$

Setting $Y=(y_1,y_2)\in\HH\times\HH$ we easily compute $\overline{x}'^{-1}(\overline{x}\overline{Y})=(\overline{x}'^{-1}\overline{x}\,\overline{y}_1,-x'^{-1}xy_2)$. 

If $y_2\neq 0$, then setting $x'=1$, we see that the second component of each point of $O_2^*$ determines the first one. However, now taking arbitrary $x'$, we have the $O_3^*$-points $(\overline{x}'^{-1}\overline{x}\,\overline{y}_1,-x'^{-1}xy_2)$ and, setting $x'=1$ again and replacing $x$ by $x'^{-1}x$, we also have the point $(\overline{x}\,\overline{x}'^{-1}\overline{y}_1,-x'^{-1}xy_2)$, which must hence coincide with $(\overline{x}'^{-1}\overline{x}\,\overline{y}_1,-x'^{-1}xy_2)$. This is only possible if $y_1=0$. But then $O_2^*=O_2'$. If $y_2=0$, then $O_2^*=O_2$. We have shown that there are exactly two quaternion Veroneseans in $\cE_6(\K)$ containing $O_1$ and $p_1$, and that they are mutually projectively equivalent. 

\subsection{Collineations of $\cE_6(\K)$ that pointwise fix $\cV(\K,\HH)$}\label{fixQP}
If a collineation pointwise fixes $\cV(\K,\HH)$, then, using the notation of Paragraph~\ref{coll}, the associated linear map $\theta$ on $\HH$ is the identity. Then Equation~\ref{eq2} yields $c_2=0$. We also have that both $\theta_2$ and $\theta_3$ pointwise fix $\HH$. In particular, they both fix $1$, and so $C_2=C_3=k=1$. We easily deduce now $\theta_1=\theta_2=\theta_3$. By linearity of $\theta_1$, Equation~\ref{eq1} implies that $\theta_1$ is an automorphism of $\OO'$. Hence we have, still with the notation of Paragraph~\ref{coll},
$ac+d\overline{b}=ac+d^{\theta^*}\overline{b^{\theta^*}}$ and $(\overline{a}d+cb)^{\theta^*}=\overline{a}d^{\theta^*}+cb^{\theta^*}$. We readily deduce that $x^{\theta^*}=xa$, with $a\overline{a}=1$, $a\in\HH$. Conversely, every such automorphism pointwise fixes $\cV(\K,\HH)$. Hence the group that pointwise fixes $\cV(\K,\HH)$ inside $\cE_6(\K)$ is isomorphic to the multiplicative group of norm 1 elements of $\HH$. 
%$k\overline{z}\overline{y}= \overline{y}k(0,1).(0,-1)\overline{z}=k. (0,y).(0,-\overline{z}) = -k. \overline{z}\overline{y}$

Now we turn to the case where the fixed point structure is a Veronesean over an inseparable extension of degree 4.
\subsection{The inseparable case: Embedding into a split Cayley algebra}

The split Cayley algebra $\OO'$ over $\K$ can be defined with Zorn's matrices as follows. Write a generic element $X$ of the $8$-dimensional vector space $\OO'$ and its conjugate $\overline{X}$ in matrix form as follows (with $x_i\in\K$, $i=0,1,\ldots,7$).
$$X=\begin{pmatrix} x_0 & \begin{pmatrix} x_4 \\ x_5 \\ x_6 \end{pmatrix}\\ \begin{pmatrix} x_1 \\ x_2 \\ x_3 \end{pmatrix} & x_7 \end{pmatrix},\; \overline{X}=\begin{pmatrix} x_7 & \begin{pmatrix} -x_4 \\ -x_5 \\ -x_6 \end{pmatrix}\\
\begin{pmatrix} -x_1 \\ -x_2 \\ -x_3 \end{pmatrix} & x_0\end{pmatrix}.$$
We denote $X$ by $ (x_0,[x_1,x_2,x_3],[x_4,x_5,x_6],x_7)$.
 
Suppose $\K$ is a non-perfect field of characteristic $2$. Choose an element $\ell_1\in\K$ that is not a square and suppose that $\K^2+\ell_1\K^2\neq \K$.  Select an element $\ell_2\in\K\setminus (\K^2+\ell_1\K^2)$.
Then the quadruplets $(x_0,x_1,x_2,x_3)\in \K\times \K\times \K\times \K$, with componentwise addition and multiplication as follows, form an inseparable $4$-dimensional extension $\HH$ of $\K$:
\begin{eqnarray*}&&(x_0,x_1,x_2,x_3)\cdot(y_0,y_1,y_2,y_3)=\\
&&(x_0y_0+\ell_1x_1y_1+\ell_2x_2y_2+\ell_1\ell_2x_3y_3,
x_0y_1+x_1y_0+\ell_2x_2y_3+\ell_2x_3y_2,\\
&&x_0y_2+x_2y_0+\ell_1x_1y_3+\ell_1x_3y_1,
x_0y_3+x_1y_2+x_2y_1+x_3y_0).\end{eqnarray*}

In this structure, conjugation (the ``standard involution'') is the identity, and the norm is given by $\mathsf{n}(x_0,x_1,x_2,x_3)=x_0^2+\ell_1x_1^2+\ell_2x_2^2+\ell_1\ell_2x_3^2=(x_0,x_1,x_2,x_3)^2$.

Using the matrix representation, we can embed $\HH$ in $\cO'$ as follows. 
$$\HH\hookrightarrow\OO':(x_0,x_1,x_2,x_3)\mapsto  \begin{pmatrix} x_0 &  \begin{pmatrix} x_1 \\ x_2 \\ \ell_1\ell_2x_3 \end{pmatrix}\\ \begin{pmatrix}\ell_1x_1 \\ \ell_2x_2 \\ x_3 \end{pmatrix} & x_0 \end{pmatrix}.$$
Note that the restriction of the standard involution of $\OO'$ indeed acts trivially on $\HH$. Now the inseparable Veronese map $\rho$ is defined as follows:
$$\rho:\HH\times\HH\times\HH\rightarrow \K^{15}:(X,Y,Z)\mapsto (X^2,Y^2,Z^2,Y{Z},Z{X},X{Y}).$$
The image of $\rho$ in $\PG(14,\K)$ is an inseparable Veronese variety $\mathcal{V}(\K,\HH)$. It embeds in $\mathsf{E_{6,1}}(\K)$ just like the (inseparable) quaternion Veronese variety above.

Again, we want to determine all the automorphisms $\theta$ of $\mathcal{E}_6(\K)$  that fix the inseparable quaternion Veronese variety $\mathcal{V}(\K,\HH)$ pointwise. As with the separable case, one first shows that this is equivalent to finding the group of linear transformations of $\OO'$ (as a vector space over $\K$) pointwise fixing $\HH$ and acting as automorphisms of $\OO'$.  This can be done with elementary though tiresome calculations. Expressing that a general invertible $8\times8$ matrix $A$ with entries in $\K$ pointwise fixes $\HH$ and preserves  multiplication in $\OO'$, we obtain the following  expression for~$A$:

%Again every automorphism of $\PG(26,\K)$ can be represented as a non-singular matrix: an element in $\K^{27 \times 27}$ with determinant not $0$.

%With similar calculation as before, the matrix of $\theta$ is block-diagonal: $$\begin{pmatrix} \begin{matrix} 1 & & \\ & 1 & \\ & & 1 \end{matrix} & & & \\ & A & & \\ & & A &\\
%& & & A\end{pmatrix},
%$$
%with $A$ an $8\times 8$ matrix. The matrix $A$ is the following $8\times 8$ matrix:
$$A=\begin{pmatrix}
1+a & \ell_1b & \ell_2c & d & b & c & \ell_1\ell_2d & a \\
b & 1+a & \ell_2d & \ell_1^{-1}c & {\ell^{-1}_1}a & d & \ell_2c & b \\
c & \ell_1d & 1+a & {\ell^{-1}_2}b & d & {\ell^{-1}_2}a & \ell_1b & c \\
\ell_1\ell_2d & \ell_1\ell_2c & \ell_1\ell_2b & 1+a & \ell_2c & \ell_1b & \ell_1\ell_2a & \ell_1\ell_2d \\
\ell_1b & \ell_1a & \ell_1\ell_2d & c & 1+a & \ell_1d & \ell_1\ell_2c & \ell_1b \\
\ell_2c & \ell_1\ell_2d & \ell_2a & b & d_2d & 1+a & \ell_1\ell_2b & \ell_2c \\
d & c & b & {\ell^{-1}_1\ell^{-1}_2}a & {\ell^{-1}_1}c & {\ell^{-1}_2}b & 1+a & d \\
a & \ell_1b & \ell_2c & d & b & c & \ell_1\ell_2d & 1+a \end{pmatrix}
$$
with $a,b,c,d\in\K$ and with $a+a^2+\ell_1b^2+\ell_2c^2+\ell_1\ell_2d^2=0$. We will use the notation $A(a,b,c,d)$ for this matrix,  where we consider the quadruple $(a,b,c,d)$ as a member of $\HH$.  Conversely, every such matrix belongs to an automorphism of $\OO'$ pointwise fixing $\HH$.
%(non-identity) collineation with such a matrix fixes $\mathcal{V}(\K,\HH)$ pointwise and preserves  $\mathcal{E}_6(\K)$. 
Let $G$ be the group of all such automorphisms. %We can also write this condition as $\mathsf{n}(a,b,c,d)=a$. 
It is easily calculated that $A(a_1,b_1,c_1,d_1)A(a_2,b_2,c_2,d_2)=A((a_1,b_1,c_1,d_1)+(a_2,b_2,c_2,d_2))$. Hence $G$ is isomorphic to an additive subgroup of $\HH$. Let us call an element $(a,b,c,d)$ of $\HH$ \emph{admissible} if $a+a^2+\ell_1b^2+\ell_2c^2+\ell_1\ell_2d^2=0$, that is, $a=(a,b,c,d)^2$, and note that the square is the norm, which we denote by $\mathsf{n}(a,b,c,d)$. Then we can be more specific about the group $G$ (in paricular, prove that it is not trivial):

\begin{prop}
The set of admissible elements of $\HH$ equals $\{0\}\cup\{(1,b,c,d)^{-1}\mid b,c,d\in\K\}$. 
\end{prop}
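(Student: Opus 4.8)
The plan is to exploit two elementary facts: that in this algebra the square of an element is a scalar multiple of the identity equal to its norm, and that the norm form is anisotropic because $\HH$ is a field. First I would record that for $h=(x_0,x_1,x_2,x_3)\in\HH$ one has $h^2=\mathsf{n}(h)\cdot 1$, where $1=(1,0,0,0)$ and $\mathsf{n}(h)=x_0^2+\ell_1 x_1^2+\ell_2 x_2^2+\ell_1\ell_2 x_3^2$; this is immediate from the multiplication table, since all mixed terms occur in equal pairs and vanish in characteristic $2$. Consequently, admissibility of $(a,b,c,d)$, which reads $a+a^2+\ell_1 b^2+\ell_2 c^2+\ell_1\ell_2 d^2=0$, is exactly the condition $a=\mathsf{n}(a,b,c,d)$: the first coordinate of the element equals its norm. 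Since $\HH$ is a field, the norm form is anisotropic, because $\mathsf{n}(h)=0$ forces $h^2=0$ and hence $h=0$. I would also record the inversion formula $x^{-1}=\mathsf{n}(x)^{-1}x$, valid because $\HH$ is commutative with trivial involution, so $x\cdot(\mathsf{n}(x)^{-1}x)=\mathsf{n}(x)^{-1}(x^2)=1$.

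For the forward inclusion, $0$ is trivially admissible. Given a nonzero admissible element $h=(a,b,c,d)$, I would first argue that $a\neq 0$: if $a=0$, then admissibility yields $\mathsf{n}(h)=a=0$, whence $h=0$ by anisotropy, a contradiction. Setting $g:=a^{-1}h=(1,b/a,c/a,d/a)$, which has first coordinate $1$, a short computation using $\mathsf{n}(\lambda x)=\lambda^2\mathsf{n}(x)$ together with the admissibility relation $\mathsf{n}(h)=a$ gives $\mathsf{n}(g)=a^{-2}\mathsf{n}(h)=a^{-1}$. Then $g^{-1}=\mathsf{n}(g)^{-1}g=a\cdot(a^{-1}h)=h$, so $h=g^{-1}=(1,b/a,c/a,d/a)^{-1}$ is of the required form.

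For the reverse inclusion I would take $g=(1,b,c,d)$ and set $N:=\mathsf{n}(g)=1+\ell_1 b^2+\ell_2 c^2+\ell_1\ell_2 d^2$, which is nonzero by anisotropy since $g\neq 0$. Then $g^{-1}=N^{-1}g=(N^{-1},N^{-1}b,N^{-1}c,N^{-1}d)$ has first coordinate $N^{-1}$, while $\mathsf{n}(g^{-1})=N^{-2}\mathsf{n}(g)=N^{-2}\cdot N=N^{-1}$; the two coincide, so $g^{-1}$ is admissible. This yields the claimed equality of sets.

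There is no genuine obstacle here: the whole statement collapses to the observations that the square equals the norm and that the norm is anisotropic. The only point demanding a little care is the bookkeeping with inverses, and in particular checking that the defining relation of admissibility is precisely what forces the norm of the normalised element $g=a^{-1}h$ to equal $a^{-1}$, which is exactly what makes $g^{-1}=h$ and thereby closes the forward inclusion.
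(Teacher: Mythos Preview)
Your proof is correct and follows essentially the same route as the paper: both hinge on the identity $h^2=\mathsf{n}(h)$, rewrite admissibility as ``first coordinate equals norm'', scale a nonzero admissible $h$ by $a^{-1}$ to get an element with first coordinate~$1$, and then invert. You make the step $a\neq 0$ (via anisotropy) and the inversion formula $x^{-1}=\mathsf{n}(x)^{-1}x$ more explicit than the paper does, but the argument is the same.
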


\begin{proof}
Suppose first that $(a,b,c,d)$ is admissible and distinct from $0$. Then $$0=a^{-2}(a+a^2+\ell_1b^2+\ell_2c^2+\ell_1\ell_2d^2)=a^{-1}+(1+\ell_1b'^2+\ell_2c'^2+\ell_1\ell_2d'^2),$$
with $b'=ba^{-1}$, $c'=ca^{-1}$ and $d'=da^{-1}$. Hence on the one hand $a=\mathsf{n}(a,b,c,d)$, and on the other hand $a=\mathsf{n}(1,b',c',d')^{-1}$. Since the norm is just squaring, we obtain $$(a,b,c,d)=(1,b',c',d')^{-1}.$$

Conversely, for all $b,c,d\in\K$, on can write $(1,b,c,d)^{-1}=(n^{-1},bn^{-1},cn^{-1},dn^{-1})$, with $n=\mathsf{n}(1,b,c,d)$. Then clearly $$\mathsf{n}(n^{-1},bn^{-1},cn^{-1},dn^{-1})=n^{-2}n=n^{-1},$$ which implies that $(n^{-1},bn^{-1},cn^{-1},dn^{-1})$, and hence $(1,b,c,d)^{-1}$, is admissible. 
\end{proof}

So the group $G$ is a subgroup of $\HH$ each element of which depends on three free variables over $\K$. Compare this with the separable quaternion case, where $G$ was the subgroup of $\HH^\times$ of elements of norm 1, hence in a certain sense also has dimension $3$.

It remains to show that all Veronesean varieties isomorphic to $\mathcal{V}(\K,\HH)$ and contained in $\cE_6(\K)$ are mutually equivalent. In fact we will show that there is a unique $\mathcal{V}(\K,\HH)$ containing a given ovoid $O$ of a given symp $\xi$ of $\cE_6(\K)$ arising from the inclusion $\HH\subseteq\OO'$, and a given point $p$ of $\cE_6(\K)$ opposite $\xi$. 

\subsection{The projective uniqueness of $\mathcal{V}(\K,\HH)$ in $\cE_6(\K)$ in the inseparable case}
As in the separable quaternion case above, we are looking for elements $Y\in\OO'$ such that $S_Y:=\{xY\mid x\in\HH\}=\{x'(xY)\mid x',x\in\HH\}=:S_Y^*$. This time we will show that $Y$ itself has to belong to $\HH$, which implies that $\mathcal{V}(\K,\HH)$ inside $\cE_6(\K)$ is determined by the points in one symp (containing at least two points of $\mathcal{V}(\K,\HH)$) and an extra point. 

In order to do so, it is rather convenient to use the following form of $\OO'$. Let $\LL$ be the split (separable) extension $\K(t)$ of $\K$ defined by the element $t$ with $t^2=t$. Then the corresponding natural involution $x\mapsto\overline{x}$, $x\in\LL$, interchanges $t$ with $t+1$. Now we apply to $\LL$ the Cayley-Dickson process twice: first using $\ell_1$ as a primitive element, the second time using $\ell_2$. Then we obtain $\OO'$ as $\LL\times\LL\times\LL\times\LL$ with the following explicit form of the multiplication:
\begin{eqnarray*}(x_1,x_2,x_3,x_4)(y_1,y_2,y_3,y_4)&=&(x_1y_1+\ell_1 \overline{x}_2y_2+\ell_2\overline{x}_3y_3+\ell_1\ell_2 x_4\overline{y}_4, \\ &&\phantom{(}\overline{x}_1y_2+\phantom{\ell_1}{x}_2y_1+\ell_2\overline{x}_3y_4+\phantom{\ell_1}\ell_2 x_4\overline{y}_3,\\ &&\phantom{(} \overline{x}_1y_3+\ell_1 \overline{x}_2y_4+\phantom{\ell_1}x_3y_1+\phantom{\ell_2}\ell_1x_4\overline{y}_2, \\ && \phantom{(}x_1y_4+\phantom{\ell_1}{x}_2y_3+\phantom{\ell_1}{x}_3y_2+\phantom{\ell_1\ell_2}x_4\overline{y}_1),
\end{eqnarray*}
for all $x_1,\ldots,x_4,y_1,\ldots,y_4\in\LL$. The embedding $\HH\subseteq\OO'$ is via $\HH=\K\times\K\times\K\times\K$. %Now notice that $(\overline{(y_1,y_2,y_3,y_4)=(\overline{y}_1,y_2,y_3,y_4)$. 
It follows that,
\begin{eqnarray*}
(0,1,0,0)((0,0,1,0)(y_1,y_2,y_3,y_4))&=&(0,0,0,1)(\overline{y}_1,\overline{y}_2,\overline{y}_3,\overline{y}_4),
%\\
%(0,\ell_1^{-1}x_3,0,0)((0,0,0,1)(y_1,y_2,y_3,y_4))&=&(0,0,x_3,0)(\overline{y}_1,\overline{y}_2,\overline{y}_3,\overline{y}_4),\\
%(0,0,\ell_2^{-1}x_2,0)((0,0,0,1)(y_1,y_2,y_3,y_4))&=&(0,x_2,0,0)(\overline{y}_1,\overline{y}_2,\overline{y}_3,\overline{y}_4),
\end{eqnarray*}
and so, by the additivity of $S_Y$ (by its very definition), we see that $$(y_1+\overline{y}_1,y_2+\overline{y}_2,y_3+\overline{y}_3,y_4+\overline{y}_4)\in S_Y.$$ Hence $(S_Y\cap\HH)\setminus\{0\}\neq\emptyset$, say $0\neq u\in S_Y\cap \HH$, implying the existence of $x\in\HH^\times$ such that $xY=u$. Since $x$ is invertible, this yields $Y=x^{-1}u\in\HH$ and we are done. 

Taking everything together we have proved:
%The next proposition is proved in exactly the same way as the (true) quaternion case above (see Proposition~\ref{existE6}).

\begin{prop}\label{existE6}
Suppose the field $\K$ either admits a ---separable or inseparable--- quaternion division algebra $\HH$. Then any standard inclusion of the Veronese variety $\cV\cong\mathcal{V}(\K,\HH)$ in $\mathsf{E_{6,1}}(\K)$ is projectively unique and admits a nontrivial group of linear kangaroo collineations each nontrivial element of which pointwise fixes exactly $\cV$. 
\end{prop}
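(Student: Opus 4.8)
The plan is to assemble the results of the preceding subsections. Two assertions must be established: the projective uniqueness of a standard inclusion of $\cV\cong\mathcal{V}(\K,\HH)$, and the existence of a nontrivial group of linear kangaroo collineations fixing exactly $\cV$ pointwise. The separable and inseparable cases are treated in parallel, drawing in the former on Subsections~\ref{coll}, \ref{Qp} and~\ref{fixQP}, and in the latter on the matrix computation and the description of admissible elements obtained above.

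First I would settle existence together with the identification of the fixed locus. In the separable case Subsection~\ref{fixQP} exhibits the linear collineations of $\cE_6(\K)$ fixing $\cV(\K,\HH)$ pointwise as those induced by $x\mapsto xa$ with $a\overline{a}=1$; these form a group isomorphic to the norm-$1$ subgroup $\mathrm{SL}_1(\HH)$ of $\HH^\times$, which is nontrivial since $\HH$ is a quaternion division algebra (in particular defined over an infinite field, with anisotropic and hence nonconstant norm form). In the inseparable case the fixing group consists of the automorphisms $A(a,b,c,d)$ with $(a,b,c,d)$ admissible, and the preceding proposition identifies the admissible elements as $\{0\}\cup\{(1,b,c,d)^{-1}\mid b,c,d\in\K\}$, so this group is likewise nontrivial. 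To see that each such nontrivial $\theta$ fixes \emph{exactly} $\cV$, I would pass to the kangaroo theory: the point set of $\cV$ satisfies (VV1) and (VV2) (each host symp meets it in a $Y$-ovoid, and it spans $\PG(14,\K)$, so lies in no single symp), whence $\theta$ is a kangaroo by Proposition~\ref{convers}. As $\theta$ is linear, Theorem~\ref{kangaroo} shows its fixed structure is $U'\cap\cE_6(\K)$ for the $14$-dimensional span $U'$ of that structure; since $\cV\subseteq\mathrm{Fix}(\theta)$ and $\langle\cV\rangle$ is itself $14$-dimensional, we get $\langle\cV\rangle=U'$, and as both $\cV$ and $\mathrm{Fix}(\theta)$ are the full intersection of this common span with $\cE_6(\K)$, it follows that $\mathrm{Fix}(\theta)=\cV$.

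It then remains to prove projective uniqueness, which is the genuine content of the statement. Given two standard inclusions, I would first normalise them to share data: in each, choose a host symp $\xi$ with its $Y$-ovoid $O$ and a point $p$ opposite $\xi$. Since the embedding of such an ovoid in a symp $\cong\mathsf{D_{5,1}}(\K)$ is unique up to a collineation of the symp (as shown in the analysis of linear ovoids of type $\mathsf{D_5}$) and the stabiliser of $\xi$ is transitive on the points opposite $\xi$, an automorphism of $\cE_6(\K)$ carries $(\xi,O,p)$ onto the fixed configuration $(\xi_1,O_1,p_1)$ of Subsection~\ref{coll}. With both inclusions now through $O_1\cup\{p_1\}$, Subsection~\ref{Qp} shows in the separable case that there are exactly two quaternion Veroneseans through this data and that they are mutually projectively equivalent, while in the inseparable case the uniqueness argument above forces the parameter $Y$ to lie in $\HH$, so the Veronesean is already determined by $O_1$ and $p_1$. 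Either way the two inclusions are projectively equivalent. I expect the main obstacle to be precisely this normalisation step: the heavy computations (the cubic-form calculation bounding the Veroneseans through $O_1,p_1$, and the proof that $Y\in\HH$) are already carried out in the cited subsections, so the delicate remaining point is to verify the homogeneity of the configuration $(\xi,O,p)$ under $\Aut(\cE_6(\K))$, which is what upgrades the local uniqueness to global projective uniqueness.
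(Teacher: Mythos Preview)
Your proposal is correct and follows essentially the same route as the paper, which simply prefaces the proposition with ``Taking everything together we have proved'' and relies on the preceding subsections. You are actually more explicit than the paper about how the pieces fit together.

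One point deserves tightening. In the normalisation step you assert that the stabiliser of $\xi$ is transitive on ovoids of the given type \emph{and} transitive on points far from $\xi$, and then conclude transitivity on pairs $(O,p)$. These two separate transitivities do not combine to give transitivity on the product without further argument. The clean fix is to observe that the unipotent radical of the parabolic stabilising $\xi$ acts trivially on $\xi$ (hence fixes $O$) while being transitive on the points far from $\xi$; alternatively, first normalise the opposite pair $(\xi,p)$ using building transitivity on opposite flags, and then note that the Levi of the resulting stabiliser acts on $\xi$ as the full $\mathsf{D_5}$ group, hence transitively on ovoids of the given type by Subsection~5.1. You rightly flag this homogeneity as the ``delicate remaining point'', so you are aware it needs care---it just needs one of these two observations to close.

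A minor remark on the ``exactly $\cV$'' step: your route through Proposition~\ref{convers} and Theorem~\ref{kangaroo} is valid, but a more direct argument is available. The linear collineation $\theta$ arises from an automorphism of $\OO'$ whose fixed subalgebra is precisely $\HH$, so the fixed subspace of $\theta$ in $\PG(26,\K)$ is exactly $\K^3\times\HH^3=\langle\cV\rangle$; intersecting with $\cE_6(\K)$ and using the explicit parametrisation of Section~5.2 gives $\mathrm{Fix}(\theta)=\cV$ immediately. Your dimension-comparison argument reaches the same conclusion, but relies on the assertion that $\cV=\langle\cV\rangle\cap\cE_6(\K)$, which is true but is itself most easily seen from that same parametrisation.
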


\begin{remark}\label{uniqueE6}
The results of \cref{Qp} show that, whenever a quaternion veronesean is embedded in $\mathsf{E_{6,1}}(\K)$ in such a way that each of its quadrics is contained in a symp as a classical ovoid (that is, the intersection of the symp with a subspace in its ambient projective space), and each point off a quadric is far from the corresponding symp of $\mathsf{E_{6,1}}(\K)$, then it is a standard inclusion.  
\end{remark}
%In \cite{PVMexc4} we will weaken the condition that no pair of points of $\cV$ is collinear, also allowing projections of the standard Veronese variety, and we will still arrive at the same conclusion.  

%%%%%%%%%%%%%%%%%
%%%%%%%%%%%%%%%%%
%%%%%%%%%%%%%%%%%
%%%%%%%%%%%%%%%%%
%%%%%%%%%%%%%%%%%
%%%%%%%%%%%%%%%%%
%%%%%%%%%%%%%%%%%
%%%%%%%%%%%%%%%%%

\section{Collineations with opposition diagram $\sE_{7;4}$ fixing no chamber}\label{sec:E74nochamber}
We now apply the results of the previous sections to the classification of all domestic collineations of $\mathsf{E_7}(\K)$ with opposition diagram $\sE_{7;4}$ fixing no chamber. To that aim, we argue in this section within the parapolar space $\Delta:=\mathsf{E_{7,7}}(\K)$. We first describe two classes of point-domestic collineations which  have opposition diagram $\mathsf{E_{7;4}}$. Then we show that these are the only domestic collineations of $\Delta$  fixing no chamber with that opposition diagram. Since we are interested in large buildings, we assume $|\K|>2$ throughout. 

\subsection{Two classes of examples fixing no chamber}
There will be two classes of such domestic automorphisms; one related to every quadratic field extension of $\K$ and one related to every quaternion division algebra over $\K$.

First, let $\mathcal{I}$ be a full imaginary set of symps of $\Delta$ (see \cref{imE7} and \cref{E7im}). Then by \cref{E7im} the group $\PSL_2(\K)$ acts on $\cI$ pointwise fixing the equator geometry defined by any two members of $\cI$. Let $\theta$ be a collineation of $\Delta$ belonging to that group and acting fixed point freely on $\mathcal{I}$. Then we claim that $\theta$ is a point-domestic collineation. 

Indeed, if a point $x$ is collinear to some $5$-space $U$ of a member $\xi$ of $\mathcal{I}$, then the $6$-space generated by $x$ and $U$ contains a point of $\mathcal E$, and so $x$ is collinear to a fixed point and is mapped by $\theta$ to a point at most at distance $2$ from $x$. If $x$ is collinear to unique points $z_i$ of two members $\xi_i\in\mathcal I$, $i=1,2$, then, if $z_1$ is not collinear to $z_2$, they determine a unique symp $\zeta$ containing $x$ and fixed under $\theta$. So $x,x^\theta\in\zeta$. If $z_1\perp z_2$, then both $x$ and $x^\theta$ are collinear to $z_1z_2=(z_1z_2)^\theta$, and hence the distance between $x$ and $x^\theta$ is again at most $2$.

Note that a collineation like the previous one can only be found if $\K$ admits a quadratic extension, since it requires the existence of a member of $\PSL_2(\K)$ acting fixed point freely in $\PG(1,\K)$; hence it yields a $2\times 2$ matrix with imaginary eigenvalues, that is, eigenvalues in a quadratic extension. 

Now let $\HH$ be a separable or inseparable quaternion division algebra over $\K$ and denote by $\mathsf{C_{3,3}}(\K,\HH)$ the dual polar space arising from the polar space $\mathsf{C_{3,1}}(\HH,\K)$ by taking as points the singular planes of $\mathsf{C_{3,3}}(\HH,\K)$ and as lines the (full) plane pencils (all singular planes through a given line). Suppose $\mathsf{C_{3,3}}(\HH,\K)$ is a full subgeometry of $\mathsf{E_{7,7}}(\K)$ in such a way that the lines of $\mathsf{C_{3,3}}(\HH,\K)$ through a given point $x$ of $\mathsf{C_{3,3}}(\HH,\K)$ form a quaternion Veronesean in $\Res_\Delta(x)$ as in \cref{existenceE74E6}. We call this full subgeometry a \emph{dual polar quaternion Veronesean}.  Suppose also that  $\theta$ is a collineation of $\mathsf{E_{7,7}}(\K)$ with fixed point set a polar quaternion Veronesean $\mathcal{V}$, that is, $\theta$ only has fixed points, fixed lines and fixed symps, and every fixed line belongs to $\cV$ (hence is pointwise fixed) and every fixed symp contains a pointwise fixed full subgeometry isomorphic to a $\mathsf{B_{2,1}}(\K,\HH)$ (which is by definition isomorphic to any point residual in $\mathsf{B_{3,1}}(\K,\HH)$, and which we will refer to as a \emph{quaternion generalised quadrangle}). The existence of such a collineation whenever $\K$ admits a quaternion division algebra $\HH$ shall be proved in \cref{existenceE74}. 

We claim that every point $x$ of $\Delta$ is collinear to at least one fixed point. Indeed, let $\xi$ be any fixed symp. Then the fixed points in $\xi$ form a full subgeometry isomorphic to a generalised quadrangle. Hence, since $x$ is collinear to at least one point of $\xi$, there exists a fixed point $f$ at distance at most $2$ from $x$. Hence there exists a symp $\xi$ containing $x$ and the fixed point $f$. In $\Res_\Delta(f)$, every symp contains at least one fixed point; hence $\xi$ contains a line with all points fixed and so there is at least one fixed point collinear to $x$. 

Hence no point of $\Delta$ is mapped onto an opposite point and so $\theta$ is point-domestic. Since collineations with opposition diagram $\mathsf{E_{7;1}}$ or $\mathsf{E_{7;2}}$ always fix at least one chamber of $\Delta$, the opposition diagram of $\theta$ is necessarily $\mathsf{E_{7;4}}$. 

Note also the following consequence (which is also used in \cite{Ney-Par-Mal:23}).

\begin{cor}\label{sympfixed}
If a point $p$ is not fixed, then $p$ and $p^\theta$ are contained in a unique symp which is fixed under $\theta$. 
\end{cor}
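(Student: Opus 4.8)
The plan is to reduce everything to the residue at a well-chosen fixed point, where $\theta$ induces a kangaroo of $\mathsf{E_{6,1}}(\K)$, and then reuse the facts proved in \cref{sec:kang2,existenceE74E6}. Throughout I use that $\theta$ is point-domestic and that every point is collinear to a fixed point (both just established), so that for a non-fixed point $p$ we have $\delta(p,p^\theta)\in\{1,2\}$.

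First I would fix the data. Let $p\neq p^\theta$ and pick a fixed point $f$ with $f\perp p$; since $f=f^\theta$, applying $\theta$ to $p\perp f$ gives $p^\theta\perp f$, so $fp$ and $(fp)^\theta=fp^\theta$ are two lines through $f$. They are distinct: if $fp=fp^\theta$ then $fp$ is a $\theta$-stable line, hence pointwise fixed (being a fixed line it lies in $\mathcal{V}$), forcing $p=p^\theta$. Next, because $\theta$ pointwise fixes the dual polar quaternion Veronesean $\mathcal{V}$, whose lines through $f$ form a quaternion Veronesean of $\Res_\Delta(f)\cong\mathsf{E_{6,1}}(\K)$, the induced collineation $\theta_f$ pointwise fixes a quaternion Veronesean and is therefore a kangaroo of $\mathsf{E_{6,1}}(\K)$ by \emph{Proposition~\ref{convers}} (the converse direction of \emph{Theorem~\ref{kangaroo}}). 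Regarding $fp$ and $fp^\theta$ as distinct points $\bar p,\bar p^{\theta_f}$ of the residue, the defining property of a kangaroo of $\mathsf{E_{6,1}}(\K)$ says they are \emph{not} collinear in $\Res_\Delta(f)$; equivalently, $fp$ and $fp^\theta$ are not coplanar in $\Delta$.

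I would then rule out $\delta(p,p^\theta)=1$. Suppose $p\perp p^\theta$ and put $L=pp^\theta$ inside a symp $\eta$ (axiom PPS4). If $f\in\eta$, then in the polar space $\eta$ the point $f$ is collinear to the two points $p,p^\theta$ of the line $L$, hence to all of $L$; if $f\notin\eta$, then $f$ is collinear to two distinct points of $\eta$, so by \emph{Fact~\ref{factE77}} it is close to $\eta$ and $f^\perp\cap\eta$ is a $5'$-space containing $L$, giving again $f\perp L$ pointwise. Either way $\langle f,L\rangle$ is a singular plane containing both $fp$ and $fp^\theta$, contradicting their non-coplanarity. Hence $\delta(p,p^\theta)=2$, and since $\Delta$ is strong there is a \emph{unique} symp $\xi=\xi(p,p^\theta)$ through $p$ and $p^\theta$.

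Finally I would prove $\xi^\theta=\xi$. First $f\in\xi$: otherwise, being collinear to the two distinct points $p,p^\theta$ of $\xi$, the point $f$ would be close to $\xi$ by \emph{Fact~\ref{factE77}}, so $f^\perp\cap\xi$ would be a $5'$-space containing $p,p^\theta$, forcing $p\perp p^\theta$, a contradiction. Thus $\xi$ corresponds to a symp $\bar\xi$ of $\Res_\Delta(f)\cong\mathsf{E_{6,1}}(\K)$ which contains the distinct non-collinear points $\bar p=fp$ and $\bar p^{\theta_f}=fp^\theta$, so $\bar\xi=\xi(\bar p,\bar p^{\theta_f})$. By \emph{Lemma~\ref{fixedsymp}} applied to the kangaroo $\theta_f$, the symp $\bar\xi$ is fixed by $\theta_f$; since $f$ is fixed, $\theta$ acts on symps through $f$ as $\theta_f$ acts on their residue images, and a symp through $f$ is determined by two non-collinear points it contains in $f^\perp$, so $\bar\xi^{\theta_f}=\bar\xi$ transfers to $\xi^\theta=\xi$.

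I expect the main obstacle to be the careful bookkeeping in the translation between the residue and $\Delta$: identifying ``non-collinear in $\Res_\Delta(f)$'' with ``non-coplanar in $\Delta$'', using \emph{Fact~\ref{factE77}} to pin down the position of $f$ relative to $\eta$ and to $\xi$, and spelling out the $\theta$-equivariance of the residue correspondence that lets me pull ``$\bar\xi$ fixed'' back to ``$\xi$ fixed''. The geometric and domesticity inputs are all available; the delicacy is purely in making these residue-to-ambient dictionary steps precise.
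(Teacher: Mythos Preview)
Your proof is correct and follows the same approach as the paper: pick a fixed point collinear to $p$, pass to the residue where $\theta$ induces a kangaroo of $\mathsf{E_{6,1}}(\K)$, and invoke Lemma~\ref{fixedsymp}. The paper's proof is considerably terser (two sentences), leaving the details you spell out---that $fp\neq fp^\theta$, that $p\not\perp p^\theta$, that $f\in\xi$, and the residue-to-ambient equivariance---implicit.
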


\begin{proof}
Let $x$ be point fixed by $\theta$ and collinear with $p$. In the residue of $x$, the symp determined by $p$ and $p^\theta$ is fixed by Lemma~\ref{fixedsymp}. 
\end{proof}

Hence we have shown:

\begin{prop}\label{F4isdom}
Let $\theta$ be a nontrivial collineation of $\mathsf{E_{7,7}}(\K)$ pointwise fixing a polar quaternion Veronesean. Then $\theta$ is point-domestic with opposition diagram $\mathsf{E_{7;4}}$. More exactly, each point not fixed by $\theta$ is collinear to at least one fixed point.
\end{prop}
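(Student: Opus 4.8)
The plan is to reduce everything to one geometric statement: every point of $\Delta=\mathsf{E_{7,7}}(\K)$ is collinear with some point fixed by $\theta$. Granting this, both assertions follow quickly. For point-domesticity, if $x\perp f$ with $f=f^\theta$ then also $x^\theta\perp f^\theta=f$, so $x$ and $x^\theta$ are joined by the path $x\perp f\perp x^\theta$ and thus lie at distance at most $2$ in a parapolar space of diameter~$3$; hence they are never opposite. The same statement, restricted to non-fixed points, is exactly the ``more exactly'' clause, so only the opposition diagram will remain.

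To prove the collinearity statement I would first locate a fixed point \emph{near} $x$. As $\theta$ pointwise fixes the polar quaternion Veronesean $\mathcal{V}$, there is at least one fixed symp $\xi$, and by hypothesis its fixed points form a full quaternion generalised quadrangle $\cong\mathsf{B_{2,1}}(\K,\HH)$, which is large in $\xi$ (every point of $\xi$ is collinear with a fixed point). By the point--symp relations of Fact~\ref{factE77}, $x$ is collinear with at least one point $q$ of $\xi$; choosing a fixed point $f$ collinear with $q$ inside $\xi$ yields a fixed point with $\dist(x,f)\le 2$.

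The second and more delicate step is to upgrade $\dist(x,f)\le2$ to genuine collinearity. If $\dist(x,f)=1$ we are done; otherwise $x$ and $f$ form a symplectic pair (since $\Delta$ is strong of diameter~$3$) and lie in a common symp $\zeta$. I then pass to $\Res_\Delta(f)\cong\mathsf{E_{6,1}}(\K)$: by the dual-polar-quaternion-Veronesean hypothesis the fixed lines through $f$ form a quaternion Veronesean in this residue, so the induced collineation is a kangaroo of $\mathsf{E_{6,1}}(\K)$ by Proposition~\ref{convers}, and Corollary~\ref{handycor} supplies a fixed point in every symp of the residue. Translating this back through the residue yields a pointwise-fixed line $L$ of $\Delta$ through $f$ inside $\zeta$; since $x\not\perp f$ in the polar space $\zeta$, the point $x$ is collinear with exactly one point of $L$, and that point is fixed. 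I expect this transfer --- correctly matching the residual types and reading ``a fixed point in every symp of the residue'' back as a pointwise-fixed line inside $\zeta$ --- to be the main obstacle; the rest is bookkeeping.

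Finally, for the opposition diagram, point-domesticity means the point node (node~$7$) is not encircled, so by Figure~\ref{fig:Dynkin} the diagram of $\theta$ is one of $\mathsf{E_{7;0}}$, $\mathsf{E_{7;1}}$, $\mathsf{E_{7;2}}$, $\mathsf{E_{7;4}}$. Nontriviality excludes $\mathsf{E_{7;0}}$; and since the automorphisms with diagram $\mathsf{E_{7;1}}$ or $\mathsf{E_{7;2}}$ fix a chamber by \cite{PVMexc}, whereas the fixed structure of $\theta$ is a full rank-$3$ dual polar quaternion Veronesean admitting no fixed chamber, the diagram of $\theta$ must be $\mathsf{E_{7;4}}$.
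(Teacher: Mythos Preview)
Your proposal is correct and follows essentially the same argument as the paper: find a fixed symp, use the largeness of the quaternion quadrangle in it to get a fixed point at distance $\le 2$ from $x$, then pass to the $\mathsf{E_{6,1}}$ residue at that fixed point and use that every symp there contains a fixed point to produce a pointwise-fixed line in $\zeta$ meeting $x^\perp$. Your derivation of the opposition diagram, excluding $\mathsf{E_{7;1}}$ and $\mathsf{E_{7;2}}$ via the absence of a fixed chamber (no fixed plane, by the hypothesis on the fixed structure), is also the paper's route; you have simply made a few steps more explicit (citing Proposition~\ref{convers} and Corollary~\ref{handycor} for the residue behaviour) than the paper does.
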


Now we turn to the classification. Before we make an assumption on the opposition diagram, we prove a general result which we will apply for both opposition diagrams $\mathsf{E_{7;3}}$ and $\mathsf{E_{7;4}}$. 

The next lemma is also useful for the opposition diagram $\mathsf{E_{7;3}}$ and hence will be stated in more general terms.

\begin{lemma}\label{pointlinefixed}
If $\theta$ is a collineation of $\Delta$ with at least one fixed point, and with the property that for every point $x\perp x^\theta\neq x$ the line $xx^\theta$ is fixed under $\theta$, then every fixed line contains at least one fixed point.
\end{lemma}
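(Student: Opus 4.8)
My plan is to argue by contradiction. Suppose $L$ is a fixed line containing no fixed point, so that $\theta$ restricts to a fixed-point-free collineation of $L\cong\PG(1,\K)$, and let $p_0$ be a fixed point (which exists by hypothesis). Since $\theta$ preserves the collinearity graph, the function $x\mapsto\delta(p_0,x)$ is $\theta$-invariant on $L$; hence if it attains some value at a unique point of $L$, that point is fixed, a contradiction. First I would record the elementary point--line distance dichotomy: for any point $p$ and line $M$, either $p$ has a unique nearest point on $M$, or $\delta(p,\cdot)$ is constant on $M$. This follows by choosing a symp $\xi\supseteq M$ (each line lies in a symp) and reading off the position of $p$ relative to $\xi$ from \cref{factE77}, using that in the polar space $\xi$ the perp of a point is a geometric hyperplane and so meets every line; the same facts exclude constant distance $3$. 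Applied to $p_0$ and $L$, this leaves only $\delta(p_0,\cdot)\equiv 1$, i.e.\ $L\subseteq p_0^\perp$, and $\delta(p_0,\cdot)\equiv 2$, i.e.\ $p_0$ is symplectic to every point of $L$.

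The key tool is a purely incidence-geometric observation, valid in the linear and semilinear cases alike: \emph{if $\pi$ is a fixed singular plane containing a fixed point $p$ and a fixed line $M$ with $p\notin M$, then $M$ contains a fixed point}. Indeed, assume not, pick $x\in M$ (so $x^\theta\neq x$) and a point $w$ on $px$ with $w\neq p,x$ (possible since $|\K|>2$). Both $w$ and $w^\theta$ lie in the singular plane $\pi$, hence are collinear, and $w^\theta=w$ would force $w\in px\cap px^\theta=\{p\}$; so by the hypothesis the line $ww^\theta$ is fixed. As $w\notin M$ we have $ww^\theta\neq M$, and these two fixed lines meet in a fixed point of $M$, a contradiction. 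This settles $\delta(p_0,\cdot)\equiv 1$ at once: then $\pi:=\<p_0,L\>$ is a fixed singular plane containing the fixed point $p_0\notin L$, so $L$ carries a fixed point.

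For $\delta(p_0,\cdot)\equiv 2$ my plan is to reduce to the previous case by producing a fixed point collinear to $L$. Consider $Q:=p_0^\perp\cap L^\perp$, the set of points collinear to $p_0$ and to every point of $L$; since the perp of a point is a subspace, $Q$ is a $\theta$-invariant subspace contained in $p_0^\perp$, and $p_0\notin Q$ (else $p_0\perp L$). Assuming $Q$ is nonempty and singular, $\<p_0,Q\>$ is a fixed singular subspace with apex $p_0$. If some $g\in Q$ is not fixed then $g\perp g^\theta$ (as $Q$ is singular), so $M:=gg^\theta\subseteq Q$ is a fixed line, and applying the claim of the previous paragraph inside the fixed singular plane $\<p_0,M\>$ yields a fixed point on $M\subseteq Q$. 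Thus $Q$ contains a fixed point $m$, which is collinear to all of $L$; then either $m\in L$ (done) or $\<m,L\>$ is a fixed singular plane with fixed apex $m\notin L$, and the claim again produces a fixed point on $L$ --- the desired contradiction.

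The hard part is the geometric input just assumed: that $Q=p_0^\perp\cap L^\perp$ is nonempty and singular when $p_0$ is symplectic to every point of $L$. For nonemptiness I would fix a symp $\xi\supseteq L$ and use \cref{factE77}: in the \emph{close} case the points of the $5'$-space collinear to $p_0$ that are also collinear to $L$ form a nonempty singular subspace of $\xi$, while in the \emph{far} case the unique point of $\xi$ collinear to $p_0$ already lies in $Q$. For singularity, two non-collinear (hence symplectic) points of $Q$ would place the two non-collinear points $p_0$ and some $x\in L$ in the common perp of a symplectic pair, a configuration I expect to exclude via the symp--symp relations of \cref{factE77symp}. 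Making these two points precise --- and checking, when $\theta$ is semilinear, that the incidence argument of the key claim is genuinely insensitive to the companion field automorphism (it is, since it never uses linearity) --- is where the real work lies; everything else is formal.
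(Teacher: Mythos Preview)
Your reduction to the two cases (all of $L$ collinear to $p_0$, or all of $L$ symplectic to $p_0$) via the distance dichotomy is clean and correct, and your treatment of the collinear case --- the ``fixed plane with fixed apex'' trick --- is essentially identical to the paper's argument there.

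In the symplectic case the paper takes a different route: it picks two points $y_1,y_2\in L$, looks at the symps $\xi(p_0,y_1)$ and $\xi(p_0,y_2)$, and from their intersection line builds an explicit chain of fixed subspaces --- a fixed $5$-space $V\ni p_0$, the unique $6$-space $W\supseteq L$ meeting $V$ in a $4$-space, and finally a fixed $4$-space $U\subseteq W$ collinear to both $p_0$ and $L$. To extract a fixed point from $U$ the paper then invokes an external result (\cite[Proposition~3.3]{PVMclass}). Your approach via $Q=p_0^\perp\cap L^\perp$ is more direct and entirely self-contained: once $Q$ is known to be a nonempty singular $\theta$-stable subspace in $p_0^\perp\setminus\{p_0\}$, the fixed-plane trick over the cone $\langle p_0,Q\rangle$ immediately produces a fixed point collinear to $L$, and you are back in the collinear case. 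This genuinely avoids the external reference.

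The two facts you flag as ``where the real work lies'' are in fact short. For singularity: if $a,b\in Q$ were non-collinear, then (since both are collinear to $p_0$) $\delta(a,b)=2$ and $p_0\in\xi(a,b)$ by convexity; likewise every $y\in L$ lies in $\xi(a,b)$, so $p_0$ and $L$ sit in a common symp with $p_0$ not collinear to any point of $L$ --- impossible in a polar space. For nonemptiness: pick a symp $\xi\supseteq L$; then $p_0\notin\xi$ (else $p_0^\perp$ would meet $L$ in $\xi$), and in the far case the unique point $q\in\xi$ collinear to $p_0$ satisfies $L\subseteq q^\perp$ by \cref{factE77}(i), while in the close case the $5'$-space $p_0^\perp\cap\xi$ is disjoint from $L$ and hence meets $L^\perp$ (codimension $2$ in the ambient projective space of $\xi$) in at least a $3$-space. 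Filling these in turns your plan into a complete proof; your remark about semilinearity is unnecessary, since the argument is purely incidence-geometric throughout.
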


\begin{proof}
Let $L$ be a fixed line and let $x$ be a fixed point. If a unique point on $L$ is either incident, collinear or symplectic to $x$, then it is fixed and we are fine.  If all points on $L$ are collinear with $x$, then $\theta$ fixes the plane $\pi$ generated by $x$ and $L$. Pick $u\in\pi\setminus(L\cup\{x\})$. If $u$ is fixed, then so is $L\cap ux$; if $u$ is not fixed then $uu^\theta\cap L$ is fixed. 

At last assume that all points on $L$ are symplectic to $x$. Let $y_1,y_2\in L$, $y_1\neq y_2$, and set $\xi_i:=\xi(x,y_i)$, $i=1,2$. Then $\xi_1$ and $\xi_2$ have a line $M$ in common and since $\xi_1\neq\xi_2$ (as otherwise some point of $L$ would be collinear to $x$), $y_1$ and $y_2$ are collinear to the same point $u\in M$. Then $y_2$ is collinear to the line $uy_1$ of $\xi_1$ and so it is collinear to a $5$-space of $\xi_1$ which spans together with $y_2$ a $6$-space $W$. Also, $x$ is collinear to a $4$-space $U\subseteq W$. Clearly, $U$ is contained in every symp through $x$ and a point of $L$, and so the $5$-space $V$ spanned by $U$ and $x$ is the intersection of all such symps, proving it is uniquely determined by $x$ and $L$ and hence fixed by $\theta$. Now $W$ is the unique $6$-space through $L$ intersecting $V$ in a $4$-space. Hence $W$ is also fixed, and consequently also $U$ is fixed. Now the combination of Proposition~3.3 of \cite{PVMclass} and our assumption yields a point $u\in U$ fixed by $\theta$. Since $u$ is collinear to $L$, we are reduced to the situation of the previous paragraph, completing the proof of the lemma. 
\end{proof}

Now we let $\theta$ be an automorphism of the building $\mathsf{E_7}(\K)$ and we consider $\theta$ as a collineation of $\Delta$. We assume that $\theta$ does not fix any chamber and that it has opposition diagram $\mathsf{E_{7;4}}$.  We begin with some general properties until we make a case distinction each leading to a class of examples.

%The first two lemmas will be stated under milder hypotheses. In particular the lemmas will also be applicable for collineations with opposition diagram $\mathsf{E_{7;3}}$. 
\subsection{General properties}

\begin{lemma}\label{pointwisefixedlines}
If a symp is mapped onto a symplectic one, then the intersection line is pointwise fixed. Consequently, there exist lines which are pointwise fixed. 
\end{lemma}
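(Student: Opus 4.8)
The plan is to work throughout in $\Delta=\mathsf{E_{7,7}}(\K)$ and to use repeatedly that $\theta$ is point-domestic (node $7$ is not encircled in $\mathsf{E_{7;4}}$), so that $\delta(x,x^\theta)\le 2$ for every point $x$. Assume $\xi^\theta$ is symplectic to $\xi$ and set $L=\xi\cap\xi^\theta$, a line by \cref{factE77symp}$(iii)$. Since $\theta$ is a collineation, $\theta|_\xi\colon\xi\to\xi^\theta$ is an isomorphism of polar spaces carrying the line $L$ to $L^\theta\subseteq\xi^\theta$ and preserving collinearity and perps. I would first record the elementary fact that, for a point $a\in L$, the set of $y\in\xi^\theta$ whose unique projection onto $L$ is $a$ is exactly $a^\perp\setminus L^\perp$ (perps computed inside $\xi^\theta$): indeed $y\perp a$ with $y\notin L^\perp$ forces $y^\perp\cap L=\{a\}$, and conversely.

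Now fix $a\in L$ and take any $y\in a^\perp\setminus(L^\perp\cup L^\theta)$, so that $x:=y^{\theta^{-1}}$ lies in $\xi\setminus L$ and $y=x^\theta\in\xi^\theta\setminus L$. Point-domesticity gives $\delta(x,x^\theta)\neq 3$, so by \cref{factE77symp}$(iii)$ the sets $x^\perp\cap L$ and $y^\perp\cap L=\{a\}$ are not disjoint; hence $a\in x^\perp\cap L$, i.e.\ $x$ is collinear to $a$. Applying the isomorphism $\theta|_\xi$ yields that $y=x^\theta$ is collinear to $a^\theta$. Thus $a^\perp\setminus(L^\perp\cup L^\theta)\subseteq(a^\theta)^\perp$ inside $\xi^\theta$. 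If $a^\theta\neq a$, then $(a^\theta)^\perp$ meets $a^\perp$ in a proper geometric hyperplane of the rank-$5$ residue of $a$, and a general-position argument—valid because $|\K|>2$, resting on the standard fact that a thick polar space is not covered by two geometric hyperplanes together with a single line—produces a point collinear to $a$ but not to $a^\theta$ and avoiding $L^\perp$ and $L^\theta$, a contradiction. Hence $a^\theta=a$ for all $a\in L$, that is, $L$ is pointwise fixed. I expect this last general-position step to be the main technical point, since it is where the hypothesis $|\K|>2$ is genuinely needed and where the finite-field case must be treated with care.

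For the final assertion I would exhibit a symp mapped to a symplectic one and invoke the first part. As $\theta\neq\mathrm{id}$ there is a non-fixed point $p$, and point-domesticity forces $\delta(p,p^\theta)\in\{1,2\}$. In the distance-$2$ case the symp $\eta=\xi(p,p^\theta)$ satisfies $p^\theta\in\eta\cap\eta^\theta$, so by \cref{factE77symp} the intersection $\eta\cap\eta^\theta$ is a $5$-space, a line, or $\eta=\eta^\theta$ (it cannot be empty, as $p^\theta$ lies in it); in the line case $\eta$ is mapped to a symplectic symp and the first part produces a pointwise fixed line. It remains to handle the residual configurations—namely that every non-fixed point is mapped to a collinear point, or that $\eta$ and $\eta^\theta$ are always equal or adjacent—and to show these cannot all persist without $\theta$ fixing a chamber, contradicting our standing hypothesis. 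This case analysis is the delicate part of the concluding statement; once a symplectic image of a symp is secured, the first part finishes the proof.
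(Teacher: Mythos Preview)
Your argument for the first assertion is essentially correct but less direct than the paper's. The paper argues by immediate contradiction: if $x\in M:=\xi\cap\xi^\theta$ is not fixed then $x^{\theta^{-1}}\in\xi$ (because $x\in\xi^\theta$) with $x^{\theta^{-1}}\neq x$, and one picks $y\in\xi\setminus M$ collinear to $x$ but not to $x^{\theta^{-1}}$. Then $y^\theta$ is not collinear to $x$, and \cref{factE77symp}$(iii)$ gives at once that $y$ and $y^\theta$ are opposite, contradicting point-domesticity. A single well-chosen point suffices, so there is no need for your general-position argument, and $|\K|>2$ plays no role at this stage. Your route via the containment $a^\perp\setminus(L^\perp\cup L^\theta)\subseteq(a^\theta)^\perp$ is the contrapositive reorganisation of the same idea, but it leaves the covering step (``a thick polar space is not covered by two geometric hyperplanes and a line'') to be justified, which is extra work you can avoid.

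For the second assertion there is a genuine gap: you explicitly leave open the cases where every non-fixed point is sent to a collinear one, or where $\eta=\xi(p,p^\theta)$ and $\eta^\theta$ are always equal or adjacent, and it is not clear that these can be eliminated along the lines you suggest without substantial further argument. The paper takes a completely different and much shorter route, exploiting the opposition diagram rather than the non-chamber-fixing hypothesis. Since node $6$ is encircled in $\sE_{7;4}$ there exists a non-domestic line $L$; on the $\sD_5$-factor of $\Res_\Delta(L)$ the induced map $\theta_L$ is $3$-space- and $4$-space-domestic (types $2$ and $5$ are not encircled), and Theorem~6.1 of \cite{TTVM} then produces a symp $\xi$ through $L$ with $\{\xi,\xi^\theta\}$ symplectic. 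The first part finishes the proof.
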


\begin{proof}
Let $\xi$ be a symp with $\xi^\theta\cap\xi=M$, with $M$ a line. Suppose for a contradiction that some point $x$ on $M$ is not fixed by $\theta$. Then we can find a point $y\in\xi\setminus M$ collinear to $x$ but not collinear to $x^{\theta^{-1}}$.    Then $y^\theta$ is not collinear to $x$, and so, by \cref{factE77symp}$(iii)$ the points $y$ and $y^\theta$ are opposite, a contradiction to the opposition diagram. Hence all points of $M$ are fixed.

Now we use some notation and terminology from \cref{oppsec}.  Let $L$ be a non-domestic line. Then $\theta_L$ is $3$-space- and $4$-space-domestic. Hence Theorem~6.1 of~\cite{TTVM} yields a symp $\xi$ through $L$ with $\{\xi,\xi^\theta\}$ symplectic.
\end{proof}

\begin{lemma}\label{allpointsfixed}
If a line is fixed, but not all points on it are fixed, then there exists a fixed panel of type $\{1,2,3,4,5,6\}$ containing the line. Consequently if a fixed line has at least one fixed point, then all of its points are fixed. 
\end{lemma}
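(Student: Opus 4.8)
The plan is to pass to the residue of the fixed line $L$ and to reduce the assertion to the existence of a fixed chamber there. Since the $\sE_7$ diagram admits no symmetry, $\theta$ is type-preserving, and as $L$ is a fixed vertex of type $6$ it induces a type-preserving automorphism $\theta_L$ of $\Res_\Delta(L)$. Deleting node~$6$ from the diagram leaves a $\mathsf{D_5}$ component on nodes $1,2,3,4,5$ together with an isolated node~$7$, so $\Res_\Delta(L)\cong\mathsf{D_5}(\K)\times\mathsf{A_1}(\K)$: the $\mathsf{A_1}$-factor is precisely the point row of $L$, while the points of the $\mathsf{D_5}$-polar space are the symps through $L$ and its lines are the $5$-spaces through $L$. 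As $\theta$ respects the two factors, $\theta_L=(\alpha,\beta)$ with $\alpha$ a collineation of $\mathsf{D_5}$ and $\beta$ a collineation of the projective line $L$; the hypothesis that $L$ is not pointwise fixed says exactly that $\beta\neq\mathrm{id}$. A fixed flag of type $\{1,2,3,4,5,6\}$ through $L$ is the same datum as a $\theta_L$-fixed chamber of the $\mathsf{D_5}$-factor, so it suffices to prove that $\alpha$ fixes a chamber of $\mathsf{D_5}$.

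First I would show that $\alpha$ is point-domestic. Let $\xi$ be a symp through $L$; then $\xi^\theta$ also contains $L$, so $\xi$ and $\xi^\theta$ are not disjoint and hence, by \cref{factE77symp}, can be neither special nor opposite. If they were symplectic, their intersection would be a single line, necessarily $L$, and \cref{pointwisefixedlines} would then force $L$ to be pointwise fixed, contrary to hypothesis. Thus every symp through $L$ is sent to an equal or adjacent symp, which in the residue says precisely that $\alpha$ maps every point of $\mathsf{D_5}$ to an equal or collinear point; that is, $\alpha$ is point-domestic.

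Next I would promote this to a fixed chamber. Point-domesticity by itself is not enough (already in rank $2$ a point-domestic collineation can fix a spread line-wise while fixing no point), so the global fixed structure of $\theta$ must be used. The plan is to first secure a single $\theta$-fixed symp through $L$: choosing $x\in L$ with $x^\theta\neq x$, the points $x,x^\theta$ are collinear and lie on $L$, and using a fixed $\Delta$-point (available from \cref{pointwisefixedlines}) together with the fact that $\theta$ induces a kangaroo in the $\mathsf{E_{6,1}}(\K)$-residue of each of its fixed points — the engine of the whole $\sE_{7;4}$ analysis — \cref{fixedsymp} makes the symp $\xi(x,x^\theta)$ fixed; by convexity of symps this symp contains $L$, so $\alpha$ has a fixed point. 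With a fixed point of $\mathsf{D_5}$ in hand one descends: the residue of the fixed pair $\{L,\xi(x,x^\theta)\}$ is $\mathsf{D_4}(\K)\times\mathsf{A_1}(\K)$, the factor $\beta$ is still nontrivial, and the very same mechanism (a residual symp mapped to a symplectic one would again pointwise fix $L$) shows the induced collineation is point-domestic and again fixes a point. Iterating through $\mathsf{D_5}\supset\mathsf{D_4}\supset\mathsf{D_3}\supset\cdots$ extends the fixed flag one type at a time and produces a $\theta_L$-fixed chamber of the $\mathsf{D_5}$-factor, hence the desired fixed panel of type $\{1,2,3,4,5,6\}$ containing $L$. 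This middle step — locating the first fixed symp through $L$ and thereby ruling out the ``spread'' pathology — is the main obstacle; everything around it is residue bookkeeping.

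Finally, the ``consequently'' is formal. Suppose the fixed line $L$ carries a fixed point $p$ but is not pointwise fixed. By the first part there is a fixed panel $F$ of type $\{1,2,3,4,5,6\}$ whose type-$6$ element is $L$. The chambers containing $F$ are obtained by adjoining a type-$7$ vertex incident to $L$, i.e.\ exactly the points on $L$; in particular $p$ is one of them. Since $F$ and $p$ are both fixed, $F\cup\{p\}$ is a $\theta$-fixed chamber, contradicting the standing assumption that $\theta$ fixes no chamber. Hence every point of $L$ is fixed.
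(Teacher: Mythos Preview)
Your reduction to the $\mathsf{D_5}$ factor of $\Res_\Delta(L)$ and the proof that the induced collineation $\alpha$ is point-domestic are correct and match the paper exactly. The ``consequently'' paragraph is also fine.

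The gap is in the middle. You write that ``point-domesticity by itself is not enough'' and cite a rank~$2$ spread as obstruction; but $\mathsf{D_5}$ has rank~$5$, which is \emph{odd}, and for polar spaces of odd rank a point-domestic collineation automatically fixes a chamber. This is precisely Lemma~3.16 of \cite{PVMclass}, and it is what the paper invokes to finish in one line. So the alleged obstruction does not exist here, and no further work is needed once point-domesticity is established.

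Your proposed workaround, by contrast, does not go through. First, you invoke $\xi(x,x^\theta)$ for $x\in L$ with $x\neq x^\theta$; but $x$ and $x^\theta$ are collinear (both on $L$), so there is no unique symp $\xi(x,x^\theta)$ and \cref{fixedsymp} does not apply---that lemma lives in the $\mathsf{E_{6,1}}$ kangaroo context and concerns non-collinear points. Second, the claim that $\theta$ induces a kangaroo in the residue of each fixed $\Delta$-point is not available at this stage of the argument (it is only established later, in \cref{nocoll}, under the additional ``quaternion case'' hypothesis). Third, even granting a first fixed symp through $L$, your iterative descent lands next in $\mathsf{D_4}$, which has even rank~$4$, so your own spread objection would recur and the mechanism you propose for escaping it has the same defects as above.
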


\begin{proof}
Let $L$ be fixed, but not pointwise.  By Lemma~\ref{pointwisefixedlines}, no symp $\xi$ through $L$ has the property that $\xi\cap\xi^\theta=L$. Hence the map induced by $\theta$ on the irreducible factor of type $\mathsf{D_5}$ of $\Res_\Delta(L)$ is point-domestic (when viewed as a polar space of type $\mathsf{D_5}$, hence of odd rank 5). Lemma~3.16 of \cite{PVMclass}  now completes the proof of the first assertion of the lemma. The second one now follows since, if at least one point of a fixed line was fixed, but not all of them, then $\theta$ would fix a chamber.    
\end{proof}

\begin{lemma}\label{dilemma}
If a point $x$ is mapped onto a collinear point, then either the line $xx^\theta$ is fixed, or $x^{\theta^2}$ is collinear to $x$ and each symp through the plane $\pi$ containing $x,x^\theta$ and $x^{\theta^2}$ is fixed (in particular $\pi$ is fixed). 
\end{lemma}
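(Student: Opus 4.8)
The plan is to track the three points $x,x^\theta,x^{\theta^2}$ and to exploit that, since $\theta$ has opposition diagram $\mathsf{E_{7;4}}$, it is point-domestic on the strong parapolar space $\Delta=\mathsf{E_{7,7}}(\K)$ of diameter $3$, so no point lies at distance $3$ from its image. Put $L=xx^\theta$, a line since $x\perp x^\theta$. Applying $\theta$ to $x\perp x^\theta$ gives $x^\theta\perp x^{\theta^2}$, so $L^\theta=x^\theta x^{\theta^2}$ is a line through $x^\theta$. If $L^\theta=L$ then $L$ is fixed and we are in the first alternative, so from now on I assume $L^\theta\neq L$; this forces $x^{\theta^2}\notin L$ and $x\neq x^{\theta^2}$.

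First I determine the mutual position of $x$ and $x^{\theta^2}$. The path $x\perp x^\theta\perp x^{\theta^2}$ has length $2$, so $\delta(x,x^{\theta^2})\le 2$, and as $x\neq x^{\theta^2}$ we get $\delta(x,x^{\theta^2})\in\{1,2\}$. The heart of the argument is to exclude the symplectic case $\delta(x,x^{\theta^2})=2$. There $x$ and $x^{\theta^2}$ lie in a unique symp $\zeta=\xi(x,x^{\theta^2})$, and since $\Delta$ is strong of diameter $3$ the common neighbour $x^\theta$ lies on a geodesic between them, hence in the convex closure, so $x^\theta\in\zeta$; thus $L$ and $L^\theta$ are both lines of $\zeta$ through $x^\theta$. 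Comparing $\zeta$ with $\zeta^\theta$: since $x^\theta,x^{\theta^2}\in\zeta$ and $x^\theta,x^{\theta^2}\in\zeta^\theta$ (the latter being the images of $x,x^\theta$), the line $L^\theta$ lies in $\zeta\cap\zeta^\theta$, so by the symp--symp relations of \cref{factE77symp} the pair $\zeta,\zeta^\theta$ is equal, adjacent, or symplectic. If symplectic, then \cref{pointwisefixedlines} shows the intersection line $L^\theta$ is pointwise fixed, whence $x^{\theta^2}=(x^\theta)^\theta=x^\theta$, a contradiction. The remaining possibilities ($\zeta$ fixed, or $\zeta,\zeta^\theta$ adjacent) I expect to rule out by passing to the induced action of $\theta$ on the fixed (resp.\ shared) polar structure and invoking the hypothesis that $\theta$ fixes no chamber, via \cref{pointwisefixedlines} and \cref{allpointsfixed}: a ``large fixed intersection'' produces a fixed line carrying a fixed but not pointwise-fixed configuration, i.e.\ a fixed chamber, which is excluded. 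This contradiction yields $\delta(x,x^{\theta^2})=1$, so $x,x^\theta,x^{\theta^2}$ are pairwise collinear; as $L\neq L^\theta$ they are not on a common line, and hence they span a singular plane $\pi$.

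It then remains to show every symp $\eta\supseteq\pi$ is fixed. Fix such an $\eta$. Since $x^\theta,x^{\theta^2}\in\pi\subseteq\eta$ and $x^\theta,x^{\theta^2}\in\pi^\theta\subseteq\eta^\theta$, the line $L^\theta$ lies in $\eta\cap\eta^\theta$, so again by \cref{factE77symp} the pair $\eta,\eta^\theta$ is equal, adjacent, or symplectic. The symplectic case is impossible exactly as before, since \cref{pointwisefixedlines} would force $x^{\theta^2}=x^\theta$. The adjacent case, where $\eta\cap\eta^\theta$ is a $5$-space $U\supseteq L^\theta$, is excluded by the same mechanism: applying the point--symp relation \cref{factE77} to $x$ and $\eta^\theta$ (noting $x$ is collinear to the whole line $L^\theta\subseteq\eta^\theta$, hence close to $\eta^\theta$) together with \cref{allpointsfixed} again yields a fixed but not pointwise-fixed line, contradicting the no-chamber-fixing hypothesis. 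Hence $\eta=\eta^\theta$. Finally $\pi$ itself is fixed: the family of symps through $\pi$ is fixed elementwise, so $\pi^\theta$ lies on exactly the same symps as $\pi$, and since $\pi$ is recovered as the intersection of the symps containing it, $\pi^\theta=\pi$.

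The main obstacle is the exclusion of the two ``large intersection'' situations, namely $\{x,x^{\theta^2}\}$ symplectic and $\eta,\eta^\theta$ adjacent. Point-domesticity is vacuous for pairs of points in a common symp, since there every pair is already at distance at most $2$; thus the argument cannot rest on domesticity at these junctures and must instead exploit the induced collineation on the relevant fixed symp, combined with \cref{pointwisefixedlines}, \cref{allpointsfixed}, and the standing assumption that $\theta$ fixes no chamber. Converting ``large fixed intersection'' into ``fixed chamber'' is the delicate technical step on which the whole dichotomy rests.
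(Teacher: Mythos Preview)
Your approach has a genuine gap at the two places you yourself flag as ``the delicate technical step.'' In the first part you consider $\zeta=\xi(x,x^{\theta^2})$ and correctly rule out the case $\zeta\cap\zeta^\theta$ a line via \cref{pointwisefixedlines}. But the remaining sub-cases, $\zeta=\zeta^\theta$ and $\zeta\cap\zeta^\theta$ a $5$-space, are not dealt with: saying ``I expect to rule out'' them via \cref{allpointsfixed} and the no-chamber hypothesis is not an argument. In fact no fixed line is identified in either sub-case, so \cref{allpointsfixed} has nothing to bite on; and having a fixed symp $\zeta$ containing $x,x^\theta,x^{\theta^2}$ with $x\not\perp x^{\theta^2}$ is by itself not a contradiction with anything you have available. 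The same defect recurs verbatim in your handling of the adjacent case for $\eta$ in the second part. So as written the dichotomy is not established.

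The paper's proof avoids this difficulty by a change of viewpoint: it never looks at $\xi(x,x^{\theta^2})$. Instead it works entirely in the point residual $\Res_\Delta(x^\theta)\cong\mathsf{E_{6,1}}(\K)$, where the lines $xx^\theta$ and $x^\theta x^{\theta^2}$ correspond to points $p,p'$. If $x\not\perp x^{\theta^2}$ then $p\not\perp p'$, and one can \emph{choose} (using \cref{factE6}) a symp of the residue through $p$ that is far from $p'$; its image then meets it in a single point, which in $\Delta$ translates to a pair of symplectic symps through $x^\theta$, and now \cref{pointwisefixedlines} forces $x^\theta$ to be fixed, the desired contradiction. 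For the second claim the paper again works in the residue (dualised to $\mathsf{E_{6,6}}$): with $p\perp p'$, one first shows no symp through $p$ is mapped to one meeting it in a single point (same mechanism), hence every symp through $p$ goes to an adjacent one through $p'$, and then a concrete $4'$-space argument in $\mathsf{E_{6,6}}$ shows the entire $4$-space $U$ of symps through $pp'$ is pointwise fixed. Note that neither step uses the no-chamber hypothesis; only point-domesticity via \cref{pointwisefixedlines} is needed. The key idea you are missing is the freedom to \emph{select} the symp in the residue so as to force the symplectic intersection, rather than being handed a specific symp whose relation to its image you cannot control.
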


\begin{proof}
Suppose that the point $x$ is mapped onto a collinear one, and suppose that the line $xx^\theta$ is not fixed under $\theta$. We first claim that $x\perp x^{\theta^2}$. Indeed, suppose not, then the lines $xx^\theta$ and $x^\theta x^{\theta^2}$ correspond to two noncollinear points $p,p'$, respectively, of the residual geometry $\Res_\Delta(x^\theta)$ of type $\mathsf{E_{6,1}}$. Select in $\Res_\Delta(x^\theta)$ a symp $\xi$ containing $p$ and opposite $p'$. Then $\xi$ intersects $\xi^\theta$ in a unique point. This means that, if $\xi$ corresponds to the symp $\zeta$ of $\Delta$, then $\zeta\cap\zeta^\theta$ is a line $L$ through $x^\theta$. Lemma~\ref{pointwisefixedlines} implies that $L$ is fixed pointwise, including $x^\theta$, a contradiction. The claim is proved. Hence $x,x^\theta,x^{\theta^2}$ define a unique plane $\pi$. 

We claim that all symps of $\Delta$ containing $\pi$ are fixed by $\theta$. Indeed, we again consider $\Res_\Delta(x^\theta)$, which is isomorphic to $\mathsf{E_{6,1}}(\K)$, and $p,p'$ as defined above. This time $p\perp p'$. If some symp $\xi$ of $\Res_\Delta(x^\theta)$ through $p$ is mapped onto a symp $\xi^\theta$ through $p'$ such that $\xi\cap\xi^\theta$ is a singleton, then the same argument as above yields a line through $x^\theta$ fixed pointwise, a contradiction. Hence every symp through $p$ is mapped onto an adjacent symp through $p'$. It is convenient to dualise. Dually, $p$ and $p'$ correspond to adjacent symps $\xi$ and $\xi'$ in $\mathsf{E_{6,6}}(\K)$, intersecting in a $4$-space $U$, and we have to show that each point of $U$ is fixed, knowing that each point of $\xi$ is mapped onto a collinear point in $\xi'$. Let $z$ be a point of $\xi\setminus\xi'$ and set $U_z=\<z,z^\perp\cap U\>$. Let $W_z$ be the unique $5$-space containing $U_z$. Then $W_z\cap\xi'$ is a $4'$-space $U'_z$ through $U\cap z^\perp$. Each point of $\xi'$ collinear to some point of $U_z\setminus U$ is contained in $U'_z$. Hence, since $U_z\setminus U$ generates $U_z$, the subspace $U_z$ is mapped onto $U'_z$. If $u\in \xi\setminus U_z$, then $U_u\cap U_z\subseteq U$ is mapped onto $U'_z\cap U'_u\subseteq U$. It follows that $z^\perp\cap U$ is mapped onto itself. Hence all $3$-spaces of $U$ are stabilised and consequently so is each point of $U$. The claim and the lemma follow. 
\end{proof}

Lemma~\ref{dilemma} permits a case distinction. Either for all points $x\perp x^\theta\neq x$ the line $xx^\theta$ is fixed (the \emph{``quaternion case''}), or there exists a point $z\perp z^\theta\neq z$ with $z^{\theta^2}\notin zz^\theta$ (the \emph{``quadratic case''}).

\subsection{The quaternion case}
In this subsection we assume that for all points $x\perp x^\theta\neq x$ the line $xx^\theta$ is fixed. 

We begin with proving that this in fact implies that no point is mapped onto a collinear one. 

\begin{lemma}\label{nocoll}
No point is mapped onto a collinear one. Hence every fixed singular subspace (in particular, line) is pointwise fixed.
\end{lemma}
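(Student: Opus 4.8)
The plan is to treat this as a short combinatorial argument that ties together the three preceding lemmas, deriving a contradiction from the existence of a point mapped to a distinct collinear point, and then reading off the statement about singular subspaces.

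First I would record that $\theta$ has fixed points at all: Lemma~\ref{pointwisefixedlines} produces a pointwise fixed line, and any point on it is fixed. Combined with the standing hypothesis of this subsection---that $xx^\theta$ is fixed whenever $x\perp x^\theta\neq x$---this shows that $\theta$ satisfies both hypotheses of Lemma~\ref{pointlinefixed}. Applying that lemma yields the key intermediate fact: \emph{every fixed line of $\theta$ contains at least one fixed point}.

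Next I would argue by contradiction. Suppose some point $x$ satisfies $x\perp x^\theta\neq x$, and set $L=xx^\theta$. By the subsection hypothesis $L$ is fixed, and by the previous step $L$ then carries a fixed point. The second assertion of Lemma~\ref{allpointsfixed} now forces every point of $L$ to be fixed; in particular $x$ is fixed, contradicting $x^\theta\neq x$. Hence no point is mapped to a collinear one. The final clause is then immediate: if $U$ is a fixed singular subspace and $p\in U$, then $p^\theta\in U^\theta=U$, so $p$ and $p^\theta$ are two points of a singular subspace and are collinear or equal; since the first part rules out their being collinear and distinct, $p^\theta=p$, and $U$ is pointwise fixed.

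The argument presents essentially no obstacle once the earlier lemmas are in place; the only point needing care is verifying that the hypotheses of Lemma~\ref{pointlinefixed} genuinely hold, which is exactly where the existence of a fixed point (via Lemma~\ref{pointwisefixedlines}) and the standing subsection assumption are brought together. All the genuinely substantive work---locating pointwise fixed lines, and controlling fixed lines via point-domesticity in the $\mathsf{D_5}$-factor of the residue---has already been discharged in Lemmas~\ref{pointwisefixedlines} and~\ref{allpointsfixed}, so this lemma is purely the synthesis step.
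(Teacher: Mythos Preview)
Your proposal is correct and follows essentially the same route as the paper's proof: assume some $x\perp x^\theta\neq x$, use the subsection hypothesis to get $xx^\theta$ fixed, invoke Lemma~\ref{pointlinefixed} for a fixed point on it, then Lemma~\ref{allpointsfixed} forces $x$ fixed, a contradiction. You are slightly more explicit than the paper in checking the fixed-point hypothesis of Lemma~\ref{pointlinefixed} (via Lemma~\ref{pointwisefixedlines}) and in spelling out the final clause on singular subspaces, but these are just cosmetic differences.
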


\begin{proof}
Suppose for a contradiction that some point $x$ is mapped onto a collinear one. Then, by the main assumption of this subsection, the line $xx^\theta$ is fixed. \cref{pointlinefixed} yields a fixed point on $L$. Now Lemma~\ref{allpointsfixed} implies that $\theta$ fixed each point on $xx^\theta$, hence also $x$, a contradiction.  
\end{proof}

Now define the geometry $(F,\cL_F)$, with $F$ the set of fixed points of $\theta$ and $\cL_F$ the set of fixed lines of $\theta$. By Lemma~\ref{nocoll}, $F$ is a subspace, hence $\cL_F$ is just the set of lines all points of which are contained in $F$. 

\begin{lemma}\label{connected}
The geometry $(F,\cL_F)$ is connected.
\end{lemma}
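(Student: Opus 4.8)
The plan is to first reduce the statement to a purely graph-theoretic one. Since $\Delta$ is a partial linear space and $p,q$ fixed points determine a unique line $pq$, the line $pq$ is itself fixed, and by Lemma~\ref{allpointsfixed} (having the fixed point $p$) it is pointwise fixed, hence $pq\in\cL_F$. Thus two fixed points are joined by a member of $\cL_F$ exactly when they are collinear in $\Delta$, and proving that $(F,\cL_F)$ is connected is equivalent to proving that the set $F$ of fixed points, equipped with $\Delta$-collinearity, is a connected graph. I would also record at the outset that, since the opposition diagram is $\sE_{7;4}$, node $7$ is uncircled, so $\theta$ is point-domestic and maps no point to an opposite one; combined with Lemma~\ref{nocoll} and the fact that $\Delta=\mathsf{E_{7,7}}(\K)$ is a \emph{strong} parapolar space of diameter $3$, every non-fixed point $x$ satisfies $\delta(x,x^\theta)=2$ with $x,x^\theta$ symplectic, so $x$ and $x^\theta$ lie in a unique symp $\xi(x,x^\theta)$.

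The engine of the argument is the claim that $\xi:=\xi(x,x^\theta)$ is a \emph{fixed} symp whenever $x\neq x^\theta$. Both $\xi$ and $\xi^\theta$ contain $x^\theta$, so by the symp-symp trichotomy of \cref{factE77symp} they are equal, adjacent (meeting in a $5$-space), or symplectic (meeting in a line). The symplectic case is excluded immediately: by Lemma~\ref{pointwisefixedlines} the common line would be pointwise fixed, forcing $x^\theta\in\xi\cap\xi^\theta$ to be a fixed point, which is impossible since $\theta$ is injective and $x\neq x^\theta$. This leaves the equality case (which is exactly what we want) and the adjacent case, and \textbf{the adjacent case is the main obstacle}. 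To rule it out I would analyse the two generators $U=\xi\cap\xi^\theta$ and $U^{\theta^{-1}}=\xi^{\theta^{-1}}\cap\xi$ of the rank-$6$ polar space $\xi\cong\mathsf{D}_6$ (note $x\in U^{\theta^{-1}}$, $x^\theta\in U$), together with the point-symp relations of \cref{factE77}, to produce a point mapped to a collinear one, contradicting Lemma~\ref{nocoll}. Once $\xi$ is known to be fixed, the restriction $\theta|_\xi$ is a collineation of $\xi$ mapping no point to a collinear one (Lemma~\ref{nocoll}) and having fixed points, i.e.\ a kangaroo in the sense of Section~\ref{sec:kang}; the second technical point is to show it is \emph{lazy} (its fixed set contains a pointwise-fixed line), so that by Proposition~\ref{kangcharact} its fixed-point set is a nondegenerate polar subspace of rank at least $2$, and in particular a \emph{connected} geometry, rather than a diligent kangaroo whose fixed ovoid would be totally disconnected (Lemma~\ref{busyovoid}).

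With these two facts in hand the local and global steps are routine. Locally, at any fixed point $p$ the induced map $\theta_p$ on $\Res_\Delta(p)\cong\mathsf{E_{6,1}}(\K)$ maps no point to a collinear one (Lemma~\ref{nocoll} again, as coplanar image lines would force collinear image points), so $\theta_p$ is a kangaroo of $\mathsf{E_{6,1}}(\K)$ or the identity; Theorem~\ref{kangaroo} and Lemma~\ref{fixedsymp} then show that every line through $p$ lies in a plane on $p$ containing a pointwise-fixed line, whence every point collinear to $p$ is collinear to a fixed point. Globally, given two fixed points I would join them by a geodesic $p=x_0\perp x_1\perp\cdots\perp x_k=q$ in the connected geometry $\Delta$; at each non-fixed $x_i$ the symp $\xi(x_i,x_i^\theta)$ is fixed with connected fixed-point set, and the local statement guarantees fixed points close to each $x_i$, so one stitches these connected fixed polar subspaces into a single path of fixed points with consecutive members collinear. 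Translating back through the reduction of the first paragraph yields a path in $(F,\cL_F)$ from $p$ to $q$, proving connectedness. The only genuinely delicate ingredients are the two claims highlighted above—that $\xi(x,x^\theta)$ is fixed (excluding the adjacent configuration) and that the kangaroo it induces on a fixed symp is lazy—everything else being bookkeeping with the already established residue and symp relations.
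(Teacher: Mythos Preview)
Your overall plan has a genuine gap at precisely the point you flag as ``the main obstacle'': the adjacent case in claim~(A). Your proposed strategy---analyse the two generators $U=\xi\cap\xi^\theta$ and $U^{\theta^{-1}}$ of $\xi$ to ``produce a point mapped to a collinear one''---does not go through. What one actually obtains is that $U\cap U^{\theta^{-1}}$ is pointwise fixed (any $y$ in this intersection has $y,y^\theta\in U$, hence collinear, hence equal by Lemma~\ref{nocoll}); since $U$ and $U^{\theta^{-1}}$ are same-type generators in $\mathsf{D}_6$, this intersection is a line or a $3$-space, and that is \emph{not} a contradiction. In fact the paper only excludes the adjacent configuration \emph{after} the full classification of the fixed structure (see Corollary~6.8 and its proof, which relies on the quaternion dual polar space already being established). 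So your claim~(A) cannot be proved directly at this stage by the route you sketch. Your claim~(B) is fine, and the reduction in your first paragraph is correct, but without~(A) the geodesic stitching collapses---and even granting~(A), the stitching at distance~$3$ is left rather vague.

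The paper's argument bypasses~(A) entirely and is much shorter. It fixes once and for all the pointwise-fixed line $L$ (Lemma~\ref{pointwisefixedlines}), observes that at any $x\in L$ the residue map $\theta_x$ on $\Res_\Delta(x)\cong\mathsf{E}_{6,1}(\K)$ is a kangaroo (fixing the point corresponding to $L$), and so by the $\mathsf{E}_6$ kangaroo theory there is a fixed symp $\xi\supseteq L$. Then for an arbitrary fixed point $f$ one runs through the three point--symp relations of Fact~\ref{factE77}: if $f\in\xi$ a line in $\xi$ connects $f$ to $L$; if $f$ is far from $\xi$ the unique point $f'=f^\perp\cap\xi$ is itself fixed and reduces to the first case; if $f$ is close to $\xi$ then $f^\perp\cap\xi$ is a fixed, hence pointwise-fixed, $5'$-space, which is impossible by Theorem~\ref{kangaroo} (it would produce collinear fixed points in $\Res_\Delta(x)$, contradicting the Veronesean structure). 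So a single fixed symp does all the work, and neither~(A) nor any geodesic argument is needed.
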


\begin{proof}
By Lemma~\ref{pointwisefixedlines} there is at least one line $L$ that is pointwise fixed. We show that every point of $F$ is contained in the same connected component as $L$. Let $f\in F$ be arbitrary. Pick $x\in L$ arbitrary and consider the induced action of $\theta$ in $\Res_\Delta(x)$. By Lemma~\ref{nocoll}, no point is mapped onto a collinear one. If also no point is mapped onto a point at distance 2, then $\theta$ induces the identity on $\Res_\Delta(x)$, and so it fixes a chamber, a contradiction. Finally, $\theta$ fixes the point corresponding to the line $L$ in $\Res_\Delta(x)$. So $\theta$ induces a kangaroo collineation in $\Res_\Delta(x)$. In particular, $\theta$ fixes some symp $\xi$ containing $L$. If $f\in\xi$, then any line in $\xi$ joining $f$ with a point of $L$ is pointwise fixed.   If there is a unique point $f'\in\xi$ collinear to $f$, then $f'\in F$ and $ff'$ is fixed pointwise (by Lemma~\ref{nocoll}), and by the foregoing, $f'$ is connected to $L$ inside $F$, hence so is $f$. Finally, if $f$ is collinear to a unique $5$-space $U$ of $\xi$, then $U$ is fixed, and hence it is fixed pointwise by Lemma~\ref{nocoll}. This contradicts Theorem~\ref{kangaroo}. 
\end{proof}

\begin{lemma}\label{GQ}
The fixed point structure induced in any fixed symp is a (quaternion) generalised quadrangle.
\end{lemma}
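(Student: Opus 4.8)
The plan is to restrict $\theta$ to a fixed symp $\xi$ and to identify the induced fixed structure via the kangaroo theory of Sections~\ref{sec:kang}--\ref{sec:kang2} together with the residue at a fixed point. Recall that a symp of $\Delta=\mathsf{E_{7,7}}(\K)$ is a polar space of type $\mathsf{D}_6$. The first task is to show that an \emph{arbitrary} fixed symp $\xi$ meets the fixed point set at all. By \cref{pointwisefixedlines} (and the argument in \cref{connected}) there is a fixed symp $\xi_0$ containing a pointwise fixed line, so $\xi_0$ carries fixed points. For the given $\xi$ I would run through the five mutual positions of $\xi_0$ and $\xi$ from \cref{factE77symp}: if $\xi_0=\xi$ there is nothing to do; if $\xi_0,\xi$ are adjacent or symplectic their intersection is a fixed $5$-space, respectively fixed line, which is pointwise fixed by \cref{nocoll}; if they are special, the unique third symp $\xi''$ meeting both in $5$-spaces is itself fixed (by its uniqueness), whence $\xi''\cap\xi$ is a pointwise fixed $5$-space; and if they are opposite, a fixed point $f_0\in\xi_0$ is collinear to a unique point $q\in\xi$, and applying $\theta$ forces $q^\theta=q$. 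In every case $\xi$ contains a fixed point, and that point is collinear to a further fixed point, hence lies on a pointwise fixed line of $\Delta$.

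Next I would verify that $\theta|_\xi$ is a kangaroo of the $\mathsf{D}_6$ polar space $\xi$. It maps no point to a collinear one by \cref{nocoll}, it has a fixed point by the previous paragraph, and it is nontrivial (otherwise $\xi$ is pointwise fixed and $\theta$ fixes a chamber). Moreover it is \emph{type-preserving}: the two systems of generators of $\xi$ correspond to the fork nodes $2$ and $3$ of the $\mathsf{E_7}$ diagram, and since $\mathsf{E_7}$ admits no diagram automorphism, the (type-preserving) building automorphism $\theta$ cannot interchange them. By \cref{kangcharact} the fixed structure $G$ of $\theta$ in $\xi$ is therefore a nondegenerate polar subspace of some rank $k+1$, and by \cref{par} (type-preserving, $n=6$) the integer $k$ is odd.

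To pin the rank down to $2$ I would pass to a residue. Choose a fixed point $f\in G$ lying on a pointwise fixed line of $\Delta$; then $\theta$ induces on $\Res_\Delta(f)\cong\mathsf{E_{6,1}}(\K)$ a collineation that maps no point to a collinear one (\cref{nocoll}) and has a fixed point, i.e.\ a kangaroo of $\mathsf{E_{6,1}}(\K)$. By \cref{kangaroo} its fixed structure is a (separable or inseparable) quaternion Veronesean $\cV(\K,\HH)$; the octonion alternative of \cref{kangaroo} is semilinear (a Galois kangaroo) and is excluded in the present setting, as it would yield opposition diagram $\mathsf{E_{7;3}}$ rather than $\mathsf{E_{7;4}}$. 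The symp $\xi$ appears in $\Res_\Delta(f)$ as a host symp, in which the fixed points form an ovoid by \cref{nocollE6}; but this ovoid is precisely the set of fixed lines of $\Delta$ through $f$ lying inside $\xi$, that is, the residue $\Res_G(f)$. Such an ovoid is a quaternion norm quadric of Witt index $1$, hence a polar space of rank $1$, so $\Res_G(f)$ has rank $1$ and $G$ has rank $2$. Thus $G$ is a generalised quadrangle, and the quaternion local structure identifies it as a point residual of $\mathsf{B_{3,1}}(\K,\HH)$, i.e.\ a quaternion generalised quadrangle, as required.

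The step I expect to be the main obstacle is this last transfer through the residue: correctly matching $\Res_G(f)$ with the Veronesean ovoid and, above all, excluding the semilinear (octonion) kangaroo so that $G$ is a quaternion and not an octonion quadrangle. The other delicate point, handled above by the symp--symp dictionary of \cref{factE77symp}, is that a fixed symp need not obviously contain a fixed point; the argument deliberately relays fixedness from the distinguished symp $\xi_0$ through intersections that are pointwise fixed by \cref{nocoll}.
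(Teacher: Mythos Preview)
Your overall route is the same as the paper's: show $\xi\cap F\neq\emptyset$, then pass to the $\mathsf{E_{6,1}}$ residue at a fixed point and invoke \cref{kangaroo}. Two comments.

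First, your opening detour through the symp--symp dictionary works, but it is heavier than needed. The paper simply takes any $f\in F$ and applies \cref{factE77}: either $f\in\xi$, or $f^\perp\cap\xi$ is a single point (then fixed), or $f^\perp\cap\xi$ is a $5'$-space (then a fixed singular subspace, hence pointwise fixed by \cref{nocoll}). No auxiliary $\xi_0$ is required.

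Second---and this is the gap you already flagged---your exclusion of the octonion alternative of \cref{kangaroo} via the opposition diagram is not justified: nothing in the paper says that a semilinear residual kangaroo forces diagram $\mathsf{E_{7;3}}$. The correct argument, and the one the paper uses, is \emph{linearity from full fixed lines}. Since $\theta$ fixes lines of $\Delta$ pointwise (\cref{nocoll}), its extension to the universal embedding is linear, and so is the induced collineation $\theta_f$ on $\Res_\Delta(f)\hookrightarrow\PG(26,\K)$. You can also close the gap internally: your parity step already gives $k\geq 1$, so $G$ contains a full line of $\xi$; hence $\theta|_\xi$ is a \emph{lazy} kangaroo and therefore linear (remark after \cref{semilinear_ovoid}); restricting to $\xi_f$ shows $\theta_f|_{\xi_f}$ is linear, whence $\theta_f$ has trivial companion field automorphism and is linear on all of $\Res_\Delta(f)$. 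Once $\theta_f$ is linear, \cref{kangaroo} forces the quaternion Veronesean, and your remaining steps (matching the ovoid in the host symp $\xi_f$ with $\Res_G(f)$ and deducing rank~$2$) go through exactly as in the paper.
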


\begin{proof}
Let $\xi$ be a fixed symp. We first claim that $\xi\cap F\neq\emptyset$. Indeed, let $f\in F$ arbitrary. If $f\in\xi$, then the claim follows. If $f^\perp\cap\xi$ is a singleton, then the claim also follows. So suppose that $f^\perp\cap\xi=U$ is a $5$-space of $\xi$. Then $U$ is pointwise fixed by \cref{nocoll} and the claim follows.

Next we claim that, for each $f\in F$, the collineation $\theta$ induces in $\Res_\Delta(f)$ is a kangaroo collineation $\theta_f$. Indeed, we already noted that  $\theta_f$ does not map points to collinear ones. If $\theta_f$ were the identity, then $\theta$ would fix a chamber of $\Delta$, a contradiction. Hence it remains to show that $\theta_f$ fixes at least one point.  But Lemmas~\ref{pointwisefixedlines} and~\ref{connected} yield a pointwise fixed line through $f$, hence the claim. Note also that, since $\theta$ fixes lines pointwise, $\theta_f$ is a linear kangaroo collineation. 

Hence Theorem~\ref{kangaroo} implies that, if $f\in F\cap \xi$, then the set of fixed points in $f^\perp\cap\xi$ is a cone over a quadric of Witt index 0 in a $5$-dimensional projective space (when viewing $\xi$ as a hyperbolic quadric in $\PG(11,\K)$); so the cone spans a $6$-dimensional projective space. This holds for every point of $F\cap\xi$. Hence $F\cap\xi$ is a full subgeometry of $\xi$ isomorphic to a generalised quadrangle. Since, by the above, the tangent spaces are $6$-spaces, $F\cap\xi$ spans a $7$-space $U_\xi$ and it is not difficult to see that $F\cap\xi=U_\xi\cap\xi$. Since the perp of two noncollinear points is a quadric of Witt index 0 and stems from a quaternion Veronesean (see Theorem~\ref{kangaroo}), $F\cap\xi$ is a quaternion quadrangle. 
\end{proof}

Now let $(\Xi_\theta,\cL_F)$ be the point-line geometry with point set the set $\Xi_\theta$ of fixed symps of $\theta$ and line set $\cL_F$, with natural incidence relation. If we show that $(\Xi_\theta,\cL_F)$ is a polar space, then $(F,\cL_F)$ is a quaternion dual polar space. 

\begin{prop}
The geometry $(\Xi_\theta,\cL_F)$ is a polar space. 
\end{prop}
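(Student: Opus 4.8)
The plan is to describe collinearity in $(\Xi_\theta,\cL_F)$ explicitly and then verify the Buekenhout--Shult one-or-all axiom, from which (together with thickness, nondegeneracy, and the finite rank of $\Delta$) it follows that $(\Xi_\theta,\cL_F)$ is a polar space. First I would record the dictionary. Two fixed symps $\xi,\xi'$ lie on a common member of $\cL_F$ exactly when $\xi\cap\xi'\neq\emptyset$: by \cref{factE77symp} the intersection of two symps is a $5$-space, a line, or empty, and never a single point; whenever it is nonempty it is a $\theta$-invariant singular subspace, hence pointwise fixed by \cref{nocoll}, so it carries fixed lines, and conversely a shared fixed line forces the symps to meet. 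Thus ``collinear'' means ``meets''. Thickness is immediate from the residue picture: for a fixed line $L$ and a fixed point $f\in L$, the map $\theta_f$ induced on $\Res_\Delta(f)\cong\mathsf{E_{6,1}}(\K)$ is a linear kangaroo whose fixed geometry is the projective plane of \cref{kangaroo}, and the fixed symps through $L$ correspond to the lines of that plane through the fixed point determined by $L$; there are at least three. Nondegeneracy follows from the existence of disjoint (opposite) fixed symps.

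For the one-or-all axiom, fix a point $\xi$ (a fixed symp) and a line $L\in\cL_F$ with $L\not\subseteq\xi$. Since symps are convex subspaces, $L\cap\xi$ is either empty or a single point. If $L\cap\xi=\{q\}$, then $q$ is a fixed point, every fixed symp through $L$ contains $q\in\xi$ and hence meets $\xi$, and so $\xi$ is collinear with \emph{all} points of $L$. The substantial case is $L\cap\xi=\emptyset$. Here I would first show that no point of $L$ is close to $\xi$: a close point would be collinear to a $\theta$-invariant, hence (by \cref{nocoll}) pointwise fixed, $5'$-space of $\xi$, which is impossible because $F\cap\xi$ is a generalised quadrangle (\cref{GQ}) and so contains no fixed singular subspace of dimension exceeding $1$. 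Consequently every $p\in L$ is far from $\xi$ with a unique gate $q_p\in F\cap\xi$.

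Next I would analyse the gate map $p\mapsto q_p$. If $q_p\neq q_{p'}$ but $q_p\not\perp q_{p'}$, then by \cref{factE77}$(i)$ applied to $p'$ we would have $\delta(p',q_p)=3$, contradicting the length-$2$ path $p'\perp p\perp q_p$; hence distinct gates are collinear and the image lies in a line of the quadrangle $F\cap\xi$. A constant gate map is excluded, since it would make $\langle q,L\rangle$ a pointwise-fixed plane, which in $\Res_\Delta(f)$ (with $f\in L$) produces two collinear fixed points of the kangaroo $\theta_f$, contradicting \cref{nocollE6}. Therefore the gates sweep out a full fixed line $M\subseteq F\cap\xi$, matched to $L$ by $p\leftrightarrow q_p$ with $p\perp q_p$ and $p\not\perp q_{p'}$ for $p\neq p'$.

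It then remains to decide, among the fixed symps $\eta\supseteq L$, which meet $\xi$: by \cref{factE77symp} these are exactly the $\eta$ adjacent or symplectic to $\xi$, whereas those special or opposite to $\xi$ are disjoint from it. The main obstacle is to establish the resulting dichotomy, namely that $\xi$ meets either exactly one or all of the fixed symps through $L$. I expect to obtain this as a ``two implies all'' statement---if $\xi$ shares a fixed line with two distinct fixed symps through $L$, then it shares one with every such symp---argued from the matched configuration $(L,M)$ together with the symp--symp positions of \cref{factE77symp}, passing to a suitable residue so that \cref{kangaroo} and the projective-plane structure of the fixed lines and symps through a fixed point become available. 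Once the one-or-all axiom is in place, the Buekenhout--Shult characterisation (in finite rank) yields that $(\Xi_\theta,\cL_F)$ is a polar space, and dually that $(F,\cL_F)$ is a dual polar space, as required.
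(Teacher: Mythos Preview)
Your setup is largely correct and more detailed than the paper's terse argument, but there is a genuine gap precisely where you flag ``the main obstacle'': you do not finish the case $L\cap\xi=\emptyset$. The finish is shorter than you anticipate, and your proposed ``two implies all'' detour is unnecessary---in fact vacuous here, since the correct conclusion is \emph{exactly one}.

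Here is the missing step. You have established that the gate map $p\mapsto q_p$ is nonconstant with image contained in a fixed line $M$ of the quadrangle $F\cap\xi$. Pick $p\neq p'$ in $L$ with $q_p\neq q_{p'}$. Since $q_p$ is the unique point of $\xi$ collinear to $p$ and $q_{p'}\in\xi\setminus\{q_p\}$, we have $p\not\perp q_{p'}$; as $p\perp p'\perp q_{p'}$, the pair $\{p,q_{p'}\}$ is symplectic and the symp $\zeta=\xi(p,q_{p'})$ exists. Now $p'\in p^\perp\cap q_{p'}^\perp\subseteq\zeta$ and likewise $q_p\in p^\perp\cap q_{p'}^\perp\subseteq\zeta$ (distinct gates are collinear), so $L=pp'$ and $M=q_pq_{p'}$ both lie in $\zeta$. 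This $\zeta$ is fixed (it is determined by the fixed symplectic pair $p,q_{p'}$) and meets $\xi$ in the fixed line $M$: existence is done. For uniqueness, suppose a fixed symp $\eta\supseteq L$ meets $\xi$. Since a fixed $5$-space in $\xi$ is excluded by \cref{GQ}, $\eta\cap\xi$ is a fixed line $N$. In the polar space $\eta$, each $p\in L$ is collinear to some point of the line $N$; that point lies in $\xi$ and hence equals $q_p$. Thus all gates lie on $N$, forcing $N=M$. Then $\eta$ contains the symplectic pair $p,q_{p'}$, so $\eta=\zeta$. Exactly one fixed symp through $L$ meets $\xi$, and the one-or-all axiom holds.

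This is precisely what the paper compresses into the single sentence ``This yields a unique fixed symp $\xi'$ containing $L$ and intersecting $\xi$ in a fixed line.'' Two smaller points: you should also record the trivial case $L\subseteq\xi$ (all fixed symps through $L$ meet $\xi$); and you assert the existence of opposite fixed symps for nondegeneracy without constructing them, whereas the paper builds them explicitly by chaining $\xi\cap\xi'=L$, then choosing $L'\subseteq\xi'$ opposite $L$ via \cref{GQ}, then $\xi''$ with $\xi'\cap\xi''=L'$, so that $\xi$ and $\xi''$ are opposite in $\Delta$.
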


\begin{proof}
It suffices to show that $(i)$ no point is collinear to all others, and that $(ii)$ for a given point-line pair, either all points on the line are collinear to the given point, or exactly one is. 

{$(i)$ } Let $\xi\in\Xi_\theta$ and let $L\in\cL_F$ with $L\subseteq\xi$. By considering the residue $\Res_\Delta(x)$, for any $x\in L$, we see that there exists $\xi'\in\Xi_\theta$ with $\xi\cap\xi'=L$. By Lemma~\ref{GQ}, there is a line $L'\in\cL_F$ in $\xi'$ opposite $L$ (opposite in the sense of polar spaces). Again we can select $\xi''\in\Xi_\theta$ with $\xi'\cap\xi''=L'$. Then $\xi$ and $\xi''$ are opposite in $\Delta$, and so $\xi$ is not collinear to all points of $\Xi_\theta$.

{$(ii)$ } Now let $(\xi,L)\in\Xi_\theta\times\cL_F$. If $L\subseteq\xi$, then the assertion is trivial.
Now suppose $L\cap\xi=\{x\}$, with $x\in F$. Every symp $\xi'\in\Xi_\theta$ containing $L$ intersects $\xi$ in a fixed line $M$ (use $\Res_\Delta(x)$ to see this). Hence $\xi'\perp\xi$ in $(\Xi_\theta,\cL_F)$.

Finally assume $L\cap\xi=\emptyset$. Since no $5$-space of $\xi$ is fixed, every point of $L$ is collinear to a unique point of $\xi$. This yields a unique fixed symp $\xi'$ containing $L$ and intersecting $\xi$ in a fixed line.
\end{proof}

We record an interesting consequence.
\begin{cor}
No symp is mapped onto an adjacent symp.
\end{cor}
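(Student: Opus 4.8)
The plan is to argue by contradiction, reducing the statement to the single claim that \emph{every symp of $\Delta$ contains a fixed point}, and then to exploit the residual kangaroos. Suppose some symp $\xi$ is mapped onto an adjacent symp $\xi^\theta$, so that $U:=\xi\cap\xi^\theta$ is a $5$-space. The engine of the proof is the following observation: if $\xi$ contains a fixed point $f$, then since $f^\theta=f$ we have $f\in\xi\cap\xi^\theta=U$, and both $\xi$ and $\xi^\theta$ are symps through $f$, hence points of $\Res_\Delta(f)\cong\mathsf{E_{6,1}}(\K)$. As $\xi\cap\xi^\theta=U$ is a $5$-space (a type-$3$ element of $\Delta$ through $f$, hence a line of $\Res_\Delta(f)$ incident to both), these two points are collinear in $\Res_\Delta(f)$. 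But $\theta$ induces on $\Res_\Delta(f)$ a kangaroo collineation $\theta_f$ (established in the proof of \cref{GQ}), and a kangaroo of $\mathsf{E_{6,1}}(\K)$ maps no point to a collinear one; this is the desired contradiction. So it only remains to guarantee a fixed point in $\xi$.

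Thus the task becomes proving that every symp of $\Delta$ contains a fixed point. First I would re-run the argument in the proof of \cref{F4isdom} to conclude that every point of $\Delta$ is collinear to a fixed point: that argument uses only the existence of a fixed symp whose fixed structure is a generalised quadrangle (\cref{GQ}) together with the fact that the residual collineation at a fixed point is an $\mathsf{E_{6,1}}$-kangaroo, every symp of which carries a fixed point (\cref{handycor}); both ingredients are available here. Moreover, by the kangaroo characterisation (\cref{kangcharact}), the fixed generalised quadrangle inside a fixed symp $\eta$ is $4$-large, so every generator, and indeed every singular $4$-space, of $\eta$ meets it in a fixed point.

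Now let $\zeta$ be an arbitrary symp and pick $z\in\zeta$; by the previous paragraph $z$ is collinear to a fixed point $f$, and we may assume $f\notin\zeta$. Working in $\Res_\Delta(f)$, the line $fz$ is a symp of $\Res_\Delta(f)$, which by \cref{handycor} contains a fixed point of $\theta_f$; translated back, this produces a fixed symp $\eta$ of $\Delta$ containing the line $fz$, and in particular $z\in\eta\cap\zeta$. By \cref{factE77symp} the intersection $\eta\cap\zeta$ is then either a $5$-space or a line. In the first case $\eta\cap\zeta$ is a generator of the fixed symp $\eta$ and hence meets its $4$-large fixed quadrangle, yielding a fixed point in $\zeta$ and completing the reduction.

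The main obstacle is the remaining (symplectic) case, where $\eta\cap\zeta=L$ is a line and $4$-largeness does not directly force a fixed point on $L$. To handle it I would choose, inside the fixed quadrangle of $\eta$, a fixed point $f'$ collinear to all of $L$ but not lying on $L$ (such an $f'$ exists because the fixed quadrangle is $4$-large and $L^\perp\cap\eta$ contains singular $4$-spaces); then $f'$ is a fixed point close to $\zeta$, collinear to a $5'$-space $W\supseteq L$ of $\zeta$, and repeating the residual argument at $f'$ relocates a fixed symp so as to meet $\zeta$ in a generator rather than only in $L$, at which point $4$-largeness finishes as before. Verifying that this relocation genuinely upgrades the intersection from a line to a $5$-space—equivalently, controlling the mutual position of $\zeta$ with the rank-$3$ polar space of fixed symps established in the preceding proposition—is the delicate point, and is where the bulk of the careful work will lie.
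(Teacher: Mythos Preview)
Your engine—the residual kangaroo contradiction at a fixed point $f\in\xi$—is correct and elegant. The detour through the general claim ``every symp contains a fixed point'' is, however, unnecessary and (as you acknowledge) incomplete in the symplectic-intersection case.

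The paper's proof bypasses this entirely by looking directly at $U=\xi\cap\xi^\theta$. Either $U$ is stabilised by $\theta$, in which case by \cref{nocoll} it is pointwise fixed; but a $5$-dimensional singular subspace of fixed points contradicts the quaternion dual polar space structure established just before the corollary (that structure has no singular subspace larger than a line). Or some $u\in U$ is moved; since $u\in U\subseteq\xi^\theta$ and $u^\theta\in\xi^\theta$, and by \cref{nocoll} they are not collinear, the pair is symplectic with $\xi(u,u^\theta)=\xi^\theta$, whence \cref{sympfixed} forces $\xi^\theta$ to be fixed, contradicting $\xi\neq\xi^\theta$.

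Your engine actually gives a neat alternative for the first case: if $U$ is pointwise fixed, take any $f\in U$ and run the kangaroo argument in $\Res_\Delta(f)$—this avoids invoking the dual polar space structure. But for the second case (no fixed point in $U$) you still need \cref{sympfixed}; your attempt to manufacture a fixed point elsewhere in $\xi$ is both harder and not carried through. In particular, the ``relocation'' step in the line case is genuinely delicate: it is not clear that passing from $f$ to $f'$ ever upgrades the intersection $\eta\cap\zeta$ from a line to a $5$-space, and without that the argument loops.

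So the gap is not the engine but the reduction: you are solving a strictly harder problem than needed, and its hard part is left unresolved. Look for the fixed point in $U$ itself, and fall back on \cref{sympfixed} when none exists there.
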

\begin{proof}
Suppose the symps $\xi$ and $\xi^\theta$ are adjacent and let $U=\xi\cap\xi^\theta$. If $U$ is stabilised, then it is fixed pointwise,  contradicting the fact that no subspace larger than a line is contained in a quaternion dual polar space. Hence some point of $U$ is moved by $\theta$, implying that it is mapped onto a symplectic point (by Lemma~\ref{nocoll}). But now by Corollary~\ref{sympfixed}, $\xi^\theta$ is fixed, a contradiction.
\end{proof}

\subsection{The quadratic case}

Here we assume that there is a point $z\perp z^\theta\neq z$ such that $z^{\theta^2}\notin zz^\theta$. By Lemma~\ref{dilemma}, $z^{\theta^2}\perp z$ and the plane $\pi$ generated by $z,z^\theta,z^{\theta^2}$ is fixed by $\theta$, as well as every symp containing $\pi$. 

\begin{lemma}\label{L}
\begin{compactenum}[$(i)$]
\item Every singular subspace containing $\pi$ is fixed. \item The collineation $\theta$ fixes a unique line $L\subseteq \pi$ and a unique point $f\in\pi\setminus L$. \item  Every singular $k$-space $U$ containing $\pi$ contains a pointwise fixed $(k-2)$-dimensional subspace $U'$ disjoint from $L$. \item Every symp containing $L$ is fixed under $\theta$. \end{compactenum}
\end{lemma}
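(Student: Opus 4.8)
The plan is to exploit that, by \cref{dilemma}, the plane $\pi$ and every symp through it are fixed, and to read off the four assertions from the action of $\theta$ on $\pi$ together with its residue. For $(i)$ I would pass to $\Res_\Delta(\pi)$. Since $\pi$ is a plane (type $5$), removing node $5$ from the $\sE_7$ diagram shows $\Res_\Delta(\pi)$ has type $\sA_4\times\sA_2$; concretely it is $\PG(4,\K)\times\PG(2,\K)$, where the $\PG(2,\K)$-factor is $\pi$ itself (its internal points and lines, types $7$ and $6$) and the $\PG(4,\K)$-factor is the projective space of all singular subspaces of $\Delta$ containing $\pi$ (types $1,3,4,2$). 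As $\theta$ is type-preserving it respects this decomposition. The symps through $\pi$ are exactly the points of the $\PG(4,\K)$-factor (node $1$ is an endpoint of the $\sA_4$), and all of them are fixed; a collineation of a projective space fixing every point is the identity, so $\theta$ acts trivially on the $\PG(4,\K)$-factor and hence fixes every one of its vertices. This is precisely $(i)$.

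For $(ii)$ I study $\bar\theta:=\theta|_\pi$ on the $\PG(2,\K)$-factor. It is not the identity (otherwise $z=z^\theta$), and it is not a perspectivity: a perspectivity with centre $c$ would force $z^\theta\in zc$ and $z^{\theta^2}\in z^\theta c=zc$, so $z^{\theta^2}\in zz^\theta$, contradicting the defining hypothesis of the quadratic case. Combining $(i)$ with the assumption that $\theta$ fixes no chamber, I claim $\bar\theta$ fixes no flag of $\pi$: a fixed incident point--line pair in $\pi$ would extend, through the fixed chain of singular subspaces containing $\pi$ furnished by $(i)$ and a fixed symp on top of it, to a fixed chamber of $\Delta$. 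Sorting the fixed-element configurations of a plane collineation by Jordan type, the only ones compatible with ``not the identity, not a perspectivity, no fixed flag'' are: exactly one fixed point $f$ and one fixed line $L$, non-incident (characteristic polynomial $(x-\lambda)q(x)$ with $\lambda\in\K$ and $q$ irreducible with roots $\mu,\bar\mu$ in a quadratic extension $\LL$); or no fixed point and no fixed line (irreducible characteristic polynomial). So $(ii)$ follows once the second possibility is ruled out; along the way one should also check that $\theta$ induces a \emph{linear} map on $\pi$, i.e. reduce away a nontrivial companion field automorphism.

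Excluding the fixed-point-free possibility is the main obstacle, and I expect to resolve it, together with $(iii)$, inside a fixed symp $\xi\supseteq\pi$, where $\xi\cong\mathsf{D}_{6,1}(\K)$ and $\theta|_\xi$ is an orthogonal similitude of the ambient hyperbolic quadric. By $(i)$, $\theta$ fixes every singular subspace of $\xi$ containing $\pi$, hence acts trivially on the nondegenerate rank-$3$ residual polar space supported by $\pi^\perp/\pi$, i.e. as a scalar $c$ on $\pi^\perp/\pi$. The orthogonal form pairs the eigenvalues on $\pi$ with those on the dual quotient $\widetilde\xi/\pi^\perp$ and forces the residual eigenvalue to satisfy $c^2=s$ (the similitude factor); the plan is to combine this pairing with the scalar action to show the characteristic polynomial of $\bar\theta$ on $\pi$ must acquire a root in $\K$, which produces $f$ and kills the irreducible case. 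The same bookkeeping gives $(iii)$: on a fixed $k$-space $U\supseteq\pi$ the matrix of $\theta|_U$ is block upper-triangular with blocks $M_\pi$ (the action on $\pi$) and $cI$ (scalar on $U/\pi$), so the eigenspace for the common eigenvalue has projective dimension $k-2$ and, being the $\{\lambda\}$- (not $\{\mu,\bar\mu\}$-) part, is pointwise fixed and disjoint from $L$. The genuinely delicate point is the identification $c=\lambda$, which is exactly what makes this fixed subspace have dimension $k-2$ rather than $k-3$; I expect it to come from the eigenvalue pairing imposed by the quadratic form on $\xi$.

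Finally, for $(iv)$ I would argue by point-domesticity, which is available since the opposition diagram is $\sE_{7;4}$ (node $7$ uncircled). Let $\zeta\supseteq L$ be a symp; then $\zeta^\theta\supseteq L^\theta=L$. Suppose $\zeta^\theta\neq\zeta$ with $\zeta\cap\zeta^\theta=L$, so the pair is symplectic by \cref{factE77symp}. Choose $x\in\zeta\setminus(L\cup L^\perp)$, so that $x^\perp\cap L$ is a single point $\ell_x$; then $x^\theta\in\zeta^\theta\setminus L$ and $(x^\theta)^\perp\cap L=(x^\perp\cap L)^\theta=\{\ell_x^\theta\}$. Since $\theta$ acts fixed-point-freely on $L$ by $(ii)$, we have $\ell_x^\theta\neq\ell_x$, so $x^\perp\cap L$ and $(x^\theta)^\perp\cap L$ are disjoint and \cref{factE77symp} yields $\delta(x,x^\theta)=3$, i.e. $x$ is non-domestic --- a contradiction. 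The remaining case $\zeta\cap\zeta^\theta$ a $5$-space is handled analogously by exhibiting an opposite pair. Hence every symp containing $L$ is fixed, giving $(iv)$.
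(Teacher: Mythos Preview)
Your argument for $(i)$ via the $\sA_4\times\sA_2$ residue is correct and essentially equivalent to the paper's one-line observation that every singular subspace through $\pi$ is an intersection of symps through $\pi$. Your argument for the symplectic case of $(iv)$ is also correct and is in fact slicker than the paper's route.

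The real problem is the eigenvalue-pairing strategy you propose for $(ii)$ and $(iii)$: it does not deliver either conclusion. Inside a fixed symp $\xi$, the similitude condition gives $c^2=s$ and pairs each eigenvalue $\lambda_i$ on $\pi$ with $s/\lambda_i=c^2/\lambda_i$ on $V/\pi^\perp$; but the resulting multiset $\{\lambda,\mu,\nu,\,c^{(6)},\,c^2/\lambda,c^2/\mu,c^2/\nu\}$ is automatically closed under $t\mapsto c^2/t$, so no further constraint is squeezed out. In particular nothing prevents $p_\pi$ from being an irreducible cubic, and nothing forces $c=\lambda$. If $c\neq\lambda$ your block computation gives a $(k-3)$-dimensional eigenspace, not $(k-2)$. (You also never establish linearity of $\theta|_\pi$, which you need before invoking any Jordan-type classification.)

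What the paper does instead---and what you are missing---is to reuse \cref{dilemma} on points \emph{outside} $\pi$. For $(ii)$: take a fixed $3$-space $U\supseteq\pi$ and any $u\in U\setminus\pi$ with $u\neq u^\theta$; then \cref{dilemma} forces either $uu^\theta$ fixed (so $uu^\theta\cap\pi$ is a fixed point of $\pi$) or $\langle u,u^\theta,u^{\theta^2}\rangle$ a fixed plane in $U$ (so its intersection with $\pi$ is a fixed line). This immediately kills the irreducible-cubic possibility, and then the short linear-algebra claim (fixed line $\Rightarrow$ fixed point) gives the antiflag. For $(iii)$: the same trick produces, in each $3$-space through $\pi$, a second plane $\pi'\ni L$ carrying its own fixed point $f'\notin L$; \cref{allpointsfixed} then makes the line $ff'$ pointwise fixed, and these lines span the required $(k-2)$-space. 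The identity $c=\lambda$ is a \emph{consequence} of this, not an input.

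Finally, your $(iv)$ has a gap in the adjacent case: if $\zeta\cap\zeta^\theta$ is a $5$-space then every point of $\zeta$ is at distance $\leq 2$ from every point of $\zeta^\theta$, so you cannot ``analogously'' exhibit an opposite pair. The paper avoids any case split here: using $(iii)$, the $6$-space $\langle f,\,f^\perp\cap\zeta\rangle$ contains, for each plane $\alpha\supseteq L$ inside $f^\perp\cap\zeta$, a fixed point $f_\alpha\notin L$; one checks $\alpha$ then satisfies the quadratic-case hypothesis, so \cref{dilemma} applied to $\alpha$ fixes every symp through $\alpha$, in particular $\zeta$.
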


\begin{proof}
{$(i)$ } By Lemma~\ref{dilemma} every symp containing $\pi$ is fixed, hence by taking intersections, every singular subspace containing $\pi$ is fixed.  

{$(ii)$ } First note that $\theta$ does not fix an incident point-line pair in $\pi$, as $(i)$ would lead to a fixed chamber.  Consider now a singular $3$-space $U$ of $\Delta$ containing $\pi$ and pick any point $u\in U\setminus\pi$ not fixed under $\theta$ (if $\theta$ fixes all points of $U\setminus\pi$, then also all points of $\pi$, a contradiction). Then either $\theta$ fixes $uu^\theta$, and hence also the point $f:=uu^\theta\cap\pi$, or it fixes the plane $\alpha$ spanned by $u,u^\theta,u^{\theta^2}$, and hence the line $L:=\pi\cap\alpha$. 

We now claim that, if a linear collineation $\sigma$ in the projective plane $\PG(2,\K)$ fixes a line, then it fixes a point. Indeed, we may assume, up to duality, that $\sigma$ fixes the line $L$ without fixed points. Choose coordinates so that $(1,0,0),(0,1,0)\in L$ and let $(0,0,1)^\sigma=(x_1,x_2,x_2)$, $x_i\in\K$, $i=1,2,3$.  Then the associated matrix $M$ of $\sigma$ can be written as 
$$\left[ \begin{array}{ccc} a & b & 0 \\ c & d & 0\\ x_1 & x_2 & x_3 \end{array}\right], $$ where $\lambda^2-(a+d)\lambda+(ad-bc)=0$ has no $\K$-solutions. Hence $x_3$ is the only $\K$-eigenvalue of $M$ and consequently $\sigma$ fixes a unique point. The claim is proved.

The previous claim now implies that $\theta$ fixes an antiflag $(f,L)$ in $\pi$.  

{$(iii)$ } Let $U$ be a singular $k$-space containing $\pi$, $3\leq k\leq 6$. Then $U^\theta= U$. Pick any point $u\in U\setminus\pi$ and note that the $3$-space $\Sigma$ spanned by $u$ and $\pi$ is fixed by $\theta$.  As before, we may assume that $u\neq u^\theta$. If for all choices of $u$, the points $f,u,u^\theta$ are aligned, then $f$ is a center and hence there is an axis (of fixed points), which intersects $\pi$ in a line, a contradiction. So we may assume that $uu^\theta\cap\pi\neq \{f\}$. It follows by the uniqueness of $f$ that $u,u^\theta,u^{\theta^2}$ determine a plane $\pi'$ which, by uniqueness of $L$, intersects $\pi$ in $L$. As before, $L$ is the unique fixed line of $\pi'$ and hence there is a unique fixed point $f'\notin L$ in $\pi'$. By Lemma~\ref{allpointsfixed}, the line $ff'$ is fixed pointwise. So every $3$-space in $U$ containing $\pi$ contains a unique line pointwise fixed. Since by Lemma~\ref{allpointsfixed}, the set of fixed points in $U$ is a subspace, we see that all these lines generate a $(k-2)$-space $U'$ of $U$ pointwise fixed. 

{$(iv)$ }   Let $\xi$ be any symp containing $L$. Then $f$, being collinear to all points of $L$, is collinear to all points of a certain $5$-space $W$ of $\xi$, and the singular subspace defined by $f$ and $W$ contains $\pi$. Then $(iii)$ implies that $\theta$ fixes every plane $\alpha$ in $W$ containing $L$.   Also, $\alpha$ contains a unique fixed point $f_\alpha\notin L$. It follows easily that for every point $v\in\alpha\setminus(L\cup\{f_\alpha\})$, the line $vv^\theta$ does not contain $v^{\theta^2}$. Hence we can let $\alpha$ play the role of $\pi$; in particular $\theta$ fixes each symp containing $\alpha$, and so $\xi^\theta=\xi$.  
\end{proof}

\begin{lemma}\label{grid}
Let $\xi$ be a symp containing $L$. Then $\xi$ pointwise fixes a subpolar space $Q\subseteq L^\perp$ of type $\mathsf{D_4}$; also $\theta$ fixes all members of the system of generators of the grid $Q^\perp$ which contains $L$. 
\end{lemma}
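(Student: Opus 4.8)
The plan is to study the collineation that $\theta$ induces on a fixed symp $\xi$ containing $L$ (fixed by Lemma~\ref{L}$(iv)$), which is a hyperbolic polar space $\cong\mathsf{D_{6,1}}(\K)$, and to locate inside it the pointwise-fixed $\mathsf{D_4}$ by using the residue of $L$.

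First I would exploit the reducibility of $\Res_\Delta(L)$. Since $L$ is a line of $\Delta=\sE_{7,7}(\K)$, i.e.\ a vertex of type $6$, its residue is a building of type $\mathsf{D_5}\times\mathsf{A_1}$, where the $\mathsf{A_1}$-factor is the pencil of points on $L$ and the $\mathsf{D_5}$-factor is the polar space whose points are the symps through $L$. As $\theta$ is type-preserving it acts on each factor; on the $\mathsf{A_1}$-factor it acts as the fixed-point-free (quadratic) collineation of $\PG(1,\K)$ coming from the action on $L$ recorded in Lemma~\ref{L}$(ii)$. The key observation is that, by Lemma~\ref{L}$(iv)$, \emph{every} symp through $L$ is fixed, so $\theta$ fixes every point of the $\mathsf{D_5}$-factor and hence acts as the identity on it. Consequently $\theta$ fixes every singular subspace of $\Delta$ through $L$ lying in the $\mathsf{D_5}$-direction; in particular the induced collineation $\tilde\theta$ on $\Res_\xi(L)\cong\mathsf{D_4}$ (which is the residue of the point $\xi$ inside the $\mathsf{D_5}$-factor) is the identity.

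Next I would extract the $\mathsf{D_4}$. Realising $\xi$ as a hyperbolic quadric in $\<\xi\>=\PG(11,\K)$ with $\hat L$ the $2$-dimensional isotropic subspace spanned by $L$, the statement $\tilde\theta=\mathrm{id}$ on $\Res_\xi(L)=\hat L^\perp/\hat L$ says that $\theta v\equiv v\pmod{\hat L}$ for all $v\in\hat L^\perp$; writing $\theta v=v+\ell(v)$ defines a linear map $\ell\colon\hat L^\perp\to\hat L$. Because in the quadratic case $\theta|_{\hat L}$ has irreducible characteristic polynomial over $\K$ (in particular no eigenvalue $1$), the restriction $\ell|_{\hat L}$ is invertible, so $\ker\ell$ is an $8$-dimensional subspace complementary to $\hat L$ and pointwise fixed by $\theta$. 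A short nondegeneracy check (using $(\hat L^\perp)^\perp=\hat L$) shows that the quadratic form restricts nondegenerately to $\ker\ell$ with Witt index $4$, so $Q:=\xi\cap\PG(\ker\ell)$ is a pointwise-fixed subpolar space of type $\mathsf{D_4}$, and $Q\subseteq L^\perp$ since $\ker\ell\subseteq\hat L^\perp$. The pointwise-fixed $3$-space produced in Lemma~\ref{L}$(iii)$ reappears here as a generator of $Q$, which also confirms that $\theta|_\xi$ is linear.

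Finally, the grid. Since $Q$ is pointwise fixed, $\theta$ fixes $\<Q\>=\ker\ell$ pointwise, hence is an isometry and stabilises $V_0:=(\ker\ell)^\perp$; the grid is $Q^\perp\cap\xi=\mathcal Q^+(V_0)\cong\PG(1,\K)\times\PG(1,\K)$, with $L$ a generator of one ruling $\mathcal R_1$. Being type-preserving on $\xi$, $\theta$ preserves the two rulings, so $\theta|_{V_0}=g_1\otimes g_2$ with $g_2$ (the action on the points of $L$) fixed-point-free and $g_1$ (the action on $\mathcal R_1$) fixing $L$; the claim is exactly that $g_1$ is scalar. I expect this to be the main obstacle, because the restriction of $\theta$ to $\xi$ alone does \emph{not} determine $g_1$: one genuinely has to feed in ambient information. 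The plan is to show that each generator $L'\in\mathcal R_1$ is itself fixed by producing a fixed symp of $\Delta$ responsible for it—concretely, for a generator $g$ of $Q$ the $5$-space $\<L',g\>$ is a generator of $\xi$, and transporting the family of fixed symps through $L$ (Lemma~\ref{L}$(iv)$) across the pointwise-fixed $\mathsf{D_4}$ $Q$ should force $\<L',g\>$, and with it $L'$, to be fixed. Once $g_1$ is shown to be scalar, every member of the ruling $\mathcal R_1$ is fixed, which is the assertion.
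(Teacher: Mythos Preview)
Your approach to locating the pointwise-fixed $\mathsf{D_4}$ is correct and genuinely different from the paper's. The paper proceeds concretely: it picks two $5$-spaces $U,U'$ of $\xi$ with $U\cap U'=L$, invokes Lemma~\ref{L}$(iii)$ to get pointwise-fixed $3$-spaces $S\subseteq U$ and $S'\subseteq U'$ disjoint from $L$, and takes $Q$ to be the $\mathsf{D_4}$ generated by $S$ and $S'$. Your route---observing that Lemma~\ref{L}$(iv)$ makes $\theta$ the identity on the $\mathsf{D_5}$-factor of $\Res_\Delta(L)$, hence on $\Res_\xi(L)$, and then reading this off linearly as $\theta v\equiv v\pmod{\hat L}$ on $\hat L^\perp$---is cleaner and more conceptual; it explains at once why the fixed set has exactly the right dimension. (One small ordering issue: you use linearity of $\theta|_\xi$ before quoting Lemma~\ref{L}$(iii)$ as its justification; this should be stated first.)

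The second assertion, however, is where your proposal has a real gap. You correctly identify that $\theta|_{V_0}=g_1\otimes g_2$ with $g_2$ fixed-point-free and $g_1$ fixing $L\in\mathcal R_1$, and that ambient information is required to force $g_1$ to be scalar. But your suggested mechanism---``transporting the family of fixed symps through $L$ across $Q$'' to make $\langle L',g\rangle$ fixed---does not work as stated: $\langle L',g\rangle^\theta=\langle L'^\theta,g\rangle$, and nothing you have arranged distinguishes $L'$ from any other member of $\mathcal R_1$. The identity on the $\mathsf{D_5}$-factor of $\Res_\Delta(L)$ only controls flags \emph{through $L$}, and $L'$ is not one of these.

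The paper's argument for this step is substantive and quite different. It chooses a $\mathsf{D_3}$-subspace $Q'\subseteq\xi$ containing the grid $Q^\perp$ and passes, via inverse Klein correspondence, to a collineation of $\PG(3,\K)$ fixing two disjoint lines $K_1,K_2$ (the two points of $Q\cap Q'$), acting fixed-point-freely on $K_1$, and fixing at least one point on $K_2$. The crucial ambient input is Lemma~\ref{dilemma}: applied to the point of $Q'$ corresponding to an intersecting pair of Klein-lines, it forces (when $z^\theta$ is not fixed) the plane $\langle z,z^\theta,z^{\theta^2}\rangle$ in $\PG(3,\K)$ to be $\theta$-stable, and this extra constraint is what rules out a nontrivial $g_1$. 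Your plan does not identify this dichotomy, and without it (or an equivalent ambient constraint) the second assertion cannot be concluded.
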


\begin{proof}
Let $U,U'$ be two singular $5$-spaces in $\xi$ with $U\cap U'=L$. By Lemma~\ref{L}$(iii)$, $U$ and $U'$ contain $3$-spaces $S,S'$, respectively, pointwise fixed under $\theta$, and disjoint from $L$. Then $S$ and $S'$ are opposite in $\xi$. Every point lying on a line which intersects $S\cup S'$ in at least two points is fixed, according to Lemma~\ref{allpointsfixed}. All such points together form a subspace $Q$ of type $\mathsf{D_4}$. Now $\theta$ does not fix any other points, as otherwise it would fix a point of the grid $Q^\perp$, which contains $L$; hence $\theta$ would fix a point on $L$, a contradiction. Hence the first assertion is proved.

Now let $Q'$ be a subspace of $\xi$ of type $\mathsf{D_3}$ containing $Q^\perp$. Then $Q\cap Q'$ contains exactly two points. Applying inverse Klein correspondence to $Q'$, we obtain a map, which we shall also denote by $\theta$, of $\PG(3,\K)$ fixing exactly two disjoint lines $K_1,K_2$, acting without fixed points on $K_1$ and fixing at least one point $a$ on $K_2$. For each line $N$, either $N^\theta$ is disjoint from $N$, or $\theta$ fixes the intersection $N\cap N^\theta$ or the span $\<N,N^\theta\>$. 

Assume for a contradiction that $\theta$ does not fix some point $z_2\in K_2$. Pick any point  $z_1\in K_1$ and pick any point $z\in \<z_1,z_2\>$. Then $\<z^\theta_1,z^\theta_2\>$ is disjoint from $\<z_1,z_2\>$ because otherwise both $K_1$ and $K_2$ would be contained in the plane spanned by $z_1,z_2,z_1^\theta,z^\theta_2$. For a similar reason $\<z,z^\theta\>$ is disjoint from both $K_1$ and $K_2$. Now we claim that $\<z,z^\theta\>^\theta$ is distinct from $\<z,z^\theta\>$. Indeed, if $\<z,z^\theta\>$ was fixed, then so would be the unique line $A$ through $a$ intersecting $\<z,z^\theta\>$ and $K_1$ nontrivially, and hence also $A\cap K_1$, contradicting the fact that $\theta$ acts without fixed points on $K_1$. The claim is proved. Since the intersection $z^\theta=\<z,z^\theta\>\cap\<z,z^\theta\>^\theta$ is not fixed, the plane $\<z,z^\theta,z^{\theta^2}\>$ is fixed (this follows from applying \cref{dilemma} to the intersection point of the planes of $Q'$ corresponding under the Klein correspondence to $z$ and $z^\theta$). But since $\<z,z^\theta\>$ is disjoint from $K_1$, this plane does not contain $K_1$ and hence intersects it in a unique point, which is then fixed, a contradiction. 

We conclude that $\theta$ fixes each point of $K_2$. Translate back to $Q'$, this proves the second assertion. 
\end{proof}
We can now pin down $\theta$.

\begin{prop}
The collineation $\theta$ pointwise fixes the equator geometry defined by two opposite symps $\xi,\xi'$. Also, $\theta$ acts fixed point freely on the full imaginary set $\mathcal{I}$ of symps defined by $\xi$ and $\xi'$ and fixes every line intersecting every member of $\mathcal{I}$. 
\end{prop}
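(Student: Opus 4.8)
The plan is to recognise the pointwise-fixed structure assembled in \cref{L,grid} as a $\mathsf{D_6}$ equator geometry, with the fixed line $L$ serving as a connecting line, and then to transport the fixed-point-free action of $\theta$ on $L$ to the imaginary set. First I record the local picture. By \cref{L}$(iv)$ every symp $\eta$ through $L$ is fixed, and by \cref{grid} the fixed points of $\theta$ inside $\eta$ form a pointwise-fixed subpolar space $Q_\eta$ of type $\mathsf{D_4}$ lying in $L^\perp$, while $\theta$ fixes every generator of the grid $Q_\eta^\perp$ in the ruling containing $L$; moreover, as in the proof of \cref{grid}, no point of $\eta$ outside $Q_\eta$ is fixed, so that $F\cap\eta=Q_\eta$. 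Since $\eta$ is a hyperbolic polar space of rank $6$, this realises the orthogonal splitting $\eta=Q_\eta\perp Q_\eta^\perp$ in which $Q_\eta\cong\mathsf{D_4}$ is fixed pointwise and $Q_\eta^\perp$ is a grid carrying $L$; by \cref{L}$(ii)$ the collineation $\theta$ acts on $L$ as a fixed-point-free projectivity whose eigenvalues generate a quadratic extension $\LL$ of $\K$, and hence has no fixed point on any line of the $L$-ruling. In particular $L\cap F=\emptyset$ and $L\perp Q_\eta$.

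Next I globalise. The symps through the fixed line $L$ are parametrised inside $\Res_\Delta(L)$ by the points of its irreducible $\mathsf{D_5}$-factor, all of which are fixed, and the corresponding $\mathsf{D_4}$'s $Q_\eta$ overlap along lower-dimensional fixed subspaces. Using that $F$ is a subspace (a fixed line carrying a fixed point is pointwise fixed, \cref{allpointsfixed}) together with a connectivity argument modelled on the proof of \cref{connected}, one checks that $F$, endowed with the pointwise-fixed lines, is a nondegenerate hyperbolic geometry of type $\mathsf{D_6}$ whose intersection with each symp through $L$ is one of the $Q_\eta$. By the structure theory of equator geometries (see \cref{imE7,E7im} and \cite{DSV,Sch-Mal:23}) such a pointwise-fixed $\mathsf{D_6}$-geometry is the equator $E(\xi,\xi')$ of a uniquely determined pair of opposite symps $\xi,\xi'$, with an associated full imaginary set $\mathcal{I}$ and set $\cL$ of connecting lines canonically attached to $F=E(\xi,\xi')$. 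The members of $\cL$ are exactly the lines meeting every member of $\mathcal{I}$, and the lines of the $L$-ruling of $Q_\eta^\perp$ are such connecting lines; in particular $L\in\cL$.

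Finally I read off the action. Since $\theta$ fixes $F=E(\xi,\xi')$ pointwise, it lies in the pointwise stabiliser of the equator; by the description in \cref{E7im} (and \cite{DSV,Sch-Mal:23}) this stabiliser fixes every connecting line and induces on $\mathcal{I}$ the projectivity by which it acts on a connecting line. Hence $\theta$ fixes every line of $\cL$, i.e. every line meeting all members of $\mathcal{I}$. On the other hand, by \cref{imE7} each connecting line, in particular $L$, is partitioned by its intersection points with the members of $\mathcal{I}$, which yields a $\theta$-equivariant bijection between the points of $L$ and $\mathcal{I}$; as $\theta$ acts on $L$ without fixed points by \cref{L}$(ii)$, it acts on $\mathcal{I}$ without fixed points. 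This establishes all three assertions.

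The main obstacle is the globalisation and recognition step in the second paragraph: proving that the local $\mathsf{D_4}$'s $Q_\eta$ glue to a genuine nondegenerate, connected geometry of type $\mathsf{D_6}$ and identifying this fixed structure with an abstract equator, so that $L$ is forced to be a connecting line and the $L$-ruling consists of fixed connecting lines. Once $F$ is known to be an equator, the statement that $\theta$ fixes $E(\xi,\xi')$ pointwise is tautological, the behaviour on $\cL$ follows from the equator structure theory, and the fixed-point-freeness on $\mathcal{I}$ is an immediate consequence of the fixed-point-free action of $\theta$ on $L$ recorded in \cref{L}$(ii)$.
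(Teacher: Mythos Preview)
Your approach differs substantially from the paper's, and the gap you yourself flag in the final paragraph is genuine and unresolved.

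The paper does not first assemble the $Q_\eta$'s into a global $\mathsf{D_6}$ and then recognise it abstractly as an equator. Instead it \emph{constructs} the two opposite symps directly: from the line $L$ and a fixed symp $\zeta\supseteq L$, \cref{grid} produces a second line $L'\subseteq\zeta$ in the same ruling as $L$ (so $L'$ enjoys the same properties as $L$); then a symp $\zeta'$ with $\zeta\cap\zeta'=L'$ yields a third line $L''\subseteq\zeta'$, $\zeta'$-opposite $L'$, which is shown to be $\Delta$-opposite $L$. Picking $x\in L$ and $x''\in L''$ not opposite one another, the paper sets $\xi:=\xi(x,x'')$ and $\xi':=\xi^\theta$; fixed-point-freeness on the resulting imaginary set $\mathcal{I}$ is then immediate. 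For each $y\in x^\perp\cap {x''}^\perp$ it constructs a connecting line $L_y$ (as the intersection of two fixed symps through $L$ and $L''$ respectively) and shows $L_y$ is fixed. The crucial step you are missing is then supplied not by structure theory but by the \emph{opposition diagram}: one shows that $\theta_\xi$ fixes $x$, $x''$ and every point of $x^\perp\cap {x''}^\perp$, and that together with $\mathsf{E_{7;4}}$-domesticity this forces $\theta_\xi$ to have opposition diagram at most $\mathsf{D_{6;1}^2}$; by the classification in \cite{PVMclass} such a collineation must be the identity (the alternative fixed structure is incompatible with the fixed points already found). Once $\theta_\xi=\mathrm{id}$, every point close to $\xi$ (hence every point of the equator) is fixed.

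Your proposed route has two difficulties. First, the ``one checks'' globalisation is doing a lot of work: you need to prove that $\bigcup_\eta Q_\eta$ is a full, connected, nondegenerate rank-$6$ hyperbolic polar space inside $\Delta$, and nothing in \cref{connected} (which concerns the quaternion case in a different ambient geometry) transfers directly. Second, even granting that $F\cong\mathsf{D_{6,1}}(\K)$, you invoke an unstated uniqueness theorem to conclude that any such fully embedded $\mathsf{D_6}$ in $\mathsf{E_{7,7}}(\K)$ is automatically an equator $E(\xi,\xi')$ with $L$ a connecting line; \cref{imE7,E7im} describe equator geometries but do not supply this converse recognition. The paper's explicit construction of $\xi,\xi'$ and its use of the opposition diagram to force $\theta_\xi=\mathrm{id}$ are precisely what circumvent both issues.
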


\begin{proof}
Let $\zeta$ be any symp containing the line $L$; then $\zeta^\theta=\zeta$ by Lemma~\ref{L}$(iv)$. By Lemma~\ref{grid}, we find another line $L'$ in $\zeta$ fixed under $\theta$, containing no fixed points, and such that $L'$ is contained in a plane containing exactly one fixed point. Hence everything that holds for $L$ also holds for $L'$. We can then consider a symp $\zeta'$ with $\zeta\cap\zeta'=L'$. We again find a line $L''$ in $\zeta'$ with similar properties as $L$. Note that $L''$ is $\zeta'$-opposite $L'$. It follows that $L$ and $L''$ are opposite in $\Delta$. Pick any point $x\in L$ and let $x''$ be the unique point on $L''$ not opposite $x$. Set $\xi:=\xi(x,x'')$ and $\xi'=\xi^\theta$. Clearly, $\theta$ acts fixed point freely on the  imaginary set $\mathcal{I}$ of symps defined by $\xi$ and $\xi'$. 

Let $y$ be any point in $x^\perp\cap{x''}^\perp$. Let $\zeta$ be the symp defined by $xy$ and $L$, and let $\zeta''$ be the symp defined by $x''y$ and $L''$.  Since $L$ and $L''$ are opposite, the symps $\zeta$ and $\zeta''$ are symplectic and intersect in a unique line $L_y$, with $y\in L_y$. Every point $y_0'$ of $L_y$ is collinear to a point  $y_0\in L$ and a point $y_0''\in L''$; since $y_0$ and $y_0''$ are not opposite, they are contained in the same member of $\mathcal{I}$. Hence $L_y$ intersects each member of $\mathcal{I}$. Note that by Lemma~\ref{L}$(iv)$ the symps $\zeta$ and $\zeta''$ are fixed, and hence so is $L_y$. It follows that $\theta_\xi$ pointwise fixes $x^\perp\cap{x''}^\perp$, and also $x$ and $x''$. This implies that every singular subspace of $\xi$ of dimension at least $2$ is domestic. Since $\theta$ belongs to the opposition diagram $\mathsf{E_{7;4}}$, $\theta_\xi$ is also point-domestic. Consequently $\theta_\xi$ has opposition diagram $\mathsf{D_{6;1}^2}$ unless it is the identity. By  Proposition~3.11 of \cite{PVMclass}, $\theta_\xi$ is either identity or pointwise fixes the perp of a line. he latter contradicts the fixed points of $\theta_\xi$ already found. So $\theta_\xi$ is the identity.  

Now let $p$ be any point of the equator geometry defined by $\mathcal{I}$. Then $p$ is collinear to singular $5$-spaces in the members of $\mathcal{I}$. By the previous paragraph, all these $5$-spaces are mapped onto each other by $\theta$, and hence $p$ is fixed. Hence $\theta$ fixes pointwise the equator geometry defined by the two opposite symps $\xi,\xi'$.
\end{proof}

\subsection{Existence and uniqueness of the quaternion case}  \label{existenceE74}

%\subsubsection{Automorphisms of $\mathsf{E_7}(\K)$ with fixed point structure a quaternionic polar space}
Let $\HH$ be a|separable or inseparable|quaternion division algebra over $\K$, with standard involution $x\mapsto\overline{x}$.  Let us call a polar space isomorphic to $\mathsf{C_{3,1}}(\HH,\K)$ a \emph{quaternionic polar space}. Recall it is defined by the pseudo-quadratic form $$\overline{X}_{-3}X_3+\overline{X}_{-2}X_2 +\overline{X}_{-1}X_1\in\K$$ and lives in $\PG(5,\HH)$. The quads of the corresponding dual polar space are isomorphic to the polar space $\mathsf{B_{2,1}}(\K,\HH)$ of rank 2 arising from the quadric $Q(\HH)$ in $\PG(7,\K)$ with equation $$X_{-1}X_{-2}+X_{-3}X_{-4}=\mathsf{n}(X_0,X_1,X_2,X_3),$$ with $\mathsf{n}$ the norm mapping in $\HH$. This quadric arises as the intersection of the hyperbolic quadric $Q\cong\mathsf{D_{6,1}}(\K)$ in $\PG(11,\K)$ with equation $X_{-1}X_{-2}+X_{-3}X_{-4}-X_0X_7+X_1X_4+X_2X_5+X_3X_6=0$ with a $7$-dimensional subspace $S$ having equations
$$
X_4 =\ell_1X_1,  \; 
X_6 =-\ell_1\ell_2X_3, \; 
X_{7}=X_0+X_1, \; 
X_5=\ell_2(X_2+X_6) 
$$
in the separable case, and 
$$X_0=X_7,\; X_4=\ell_1X_1,\; X_5=\ell_2 X_2,\; X_6=\ell_1\ell_2 X_3$$ in the inseparable case. This follows from the explicit expressions of the norm form above. 

Now let $p_{-3}$ and $p_{-4}$ be the points with only nonzero coordinate $X_{-3}$ and $X_{-4}$, respectively. Then both points belong to $Q(\HH)$. Let $Q_0(\HH)$ and $Q_0$ be the quadrics contained in $Q(\HH)$ and $Q$, respectively, consisting of the points of $Q(\HH)$ and $Q$, respectively, collinear to both $p_{-3}$ and $p_{-4}$ (hence, $Q_0(\HH)=p_{-3}^\perp\cap p_{-4}^\perp$, with $\perp$ the collinearity relation in $Q(\HH)$, and $Q_0=p_{-3}^{\perp'}\cap p_{-4}^{\perp'}$, with $\perp'$ the collinearity relation in $Q$). Then $U:=\<Q_0\>$ is a $9$-dimensional subspace of $\PG(11,\K)$. 
With this notation, we have the following lemma.
\begin{lemma}\label{extendD5D6}
Let $\varphi_0$ be a collineation of $U$ stabilizing $Q_0$ and fixing $Q_0(\HH)$ pointwise. Then there exists a unique collineation $\varphi$ of $\PG(11,\K)$ stabilizing $Q$, fixing $Q(\HH)$ pointwise and such that $\varphi$ coincides with $\varphi_0$ over $U$.  
\end{lemma}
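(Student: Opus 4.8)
The plan is to work in the $12$-dimensional vector space underlying $\PG(11,\K)$ and to exploit the orthogonal decomposition induced by the hyperbolic pair $p_{-3},p_{-4}$. Writing $M=\langle p_{-3},p_{-4}\rangle$, the equation of $Q$ shows that $M$ is a hyperbolic line and that $U=M^{\perp}$ with respect to the bilinear form of $Q$; thus $V=\hat U\perp\hat M$ and $Q$ splits as $Q|_{\hat U}\perp Q|_{\hat M}$. First I would record the constraints forced on any admissible extension $\varphi$. Since $p_{-3},p_{-4}\in Q(\HH)$ and $\varphi$ fixes $Q(\HH)$ pointwise, $\varphi$ fixes both points; since $\varphi$ stabilises $Q$ it is a similarity of the form, hence preserves orthogonality, so from $g(\hat U)=\hat U$ (because $\varphi|_U=\varphi_0$) it maps $\hat M=\hat U^{\perp}$ to itself. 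After normalising the representative semilinear map $g$ so that $g|_{\hat U}=g_0$ (which uses up the scalar freedom), the action on $\hat M$ is therefore diagonal, $g\hat p_{-3}=a\hat p_{-3}$ and $g\hat p_{-4}=d\hat p_{-4}$, and equality of the form multipliers on the two orthogonal summands forces $ad=\lambda_0$, where $\lambda_0$ is the multiplier of $g_0$ on $U$. This reduces the problem to determining the single pair $(a,d)$.

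The heart of the argument is to pin down $(a,d)$ using that $\varphi$ must fix $Q(\HH)$ pointwise. Here I would first reduce to the linear case: $\varphi_0$ fixes the set $Q_0(\HH)=Q(\HH)\cap U$ pointwise, and $Q_0(\HH)$ is a $Y$-ovoid of the rank-$5$ polar space $Q_0=Q\cap U$, so by Lemma~\ref{skel} (using $|\K|>2$) it contains a skeleton of its span $S_0:=\langle Q_0(\HH)\rangle$, which is the $5$-space $\{X_{-3}=X_{-4}=0\}\cap S$. A (semi)linear map fixing a frame of $S_0$ together with the abundant further points of the quadric $Q_0(\HH)$ must be linear, and indeed acts on $\hat S_0$ as a homothety $\mu_0\cdot\mathrm{id}$; comparing multipliers on the non-degenerate form gives $\mu_0^{2}=\lambda_0$. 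Consequently $\varphi$ fixes the hyperplane $S_0$ of $S=\langle S_0,p_{-3},p_{-4}\rangle$ pointwise, while also fixing the two points $p_{-3},p_{-4}\notin S_0$.

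The decisive step is then the observation that $\varphi|_S$ is a central collineation of the projective space $S$ with axis the hyperplane $S_0$; a non-trivial such collineation fixes at most one point off its axis, so fixing the two distinct points $p_{-3},p_{-4}$ forces $\varphi|_S=\mathrm{id}$. For existence I would simply take $a=d=\mu_0$ (so that $ad=\mu_0^{2}=\lambda_0$ and $g$ is a similarity of $Q$): the resulting $\varphi$ fixes $S_0$ pointwise and also $p_{-3},p_{-4}$, hence $\varphi|_S=\mathrm{id}$ by the central-collineation argument, so in particular it fixes $Q(\HH)\subseteq S$ pointwise and restricts to $\varphi_0$ on $U$. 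For uniqueness the same argument runs in reverse: any admissible $\varphi$ satisfies $\varphi|_S=\mathrm{id}$, which forces $a=d=\mu_0$, and together with the normalisation $g|_{\hat U}=g_0$ this determines $g$ on all of $V=\hat U\oplus\hat M$, hence $\varphi$. I expect the main obstacle to be the bookkeeping certifying that $\varphi$ is genuinely a similarity of $Q$ with a single multiplier (the identity $\mu_0^{2}=\lambda_0$) and the verification that $Q_0(\HH)$ really is an ovoid spanning the claimed $5$-space $S_0$, so that Lemma~\ref{skel} applies; once these are in place the central-collineation step makes both existence and uniqueness immediate.
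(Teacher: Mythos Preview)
Your overall strategy is sound and close to the paper's, but there is a genuine gap in the uniqueness argument. You assert that $S_0=\langle Q_0(\HH)\rangle$ is a \emph{hyperplane} of $S=\langle Q(\HH)\rangle$, and then invoke the fact that a central collineation with axis $S_0$ fixing two points off the axis must be the identity. However, $\dim S=7$ while $\dim S_0=5$, so $S_0$ has codimension~$2$ in $S$, not~$1$. With your constraints so far ($g|_{\hat S_0}=\mu_0\cdot\mathrm{id}$, $g\hat p_{-3}=a\hat p_{-3}$, $g\hat p_{-4}=d\hat p_{-4}$, $ad=\mu_0^2$) there is still a one-parameter family of possibilities for $(a,d)$, and the central-collineation step does not close this. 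To pin down $a=d=\mu_0$ you must actually use that $\varphi$ fixes \emph{all} of $Q(\HH)$ pointwise, not merely $Q_0(\HH)\cup\{p_{-3},p_{-4}\}$: for instance, every point of $Q_0(\HH)$ is $Q(\HH)$-collinear to $p_{-3}$, so $Q(\HH)$ contains points $[\hat q+\hat p_{-3}]$ with $\hat q\in\hat S_0$, and fixing such a point forces $a=\mu_0$ (and similarly $d=\mu_0$). Your existence argument is fine---with $a=d=\mu_0$ one has $g|_{\hat S}=\mu_0\cdot\mathrm{id}$ directly, so the appeal to the central-collineation argument there is unnecessary.

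By contrast, the paper's proof bypasses all of this by observing at once that $\varphi$ fixes $\langle Q(\HH)\rangle=S$ pointwise (since $Q(\HH)$ contains lines and spans $S$), and then uses the span decomposition $\PG(11,\K)=\langle U,S\rangle$ with $U\cap S=\langle Q_0(\HH)\rangle$: uniqueness is immediate since $\varphi$ is prescribed on both pieces and they agree on the overlap, and existence comes from normalising $g_0$ to be the identity on $\widehat{\langle Q_0(\HH)\rangle}$ and extending by fixing the vectors $\hat p_{-3},\hat p_{-4}$. Your orthogonal-decomposition framework is a legitimate alternative and makes the similarity condition more explicit, but the paper's direct use of $S$ rather than $M$ avoids the codimension issue entirely.
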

\begin{proof}
The collineation $\varphi$ fixes the subspace $\<Q(\HH)\>$ pointwise and is determined on the subspace $U$ by $\varphi_0$. Since $\<U,Q(\HH)\>=\PG(11,\K)$ and $U\cap \<Q(\HH)\>=\<Q_0(\HH)\>$, we deduce that $\varphi$ is unique, if it exists. 

Now since $\varphi_0$ fixes $\<Q_0(\HH)\>$ pointwise, $\varphi_0$ stems from a linear transformation of the underlying vector space fixing the vectors corresponding to $\<Q_0(\HH)\>$. Extending $\varphi_0$ by declaring the basis vectors corresponding to $p_{-3}$ and $p_{-4}$ to be fixed, the obtained collineation $\varphi$ satisfies all the stated requirements. 
\end{proof}

We also need the following result.

\begin{lemma}\label{flagsopp}
Let $\{p,L,\xi\}$ be a flag of $\Delta\cong\mathsf{E_{7,7}}(\K)$, with $p$ a point, $L$ a line and $\xi$ a symp. Let $q$ be a point in $\xi$ symplectic to $p$. Let $r$ be a point collinear to $q$ such that the line $rq$ is opposite $\xi$ in the point residual at $q$. Let $\zeta$ be a symp containing $r$ opposite $rq$ in the point residual at $r$, and let $R$ be any line in $\zeta$ containing $r$ and not containing a point that is not opposite all points of $L$. Then $p$ and $r$ are opposite, $L$ and $R$ are opposite, and $\xi$ and $\zeta$ are opposite. 
\end{lemma}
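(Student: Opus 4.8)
The plan is to reduce the whole statement to two symmetric ``far with a unique collinear point'' assertions, one located at $q$ and one at $r$, obtained by translating the two residue–opposition hypotheses, and then to read off the three global opposition relations from the point–symp and symp–symp dictionaries \cref{factE77,factE77symp} together with \cref{oppositionU}. Concretely, I would first prove:
\begin{compactenum}[$(a)$]
\item $r\notin\xi$, and $r$ is far from $\xi$ with $r^\perp\cap\xi=\{q\}$;
\item $q\notin\zeta$, and $q$ is far from $\zeta$ with $q^\perp\cap\zeta=\{r\}$.
\end{compactenum}

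These two translations are the heart of the argument, and where the main obstacle lies. For $(a)$, recall $\Res_\Delta(q)\cong\mathsf{E_{6,1}}(\K)$; inside this residue the symp $\xi\ni q$ becomes a symp (its residue $\Res_\xi(q)$ is a $\mathsf{D_{5,1}}$, hence a symp of $\Res_\Delta(q)$) while the line $rq$ becomes a point. In $\mathsf{E_{6,1}}(\K)$ a point and a symp are of opposite type, and they are opposite exactly when the point is \emph{far} from the symp in the sense of \cref{factE6}$(iii)$, i.e.\ collinear to none of its points; thus the hypothesis ``$rq$ opposite $\xi$ in the residue at $q$'' says precisely that no line $\ell\subseteq\xi$ through $q$ is coplanar with $rq$. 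Now $r\in\xi$ would force $rq\subseteq\xi$, making the two incident rather than opposite; and if $r$ were close to $\xi$ it would be collinear to a $5'$-space $U\ni q$ of $\xi$, so that any line $qy\subseteq U$ would be coplanar with $rq$ (using that $\Delta$ is strong, $r,q,y$ span a singular plane) — both excluded. Hence $r$ is far from $\xi$, and \cref{factE77}$(i)$ supplies the unique collinear point, which is $q$. Statement $(b)$ is proved identically inside $\Res_\Delta(r)\cong\mathsf{E_{6,1}}(\K)$.

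Granting $(a)$ and $(b)$, the point and line statements are immediate. Since $p$ is symplectic to $q$ we have $p\in\xi\setminus q^\perp$, so applying \cref{factE77}$(i)$ to $r$ and $\xi$ (via $(a)$) gives $\delta(p,r)=3$, i.e.\ $p$ and $r$ are opposite. For the lines, the hypothesis on $R$ says every point of $R$ is opposite every point of $L$; as points and lines are self-opposite types in $\mathsf{E_{7,7}}(\K)$, \cref{oppositionU} applies, and ``each point of $L$ is opposite some point of $R$'' (indeed every such point) forces $L$ and $R$ to be opposite.

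Finally I would establish $\xi$ opposite $\zeta$ by eliminating, through \cref{factE77symp}, every other mutual position. Equality is ruled out by $r\in\zeta\setminus\xi$. If $\xi,\zeta$ were adjacent, meeting in a $5$-space $U\subseteq\zeta$, then $q\in\xi\setminus U$ would be collinear to a hyperplane of $U$, contradicting $q^\perp\cap\zeta=\{r\}$. If they were symplectic, meeting in a line $M$, then $q\in\xi\setminus M$ and $r\in\zeta\setminus M$ would be collinear, contradicting \cref{factE77symp}$(iii)$. If they were special, the line $qr$ joining $q\in\xi$ to $r\in\zeta$ must meet $U\cup U'$ (in the notation of \cref{factE77symp}$(iv)$); as $qr\cap\xi=\{q\}$ and $qr\cap\zeta=\{r\}$, this point is $q$ or $r$, making $q$ close to $\zeta$ or $r$ close to $\xi$ — both excluded by $(a)$ and $(b)$. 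The only remaining case of \cref{factE77symp} is opposition, which completes the proof. The one genuinely delicate step is the residue translation in $(a)$ and $(b)$; once that dictionary between ``opposite in the residue'' and the global ``far'' relation is in place, the three conclusions follow formally.
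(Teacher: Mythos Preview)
Your proof is correct and follows essentially the same approach as the paper: translate the residue-opposition hypotheses into the ``far'' relations $(a)$ and $(b)$, then read off the three conclusions from \cref{factE77}, \cref{factE77symp}, and \cref{oppositionU}. The only organisational difference is that the paper establishes just $(a)$ explicitly and dispatches the intersecting case (adjacent and symplectic together) in one stroke via $(a)$ alone --- if $\xi\cap\zeta$ contains a line $M$, then $r^\perp\cap M\neq\emptyset$ forces $q\in M\subseteq\zeta$, a contradiction --- whereas you separate these cases and lean on $(b)$; both routes are equally valid.
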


\begin{proof}
The choice of $r$ immediately implies that $r$ is far from $\xi$, hence $\{q\}=r^\perp\cap\xi$ and so Fact~\ref{factE77} implies that $p$ and $r$ are opposite. Suppose now that $\xi$ and $\zeta$ intersect nontrivially. Then the intersection contains a line $M$ which, by assumption, does not contain $q$. Then $r^\perp M\subseteq\xi$ is nonempty, contradicting $r$ being far from $\xi$.  Assume that $\xi\cap\zeta$ is empty but that they are not opposite. Then by Fact~\ref{factE77symp}, and since $r$ is far from $\xi$, the point $q$ is close to $\zeta$, contradicting the choice of $\zeta$ as opposite $rq$ in the point residual at $r$. Hence $\xi$ and $\zeta$ are opposite. Finally, $R$ and $L$ are opposite by the assumptions on $R$. 
\end{proof}

Recall that we denote the dual polar space corresponding to the quaternion polar space by $\mathsf{C_{3,3}}(\HH,\K)$.  The corresponding building is denoted by $\mathsf{C_3}(\HH,\K)$. 
The rest of this section is dedicated to the proof of the following existence result.

\begin{prop}\label{B33inE77}
Suppose the field $\K$ admits a (separable or inseparable) quaternion division algebra $\HH$. Then $\mathsf{C_{3,3}}(\HH,\K)$ is a full subgeometry of $\mathsf{E_{7,7}}(\K)$, unique up to projectivity, such that every point residual of $\mathsf{C_{3,3}}(\HH,\K)$ is a standard inclusion of $\cV(\K,\HH)$ in the corresponding point residual of $\mathsf{E_{7,7}}(\K)$. Also, this embedding arises as the fixed point set of each nontrivial collineation of a group of collineations isomorphic to the group $G$ above. 
\end{prop}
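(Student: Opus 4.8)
The preceding subsections already show that every nontrivial domestic collineation of $\Delta=\mathsf{E_{7,7}}(\K)$ with opposition diagram $\mathsf{E_{7;4}}$ that fixes no chamber and falls in the quaternion case has as fixed structure a full quaternion dual polar space $\mathsf{C_{3,3}}(\HH,\K)$, each of whose point residuals is a standard inclusion of $\cV(\K,\HH)$. Hence the real content of the proposition is the \emph{existence} of such a collineation, for every quaternion division algebra $\HH$ over $\K$, together with its projective uniqueness and the identification of the fixing group with $G$. The plan is to construct the fixing group and the embedding simultaneously, by extending the $\mathsf{E_6}$-data of Sections~\ref{sec:kang2}--\ref{existenceE74E6} from a single point residue to all of $\Delta$.

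I would begin by fixing a point $q$ of $\Delta$ and identifying $\Res_\Delta(q)$ with $\mathsf{E_{6,1}}(\K)$. By Proposition~\ref{existE6} there is a standard quaternion Veronesean $\cV=\cV(\K,\HH)$ in this residue, pointwise fixed by a nontrivial group, denoted $G$, of kangaroo collineations (the norm-one group of $\HH^\times$ in the separable case and the group of the same name in the inseparable case, determined in Subsection~\ref{fixQP} and the inseparable case). Each $g\in G$ is, for the moment, only a collineation of the residue; its fixed points form $\cV$, which corresponds under the residue isomorphism to a family of symps of $\Delta$ through $q$, each of which must become a fixed symp carrying a pointwise-fixed quaternion quadrangle $Q(\HH)\cong\mathsf{B_{2,1}}(\K,\HH)$, embedded in the symp $\cong\mathsf{D_{6,1}}(\K)$ exactly as in the explicit model preceding the statement.

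The core of the argument is to promote a fixed nontrivial $g\in G$ to a genuine collineation $\theta=\theta_g$ of $\Delta$. In each symp $\xi$ that is to be fixed I would use Lemma~\ref{extendD5D6}: the kangaroo datum on a pointwise-fixed $\mathsf{D_5}$-reduct $U\subseteq\xi$ (equivalently, on the quadrangle $Q(\HH)$) extends to a unique collineation of $\xi$ stabilising the ambient quadric and fixing $Q(\HH)$ pointwise. Two symps sharing a prospective fixed line meet in that line pointwise, and the uniqueness clause of Lemma~\ref{extendD5D6}, combined with the projective uniqueness of the Veronesean in each intervening point residue (Proposition~\ref{existE6}, Remark~\ref{uniqueE6}), forces the extension to one symp to agree with the extension to the adjacent one. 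Propagating along the connected incidence graph of prospective fixed symps and lines then defines $\theta$ on all of $\Delta$, and Lemma~\ref{flagsopp} is invoked to produce symps opposite $\xi$ carrying compatible quadrangles, guaranteeing that the propagation reaches every symp. I expect the main obstacle to be precisely the absence of \emph{monodromy}: one must check that the locally forced extensions agree around cycles in this graph, so that $\theta$ is a well-defined type-preserving collineation of $\Delta$ rather than merely a coherent family of partial maps. This is where the rigidity supplied by Lemmas~\ref{extendD5D6} and~\ref{flagsopp} and the uniqueness of the $\mathsf{E_6}$-Veronesean must be orchestrated.

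Granting the existence of $\theta$, the remaining claims follow quickly. For each fixed point $p$ the collineation induces a kangaroo on $\Res_\Delta(p)\cong\mathsf{E_{6,1}}(\K)$, its fixed structure there being $\cV$ by Theorem~\ref{kangaroo}; since a kangaroo fixes no chamber of $\mathsf{E_6}$, a fixed chamber of $\Delta$ would restrict to a fixed chamber in such a residue, a contradiction, so $\theta$ fixes no chamber. The quaternion-case analysis of the previous subsections then identifies the fixed points, lines and symps of $\theta$ with a full $\mathsf{C_{3,3}}(\HH,\K)$ whose point residuals are standard inclusions of $\cV(\K,\HH)$. The assignment $g\mapsto\theta_g$ is a homomorphism, and it is injective because the restriction of $\theta_g$ to $\Res_\Delta(q)$ already recovers $g$; its image is the desired group isomorphic to $G$. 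Finally, for projective uniqueness I would use the flag-transitivity of $\Aut(\mathsf{E_7}(\K))$ to move any second such embedding so that it shares the seed $q$, the symp $\xi$ and the quadrangle $Q(\HH)$; the projective uniqueness of the Veronesean in $\Res_\Delta(q)$ matches the two embeddings on that residue, and the propagation rigidity above matches them throughout $\Delta$, exhibiting the required projectivity.
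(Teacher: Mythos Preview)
Your overall strategy is sound and the ingredients you identify---Proposition~\ref{existE6}, Lemma~\ref{extendD5D6}, Lemma~\ref{flagsopp}---are exactly those the paper uses. However, the construction of the global collineation $\theta$ from the local data has a genuine gap, precisely at the point you yourself flag: the monodromy problem. You propose to propagate $\theta$ along the incidence graph of prospective fixed symps and lines, and then say that the rigidity lemmas ``must be orchestrated'' to rule out inconsistency around cycles. But you do not actually carry this out, and it is not clear that Lemmas~\ref{extendD5D6} and~\ref{flagsopp} alone suffice: the first only gives compatibility across a single symp/point-residue interface, and the second only manufactures opposite flags---neither addresses what happens when two propagation paths meet.

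The paper avoids this difficulty entirely by invoking Tits' extension theorem (Proposition~4.16 of \cite{Tits:74}): if a map is defined consistently on the union $E_2(C)$ of the rank~2 residues around a chamber~$C$ \emph{and} on one apartment $\mathcal{A}$ through~$C$, and is adjacency-preserving there, then it extends uniquely to a type-preserving automorphism of the whole building. This reduces the global well-definedness problem to a finite local check plus the construction of a single compatible apartment, which is where Lemma~\ref{flagsopp} is actually deployed (to produce an opposite flag $H=\{r,R,\zeta\}$ so that $C$, $\theta_3(C)$, $H$ determine a unique apartment). Once $\theta$ exists, the paper reads off the fixed structure directly as the convex closure of the seed data, rather than routing through the domesticity analysis of Section~6.3 as you propose; your route there is not wrong, but it is more circuitous and would require first checking that $\theta$ satisfies the hypotheses of that subsection. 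The uniqueness argument likewise rests on Propositions~4.16 and~4.1.1 of \cite{Tits:74} rather than on propagation rigidity.
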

\begin{proof}
Let $\Delta$ be a parapolar space isomorphic to $\mathsf{E_{7,7}}(\K)$ and let $\Gamma$ be a dual polar space isomorphic to $\mathsf{C_{3,3}}(\HH,\K)$. 

Select a chamber of $\Delta$, that is, a set $C$ consisting of a point $p$, a line $L$, a plane $\pi$, a solid $\Sigma$, a maximal $5$-space $W$, a maximal $6$-space $U$ and a symp $\xi$. The fact that these subspaces form a chamber translates in the conditions $p\in L\subseteq \pi\subseteq \Sigma\subseteq W\subseteq \xi$ and $\dim (U\cap W)=4$. These conditions imply that $U\cap\xi$ is a $5$-space. We have represented $C$ on the Coxeter diagram of $\Delta$, using Bourbaki labelling, see Figure~\ref{fig1}. 
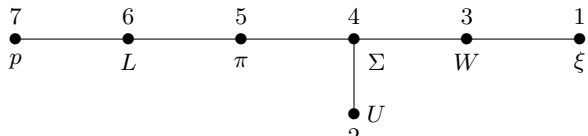
\begin{figure}[ht]
\begin{center}
\begin{tikzpicture}[scale=0.5]
%\label{fig1}
\node at (0,0.3) {};
\node [inner sep=0.8pt,outer sep=0.8pt] at (-6,0) (1) {$\bullet$};
\node [inner sep=0.8pt,outer sep=0.8pt] at (-6,0.6) (1) {\footnotesize $7$};
\node [inner sep=0.8pt,outer sep=0.8pt] at (-6,-0.6) (1) {\footnotesize $p$};

\node [inner sep=0.8pt,outer sep=0.8pt] at (-3,0) (3) {$\bullet$};
\node [inner sep=0.8pt,outer sep=0.8pt] at (-3,0.6) (3) {\footnotesize 6};
\node [inner sep=0.8pt,outer sep=0.8pt] at (-3,-0.6) (3) {\footnotesize $L$};

\node [inner sep=0.8pt,outer sep=0.8pt] at (0,0) (4) {$\bullet$};
\node [inner sep=0.8pt,outer sep=0.8pt] at (0,0.6) (4) {\footnotesize 5};
\node [inner sep=0.8pt,outer sep=0.8pt] at (0,-0.6) (4) {\footnotesize $\pi$};

\node [inner sep=0.8pt,outer sep=0.8pt] at (3,0) (5) {$\bullet$};
\node [inner sep=0.8pt,outer sep=0.8pt] at (3,0.6) (5) {\footnotesize 4};
\node [inner sep=0.8pt,outer sep=0.8pt] at (3.6,-0.6) (5) {\footnotesize $\Sigma$};

\node [inner sep=0.8pt,outer sep=0.8pt] at (6,0) (6) {$\bullet$};
\node [inner sep=0.8pt,outer sep=0.8pt] at (6,0.6) (6) {\footnotesize 3};
\node [inner sep=0.8pt,outer sep=0.8pt] at (6,-0.6) (6) {\footnotesize $W$};

\node [inner sep=0.8pt,outer sep=0.8pt] at (9,0) (7) {$\bullet$};
\node [inner sep=0.8pt,outer sep=0.8pt] at (9,0.6) (7) {\footnotesize 1};
\node [inner sep=0.8pt,outer sep=0.8pt] at (9,-0.6) (7) {\footnotesize $\xi$};

\node [inner sep=0.8pt,outer sep=0.8pt] at (3,-2) (2) {$\bullet$};
\node [inner sep=0.8pt,outer sep=0.8pt] at (3,-2.6) (2) {\footnotesize 2};
\node [inner sep=0.8pt,outer sep=0.8pt] at (3.6,-2) (2) {\footnotesize $U$};

\phantom{\draw [line width=0.5pt,line cap=round,rounded corners] (1.north west)  rectangle (1.south east);}
\phantom{\draw [line width=0.5pt,line cap=round,rounded corners] (6.north west)  rectangle (6.south east);}
\draw (-6,0)--(9,0);
\draw (3,0)--(3,-2);
%\draw [line width=0.5pt,line cap=round,rounded corners] (1.north west)  rectangle (1.south east);
\end{tikzpicture}
\end{center} \vspace{-0.7cm}
\caption{The Dynkin diagram of $\Delta$ showing the chamber $C$ \label{fig1}}
%\hspace{2cm} 
\vspace{0.3cm}
\end{figure}

The residue $\Res_\Delta(p)$ of $p$ in $\Delta$ is a parapolar space isomorphic to $\mathsf{E_{6,1}}(\K)$. By Proposition~\ref{existE6}  we may select an arbitrary linear kangaroo collineation $\theta_1$ of $\Res_{\Delta}(p)$ pointwise fixing a quaternion Veronese variety $\cV(\K,\HH)$, and such that both $L$ and $\xi$ are fixed by $\theta_1$. Let $G$ be the pointwise stabiliser of $\cV(\K,\HH)$ in $\Res_{\Delta}(p)$. This now implies that the action of $\theta_1$ on $\xi$ satisfies the assumptions of Lemma~\ref{extendD5D6} and so there is a unique collineation $\theta_2$ of $\Res_\Delta(\xi)$, which we identify with the symp $\xi$ itself, such that the action of $\theta_2$ on $\Res_\xi(p)$ coincides with the action of $\theta_1$ on $\Res_\Delta(\{p,\xi\})$. Denote by $\theta_3$ the union of $\theta_1$ and $\theta_2$ (which is a well defined function since $\theta_1$ and $\theta_2$ coincide over the intersection of their domains).  Then the domain of definition of $\theta_3$ contains the union $E_2(C)$ of the rank 2 residues defined by the flags in $C$ of size $5$. Set $C'=\theta_3(C)$. Then $\theta_3:E_2(C)\rightarrow E_2(C')$ is adjacency preserving. 

Set $F=\{p,L,\xi\}\subseteq C$.  Select a line $M$ through $p$ in $\xi$ fixed under $\theta_3$, with $M$ not collinear to all points of $L$. Select a point $q$ in $\xi$ not collinear to $p$ and fixed under $\theta_2$.  Finally, select a line $K$ through $p$, fixed under $\theta_1$ and opposite $\xi$ in $\Res_\Delta(p)$. Considering any apartment $\mathcal{A}_1$ containing $K,M,L,q$ (which exists by grouping these elements together as flags $\{K,\xi'\}$ and $\{q,L',\xi\}$, with $\xi'$ the symp determined by $K$ and $M$, and $L'$ the line in $\xi$ through $q$ meeting $L$ nontrivially, and noting that $\xi\cap\xi'=M$), we can select a point $r$ collinear to the unique point $q$ of $\xi$, and opposite $p$.  In $\mathcal{A}_1$, there is a unique line $R$ (containing $r$) opposite $L$ and a unique symp $\zeta\supseteq R$ opposite $\xi$. Denote by $H$ the flag $\{r,R,\zeta\}$. We claim that there is a unique apartment $\mathcal{A}$ containing $C,\theta_3(C)$ and $H$. Indeed, first we notice that, since $\theta_1$ is a kangaroo, the flag $C\setminus F$ is opposite the flag $\theta_3(C)\setminus F$ in $\Res_\Delta(F)$. Hence, by Proposition~3.29 of~\cite{Tits:74}, the projection $D$ of $C$ onto $H$ is opposite $\theta_3(C)$. Let $\mathcal{A}$ be an apartment determined by $D$ and $\theta_3(C)$. Then $\mathcal{A}$ is unique and contains $C$, since $C$ is the projection of $D$ onto $F$ (reversing the roles of $F$ and $H$). The claim follows. 

Now, similarly, there is a unique apartment $\mathcal{A}'$ containing $\theta_3(C),\theta_3^2(C)$ and $H$. Since both $\mathcal{A}$ and $\mathcal{A}'$ contain $L,M$ and $q$, which are all fixed under $\theta_3$, the intersections of $\mathcal{A}$ and $\mathcal{A}'$ with $\xi$ are independent of the choice of $H$ and this implies that the image under $\theta_3$ of $\mathcal{A}\cap\xi$ is exactly $\mathcal{A}'\cap\xi$. Similarly for $\Res_\Delta(p)$. We can therefore extend $\theta_3$ to $\mathcal{A}$ as the unique isomorphism (or adjacency-preserving map) $\mathcal{A}\rightarrow\mathcal{A}'$ extending the action of $\theta_3$. Note that $\theta_3$ then fixes $H$ elementwise.  

Hence we have everything in place to apply Proposition~4.16 of~\cite{Tits:74}.  We obtain that $\theta_3$ is the restriction of a collineation $\theta$ globally defined on $\Delta$ and coinciding with $\theta_3$ over $E_2(C)\cup\mathcal{A}$. Now $\theta_3$ pointwise fixes the complex arising as the convex closure of $F$ and $H$, which is a Coxeter complex of type $\mathsf{C}_3$. Also, the fixed point structures of $\theta_3$ in $\xi$ and $\Res_\Delta(p)$ imply that the global fixed point structure $\Gamma$ of $\theta$ is a building of type $\mathsf{C}_3$ isomorphic to $\mathsf{C_3}(\HH,\K)$.  

We show that $\Gamma$ is a full subgeometry. All points of $\Delta$ on the line $L$ are fixed by $\theta$. If $L'$ is a line of $\Gamma$ opposite $L$ (so $L'$ is fixed under $\theta$), then each point of $L'$ is the unique point not opposite a certain point of $L$, hence fixed by $\theta$. So all points of $L'$ belong to $\Gamma$. Now each line of $\Gamma$ is opposite some line opposite $L$, so the same argument shows the assertion.

Note that $\Gamma$ is the convex hull of $(E_1(C)\cup\Sigma)\cap\Gamma$ (by the arguments in \S4.3 of \cite{Tits:74}), so that we would obtain the same building $\Gamma$ if we replaced $\theta_1$ with any other nontrivial member of $G$. This shows the last assertion of the Proposition. 

Finally, we prove that $\Gamma$ is unique up to projectivity. Let $\Gamma'\cong\Gamma$ be another full subgeometry with the property that every point residual of $\Gamma'$ is a standard inclusion of $\cV(\K,\HH)$ in the corresponding point residual of $\mathsf{E_{7,7}}(\K)$. By the projective uniqueness of the point residuals, we may assume that $p$ belongs to $\Gamma'$ and that the point residuals at $p$ coincide.  We may also assume (using Lemma~\ref{extendD5D6}) that respective symps of $\Gamma$ and $\Gamma'$ in $\xi$ coincide. In the above arguments, we then skip the selection of $\cA_1$ and instead choose the point $r_0$ arbitrary on a line of $\Gamma'$ through $q$, choose $\zeta_0$ through $r_0$ such that it hosts a symp of $\Gamma'$ not containing the line $r_0q$, and choose $R_0$ any line of $\Gamma'$ through $r_0$ in $\zeta_0$ and $\Gamma'$-opposite $L$. Our assumption on the point residuals being standard embeddings and Lemma~\ref{flagsopp} imply that the distance of points of $\Gamma'$ measured in $\Gamma'$ is the same as the distance measured in $\Delta$. Then Lemma~\ref{flagsopp} implies that $H_0:=\{r_0,R_0,\zeta_0\}$ and $\{p,L,\xi\}$ are opposite. Let $\cA_0$ be the apartment containing $C,\theta(C)$ and $H$.  There exists a collineation $\varphi$ fixing $E_2(C)$ pointwise and mapping $\cA$ to $\cA_0$ (use again Proposition~4.16 of~\cite{Tits:74}). Then $E_2(F)$ in $\Gamma$ coincides with $E_2(F)$ in $\varphi(\Gamma)$ and in $\Gamma'$. Since $H$ represents a chamber of both $\varphi(\Gamma)$ and $\Gamma'$, Proposition~4.1.1 of~\cite{Tits:74} implies that $\varphi(\Gamma)$ and $\Gamma'$ coincide. Whence the assertion.  
\end{proof}

We now provide a more or less direct construction of the group $G$ of \cref{B33inE77} by explicitly establishing it (but without detailed proof). To that aim, we recall the following construction of $\mathsf{E_{7,7}}(\K)$. Let $V$ be a vector space of dimension $56$ over $\K$. Then we define the mapping\\ $\nu:\K\times\K\times\K\times\OO'\times\OO'\times\OO'\rightarrow V: (\ell_1,\ell_2,\ell_3,X_1,X_2,X_3)\mapsto$ 
\begin{eqnarray*}& (1,\ell_1,\ell_2,\ell_3,X_1,X_2,X_3,\\& X_1\overline{X_1}-\ell_2\ell_3, X_2\overline{X_2}-\ell_3\ell_1, 
X_3\overline{X_3}-\ell_1\ell_2,\\ & \ell_1\overline{X}_1-{X_2}X_3, \ell_2 \overline{X}_2 -X_3X_1, \ell_3\overline{X}_3-X_1{X_2},\\ &  \ell_1X_1\overline{X_1}+\ell_2 X_2\overline{X_2}+\ell_3X_3\overline{X_3}-\overline{X_3}(\overline{X}_2\overline{X_1}) 
- (X_1{X_2})X_3-\ell_1\ell_2\ell_3),\end{eqnarray*}
and call this the \emph{dual polar affine octonion Veronese map}. Its image $\mathcal{AV}(\K,\OO')$ is contained in and spans $\PG(V)\cong\PG(55,\K)$. For $|\K|>2$, let $\mathcal{V}(\K,\OO')$ be the projective closure of $\mathcal{AV}(\K,\OO')$ (see \cref{cartan} for the definition). % as the smallest set of points of $\PG(V)$ containing $\mathcal{AV}(\K,\OO')$ and having the property that, whenever it contains all points except possibly one of a line, it contains all points of that line. 
We then have

\begin{prop}[see \cite{SSMV}] For $|\K|>2$, the full subgeometry $\mathcal{V}(\K,\OO')$, endowed with all projective lines contained in it, is isomorphic to $\mathsf{E_{7,7}}(\K)$. 
\end{prop}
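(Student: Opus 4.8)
The plan is to equip the point set $\mathcal{V}(\K,\OO')\subseteq\PG(V)$ with the collection $\cL$ of all projective lines of $\PG(V)$ lying entirely on it, and to recognise the resulting point--line geometry as a parapolar space whose point residues are $\mathsf{E_{6,1}}(\K)$; the isomorphism with $\mathsf{E_{7,7}}(\K)$ then follows from the standard recognition of that geometry among parapolar spaces of diameter $3$. First I would use $\nu$ to coordinatise a big cell: one checks that $\nu$ is injective on $\K^3\times(\OO')^3$, that its image $\mathcal{AV}(\K,\OO')$ is exactly the affine chart of $\mathcal{V}(\K,\OO')$ in which the first coordinate is nonzero, and that the remaining points of the projective closure arise as lower strata obtained by letting the $\ell_i$ and $X_i$ tend to infinity. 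Writing $e_0=\nu(0,0,0,0,0,0)=(1,0,\dots,0)$ for the base point, the decomposition of $V$ under the stabiliser of $e_0$ reads $V=\K\oplus\K^{27}\oplus\K^{27}\oplus\K$, where the two outer summands are the highest and lowest weight lines and each middle $27$-space is a copy of the ambient space of $\cE_6(\K)$ from \cref{cartan}.

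The crucial local step is to identify $\Res_{\mathcal{V}}(e_0)$ with $\mathsf{E_{6,1}}(\K)$. The lines of $\cL$ through $e_0$ displace $e_0$ in the tangent directions spanned by the first $27$-space, and reading off the quadratic leading terms of $\nu$ I would show that these directions trace out precisely the image of the octonion Veronese map $\rho'$ of \cref{cartan}, i.e. the universal embedding $\cE_6(\K)$ of $\mathsf{E_{6,1}}(\K)$ in $\PG(26,\K)$ from \cite{Ron-Smi:85}. Concretely, comparing the components of $\nu$ with the defining equations $(*)$ of $\cE_6(\K)$ yields a bijection between the lines on $\mathcal{V}$ through $e_0$ and the points of $\cE_6(\K)$ under which planar line pencils correspond to lines of $\cE_6(\K)$; hence $\Res_{\mathcal{V}}(e_0)\cong\mathsf{E_{6,1}}(\K)$. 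To transport this to every point I would exhibit a transitive automorphism group: the $E_6$--action on each octonion slot (the linear kangaroo collineations constructed in \cref{cartan}), the cyclic symmetry permuting $(\ell_1,\ell_2,\ell_3)$ and $(X_1,X_2,X_3)$ simultaneously, and the unipotent ``translations'' of the big cell together act transitively on $\mathcal{V}(\K,\OO')$, so every local statement reduces to the base point.

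With the residues identified I would verify the parapolar axioms (PPS1)--(PPS4): connectivity follows from the cell/strata description, each line lies in a symplecton, and the symps appear as the sub-Veroneses attached to the $\mathsf{D_6}$--subsystems, hence are polar spaces isomorphic to $\mathsf{D_{6,1}}(\K)$, with special and symplectic pairs distributing as recorded in \cref{factE77} and \cref{factE77symp}. Diameter $3$ is read off from the four strata of the closure. At this point the recognition theorem for parapolar spaces of diameter $3$ with point residues of type $\mathsf{E_{6,1}}$ identifies $(\mathcal{V}(\K,\OO'),\cL)$ with $\mathsf{E_{7,7}}(\K)$; since by construction the lines of the geometry are genuine projective lines of $\PG(V)$, this is the asserted isomorphism.

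The main obstacle is twofold. First, the octonion bookkeeping underlying the coordinatisation: verifying that $\mathcal{V}(\K,\OO')$ is genuinely the projective closure of $\mathcal{AV}(\K,\OO')$, that the closure introduces no spurious points or lines, and that the quadratic leading terms of $\nu$ reproduce $\rho'$, all rest on the Moufang and triality identities (Theorem~3.2.1 and Lemma~3.3.2 of \cite{SV}) and are markedly heavier than in the $\mathsf{E_6}$ case. Second, one must establish the full global parapolar structure---in particular that each symp is a \emph{full} polar space $\mathsf{D_{6,1}}(\K)$ of rank $6$, and that the point--symp and symp--symp positions match \cref{factE77} and \cref{factE77symp}---before the recognition theorem becomes applicable.
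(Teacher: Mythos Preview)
The paper does not supply its own proof of this proposition: the bracket ``[see \cite{SSMV}]'' in the heading is the entire argument, and the statement is invoked as a black box from that reference. So there is nothing in the present paper to compare your attempt against; what you have written is a plausible outline of how the result in \cite{SSMV} might be established (local identification of the point residue with $\cE_6(\K)$, transitivity, then a parapolar recognition theorem), but it is not something the paper itself undertakes.

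One concrete slip in your sketch: the ``$E_6$--action on each octonion slot'' is not given by ``the linear kangaroo collineations constructed in \S\ref{cartan}''. Section~\ref{cartan} constructs $\cE_6(\K)$ as a variety; kangaroo collineations appear only later (\S\ref{sec:kang2} and \S\ref{existenceE74E6}) and are by definition very special automorphisms pointwise fixing a quaternion Veronesean, far from the full $E_6$ group you need for transitivity. You want instead the genuine Chevalley group $\mathsf{E_6}(\K)$ acting on the $27$-space (via the equations $(*)$ and triality, as in \cite{SV}), together with the evident torus and Weyl symmetries of $\nu$, to obtain transitivity on $\mathcal{V}(\K,\OO')$. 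Also, the ``recognition theorem for parapolar spaces of diameter $3$ with point residues $\mathsf{E_{6,1}}$'' needs a precise citation (e.g.\ the relevant result in \cite{Shult} or \cite{BuekCoh}); as stated it is doing a lot of heavy lifting.
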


Now let $\sigma$ be an automorphism of $\OO'$ fixing the center $\K$ pointwise (so that $X\overline{X}=X^\sigma\overline{X}^\sigma$ and $X+\overline{X}=X^\sigma+\overline{X}^\sigma$). Then clearly the following linear map \begin{eqnarray*}\varphi_\sigma:&V\rightarrow V: &(x,x_1,x_2,x_3,X_1,X_2,X_3,y_1,y_2,y_3,Y_1,Y_2,Y_3,y)\mapsto\\&&(x,x_1,x_2,x_3,X_1^\sigma,X_2^\sigma,X_3^\sigma,y_1,y_2,y_3,Y_1^\sigma,Y_2^\sigma,Y_3^\sigma,y),\end{eqnarray*}
with $x,x_i,y,y_i\in\K$, $X_i,Y_i\in\OO'$, $i=1,2,3$, defines an automorphism, which we also denote by $\varphi_\sigma$, of $\mathcal{AV}(\K,\OO')$, and hence of $\mathcal{V}(\K,\OO')$. Now we have the following proposition, the proof of which is left to the reader.

\begin{prop}
Suppose that $\sigma$ is an automorphism of $\OO'$ the fixed point structure of which is a separable or inseparable quaternion division subalgebra $\HH$. Then the fix structure in $\mathcal{V}(\K,\OO')$ of the collineation $\varphi_\sigma$ is a full subgeometry of $\mathcal{V}(\K,\OO')\cong\mathsf{E_{7,7}}(\K)$ isomorphic to the dual polar space $\mathsf{C_{3,3}}(\HH,\K)$.
\end{prop}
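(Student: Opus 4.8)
The plan is to exploit the $\sigma$-equivariance of the Veronese map $\nu$. First I would verify the identity
\[
\varphi_\sigma\bigl(\nu(\ell_1,\ell_2,\ell_3,X_1,X_2,X_3)\bigr)=\nu(\ell_1,\ell_2,\ell_3,X_1^\sigma,X_2^\sigma,X_3^\sigma),
\]
which is immediate from the definition of $\varphi_\sigma$ once one checks that each $\K$-valued coordinate of $\nu$ is preserved by $\sigma$ while each $\OO'$-valued coordinate (the $X_i$ in positions $5,6,7$ and the $Y_i=\ell_i\overline{X}_i-X_jX_k$ in positions $11,12,13$) transforms by $\sigma$. The only points needing care are that $\sigma$ commutes with the standard involution (true since an automorphism preserves trace and norm, so $\overline{X}^\sigma=\overline{X^\sigma}$) and that the last coordinate is genuinely a scalar, being of the shape $[\text{scalars}]-\tr\bigl((X_1X_2)X_3\bigr)$. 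As $\nu$ is injective on its affine parameters (the $\ell_i$ and $X_i$ occur directly as coordinates), the identity shows that an affine point $\nu(\ell_1,\ell_2,\ell_3,X_1,X_2,X_3)$ is fixed by $\varphi_\sigma$ precisely when $X_i^\sigma=X_i$ for all $i$, that is, when $X_1,X_2,X_3\in\HH$, the fixed subalgebra of $\sigma$.

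Consequently the affine fixed locus is exactly the image $\mathcal{AV}(\K,\HH)$ of $\nu$ restricted to $\K^3\times\HH^3$; since $\HH$ is a subalgebra closed under the standard involution, this restriction is well defined (all products $X_jX_k$ and conjugates $\overline{X}_i$ stay in $\HH$), and it is nothing but the \emph{dual polar affine quaternion Veronese map}, the $\HH$-specialisation of $\nu$. I would then invoke the uniform construction underlying \cite{SSMV}: exactly as the $\OO'$-version produces $\mathsf{E_{7,7}}(\K)$, its $\HH$-specialisation produces, after taking projective closure $\mathcal V(\K,\HH)$ and declaring its lines to be the projective lines it contains, the dual polar space $\mathsf{C_{3,3}}(\HH,\K)$. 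As a sanity check, $\mathcal V(\K,\HH)$ spans the fixed subspace $\PG(V^{\varphi_\sigma})$, of projective dimension $31$, matching the $32$-dimensional half-spin module of the absolute type $\sD_6$ of $\mathsf{C_{3,1}}(\HH,\K)$.

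It remains to match the fixed structure of $\varphi_\sigma$ inside $\mathcal V(\K,\OO')$ with $\mathcal V(\K,\HH)$, closure points included. Here I would argue that $\mathcal V(\K,\OO')\cap\PG(V^{\varphi_\sigma})$ is a projectively closed subset of $\PG(V^{\varphi_\sigma})$, being the intersection of the projectively closed $\mathcal V(\K,\OO')$ with a linear subspace; hence it contains the projective closure $\mathcal V(\K,\HH)$ of $\mathcal{AV}(\K,\HH)$. Conversely every fixed point of $\mathcal V(\K,\OO')$ lies in $\PG(V^{\varphi_\sigma})$ and, arising as a closure point of the affine fixed locus, lies in $\mathcal V(\K,\HH)$, so the two coincide as point sets. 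For the incidence structure, a projective line of $\mathcal V(\K,\OO')$ is fixed pointwise by $\varphi_\sigma$ if and only if it is contained in $\PG(V^{\varphi_\sigma})$, and these are precisely the projective lines contained in $\mathcal V(\K,\HH)$; thus the fix structure is a full subgeometry whose lines are those of $\mathsf{C_{3,3}}(\HH,\K)$, giving the stated isomorphism.

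The main obstacle is the identification in the third step of $\mathcal V(\K,\OO')\cap\PG(V^{\varphi_\sigma})$ with the \emph{entire} closed variety $\mathcal V(\K,\HH)$: one must exclude spurious fixed closure points of $\mathcal V(\K,\OO')$ that lie in $\PG(V^{\varphi_\sigma})$ yet outside the closure of $\mathcal{AV}(\K,\HH)$, and confirm that the $\HH$-specialisation of the construction of \cite{SSMV} yields $\mathsf{C_{3,3}}(\HH,\K)$ itself rather than merely some $\sD_6$-form geometry containing it. Both are exactly the routine-but-technical verifications the authors leave to the reader; modulo them, the equivariance identity carries all of the conceptual content.
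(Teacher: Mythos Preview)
The paper explicitly leaves the proof of this proposition to the reader, so there is no authorial argument to compare against. Your approach---establish the equivariance $\varphi_\sigma\circ\nu=\nu\circ(\mathrm{id},\sigma,\sigma,\sigma)$, deduce that the affine fixed locus is $\mathcal{AV}(\K,\HH)$, identify its closure with $\mathsf{C_{3,3}}(\HH,\K)$ via \cite{SSMV}, and then match closures---is exactly the natural route and is what the authors intend.

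One genuine inconsistency: in your third paragraph you assert that every fixed point of $\mathcal V(\K,\OO')$, ``arising as a closure point of the affine fixed locus, lies in $\mathcal V(\K,\HH)$''. This is precisely the step you then flag, in the fourth paragraph, as the main unverified obstacle. As written the sentence is circular: a fixed point of $\mathcal V(\K,\OO')$ lying in $\PG(V^{\varphi_\sigma})$ is a closure point of $\mathcal{AV}(\K,\OO')$, not a priori of $\mathcal{AV}(\K,\HH)$, and bridging that gap \emph{is} the work. You should either remove that clause or make the dependence explicit.

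For actually closing the gap, two options are available in the paper's context. The direct one is to parametrise the boundary $\mathcal V(\K,\OO')\setminus\mathcal{AV}(\K,\OO')$ explicitly (it decomposes into strata according to which of the first four scalar coordinates vanish, each stratum again parametrised by $\K$'s and $\OO'$'s) and check that the $\varphi_\sigma$-fixed points in each stratum are exactly those with octonion parameters in $\HH$; this is the ``routine-but-technical'' verification you mention. The cleaner indirect route is to invoke Proposition~\ref{B33inE77}: you have exhibited a copy of $\mathsf{C_{3,3}}(\HH,\K)$ inside the fixed structure of $\varphi_\sigma$ with the required residual behaviour, and that proposition (together with the determination of the pointwise stabiliser in \S\ref{fixQP}) already tells you the fixed structure of any nontrivial element of this group is exactly such a dual polar space, hence no larger.
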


%%%%%%%%%%
%%%%%%%%%%
%%%%%%%%%%
%%%%%%%%%%
%%%%%%%%%%
%%%%%%%%%%
%%%%%%%%%%
%%%%%%%%%%
%%%%%%%%%%
%%%%%%%%%%

\section{Collineations with opposition diagram $\sE_{7;3}$ fixing no chamber}\label{sec:E73nochamber}
\subsection{The examples}
By Proposition~5.7 of \cite{DSV}, each quadratic extension $\LL$ of $\K$ yields a subbuilding $\mathfrak{B}$ of $\mathsf{E_7}(\K)$ isomorphic to $\mathsf{F_4}(\K,\LL)$ with the following properties. 

\begin{compactenum}[$(i)$]
\item The associated Lie incidence geometry $\Gamma^*:=\mathsf{F_{4,1}}(\K,\LL)$ is a full subgeometry of the Lie incidence geometry $\Delta^*:=\mathsf{E_{7,1}}(\K)$.
\item Each symp of $\Gamma^*$ is embedded in a unique symp of $\Delta$.
\item The embedding is \emph{isometric}, that is, a pair of points $p,q$ in $\Gamma^*$ is collinear, symplectic, special or opposite in $\Gamma^*$ if, and only if, it is collinear, symplectic, special or opposite, respectively, in $\Delta^*$. %coincides with the set of points of convex closure in $\Delta^*$ of $p,q$, restricted to the points of $\Gamma^*$.  
\item There is a group $G$ isomorphic to $\LL^\times/\K^\times$ of automorphisms of $\Delta^*$ each nontrivial member of which has $\Gamma^*$ as fix structure. If some automorphism of $\Delta^*$ pointwise fixes $\Gamma^*$, then it belongs to $G$. 
\end{compactenum}

Conversely, if, for some quadratic extension $\LL$ of $\K$, the Lie incidence geometry $\Delta^*$ contains a full subgeometry isomorphic to the Lie incidence geometry $\mathsf{F_{4,1}}(\K,\LL)$, then, up to a projectivity, it is precisely the corresponding point-line geometry of $\mathfrak{B}$.  Note that $\LL/\K$ is allowed to be an inseparable extension and also $|\K|=2$ is allowed. 

We now show the following proposition.

\begin{prop}\label{E73isdom}  Given a full subgeometry of $\Delta^*$ isomorphic to the Lie incidence geometry $\Gamma^*$  with corresponding group $G$ as above, then every nontrivial member $\theta$ of $G$ is domestic and has opposition diagram  $\mathsf{E_{7;3}}$.
\end{prop}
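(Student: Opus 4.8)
The plan is to identify the opposition diagram of $\theta$ by determining exactly which vertex types are non-domestic, using that $\theta$ is capped (by the main result of \cite{PVM}, since $\Delta$ is large) and that the diagram automorphism is trivial for $\sE_7$. Thus the opposition diagram is the set of types $t$ admitting a type-$t$ vertex sent to an opposite, and by the classification it must be one of the six admissible diagrams of \cref{fig:Dynkin}. Consulting that list, it suffices to prove just two things: that \emph{lines} (type $3$) are domestic, and that some \emph{para} (type $7$) is mapped onto an opposite para. Indeed the admissible diagrams not circling node $3$ are exactly $\sE_{7;0},\sE_{7;1},\sE_{7;2},\sE_{7;3}$, and among those only $\sE_{7;3}$ circles node $7$; moreover ruling out a circled node $3$ already rules out $\sE_{7;7}$, so $\theta$ is automatically domestic. (One checks analogously, and consistently, that points and symps are non-domestic, but this is not logically needed.)

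First I would record the local behaviour of $\theta$. By property (ii) each symp $\eta$ of $\Gamma^*$, which is a polar space $\mathsf{B_{3,1}}(\K,\LL)$, lies in a unique symp $\hat\eta$ of $\Delta^*$; hence $\hat\eta$ is fixed and $\theta$ restricts on it to a collineation of $\mathsf{D_{5,1}}(\K)$ pointwise fixing the rank-$3$ subpolar space $\eta$. This is precisely the linear \emph{lazy kangaroo} of \cref{residue2}, so in particular $\theta$ maps no point of a fixed symp onto a collinear one. Dually, at each fixed point $f$ of $\Gamma^*$ the induced collineation on $\Res_{\Delta^*}(f)\cong\mathsf{D_{6,6}}(\K)$ fixes the residue of $f$ in $\Gamma^*$. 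Together with the isometry property (iii), these restrictions let me import the mutual-position calculus of \cref{factE71}, \cref{factE73} and \cref{factE72} to all points, symps and paras meeting the pointwise-fixed metasymplectic space.

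For the domesticity of lines I would invoke \cref{oppositionU}: a line $L$ is opposite $L^\theta$ if and only if every point of $L$ is opposite some point of $L^\theta$. So the goal is to exhibit, on an arbitrary line $L$, a point $p$ with $\delta(p,q)\le 2$ for every $q\in L^\theta$, which blocks opposition. Since every line lies in a symp (axiom (PPS4)) and the fixed $\mathsf{B_{3,1}}(\K,\LL)$ meets each fixed symp richly, I would analyse the position of $L$ relative to the fixed symps and use the lazy-kangaroo action (which never produces a collinear image) to control $\delta(p,p^\theta)$ and the location of $L^\theta$, producing the required non-opposite point. For the non-domesticity of paras I would instead start from a point $p$ lying far from the fixed structure and build a para $\Pi$ through $p$ whose image is disjoint from it in the sense of \cref{factE72}$(iii)$; by \cref{oppositionU} it then remains to check that every point of $\Pi$ is opposite some point of $\Pi^\theta$, which I would verify through the kangaroo description inside the symps shared by the two paras.

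The main obstacle is the uniform domesticity of lines. The difficulty is that the pointwise-fixed point set of $\theta$ is \emph{not} a geometric hyperplane of $\Delta^*$, so a general line need not contain a fixed point and one cannot simply intersect $L$ with $\Gamma^*$. Instead the argument must run a careful case distinction on how $L$ sits relative to the fixed symps and paras, combining the detailed point–symp relations of \cref{factE71} with the kangaroo property on fixed symps to guarantee in every case a point of $L$ at distance at most $2$ from all of $L^\theta$. Once lines are shown domestic and a single opposite para pair is produced, the identification of the diagram as $\sE_{7;3}$, and with it the domesticity of $\theta$, follows formally from the admissible list in \cref{fig:Dynkin}.
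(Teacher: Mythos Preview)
Your reduction is correct: by cappedness and the list in Figure~\ref{fig:Dynkin}, type-$3$ domesticity together with type-$7$ non-domesticity forces $\sE_{7;3}$. But the route you propose is genuinely different from the paper's, and the crucial step is left as a promissory note.

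The paper does not stay in $\Delta^*=\mathsf{E_{7,1}}(\K)$. It dualises to $\Delta=\mathsf{E_{7,7}}(\K)$, where the fixed structure becomes $\mathsf{F_{4,4}}(\K,\LL)$. There the key input is Lemma~\ref{D6fix}: on each fixed symp (now a $\mathsf{D_6}$ polar space) $\theta$ is point-domestic with no fixed points, so it induces a \emph{line spread}, and one gets precise control on how $5$- and $5'$-spaces move. From this one proves that every point of $\Delta$ is close to some fixed symp (Lemma~\ref{pointsymp}), whence every point is sent to a collinear or an opposite point (Lemma~\ref{notsympl}). A concrete geometric argument then shows that every \emph{plane} of $\Delta$ contains a point sent to a collinear one, so type $5$ is domestic; together with an explicit non-domestic type-$7$ vertex (a point in a $6$-space over a $5'$-space $U$ with $U\cap U^\theta=\varnothing$), this yields $\sE_{7;3}$. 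The line-spread in $\mathsf{D_6}$ is what makes the whole chain go through.

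Your gap is precisely the line-domesticity step. You acknowledge it as ``the main obstacle'' and offer only a schematic case split on the position of $L$ relative to fixed symps, using that $\theta$ is a lazy kangaroo on each fixed $\mathsf{D_5}$ symp of $\mathsf{E_{7,1}}$. But a rank-$3$ fixed subpolar in $\mathsf{D_5}$ is far less restrictive than a line spread in $\mathsf{D_6}$: it does not by itself force the dichotomy ``collinear or opposite'' for points, nor do you have an analogue of Lemma~\ref{pointsymp} guaranteeing that an arbitrary line meets the sphere of influence of a fixed symp in a usable way. Without those two ingredients there is no mechanism producing, on every line $L$, a point at distance $\le 2$ from all of $L^\theta$. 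Your para-non-domesticity sketch is similarly undeveloped. If you want to stay on the $\mathsf{E_{7,1}}$ side you will need substitutes for Lemmas~\ref{D6fix}--\ref{notsympl}; the paper's experience suggests that passing to the dual is where such substitutes actually exist.
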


It is convenient to consider the ``dual'' situation, that is, to consider $\Gamma=\mathsf{F_{4,4}}(\K,\LL)$ inside $\Delta=\mathsf{E_{7,7,}}(\K)$. Then it follows from \cite{DSV} that  the fixdiagram of $\theta$ is $\mathsf{E_{7;4}}$ and that the collineation induced by $\theta$ in any fixed symp of $\Delta$ is point-domestic and does not fix a chamber. We state this and some consequences as a lemma for further reference.

\begin{lemma}\label{D6fix}
Let $\xi$ be a symp of $\Delta$ fixed by $\theta$. Then $\theta$ induces a point-domestic collineation in $\xi$ without fixed points. It has both fixdiagram and opposition diagram $\mathsf{D_{6;3}^2}$. Also, 
\begin{compactenum}[$(i)$] \item The set of fixed lines is a spread of $\xi$; \item Two fixed lines of $\xi$ are either contained in a common (fixed) $3$-space, or are $\xi$-opposite. \item All $5$-spaces of $\xi$ that are maximal singular subspaces of $\Delta$ that contain a fixed $3$-space, are fixed themselves by $\theta$.  \item A $5'$-space $U$ of $\xi$ is mapped to a $5'$-space $U^\theta$ that is either disjoint from $U$ or intersects $U$ in a fixed $3$-space.
\item No $5$-space of $\xi$ is mapped onto a disjoint one. 
\end{compactenum}
\end{lemma}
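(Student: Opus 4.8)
The plan is to deduce every assertion from the explicit description, due to \cite{DSV}, of the isometric full embedding $\Gamma=\mathsf{F_{4,4}}(\K,\LL)\hookrightarrow\Delta=\mathsf{E_{7,7}}(\K)$, combined with the point-domesticity and chamber-freeness of $\theta_\xi:=\theta|_\xi$ already recorded just before the statement. First I would pin down the fixed structure of $\theta_\xi$ inside a fixed symp $\xi\cong\mathsf{D_{6,1}}(\K)$: by the embedding, $\xi\cap\Gamma$ is a symp of $\Gamma$, that is, a copy of $\mathsf{C_{3,1}}(\LL,\K)$ sitting in $\xi$ by field reduction $\PG(5,\LL)\to\PG(11,\K)$, so that its points, lines and planes are exactly certain fixed $\K$-lines, fixed $\K$-solids and fixed $\K$-$5$-spaces of $\xi$ (the $\LL$-points, $\LL$-lines and $\LL$-planes of the underlying $\LL$-space). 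In this model $\theta_\xi$ is the restriction of multiplication by a representative $\lambda$ of the generator of $G\cong\LL^\times/\K^\times$, with $\lambda\notin\K$. The cleanest route to ``no fixed points'' is the global one, though: the fix diagram of $\theta$ is $\mathsf{E_{7;4}}$, which leaves node $7$ uncircled, so $\theta$ fixes no point of $\Delta$ at all.

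For the two diagrams I would combine point-domesticity (node $1$ of $\mathsf{D_6}$ uncircled), the absence of fixed points, and the absence of a fixed chamber with the classification of opposition diagrams of collineations of hyperbolic $\mathsf{D_6}$ polar spaces (\cite{PVMclass,TTVM}); this singles out $\mathsf{D_{6;3}^2}$. For the fix diagram I would identify the types of $\xi$ carrying a fixed vertex—by the field-reduction picture exactly the three types hosting the $\LL$-points, $\LL$-lines and $\LL$-planes—and check that these assemble, with the diagram symmetry inherited from the way $\xi$ sits in $\Delta$, into $\mathsf{D_{6;3}^2}$ once more.

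Turning to the structural claims, (i) follows since for any point $p$ point-domesticity and the fact that non-collinear points of a polar space are opposite force $p\perp p^\theta$; the fixed line through $p$ is the unique $\LL$-point containing $p$, and as $p$ ranges over $\xi$ these $\LL$-points are totally singular and partition the point set, hence form a spread. For (ii), two fixed lines are two points of the embedded $\mathsf{C_3}$: if they are collinear there they lie on a common $\LL$-line, namely a common fixed solid, and otherwise they are $\mathsf{C_3}$-opposite, which by the isometry of the embedding means $\xi$-opposite. For (iii) I would localise in the residue of a fixed solid $T$ (an $\LL$-line): this residue is a grid $T^\perp/T$ carrying an induced $\LL$-structure, one ruling of which—the one consisting of the $\Delta$-maximal $5$-spaces through $T$—is made up of $\LL$-subspaces and is hence fixed by $\lambda$, so every $\Delta$-maximal $5$-space containing $T$ is fixed.

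The same grid shows $\lambda$ acts without fixed points on the other ruling, the $5'$-spaces through $T$, which gives the dichotomy in (iv) once one checks that a meeting $U\cap U^\theta$ of two $5'$-spaces is $\LL$-invariant, hence a fixed solid. The genuinely delicate assertion is (v): that no $\Delta$-maximal $5$-space is sent to a disjoint (equivalently $\xi$-opposite) one. This is \emph{not} merely the observation that such generators are $\LL$-invariant—a generic $\Delta$-maximal $5$-space need not be an $\LL$-subspace—so it is a true domesticity statement singling out one of the two generator families. I would derive it from the opposition diagram $\mathsf{D_{6;3}^2}$, whose uncircled node for the $\Delta$-maximal family encodes exactly this domesticity, and the main work is to show that it is precisely the $\mathsf{C_3}$-plane (i.e.\ $\Delta$-maximal) family that is domestic; this is where the Hermitian geometry of the \cite{DSV} embedding is essential.
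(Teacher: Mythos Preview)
Your approach via the field-reduction model of \cite{DSV} is genuinely different from the paper's, which argues directly with polar-space incidence and never invokes the explicit $\PG(5,\LL)\hookrightarrow\PG(11,\K)$ picture. For (i) the paper simply quotes the classification of point-domestic collineations of polar spaces; for (ii) it uses the elementary observation that if two fixed lines are not $\xi$-opposite then a point of one is collinear to the other, hence so is its image, forcing a common fixed solid; for (iii) it argues by contradiction that a non-fixed $5$-space $U\supseteq S$ would produce a second fixed $5$-space $\langle S,p,p^\theta\rangle$ meeting $U$ in a $4$-space, violating $\Delta$-maximality; and for (v) it exhibits a non-domestic $5'$-space directly and then appeals to the admissible-diagram classification (as you do). Your route through the Hermitian model is pleasant and gives a conceptual reason for (iii)---the $\Delta$-maximal ruling through a fixed solid is exactly the pencil of $\LL$-planes through an $\LL$-line---though you should make explicit the paper's opening observation (that fixed $5$-dimensional singular subspaces lie in at least two fixed symps and are therefore $\Delta$-maximal), since this is what identifies \emph{which} ruling is the $\LL$-ruling.

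There is, however, a real gap in your treatment of (iv). It is correct (and elegant) that $U\cap U^\theta$ equals the largest $\LL$-subspace $U_\LL$ of $U$ and is therefore fixed. But for a $5'$-space $U$ this only gives $\dim_\K U_\LL\in\{0,2,4\}$, i.e.\ $U\cap U^\theta$ is empty, a fixed line, or a fixed solid; your phrase ``hence a fixed solid'' skips the genuine case $\dim_\K U_\LL=2$, where $U$ contains a Hermitian point but no Hermitian line. Your grid argument only treats $5'$-spaces through a given fixed solid $T$ (where indeed $U\cap U^\theta\supseteq T$ forces dimension $3$), not $5'$-spaces that merely contain a spread line. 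The paper closes this gap by a careful incidence argument: given a spread line $R\subseteq U$ and any $p\in U\setminus R$, the spread line $L$ through $p$ spans with $R$ a fixed solid $S$ (using (ii)); then either $S\subseteq U$, or one builds a fixed $\Delta$-maximal $5$-space $W$ through $S$ meeting $U$ in a hyperplane and observes that every $4$-space of $W$ contains a fixed $3$-space. In your model this amounts to the nontrivial lemma that every $5'$-generator of the residual $\mathsf{D_4}$ (with its induced Hermitian $\mathsf{C_2}$ structure) contains a Hermitian point; you would need to supply that.
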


\begin{proof} First note that fixed $5$-dimensional singular subspaces of $\xi$ are maximal singular subspaces (hence $5$-spaces and not $5'$-spaces) since they are contained in at least two fixed symps (by assumption of the fix structure). 

Now we note that 
$(i)$ follows from point-domesticity and Proposition~3.1 of \cite{PVMclass}. Heading for $(ii)$, let $L_1$ and $L_2$ be two arbitrary fixed lines of $\xi$ and assume $L_1$ is not opposite $L_2$. Then some point $p_1\in L_1$ is collinear to all points of $L_2$. Then also $p_1^\theta$ is collinear to all points of $L_2$, and so $L_1$ and $L_2$ are contained in an automatically fixed $3$-space, proving $(ii)$.

Now assume that a $5$-dimensional subspace $U$ containing a fixed $3$-space $S$ is not fixed.   Let $p\in U\setminus S$. Since $U$ is not fixed, $p^\theta\notin U$. But $\<S,p,p^\theta\>$ is a fixed $5$-space, intersecting $U$ in the $4$-space $\<S,p\>$. Hence $U$ is not maximal in $\Delta$, proving $(iii)$.

Now we prove $(iv)$. By $(i)$, the collineation $\theta$ induces a line-spread in $\xi$. Suppose first that $U$ does not contain a line of this spread, then $U$ and $U^\theta$ are disjoint, as $\theta$ has no fixed points. Thus we can suppose that $U$ contains a line $R$ of the spread. Choose an arbitrary point $p\in U\setminus R$, let $L$ be the line of the spread that contains $p$. Since $p$ and $R$ lie in a singular subspace, it follows from $(ii)$ that $L$ and $R$ are collinear and thus span a $3$-space $S$, that is fixed by $\theta$.

If $L$ and thus also $S$ lie in $U$, then $U\cap U^\theta$ contains $S$, a fixed $3$-space.
Thus suppose $L$ does not lie in $U$. Then we consider the singular $5$-space $W$ spanned by $S$ and the points of $U$ that are collinear to all points of $S$. This is a $5$-space that intersects $U$ in a $4$-space, thus $W$ is a maximal singular subspace that contains $S$. It now follows from $(iii)$ that $W$ is a fixed space. %because from the fixdiagram it follows that all maximal $5$-spaces through a fixed $3$-space, are fixed.

Since $W$ and $U$ intersect in a hyperplane, it suffices to show that every $4$-space $A$ of $W$ contains a fixed $3$-space. One verifies that $A\cap A^\theta$ is fixed, proving $(iv)$.

In order to prove $(v)$ we consider two opposite fixed $5$-spaces $U$ and $U'$. It is easy to find a plane $\alpha\subseteq U$ containing no fixed line. Then $\alpha$ and $\alpha^\theta$ are disjoint and the $5'$-space spanned by $\alpha$ and $\alpha^\perp\cap U'$ is mapped onto an opposite. By the opposition diagram, no $5$-space is mapped onto an opposite.
\end{proof}

%Recall that for a point $p$ and a symp $\xi$ of $\Delta^*$, exactly one the three following possibilities holds: (1) $p\in\xi$; (2) $p$ is collinear to a unique point $q$ of $\xi$ (and then $p$ is opposite all points of $\xi$ which are not collinear to $q$); (3) $p$ is collinear to a maximal singular subspace of $\xi$ (which is a $5$-space). 

\begin{lemma}\label{pointsymp}
For every point $p$ of $\Delta$, there is a symp $\xi$ of $\Delta$ close to $p$ fixed by $\theta$.
\end{lemma}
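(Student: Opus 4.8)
The plan is to fix an arbitrary point $p$ of $\Delta=\mathsf{E_{7,7}}(\K)$, choose a fixed symp $\xi$ (one exists since node $1$ is circled in the fixdiagram $\mathsf{E_{7;4}}$, equivalently since $\Gamma=\mathsf{F_{4,4}}(\K,\LL)$ has symps, which are precisely the fixed symps of $\Delta$), and analyse the mutual position of $p$ and $\xi$ using \cref{factE77}. Throughout I would use that $\theta$ has no fixed point on any fixed symp and that, by \cref{D6fix}, the fixed lines inside a fixed symp form a spread while fixed $3$-spaces and (type $3$) fixed $5$-spaces are abundant. The whole statement then reduces to producing, in each of the three positions (contained, close, far), a \emph{fixed} symp to which $p$ is close.

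If $p$ is close to $\xi$ there is nothing to prove. If $p\in\xi$, I would pass to a neighbouring fixed symp: using \cref{D6fix}$(iii)$ pick a fixed $3$-space $T\subseteq\xi$ and a maximal singular $5$-space $S\supseteq T$ of type $3$ with $p\notin S$; since $S$ is the intersection of exactly two symps of $\Delta$ and both $S$ and $\xi$ are fixed, the second symp $\xi'$ through $S$ is fixed as well. As $p\in\xi\setminus S$, the set $p^\perp\cap\xi'$ contains the $4$-space $p^\perp\cap S$, so by \cref{factE77} it is a $5'$-space and $p$ is close to $\xi'$. Hence the only remaining situation is that $p$ is far from $\xi$.

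So assume $p$ is far from $\xi$, say $p^\perp\cap\xi=\{q\}$, and let $L$ be the (fixed) spread line of $\xi$ through $q$; since $L$ carries no fixed point, $q^\theta\in L\setminus\{q\}$. By \cref{factE77}$(i)$, $p$ is symplectic to every point of $q^\perp\cap\xi\setminus\{q\}$, so the lines $pq$ and $L$ are non-collinear in $\Res_\Delta(q)\cong\mathsf{E_{6,1}}(\K)$ and by \cref{factE6}$(i)$ they lie in a unique symp, giving a symp $\zeta$ of $\Delta$ with $p\in\zeta$ and $L\subseteq\zeta$. The idea is to exploit the $\theta$-invariance of $L$: one has $\zeta\cap\zeta^\theta\supseteq L$, so \cref{factE77symp} forces $\zeta$ and $\zeta^\theta$ to be equal, adjacent, or symplectic, and in the first two cases a fixed symp containing $p$ (respectively a fixed $5$-space through which $p$ already sees a $5'$-space) appears, reducing matters to the situations already treated.

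The hard part is exactly the control of fixedness in the remaining symplectic subcase: because neither $p$ nor its foot $q$ is fixed, I cannot run the argument inside $\Res_\Delta(p)$ or $\Res_\Delta(q)$, so the fixed symp close to $p$ must be located intrinsically among the symps of $\Gamma$ through the point $L$, whose mutual positions are governed by the metasymplectic point-symp relations (\cref{M7}) transported into $\Delta$ via \cref{factE77,factE77symp}. I expect the cleanest route to be to dualise: in $\Delta^*=\mathsf{E_{7,1}}(\K)$ the claim becomes that every para is close to a fixed point of $\Gamma^*=\mathsf{F_{4,1}}(\K,\LL)$, which is a statement about the point-para relation (\cref{factE73}) for the dense, isometrically embedded fixed point set of $\theta$, where the abundance of fixed points supplied by the embedding of \cite{DSV} makes the required $5$-space of collinearity straightforward to exhibit.
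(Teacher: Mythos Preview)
Your far-case approach has a genuine gap. In the adjacent subcase ($\zeta\cap\zeta^\theta$ a $5$-space $V$) you assert that a fixed $5$-space appears, but this fails: $V^\theta=\zeta^\theta\cap\zeta^{\theta^2}$ need not equal $V$, and in fact if $V$ (a type-$3$ $5$-space, being the intersection of two symps) were fixed, then by fullness of $\Gamma^*$ in $\Delta^*$ every symp through $V$---including $\zeta$ itself---would be fixed, contradicting $\zeta\neq\zeta^\theta$. So this subcase does not reduce as you hope, and the symplectic subcase you leave as a sketch.

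The key idea you are missing is to upgrade from the fixed spread line $L$ to a fixed \emph{$5$-space} through the foot. The paper observes (using \cref{D6fix} and the fact that the fixed elements inside a fixed symp form a rank-$3$ polar space) that through $q$ there is a fixed type-$3$ $5$-space $U\subseteq\xi$. Since $\Gamma^*=\mathsf{F_{4,1}}(\K,\LL)$ is a \emph{full} subgeometry of $\Delta^*=\mathsf{E_{7,1}}(\K)$ and $U$ corresponds to a line of $\Delta^*$, \emph{every} symp of $\Delta$ through $U$ is fixed. One then passes to $\Res_\Delta(q)\cong\mathsf{E_{6,1}}(\K)$: the image of $pq$ is a point not collinear to any point of the $4$-space image of $U$, and one must show some symp through this $4$-space is close to the point. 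The paper dualises inside $\mathsf{E_{6,1}}$ (point $\leftrightarrow$ symp, $4$-space $\leftrightarrow$ line) and settles it in two lines via \cref{factE6}. As a minor aside, your $p\in\xi$ case contains a slip---a type-$3$ $5$-space lies in a full pencil of symps, not exactly two---but this is harmless, since by the same fullness argument all of them are fixed.
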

\begin{proof}
Choose an arbitrary fixed symp $\zeta$ and suppose $p$ is collinear to a unique point $x$ in $\zeta$ (otherwise, by \cref{factE77}, there is nothing to prove). In $\zeta$ there is a fixed $5$-space $U$ through $x$ and all symps through $U$ are fixed by $\theta$. We claim that one of these symps contains a $5$-space collinear to $p$.

To prove this, we look at the residue $\Res_\Delta(x)$ at $x$: in this geometry of type $\mathsf{E_{6,1}}$, we have a point $p$ and a $4$-space $U$ such that $p$ is not collinear to any point of $U$, and we have to prove that there exists a symp through $U$ that is close to $p$. Because this residue is self-dual, it suffices to show the dual of this: if we have a symp $P'$ and a line $U'$, there exists a point on $U'$ that is close to $P'$. Each line is contained in a symp and every two symps intersect in at least a point, thus we find a symp $\xi$ through $U'$ that intersects $P'$. The point $y$ in this intersection is collinear to a point $q$ of $U'$, thus $q$ has to be close to $P'$.
\end{proof}

\begin{lemma}\label{notsympl}
A point $p$ of $\Delta$ is either mapped to an opposite point, or to a collinear point. As a consequence, no point of $\Delta$ is mapped to a point at distance $2$.
\end{lemma}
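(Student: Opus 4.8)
The plan is to reduce, via \cref{pointsymp}, to the local picture of $p$ relative to a single $\theta$-fixed symp, and then to read off $\delta(p,p^\theta)$ from the mutual position of the two generators $p^\perp\cap\xi$ and $(p^\theta)^\perp\cap\xi$ inside that symp.

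First I would fix an arbitrary point $p$ and invoke \cref{pointsymp} to obtain a $\theta$-fixed symp $\xi$ that either contains $p$ or is close to it. If $p\in\xi$, then since $\theta$ fixes $\xi$ it restricts there to the point-domestic, fixed-point-free collineation of \cref{D6fix}; as any two distinct points of the polar space $\xi$ are either collinear or non-collinear, point-domesticity forces $p^\theta$ to be collinear to $p$, and we are done. So the substantive case is $p$ close to $\xi$, where by \cref{factE77}$(ii)$ the set $U:=p^\perp\cap\xi$ is a generator (a $5'$-space) of $\xi$. Applying $\theta$, which stabilises $\xi$ and is type-preserving, shows that $p^\theta$ is again close to $\xi$, with $(p^\theta)^\perp\cap\xi=U^\theta$ a generator of the same family as $U$.

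Next I would split according to \cref{D6fix}$(iv)$: either $U\cap U^\theta=\emptyset$, or $U\cap U^\theta$ is a $\theta$-stable $3$-space $S$. In the second case $p$ and $p^\theta$ are both collinear to every point of $S$, so $\delta(p,p^\theta)\le 2$, and the task is to upgrade this to collinearity; here I would examine the two $6$-spaces $\langle p,U\rangle$ and $\langle p^\theta,U^\theta\rangle$ (images of one another under $\theta$) together with the $\theta$-fixed $5$-space through $S$ supplied by \cref{D6fix}$(iii)$, and use the point--subspace incidences of $\mathsf{E_{7,7}}$ to locate $p^\theta$ inside $p^\perp$, thereby excluding distance $2$. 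In the first case, where $U$ and $U^\theta$ are opposite generators of $\xi$, I would instead show $\delta(p,p^\theta)=3$: using \cref{oppositionU} and the symp--symp dichotomy of \cref{factE77symp}, one checks that no point can be collinear or symplectic to both $p$ and $p^\theta$, so they are opposite. The final ``consequence'' is then immediate, since $\Delta$ is a strong parapolar space of diameter $3$, so the only possible distances are $0,1,2,3$ and we have excluded $2$.

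The main obstacle is precisely this last geometric bookkeeping inside $\mathsf{E_{7,7}}$: translating ``$U\cap U^\theta$ is a $3$-space'' into ``$p\perp p^\theta$'' and ``$U\cap U^\theta=\emptyset$'' into ``$p$ opposite $p^\theta$'' requires a careful, case-specific use of \cref{factE77,factE77symp} rather than a single clean principle. The delicate point is that $\theta$ has \emph{no} fixed points at all (its fixed structure lives in types $\{1,3,4,6\}$, never in the point type $7$), so one cannot localise at a fixed point of $\xi$ and must argue entirely through the $\theta$-stable, but not pointwise fixed, singular subspaces $S$, $U$, $U^\theta$ and the $6$-spaces they span.
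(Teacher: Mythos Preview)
Your overall plan matches the paper's proof exactly: invoke \cref{pointsymp}, set $U=p^\perp\cap\xi$, and split via \cref{D6fix}$(iv)$ into the cases $U\cap U^\theta=S$ (a fixed $3$-space) and $U\cap U^\theta=\emptyset$. The $p\in\xi$ case is handled as you say.

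Where you diverge is in the execution of the $3$-space case. The paper does \emph{not} analyse the $6$-spaces $\langle p,U\rangle$ and $\langle p^\theta,U^\theta\rangle$. Instead it argues by contradiction: suppose $p\pperp p^\theta$. Then every point of $S$ lies on a geodesic from $p$ to $p^\theta$, so $S\subseteq\xi(p,p^\theta)$, and by \cref{factE77symp} the intersection $\xi(p,p^\theta)\cap\xi$ is a (maximal) $5$-space containing the fixed $3$-space~$S$. By \cref{D6fix}$(iii)$ this $5$-space is fixed. Now the crucial step is that, since $\Gamma^*=\mathsf{F_{4,1}}(\K,\LL)$ is a \emph{full} subgeometry of $\Delta^*=\mathsf{E_{7,1}}(\K)$, every symp of $\Delta$ through a fixed $5$-space is itself fixed; hence $\xi(p,p^\theta)$ is $\theta$-fixed, and \cref{D6fix} applied to $\xi(p,p^\theta)$ forces $p\perp p^\theta$, the desired contradiction. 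Your proposed route via $6$-spaces and ``point--subspace incidences'' is vaguer and does not isolate this use of fullness, which is really what makes the argument close.

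For the disjoint case your idea (exclude a common neighbour) is correct, but \cref{oppositionU} is not the tool used. The paper first rules out $p\perp p^\theta$ by observing that a symp through $p$ and a point of $U^\theta$ would share two disjoint $4$-spaces with~$\xi$. Then, assuming $p\pperp p^\theta$ with common neighbour~$q\notin\xi$, it shows that $q$ must be collinear to a $3$-space in $U$ \emph{and} a $3$-space in~$U^\theta$; since $U\cap U^\theta=\emptyset$ this forces $q^\perp\cap\xi$ to contain two disjoint $3$-spaces, impossible inside a single $5'$-space.
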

\begin{proof}
From Lemma~\ref{pointsymp} we know that $p$ is collinear to a $5$-space $U$ of a fixed symp $\xi$. From lemma \ref{D6fix}$(iv)$ follows that $U \cap U^\theta$ is a fixed $3$-space $S$ or $U$ is opposite $U^\theta$.

Suppose first that $U\cap U^\theta=S$, then $p$ and $p^\theta$ are both collinear to $S$. Thus $p$ and $p^\theta$ can not be opposite. If $p$ and $p^\theta$ are symplectic, then the symp $\xi(p,p^\theta)$ determined by $p$ and $p^\theta$ intersects $\xi$ in a $5$-space that contains $S$. Since $\Gamma^*$ is a full subgeometry of $\Delta^*$, this $5$-space through a fixed $3$-space has to be fixed, and the symp $\xi(p,p^\theta)$ through this fixed $5$-space also has to be fixed. Thus $\theta$ induces a spread in $\xi(p,p^\theta)$ and every point of this symp has to be mapped to a collinear point. Hence $p$ and $p^\theta$ can not be symplectic. It follows that $p$ and $p^\theta$ are collinear.

Suppose now that $U$ and $U^\theta$ are disjoint. Then it is easy to see that $p$ and $p^\theta$ can not be collinear (a symp through $p$ and a point of $U^\theta$ then has disjoint $4$-spaces lying in $U\cup U^\theta$ in common with $\xi$, a contradiction). Suppose they are symplectic and consider a point $q$ that is collinear to both $p$ and $p^\theta$ (note $q$ does not lie in $\xi$). The point $q$ has to be collinear to at least one point $y$ of $\xi$. If $y\notin U$, then $p$ and $y$ determine a symp that contains $q$ and a $4$-space of $U$. So, either $q$ is collinear to a unique point $y$ of $\xi$ that lies in $U$, or $q$ is collinear to a $3$-space in $U$. We can repeat this reasoning with $U^\theta$ instead of $U$. We conclude that $q$ should be collinear to a $3$-space in $U$ and a $3$-space in $U^\theta$. This is a contradiction, $q$ can only be collinear to a $5$-space of $\xi$. Thus $p$ and $p^\theta$ are opposite.
\end{proof}

\begin{lemma}
In every plane $\alpha$ of $\Delta$, there is a point that is mapped to a collinear point. In particular, no plane is mapped to an opposite plane by $\theta$.
\end{lemma}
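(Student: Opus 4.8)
The plan is to establish the two assertions in turn: first that every plane contains a point mapped to a collinear point (call such a point \emph{domestic}; by Lemma~\ref{notsympl} the only alternative is that it is mapped to an opposite point), and then to deduce that no plane is mapped to an opposite plane. The second assertion will follow cleanly from the first. If $p\in\alpha$ is domestic, then $p^\theta\in p^\perp$, and since $\alpha^\theta$ is a singular plane containing $p^\theta$, every point $r\in\alpha^\theta$ satisfies $r\perp p^\theta$; by the triangle inequality for the point-graph distance, $\delta(p,r)\le\delta(p,p^\theta)+\delta(p^\theta,r)\le 2$, so $p$ is opposite no point of $\alpha^\theta$. Remark~\ref{oppositionU} then gives that $\alpha$ and $\alpha^\theta$ are not opposite. (Conversely, if every point of $\alpha$ were non-domestic, then each $p\in\alpha$ would be opposite $p^\theta\in\alpha^\theta$, so by Remark~\ref{oppositionU} the planes would be opposite; hence the two assertions are equivalent, and it suffices to produce a single domestic point in $\alpha$.)

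I would prove the existence of a domestic point by contradiction, assuming every point of $\alpha$ is mapped to an opposite point. The first reduction uses the fixed symps: if $\alpha$ met some fixed symp $\xi$ in a point $x$, then by Lemma~\ref{D6fix}$(i)$ the point $x$ would lie on a fixed line of the spread of $\xi$, and as that line carries no fixed point it is mapped fixed-point-freely onto itself, so $x^\theta$ would be collinear to $x$, contradicting non-domesticity. Hence $\alpha$ meets no fixed symp. Next I fix $p\in\alpha$ and, by Lemma~\ref{pointsymp}, a fixed symp $\xi$ close to $p$, so that $U:=p^\perp\cap\xi$ is a $5'$-space (Fact~\ref{factE77}$(ii)$) and $W:=\langle p,U\rangle$ is the unique $6$-space containing $U$. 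Non-domesticity of $p$ forces, exactly as in the proof of Lemma~\ref{notsympl}, that $U$ and $U^\theta$ are disjoint, hence opposite in $\xi$. Moreover $\alpha\not\subseteq W$: otherwise a dimension count in $W\cong\PG(6,\K)$ would give $\dim(\alpha\cap U)\ge 2+5-6=1$, so $\alpha$ would meet $U\subseteq\xi$ in at least a line, contradicting that $\alpha$ meets no fixed symp.

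The heart of the argument is then to exploit a second point $q\in\alpha\setminus W$, which exists since $\alpha\not\subseteq W$. Both $p$ and $q$ are non-domestic, and I would analyse the position of $q$ relative to the fixed symp $\xi$ using the point–symp relations (Fact~\ref{factE77}) together with the constraints on $\xi$ from Lemma~\ref{D6fix} — in particular that no (type-$3$) $5$-space of $\xi$ is mapped onto a disjoint one, and that $5$-spaces through a fixed $3$-space are fixed. The goal is to show that the $5'$-space $q^\perp\cap\xi$ (in the close case), or the joint configuration of $q^\perp\cap\xi$, $U$ and their $\theta$-images (in the far case), cannot be simultaneously compatible with $p$ and $q$ both being non-domestic; concretely, to force one of the relevant $5'$-spaces to meet its image in a fixed $3$-space, making the corresponding point domestic. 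I expect this to be the main obstacle, precisely because the available facts supply only the point–symp and symp–symp dictionaries (Facts~\ref{factE77} and~\ref{factE77symp}) and no ready description of how a singular plane meets a symp; the required plane–symp incidence analysis — tracking how $\alpha\cap W$, the $5'$-spaces $x^\perp\cap\xi$ for $x\in\alpha$, and their images interact inside the $\mathsf{D_6}$ geometry of $\xi$ — has to be assembled by hand. Once a domestic point is produced the contradiction closes the first assertion, and the second assertion follows by the distance argument of the first paragraph.
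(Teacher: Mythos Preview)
Your deduction of the second assertion from the first is fine, and the opening reduction (if $\alpha$ meets a fixed symp then you already have a domestic point via the spread of Lemma~\ref{D6fix}$(i)$) is a correct and pleasant observation. The setup with $p$, the close fixed symp $\xi$, the $5'$-space $U=p^\perp\cap\xi$, and the conclusion $U\cap U^\theta=\emptyset$ from non-domesticity of $p$ are all correct.

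The gap is exactly where you say it is, and it is real: with only two points $p,q$ there is no contradiction to be squeezed out of the configuration you describe. Lemma~\ref{D6fix}$(iv)$ explicitly allows a $5'$-space to be disjoint from its image, so two $5'$-spaces $U=p^\perp\cap\xi$ and $V=q^\perp\cap\xi$ can both be disjoint from $U^\theta$, $V^\theta$ without violating anything; the only disjointness obstruction, Lemma~\ref{D6fix}$(v)$, is for the \emph{other} type of $5$-space, and a pair $U,V$ does not naturally produce one of those. Moreover, if $q$ happens to be far from $\xi$ you only get a single point $q^\perp\cap\xi$, and your plan does not say how to recover from that.

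The paper's argument supplies precisely the missing mechanism. It does \emph{not} argue by contradiction and does not stay with a single fixed symp: it takes three non-collinear points $p,q,r\in\alpha$ and, whenever the current point is far from the current fixed symp, uses the spread to manufacture a fixed $3$-space and then a new fixed symp through it to which the point becomes close. After two such steps one has a fixed symp $\xi_3$ with all of $p,q,r$ close to it, and their perps cut out three $5'$-spaces $P,Q,R$ whose pairwise intersections are $3$-spaces meeting in a common plane $\beta$; every point of $\alpha$ is then close to $\xi_3$ via some $5'$-space through $\beta$, and these $5'$-spaces are parametrised by a single maximal $5$-space $W$. Now Lemma~\ref{D6fix}$(v)$ applied to $W$ produces a fixed line $L\subseteq W\cap W^\theta$, and the point of $\alpha$ whose $5'$-space contains $L$ is the domestic one. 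The three-point step and the iterative change of fixed symp are the ideas your outline is missing.
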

\begin{proof}
Choosing a point $p$ in $\alpha$, we can suppose that $p$ and $p^\theta$ are opposite. \cref{pointsymp} yields a fixed symp $\xi_1$ close to $p$ and $U:=p^\perp\cap \xi_1$ is disjoint from $U^\theta$. Now choose a point $q\neq p$ in $\alpha$. Then one verifies that $q$ is either collinear to a $5$-space of $\xi_1$ that intersects $U$ in a $3$-space, or $q$ is collinear to a unique point $x$ of $\xi_1$ that lies in $U$. In the first case we set $\xi_2:=\xi_1$, in the second case we find $\xi_2$ as follows.
Choose a point $y$ in $U$ that is collinear to $x^\theta$. Then $x,y,x^\theta$ span a plane and thus, by Lemma~\ref{D6fix}$(ii)$, the points  $x,x^\theta,y,y^\theta$ span a fixed $3$-space $S$.
%Let $D^\theta$ be the $4$-space in $U^\theta$ that is collinear to $x$, thus $x^\theta\in D^\theta$, and $D$ is a $4$-space in $U$. The point $x^\theta$ is collinear with a $4$-space $E$ in $U$, thus $D$ and $E$ have to intersect in a $3$-space, choose an arbitrary point $y$ in this $3$-space. Now $x$, $x^\theta$, $y$ and $y^\theta$ span a $3$-space $S$, that is fixed because it contains the two lines $\langle x, x^\theta \rangle$ and $\langle y, y^\theta \rangle$ from the spread.
%(Ik had eerst deze redenering die nu in commentaar staat, maar bedacht daarna dat we het argument met de spreadrechten van hierboven konden herhalen. Ik heb dit toch nog in commentaar laten staan voor als er toch een foutje in zit).

Choose a fixed $5$-space through $S$. From the proof of Lemma~\ref{pointsymp}, we know that there is a fixed symp $\xi_2$ through this $5$-space such that $q$ is collinear with a $5$-space of $\xi_2$. Also the point $p$ is collinear to a $5$-space of $\xi_2$, as $p$ is collinear to the line $\langle x, y \rangle$ in $S$, which is contained in $\xi_2$. In both cases we now have a symp $\xi_2$ such that $p$ and $q$ are collinear to $5$-spaces in $\xi_2$. Because $p$ and $q$ are collinear, these $5$-spaces intersect in a $3$-space $A$.

Consider a third point $r$ on the plane $\alpha$, that does not lie on the line $pq$. If $r$ is collinear to a $5$-space $R$ in $\xi_2$ then we set $\xi_3:=\xi_2$.
We can now suppose that $r$ is collinear to a unique point of $\xi_2$, which has to belong to $A$. 
Choose a point $b$ in $A\setminus {a}$ that is collinear to $a^\theta$. Now we find a fixed $3$-space $F$ spanned by $a,b,a^\theta,b^\theta$ as before.
%Now $a$ is collinear with a plane $B^\theta$ in $A^\theta$ and $a^\theta$ is collinear with a plane $C$ in $A$. The planes $B$ and $C$ intersect at least in a line $L$ in $A$, thus we find a fixed $3$-space $F$ spanned by $L$ and $L^\theta$ as before, and $F$ contains the point $a$.
We choose an arbitrary fixed $5$-space through $F$ and again as in the proof of Lemma~\ref{pointsymp} we find a fixed symp $\xi_3$ through this $5$-space, such that $r$ is collinear to a $5$-space of $\xi_3$. The points $p$ and $q$ are collinear to the line $\langle a,b\rangle$, thus they are also collinear to a $5$-space of $\xi_3$.

We now have a fixed symp $\xi_3$ such that $p$, $q$ and $r$ are all collinear to a $5$-space ($P$, $Q$ and $R$, respectively) in $\xi_3$. The $5$-space $R$ intersects both $P$ and $Q$ in a $3$-space distinct from $P\cap Q$, since otherwise we have two different $6$-spaces---the one spanned by $p$ and $P$ and the one spanned by $\alpha$ and $P\cap Q$---intersecting in a $4$-space, contradicting \cref{factE6}$(vi)$ in $\Res_\Delta(p)$.

Consequently we may suppose that there exists a point $a\in (P\cup Q)\setminus R$. The symp $\xi(a,r)$ contains $p,q,r$ and a $4$-space of $R$. The points $p$ and $q$ are collinear to at least a common plane of that $4$-space, and so we conclude with the previous paragraph that  $P\cap Q\cap R$ is a plane $\beta$ and $P\cap Q$, $Q\cap R$ and $R\cap P$ span a $5$-space $W$ containing $\beta$.

Now each point of the plane $\alpha$ is collinear to a $5'$-space containing $\beta$, that intersects $W$ in a $4$-space, an conversely, every $5'$-space containing $\beta$ and intersecting $W$ in a $4$-space is collinear to a point of $\alpha$.

If $\beta$ and $\beta^\theta$ had a point in common, then every point of $\alpha$ would go to a collinear point. Thus we suppose $\beta$ and $\beta^\theta$ are disjoint. 

Now $W\cap W^\theta$ is nonempty by \cref{D6fix}$(v)$. Let $x\in W\cap W^\theta$. Then $x^\theta\in W^\theta$ and $x^{\theta^{-1}}\in W$, hence $L:=xx^{\theta^{-1}}=xx^\theta\subseteq W\cap W^\theta$. Since $L$ and $\beta$ generate at most a $4$-space, we find a $5'$-space $Z$ containing $\beta$ and $L$, hence intersecting $W$ in at least a $3$-space (as $L$ and $\beta$ generate at least a $3$-space---indeed, $L$ is not contained in $\beta$ by the previous paragraph). Consequently there is a point $z\in\alpha$ collinear to $Z$. Since $L\subseteq Z\cap Z^\theta$, the point $z$ is not opposite $z^\theta$ and hence $z\perp z^\theta$ by \cref{notsympl}.
%Since the collineation induced by $\theta$ on $\xi_3$ has opposition diagram $\mathsf{D_{6,3}^2}$, it is clear that no member of the flag $(\beta,W)$ can not be mapped to an opposite in $\xi_3$. 
%By, $W\cap W^\theta$ contains a line $M$, as the intersection can not just contain a point. Then $M$ and $\beta$ span a $4$-space that is incident with exactly two $5$-spaces: $W$ and a $5$-space $V$ that is collinear to a point $z$ of $\alpha$. Now $V$ is mapped to a $5$-space that intersects $W^\theta$ in a $4$-space, thus $V^\theta$ contains at least one point of $M$. Hence $V$ and $V^\theta$ are not disjoint, and $z$ is mapped to a collinear point.
\end{proof}

It now follows that $\theta$ has opposition diagram $\mathsf{E_{7;3}}$ since the previous lemma implies that $\theta$ is domestic, and since we know that $\theta$ maps some point to an opposite point (namely, any point in a $6$-space containing a $5$-space $U$ in a fixed symp with the property that $U$ and $U^\theta$ are disjoint). Proposition~\ref{E73isdom} is proved.

\begin{remark}\label{remark02}Note that, in the above proofs, we only used the hypotheses that there are no fixed even-dimensional subspaces, that there are fixed lines, that every maximal $5$-space and every symp through a fixed $3$-space is also fixed, and that the collineation induced in a fixed symp is point-domestic. 
\end{remark}

\subsection{The characterisation}
In this section, let $\theta$ be a collineation of $\Delta=\mathsf{E_{7,7}}(\K)$ with opposition diagram $\mathsf{E_{7;3}}$ fixing no chamber (however some lemmas below are independent of the latter condition). Again, since we restrict to large buildings we may assume that $|\K|>2$. 

Our aim is to show that the fix structure of $\theta$ is a subbuilding $\mathsf{F_{4}}(\K,\LL)$, with $\LL$ a quadratic extension of $\K$, such that the corresponding geometry $\mathsf{F_{4,1}}(\K,\LL)$ is a full subgeometry of $\mathsf{E_{7,1}}(\K)$.

\begin{lemma}\label{notspecial}
For each symp $\xi$ of $\Delta$, the pair $\{\xi,\xi^\theta\}$ is not special.% for all symps $\xi$. 
\end{lemma}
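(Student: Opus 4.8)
The plan is to argue by contradiction: assuming $\{\xi,\xi^\theta\}$ is special, I will produce a singular subspace of a \emph{domestic} type (a plane, node $5$) that $\theta$ maps onto an opposite subspace, which is impossible since node $5$ is not circled in the opposition diagram $\mathsf{E_{7;3}}$. First I would unpack the special configuration via Fact~\ref{factE77symp}$(iv)$: there is a unique symp $\xi''$ with $U:=\xi\cap\xi''$ and $U':=\xi^\theta\cap\xi''$ opposite $5$-spaces of $\xi''$ (both of type $3$, hence of equal type, since $\xi''\cong\mathsf{D_{6,1}}(\K)$ has even rank $6$). Every point of $\xi\setminus U$ is far from $\xi^\theta$, so by Fact~\ref{factE77}$(i)$ such a point $p$ is collinear to a unique gate $q_p\in\xi^\theta$, symplectic to the points of $q_p^\perp\cap\xi^\theta$ other than $q_p$, and opposite to all remaining points of $\xi^\theta$. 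A small preliminary observation I would record is that $q_p\in U'$: the line $pq_p$ meets $\xi$ only in $p$ and $\xi^\theta$ only in $q_p$ (two points of the line in a symp would force the whole line into it), so the point of $U\cup U'$ that Fact~\ref{factE77symp}$(iv)$ places on $pq_p$ cannot be $p$ and must therefore be $q_p\in U'$.

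The core strategy is then the following. Since planes are domestic, it suffices, by Remark~\ref{oppositionU}, to find a plane $\pi\subseteq\xi$ all of whose points are mapped to opposite points; for then each point of $\pi$ is opposite some point of $\pi^\theta$, making $\pi$ opposite $\pi^\theta$ and contradicting the opposition diagram. To locate $\pi$ I would work inside a generator $M$ of $\xi$ opposite $U$ (which exists in the $\mathsf{D_6}$ polar space $\xi$), so that $M\cap U=\emptyset$ and every point of $M$ is far from $\xi^\theta$. The restriction $\theta|_M\colon M\to M^\theta$ is a projective collineation $\mathsf{PG}(5,\K)\to\mathsf{PG}(5,\K)$, while the gate assignment $p\mapsto q_p$ is a (collapsing) projective map $M\to U'$. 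A point $p\in M$ fails to be mapped to an opposite point precisely when $p^\theta$ lies in the geometric hyperplane $q_p^\perp\cap\xi^\theta$ of $\xi^\theta$; this is a single incidence condition between the projectively varying points $p^\theta$ and $q_p$ under the polar form of $\xi^\theta$, hence cuts out a hypersurface $B\subseteq M$. Provided $B\neq M$, since $|\K|>2$ one finds a plane $\pi\subseteq M$ avoiding $B$, and the contradiction above closes the argument.

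The main obstacle is the degenerate alternative $B=M$, i.e.\ the possibility that \emph{every} point of $M$ (for every generator $M$ opposite $U$) is mapped to a collinear or symplectic point rather than an opposite one. Here I would argue that such total collapse is a strong rigidity constraint: it forces $\theta|_\xi$ to coincide, on the far part of $\xi$, with the gate projection onto $U'$, so that $\theta$ stabilises the pencil of $5$-spaces through $U'$ and fixes incidences among $U$, $U'$, $\xi''$; tracking these fixed incidences up to a maximal flag (a point on $U'$, a line, a plane, etc., all stabilised) would then exhibit a fixed chamber of $\Delta$, contradicting the standing hypothesis that $\theta$ fixes no chamber. Thus both branches of the dichotomy are impossible, and $\{\xi,\xi^\theta\}$ cannot be special. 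I expect the delicate point to be precisely this rigidity step — verifying that the collapse of $\theta|_M$ onto the gate map propagates to a genuine fixed chamber rather than merely a fixed subspace — and it is where the no-fixed-chamber and $|\K|>2$ assumptions do the real work.
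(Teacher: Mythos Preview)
Your argument has a genuine gap at the central step. Even granting that the gate map $p\mapsto q_p$ is projective (which you have not verified) and that the bad locus $B=\{p\in M\mid p^\theta\in q_p^\perp\}$ is a quadric hypersurface in $M\cong\mathsf{PG}(5,\K)$, the claim ``since $|\K|>2$ one finds a plane $\pi\subseteq M$ avoiding $B$'' is false. A plane and a hypersurface in $\mathsf{PG}(5,\K)$ have complementary dimensions and will meet: a plane meets a hyperplane in at least a line, and meets a nondegenerate quadric in a conic. Concretely, over any finite field $\K$ with $|\K|>2$ every ternary quadratic form is isotropic, so \emph{every} plane meets \emph{every} nonempty quadric; the hypothesis $|\K|>2$ gives you nothing here. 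Your fallback rigidity argument for the degenerate branch $B=M$ is too vague to assess, and in any case the main branch is already broken.

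The paper avoids this trap by two changes of strategy. First, it works with a $5$-space (type $3$, also uncircled in $\mathsf{E_{7;3}}$) rather than a plane. Second, and crucially, it uses the weaker criterion of Remark~\ref{oppositionU}: to show $W$ opposite $W^\theta$ one only needs each point of $W$ opposite \emph{some} point of $W^\theta$, not its own image. Choosing a generator $W$ of $\xi$ disjoint from both $U$ and $U'^{\theta^{-1}}$ (possible by Proposition~3.30 of \cite{Tits:74}) makes $W^\theta$ a generator of $\xi^\theta$ disjoint from $U'$. For each $w\in W$ the gate $q_w$ lies in $U'$, so $W^\theta\not\subseteq q_w^\perp$ (otherwise $\langle q_w,W^\theta\rangle$ would be singular of dimension $6$ in the rank-$6$ polar space $\xi^\theta$); hence $w$ is opposite some point of $W^\theta$. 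This gives the contradiction directly, with no hypersurface, no case split, and without invoking the no-fixed-chamber hypothesis.
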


\begin{proof}
Suppose for a contradiction that $\{\xi,\xi^\theta\}$ is a special pair of symps. %Then, by Lemma~\ref{specialsymps}, either
%\begin{compactenum}[$(i)$]
%\item a point $p$ of $\xi$ is mapped onto an opposite and $\theta_p$ is not a symplectic polarity. This contradicts the opposition diagram $\mathsf{E_{7;3}}$ in the residue of a type 7 element,
%\end{compactenum}
%or
%\begin{compactenum}
%\item[$(ii)$] a line $L$ of $\xi$ is mapped onto an opposite and $\theta_L$ maps some point to a collinear one. In view of the opposition diagram $\mathsf{E_{7;3}}$, $\theta_L$ is line-domestic and switches types; hence fixes pointwise a geometric hyperplane. If a point were mapped onto a collinear one, then one would fix each maximal singular subspace through that point, contradicting the fact that $\theta_L$ is not type-preserving.
%\end{compactenum}
Let $U\subseteq\xi$ and $U'\subseteq\xi^\theta$ be the unique $5$-spaces contained in a common symp. Select a $5$-space $W$ in $\xi$ disjoint from both $U$ and ${U'}^{\theta^{-1}}$ (this exists by Proposition~3.30 of \cite{Tits:74}). Then $W^\theta$ is disjoint from $U'$ and hence is not contained in the perp of any point of $U'$. It follows from \cref{factE77symp}$(iv)$ combined with \cref{factE77}$(i)$ that each point of $W$ is opposite some point of $W^\theta$.  Hence, by \cref{oppositionU}, $W$ is opposite $W^\theta$, contradicting the opposition diagram. 
\end{proof}

\begin{lemma}\label{colllinefixed}
If a point $x$ is mapped onto a collinear point $x^\theta$, then the line $xx^\theta$ is fixed. 
\end{lemma}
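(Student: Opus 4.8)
The plan is to argue by contradiction: assume $x\perp x^\theta\ne x$ but that $L:=xx^\theta$ is not fixed. Then $L^\theta=x^\theta x^{\theta^2}$ is a second line through $x^\theta$ distinct from $L$, so in particular $x^{\theta^2}\notin L$. First I would pass to the residue $\Res_\Delta(x^\theta)\cong\mathsf E_{6,1}(\K)$, in which $L$ and $L^\theta$ determine two \emph{distinct} points $p\ne q$; since $\mathsf E_{6,1}(\K)$ is strong of diameter $2$ (\cref{factE6}), $p$ and $q$ are either collinear or symplectic. The entire lemma then reduces to showing $p=q$, i.e.\ to excluding both possibilities. This mirrors the opening of the proof of \cref{dilemma}, but the decisive inputs must be different here: under the opposition diagram $\mathsf E_{7;3}$ the $\theta$-image of a plane (type $5$), of a $3$-space (type $4$) and of a $5$-space (type $3$) is never opposite it, whereas points, lines and symps may be sent to opposites — the reverse emphasis from the $\mathsf E_{7;4}$ situation, so that \cref{pointwisefixedlines} is unavailable and a substitute is needed.

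In the symplectic case I would translate back to $\Delta$: choosing in the residue a symp through $p$ that is \emph{far} from $q$ (such exist by \cref{closefar}), the fact that two symps of $\mathsf E_{6,1}(\K)$ meet in a point or a $4'$-space (\cref{factE6}$(ii)$) together with the far condition forces the residual intersection to be a point, which translates into a symp $\zeta\ni L$ with $\{\zeta,\zeta^\theta\}$ symplectic and $M:=\zeta\cap\zeta^\theta\ni x^\theta$. I would then work inside $\zeta\cong\mathsf D_{6,1}(\K)$, using \cref{factE77symp}$(iii)$ and \cref{oppositionU} to convert positions of points into opposition data, and \cref{notspecial} to rule out special symp-pairs, aiming to exhibit a singular subspace of a domestic type whose image is opposite it — the sought contradiction. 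In the collinear case $L$ and $L^\theta$ instead span a singular plane $\pi\ni x,x^\theta,x^{\theta^2}$ (so $x\perp x^{\theta^2}$), and I would argue as in the ``quadratic'' branch of \cref{dilemma} that $\pi$, and every symp through $\pi$, is fixed; the point is that this branch, which genuinely survives under $\mathsf E_{7;4}$ and produces the honest examples there, has to collapse under $\mathsf E_{7;3}$, the extra domesticity of $5$-spaces being incompatible with a fixed plane carrying a fixed-point-free induced collineation.

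The hard part will be producing the opposite singular subspace of domestic type. The obstruction is structural: the whole $\theta$-orbit $\dots,x^{\theta^{-1}},x,x^\theta,x^{\theta^2},\dots$ is a chain of consecutively collinear points, so every singular subspace through $L$ has its $\theta$-image again passing through $x^\theta$ and hence meeting it there; no opposition is ever visible \emph{along} the orbit, which is precisely why no contradiction arises merely from points or lines being sent to opposites. Thus the opposite pair must be built \emph{transversally} inside $\zeta$ and $\zeta^\theta$, and the genuine difficulty is that every plane or $3$-space of $\zeta$ meets $M^\perp$, while a point collinear to all of $M$ is opposite no point of $\zeta^\theta$ (by \cref{factE77symp}$(iii)$); overcoming this requires selecting the transverse subspace so that its image escapes the collinearity forced by $M$, and I expect this combinatorial choice in the rank-$6$ polar space $\zeta$ to be the technical heart of the proof.
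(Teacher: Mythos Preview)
Your proposal contains a genuine gap and, more importantly, misses the key simplifying idea in the paper's argument. The paper's proof is very short: assuming $L=xx^\theta$ is not fixed, it applies the \emph{dual} of \cref{closefar} in $\Res_\Delta(x^\theta)\cong\mathsf E_{6,1}(\K)$ to find a symp $\xi^\theta$ through $x^\theta$ that is simultaneously \emph{close} to the point corresponding to $L$ and \emph{far} from the point corresponding to $L^\theta$. Pulling back by $\theta^{-1}$, $\xi$ is then a symp through $x$ that is locally far from $L$ at $x$. From the ``close'' condition one gets that $x$ is close to $\xi^\theta$, so $\{\xi,\xi^\theta\}$ is not opposite; from the ``far'' condition one rules out a nontrivial intersection (any $p\in\xi\cap\xi^\theta$ collinear with $x$ would produce a line $xp\subseteq\xi$ coplanar with $L$, contradicting local farness). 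Hence $\{\xi,\xi^\theta\}$ is special, contradicting \cref{notspecial}. No case split on whether $p$ and $q$ are collinear or symplectic is needed, and no opposite singular subspace of domestic type has to be manufactured.

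The crucial difference with your approach is the choice of symp: you take a symp \emph{through} $p$ (i.e.\ containing $L$), which forces $\{\zeta,\zeta^\theta\}$ to be symplectic with intersection line through $x^\theta$, and then you are stuck trying to build a transverse domestic subspace inside a rank-$6$ polar space — precisely the obstruction you correctly identify in your final paragraph. The paper instead takes a symp merely \emph{close} to $p$ (hence not containing $L$), which lands the symp-pair in the special case and finishes immediately via \cref{notspecial}. Your ``hard part'' never needs to be confronted; the dual of \cref{closefar} is stronger than how you use it, and exploiting both the ``close'' and ``far'' conditions together is what makes the argument collapse to a few lines.
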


\begin{proof}
Suppose not. By the dual of \cref{closefar}, there is a symp $\xi^\theta$ locally opposite the line $x^\theta x^{\theta^2}$ at $x^\theta$ and locally close to the line $xx^\theta$ at $x^\theta$. It follows that $\xi$ is locally opposite $xx^\theta$ at $x$. We claim that $\{\xi,\xi^\theta\}$ is special. 

Since $\xi^\theta$ is locally close to $xx^\theta$ at $x^\theta$, the perp $x^\perp$ intersects $\xi^\theta$ in a maximal singular subspace. Since $x\in\xi$, this implies that $\xi$ and $\xi^\theta$ are not opposite. Now suppose that $\xi$ and $\xi^\theta$ are not disjoint. Then, by \cref{factE77}, they share at least a line, and there exists some point $p\in\xi\cap\xi^\theta$ collinear to $x^\theta$. Hence $p\perp x$ and $xx^\theta$ and $xp$ are collinear points in $\Res_\Delta(x)$, contradicting the fact that $xx^\theta$ is far from $\xi$ in $\Res_\Delta(x)$.  This shows the claim.

But now we run against a contradiction to Lemma~\ref{notspecial}. 
\end{proof}

\begin{lemma}\label{linefixed}
If for a  symp $\xi$ the pair $\{\xi,\xi^\theta\}$ is symplectic, then the line $\xi\cap\xi^\theta$ is fixed. 
\end{lemma}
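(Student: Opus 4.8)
The plan is to run the template of \cref{pointwisefixedlines}, with the one change that is forced by the diagram. Under $\sE_{7;3}$ the circled nodes are $1,6,7$, so points (type $7$) and lines (type $6$) may now be non-domestic, and the proof of \cref{pointwisefixedlines}, which closes by producing a point mapped onto an opposite point, is no longer available. What does survive is that the types $2,3,4,5$ are domestic; I would therefore drive the contradiction with a $5$-space (type $3$) in place of a point.

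Write $L=\xi\cap\xi^\theta$, a line by \cref{factE77symp}$(iii)$. The first step is a local computation of $\perp L$. No point $w\in\xi\setminus L$ can be collinear with all of $L$: otherwise $w$ would be close to $\xi^\theta$ and hence collinear with a point of $\xi^\theta\setminus L$, contrary to the ``never collinear'' clause of \cref{factE77symp}$(iii)$; the symmetric statement holds in $\xi^\theta$. Hence $w^\perp\cap L$ is a single point for every $w\in(\xi\cup\xi^\theta)\setminus L$, and by \cref{factE77symp}$(iii)$ a point $w\in\xi\setminus L$ is opposite a point $w'\in\xi^\theta\setminus L$ exactly when $w^\perp\cap L\neq{w'}^\perp\cap L$ as points of $L$.

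Next I would suppose that some point $x\in L$ is not fixed and derive a contradiction, thereby showing $L$ is fixed (pointwise). As in \cref{pointwisefixedlines} I pick $y\in\xi\setminus L$ with $y\perp x$ and $y\not\perp x^{\theta^{-1}}$, so that $y^\theta\not\perp x$ and therefore $y^\perp\cap L=\{x\}\neq{y^\theta}^\perp\cap L$, i.e.\ $y$ is opposite $y^\theta$. To promote this to a forbidden opposition I would enlarge $y$ to a type-$3$ generator $S$ of $\xi$ containing $y$ and disjoint from $L\cup L^{\theta^{-1}}$ (possible by \cref{disjoint_msd}, noting that $S\cap L\subseteq y^\perp\cap L=\{x\}$, so it suffices to keep $x\notin S$). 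Then $S^\theta\subseteq\xi^\theta$ satisfies $S^\theta\cap L=\theta(S\cap L^{\theta^{-1}})=\emptyset$, using $\theta(\xi\cap\xi^{\theta^{-1}})=\xi^\theta\cap\xi=L$, and the projection $S^\theta\to L$, $z'\mapsto{z'}^\perp\cap L$, is a well-defined projective map. It cannot be constant, since a constant value $p_0$ would give $S^\theta\subseteq p_0^\perp$ and hence a singular $6$-space in $\xi^\theta$; as $|\K|>2$ it is thus surjective onto $L$. Consequently every point of $S$ is opposite some point of $S^\theta$ (at $y$ one uses $y^\theta$ directly), so by \cref{oppositionU} the $5$-spaces $S$ and $S^\theta$ are opposite, contradicting the domesticity of type-$3$ elements under $\sE_{7;3}$. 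Hence every point of $L$ is fixed; an application of \cref{colllinefixed} to the points of $L$ (which now map to collinear points) records the same conclusion in the form used later.

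The step I expect to be most delicate is the bookkeeping that keeps the image simplex off $L$ while forcing its $L$-projection to be surjective — that is, transporting ``general position relative to $L$'' across $\theta$ via the identity $\theta(\xi\cap\xi^{\theta^{-1}})=L$, and checking that a generator of $\xi^\theta$ is never contained in the perp of a single point of $L$. This is exactly where the circled pattern $\{1,6,7\}$ of $\sE_{7;3}$ (rather than the $\{1,3,4,6\}$ of $\sE_{7;4}$) is used, and it is the reason the opposite simplex must be a $5$-space (or a plane) rather than a single point, as in \cref{pointwisefixedlines}.
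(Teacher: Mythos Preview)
Your argument has a genuine gap at its first structural claim: it is not true that every $w\in\xi\setminus L$ has $w^\perp\cap L$ a single point. Inside the rank-$6$ polar space $\xi$, the set $L^\perp$ of points collinear with all of $L$ is a corank-$2$ subspace with radical~$L$, so $(L^\perp\cap\xi)\setminus L$ is large. Your justification invokes the ``never collinear'' clause of \cref{factE77symp}$(iii)$; read literally that clause is too strong --- a Gosset-graph check shows that points $w\in\xi\setminus L$ and $w'\in\xi^\theta\setminus L$ with $w,w'\in L^\perp$ can very well be collinear --- and in any case the conclusion you draw is false. Consequently your projection $S^\theta\to L$, $z'\mapsto z'^\perp\cap L$, is undefined on the subspace $S^\theta\cap L^\perp$, which has dimension at least~$3$. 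More decisively, the whole strategy cannot succeed: every generator $S\subseteq\xi$ meets $L^\perp$ in dimension $\geq 3$, and for $z\in S\cap L^\perp$ one has $z^\perp\cap L=L$, which is never disjoint from $z'^\perp\cap L$; so by the opposition criterion in \cref{factE77symp}$(iii)$ such $z$ is opposite no point of $\xi^\theta$, and \cref{oppositionU} fails. (In an apartment exactly two vertices of $\xi$ have their antipode in $\xi^\theta$, and they span only a line.) Thus no plane, solid or $5$-space of $\xi$ is opposite one of $\xi^\theta$, and domesticity at types $2,3,4,5$ gives no leverage here.

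The paper argues differently. It splits on whether $\xi$ contains a non-domestic point. If not, the image of the set $\{z\in\xi:z\perp y,\,z\notin L^\perp\}$ lands in $\{y,y^\theta\}^\perp\cap\xi^\theta$, and a comparison of the affine $5$-spaces occurring on each side forces $y=y^\theta$ for every $y\in L$. If some $x\in\xi$ is non-domestic, one passes to $\Res_\Delta(x)$: there $\xi$ becomes the line $M$ through $x$ meeting $L$, and \cite[Lemma~4.18]{PVM} applied to $\theta_x$ produces a fixed line $K$ through $M\cap L$; from $K\subseteq\xi^\theta$ and $K=K^{\theta^{-1}}\subseteq\xi$ one gets $K=L$.
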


\begin{proof}
Set $\xi\cap\xi^\theta=:L$.

We show first that we may assume that some point $x\in\xi$ is mapped onto an opposite. Assume for a contradiction that no point of $\xi$ is mapped onto an opposite one; then the set $Y$ of points of $\xi$ collinear to a point $y\in L$ but not in $L^\perp$ is mapped into the set of points of $\xi^\theta$ collinear to $y$, but also into the set of points of $\xi^\theta$ collinear to $y^\theta$, hence into the set $Y'$ of points of $\xi^\theta$ collinear to both $y$ and $y^\theta$.  If $y$ and $y^\theta$ are not collinear, then $Y$ contains some affine part of a $5$-space, whereas $Y'$ does not, a contradiction. If $y\perp y^\theta$, with $y\neq y^\theta$, then $Y'$ does not contain pairs of such affine $5$-spaces the projective completion of which intersects precisely in a point (they always contain the line $yy^\theta$), whereas $Y$ does (in the perp of $y$ one finds $5$-spaces only intersecting in $y$).    It follows that $y=y^\theta$, hence $L$ is fixed pointwise. 

So we may assume that there exists a non-domestic point $x\in\xi$. If $M$ is the line in $\xi$ through $x$ intersecting $L$ nontrivially (say, in the point $u$), then $\xi^{\theta_x}=M$.  Lemma~4.18 of~\cite{PVM} implies that the line $K$ through $u$ intersecting $M^\theta$ nontrivially is fixed. Since $K\subseteq \xi^\theta$, we have $K=K^{\theta^{-1}}\subseteq\xi$, and so $K=L$.
\end{proof}

\begin{lemma}\label{oppsymp}
Suppose at least one fixed point exists. If a symp $\xi$ is non-domestic, then it does not contain points $x$ with $\{x,x^\theta\}$ symplectic.  
\end{lemma}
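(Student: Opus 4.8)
The plan is to argue by contradiction. Suppose $\xi$ is non-domestic, so that $\xi$ and $\xi^\theta$ are opposite, and suppose some point $x\in\xi$ has $\{x,x^\theta\}$ symplectic. Since $\xi$ and $\xi^\theta$ are opposite, \cref{factE77symp}$(v)$ shows that $x$ is collinear to a unique point $x^*\in\xi^\theta$ and that $x$ is far from $\xi^\theta$. As $x^\theta$ is another point of $\xi^\theta$ and $\{x,x^\theta\}$ is symplectic (so neither collinear nor opposite), \cref{factE77}$(i)$ forces $x^\theta$ to be collinear to $x^*$ inside $\xi^\theta$, with $x^\theta\ne x^*$; thus $x^\theta$ and $x^*$ span a line $M\subseteq\xi^\theta$.

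Next I set $\zeta:=\xi(x,x^\theta)$, the symp through the symplectic pair. The point $x^*$ is a common neighbour of $x$ and $x^\theta$, hence lies on a geodesic between them and so in the convex closure $\zeta$; consequently $M=x^\theta x^*\subseteq\zeta$, and $\zeta\cap\xi^\theta$ contains the line $M$. Since also $x^\theta\in\zeta\cap\zeta^\theta$, the symps $\zeta$ and $\zeta^\theta$ share a point and are therefore not opposite, i.e. $\zeta$ is a domestic symp.

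The decisive step is to determine the mutual position of $\zeta$ and $\zeta^\theta$. I would show that $\{\zeta,\zeta^\theta\}$ is symplectic, so that $\zeta\cap\zeta^\theta$ is a line $F$ with $x^\theta\in F$. Granting this, \cref{linefixed} applies and shows $F$ is fixed by $\theta$. But then $F^\theta=F$ together with $x^\theta\in F$ give $x=(x^\theta)^{\theta^{-1}}\in F^{\theta^{-1}}=F$, so $x$ and $x^\theta$ both lie on the line $F$ and are collinear, contradicting that $\{x,x^\theta\}$ is symplectic. This crisp contradiction is the heart of the argument; conceptually it reflects that, passing to the $\mathsf{D_6}$-residue $\Res_\Delta(\xi)$, the induced collineation $\theta_\xi$ has opposition diagram circling only the point and line nodes (types $7$ and $6$ of the ambient diagram), and a symplectic pair $\{x,x^\theta\}$ translates exactly into $\theta_\xi$ sending the point $x$ to a distinct collinear point, which such a diagram forbids.

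The main obstacle is precisely the determination of $\zeta\cap\zeta^\theta$: a priori \cref{factE77symp} also permits $\zeta=\zeta^\theta$ (a fixed symp) or $\zeta\cap\zeta^\theta$ a $5$-space (the adjacent case), and in neither of these is \cref{linefixed} available. To rule out adjacency I would use that the shared $5$-space $P=\zeta\cap\zeta^\theta$ is a type-$3$ element, hence domestic by the diagram $\mathsf{E_{7;3}}$; analysing the two generators $P$ and $P^\theta$ of the polar space $\zeta^\theta$ (through $x^\theta$ and $x^{\theta^2}$ respectively) against the symplectic position of $\{x,x^\theta\}$ and the fact that $x\notin P$ should be incompatible with $P$ being domestic. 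To rule out $\zeta=\zeta^\theta$ I would invoke the standing hypothesis that a fixed point exists: passing to the residue at a fixed point collinear with the configuration and using \cref{factE77} together with the kangaroo-type behaviour forced there, a fixed symp cannot carry a point sent to an opposite point of itself. This control of how $\theta$ displaces $\zeta$ is exactly where the fixed-point hypothesis enters, and is the part requiring the most care.
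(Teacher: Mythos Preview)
Your strategy of analysing $\zeta=\xi(x,x^\theta)$ and hoping to land in the symplectic case so that \cref{linefixed} applies is natural, and the endgame you sketch (if $\zeta\cap\zeta^\theta$ is a fixed line $F$, then $x^\theta\in F$ forces $x\in F^{\theta^{-1}}=F$, contradicting $x\not\perp x^\theta$) is correct. The problem is precisely where you locate it: ruling out $\zeta=\zeta^\theta$ and the adjacent case. Neither of your sketches works.

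For the adjacent case your argument collapses: if $P=\zeta\cap\zeta^\theta$ then $P^\theta=\zeta^\theta\cap\zeta^{\theta^2}$, so both $P$ and $P^\theta$ lie in the symp $\zeta^\theta$. Points of $P$ and $P^\theta$ are then at distance at most $2$ in $\Delta$, never opposite, so by \cref{oppositionU} the $5$-spaces $P$ and $P^\theta$ are automatically non-opposite. Domesticity of type~$3$ gives you nothing here. For the fixed case, what you need is exactly that a fixed symp carries no point sent to a $\zeta$-opposite point; this is \cref{sympno2}, which is proved \emph{after} the present lemma (and indirectly relies on it via \cref{oppsympnofixed}). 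Your appeal to a fixed point and ``kangaroo-type behaviour'' in a residue does not substitute for that argument. Finally, your conceptual remark that the restricted $\sD_6$ opposition diagram of $\theta_\xi$ ``forbids'' sending a point to a distinct collinear one is not right: an opposition diagram constrains which types reach an opposite, not whether points can move to collinear points.

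The paper avoids all of this by a different route. It never looks at $\xi(x,x^\theta)$. Instead it uses the constraint on the opposition diagram of $\theta_\xi$ (in the $\sD_6$ residue) together with Theorem~6.1 of \cite{TTVM} to find two noncollinear $\theta_\xi$-fixed points $x_1,x_2\in\xi$; the lines $x_ix_i^\theta$ are fixed by \cref{colllinefixed}, and the standing fixed-point hypothesis, via \cref{pointlinefixed}, produces an actual fixed point $p_1$ on $x_1x_1^\theta$. Now \cref{imE7} gives a fixed symp $\xi'$ in the full imaginary set determined by $\xi,\xi^\theta$, and the natural bijection $\xi\to\xi'$ intertwines $\theta_\xi$ with $\theta|_{\xi'}$. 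One then shows directly that no point of $\xi'$ is sent to a distinct collinear point: otherwise, using again Theorem~6.1 of \cite{TTVM} inside a $5$-space of $\xi'$, one builds a fixed $4$-space and hence a fixed chamber. Transporting back along $\xi\leftrightarrow\xi'$ gives the conclusion. The fixed-point hypothesis is used once, and cleanly, through \cref{pointlinefixed}; there is no case analysis on the position of $\zeta$ versus $\zeta^\theta$.
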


\begin{proof}
Let $\xi$ be opposite $\xi^\theta$. The opposition diagram of $\theta_\xi$ is a ``subset'' of the residue of a node of type $1$ in $\mathsf{E_{7;3}}$, that is, $\mathsf{D_{6;0}}$, $\mathsf{D_{6;1}^1}$, $\mathsf{D_{6;1}^2}$ or $\mathsf{D_{6;2}^1}$.  Then Theorem~6.1 of \cite{TTVM} yields noncollinear fixed points $x_1,x_2$ of $\theta_\xi$. The lines $x_1x_1^\theta$ and $x_2x_2^\theta$ are fixed lines by \cref{colllinefixed}; Lemma~\ref{pointlinefixed} yields a fixed point $p_1$ on $x_1x_1^\theta$. The unique symp $\xi'$ through $p_1$ intersecting $x_2x_2^\theta$ nontrivially (see \cref{imE7}) is itself fixed and hence intersects $x_2x_2^\theta$ in a fixed point $p_2$. Moreover, if $x$ is any point of $\xi$ fixed under $\theta_\xi$, then the unique point $x'$ of $\xi'$ collinear to $x$ is fixed under $\theta$. Hence the fixed point structures of $\theta_\xi$ and of $\theta$ in $\xi'$ are isomorphic. Now suppose, for a contradiction, that some point $y\in\xi'$ is mapped onto a collinear but distinct point $y^\theta$. Let $U$ be a $5$-space containing $yy^\theta$. Theorem~6.1 of \cite{TTVM} yields a $3$-space $S\subseteq U$ pointwise fixed by $\theta$. \cref{pointlinefixed} asserts that $yy^\theta$ is stabilised and \cref{colllinefixed} yeilds a fixed point $z\in yy^\theta$. If $z\in S$, then $\<S,y\>$ is a fixed $4$-space and $U$ a fixed $5$-space. This easily yields a fixed chamber, a contradiction. If $z\notin S$, then $\<S,z\>$ is a fixed $4$-space and this leads to the same contradiction. Hence no point of $\xi'$ is mapped to a collinear one, meaning that for each point $x\in\xi$, the pair $\{x,x^\theta\}$ is either collinear or opposite, but not symplectic.
\end{proof}

\begin{lemma}\label{allpoints}
If a fixed line $L$ contains a fixed point $p$, then all points on $L$ are fixed. 
\end{lemma}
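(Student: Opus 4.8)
The plan is to argue by contradiction. Assume $L$ is fixed and contains the fixed point $p$, but that $\theta$ moves some point $q\in L$; since $L$ is fixed we then have $q^\theta\in L$, so $q\perp q^\theta$. I will produce a fixed chamber of $\Delta$, contradicting the hypothesis that $\theta$ fixes none. Everything takes place in the residue $\Res_\Delta(L)$: deleting the line node ($6$) from the $\mathsf{E_7}$ diagram leaves $\mathsf{D_5}\times\mathsf{A_1}$, the $\mathsf{A_1}$-factor being exactly the points of $L$ (type $7$) and the $\mathsf{D_5}$-factor the flags of types $\{1,2,3,4,5\}$ through $L$. As $\theta$ is type preserving and fixes $L$, it respects this decomposition; write $\psi$ for the induced collineation of the $\mathsf{D_5}$-factor and $\alpha$ for that of the $\mathsf{A_1}$-factor, so that the assumption $q^\theta\neq q$ reads $\alpha\neq\mathrm{id}$.

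The argument mirrors the $\mathsf{E_{7;4}}$ analogue Lemma~\ref{allpointsfixed} and splits into two halves. The easy half records that $\psi$ fixes no chamber of $\mathsf{D_5}$: such a chamber, together with $L$, is a fixed panel of type $\{1,2,3,4,5,6\}$, and adjoining the fixed point $p$ (incident to the panel because $p\in L$) yields a fixed chamber of $\Delta$, which is excluded. The substantial half is to show that $\alpha\neq\mathrm{id}$ forces $\psi$ to fix a chamber of $\mathsf{D_5}$; these two statements are contradictory and prove the lemma. To obtain a fixed $\mathsf{D_5}$-chamber I propose to bound the opposition diagram of $\psi$: under the dictionary above the non-point nodes of the $\mathsf{D_5}$-factor are the types $3,4,5,2$ of $\Delta$, all of which are \emph{domestic} for $\theta$ since only $1,6,7$ are circled in $\mathsf{E_{7;3}}$. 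If each such node is domestic for $\psi$ as well, then the opposition diagram of $\psi$ is contained in $\mathsf{D_{5;1}}$, and the classification of domestic collineations of polar spaces (Theorem~6.1 of~\cite{TTVM}, together with the $\mathsf{D}_n$ results of~\cite{PVMclass}) guarantees that $\psi$ pointwise fixes a nondegenerate polar subspace or the perp of a line, and in particular fixes a chamber.

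The main obstacle --- and the point where the $\mathsf{E_{7;3}}$ case genuinely departs from $\mathsf{E_{7;4}}$ --- is precisely this residue-domesticity of the non-point nodes. In the $\mathsf{E_{7;4}}$ setting Lemma~\ref{pointwisefixedlines} makes the intersection line of a symplectic pair \emph{pointwise} fixed, which is incompatible with $L$ being moved, and one concludes point-domesticity directly; here the corresponding Lemma~\ref{linefixed} only makes $\xi\cap\xi^\theta=L$ fixed \emph{setwise}, so symps through $L$ may still be sent to residue-opposite symps and $\psi$ need not be point-domestic. Crucially, however, a failure of domesticity at a non-point node supplies a maximal singular subspace $Z\supseteq L$ with $Z\cap Z^\theta=L$; choosing a subspace $\pi\subseteq Z$ skew to $L$ gives $\pi\cap\pi^\theta\subseteq Z\cap Z^\theta=L$, so $\pi$ and $\pi^\theta$ are disjoint, and the plan is to show that the very fact that $\theta$ twists $L$ (that is, $\alpha\neq\mathrm{id}$) forces each point of $\pi$ to be opposite a point of $\pi^\theta$, whence $\pi$ and $\pi^\theta$ are opposite by Remark~\ref{oppositionU}. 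This contradicts the domesticity of the corresponding $\Delta$-type (for instance of planes, node~$5$), and I expect the delicate work to lie exactly in this opposition computation via Facts~\ref{factE77} and~\ref{factE77symp}, where the hypothesis $\alpha\neq\mathrm{id}$ must be used to rule out the residual collinearities that would otherwise keep $\pi$ and $\pi^\theta$ non-opposite (and which indeed occur when $L$ is pointwise fixed, consistently with the lemma failing to force a contradiction in that case).
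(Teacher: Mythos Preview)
Your framework is right --- reduce to $\Res_\Delta(L)\cong\mathsf{D_5}\times\mathsf{A_1}$ and argue that the induced $\psi$ on the $\mathsf{D_5}$-factor must fix a chamber, which adjoined to $\{p,L\}$ yields the forbidden fixed chamber of $\Delta$. But the route you propose for the substantial half does not go through. You correctly identify the obstacle (residue-opposition does not follow from $\Delta$-domesticity of types $2,3,4,5$), yet your proposed resolution fails: you pick $\pi\subseteq Z$ skew to $L$, with $Z\supseteq L$ a maximal singular subspace satisfying $Z\cap Z^\theta=L$, and hope to show $\pi$ is $\Delta$-opposite $\pi^\theta$. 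This is impossible regardless of whether $\alpha\neq\mathrm{id}$. Since $Z$ is singular and contains both $\pi$ and $L$, every point of $\pi$ is collinear to every point of $L$; the same holds for $\pi^\theta\subseteq Z^\theta$ and $L^\theta=L$. Hence any $r\in\pi$ and $r'\in\pi^\theta$ lie at distance at most~$2$ in $\Delta$ (via any point of $L$), so no such pair is opposite, and by Remark~\ref{oppositionU} the subspace $\pi$ is never $\Delta$-opposite $\pi^\theta$. The twisting of $L$ plays no role here: the obstruction is simply that $\pi,\pi^\theta\subseteq L^\perp$. So your path to bounding the opposition diagram of $\psi$ at the non-point nodes closes.

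The paper instead proves the complementary statement, that $\psi$ is \emph{point}-domestic in $\mathsf{D_{5,1}}$ (no symp $\xi\supseteq L$ has $\xi\cap\xi^\theta=L$), and then applies Lemma~3.16 of~\cite{PVMclass} to obtain a fixed $\mathsf{D_5}$-chamber. The point-domesticity argument hinges on two ingredients you do not invoke, Lemma~\ref{oppsymp} and an $\mathsf{E_6}$ input. Assuming $\xi\cap\xi^\theta=L$, the non-fixed $x\in L$ gives, via Fact~\ref{factE77symp}$(iii)$, a non-domestic point $y\in\xi$ with $y\perp x$, $y\not\perp p$; through $y$ one picks a line $M\subseteq\xi$ with $M\not\subseteq x^\perp$, so that the unique point $u\in M$ collinear to the fixed point $p$ is mapped to distance~$2$. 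Since $\theta_y$ is a domestic duality of $\Res_\Delta(y)\cong\mathsf{E_{6,1}}(\K)$, the results of~\cite{Mal:12} and~\cite{Sch-Sch-Mal:24} furnish a non-domestic symp $\zeta$ through $M$; but then $u\in\zeta$ contradicts Lemma~\ref{oppsymp}.
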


\begin{proof}
Suppose some point $x\in L$ is not fixed. Assume for a contradiction that some symp $\xi$ through $L$ intersects $\xi^\theta$ precisely in $L$. Let $y$ be a point in $\xi$ collinear to $x$ but not to $p$. Then $y$ is non-domestic. Let $M$ be a line in $\xi$ through $y$ not contained in $x^\perp$. Then $M$ contains a point $u$ mapped to a point at distance 2 (namely, the point on $M$ collinear to $p$). Now $\theta_y$ is a domestic duality in $\Res_\Delta(y)$; by the main result of \cite{Mal:12} and Section 3.2 of \cite{Sch-Sch-Mal:24}, there exists some symp $\zeta$ through $M$ mapped to an opposite $\zeta^\theta$. But $u\in\zeta$, contradicting Lemma~\ref{oppsymp}. 

Hence the map induced on $\Res_\Delta(L)$ is point-domestic on the irreducible component of type $\mathsf{D_5}$, viewed as polar space $\mathsf{D_{5,1}}(\K)$.  Lemma~3.16 of \cite{PVMclass}  implies that some chamber $C$ of that residue is fixed. But now $C\cup\{p,L\}$ is a fixed chamber of $\Delta$, a contradiction.  
\end{proof}

We are now ready to show that there are no fixed points.

\begin{lemma}\label{nofixedpoints}
There are no fixed points.
\end{lemma}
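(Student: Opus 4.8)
The plan is to argue by contradiction: assume $\theta$ fixes a point $p$ and produce a contradiction with the hypothesis that node $1$ is circled in $\mathsf{E_{7;3}}$. The first goal is to show that under this assumption \emph{no point is mapped to a distinct collinear point}. I would obtain this by chaining the three preceding lemmas: Lemma~\ref{colllinefixed} shows that whenever $x\perp x^\theta\neq x$ the line $xx^\theta$ is fixed, which is exactly the standing hypothesis needed to invoke Lemma~\ref{pointlinefixed}; since a fixed point $p$ is available, that lemma yields that every fixed line carries a fixed point, and Lemma~\ref{allpoints} then upgrades every fixed line to being pointwise fixed. Consequently, if some $x$ had $x\perp x^\theta\neq x$, the line $xx^\theta$ would be fixed, hence pointwise fixed, forcing $x=x^\theta$ -- a contradiction. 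So collinear images are impossible as soon as one fixed point exists.

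Next I would exploit the non-domestic symp guaranteed by the circled node $1$. Let $\xi$ be a symp with $\xi^\theta$ opposite $\xi$; by Fact~\ref{factE77symp} opposite symps are disjoint, and since $p=p^\theta$ it cannot lie in $\xi$ (else $p\in\xi\cap\xi^\theta=\emptyset$). The key intermediate claim is that \emph{every} point of $\xi$ is mapped to an opposite point. Indeed, for $x\in\xi$ disjointness gives $x\neq x^\theta$; the previous paragraph rules out $x\perp x^\theta$; Lemma~\ref{oppsymp} (applicable precisely because a fixed point is assumed) rules out $\{x,x^\theta\}$ being symplectic; and because $\Delta=\mathsf{E_{7,7}}(\K)$ is a \emph{strong} parapolar space of diameter $3$, every pair at distance $2$ is symplectic, so no special alternative survives. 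Hence $\delta(x,x^\theta)=3$ for all $x\in\xi$.

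The finish is then a single clean collision, and this is where the argument must be set up correctly rather than where any hard computation lies. By the point--symp dichotomy of Fact~\ref{factE77}, the point $p\notin\xi$ is collinear to at least one point $q$ of $\xi$ (this holds in both the \underline{far} and \underline{close} cases, since in each $p^\perp\cap\xi\neq\emptyset$). Applying $\theta$ to $p\perp q$ and using $p^\theta=p$ gives $p\perp q^\theta$, so $q$ and $q^\theta$ both lie in $p^\perp$ and therefore $\delta(q,q^\theta)\le 2$; this contradicts $\delta(q,q^\theta)=3$ established above. I expect the main obstacle to be the bookkeeping of the intermediate claim -- pinning down that a non-domestic symp has \emph{all} its points sent to opposites -- which relies essentially on the absence of special pairs in $\Delta$ together with Lemma~\ref{oppsymp}; once that is in hand, the existence of even one point of $\xi$ inside $p^\perp$ forces the contradiction and shows $\theta$ can have no fixed point.
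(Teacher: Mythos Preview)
Your proof is correct. The core mechanism---the chain Lemma~\ref{colllinefixed} $\Rightarrow$ Lemma~\ref{pointlinefixed} $\Rightarrow$ Lemma~\ref{allpoints}, showing that once a fixed point exists no point can be mapped to a distinct collinear one---is exactly what the paper uses. The difference lies only in how the contradiction is extracted afterwards.

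The paper finishes more directly: having a non-domestic symp $\xi$, it observes (via Theorem~6.1 of \cite{TTVM}, as in the proof of Lemma~\ref{oppsymp}) that $\theta_\xi$ has a fixed point, which in an opposite symp means precisely a point $p\in\xi$ with $p\perp p^\theta\neq p$. This immediately contradicts the conclusion of your step~2, and the proof ends there. Your route instead feeds step~2 together with Lemma~\ref{oppsymp} and the strongness of $\Delta$ into the stronger statement that \emph{every} point of $\xi$ is sent to an opposite, and then produces a single point $q\in\xi\cap p^\perp$ violating this. That works fine, but it is a detour: the existence of a $\theta_\xi$-fixed point (implicit in the proof of Lemma~\ref{oppsymp}, which you already invoke) already hands you the collinear pair needed to contradict step~2 outright. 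The paper's ending is shorter; yours has the mild advantage of avoiding an explicit appeal to \cite{TTVM} in this lemma, relying only on results already stated in the section.
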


\begin{proof}
The opposition diagram ensures that some symp $\xi$ is mapped onto an opposite. Then, as in the proof of \cref{oppsymp}, Theorem~6.1 of \cite{TTVM} yields at least one fixed point of $\theta_\xi$. Hence we find a point $p$ mapped to a collinear point and the line $pp^\theta$ is fixed by Lemma~\ref{colllinefixed}. If there was some fixed point of $\theta$, then Lemma~\ref{pointlinefixed} would yield a fixed point on the line $pp^\theta$, and the latter is then fixed pointwise by Lemma~\ref{allpoints}. Clearly, this contradicts $p$ not being fixed. 
\end{proof}

We now prepare for proving Lemma~\ref{oppsymp} without the condition of the existence of a fixed point. 

\begin{lemma}\label{sympno2}
If a symp $\xi$ is fixed, then $\theta$ acts point-domestically on it. 
\end{lemma}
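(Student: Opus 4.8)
The plan is to argue by contradiction. Since $\xi$ is fixed, $\theta$ induces a collineation $\theta_\xi$ of the polar space $\xi\cong\mathsf{D_{6,1}}(\K)$, and because $\theta$ has no fixed points at all (\cref{nofixedpoints}), this induced collineation is fixed-point-free. As $\theta$ is type-preserving on $\mathsf{E_7}$ and fixes the vertex corresponding to $\xi$, the map $\theta_\xi$ is moreover type-preserving, so it does not interchange the two families of generators of $\xi$. Recall finally that two points of $\xi$ that are opposite \emph{inside} $\xi$ (i.e.\ non-collinear in the polar space) are exactly the points that are symplectic in $\Delta$, the symp through them being $\xi$ itself. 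Hence point-domesticity of $\theta_\xi$ is equivalent to the assertion that no point of $\xi$ is mapped to a symplectic point of $\Delta$, and I assume for a contradiction that some $x\in\xi$ has $x\pperp x^\theta$.

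The first, and decisive, difficulty is that opposition \emph{inside} $\xi$ does not transfer to opposition inside $\Delta$: any two elements of $\xi$ remain at distance at most $2$ in $\Delta$, so a pair of generators (or planes) of $\xi$ that are $\xi$-opposite is never $\Delta$-opposite. Thus one cannot simply read off a forbidden opposition of the diagram $\mathsf{E_{7;3}}$ from the behaviour of $\theta_\xi$ on $\xi$ alone; the remedy is to feed the internal behaviour of $\theta_\xi$ into the \emph{exterior} of $\xi$. Concretely, I would first upgrade the hypothesis ``$x$ is mapped to a $\xi$-opposite point'' to the purely $\mathsf{D}_6$ statement that $\theta_\xi$ maps some generator $U$ of $\xi$ to a \emph{disjoint} generator $U^\theta$. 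This should follow from the non-collinearity of $x$ and $x^\theta$ together with the parity constraint that two generators of the same type meet in an odd-dimensional (possibly empty) subspace (the general polar-space fact underlying \cref{par} and \cref{capfix}), fixed-point-freeness excluding the degenerate intermediate possibilities, and \cref{colllinefixed} cleaning up the cases where points are mapped to collinear ones.

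With a generator $U$ of $\xi$ mapped to a disjoint one $U^\theta$ in hand, I would then pick a point $p\notin\xi$ close to $\xi$ with $p^\perp\cap\xi=U$, so that $p^\theta$ is close to $\xi$ with $(p^\theta)^\perp\cap\xi=U^\theta$ (legitimate by \cref{factE77}). Using the point--symp and symp--symp relations (\cref{factE77}, \cref{factE77symp}) together with the opposition criterion \cref{oppositionU}, the separation $U\cap U^\theta=\emptyset$ should force a singular subspace through $p$ (a plane meeting $U$ in a line, or a $5$-space $\langle p,U_0\rangle$ with $U_0\subseteq U$ of codimension one) whose image under $\theta$ is $\Delta$-opposite to it. A plane of $\Delta$ is a vertex of type $5$ and a $5$-space a vertex of type $3$, \emph{neither} of which is circled in $\mathsf{E_{7;3}}$; hence $\theta$ would map an uncircled-type element to an opposite one, contradicting the opposition diagram. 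This contradiction shows that no point of $\xi$ is symplectic to its image, i.e.\ $\theta_\xi$ is point-domestic, as claimed.

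The step I expect to be the genuine obstacle is precisely the transfer of the last paragraph: one must verify that the exterior object extracted is really sent to a $\Delta$-\emph{opposite} object of an \emph{uncircled} type, and not merely to a far or close one, nor to an object of a circled type ($1$, $6$ or $7$), which would be harmless. The subtlety is that the points of the candidate plane lying in $U\subseteq\xi$ are only symplectic (never opposite) to the tip $p^\theta$ of the image, so the opposition of every point of the plane with \emph{some} point of the image plane has to be checked against the remaining points of $\pi^\theta$ off $\xi$; this is where a careful bookkeeping of mutual positions via \cref{factE77} and \cref{factE77symp} is needed, and where fixed-point-freeness and type-preservation of $\theta_\xi$ (ruling out a fixed or $\xi$-adjacent image generator) are used. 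The companion result \cref{notspecial}, excluding special symp pairs, is available to dispose of the residual degenerate configurations.
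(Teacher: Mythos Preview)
Your transfer step has a genuine gap, and it is precisely the one you flagged as the obstacle. Take your close point $p\notin\xi$ with $p^\perp\cap\xi=U$ (necessarily a $5'$-space, by \cref{factE77}$(ii)$) and $U\cap U^\theta=\emptyset$. Any point $w$ of the $6$-space $\langle p^\theta,U^\theta\rangle$ outside $\xi$ is still close to $\xi$, and since $w\perp U^\theta$ one has $w^\perp\cap\xi=U^\theta$. Hence by \cref{factE77}$(ii)$ every $u\in U\subseteq\xi\setminus U^\theta$ is symplectic to $w$, not just to the tip $p^\theta$. So no point of your plane $\pi=\langle p,L\rangle$ lying on $L\subseteq U$ is opposite \emph{any} point of $\pi^\theta$, and by \cref{oppositionU} the plane is not $\Delta$-opposite its image; the same obstruction kills your $\langle p,U_0\rangle$, which is in any case a $5'$-space (a flag of type $\{1,2\}$), not a type-$3$ vertex. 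Your first step is also only a sketch: parity allows $\dim(U\cap U^\theta)\in\{1,3\}$, and the fixed lines supplied by \cref{colllinefixed} can sit inside such an intersection without contradiction, so one does not get a disjoint generator from internal considerations alone.

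The paper's argument is different in kind. Starting from $p\in\xi$ with $p\pperp p^\theta$, it first shows (via \cref{nofixedpoints} and \cref{colllinefixed}) that no line of $\xi$ through $p$ meets its image, and then that every such line is $\xi$-opposite its image, by choosing an exterior symp $\zeta$ with $\zeta\cap\xi=L$ and eliminating the possible symp--symp relations for $\{\zeta,\zeta^\theta\}$ using \cref{notspecial}, \cref{linefixed} and \cref{nofixedpoints}. This forces every generator of $\xi$ through $p$ to be disjoint from its image. Finally, picking a symp $\zeta$ \emph{adjacent} to $\xi$ with $\zeta\cap\xi=U$ (now a genuine type-$3$ $5$-space), one checks that $\{\zeta,\zeta^\theta\}$ is neither opposite, adjacent nor symplectic, hence special, contradicting \cref{notspecial}. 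The contradiction is thus routed through the previously established \cref{notspecial} rather than through an uncircled-type vertex built from an exterior close point.
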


\begin{proof}
Suppose for a contradiction that $p$ is a point of $\xi$ mapped onto a point at distance 2. Let $\sigma$ be the restriction of $\theta$ to $\xi$. Then we claim that $\sigma_p$ has no fixed points. 

Indeed, if $L$ through $p$ is mapped onto $L^\sigma$ through $p^\sigma$ with $L\cap L^\sigma={x}$, then $x$ is mapped onto a point of $L^\sigma$ (remember $\sigma$ has no fixed points by Lemma~\ref{nofixedpoints}). But then Lemma~\ref{colllinefixed} implies that $L^\theta=xx^\theta$ is fixed, a contradiction. 

Next, we claim that each line through $p$ in $\xi$ is mapped onto an opposite line in $\xi$. Indeed, suppose not, let $L$ be a line through $p$ not mapped onto an opposite in $\xi$ and consider a symp $\zeta$ which intersects $\xi$ in just $L$. Then $\zeta^\theta$ intersects $\xi$ in just $L^\theta$. Since some point of $L\subseteq\zeta$ is collinear to all points of $L^\theta\subseteq\zeta^\theta$, the pair $\{\zeta,\zeta^\theta\}$ is not opposite. It is not special either by Lemma~\ref{notspecial}, and is not an adjacent pair since this would imply that $p^\perp\cap{p^\theta}^\perp$ contains a plane in $\zeta\cap\zeta^\theta\cap\xi$. Hence they are symplectic and $\zeta\cap\zeta^\theta=M$ is a line. If $p\perp M$, then also $p^\theta\perp M^\theta=M$ (by Lemma~\ref{linefixed}) and $p^\perp\cap{p^\theta}^\perp$ contains a line in $\zeta\cap\zeta^\theta\cap\xi$, which then necessarily coincides with $L$ and $L^\theta$, a contradiction.  Hence $p$ is collinear to a unique point of $L$, which is not fixed by Lemma~\ref{nofixedpoints}. This implies that $p$ is opposite $p^\theta$ in $\Delta$, a contradiction. The claim is proved.

Now let $U$ be an arbitrary $5$-space in $\xi$ containing $p$. Suppose for a contradiction that $U\cap U^\theta$ contains a point $u$. Then $u$ is collinear to $(pu)^\theta$, contradicting the previous claim. Hence $U$ is disjoint from $U^\theta$.  Let $\zeta$ be a symp intersecting $\xi$ in $U$. Then $\{\zeta,\zeta^\theta\}$ is not opposite, adjacent or symplectic---a point in the intersection would have to be collinear to the span of two $4$-spaces (one in each of $U$ and $U^\theta$) of $\xi$, clearly impossible. We conclude that $\{\zeta,\zeta^\theta\}$ is special, but this contradicts Lemma~\ref{notspecial}. 

This completes the proof of the lemma.
\end{proof}

\begin{lemma}\label{oppsympnofixed}
If a symp $\xi$ is non-domestic, then it does not contain points $x$ with $\{x,x^\theta\}$ symplectic.  
\end{lemma}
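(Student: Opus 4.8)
The plan is to run essentially the strategy of \cref{oppsymp}, but since there are now no fixed points at all (\cref{nofixedpoints}), every appeal there to a fixed \emph{point} of $\theta$ must be replaced by an appeal to a fixed \emph{symp} via \cref{sympno2}. So suppose for a contradiction that $\xi$ is opposite $\xi^\theta$ and that some $x\in\xi$ has $\{x,x^\theta\}$ symplectic, and put $\zeta:=\xi(x,x^\theta)$. The entire argument reduces to the single claim that $\zeta$ is fixed: indeed, once $\zeta^\theta=\zeta$, the points $x$ and $x^\theta$ both lie in $\zeta$ and are non-collinear (as $\{x,x^\theta\}$ is symplectic), so $x\mapsto x^\theta$ is a point of the fixed symp $\zeta$ sent to a point at distance $2$ inside $\zeta$, directly contradicting \cref{sympno2}. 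Thus it suffices to prove $\zeta^\theta=\zeta$.

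To that end I first observe that $\zeta$ and $\zeta^\theta$ share the point $x^\theta$ (since $x^\theta\in\zeta$ and $x^\theta=\theta(x)\in\theta(\zeta)=\zeta^\theta$), so by \cref{factE77symp} they are neither opposite nor disjoint; by \cref{notspecial} they are not special either. Hence, apart from $\zeta=\zeta^\theta$, the only remaining possibilities are that $\{\zeta,\zeta^\theta\}$ is symplectic or adjacent. The symplectic case is excluded at once: then $M:=\zeta\cap\zeta^\theta$ is a line, fixed by \cref{linefixed}, and since $x^\theta\in M$ with $M$ fixed we get $x=\theta^{-1}(x^\theta)\in M$, so $x$ and $x^\theta$ lie on a common line and are collinear, contradicting that $\{x,x^\theta\}$ is symplectic.

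The remaining adjacent case, where $W:=\zeta\cap\zeta^\theta$ is a singular $5$-space containing $x^\theta$, is the main obstacle, and I expect it to be the only part not dispatched by a one-line collinearity argument. The target is again to force $W$ to be fixed, for then the symplectic-case trick applies verbatim: $x^\theta\in W=W^\theta$ gives $x\in W$, and as $W$ is a generator (pairwise collinear points) this yields $x\perp x^\theta$, the desired contradiction. To prove $W=W^\theta$ I would work inside the polar space $\zeta^\theta\cong\mathsf{D_{6,1}}(\K)$, where $W$ and $W^\theta=\zeta^\theta\cap\zeta^{\theta^2}$ are two generators carrying $x^\theta$ and $x^{\theta^2}$. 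Here the hypothesis that $\xi$ is non-domestic is brought to bear: using \cref{factE77} one locates the projection $q\in\xi^\theta$ of $x$ and the projection $x'\in\xi$ of $x^\theta$, which pin down $\zeta\cap\xi=xx'$ and $\zeta\cap\xi^\theta=qx^\theta$ as reference lines, and one analyses how the symps $\xi,\xi^\theta$ through these lines meet $W$ and $W^\theta$. The delicate point is to rule out $W\neq W^\theta$: distinct generators of $\zeta^\theta$ are either opposite there (incompatible with the forced incidence pattern of $x^\theta,x^{\theta^2}$) or meet in a proper subspace $W\cap W^\theta$, any point $u$ of which satisfies $u\perp u^\theta$ (since $u$ and $u^{\theta^{-1}}$ then lie in the common generator $W$), whence \cref{colllinefixed} produces a fixed line on which, by \cref{nofixedpoints}, $\theta$ acts without fixed points; chasing this configuration back through the relations with $\xi$ and $\xi^\theta$ should contradict the adjacency.

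Once $W=W^\theta$, and hence $\zeta^\theta=\zeta$, is established in every case, the contradiction with \cref{sympno2} recorded in the first paragraph completes the proof. The hard part is entirely the generator bookkeeping in $\zeta^\theta$ needed to exclude $W\neq W^\theta$; everything else is immediate from \cref{notspecial}, \cref{linefixed}, \cref{nofixedpoints} and \cref{sympno2}.
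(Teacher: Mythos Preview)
Your reduction to showing $\zeta:=\xi(x,x^\theta)$ is fixed is a natural idea, and the symplectic subcase is handled correctly via \cref{linefixed} and \cref{nofixedpoints}. The genuine gap is the adjacent subcase. Your assertion that $W$ and $W^\theta$ being opposite generators of $\zeta^\theta$ is ``incompatible with the forced incidence pattern of $x^\theta,x^{\theta^2}$'' is not justified: since $\{x^\theta,x^{\theta^2}\}$ is symplectic, these two points are \emph{non}-collinear in $\zeta^\theta$, which is perfectly compatible with $x^\theta\in W$, $x^{\theta^2}\in W^\theta$ and $W\cap W^\theta=\emptyset$ (a point of one opposite generator is collinear only to a hyperplane of the other). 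In the cases $\dim(W\cap W^\theta)\in\{1,3\}$ you correctly extract a $\theta$-stable singular subspace, but a fixed line or fixed solid on which $\theta$ acts without fixed points is not by itself a contradiction at this stage (the lemma \cref{noeven} you would want comes \emph{after} the present one, and in any case the dimensions here are odd). The phrase ``chasing this configuration back through the relations with $\xi$ and $\xi^\theta$'' is not an argument, and I do not see how to complete it without substantial new input.

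The paper's proof avoids the symp $\zeta$ entirely and works instead with the projected collineation $\theta_\xi$ of the polar space $\xi$. The translation is that $\{x,x^\theta\}$ symplectic is equivalent to $x':=\theta_\xi(x)$ being collinear to and distinct from $x$. One then exploits that the opposition diagram forces $\theta_\xi$ to be plane- and solid-domestic, so Theorem~6.1 of \cite{TTVM} produces pointwise fixed solids in any two $5$-spaces of $\xi$ meeting exactly in $xx'$; this pins down $xx'$ as $\theta_\xi$-stable and then yields a $\theta_\xi$-fixed point $p\in xx'$. Such a $p$ satisfies $p\perp p^\theta$, so $pp^\theta$ is a $\theta$-fixed line by \cref{colllinefixed}, and the symp $\xi(x^\theta,p)=\xi(x,p^\theta)$ is fixed, contradicting \cref{sympno2}. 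The key difference is that the paper leverages the domesticity constraints on $\theta_\xi$ (via \cite{TTVM}) rather than trying to control the symp $\xi(x,x^\theta)$ directly; this sidesteps precisely the adjacent case that blocks your approach.
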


\begin{proof}
Let $\xi$ be a non-domestic symp. Suppose for a contradiction that $\theta_\xi$ maps a point $x$ to a collinear (distinct) point $x'$. We first claim that $xx'$ is stabilised by $\theta_\xi$. If $xx'$ contains a fixed point, then this is obvious. If not, then consider two $5$-spaces $U,U'$ through $xx'$ intersecting precisely in $xx'$. By the opposition diagram, $\theta_\xi$ is plane-domestic and solid-domestic (solids being singular $3$-spaces), hence Theorem~6.1 of \cite{TTVM} yields pointwise fixed solids $S\subseteq U$ and $S'\subseteq U'$, which, by assumption, are disjoint from $xx'$. Since $\theta_\xi$ is type preserving, it stabilised both $U$ and $U'$ and hence $xx'$. The claim is proved.  

Next we claim that $xx'$ contains a fixed point. Indeed, let $\pi$ be any plane through $xx'$. Theorem 6.1 of \cite{TTVM} yields a fixed point $f\in\pi$. If there is a second fixed point $f'$ in $\pi\setminus xx'$, then $ff'\cap xx'$ is a fixed point $p$ for $\theta_\xi$. If there is no further fixed point in $\pi\setminus xx'$, then for an arbitrary point $y\in\pi\setminus (xx'\cup\{f\})$ the line $yy^\theta$ is fixed (applying the previous with $y$ in place of $x$)  and so $yy^\theta\cap xx'$ again defines a fixed point $p$ on $xx'$. The claim is proved.

Now the symp $\xi(x^\theta,p)=\xi(x,p^\theta)$ is fixed under $\theta$ (as $p^{\theta^2}$ is contained in $pp^\theta$ by Lemma~\ref{colllinefixed}). But this now contradicts Lemma~\ref{sympno2}.
\end{proof}

\begin{lemma}\label{notsymplectic}
If $x$ is a point, then $\{x,x^\theta\}$ is not symplectic.
\end{lemma}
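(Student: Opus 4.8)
The plan is to argue by contradiction: assume some point $x$ satisfies that $\{x,x^\theta\}$ is symplectic, and set $\xi:=\xi(x,x^\theta)$, the unique symp containing this pair. Since $x^\theta\in\xi$ and $x^\theta\in\xi^\theta=\xi(x^\theta,x^{\theta^2})$, the symps $\xi$ and $\xi^\theta$ share at least the point $x^\theta$; by the symp--symp relations of \cref{factE77symp} (which never permit two symps to meet in exactly a point), they are equal, adjacent (meeting in a $5$-space), or symplectic (meeting in a line). The first and third positions I would dispose of immediately, and the middle one is where the real work lies.

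If $\xi=\xi^\theta$, then $\xi$ is fixed and contains $x$, while $\{x,x^\theta\}$ symplectic means $x$ and $x^\theta$ are non-collinear, i.e. at distance $2$ inside $\xi$; this says $\theta$ moves a point of $\xi$ to a point at distance $2$, contradicting that $\theta$ acts point-domestically on the fixed symp $\xi$ (\cref{sympno2}). If $\{\xi,\xi^\theta\}$ is symplectic, then \cref{linefixed} shows the line $L:=\xi\cap\xi^\theta$ is fixed; as $x^\theta\in L$ we get $x^{\theta^2}=(x^\theta)^\theta\in L^\theta=L$, so $x^\theta$ and $x^{\theta^2}$ are two distinct (distinct because $\theta$ has no fixed points, \cref{nofixedpoints}) collinear points of $L$, contradicting that $\{x^\theta,x^{\theta^2}\}=\{x,x^\theta\}^\theta$ is symplectic.

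This leaves the adjacent case, where $V:=\xi\cap\xi^\theta$ is a $5$-space containing $x^\theta$ but not $x$; this is the main obstacle. Here I would first record that $V$ cannot be fixed, since $V^\theta=V$ would force $x^{\theta^2}\in V\subseteq\xi$ collinear to $x^\theta$; hence $V\neq V^\theta$ and $x^{\theta^2}\in V^\theta\setminus V$. The key structural observation is that every point of $V$ lies in both $\xi$ and $\xi^\theta$, so its $\theta$-image lies in $\xi^\theta$; thus $\theta$ carries the generator $V$ of $\xi^\theta$ onto the generator $V^\theta$ of $\xi^\theta$, and for any $q\in V$ whose image $q^\theta$ is collinear to $q$ the line $qq^\theta$ is fixed by \cref{colllinefixed}. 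Exploiting the relative position of $V$ and $V^\theta$ inside the polar space $\xi^\theta\cong\mathsf{D_{6,1}}(\K)$, I would produce a fixed line $L\subseteq\xi^\theta$ to which both $x^\theta$ and $x^{\theta^2}$ are collinear, and then extract a contradiction from the resulting fixed configuration.

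The delicate point, and the one I expect to require the most care, is that merely producing fixed lines is not enough: honest $\mathsf{E_{7;3}}$ collineations do carry fixed lines, arranged in spreads, so the contradiction must use the symplectic position of $x^\theta$ in an essential way. Concretely, I would aim to show that $x^\theta$ and $x^{\theta^2}$ collinear to a common fixed line $L$, together with $\{x^\theta,x^{\theta^2}\}$ being symplectic (non-collinear) in $\xi^\theta$, forces two distinct fixed lines through a common point --- hence a fixed point, contradicting \cref{nofixedpoints} --- or else forces a singular subspace mapped onto an opposite one, which is forbidden by the opposition diagram $\mathsf{E_{7;3}}$. Controlling the intersection $V\cap V^\theta$ and the induced collinearity between $V$ and $V^\theta$ precisely enough to guarantee one of these two outcomes is the crux of the argument.
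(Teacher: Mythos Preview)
Your handling of the cases $\xi=\xi^\theta$ (via \cref{sympno2}) and $\{\xi,\xi^\theta\}$ symplectic (via \cref{linefixed} together with the observation $x^\theta\in\xi\cap\xi^\theta$) is correct and clean. The gap is precisely the adjacent case, and it is not a matter of routine detail. You cannot appeal to \cref{nottoadjacent}, since that lemma is proved \emph{after} the present one and uses it; and the mechanism behind \cref{nottoadjacent} (that every point of $V=\xi\cap\xi^\theta$ goes to a collinear point, forcing $V^\theta=V$) breaks down here because $x^\theta\in V$ is mapped to the symplectic $x^{\theta^2}$. Your proposed attack---manufacturing fixed lines in $\xi^\theta$ from points of $V\cap V^\theta$ and hoping this yields either a fixed point or a forbidden opposite subspace---is not obviously workable: the genuine $\sE_{7;3}$ collineations do carry fixed lines arranged in spreads of fixed symps, and nothing in your setup isolates a feature of the configuration around $x^\theta$ that distinguishes it from that benign picture.

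The paper sidesteps the adjacent case altogether by not using $\xi(x,x^\theta)$ as the test symp. It keeps $\xi_x:=\xi(x,x^\theta)$ only for reference, picks an arbitrary line through $x^\theta$ in $\xi_x\cap\xi_x^\theta$, and then chooses a \emph{new} symp $\xi^\theta\ni x^\theta$ that is far from this line in the $\mathsf{E_{6,1}}$ residue at $x^\theta$. This choice forces both $\xi^\theta\cap\xi_x$ and $\xi\cap\xi_x$ to be single lines. Adjacency of $\xi$ and $\xi^\theta$ is now excluded by a dimension count: a $5$-space $\xi\cap\xi^\theta$ would meet $x^\perp\cap(x^\theta)^\perp$ in at least a $3$-space, and every common neighbour of $x$ and $x^\theta$ lies in $\xi_x$, so this $3$-space would have to sit inside the line $\xi^\theta\cap\xi_x$. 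The remaining positions for $\{\xi,\xi^\theta\}$ (opposite, special, symplectic) are then dispatched via \cref{oppsympnofixed}, \cref{notspecial}, and a short collinearity argument using \cref{linefixed} and \cref{nofixedpoints}.
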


\begin{proof}
Suppose for a contradiction that $x$ is mapped onto a point at distance 2 from $x$. Let $\xi_x$ be the symp through $x,x^\theta$. We select a symp $\xi^\theta$ through $x^\theta$ such that in $\Res(x^\theta)$, the symp $\xi^\theta$ is far from an arbitrary  line through $x^\theta$ in $\xi_x\cap\xi_x^\theta$; then $\xi$ and $\xi_x$ intersect in precisely a line $L$ just like $\xi^\theta$ and $\xi_x$ also just intersect in some line $M$. 

Now since $\xi\neq\xi_x$, we have $\xi\neq\xi^\theta$; also the pair $\{\xi,\xi^\theta\}$ is not opposite by Lemma~\ref{oppsympnofixed}, it is not special by Lemma~\ref{notspecial}, and it is not adjacent since otherwise each of $\xi$ and $\xi^\theta$ shares the subspace $x^\perp\cap {x^\theta}^\perp\cap \xi\cap\xi^\theta$ of dimension at least 3 with $\xi_x$, a contradiction to our choice of $\xi$. Hence $\xi$ and $\xi^\theta$ share exactly a line $K$, which is preserved under $\theta$ by Lemma~\ref{linefixed}. Assume for a contradiction that $x$ is collinear to a unique point of $K$.  By Lemma~\ref{nofixedpoints}, $x^\theta$ is not collinear to that point and so $\{x,x^\theta\}$ is, by \cref{factE77symp}$(iii)$, an opposite pair, a contradiction. Hence both $x$ and $x^\theta$ are collinear to all points of $K$. This yields $K\subseteq \xi_x\cap\xi\cap\xi^\theta$, implying $K=L=M$, which is clearly a contradiction.

We exhausted all possibilities for $\{\xi,\xi^\theta\}$ and each one led to a contradiction. This proves the lemma.   
\end{proof}

\begin{lemma}\label{nottoadjacent}
If $\xi$ is a symp, then $\xi$ is not adjacent to $\xi^\theta$.
\end{lemma}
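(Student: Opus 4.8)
The plan is to assume for contradiction that $\xi$ is adjacent to $\xi^\theta$, so that $U:=\xi\cap\xi^\theta$ is a singular $5$-space, and first to show that \emph{every} point of $\xi$ is mapped to a collinear point. For a point $u\in U$ we have $u\in\xi$, so $u^\theta\in\xi^\theta$; thus $u$ and $u^\theta$ both lie in the symp $\xi^\theta$ and are therefore either collinear or symplectic. They are not equal by Lemma~\ref{nofixedpoints} and not symplectic by Lemma~\ref{notsymplectic}, hence collinear. For a point $y\in\xi\setminus U$, the set $y^\perp\cap U$ is a hyperplane (a $4$-space) of the generator $U$ of $\xi$, and since $U\subseteq\xi^\theta$ this shows $y^\perp\cap\xi^\theta$ contains a $4$-space; by Fact~\ref{factE77} the point $y$ is therefore \underline{close} to $\xi^\theta$. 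Consequently every point of $\xi^\theta$ outside $y^\perp$ is symplectic to $y$; were $y^\theta\notin y^\perp$ we would obtain a symplectic pair $\{y,y^\theta\}$, contradicting Lemma~\ref{notsymplectic}. Hence $y^\theta\perp y$. So indeed $x\perp x^\theta$ for every $x\in\xi$, and by Lemma~\ref{colllinefixed} each line $xx^\theta$ is fixed.

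Next I would upgrade this to the statement that $U$ itself is fixed. Since $\theta$ restricts to a bijection $\xi\to\xi^\theta$ sending each point to a collinear one, every $v\in\xi^\theta$ satisfies $v^{\theta^{-1}}\in\xi$ with $v^{\theta^{-1}}\perp v$. Apply this to $w\in U\subseteq\xi^\theta$: then $w^{\theta^{-1}}\in\xi$ and $w^{\theta^{-1}}\perp w$, while also $w^\theta\in\xi^\theta$ and $w^\theta\perp w$. The fixed line $L_w:=ww^\theta$ contains the whole $\theta$-orbit of $w$, in particular $w^{\theta^{-1}}$ and $w^\theta$. As $w,w^{\theta^{-1}}\in\xi$ we get $L_w\subseteq\xi$, and as $w,w^\theta\in\xi^\theta$ we get $L_w\subseteq\xi^\theta$; therefore $L_w\subseteq\xi\cap\xi^\theta=U$. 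In particular $w^\theta\in U$, so $U^\theta\subseteq U$ and hence $U^\theta=U$. Thus $U$ is a fixed singular $5$-space, foliated by the fixed lines $L_w$, on which $\theta$ (having no fixed points) acts without fixed points.

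The main obstacle is to turn ``$U$ is a fixed $5$-space with no fixed points'' into a contradiction. The cleanest route I would pursue is to produce from this configuration a \emph{fixed symp} $\eta$ and then invoke Lemma~\ref{sympno2}: $\theta$ would act point-domestically on the polar space $\eta\cong\mathsf{D_{6,1}}(\K)$, and by the classification of point-domestic collineations of $\mathsf{D_{6,1}}(\K)$ (Proposition~3.11 of \cite{PVMclass}) such a collineation is either the identity or pointwise fixes the perp of a line; in both cases it fixes points of $\eta\subseteq\Delta$, contradicting Lemma~\ref{nofixedpoints}. The delicate point is the existence of the fixed symp: $\theta$ permutes the $\mathsf{PG}(1,\K)$ of symps through $U$ (here $\xi\mapsto\xi^\theta$), and a priori this action could be fixed-point-free. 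I would address this by exploiting the induced structure on $U$, where the fixed-line foliation forces $\theta|_U$ to be scalar multiplication by an element of a quadratic extension $\LL$ (so $U\cong\mathsf{PG}(2,\LL)$), together with the fixed solids $S=L_x^{\perp}\cap U$ arising from the transversal fixed lines $L_x$ ($x\in\xi\setminus U$): a symp through such a fixed solid, compatible with the fixed generator $U$, should be fixed. If this existence step proves stubborn, an alternative is to argue directly that adjacency forces some point off $\xi$ to be mapped to a symplectic point (again contradicting Lemma~\ref{notsymplectic}) or some symp to be mapped to a special one (contradicting Lemma~\ref{notspecial}), by running a local-opposition analysis in a residue as in the proof of Lemma~\ref{notsymplectic}.
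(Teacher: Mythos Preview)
Your first two paragraphs are correct and in fact prove slightly more than the paper needs: you show that \emph{every} point of $\xi$ (not just those of $U$) is mapped to a collinear point, and you correctly deduce that $U$ is a $\theta$-invariant $5$-space foliated by fixed lines $L_w=ww^\theta$.

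The third paragraph, however, contains a genuine gap, and the proposed route cannot work. Your plan is to produce a fixed symp $\eta$, invoke Lemma~\ref{sympno2} to conclude that $\theta$ acts point-domestically on $\eta$, and then use a classification result to force fixed points in $\eta$. But in the very setting you are in (opposition diagram $\sE_{7;3}$, no fixed chamber), fixed symps \emph{do} exist and $\theta$ \emph{does} act point-domestically on them \emph{without} any fixed points: this is precisely the content of Lemma~\ref{D6fix} for the examples of Section~\ref{sec:E73nochamber}.1, which you are ultimately characterising. Proposition~3.11 of \cite{PVMclass} does not apply here; it concerns the specific opposition diagram $\mathsf{D_{6;1}^2}$, not arbitrary point-domestic collineations of $\mathsf{D_{6,1}}(\K)$. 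So even if you succeeded in producing a fixed symp, no contradiction would follow.

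The paper bypasses this entirely with a short direct argument that you have essentially set up but not seen: once $U^\theta=U$, take an arbitrary $4$-space $S\subseteq U$, let $W$ be the unique $5'$-space of $\xi$ through $S$, and let $W'$ be the unique $5'$-space of $\xi^\theta$ through $S$. Every point of $W\setminus S$ lies in $\xi\setminus U$ and is collinear to $S\subseteq\xi^\theta$, so its image under $\theta$ (a collinear point in $\xi^\theta$) must lie in $W'$. Since $W\setminus S$ generates $W$, this gives $W^\theta=W'$, hence $S^\theta=W^\theta\cap U^\theta=W'\cap U=S$. As $S$ was arbitrary, every hyperplane of $U$ is fixed, so $U$ is fixed pointwise, contradicting Lemma~\ref{nofixedpoints}.
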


\begin{proof}
Suppose for a contradiction that the symp $\xi$ is mapped onto an adjacent one and set $U:=\xi\cap\xi^\theta$. 
By \cref{notsymplectic}, each point $x$ of $U$ is mapped to a collinear point and by \cref{colllinefixed}, the line $xx^\theta$ is fixed, implying $xx^\theta\subseteq U$. Hence $U^\theta=U$. Let $S$ be an arbitrary $4$-space of $U$ and let $W$ be the unique $5'$-space of $\xi$ containing $S$. 
The set $W\setminus S$, is mapped into the unique $5'$-space $W'$ of $\xi^\theta$ containing $S$, since the only points in $\xi^\theta$ collinear to a point of $W\setminus S$ are contained in $W'$. Hence $W'\setminus S=W^\theta\setminus S^\theta$. Since $W'\setminus S$ generates $W$, and likewise $W^\theta\setminus S^\theta$ generates $W^\theta$, this implies $W'=W^\theta$. Hence $S=S^\theta$. Since $S$ was arbitrary in $U$, this easily implies that $U$ is fixed pointwise by $\theta$. This, however, contradicts Lemma~\ref{nofixedpoints}. 
\end{proof}

\begin{lemma}\label{noeven}
There are no fixed even-dimensional singular subspaces. In particular, $\theta$ does not fix elements of types $2$, $5$ or $7$.
\end{lemma}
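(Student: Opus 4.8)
The plan is to argue by contradiction: suppose $\theta$ fixes a singular subspace $U$ of even dimension, and extract a parity obstruction that forces a fixed point, contradicting Lemma~\ref{nofixedpoints}. Write $\dim U=2m$. If $m=0$ then $U$ is a point and there is nothing to prove by Lemma~\ref{nofixedpoints}, so I would assume $m\geq 1$. The first step is to note that every point $x\in U$ is moved by $\theta$ (again Lemma~\ref{nofixedpoints}) to a point $x^\theta\in U$; since $x$ and $x^\theta$ are distinct points of the singular subspace $U$ they are collinear, so Lemma~\ref{colllinefixed} guarantees that the line $xx^\theta$ is fixed by $\theta$, and it lies in $U$ because $U$ is a subspace.

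Next I would show that the set $\mathcal{S}$ of fixed lines contained in $U$ is a \emph{line spread} of $\PG(U)$, that is, a partition of the point set of $U$ into lines. Every point $x\in U$ lies on the fixed line $xx^\theta$, and this is the unique fixed line through $x$: if $M$ is any fixed line containing $x$, then $x^\theta\in M^\theta=M$, whence $M=xx^\theta$. Moreover two distinct members of $\mathcal{S}$ are disjoint, since a common point would be fixed (being the intersection of two $\theta$-stable lines), contradicting Lemma~\ref{nofixedpoints}. Thus $\mathcal{S}$ partitions the points of $\PG(U)$.

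Finally I would invoke the parity obstruction that a projective space admitting a partition into lines has odd projective dimension (equivalently, even vector-space dimension). Concretely, $\theta$ restricts to a collineation of $\PG(U)$ and so is induced by a $\sigma$-semilinear bijection $A$ of the underlying $(2m+1)$-dimensional vector space $W$, for some $\sigma\in\Aut(\K)$ (with $\sigma=\id$ in the linear case). Viewing $W$ as a finitely generated left module over the skew polynomial ring $\K[t;\sigma]$, with $t$ acting as $A$, and using that this ring is a principal ideal domain, $W$ decomposes as a direct sum of cyclic modules $\K[t;\sigma]/\K[t;\sigma]f_i$, each of $\K$-dimension $\deg f_i$. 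The spread property forces $\deg f_i=2$ for all $i$: a summand of $\K$-dimension $1$ would be a $\theta$-fixed point, excluded by Lemma~\ref{nofixedpoints}, whereas a summand of $\K$-dimension at least $3$ would contain a generator $w$ with $w,Aw,A^2w$ linearly independent, contradicting $A^2w\in\langle w,Aw\rangle$ (the fixed line $\langle w,Aw\rangle$ being $A$-stable). Hence $\dim_{\K}W$ is even, contradicting $\dim_{\K}W=2m+1$.

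This contradiction shows that $\theta$ fixes no even-dimensional singular subspace; since elements of types $7$, $5$ and $2$ are precisely the singular subspaces of dimensions $0$, $2$ and $6$, all even, the ``in particular'' clause follows. The hard part will be the last step, where one must treat the semilinear case on equal footing with the linear one (a genuine possibility, as $\theta$ may carry a field-automorphism component); the skew-polynomial module structure is the cleanest uniform device, as it converts the geometric spread condition into the arithmetic statement that every invariant factor has degree $2$, forcing $\dim_{\K}W$ even. This is exactly the higher-dimensional analogue of the planar computation already carried out in Lemma~\ref{L}$(ii)$.
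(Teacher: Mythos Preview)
Your proof is correct but takes a different route from the paper's. The paper argues by a direct dimension-climbing: starting from a point $p_0$ in a fixed even-dimensional subspace $W$, the fixed line $p_0p_0^\theta$ has dimension~$1$; choosing $p_1\in W\setminus p_0p_0^\theta$, either $p_1p_1^\theta$ meets $p_0p_0^\theta$ in a fixed point (contradiction), or their span is a fixed $3$-space~$S$; then choose $p_2\in W\setminus S$, and so on. Since singular subspaces in $\sE_{7,7}(\K)$ have dimension at most~$6$, this terminates after three steps. No module theory is needed: at each stage one is simply intersecting a fixed odd-dimensional subspace with a fixed line inside a space of the next even dimension, forcing a fixed point.

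Your approach via the line spread and the skew polynomial ring $\K[t;\sigma]$ is more structural: it proves the statement for singular subspaces of \emph{any} even dimension, uniformly in the linear and semilinear cases, and packages the parity obstruction cleanly as ``every invariant factor has degree~$2$''. The price is the invocation of the structure theorem over a noncommutative PID, which is heavier than what the situation requires. In fact your spread observation already yields the paper's argument without this machinery: once you know the fixed lines partition $U$, pick disjoint fixed lines one at a time; each new one, being $\theta$-stable, either avoids the span of the previous ones or meets it in a $\theta$-stable point (hence a fixed point, contradiction), so the span grows by~$2$ at each step and $\dim U$ is forced to be odd. This is essentially the paper's proof recast in your language.
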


\begin{proof}
There are no fixed points by Lemma~\ref{nofixedpoints}. Suppose a subspace $W$ of even dimension $d>0$ is fixed. Pick  a point $p_0\in W$. Then $p_0^\theta\neq p_0$ and $p_0p_0^\theta$ is a fixed line, so $d\geq 2$. Pick $p_1\in W\setminus p_0p_0^\theta$. If $d=2$, then $p_0p_0^\theta\cap p_1p_1^\theta$ is a fixed point, a contradiction. Hence $d\geq 4$ and $S:=\<p_0p_0^\theta,p_1,p_1^\theta\>$ is a fixed $3$-space. Pick $p_2\in W\setminus S$. If $d=4$, then $S\cap p_2p_2^\theta$ is a fixed point, a contradiction. Hence $d=6$. But then $p_3p_3^\theta\cap\<S,p_2,p_2^\theta\>$ is a fixed point, for $p_3\in W\setminus\<S,p_2,p_2^\theta\>$, the final contradiction. 
\end{proof}

\begin{prop}\label{thechar}
The set of fixed elements of $\theta$, that is, the fixed lines, $3$-spaces, $5$-spaces and symps, form a geometry $\mathcal{F}$ of type $\mathsf{F_{4,1}}$ related to a building $\mathsf{F_{4}}(\K,\LL)$, with $\LL$ a quadratic extension of $\K$. 
\end{prop}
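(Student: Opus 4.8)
The plan is to show that the four families of fixed objects---fixed lines, fixed $3$-spaces, fixed $5$-spaces and fixed symps (types $6,4,3,1$ in the $\mathsf{E_7}$ diagram, matching the circled nodes of $\mathsf{E_{7;4}}$)---with the incidence induced from $\Delta$ form a metasymplectic space, with the fixed lines as its points. First I would collect the global behaviour already established: by Lemma~\ref{nofixedpoints} and Lemma~\ref{noeven} there are no fixed points and no fixed even-dimensional singular subspaces, while by Lemma~\ref{notsymplectic}, together with the fact that $\Delta$ is strong (so that no special pairs occur), every point is mapped either to a collinear or to an opposite point; by Lemma~\ref{colllinefixed} each line $xx^\theta$ arising from a point $x\perp x^\theta\neq x$ is fixed. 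Thus the smallest fixed objects are lines, and they are abundant: starting from a non-domestic symp (which exists by the opposition diagram) Theorem~6.1 of \cite{TTVM} produces fixed points of $\theta_\xi$, hence points mapped to collinear points and fixed lines, and one verifies each fixed line lies on a fixed symp. I would take these fixed lines to be the points of the candidate geometry $\mathcal{F}$.

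The local analysis is the heart of the matter. For a fixed symp $\xi\cong\mathsf{D_{6,1}}(\K)$, Lemma~\ref{sympno2} shows $\theta$ acts point-domestically and fixed-point-freely on $\xi$, and the analysis behind Lemma~\ref{D6fix} shows the fixed structure is a full rank-$3$ polar subspace whose points, lines and planes are exactly the fixed lines, fixed $3$-spaces and fixed $5$-spaces lying in $\xi$. Because Lemma~\ref{noeven} forbids fixed even-dimensional subspaces, all singular subspaces of this polar space have odd projective $\K$-dimension $1,3,5$, which forces it (as in Example~\ref{residue2}, where the defect of the defining quadric is read off) to be isomorphic to $\mathsf{B_{3,1}}(\K,\LL_\xi)$ for a genuine quadratic field extension $\LL_\xi/\K$; in particular the quaternion and octonion possibilities for the symps of an $\mathsf{F_{4,1}}$ space are excluded here, which is precisely what separates $\mathsf{E_{7;3}}$ from $\mathsf{E_{7;4}}$. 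These fixed polar spaces are the symps of $\mathcal{F}$. Next I would show $\LL_\xi$ does not depend on $\xi$: two distinct fixed symps meet in a fixed $5$-space, in a fixed line, or are disjoint (their intersection is fixed, and by Lemmas~\ref{nofixedpoints} and~\ref{noeven} cannot be a point or a plane), so the symp-symp relations of $\mathcal{F}$ match those of a metasymplectic space (Fact~\ref{M6}), and a shared fixed singular subspace forces equal extensions; a connectivity argument then yields a single $\LL:=\LL_\xi$.

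Finally I would assemble the global structure. For a fixed line $L$, I would analyse the collineation $\theta_L$ induced on $\Res_\Delta(L)\cong\mathsf{D_5}\times\mathsf{A_1}$ and show its fixed geometry is a rank-$3$ dual polar space, i.e.\ the $\mathsf{C_3}$ residue of a point in $\mathsf{F_{4,1}}$, so that the fixed $3$-spaces, $5$-spaces and symps on $L$ close up correctly. Together with the diameter-$3$ behaviour and the symp structure from the previous paragraph, this shows $\mathcal{F}$ is a parapolar space of diameter $3$ with rank-$3$ dual polar residues and symps isomorphic to $\mathsf{B_{3,1}}(\K,\LL)$; hence $\mathcal{F}$ is a metasymplectic space, necessarily the Lie incidence geometry $\mathsf{F_{4,1}}(\K,\LL)$ attached to a building $\mathsf{F_4}(\K,\LL)$. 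The main obstacle I anticipate is exactly this last assembly step: proving that the residue at a fixed line is genuinely a rank-$3$ dual polar space (rather than merely containing one) and establishing connectivity of $\mathcal{F}$, since these are the points where the purely local information about fixed symps must be shown to propagate coherently across the whole building.
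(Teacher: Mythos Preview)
Your overall architecture—determine the restriction of $\theta$ to fixed symps and to residues of fixed lines, then assemble into an $\mathsf{F_4}$ geometry—matches the paper's, but you have the two local identifications swapped and you miss the device the paper uses for the step you flag as a gap.

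In a fixed symp $\xi\cong\mathsf{D_{6,1}}(\K)$ the paper invokes Theorem~8 of \cite{PVMclass} (not the analysis behind Lemma~\ref{D6fix}, parts of whose proof presuppose the $\mathsf{F_4}$ fix structure you are trying to establish) to conclude that the fixed lines, $3$-spaces and $5$-spaces, with the fixed lines declared as points, form the polar space $\mathsf{C_{3,1}}(\LL,\K)$, not $\mathsf{B_{3,1}}(\K,\LL)$. The fixed lines in a fixed $3$-space form a spread, so the ``lines'' of this polar space are $\LL$-lines, not $\K$-lines: it sits in $\xi$ as an ideal, not a full, subspace, and Example~\ref{residue2} is not the right reference here. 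With your orientation (fixed lines of $\Delta$ as the points of $\mathcal{F}$) the geometry you obtain is $\mathsf{F_{4,4}}(\K,\LL)$—the same building, of course, but your identification of the symp type is genuinely wrong.

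The residue at a fixed line is exactly where the paper resolves your acknowledged obstacle. Rather than working in $\Res_\Delta(L)$ directly, the paper dualises: $L$ becomes a symp $\xi_L\cong\mathsf{D_{5,1}}(\K)$ of $\Delta^*=\mathsf{E_{7,1}}(\K)$, and Lemma~\ref{nottoadjacent} (no symp of $\Delta$ is mapped to an adjacent one) says precisely that no point of $\xi_L$ is mapped to a collinear one. Together with Lemma~\ref{noeven} this makes $\theta$ a lazy linear kangaroo on $\xi_L$ fixing points, lines and planes but nothing larger, and \emph{now} Example~\ref{residue2} applies to identify the fixed structure as $\mathsf{B_{3,1}}(\K,\LL)$. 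The final assembly is then dispatched via Proposition~9 of \cite{Tits:83}, rather than by verifying parapolar axioms directly.
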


\begin{proof}
The opposition diagram ensures that there is a non-domestic symp $\xi$. Then by Theorem 6.1 of \cite{TTVM}, $\theta_\xi$ fixes at least two collinear points $p,q$. This implies that the lines $pp^\theta$ and $qq^\theta$ are fixed. Now, since $p\perp p^\theta\perp q^\theta\perp q\perp p$, the pair $\{p,q^\theta\}$ is symplectic and so there is a unique symp $\zeta$ containing $p,p^\theta,q$ and $q^\theta$.  Since $\theta$ fixed $pp^\theta$ and $qq^\theta$, also $\zeta$ is fixed by $\theta$. Then, by \cref{sympno2}, $\theta$ acts point-domestically on $\zeta$.  Since by \cref{nofixedpoints}, $\theta$ does not admit any fixed point, Theorem~8 of \cite{PVMclass} implies the fixed point structure of $\theta$ consists of lines, 3-spaces and 5-spaces, which, declaring the lines as points, form a polar space $\mathsf{C_{3,1}}(\LL,\K)$, with $\LL$ a (separable or inseparable) quadratic extension of $\K$. Every point of $\zeta$ lies on a fixed line. 

Now consider a fixed line $L$. We claim that $L$ is contained in a fixed symp. Indeed, if $L\subseteq \zeta$, then the claim is trivial. Otherwise, let $p\in L$ be arbitrary. If $p$ is far from $\zeta$, then so is $p^\theta$ and the points $p,p^\theta, p^\perp\cap\zeta$ and $(p^\theta)^\perp\cap\zeta$ define a fixed symp containing $L$. If $p$ and $\zeta$ are close, then so are $p^\theta$ and $\zeta$. If $p^\perp\cap\zeta=(p^\theta)^\perp\cap\zeta$, then the line $L$ is contained in the unique $6$-space containing $p^\perp\cap\zeta$ and hence intersects $\zeta$ in a fixed point, contradicting \cref{nofixedpoints}.   Hence we can select a point $x\in(p^\perp\cap\zeta)\setminus(p^\theta)^\perp$. The symp through $p^\theta$ and $x$ contains $p$ and $x^\theta$ and is hence fixed. 

Dualizing the situation to $\Delta^*=\mathsf{E_{7,1}}(\K)$, we find, using \cref{nottoadjacent}, that $\theta$ induces in the symp $\xi_L$ corresponding to $L$ a kangaroo fixing points, lines and planes (use \cref{noeven} to see that no other subspaces are fixed). \cref{residue2} shows that this corresponds to a polar space $\mathsf{B_{3,1}}(\K,\LL)$, for some quadratic extension $\LL$ of $\K$. (We use the same notation for $\LL$ as before since they will turn out to be the same in the next paragraph.)

%3-space $\Sigma$ in $\zeta$. Then Lemma~\ref{nottoadjacent} implies that all symps, and hence also all 5-spaces (but neither the $5'$-spaces nor the 6-spaces!) through $\Sigma$ are fixed, and these form the structure of a projective plane. 

We have shown, viewing the fixed symps in $\Delta$ as points, that $\mathcal{F}$ is a thick full subgeometry of type $\mathsf{F_{4}}$ of $\Delta^*$ with irreducible rank 3 residues isomorphic to $\mathsf{C_{3,1}}(\LL,\K)$ and $\mathsf{B_{3,1}}(\K,\LL)$ ($\LL$ still depends on the given residue). Hence all irreducible spherical residues of rank 3 are buildings and so $\cF$ is quotient of a building of type $\mathsf{F_4}$. However, it is easily verified that $\cF$ satisfies the conditions of Proposition~9 of \cite{Tits:83}. Hence $\cF$ defines a building $\Gamma$ of type $\mathsf{F_4}$, obviously isomorphic to $\mathsf{F_4}(\K,\LL)$, for some quadratic extension $\LL$ of $\K$, when the fixed points in $\Delta^*$ are viewed as the type $1$ elements of $\Gamma$. 
%To that aim, let $U$  be a fixed 5-space in $\zeta$. Let $\zeta_1$ be a (fixed) symp containing $U$ and different from $\zeta$. Let $U_1$ be a fixed 5-space in $\zeta_1$ disjoint from $U$. Let $\zeta_2$ be a (fixed) symp containing $U_1$ and different from $\zeta_1$. Let $U_2$ be a fixed symp in $\zeta_2$ disjoint from $U_1$ and let $\zeta_3$ be a symp containing $U_2$ and different from $\zeta_2$. Finally, let $U_3$ be a fixed 5-space in $\zeta_3$ disjoint from $U_2$. Then $\zeta$ and $\zeta_3$ are opposite symps and $U$ and $U_3$ are opposite 5-spaces. It is now straight forward to pick opposite fixed 3-space $\Sigma\subseteq U$ and $\Sigma_3\subseteq U_3$, and opposite fixed lines $L\subseteq\Sigma$ and $L_3\subseteq\Sigma_3$. Then we obtain opposite fixed flags $\{\zeta,U,\Sigma,L\}$ and $\{\zeta_3,U_3,\Sigma_3,L_3\}$ of the wanted type. Hence the fixed point structure is a building and the proposition is proved.   
\end{proof}

%\begin{remark}
%\cref{nottoadjacent} shows that in $\mathsf{E_{7,1}}(\K)$ a domestic collineation with opposition diagram $\mathsf{E_{7;3}}$ and fixing no chamber induces in each fixed symp a kangaroo, which, in view of \cref{thechar}, pointwise fixes an ideal subspace of corank $2$, which is a polar space of rank $3$.
%\end{remark}

\subsection{An alternative characterization of collineations with opposition diagram~$\mathsf{E_{7;3}}$}
We now show that the necessary conditions for a collineation to have opposition diagram $\mathsf{E_{7;3}}$ proved in Lemmas~\ref{nofixedpoints} and~\ref{notsymplectic}  are also sufficient. This can be considered an analogue of the kangaroo collineation classification in buildings of type $\mathsf{E_6}$, or various alternative characterizations of domestic dualities and trialities in buildings of type $\mathsf{A}_n$, $\mathsf{D_4}$ and $\mathsf{E_6}$.

So, henceforth, let $\theta$ be a collineation of $\mathsf{E_{7,7}}(\K)$ mapping no point to a symplectic one, fixing no point (these two requirements are equivalent to saying that $\theta$ is a $\{0,2\}$-kangaroo), and mapping at least one point to a collinear one (clearly, due to the classification in \cite{Tits:66} there exist anisotropic collineations, that is, collineations mapping every point to an opposite). 

We begin with the analogue of \cref{colllinefixed}, but the arguments are inspired by the proof of Proposition~3.1 of \cite{PVMclass}.

\begin{prop}\label{colllinefixed2}
For every point $p$ in $\Delta$ with $p^\theta \perp p$, the line $\<p,p^\theta\>$ is fixed under $\theta$. 
\end{prop}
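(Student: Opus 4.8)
The plan is to argue by contradiction and to imitate the proof of \cref{colllinefixed}, but with its single appeal to the opposition diagram (through \cref{notspecial}) replaced by an argument resting only on the two standing hypotheses, namely that $\theta$ fixes no point and maps no point to a symplectic one. Suppose then that $L:=\<p,p^\theta\>$ is not fixed, so that $L^\theta\neq L$. Since $\theta$ is an isometry of the collinearity graph and $p\perp p^\theta$, also $p^\theta\perp p^{\theta^2}$, whence both $L$ and $L^\theta=\<p^\theta,p^{\theta^2}\>$ pass through $p^\theta$ and are two \emph{distinct} lines of $\Delta$ through that point.

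I would now reproduce the residue construction of \cref{colllinefixed} essentially verbatim. Working in $\Res_\Delta(p^\theta)\cong\mathsf{E_{6,1}}(\K)$, in which the lines of $\Delta$ through $p^\theta$ are the points and the symps of $\Delta$ through $p^\theta$ are the symps, the dual of \cref{closefar} yields a symp $\xi^\theta$ of $\Delta$ through $p^\theta$ that is locally opposite $L^\theta$ and locally close to $L$ at $p^\theta$; pulling back by $\theta^{-1}$ gives a symp $\xi$ through $p$ locally opposite $L$ at $p$. Exactly as in \cref{colllinefixed} one verifies that $\{\xi,\xi^\theta\}$ is neither opposite (since $p\in\xi$ while $p^\perp$ meets $\xi^\theta$ in a maximal singular subspace) nor non-disjoint (a common point collinear to $p^\theta$ would force $L$ and a line of $\xi$ to be collinear points of $\Res_\Delta(p^\theta)$, against the choice of $\xi$). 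Hence $\{\xi,\xi^\theta\}$ is a \emph{special} pair, and none of this uses the opposition diagram.

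It then remains to show that a special pair cannot occur under the present hypotheses; this is the genuinely new content, the analogue from weaker assumptions of \cref{notspecial}. By \cref{factE77symp}$(iv)$ there is a connecting symp $\xi''$ with $\xi''\cap\xi=U$ and $\xi''\cap\xi^\theta=U'$ two $5$-spaces opposite in $\xi''$, every point of $U$ being close to $\xi^\theta$. For $x\in U$, \cref{factE77}$(ii)$ identifies the points of $\xi^\theta$ symplectic to $x$ as precisely those outside the maximal singular subspace $x^\perp\cap\xi^\theta$; since $x^\theta\in\xi^\theta$ and $x$ may not be mapped to a symplectic point, we conclude $x^\theta\in x^\perp\cap\xi^\theta$. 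Thus $\theta$ maps \emph{every} point of the $5$-space $U$ to a collinear point, lying in the disjoint $5$-space $U^\theta\subseteq\xi^\theta$.

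The contradiction would then be extracted from the following mechanism, which I would isolate and which also drives the rest of the section: if a symp $\eta$ of $\Delta$ is fixed by $\theta$, then $\theta|_\eta$ is a fixed-point-free collineation of $\eta\cong\mathsf{D_{6,1}}(\K)$ that is point-domestic — a point of $\eta$ sent to an $\eta$-opposite point would be a point of $\Delta$ sent to a symplectic point — whence, by Proposition~3.1 of \cite{PVMclass}, its fixed lines form a spread, and the (fixed) spread line through any $z\in\eta$ contains $z^\theta$, so that $\<z,z^\theta\>$ is fixed. The plan is to convert the collinear-image behaviour on $U$ into a \emph{fixed} symp through $p$: tracking $\theta$ on the maximal singular subspaces through $U$ and on the far points of $\xi$ forces $\theta$ to stabilise a symp meeting $\xi$, and applying the spread mechanism inside it produces a fixed line through $p$, which must be $L$ — contradicting $L^\theta\neq L$. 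I expect this conversion, that is, the bookkeeping of singular subspaces which excludes the special (and, en route, the adjacent) mutual positions without recourse to the opposition diagram, to be the main obstacle.
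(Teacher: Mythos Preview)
Your proposal has a genuine gap. The first two paragraphs are fine and indeed recover the special pair $\{\xi,\xi^\theta\}$ exactly as in \cref{colllinefixed}, and your observation that every point of $U=\xi''\cap\xi$ is sent to a collinear point of $\xi^\theta$ is correct. But the final paragraph is a plan, not a proof: you never establish the existence of a fixed symp through~$p$. You write that ``tracking $\theta$ on the maximal singular subspaces through $U$ and on the far points of $\xi$ forces $\theta$ to stabilise a symp meeting $\xi$'', but no such tracking is carried out, and it is not clear how it would go (note that $p\in\xi$ has no reason to lie in $U$, so the collinear-image behaviour on $U$ says nothing directly about $p$). You yourself flag this conversion as ``the main obstacle'', and indeed it is: without it the argument does not close.

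The paper bypasses all of this symp machinery and gives a short direct argument in the collinearity graph, modelled on Proposition~3.1 of \cite{PVMclass} rather than on \cref{colllinefixed}. Assuming $L=\<p,p^\theta\>$ is not fixed, one first shows that $p,p^\theta,p^{\theta^2}$ span a plane~$\pi$: any $x\in L\setminus\{p,p^\theta\}$ has $x^\theta\in\<p^\theta,p^{\theta^2}\>$, and since $x,x^\theta$ are both collinear to $p^\theta$ while $\theta$ has no fixed points and sends no point to a symplectic one, $x\perp x^\theta$, whence $x\perp\<p^\theta,p^{\theta^2}\>$. One then chooses a plane $\alpha\supseteq L$ not coplanar with~$\pi$, picks $q\in\alpha\setminus\pi$ (so $q\not\perp p^{\theta^2}$) and $r\in\<q,p^\theta\>\setminus\{q,p^\theta\}$; the same mechanism gives $q^\theta,r^\theta\perp\<q,p^\theta\>$, so every point of $\<q^\theta,r^\theta\>\ni p^{\theta^2}$ is collinear to every point of $\<q,p^\theta\>\ni q$, a contradiction. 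This uses only the two standing hypotheses and no symp-level analysis at all; your route via special pairs and spreads is much heavier and, as it stands, incomplete.
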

\begin{proof}
Suppose for a contradiction that there exists a point $p$ with $p^\theta\perp p$ and $\<p,p^\theta\>$ not fixed under $\theta$.
Pick $x\in\<p,p^\theta\>\setminus\{p,p^\theta\}$. Then $x^\theta\in\<p^\theta,p^{\theta^2}\>\setminus\{p^\theta,p^{\theta^2}\}$.
Because $p^\theta$ is collinear with both $x$ and $x^\theta$, $x$ is collinear to $x^\theta$ and thus to all points of $\<p^\theta,p^{\theta^2}\>$. This implies that $p,p^\theta,p^{\theta^2}$ are contained in a plane $\pi$ of $\Delta$, which they span. Let $\alpha$ be a plane containing $\<p,p^\theta\>$ and so that $\alpha$ and $\pi$ are not contained in a common singular subspace of $\Delta$. Pick $q\in\alpha\setminus\pi$ and $r\in\<q,p^\theta\>\setminus\{q,p^\theta\}$ and note that $q$ is not collinear to $p^{\theta^2}$. Since $r\perp p$ (both are contained in $\alpha$), we have $r^\theta\perp p^\theta$ and so $r\perp r^\theta$. So $r^\theta$ is collinear to all points of $\<r,p^\theta\>$. Likewise, $q^\theta$ is collinear to all points of $\<q,p^\theta\>=\<r,p^\theta\>$. Hence all points of $\<q^\theta,r^\theta\>$ (in particular, $p^{\theta^2}$) are collinear to all points of $\<q,p^\theta\>$ (in particular, $q$). This contradiction concludes the proof.
\end{proof}

Since at least one point is mapped to a collinear point, there is at least one fixed line.

\begin{lemma}\label{noeven2}
The collineation $\theta$ fixes no singular spaces of even dimension in $\Delta$.\label{noevenfix}
\end{lemma}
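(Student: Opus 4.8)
The plan is to reproduce, in this more restrictive setting, the dimension-climbing argument already used for Lemma~\ref{noeven}, with Proposition~\ref{colllinefixed2} playing the role that Lemma~\ref{colllinefixed} played there, and with the standing hypothesis that $\theta$ fixes no point replacing the appeal to Lemma~\ref{nofixedpoints}. So I would argue by contradiction: assume $W$ is a fixed singular subspace of even dimension $d>0$, and derive a fixed point. The engine of the argument, which I would record first, is a pair of elementary observations. First, for every point $x\in W$, since $W$ is fixed and singular we have $x^\theta\in W$ and hence $x\perp x^\theta$; as $\theta$ fixes no point we have $x\neq x^\theta$, so Proposition~\ref{colllinefixed2} applies and the line $\<x,x^\theta\>$ is fixed and, being spanned by two points of $W$, is contained in $W$. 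Thus every point of $W$ lies on a fixed line of $W$. Second, the intersection of two fixed subspaces is again fixed, so if two fixed subspaces of $W$ meet in exactly a point, that point is fixed by $\theta$, which is impossible.

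With these two facts I would climb the dimensions exactly as in Lemma~\ref{noeven}. Starting from a point $p_0\in W$, the fixed line $p_0p_0^\theta$ forces $d\geq 2$. Picking $p_1\in W\setminus p_0p_0^\theta$ gives a second fixed line $p_1p_1^\theta$ distinct from the first; if $d=2$ these two distinct fixed lines of the plane $W$ would meet in a fixed point, a contradiction, so $d\geq 4$, and since two fixed lines cannot share a point they are skew and span a fixed $3$-space $S=\<p_0p_0^\theta,p_1p_1^\theta\>$. Picking $p_2\in W\setminus S$ produces a fixed line $p_2p_2^\theta$; if $d=4$ then $S$ is a hyperplane of $W$, and since $p_2\notin S$ the fixed line meets $S$ in exactly one point, which is then fixed, a contradiction. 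As the maximal singular subspaces of $\mathsf{E_{7,7}}(\K)$ have dimension at most $6$, we are left with $d=6$; then $p_2p_2^\theta$ must be disjoint from $S$ (otherwise a fixed point again), so together they span a fixed $5$-space $S'$, a hyperplane of $W$, and for any $p_3\in W\setminus S'$ the fixed line $p_3p_3^\theta$ meets $S'$ in exactly one point, which is fixed. This final contradiction finishes the proof.

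The argument is essentially mechanical once Proposition~\ref{colllinefixed2} is in hand, so I do not expect a serious obstacle; the content is entirely in the repeated use of the fact that two fixed subspaces cannot share a point. The one step I would double-check is the termination of the climb: it is exactly the bound $d\leq 6$ on the dimension of singular subspaces of $\Delta$ that, combined with the even-dimensionality of $W$, forces the hyperplane-and-line intersection at the stage $d=6$ rather than allowing the construction to continue. I would also take care, at each even stage, to verify that the freshly produced fixed line genuinely lies outside the previously built hyperplane (which holds by the choice of $p_i\notin S$, respectively $p_i\notin S'$), so that the guaranteed intersection is a single point and hence a fixed point.
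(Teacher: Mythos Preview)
Your proposal is correct and follows essentially the same approach as the paper: the paper's proof simply notes that, since every point of a fixed singular subspace is mapped to a collinear point and there are no fixed points, one can invoke Proposition~\ref{colllinefixed2} and then copy verbatim the dimension-climbing argument of Lemma~\ref{noeven}. Your write-up spells out this climb explicitly and adds the careful bookkeeping (disjointness of the successive fixed lines, and the hyperplane intersection at each even stage), but the underlying argument is identical.
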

\begin{proof}
If a singular space $U$ of $\Delta$ is fixed by $\theta$, then every point of this space is mapped to a collinear point. As there are no fixed points, we can, in view of \cref{colllinefixed2}, copy the proof of \cref{noeven}.%this means that every point lies on precisely one fixed line, thus there exists a line spread of $U$. This is only possible if the dimension of $U$ is odd.
\end{proof}

\begin{lemma}\label{3to5}
There are $3$-spaces in $\Delta$ that are fixed by $\theta$. Every maximal $5$-space through a fixed $3$-space is fixed.
\end{lemma}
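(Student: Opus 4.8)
The plan is to prove the two assertions separately. For the existence of a fixed $3$-space I would start from a fixed line: since some point maps to a collinear one, Proposition~\ref{colllinefixed2} produces a fixed line $M$, and as $\theta$ has no fixed point every point of $M$ is moved to another point of $M$. Now choose any point $q$ collinear to all of $M$ with $q\notin M$ (such $q$ exist because $\Delta$ is thick and every line lies on planes). Then $q^\theta$ is collinear to all of $M^\theta=M$, so $q$ and $q^\theta$ are both collinear to $M$, hence at distance at most $2$ and in particular not opposite. As $\{q,q^\theta\}$ is not symplectic by hypothesis and $q\neq q^\theta$, the points $q$ and $q^\theta$ are collinear, and Proposition~\ref{colllinefixed2} shows $N:=\langle q,q^\theta\rangle$ is a fixed line. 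Both $q,q^\theta$ are collinear to all of $M$, so (using that $\Delta$ is strong) $N\perp M$; moreover $N\cap M=\emptyset$, for otherwise $\langle M,N\rangle$ would be a fixed plane, contradicting Lemma~\ref{noeven2}. Consequently $\langle M,N\rangle$ is a fixed singular $3$-space.

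For the second assertion, let $S$ be a fixed $3$-space and $W\supseteq S$ a maximal (type $3$) singular $5$-space; I must show $W^\theta=W$. First note that every $w\in W$ is collinear to all of $S$, so $w^\theta$ is collinear to all of $S^\theta=S$; thus $w$ and $w^\theta$ are both collinear to $S$, hence not opposite, not symplectic (hypothesis) and not equal, which forces $w\perp w^\theta$. By Proposition~\ref{colllinefixed2} the line $L_w:=\langle w,w^\theta\rangle$ is fixed, so $w^{\theta^2}\in L_w$. Arguing by contradiction, I would assume $W^\theta\neq W$ and pick $w\in W\setminus S$ with $w^\theta\notin W$. Since $\Delta$ is strong, $w^\theta$ — being collinear to the hyperplane $S$ of the $4$-space $T:=\langle S,w\rangle$ and to the point $w\in T\setminus S$ — is collinear to all of $T$, so $W':=\langle T,w^\theta\rangle$ is a singular $5$-space. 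As $L_w\subseteq W'$ and $L_w$ is fixed, $W'^\theta=\langle S,w^\theta,w^{\theta^2}\rangle\subseteq W'$, whence $W'$ is a fixed singular $5$-space meeting $W$ exactly in $T$.

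It remains to convert the existence of this auxiliary fixed $5$-space into $W=W^\theta$, and this is the step I expect to be the crux. Since $T$ is a $4$-space (a flag of type $\{2,3\}$) it lies in a unique type-$3$ maximal $5$-space, namely $W$; hence $W'$ is a $5'$-space. To finish I would pass to a symp $\eta$ containing both $W$ and $W^\theta$ (these share $S$, so such $\eta$ exists), show $\eta$ is fixed — using that the $\theta$-orbit of each $w\in W$ lies on the fixed line $L_w\subseteq\eta$, so that $W^{\theta^2}\subseteq\eta$ and therefore $\eta^\theta=\eta$ — and then invoke the structure of the induced collineation on $\eta\cong\mathsf{D_{6,1}}(\K)$. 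Inside a fixed symp $\theta$ sends every point to a collinear point, so $\theta_\eta$ is point-domestic without fixed points, and by Theorem~8 of \cite{PVMclass} (compare Lemma~\ref{D6fix}$(iii)$) its fixed structure is a polar space $\mathsf{C_{3,1}}(\LL,\K)$ in which every type-$3$ generator through a fixed $3$-space is fixed; applied to $W\supseteq S$ this yields $W^\theta=W$. The main obstacle is exactly this last reduction: one must keep track of the two oriflamme classes of generators of $\eta$ (the type-$3$ maximal $5$-spaces versus the $5'$-spaces) and verify that the auxiliary fixed space $W'$ lands in the class complementary to $W$, so that $W$ itself — and not merely its oriflamme partner — is forced to be fixed.
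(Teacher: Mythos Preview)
Your first assertion (existence of a fixed $3$-space) matches the paper's argument essentially verbatim and is correct.

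For the second assertion you are one short step from a complete proof and then take an unnecessary detour. You correctly construct the fixed singular $5$-space $W'=\langle S,w,w^\theta\rangle$ containing the $4$-space $T=\langle S,w\rangle$, and you correctly deduce from the uniqueness of the maximal $5$-space through $T$ that (under your contradiction hypothesis $w^\theta\notin W$) the space $W'$ is a $5'$-space, hence lies in some $6$-space~$V$. But this is already the contradiction: $V\neq V^\theta$ by Lemma~\ref{noeven2}, yet $V\cap V^\theta\supseteq W'$, so $V\cap V^\theta=W'$ is $5$-dimensional; passing to $\Res_\Delta(x)$ for any $x\in W'$ turns $V,V^\theta$ into two $5$-spaces of $\mathsf{E_{6,1}}(\K)$ meeting in a $4$-space, which is impossible by Fact~\ref{factE6}$(vi)$. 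Done.

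This is exactly the paper's argument, phrased directly rather than by contradiction: for an arbitrary $4$-space $D\supseteq S$ the fixed $5$-space $\langle S,p,p^\theta\rangle$ (with $p\in D\setminus S$) is shown to be \emph{maximal} via the same residue obstruction, and then uniqueness of the maximal $5$-space through $D$ identifies it with the given~$W$.

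Your proposed route through a fixed symp $\eta\supseteq W\cup W^\theta$ is not wrong---such an $\eta$ does exist and is unique (in $\Res_\Delta(S)$ the types $1$ and $3$ form an $\mathsf{A_2}$ geometry), and your argument for $\eta^\theta=\eta$ via $W^{\theta^2}\subseteq\eta$ is sound---but it is much heavier machinery, and the oriflamme bookkeeping you flag as the main obstacle is precisely what the one-line residue argument above sidesteps.
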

\begin{proof}
Let $L$ be a fixed line and $p$ a point that is collinear to all points on $L$. Then $p^\theta$ is also collinear to all points on $L$ and so $p$ has to be collinear to $p^\theta$, as they are not opposite. Thus $M:=\langle p,p^\theta \rangle$ is a fixed line, that can not intersect $L$ because there are no fixed points. Obviously $M$ and $L$ span a fixed singular space of dimension 3.

Let $S$ be a fixed $3$-space and $D$ an arbitrary $4$-space through $S$, $p$ a point in $D\setminus S$. The point $p$ is collinear to the fixed $3$-space $S$, thus $p\perp p^\theta$ and $p^\theta \perp S$. Thus $S$, $p$ and $p^\theta$ span a $5$-space $U$ that has to be fixed because $S$ and $\langle p,p^\theta\rangle$ are fixed. If $U$ would lie in a $6$-space $W$, then $W\cap W^\theta = U$, because a $6$-space can not be fixed by \cref{noeven2}. But considering $\Res_\Delta(x)$ for a point $x\in U$, this contradicts \cref{factE6}$(vi)$. We conclude that $U$ is a maximal singular subspace.

There is exactly one maximal $5$-space through each $4$-space in $\Delta$, so, by the arbitrariness of $D$, we have proved that all maximal $5$-spaces through a fixed $3$-space are fixed.
\end{proof}

\begin{lemma}
Every symp through a fixed $3$-space, or through a fixed $5$-space in $\Delta$ is fixed.
\end{lemma}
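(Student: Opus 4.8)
The plan is to reduce the five-space case to the three-space case, and to settle the latter by a residue computation together with \cref{3to5}.

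First I would treat a fixed $3$-space $S$. Since $\theta$ fixes $S$, it induces a type-preserving automorphism $\theta_S$ of the residue $\Res_\Delta(S)$, which is a building of type $\mathsf{A}_2\times\mathsf{A}_1\times\mathsf{A}_3$ (delete the node of type $4$ from the $\mathsf{E}_7$ diagram). As the three factors are pairwise non-isomorphic, $\theta_S$ preserves each of them; I focus on the $\mathsf{A}_2$-factor carried by the nodes of types $1$ and $3$, which is a projective plane $\PG(2,\K)$ whose points are the symps through $S$ and whose lines are the maximal (type $3$) $5$-spaces through $S$, incidence being containment. By \cref{3to5}, \emph{every} such $5$-space through $S$ is fixed, so $\theta_S$ fixes every line of this $\PG(2,\K)$; a type-preserving collineation of a projective plane fixing all lines is the identity, whence $\theta_S$ fixes every point of the plane, i.e.\ every symp through $S$. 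This proves the three-space case.

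For a fixed $5$-space $U$ (a maximal singular subspace, necessarily of type $3$), the key step is to exhibit a fixed $3$-space contained in $U$, after which the previous paragraph applies because every symp through $U$ contains it. For each point $u\in U$ one has $u\neq u^\theta\in U$ (there are no fixed points and $U^\theta=U$), so $u\perp u^\theta$ and, by \cref{colllinefixed2}, the line $L_u:=\langle u,u^\theta\rangle\subseteq U$ is fixed. I claim two of these fixed lines are skew. Indeed, if the family $\{L_u\}$ were pairwise intersecting, then by the standard dichotomy for pairwise intersecting lines it would be concurrent or coplanar; concurrency in a point $p_0$ forces every $L_u$ to contain both $p_0$ and $p_0^\theta$, and hence (as $p_0\neq p_0^\theta$ by the absence of fixed points) to equal the single line $\langle p_0,p_0^\theta\rangle$, while coplanarity forces every $u$ to lie in one fixed plane---both conclusions contradict $\dim U=5$. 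Choosing skew lines $L_{u_1},L_{u_2}$, the span $S:=\langle L_{u_1},L_{u_2}\rangle$ is a fixed $3$-space inside $U$, and the three-space case finishes the proof.

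The only delicate point is this reduction: a fixed $5$-space lies in a whole pencil $\PG(1,\K)$ of symps, so fixedness of $U$ alone is far from enough to fix them individually, and one genuinely needs a fixed $3$-space inside $U$ in order to fall back on the richer $\PG(2,\K)$ of symps through a $3$-space, where \cref{3to5} pins down all the lines at once. The skew-fixed-lines argument is what makes this work, and it is robust to whether $\theta$ acts linearly or semilinearly on $U$; the remaining checks are the harmless bookkeeping that the $\mathsf{A}_2$-factor of $\Res_\Delta(S)$ really has the symps through $S$ as points and the type-$3$ $5$-spaces through $S$ as lines.
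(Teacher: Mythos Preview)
Your proof is correct and follows essentially the same approach as the paper. The paper's argument is terser: for the $3$-space case it simply observes that each symp through $S$ is determined by any two maximal $5$-spaces sharing $S$ (all of which are fixed by \cref{3to5}), and for the $5$-space case it asserts without detail that a fixed $5$-space contains a fixed $3$-space spanned by two fixed lines from \cref{colllinefixed2} applied to ``two suitable points''. Your residue formulation of the first part and your skew-lines dichotomy for the second part supply exactly the details the paper suppresses.
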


\begin{proof}
Since every fixed $5$-space contains a fixed $3$-space spanned by two fixed lines obtained by applying \cref{colllinefixed2} to two suitable points of the $5$-space, it suffices to show that each symp through a fixed $3$-space $U$ is fixed itself. But this follows from Lemma~\ref{3to5} as each symp through $U$ is determined by any two maximal $5$-spaces sharing $U$.
% 
%Let $U$ be a fixed $5$-space in $\Delta$ and $\xi$ a symp through $U$. Let $p$ be an arbitrary point in $\xi\setminus U$. We have to prove that $p^\theta \in \xi$. Suppose $p^\theta\notin\xi$.
%
%The point $p$ is collinear to a $4$-space $D$ in $U$, thus $p^\theta$ is collinear to $D^\theta$. Because $U$ is fixed, $D^\theta$ also lies in $U$. It is clear that $p \perp p^\theta$. The point $p^\theta$ is collinear with $D^\theta$ and $p$ in $\xi$, thus $D^\theta$ and $p$ have to form a $5$-space, as $p^\theta$ has to be collinear to a $5$-space in $\xi$.
%
%From $p \perp D^\theta$ and $D^\theta \subset U$ follows $D=D^\theta$, which is a contradiction with lemma \ref{noevenfix} as $D$ would be a fixed $4$-space. Thus $p^\theta \in \xi$ and $\xi$ is fixed.
\end{proof}

In view of Remark~\ref{remark02}, we have shown:

\begin{theo}
Let $\theta$ be an automorphism of the building $\mathsf{E_7}(\K)$. Then the following are equivalent.
\begin{compactenum}[$(1)$]
\item $\theta$ does not fix any chamber and has opposition diagram $\mathsf{E_{7;3}}$.
\item The fixed point structure of $\theta$ induced in $\mathsf{E_{7,1}}(\K)$ is a full subgeometry isomorphic to $\mathsf{F_{4,1}}(\K,\LL)$, for some quadratic extension $\LL$ of $\K$, isometrically embedded.
\item The collineation induced in $\mathsf{E_{7,7}}(\K)$ does not map any point to a point at even distance and maps at least one point to a collinear one.  
\end{compactenum}
\end{theo}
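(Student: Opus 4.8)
The plan is to prove the equivalence of the three statements by establishing the cycle $(1)\Rightarrow(3)\Rightarrow(2)\Rightarrow(1)$, leaning heavily on the two characterisation streams already developed in this section. The core observation is that statements $(1)$ and $(3)$ have essentially already been connected: Lemmas~\ref{nofixedpoints} and~\ref{notsymplectic} show that a collineation with opposition diagram $\mathsf{E_{7;3}}$ fixing no chamber has no fixed points and maps no point to a symplectic one, while the existence of a non-domestic symp (forced by the opposition diagram) together with Theorem~6.1 of~\cite{TTVM} produces a point mapped to a collinear one. This is exactly the hypothesis package of the alternative characterisation, namely that $\theta$ is a $\{0,2\}$-kangaroo mapping at least one point to a collinear one. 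Hence $(1)\Rightarrow(3)$ is immediate from these lemmas, and I would state it in one line.

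For $(3)\Rightarrow(2)$, I would invoke the entire chain Proposition~\ref{colllinefixed2}, Lemma~\ref{noeven2}, Lemma~\ref{3to5} and the final unnamed lemma, which together show that under hypothesis $(3)$: every point mapped to a collinear point spans a fixed line; no even-dimensional singular subspace is fixed; fixed $3$-spaces exist and every maximal $5$-space through a fixed $3$-space is fixed; and every symp through a fixed $3$-space or $5$-space is fixed. This is precisely the list of hypotheses isolated in Remark~\ref{remark02} (no fixed even-dimensional subspaces, existence of fixed lines, all maximal $5$-spaces and symps through a fixed $3$-space fixed, point-domesticity in fixed symps). The one ingredient I must still verify is point-domesticity of $\theta$ restricted to each fixed symp; this should follow from the argument of Lemma~\ref{sympno2} (or its hypothesis-free analogue), since the obstruction there — a point mapped to distance $2$ inside a fixed symp — contradicts the $\{0,2\}$-kangaroo assumption directly. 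Once these conditions hold, the proof of Proposition~\ref{thechar} applies verbatim (as Remark~\ref{remark02} asserts), yielding that the fixed structure is a full subgeometry of type $\mathsf{F_{4,1}}(\K,\LL)$ with $\LL$ a quadratic extension, and the isometric embedding follows from the para--para, point--symp and symp--symp relations recorded in Facts~\ref{factE77}--\ref{factE77symp} combined with the identification of fixed symps with $\mathsf{B_{3,1}}(\K,\LL)$ residues.

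Finally, $(2)\Rightarrow(1)$ is supplied by the examples subsection: Proposition~\ref{E73isdom} states exactly that a nontrivial member of the group $G$ fixing such a full subgeometry $\Gamma^*\cong\mathsf{F_{4,1}}(\K,\LL)$ is domestic with opposition diagram $\mathsf{E_{7;3}}$, and the accompanying analysis (Lemma~\ref{nofixedpoints}, established in the characterisation, shows there are no fixed points, hence no fixed chamber). I would also note that the converse direction of the descent result quoted at the start of the section guarantees that any isometrically embedded full $\mathsf{F_{4,1}}(\K,\LL)$ arises from the group $G$, so that $\theta$ is indeed forced to lie in $G$ and hence be domestic of the stated diagram. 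To close the loop cleanly I would remark that the ``no fixed chamber'' clause in $(1)$ is automatic from the absence of fixed points proved in the $(3)\Rightarrow(2)$ direction, ensuring the three conditions are genuinely equivalent rather than merely implied in a line.

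The main obstacle I anticipate is bookkeeping rather than a deep new idea: I must check that the hypothesis-free analogue of the point-domesticity statement (Lemma~\ref{sympno2}) is genuinely available under assumption $(3)$ alone, without circularly invoking the opposition diagram $\mathsf{E_{7;3}}$. The delicate point is that Lemmas~\ref{oppsymp} and~\ref{sympno2} in the characterisation were proved assuming the opposition diagram, whereas in the $(3)\Rightarrow(2)$ step I only have the $\{0,2\}$-kangaroo property. I expect this to be resolvable because the key contradictions in those lemmas (a symplectically-mapped point, or a special pair $\{\xi,\xi^\theta\}$) are driven by \emph{local} opposition data inside residues of type $\mathsf{E_{6,1}}$ and $\mathsf{D_5}$, which the $\{0,2\}$-kangaroo condition already controls via Proposition~\ref{colllinefixed2}; but I would need to reprove Lemma~\ref{notspecial} (no special pair $\{\xi,\xi^\theta\}$) purely from hypothesis $(3)$, most likely by the same $W$-versus-$W^\theta$ opposition argument using Facts~\ref{factE77} and~\ref{factE77symp} together with the fact that no even-dimensional singular subspace is fixed.
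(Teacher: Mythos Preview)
Your overall strategy matches the paper's, but you misread Remark~\ref{remark02}. That remark sits at the end of the examples subsection and refers to \emph{those} proofs --- the ones establishing that an automorphism satisfying the listed conditions has opposition diagram $\mathsf{E_{7;3}}$ --- not to the proof of Proposition~\ref{thechar}. So what the alternative-characterisation subsection together with Remark~\ref{remark02} actually delivers is $(3)\Rightarrow(1)$: the four conditions you listed (no fixed even-dimensional subspaces, fixed lines exist, maximal $5$-spaces and symps through fixed $3$-spaces are fixed, point-domesticity on fixed symps --- the last being immediate from the $\{0,2\}$-kangaroo hypothesis, as you correctly observe) feed into the examples-subsection machinery to force the opposition diagram $\mathsf{E_{7;3}}$, and Lemma~\ref{noeven2} (no fixed points) supplies the ``no fixed chamber'' clause. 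Then $(1)\Rightarrow(2)$ follows via Proposition~\ref{thechar}, which was proved under hypothesis $(1)$ and may now be invoked as written. Routing through $(1)$ in this way dissolves your entire final paragraph of worry: you never need hypothesis-free analogues of Lemmas~\ref{notspecial} or~\ref{sympno2}, since once $(1)$ is established the whole characterisation subsection applies verbatim.

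A second, smaller issue: your appeal to Lemma~\ref{nofixedpoints} in the $(2)\Rightarrow(1)$ step is circular, since that lemma is proved under the standing hypothesis ``opposition diagram $\mathsf{E_{7;3}}$ and no fixed chamber'', precisely what you are trying to conclude. The clean argument is direct from $(2)$: the fixed structure is exactly the $\mathsf{F_{4,1}}(\K,\LL)$ subgeometry, which contains no type-$7$ vertex (equivalently, by property~(iv) of the group $G$ quoted from~\cite{DSV} at the start of the section, $\theta$ lies in $G$ and hence has fix diagram $\mathsf{E_{7;4}}$), so no chamber can be fixed.
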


\section{Domestic collineations of $\sE_7(\K)$ fixing a chamber}\label{sec:chamberfixing}

In this section we give an explicit classification of all domestic automorphisms of a large $\sE_7$ building fixing a chamber. The simpler cases of opposition diagrams $\sE_{7;1}$ and $\sE_{7;2}$ are dealt with in~\cite{PVMexc}, and so here we focus on the remaining more involved cases $\sE_{7;3}$ and $\sE_{7;4}$. There is a growing complexity of examples as one moves through the diagrams $\sE_{7;k}$ with $k=0,1,2,3,4$ (for both chamber-fixing, and non-chamber fixing examples). 

\subsection{Preliminary results}

We begin with the following general setup. Let $G_0$ be an adjoint Chevalley group of arbitrary irreducible (spherical) type over a commutative field $\K$. We adopt the notation and conventions outlined in \cite[Section~1.1]{PVMexc}, and so in particular $\Phi$ is the root system of $G_0$, with simple roots $\alpha_1,\ldots,\alpha_n$ and $(W,S)$ is the associated Coxeter system. The fundamental coweights of $\Phi$ are denoted $\omega_1,\ldots,\omega_n$, and the coweight lattice of $\Phi$ is $P=\ZZ\omega_1+\cdots+\ZZ\omega_n$. The highest root of $\Phi$ is $\varphi$. The \textit{polar type} of $\Phi$ is the subset $\wp\subseteq\{1,2,\ldots,n\}$ given by 
$$
\wp=\{1\leq i\leq n\mid \langle\alpha_i,\varphi\rangle\neq0\}.
$$
Let $G=G_{\Phi}(\K)$ be the subgroup of $\mathrm{Aut}(G_0)$ generated by the inner automorphisms of $G_0$ and the diagonal automorphisms, as in \cite{Hum:69,St:60}, and let $x_{\alpha}(a)$, $s_{\alpha}(t)$, and $h_{\lambda}(t)$ be the elements of $G$ described in \cite[Section~1.1]{PVMexc} (with $\alpha\in\Phi$, $\lambda\in P$, $a\in\K$, and $t\in\K^{\times}$). In particular, we have the relation
\begin{align}\label{eq:fold}
s_{\alpha}(t)=x_{\alpha}(t)x_{-\alpha}(-t^{-1})x_{\alpha}(t)
\end{align}
for $\alpha\in\Phi$ and $t\in\K^{\times}$.

For each $\alpha\in\Phi$ let $U_{\alpha}$ be the subgroup of $G$ generated by the elements $x_{\alpha}(a)$ with $a\in \K$. For $A\subseteq \Phi$ let $U_{A}$ be the subgroup generated by the groups $U_{\alpha}$, $\alpha\in A$, and let $U^+=U_{\Phi^+}$. Let $H$ be the subgroup generated by the diagonal elements $h_{\lambda}(t)$ with $\lambda\in P$ and $t\in\K^{\times}$. Let $B$ be the subgroup of $G$ generated by $U^+$ and $H$.

Let $\Delta=\Delta_{\Phi}(\K)$ be the standard split spherical building associated to $G$. Thus $\Delta$ has chamber set $G/B$ and Weyl distance function $\delta(gB,hB)=w$ if and only if $g^{-1}h\in BwB$.

Suppose that $\sX$ is a polar closed type preserving admissible Dynkin diagram (see \cite[\S3]{PVMexc} or \cite[\S1.4]{PVMclass} for the definition), let $J$ be the set of encircled nodes, and let $\varphi_1,\ldots,\varphi_N$ be the associated highest roots. The relevant examples for this paper are $\sX=\sE_{7;3}$ (in which case $J=\{1,6,7\}$ and $\varphi_1=\varphi_{\sE_7}$, $\varphi_2=\varphi_{\sD_6}$, and $\varphi_3=\alpha_7$) and $\sX=\sE_{7;4}$ (in which case $J=\{1,3,4,6\}$ and $\varphi_1=\varphi_{\sE_7}$, $\varphi_2=\varphi_{\sD_6}$, $\varphi_3=\varphi_{\sD_4}$, and $\varphi_4=\alpha_3$). Define
\begin{align}\label{eq:Psi}
\Psi_J=\{\beta\in\Phi\mid \langle\alpha,\beta\rangle=0\text{ for all $\alpha\in\Phi_{S\backslash J}$}\},
\end{align}
and let $\Psi_J^+=\Psi_J\cap \Phi^+$. Note that $\varphi_1,\ldots,\varphi_N\in\Psi_J$. 

The following theorem severely restricts the form of automorphisms of $\Delta$ with an opposition diagram~$\sX$. While we expect that a version of the theorem holds for non-simply laced buildings (and indeed for non-split buildings), we shall only require the theorem in the simply laced case, and hence we restrict to this simpler setting (in particular step 4 of the proof is simplified in this setting).

\begin{thm}\label{thm:partgeneral} Let $\Delta=\Delta_{\Phi}(\K)$ be the split spherical simply laced building associated to $G=G_{\Phi}(\K)$ with $|\K|>2$. Let $\theta$ be a type preserving automorphism of $\Delta$, and suppose that $\theta$ has polar closed opposition diagram $\sX$ with highest root sequence $\varphi_1,\ldots,\varphi_N$ and encircled nodes $J\subseteq S$. Then $\theta$ is linear (that is, $\theta$ does not involve a field automorphism), and $\theta$ is conjugate in $G$ to an element of the form
$$
x_{-\varphi_1}(1)\cdots x_{-\varphi_N}(1)uh
$$
where $u\in U_{\Psi_J}^+$ and $h\in H_J$, where $H_J$ is the subgroup of $H$ generated by the elements $h_{\omega_j}(t)$ with $j\in J$ and $t\in\K^{\times}$
\end{thm}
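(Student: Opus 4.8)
The strategy is to reduce the classification of automorphisms with a prescribed opposition diagram to an essentially normalised algebraic form by exploiting two facts: first, that the opposition diagram $\sX$ encodes precisely which types of flags are mapped to opposites, and second, that the highest-root sequence $\varphi_1,\ldots,\varphi_N$ identifies a canonical ``anisotropic kernel'' living in the subsystem $\Psi_J$. The plan is to carry out a sequence of conjugations inside $G$, each designed to simplify one aspect of $\theta$, until only the stated shape remains.

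First I would establish linearity. Since $|\K|>2$ and $\Delta$ is simply laced, a field automorphism component of $\theta$ would act nontrivially on the residues of panels, and one can argue (as in the $\sE_6$ analysis of \cite{PVMexc}) that the opposition behaviour forced by a polar closed diagram $\sX$ is incompatible with a nontrivial companion field automorphism; concretely, the set of chambers mapped to opposites would not match the combinatorial prediction of $\sX$. This gives that $\theta$ is linear, i.e.\ lies in $G_{\Phi}(\K)$ (extended by diagonal automorphisms), which is what allows us to write $\theta$ in terms of the $x_{\alpha}(a)$ and $h_{\lambda}(t)$ generators at all.

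Next, the core reduction. Because $\sX$ is polar closed with encircled set $J$, the diagram tells us that $\theta$ fixes (setwise) a standard parabolic residue of type $S\backslash J$ and acts on it in a controlled way, while the ``action transverse to'' this residue is governed by the roots in $\Psi_J$ (those orthogonal to all of $\Phi_{S\backslash J}$). The plan is to use the Bruhat decomposition together with the fact that $\theta$ fixes a chamber up to conjugacy to write $\theta$ as a product of a ``lowering'' part and a part in the Borel. The highest roots $\varphi_1,\ldots,\varphi_N$ are exactly the long roots of the irreducible components of $\Psi_J$, and I would show that after conjugating by a suitable element of $U_{\Psi_J}$ and of $H$, the negative-root content of $\theta$ can be forced into the specific elations $x_{-\varphi_1}(1)\cdots x_{-\varphi_N}(1)$, with each coefficient normalised to $1$ by a torus conjugation $h_{\omega_j}(t)$ (this is where one uses $\langle\alpha_i,\varphi_k\rangle$ and the relation \eqref{eq:fold}). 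The remaining positive part is then automatically in $U_{\Psi_J}^+$ and the diagonal part in $H_J$, because any component outside $\Psi_J$ or outside $H_J$ would either contradict the opposition diagram or could be absorbed by a further conjugation fixing the already-normalised $x_{-\varphi_k}(1)$ factors.

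The main obstacle I anticipate is the precise normalisation of coefficients and the verification that conjugations used to fix the $x_{-\varphi_k}(1)$ terms do not disturb one another: the roots $\varphi_1,\ldots,\varphi_N$ need not be mutually orthogonal in the nested sense (e.g.\ in $\sE_{7;4}$ one has $\varphi_{\sE_7},\varphi_{\sD_6},\varphi_{\sD_4},\alpha_3$, which form a chain of successively smaller subsystems), so one must commute elations and torus elements through each other carefully, tracking the structure constants $N_{\alpha\beta}$ and the Chevalley commutator relations. Handling this bookkeeping uniformly — rather than case-by-case for $\sE_{7;3}$ and $\sE_{7;4}$ — is the delicate point, and it is also the step that I would set up in enough generality (using only that $\sX$ is admissible and polar closed) so that it transfers to the future $\sE_8$ work, as the paper indicates. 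The actual commutator computations are routine given the explicit root data, so I would state them as a lemma and defer the arithmetic.
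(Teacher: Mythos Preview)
Your proposal has a genuine gap at the starting point, and this propagates through the rest of the sketch.

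You assume that ``$\theta$ fixes (setwise) a standard parabolic residue of type $S\backslash J$'' and invoke ``the fact that $\theta$ fixes a chamber up to conjugacy''. Neither is available: the theorem covers \emph{all} automorphisms with opposition diagram $\sX$, including those fixing no chamber (indeed the paper later splits into chamber-fixing and non-chamber-fixing cases). The actual entry point is different: since $|\K|>2$ the automorphism is capped, and by \cite[Theorem~5]{PVMclass} the maximal numerical displacement $\ell(s_{\varphi_1}\cdots s_{\varphi_N})=\ell(w_{S\backslash J}w_0)$ is attained on some chamber. Conjugating so that the base chamber $B$ is such a chamber, and using the stabiliser of $B$ to normalise within the $w$-sphere, one gets $\theta B=x_{\varphi_1}(1)\cdots x_{\varphi_N}(1)s_{\varphi_1}\cdots s_{\varphi_N}B$. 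The identity $s_{\alpha}(t)=x_{\alpha}(t)x_{-\alpha}(-t^{-1})x_{\alpha}(t)$ then yields $\theta=x_{-\varphi_1}(1)\cdots x_{-\varphi_N}(1)\,uhf$ with $u\in U^+$, $h\in H$, $f\in\Aut(\K)$, directly. The negative-root factors are not ``forced'' by a conjugation scheme; they fall out of the Bruhat normalisation for free.

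The substance of the proof is then restricting $u$, $h$, $f$, and the mechanism you propose (absorbing stray components by conjugation) is not what works. The paper's method is a \emph{displacement contradiction}: for each restriction in turn, if it fails one exhibits an explicit $g$ (built from $w_{S\backslash J}$, from $x_{-\alpha_j}(1)$ with $j\in S\backslash J$, from $h_{\omega_j}(t)$ with $j\in S\backslash J$, and finally from $x_{-\beta-\alpha_j}(z)$ for a minimal offending root $\beta$) such that $g^{-1}\theta g\in BwBf$ with $\ell(w)>\ell(s_{\varphi_1}\cdots s_{\varphi_N})$, contradicting the opposition diagram. Linearity ($f=\id$) is obtained by exactly this device, not by an abstract incompatibility argument. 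Finally, your worry that the $\varphi_i$ ``need not be mutually orthogonal'' is misplaced: for a polar closed admissible diagram the highest-root sequence is always pairwise perpendicular (this is used explicitly in the paper), so the commutation issues you anticipate do not arise; the genuinely delicate commutator bookkeeping is in Claim~4, where one must push root subgroups past $w_{S\backslash J}w_0$ and control signs of the resulting roots.
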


\begin{proof}
By \cite[Lemma~3.5]{PVMexc} we have $s_{\varphi_1}\cdots s_{\varphi_N}=w_{S\backslash J}w_0$. Since $\theta$ is capped (as $|\K|>2$) it follows from \cite[Theorem~5]{PVMclass} that there exists a chamber of $\Delta$ mapped to Weyl distance $s_{\varphi_1}\cdots s_{\varphi_N}$ by~$\theta$. Since $G$ acts strongly transitively on $\Delta$ we may (up to conjugation) assume that the chamber $B$ is mapped to Weyl distance $s_{\varphi_1}\cdots s_{\varphi_N}$, and hence $\theta B\in Bs_{\varphi_1}\cdots s_{\varphi_N}B$. Moreover, since the stabiliser of $B$ is transitive on each $w$-sphere $\{gB\mid gB\subseteq BwB\}$ we may assume, up to conjugation, that 
$$
\theta B=x_{\varphi_1}(1)\cdots x_{\varphi_N}(1)s_{\varphi_1}\cdots s_{\varphi_N}B.
$$
 Let $M=\ell(s_{\varphi_1}\cdots s_{\varphi_N})$. By \cite[Theorem~5]{PVMclass} we have $\ell(\delta(gB,\theta gB))\leq M$ with equality if and only if $\delta(gB,\theta gB)=s_{\varphi_1}\cdots s_{\varphi_N}$.

By~(\ref{eq:fold}) and the fact that $\varphi_1,\ldots,\varphi_N$ are mutually perpendicular we have
$$
x_{\varphi_1}(1)\cdots x_{\varphi_N}(1)s_{\varphi_1}\cdots s_{\varphi_N}B=x_{-\varphi_1}(1)\cdots x_{-\varphi_N}(1)B,
$$
and hence 
$$
\theta=x_{-\varphi_1}(1)\cdots x_{-\varphi_N}(1)uhf\quad\text{for some $u\in U^+$, $h\in H$, and $f\in\mathrm{Aut}(\K)$}.
$$
If $g\in G$ then the chamber $gB$ is mapped to Weyl distance $w$, where $g^{-1}\theta g\in BwBf$. The strategy to restrict $u,h,f$ is as follows: If $u,h,f$ do not satisfy certain conditions, then we will exhibit elements $g\in G$ with $g^{-1}\theta g\in BwBf$ and $\ell(w)>M$, yielding a contradiction as $\delta(gB,\theta gB)=w$. It is helpful to note that if $v\in W_{S\backslash J}$ then $\ell(s_{\varphi_1}\cdots s_{\varphi_N}v)=\ell(s_{\varphi_1}\cdots s_{\varphi_N})+\ell(v)$.

\medskip

\noindent\textit{Claim $1$: We have $u\in U_{\Phi^+\backslash\Phi_{S\backslash J}}$.} Write $u=u_1u_2$ with $u_1\in U_{\Phi_{S\backslash J}^+}$ and $u_2\in U_{\Phi^+\backslash\Phi_{S\backslash J}}$. Then
\begin{align*}
w_{S\backslash J}^{-1}\theta w_{S\backslash J}&=x_{-\varphi_1}(\pm 1)\cdots x_{-\varphi_N}(\pm 1)u_1^-u_2'h'f,
\end{align*}
where $u_1^-=w_{S\backslash J}^{-1}u_1w_{S\backslash J}\in U_{\Phi_{S\backslash J}^-}$, $u_2'\in U_{\Phi^+\backslash \Phi_{S\backslash J}}$, and $h'\in H$. Since $u_1^-\in BW_{S\backslash J}B$ we have $u_1^-\in BvB$ for some $v\in W_{S\backslash J}$. But then
$$
w_{S\backslash J}^{-1}\theta w_{S\backslash J}\in Bs_{\varphi_1}\cdots s_{\varphi_N}B\cdot BvBf=Bs_{\varphi_1}\cdots s_{\varphi_N}vBf.
$$
This forces $v=e$, and so $u_1^-\in B\cap U_{\Phi_{S\backslash J}^-}$, giving $u_1^-=1$, hence $u_1=1$.

\medskip

\noindent\textit{Claim $2$: We have $h\in H_J$.} Write $h=\prod_{j\in S}h_{\omega_j}(t_j)$ with $t_j\in\K^{\times}$. Suppose that $j\in S\backslash J$. Then
\begin{align*}
x_{-\alpha_j}(-1)\theta x_{-\alpha_j}(1)&=x_{-\varphi_1}(1)\cdots x_{-\varphi_N}(1)x_{-\alpha_j}(-1)ux_{-\alpha_j}(t_j^{-1})hf\\
&=x_{-\varphi_1}(1)\cdots x_{-\varphi_N}(1)x_{-\alpha_j}(t_j^{-1}-1)u'hf,
\end{align*}
with $u'\in U^+$ (here we have used the fact, from Claim 1, that $x_{\alpha_j}(a)$ does not appear as a factor in $u$). Thus, if $t_j\neq 1$, relation~(\ref{eq:fold}) gives
$$
x_{-\alpha_j}(-1)\theta x_{-\alpha_j}(1)\in Bs_{\varphi_1}\cdots s_{\varphi_N}B\cdot Bs_jBf=Bs_{\varphi_1}\cdots s_{\varphi_N}s_jBf,
$$
a contradiction (as $j\in S\backslash J$). Hence $t_j=1$ for all $j\in S\backslash J$ and so $h\in H_J$.

\medskip

\noindent\textit{Claim $3$: We have $f=\mathrm{id}$.} The arguments of Claims 1 and 2 prove the following: If $\theta'=x_{-\varphi_1}(a_1)\cdots x_{-\varphi_N}(a_N)uhf$ has opposition diagram $\sX$, where $a_1,\ldots,a_N\neq 0$, $u\in U^+$, $h\in H$, and $f\in\mathrm{Aut}(\K)$, then $u\in U_{\Phi^+\backslash\Phi_{S\backslash J}}$ and $h\in H_J$. Let $j\in S\backslash J$  and $t\in\K^{\times}$, and consider the element
$$
\theta'=h_{\omega_j}(t)^{-1}\theta h_{\omega_j}(t)=x_{-\varphi_1}(t^{\langle\varphi_1,\omega_j\rangle})\cdots x_{-\varphi_N}(t^{\langle\varphi_N,\omega_j\rangle})u'hh_{\omega_j}(t^{f}t^{-1})f,
$$
where $u'\in U_{\Phi^+\backslash\Phi_{S\backslash J}}$ and $h\in H_J$ (by Claims 1 and 2). But $\theta'$ has opposition diagram $\sX$ (as it is a conjugate of $\theta$) and hence by Claim 2 we have $t^{f}t^{-1}=1$. Since $t\in\K^{\times}$ was arbitrary we have $f=\mathrm{id}$.
\medskip

\noindent\textit{Claim $4$: We have $u\in U_{\Psi_J}^+$.} Let $Q_{S\backslash J}^+$ be the $\ZZ_{\geq 0}$-span of $\Phi_{S\backslash J}^+$, and define
$$
\Omega_J=\{\beta\in\Phi^+\mid (\beta-Q_{S\backslash J}^+)\cap \Phi=\{\beta\}\}.
$$
Note that $\{\varphi_1,\ldots,\varphi_N\}\subseteq \Psi_J^+\subseteq \Omega_J$. 

So far we have shown that, up to conjugation, $\theta=x_{-\varphi_1}(1)\cdots x_{-\varphi_N}(1)uh$ with $u\in U_{\Phi\backslash\Phi_{S\backslash J}}^+$ and $h\in H_J$. Write $u=u_1u_2$ with $u_1\in U_{\Psi_J}^+$ and $u_2\in U_{\Phi\backslash(\Phi_{S\backslash J}\cup\Psi_J)}^+$. We must show that $u_2=1$. Suppose, for a contradiction, that $u_2\neq 1$. 

Note that if $\alpha\in\Phi_{S\backslash J}$ then the root subgroup $U_{\alpha}$ commutes with $u_1$ and $x_{-\varphi_1}(1)\cdots x_{-\varphi_N}(1)$ (by definition of $\Psi_J$), and also it commutes with $h$ (as $h\in H_J$). Thus by conjugating $\theta$ by an element of $U_{\Phi_{S\backslash J}}^-$ we may assume that 
$
u_2=\cdots x_{\beta}(a)
$
with the product in decreasing root height, with $\beta\in\Omega_J$ and $a\neq 0$, and there is $j\in S\backslash J$ with $\beta+\alpha_j\in\Phi$ (see the proof of Claim 4 in \cite[Theorem~2.4]{PVMexc} for a similar argument). Now conjugate $\theta$ by $x_{-\beta-\alpha_j}(z)$, with $z\in\K$ yet to be chosen. We have, for some $\lambda\neq 0$,
\begin{align*}
x_{-\beta-\alpha_j}(-z)u_2hx_{-\beta-\alpha_j}(z)&=x_{-\beta-\alpha_j}(-z)u_2x_{-\beta-\alpha_j}(\lambda z)h. 
\end{align*}
Writing $u_2=u_2'x_{\beta+\alpha_j}(b)x_{\beta}(a)$ we have
\begin{align*}
x_{-\beta-\alpha_j}(-z)u_2hx_{-\beta-\alpha_j}(z)&=x_{-\beta-\alpha_j}(-z)u_2'x_{\beta+\alpha_j}(b)x_{\beta}(a)x_{-\beta-\alpha_j}(\lambda z)\\
&=x_{-\beta-\alpha_j}(-z)u_2'x_{\beta+\alpha_j}(b)x_{-\beta-\alpha_j}(\lambda z)x_{-\alpha_j}(\pm \lambda az)x_{\beta}(a).
\end{align*}
Recall the identity, for all $\alpha\in\Phi$,
$$
x_{\alpha}(a)x_{-\alpha}(b)=x_{-\alpha}(b(1+ab)^{-1})x_{\alpha}(a(1+ab))h_{\alpha^{\vee}}((1+ab)^{-1})
$$
whenever $1+ab\neq 0$ (this relation is easiest verified using matrices in $\mathsf{SL}_2(\K)$). Thus, choosing $z\in\K$ such that $1+\lambda bz\neq 0$ (here we use $|\K|>2$) we have
\begin{align*}
x_{-\beta-\alpha_j}(-z)u_2hx_{-\beta-\alpha_j}(z)&\in 
x_{-\beta-\alpha_j}(-z)u_2'x_{-\beta-\alpha_j}(\lambda z(1+\lambda bz)^{-1})x_{\beta+\alpha_j}(b(1+\lambda bz))x_{-\alpha_j}(\pm \lambda az)B\\
&\subseteq
x_{-\beta-\alpha_j}(-z)u_2'x_{-\beta-\alpha_j}(\lambda z(1+\lambda bz)^{-1})x_{-\alpha_j}(\pm \lambda az)B\\
&\subseteq x_{-\beta-\alpha_j}(-z+\lambda z(1+\lambda bz)^{-1})u_2''x_{-\alpha_j}(\pm \lambda az)B,
\end{align*}
where $u_2''\in U^+$ (by the assumption that $\beta$ is of minimal height in the expression for $u_2$). Thus, writing $\mu=\lambda z(1+\lambda bz)^{-1}$ we have
\begin{align*}
x_{-\beta-\alpha_j}(-z)u_2hx_{-\beta-\alpha_j}(z)\in x_{-\beta-\alpha_j}(-z+\mu)Bs_jB.
\end{align*}
Write $\theta'=x_{-\beta-\alpha_j}(-z)\theta x_{-\beta-\alpha_j}(z)$. It follows that
\begin{align*}
\theta'\in x_{-\beta-\alpha_j}(-z)x_{-\varphi_1}(1)\cdots x_{-\varphi_N}(1)u_1x_{-\beta-\alpha_j}(\mu)Bs_jB.
\end{align*}
Since the highest roots pairwise commute, and since $\varphi_1,\ldots,\varphi_N\in \Psi_J$, it follows that 
\begin{align*}
\theta'\in U_{-\beta-\alpha_j}U_{\Psi_J}^+s_{\varphi_1}\cdots s_{\varphi_N}U_{\varphi_1}\cdots U_{\varphi_N}U_{-\beta-\alpha_j}Bs_jB.
\end{align*}

We claim that
\begin{align}\label{eq:partial}
U_{-\beta-\alpha_j}U_{\Psi_J}^+s_{\varphi_1}\cdots s_{\varphi_N}U_{\varphi_1}\cdots U_{\varphi_N}U_{-\beta-\alpha_j}\subseteq Bw_{S\backslash J}w_0B. 
\end{align}
Before proving~(\ref{eq:partial}), note that the theorem follows from this equation, because it implies that
\begin{align*}
\theta'\in Bw_{S\backslash J}w_0B\cdot Bs_jB=Bw_{S\backslash J}w_0s_jB,
\end{align*}
and since $\ell(w_{S\backslash J}w_0s_j)=\ell(w_{S\backslash J}w_0)+1$ we arrive at the desired contradiction.

We now prove~(\ref{eq:partial}). First note that $s_{\varphi_1}\cdots s_{\varphi_N}=w_{S\backslash J}w_0=w_0w_{S\backslash J}$ (the fact that these longest elements commute follows from the fact that $J$ is stable under opposition). Moreover, note the following:
\begin{compactenum}[$(1)$]
\item If $\beta\in\Omega_J$ then $w_{S\backslash J}w_0\beta<0$ (this is because $\beta\in\Phi^+\backslash \Phi_{S\backslash J}=\Phi(w_{S\backslash J}w_0)$). 
\item If $j\in S\backslash J$ then $w_{S\backslash J}w_0\alpha_j=\alpha_j$ (to see this, note that by the list of diagrams in \cite{PVM}, if $\sX=(\Gamma,J,\sigma)$ is the opposition diagram of a type preserving automorphism, then $w_0$ and $w_{S\backslash J}$ induce the same permutation of $S\backslash J$).
\item Suppose $\gamma\in \Psi_J^+$ is such that $\gamma-\beta-\alpha_j$ is a root (with $\beta\in\Omega_J$ and $j\in S\backslash J$ such that $\beta+\alpha_j$ is a root). Then either $\gamma-\beta-\alpha_j$ is positive, or $w_{S\backslash J}w_0(\gamma-\beta-\alpha_j)$ is positive. For if $\gamma-\beta-\alpha_j$ is negative, then $\gamma-\beta$ is a nonpositive linear combination of roots. Moreover, $\gamma-\beta\notin Q_{S\backslash J}$ (as $\beta\notin \Phi_{S\backslash J}$) and so $\langle\gamma-\beta,\omega_k\rangle<0$ for some $k\in J$. Let $k'\in J$ be given by $w_0\omega_k=-\omega_{k'}$. Then $\langle w_{S\backslash J}w_0(\gamma-\beta),\omega_{k'}\rangle=\langle \gamma-\beta,w_0w_{S\backslash J}\omega_{k'}\rangle=\langle \gamma-\beta,w_0\omega_{k'}\rangle=-\langle \gamma-\beta,\omega_{k}\rangle>0$. Then, by (2), $w_{S\backslash J}w_0(\gamma-\beta-\alpha_j)=w_{S\backslash J}w_0(\gamma-\beta)-\alpha_j$ has positive coefficient of $\alpha_{k'}$, and hence is positive. 
\end{compactenum} 
It follows that
$$
U_{\varphi_1}\cdots U_{\varphi_N}U_{-\beta-\alpha_j}=U_{-\beta-\alpha_j}YU_{\varphi_1}\cdots U_{\varphi_N}X\subseteq U_{-\beta-\alpha_j}YB,
$$
with $Y$ a product of negative roots of the form $\varphi_i-\beta-\alpha_j$ for some $i$, and $X$ a product of positive roots of the form $\varphi_i-\beta-\alpha_j$ for some~$i$. 

Similarly, we have
$$
U_{-\beta-\alpha_j}U_{\Psi_J}^+=X'U_{\Psi_J}^+Y'U_{-\beta-\alpha_j}\subseteq BY'U_{-\beta-\alpha_j}
$$
where $X'$ (respectively $Y'$) is a product of positive (respectively negative) roots of the form $\gamma-\beta-\alpha_j$ with $\gamma\in\Psi_J^+$. Thus 
\begin{align*}
U_{-\beta-\alpha_j}U_{\Psi_J}^+s_{\varphi_1}\cdots s_{\varphi_N}U_{\varphi_1}\cdots U_{\varphi_N}U_{-\beta-\alpha_j}&\subseteq BY'U_{-\beta-\alpha_j}w_{S\backslash J}w_0U_{-\beta-\alpha_j}YB
\end{align*}
The strategy is now as follows. First the terms $Y'U_{-\beta-\alpha_j}$ are sent to the right, where they will ultimately be absorbed into~$B$. Then the terms $U_{-\beta-\alpha_j}Y$, along with any negative roots generated by sending $Y'U_{-\beta-\alpha_j}$ to the right, are sent to the left, where they are absorbed into~$B$. 

The details are as follows. The terms in $Y'U_{-\beta-\alpha_j}$ are of the form $\gamma-\beta-\alpha_j$ with $\gamma\in\{0\}\cup\Psi_J^+$ and $\gamma-\beta-\alpha_j\in-\Phi^+$. We have
\begin{align*}
U_{\gamma-\beta-\alpha_j}w_{S\backslash J}w_0U_{-\beta-\alpha_j}YB&=w_{S\backslash J}w_0U_{w_{S\backslash J}w_0(\gamma-\beta)-\alpha_j}U_{-\beta-\alpha_j}YB
\end{align*}
Note that the root $w_{S\backslash J}w_0(\gamma-\beta)-\alpha_j$ is necessarily positive (by the observations above), and it is not equal to the negative of any root appearing in $U_{-\beta-\alpha_j}Y$. To verify this, if $w_{S\backslash J}w_0(\gamma-\beta)-\alpha_j=-(\gamma'-\beta-\alpha_j)$ for some $\gamma'\in\{0\}\cup\Psi_J^+$ then for all $k\in S\backslash J$ it follows that 
$$
\langle \gamma-\beta,\alpha_k\rangle-\langle\alpha_j,\alpha_k\rangle=-\langle\gamma'-\beta,\alpha_k\rangle+\langle\alpha_j,\alpha_k\rangle,
$$
and hence (since $\langle\gamma,\alpha_k\rangle=\langle\gamma',\alpha_k\rangle=0$) we have
$
2\langle\beta+\alpha_j,\alpha_k\rangle=0.
$
Thus $\beta+\alpha_j\in\Psi_J$, a contradiction (as $s_j\beta=\beta+\alpha_j$). 

Therefore commutator relations can be used to move the positive root subgroup $U_{w_{S\backslash J}w_0(\gamma-\beta)-\alpha_j}$ to the right, past $U_{-\beta-\alpha_j}Y$, where it is absorbed into~$B$. It follows that 
\begin{align*}
BY'U_{-\beta-\alpha_j}w_{S\backslash J}w_0U_{-\beta-\alpha_j}YB&\subseteq Bw_{S\backslash J}w_0U_{-\beta-\alpha_j}YY''B,
\end{align*}
where $Y''$ consists of any negative roots resulting from the commutator relations (any positive roots are absorbed into $B$). One now moves the terms $U_{-\beta-\alpha_j}YY''$ across to the left, past $w_{S\backslash J}w_0$, where they become positive roots and are absorbed into $B$, and~(\ref{eq:partial}), and hence the theorem, follows. 
\end{proof}

The following elementary lemma is useful for studying domestic automorphisms that fix a chamber.

\begin{lemma}\label{lem:fixing}
Let $\Delta$ be a building and let $R$ be a residue. Suppose that $\theta$ is an automorphism of $\Delta$ stabilising $R$. Then $\theta$ fixes a chamber of $\Delta$ if and only if $\theta|_R$ fixes a chamber of $R$. 
\end{lemma}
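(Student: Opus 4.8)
The plan is to prove the two implications separately, with essentially all of the content residing in the forward direction. The one tool needed is the projection (gate) map $\proj_R\colon \mathrm{Ch}(\Delta)\to\mathrm{Ch}(R)$, which sends a chamber $C$ of $\Delta$ to the unique chamber of $R$ at minimal gallery distance from $C$ (equivalently, minimising the Weyl distance $\delta(C,\cdot)$ over $R$); this is a standard feature of buildings. The key fact I would invoke is that projection is equivariant under automorphisms: for any automorphism $\psi$ of $\Delta$ and any residue $R$ one has $\proj_{\psi(R)}(\psi(C))=\psi(\proj_R(C))$. This is immediate from the observation that $\psi$ preserves the Weyl distance, so it carries the chamber of $R$ nearest to $C$ onto the chamber of $\psi(R)$ nearest to $\psi(C)$.

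First I would treat the forward direction. Suppose $\theta$ fixes a chamber $C$ of $\Delta$. Since $\theta$ stabilises $R$ we have $\theta(R)=R$, and hence equivariance together with $\theta(C)=C$ yields $\theta(\proj_R(C))=\proj_{\theta(R)}(\theta(C))=\proj_R(C)$. Thus $D:=\proj_R(C)$ is a chamber of $R$ fixed by $\theta$, and therefore fixed by $\theta|_R$, which is exactly what is required.

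For the converse, suppose $\theta|_R$ fixes a chamber $D$ of $R$. Here I would only need to unravel the definitions: the chambers of the residue $R$, viewed as a building in its own right, are precisely chambers of $\Delta$, and $\theta|_R$ is by definition the restriction of $\theta$ to (the chamber set of) $R$. Consequently $\theta(D)=\theta|_R(D)=D$, so $\theta$ fixes the chamber $D$ of $\Delta$.

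I do not anticipate a genuine obstacle. The only point requiring a moment's care is the equivariance of $\proj_R$, which follows at once from the facts that $\theta$ is an isometry for the Weyl distance and that $\theta(R)=R$; the converse is purely a matter of recalling that a chamber of a residue is a chamber of the ambient building.
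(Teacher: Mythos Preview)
Your proof is correct and follows essentially the same approach as the paper: use the projection $\proj_R(C)$ to obtain a fixed chamber of $R$ in the forward direction, and observe that chambers of $R$ are chambers of $\Delta$ for the converse. You have simply spelled out the equivariance of the projection map in more detail than the paper does.
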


\begin{proof}
Suppose a chamber $C$ of $\Delta$ is fixed. Since $R$ is stabilised, the projection $\mathrm{proj}_R(C)$ is fixed. The converse is clear (as each chamber of $R$ is also a chamber of $\Delta$).
\end{proof}

\subsection{The $\sE_7$ case}

We now specialise to the case of $\sE_7$. We will use standard Bourbaki~\cite{Bou:02} root system conventions, and when explicit calculations are required we adopt the sign conventions for root subgroups in the Chevalley group $\sE_7(\K)$ used in $\mathsf{MAGMA}$~\cite{CMT:04,MAGMA}. The list of positive roots of $\sE_7$ are listed in \cite[Appendix]{PVMexc} for reference.

The following lemma records basic facts about the system $\Psi_J$ from~(\ref{eq:Psi}) for the diagrams $\sE_{7;3}$ and $\sE_{7;4}$. For brevity, we sometimes write $i$ in place of $s_i$ (for example, $s_1s_2s_7s_4=1274$).

\begin{lemma}\label{lem:magicelement}
We have the following.
\begin{compactenum}[$(1)$]
\item Let $\sX=\sE_{7;3}$. Then $\Psi_J$ is of type $\sA_1\times\sA_1\times\sA_1$ with simple roots $\gamma_1=\varphi_{\sE_7}$, $\gamma_2=\varphi_{\sD_6}$, $\gamma_3=\alpha_7$. The element $\su=134265423143765423143546$ has the property $\su^{-1}\gamma_1=\alpha_7$, $\su^{-1}\gamma_2=\alpha_5$, and $\su^{-1}\gamma_3=\alpha_2$, and thus conjugates the system $\Psi_J$ to the $\sA_1\times \sA_1\times \sA_1$ system generated by $\{\alpha_7,\alpha_5,\alpha_2\}$. 
\item Let $\sX=\sE_{7;4}$. Then $\Psi_J$ is of type $\sD_4$ with simple system $\gamma_1=\varphi_{\sD_6}$, $\gamma_2=\alpha_1$, $\gamma_3=\varphi_{\sD_4}$, and $\gamma_4=\alpha_3$. 
The element $
\su=4 3 1 5 4 3 6 5 4 2 3 1 4 3 5 4 6 5 7 6 5 4 3 1
$
satisfies $\su^{-1}\gamma_1=\alpha_2$, $\su^{-1}\gamma_2=\alpha_4$, $\su^{-1}\gamma_3=\alpha_3$, and $\su^{-1}\gamma_4=\alpha_5$, and thus conjugates $\Psi_J$ to the standard $\sD_4$ parabolic subsystem.
\end{compactenum}
\end{lemma}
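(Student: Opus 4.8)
The statement is a purely combinatorial assertion about the $\sE_7$ root system, and for each diagram it splits into two independent tasks: first identify $\Psi_J$ together with its claimed simple system, and second verify that the explicit word $\su$ acts as stated. The plan is to begin by reading $\Phi_{S\backslash J}$ off the Bourbaki diagram. For $\sX=\sE_{7;3}$ we have $S\backslash J=\{2,3,4,5\}$, whose sub-diagram is a fork with branch node $4$ and legs $2,3,5$; hence $\Phi_{S\backslash J}$ is of type $\sD_4$, its span is $4$-dimensional, and $\Psi_J$, being the roots in the orthogonal complement of that span, has rank at most $3$. For $\sX=\sE_{7;4}$ we have $S\backslash J=\{2,5,7\}$, three pairwise non-adjacent nodes, so $\Phi_{S\backslash J}$ is of type $\sA_1\times\sA_1\times\sA_1$ and $\Psi_J$ has rank at most $4$.

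Next I would establish that the listed roots lie in $\Psi_J$ and generate it. The key tool is the polar-node property: for any parabolic subsystem $X$ the highest root $\varphi_X$ satisfies $\<\varphi_X,\alpha\>=0$ for every simple root $\alpha$ of $X$ except the polar node of $X$, while $\varphi_X$ is automatically orthogonal to every simple root of $\Phi$ lying outside $X$. Identifying the polar node of the $\sD_6$ subsystem on $\{2,\dots,7\}$ as node $6$ and that of the $\sD_4$ subsystem on $\{2,3,4,5\}$ as the branch node $4$, each claimed generator ($\varphi_{\sE_7},\varphi_{\sD_6},\alpha_7$ in the first case; $\varphi_{\sD_6},\alpha_1,\varphi_{\sD_4},\alpha_3$ in the second) is immediately seen to be orthogonal to $\Phi_{S\backslash J}$, hence to lie in $\Psi_J$. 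To pin down the type I would compute the finitely many Cartan integers $\<\gamma_i,\gamma_j\>$: for $\sE_{7;3}$ all three generators are mutually orthogonal, giving $\sA_1\times\sA_1\times\sA_1$; for $\sE_{7;4}$ one finds $\alpha_1$ paired non-trivially with each of $\varphi_{\sD_6},\varphi_{\sD_4},\alpha_3$, which are themselves mutually orthogonal, i.e.\ a star centred at $\alpha_1$, giving $\sD_4$. Since these generators already realise the maximal possible rank, a short exhaustiveness check against the $126$ roots of $\sE_7$ (or a dimension argument for the orthogonal complement) confirms that $\Psi_J$ contains no further roots, completing the structural part.

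Finally I would verify the action of $\su$. Since $\su$ is given as an explicit product of simple reflections, the claims $\su^{-1}\gamma_i=\alpha_{?}$ are checked by applying the reflections one at a time to each $\gamma_i$, expressed in the basis of simple roots, and confirming the result is exactly the named simple root; equivalently one checks $\su\,\alpha_{?}=\gamma_i$, which simultaneously confirms that $\su$ carries the standard parabolic system ($\{\alpha_7,\alpha_5,\alpha_2\}$, respectively $\{\alpha_2,\alpha_3,\alpha_4,\alpha_5\}$) onto $\{\gamma_i\}$ and hence conjugates $\Psi_J$ to the stated parabolic subsystem. This reflection bookkeeping through some twenty-four factors is the \emph{main obstacle}: it is entirely mechanical but error-prone, and the delicate point is not merely that $\su$ conjugates $\Psi_J$ into the correct parabolic, but that it induces the precise labelled diagram isomorphism recorded in the statement (so that $\alpha_1$, the centre of the $\sD_4$, is sent to the central node $\alpha_4$, and each outer node to the corresponding outer node). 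I would organise the computation by tracking each $\gamma_i$ incrementally, and cross-check the outcome with $\mathsf{MAGMA}$, whose root-system conventions the paper already adopts.
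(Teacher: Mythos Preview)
Your proposal is correct and coincides with the paper's approach: the paper's proof is the single sentence ``This is verified by direct calculation,'' and what you have written is precisely an organised account of that direct calculation, including the identification of $\Psi_J$ via polar-node considerations and the mechanical tracking of each $\gamma_i$ through the word $\su$ (with the sensible suggestion to cross-check in $\mathsf{MAGMA}$).
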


\begin{proof}
This is verified by direct calculation. 
\end{proof}

\subsubsection{The $\sE_{7;3}$ diagram}

We now consider the $\sE_{7;3}$ diagram.

\begin{thm}\label{thm:E73chamberfixing}
Let $\theta$ be be an automorphism of the $\sE_7(\K)$ building with $\Diag(\theta)=\sE_{7;3}$, where $|\K|>2$. Then $\theta$ is conjugate to an element of the form
\begin{align}
\label{eq:theta}
x_{\varphi_1}(t_1^{-1}t_2^{-1}a)x_{\varphi_2}(t_2^{-1}a)x_{\varphi_3}(a)s_{\varphi_1}^{-1}s_{\varphi_2}^{-1}s_{\varphi_3}^{-1}h_{\omega_1}(t_1)h_{\omega_6}(t_2)h_{\omega_7}(t_3)
\end{align}
with $a\in\K$ and $t_1,t_2,t_3\in\K^{\times}$, and where $\varphi_1=\varphi_{\sE_7}$, $\varphi_2=\varphi_{\sD_6}$, and $\varphi_3=\alpha_7$. Moreover, $\theta$ fixes a chamber (and hence is conjugate to a member of $B$) if and only if the polynomial $p(Y)=Y^2+aY+t_3^{-1}$ has a root $y\in\K$. If $y\in\K$ is a root of $p(Y)$ then:

\begin{compactenum}[$(1)$]
\item If $t_3y^2=1$ then $\theta$ is conjugate to the unipotent element $x_{\varphi_1}(t_1t_2t_3y)x_{\varphi_2}(t_2t_3y)x_{\varphi_3}(t_3y)$. 
\item If $t_3y^2\neq 1$ then $\theta$ is conjugate to the homology $h_{\omega_7}(t_3^{-1}y^{-2})$. 
\end{compactenum}
\end{thm}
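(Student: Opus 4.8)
The plan is to leverage Theorem~\ref{thm:partgeneral} together with the fact, from Lemma~\ref{lem:magicelement}(1), that $\Psi_J$ has type $\sA_1\times\sA_1\times\sA_1$, so that the whole analysis reduces to three commuting rank-one situations. Since the $\sE_7$ diagram has no symmetry, $\theta$ is type preserving, and Theorem~\ref{thm:partgeneral} (with $J=\{1,6,7\}$) shows $\theta$ is conjugate to $x_{-\varphi_1}(1)x_{-\varphi_2}(1)x_{-\varphi_3}(1)uh$ with $u\in U_{\Psi_J}^+$ and $h\in H_J$. By Lemma~\ref{lem:magicelement}(1), $u=x_{\varphi_1}(b_1)x_{\varphi_2}(b_2)x_{\varphi_3}(b_3)$ and $h=h_{\omega_1}(t_1)h_{\omega_6}(t_2)h_{\omega_7}(t_3)$. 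As $\varphi_1,\varphi_2,\varphi_3$ are mutually perpendicular, the corresponding root groups and the $s_{\varphi_i}$ pairwise commute, so $\theta$ decouples into three rank-one blocks linked only through the shared torus. Reading off the pairings $\langle\varphi_i,\omega_j\rangle$ from $\varphi_1=2\alpha_1+2\alpha_2+3\alpha_3+4\alpha_4+3\alpha_5+2\alpha_6+\alpha_7$, $\varphi_2=\alpha_2+\alpha_3+2\alpha_4+2\alpha_5+2\alpha_6+\alpha_7$, $\varphi_3=\alpha_7$, and using \eqref{eq:fold} with $h_\lambda(t)x_\alpha(b)h_\lambda(t)^{-1}=x_\alpha(t^{\langle\alpha,\lambda\rangle}b)$, a direct computation rewrites $\theta$ in the form \eqref{eq:theta}; conjugating by a torus element from $\langle h_{\omega_j}(\cdot)\mid j\in S\setminus J\rangle$ (available since $|\K|>2$) collapses $b_1,b_2,b_3$ to the single parameter $a$ with the torus-weights $t_1^{-1}t_2^{-1},t_2^{-1},1$.

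For the chamber-fixing criterion I would first conjugate by the element $\su$ of Lemma~\ref{lem:magicelement}(1), carrying $\Psi_J$ to the $\sA_1^3$ system on $\{\alpha_2,\alpha_5,\alpha_7\}$. Then $\theta$ lies in the subgroup generated by $U_{\pm\alpha_i}$, $i\in\{2,5,7\}$, and $H$, which stabilises the standard residue $R$ of type $\{2,5,7\}$. This residue is a building of type $\sA_1\times\sA_1\times\sA_1$, i.e.\ a product of three projective lines $\PG(1,\K)$, and $\theta$ acts on it as a product $(M_1,M_2,M_3)$ of three elements of $\PGL_2(\K)$. By Lemma~\ref{lem:fixing}, $\theta$ fixes a chamber of $\Delta$ if and only if $\theta|_R$ fixes a chamber of $R$, which holds if and only if each $M_i$ fixes a point of the corresponding $\PG(1,\K)$.

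Next I would make the three factors explicit: each $M_i$ is represented by $\left(\begin{smallmatrix} c\lambda & -1\\ \lambda & 0\end{smallmatrix}\right)$, where $c$ is the $x_{\varphi_i}$-coefficient and $\lambda$ the torus-scaling of $U_{\varphi_i}$, and $M_i$ has a fixed point precisely when $\lambda Y^2-c\lambda Y+1$ has a root in $\K$. The coefficients in \eqref{eq:theta} are chosen exactly so that, after the evident rescalings of the fixed-point coordinate, all three quadratics become $Y^2-aY+t_3^{-1}$, i.e.\ $p(-Y)$. Hence the three factors acquire fixed points simultaneously, and $\theta$ fixes a chamber, if and only if $p(Y)=Y^2+aY+t_3^{-1}$ has a root $y\in\K$. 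For the normal forms, note that the two roots of $p$ have product $t_3^{-1}$, so $p$ has a repeated root iff $y^2=t_3^{-1}$, that is $t_3y^2=1$. If $t_3y^2=1$, each $M_i$ is a nontrivial transvection, and conjugating each block within its rank-one group assembles $\theta$ into the unipotent element $x_{\varphi_1}(t_1t_2t_3y)x_{\varphi_2}(t_2t_3y)x_{\varphi_3}(t_3y)$, giving case~(1). If $t_3y^2\neq1$, each $M_i$ has distinct eigenvalues and is $\PGL_2$-conjugate to a diagonal element; tracking the eigenvalue ratio $t_3y^2$ through the assembly identifies $\theta$, up to conjugacy, with the homology $h_{\omega_7}(t_3^{-1}y^{-2})$, which is nontrivial since $t_3y^2\neq1$, giving case~(2).

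I expect the main obstacle to be the explicit bookkeeping rather than the conceptual structure: first, the reduction of the three a-priori-independent coefficients $b_1,b_2,b_3$ to the single $a$ in \eqref{eq:theta}, and second, tracking the simultaneous rank-one conjugations and the $\mathsf{SL}_2$/$\PGL_2$ sign conventions carefully enough that the assembled normal forms come out with precisely the stated coefficients $t_1t_2t_3y,\ t_2t_3y,\ t_3y$ and $t_3^{-1}y^{-2}$. The clean conceptual point that carries the proof is that the residue reduction turns the chamber-fixing question into three copies of the same scalar quadratic $p$, which the coefficient normalisation is designed to force.
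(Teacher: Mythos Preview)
Your reduction to the residue of type $\{2,5,7\}$ via $\su$ and the use of Lemma~\ref{lem:fixing} for the ``fixes a chamber iff $p$ splits'' step, as well as the rank-one diagonalisation for the normal forms (1) and (2), are essentially the same as in the paper and are fine.

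The gap is in how you obtain the form~\eqref{eq:theta}. You write that ``conjugating by a torus element from $\langle h_{\omega_j}(\cdot)\mid j\in S\setminus J\rangle$ \dots\ collapses $b_1,b_2,b_3$ to the single parameter $a$''. This cannot work as stated. Since $\langle\varphi_3,\omega_j\rangle=0$ for $j\in\{2,3,4,5\}$, such conjugation leaves $b_3$ fixed; it rescales $b_1,b_2$ but simultaneously changes the torus part (the Weyl element $s_{\varphi_1}^{-1}s_{\varphi_2}^{-1}s_{\varphi_3}^{-1}$ does not centralise $H_{S\setminus J}$), so the values $t_1,t_2,t_3$ move as well. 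You have not shown that one can land on the specific relations $b_1=t_1^{-1}t_2^{-1}b_3$, $b_2=t_2^{-1}b_3$ with the \emph{resulting} $t_i$; and in fact generically one cannot. The point is that these relations are not a convenient normalisation but a genuine \emph{constraint forced by the hypothesis} $\Diag(\theta)=\sE_{7;3}$. The paper proves this by conjugating by $x_{-\alpha}(1)x_{-\beta}(1)$ for $\alpha=(1010000),\beta=(1234321)$ (respectively $\alpha=(0000110),\beta=(0112211)$) and showing that if $t_1b_1\neq b_2$ (respectively $t_2b_2\neq b_3$) then the resulting chamber is sent to Weyl distance $s_{\varphi_1}s_{\varphi_2}s_{\varphi_3}s_3$ (respectively $s_{\varphi_1}s_{\varphi_2}s_{\varphi_3}s_5$), contradicting the opposition diagram.

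This matters downstream: with three independent parameters $b_1,b_2,b_3$ the three $\PGL_2$ blocks on the residue yield three \emph{different} quadratics, so chamber-fixing would be three separate conditions, not one. Your sentence ``the coefficients in \eqref{eq:theta} are chosen exactly so that \dots\ all three quadratics become $Y^2-aY+t_3^{-1}$'' gets the logic backwards: they are not chosen, they are forced, and that is precisely why a single polynomial $p(Y)$ governs the answer. Once you insert the missing opposition-diagram step establishing $b_2=t_2^{-1}b_3$ and $b_1=t_1^{-1}t_2^{-1}b_3$, the rest of your outline goes through.
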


\begin{proof} Let $J=\{1,6,7\}$. We have $\Psi_J^+=\{\varphi_1,\varphi_2,\varphi_3\}$ (a system of type $\sA_1\times\sA_1\times \sA_1$), and thus by Theorem~\ref{thm:partgeneral} we may assume, up to conjugation, that 
$$
\theta=x_{-\varphi_1}(1)x_{-\varphi_2}(1)x_{-\varphi_3}(1)uh
$$
where $u=x_{\varphi_1}(a_1)x_{\varphi_2}(a_2)x_{\varphi_3}(a_3)$ and $h=h_{\omega_1}(t_1)h_{\omega_6}(t_2)h_{\omega_7}(t_3)$, with $a_1,a_2,a_3\in\K$ and $t_1,t_2,t_3\in\K^{\times}$. Using~(\ref{eq:fold}), and the fact that $s_{\varphi_i}^{-1}x_{\varphi_j}(1)s_{\varphi_i}=x_{\varphi_j}(1)$ for $i\neq j$, it follows that
\begin{align*}
\theta_1&=x_{\varphi_1}(1)x_{\varphi_2}(1)x_{\varphi_3}(1)s_{\varphi_1}^{-1}s_{\varphi_2}^{-1}s_{\varphi_3}^{-1}x_{\varphi_1}(a_1')x_{\varphi_2}(a_2')x_{\varphi_3}(a_3')h_{\omega_1}(t_1)h_{\omega_6}(t_2)h_{\omega_7}(t_3)\\
&=x_{\varphi_1}(1)x_{\varphi_2}(1)x_{\varphi_3}(1)s_{\varphi_1}^{-1}s_{\varphi_2}^{-1}s_{\varphi_3}^{-1}h_{\omega_1}(t_1)h_{\omega_6}(t_2)h_{\omega_7}(t_3)x_{\varphi_1}(t_1^{-2}t_2^{-2}t_3^{-1}a_1')x_{\varphi_2}(t_2^{-2}t_3^{-1}a_2')x_{\varphi_3}(t_3^{-1}a_3'),
\end{align*}
where $a_i'=a_i+1$ for $i=1,2,3$. If follows that $\theta$ is conjugate to 
\begin{align*}
\theta'&=x_{\varphi_1}(b_1)x_{\varphi_2}(b_2)x_{\varphi_3}(b_3)s_{\varphi_1}^{-1}s_{\varphi_2}^{-1}s_{\varphi_3}^{-1}h_{\omega_1}(t_1)h_{\omega_6}(t_2)h_{\omega_7}(t_3),
\end{align*}
where $b_1=1+t_1^{-2}t_2^{-2}t_3^{-1}a_1'$, $b_2=1+t_2^{-2}t_3^{-1}a_2'$, and $b_3=1+t_3^{-1}a_3'$. 

Let $\alpha=(1010000)$ and $\beta=(1234321)$. A direct calculation shows that if $t_1b_1-b_2\neq 0$ then 
\begin{align*}
x_{-\beta}(-1)x_{-\alpha}(-1)\theta' x_{-\alpha}(1)x_{-\beta}(1)\in Bs_{\varphi_1}s_{\varphi_2}s_{\varphi_3}s_3B,
\end{align*}
contradicting the fact that $\theta$ has opposition diagram~$\sE_{7;3}$. Similarly, writing $\gamma=(0000110)$ and $\delta=(0112211)$ we see that if $t_2b_2-b_3\neq 0$ then 
$x_{-\delta}(-1)x_{-\gamma}(-1)\theta' x_{-\gamma}(1)x_{-\delta}(1)\in Bs_{\varphi_1}s_{\varphi_2}s_{\varphi_3}s_5B$, 
again a contradiction. Writing $a=b_3$ it follows that $b_2=t_2^{-1}a$ and $b_1=t_1^{-1}t_2^{-1}a$, and~(\ref{eq:theta}) is proved.

We claim that if there exists $y\in \K$ with 
$
y^2+ay+t_3^{-1}=0
$
then $\theta$ is conjugate to
\begin{align}\label{eq:thetaprime}
\theta''&=x_{\varphi_1}(t_1t_2t_3y)x_{\varphi_2}(t_2t_3y)x_{\varphi_3}(t_3y)h_{\omega_7}(t_3^{-1}y^{-2}).
\end{align}
The equation $y^2+ay+t_3^{-1}=0$ gives $a=-y-t_3^{-1}y^{-1}$. A direct calculation shows that the chamber $g_1B$ is fixed by $\theta'$, where
$g_1=x_{\varphi_1}(-t_1^{-1}t_2^{-1}y)x_{\varphi_2}(-t_2^{-1}y)x_{\varphi_3}(-y)s_{\varphi_1}s_{\varphi_2}s_{\varphi_3}.$ 
We compute $g_1^{-1}\theta' g_1=x_{\varphi_1}(t_1t_2t_3y)x_{\varphi_2}(t_2t_3y)x_{\varphi_3}(t_3y)h_{\omega_7}(t_3^{-1}y^{-2})$ as required.

Next we claim that if $y^2+ay+t_3^{-1}=0$ then statements $(1)$ and $(2)$ in the statement of the theorem hold. Indeed statement (1) is immediate from~(\ref{eq:thetaprime}). For (2), note that if $t_3y^2\neq 1$ then a direct calculation shows that $\theta''$ fixes the chamber $g_2B$, where
$$
g_2=x_{\varphi_1}(t_1t_2t_3^2y^3(t_3y^2-1)^{-1})x_{\varphi_2}(t_2t_3^2y^3(t_3y^2-1)^{-1})x_{\varphi_3}(t_3^2y^3(t_3y^2-1)^{-1})s_{\varphi_1}s_{\varphi_2}s_{\varphi_3}.
$$ 
Direct calculation gives $g_2^{-1}\theta'' g_2=h_{\omega_7}(t_3^{-1}y^{-2})$ as required.

Finally, we claim that if $p(Y)=Y^2+aY+t_3^{-1}$ is irreducible over $\K$ then $\theta$ does not fix any chamber. With $\su$ as in Lemma~\ref{lem:magicelement}, and with $\theta'$ as above, we have 
$$
\theta'''=\su^{-1}\theta'\su=x_{\alpha_7}(t_1^{-1}t_2^{-1}a)x_{\alpha_5}(t_2^{-1}a)x_{\alpha_2}(a)s_7^{-1}s_5^{-1}s_2^{-1}h_{\alpha_7}(t_1)h_{\alpha_5+\alpha_7}(t_2)h_{\frac{1}{2}(\alpha_2+\alpha_5+\alpha_7)}(t_3)
$$
(note that $\frac{1}{2}(\alpha_2+\alpha_5+\alpha_7)\in P$). In particular, $\theta'''$ stabilises the residue $R$ of type $\{2,5,7\}$ containing the base chamber $B$. Thus by Lemma~\ref{lem:fixing} $\theta'''$ fixes a chamber of $\Delta$ if and only if it fixes a chamber of $R$, and a simple calculation shows that this occurs if and only if $p(Y)$ splits over~$\K$.
\end{proof}

We now prove Theorem~\ref{thm:E73Classification1}, which we restate for convenience. 

\begin{thm}\label{thm:E73Classification}
An automorphism $\theta$ of the building $\sE_7(\K)$ with $|\K|>2$ has opposition diagram $\sE_{7;3}$ and fixes a chamber if and only if $\theta$ is conjugate to one of the following elements
\begin{compactenum}[$(1)$]
\item $x_{\varphi_1}(1)x_{\varphi_2}(1)x_{\varphi_3}(1)$;
\item $h_{\omega_7}(t)$ with $t\neq 0,1$.
\end{compactenum}
\end{thm}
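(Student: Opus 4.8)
The plan is to deduce Theorem~\ref{thm:E73Classification} directly from the more refined Theorem~\ref{thm:E73chamberfixing}, which has already established the full normal form~(\ref{eq:theta}) for any automorphism with opposition diagram $\sE_{7;3}$, together with the exact chamber-fixing criterion. First I would recall that Theorem~\ref{thm:E73chamberfixing} tells us that $\theta$ fixes a chamber if and only if the polynomial $p(Y)=Y^2+aY+t_3^{-1}$ has a root $y\in\K$; so the chamber-fixing hypothesis immediately supplies such a root $y$, and I may invoke parts $(1)$ and $(2)$ of that theorem. The entire content of the present statement is therefore a bookkeeping reduction: showing that the two conjugacy classes exhibited there, namely the unipotent class (case $t_3y^2=1$) and the homology class $h_{\omega_7}(t_3^{-1}y^{-2})$ (case $t_3y^2\neq1$), coincide with the two normal forms $(1)$ and $(2)$ listed here.

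For the forward direction I would argue as follows. Given a chamber-fixing $\theta$ with $\Diag(\theta)=\sE_{7;3}$, pick the root $y$ of $p(Y)$. In the case $t_3y^2=1$, Theorem~\ref{thm:E73chamberfixing}$(1)$ gives that $\theta$ is conjugate to $x_{\varphi_1}(t_1t_2t_3y)x_{\varphi_2}(t_2t_3y)x_{\varphi_3}(t_3y)$; since $\varphi_1,\varphi_2,\varphi_3$ are mutually perpendicular (they span the $\sA_1\times\sA_1\times\sA_1$ system $\Psi_J$), the three root subgroups commute, and conjugating by a suitable torus element $h\in H_J$ rescales each nonzero parameter $t_it_3y$ to $1$; this uses $t_3y\neq0$, which holds because $t_3y^2=1$ forces $y\neq0$. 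Thus $\theta$ is conjugate to $x_{\varphi_1}(1)x_{\varphi_2}(1)x_{\varphi_3}(1)$, which is form $(1)$. In the case $t_3y^2\neq1$, Theorem~\ref{thm:E73chamberfixing}$(2)$ already gives directly that $\theta$ is conjugate to the homology $h_{\omega_7}(t)$ with $t=t_3^{-1}y^{-2}$; here $t\neq0$ automatically, and I must check $t\neq1$, which is exactly the hypothesis $t_3y^2\neq1$. This yields form $(2)$.

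For the converse I would verify that each of the two listed elements does have opposition diagram $\sE_{7;3}$ and does fix a chamber. Both facts are essentially already recorded: each element lies in $B$ (the unipotent element lies in $U^+\subseteq B$, and $h_{\omega_7}(t)\in H\subseteq B$), hence each fixes the base chamber $B$ by construction. That each has opposition diagram exactly $\sE_{7;3}$ can be read off by matching against the normal form~(\ref{eq:theta}): the unipotent $x_{\varphi_1}(1)x_{\varphi_2}(1)x_{\varphi_3}(1)$ is the image under $\theta\mapsto\theta''$ with parameters making $t_3y^2=1$, and $h_{\omega_7}(t)$ with $t\neq0,1$ arises as the homology case; both were produced inside the proof of Theorem~\ref{thm:E73chamberfixing} as conjugates of genuine $\sE_{7;3}$ automorphisms, so their opposition diagrams are $\sE_{7;3}$. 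Alternatively one invokes the known result of~\cite{PVMexc} identifying these as the domestic homologies and unipotent examples with this diagram.

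The step I expect to require the most care is not an obstacle so much as a consistency check: confirming that the torus rescaling in the unipotent case actually lands the three parameters simultaneously at $1$. Because $\Psi_J$ is $\sA_1\times\sA_1\times\sA_1$ and $H_J$ is generated by $h_{\omega_1},h_{\omega_6},h_{\omega_7}$ with $\langle\varphi_i,\omega_j\rangle$ forming an invertible integer matrix on the relevant indices, conjugation by $H_J$ acts on the triple of parameters $(t_1t_2t_3y,\,t_2t_3y,\,t_3y)$ by independent scalings, so each can be normalised to $1$; I would state this explicitly rather than leave it implicit. The homology case needs no normalisation and is immediate. Thus the two classes are genuinely distinct and exhaust the chamber-fixing $\sE_{7;3}$ automorphisms, completing the proof.
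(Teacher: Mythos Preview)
Your proposal is correct and follows essentially the same approach as the paper: the forward direction invokes Theorem~\ref{thm:E73chamberfixing} and rescales the unipotent parameters by a torus element, and the converse cites \cite{PVMexc} (specifically Theorem~3.1 and Lemma~4.5 there) to confirm the two listed elements have opposition diagram~$\sE_{7;3}$.

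One caution: your first argument for the converse---that the elements ``were produced inside the proof of Theorem~\ref{thm:E73chamberfixing} as conjugates of genuine $\sE_{7;3}$ automorphisms''---is circular as stated, since that theorem \emph{assumes} the diagram is $\sE_{7;3}$ and does not independently verify it for the output forms; in particular it does not by itself show that every $t\neq 0,1$ actually arises. The paper (and your ``alternatively'' clause) correctly relies on the independent results from \cite{PVMexc} for this direction, so your proof is complete once you commit to that citation rather than the matching-of-normal-forms heuristic.
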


\begin{proof}
If $\theta$ fixes a chamber and has opposition diagram $\sE_{7;3}$ then, by Theorem~\ref{thm:E73chamberfixing} $\theta$ is conjugate to either an element of the form $\theta_1=x_{\varphi_1}(a)x_{\varphi_2}(b)x_{\varphi_3}(c)$ with $a,b,c\neq0$, or $\theta_2=h_{\omega_7}(t)$ with $t\neq 0,1$. The element $\theta_1$ is in turn conjugate (by an element of $H$) to $x_{\varphi_1}(1)x_{\varphi_2}(1)x_{\varphi_3}(1)$. 

Conversely, by \cite[Theorem~3.1 and Lemma~4.5]{PVMexc} the elements of the form $\theta_1$ and $\theta_2$ are domestic with opposition diagram~$\sE_{7;3}$. 
\end{proof}

\subsubsection{The $\sE_{7;4}$ diagram}

We now turn our attention to the $\sE_{7;4}$ diagram. This case is considerably more involved than the $\sE_{7;3}$ case. Let $\Phi_{\sD_4}$ be the $\sD_4$ subsystem of the $\sE_7$ root system~$\Phi$. To fix conventions, we identify the simple roots $\alpha_2,\alpha_3,\alpha_4,\alpha_5$ of the $\sE_7$ root system with simple roots $\alpha_1',\alpha_3',\alpha_2',\alpha_4'$, respectively, of the $\sD_4$ system. For example, the root $\alpha=(0101000)$ of $\sE_7$ is identified with the root $(1100)$ of $\sD_4$. Thus, for example, we shall write $x_{1100}(a)$ and $x_{0101000}(a)$ interchangeably. Moreover, we adopt the standard realisation of the simple roots of $\sD_4$ in $\mathbb{R}^4$, with $\alpha_1'=e_1-e_2$, $\alpha_2'=e_2-e_3$, $\alpha_3'=e_3-e_4$, and $\alpha_4'=e_3+e_4$.

Let $G_{\sD_4}$ be the subgroup of the $\sE_7$ Chevalley group $G$ generated by the root subgroups $U_{\alpha}$ with $\alpha$ in the $\sD_4$ subsystem. Let $w_{\sD_4}$ be the longest element of the $\sD_4$ Coxeter group. In the theorem below, we shall see that all automorphisms of the $\sE_7$ building with opposition diagram $\sE_{7;4}$ can be conjugated into $G_{\sD_4}$, allowing us to make explicit matrix calculations in the $G_{\sD_4}$ group by realising it as a subgroup of $\mathsf{GL}_8(\K)$. We shall use the sign conventions in $\sD_4$ that are inherited from the sign choices in $\mathsf{MAGMA}$ for the $\sE_7$ system (see the Groups of Lie Type package~\cite{CMT:04}). With these conventions, explicit $8\times 8$ matrices for the standard representation of the $G_{\sD_4}$ are as follows:
\begin{align*}
x_{e_1-e_2}(a)&=I+aE_{21}-aE_{87}&x_{e_1+e_2}(a)&=I+aE_{71}-aE_{82}\\
x_{e_2-e_3}(a)&=I+aE_{32}-aE_{76}&x_{e_2+e_3}(a)&=I+aE_{62}-aE_{73}\\
x_{e_3-e_4}(a)&=I+aE_{43}-aE_{65}&x_{e_3+e_4}(a)&=I+aE_{53}-aE_{64}\\
x_{e_1-e_3}(a)&=I-aE_{31}+aE_{86}&x_{e_1+e_3}(a)&=I-aE_{61}+aE_{83}\\
x_{e_1-e_4}(a)&=I-aE_{41}+aE_{85}&x_{e_1+e_4}(a)&=I+aE_{51}-aE_{84}\\
x_{e_2-e_4}(a)&=I+aE_{42}-aE_{75}&x_{e_2+e_4}(a)&=I-aE_{52}+aE_{74}.
\end{align*}
The group $G_{\sD_4}$ acts on $V=\mathbb{R}^8$ on the right, and this action preserves the bilinear form
$$
(X,Y)=X_1Y_8+X_2Y_7+X_3Y_6+X_4Y_5+X_5Y_4+X_6Y_3+X_7Y_2+X_8Y_1.
$$
Writing $f(X)=X_1X_8+X_2X_7+X_3X_6+X_4X_5$, a vector $X$ is isotropic if $f(X)=0$. A subspace $V'$ is singular if $(X,Y)=0$ for all $X,Y\in V'$. Then the associated $\sD_4$ building is realised as the oriflamme complex in the usual way.

\begin{thm}\label{thm:E74chamberfixing}
Let $\theta$ be be an automorphism of the $\sE_7(\K)$ building with $\Diag(\theta)=\sE_{7;4}$, where $|\K|>2$. Then $\theta$ is conjugate to an element of $G_{\sD_4}$ of the form
$
\theta=uhw_{\sD_4},
$
where $u,h\in G_{\sD_4}$ are of the form
\begin{align*}
u&=x_{1000}(t_2t_3t_4a)x_{1100}(-t_1t_2t_3t_4b)x_{1101}(t_1t_2t_3t_4c)x_{1111}(-t_1t_2^2t_3^2t_4b)x_{1211}(t_1t_2^2t_3^2t_4a)x_{0010}(t_2t_3a)\\
&\qquad x_{0110}(-t_1t_2t_3b)x_{0111}(t_1t_2t_3c)x_{0001}(t_2a)x_{1110}(-t_1t_2t_3^2t_4c)x_{0101}(t_1t_2b)x_{0100}(t_1c)\\
h&=h_{1000}(t_1t_2^2t_3^3t_4^2)h_{0100}(t_1^2t_2^3t_3^4t_4^2)h_{0010}(t_1t_2^2t_3^3t_4)h_{0001}(t_1t_2^2t_3^2t_4)
\end{align*}
with $a,b,c\in\K$ and $t_1,t_2,t_3,t_4\in\K^{\times}$. Moreover, if the automorphism $\theta$ fixes a chamber of $\sE_7(\K)$ then the polynomial 
$$
p(Y)=Y^2-(t_2a^2+t_1t_2b^2+t_1c^2-t_1t_2abc-2)Y+1
$$
splits over $\K$.
\end{thm}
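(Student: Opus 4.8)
The plan is to prove only the necessity asserted in the final sentence, namely that a fixed chamber forces $p(Y)$ to split. First I would invoke the part of the theorem already established to assume $\theta=M:=uhw_{\sD_4}\in G_{\sD_4}$, with $u,h$ as displayed. Since $G_{\sD_4}$ is generated by the root subgroups $U_{\alpha}$ with $\alpha\in\Phi_{\sD_4}$, the element $M$ stabilises the residue $R$ of type $\{2,3,4,5\}$ containing the base chamber $B$, and $R$ is a building of type $\sD_4$; hence by \cref{lem:fixing} the automorphism $\theta$ fixes a chamber of $\Delta$ if and only if $M$ fixes a chamber of $R$. I would realise $R$ as the oriflamme complex of the $8$-dimensional orthogonal space $(V,f)$ set up above, so that $M$ acts as the explicit $8\times 8$ matrix obtained as the product of the listed matrices for the twelve root-group factors of $u$, the four torus factors of $h$, and the signed permutation matrix $w_{\sD_4}$. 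As each factor lies in $\mathsf{O}(V,f)$, we have $M\in\mathsf{O}_8(\K)$, so the characteristic polynomial $\chi_M(Y)$ is self-reciprocal.

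The key step would be to reduce the assertion to a triangularisability observation. Suppose $M$ fixes a chamber $C$ of $R$. A chamber of the $\sD_4$ oriflamme geometry consists of an isotropic point, an isotropic line, and two maximal singular solids (one from each family) through that line; intersecting the two solids recovers a singular plane, so from $C$ one extracts a singular flag $V_1\subset V_2\subset V_3\subset V_4$ (of vector dimensions $1,2,3,4$) all fixed by $M$, with $V_4$ maximal singular. Since $\dim V=8=2\cdot 4$ the subspace $V_4$ is self-perpendicular, $V_4=V_4^{\perp}$, and as $M$ preserves $f$ it then stabilises the complete flag
$$
V_1\subset V_2\subset V_3\subset V_4\subset V_3^{\perp}\subset V_2^{\perp}\subset V_1^{\perp}\subset V
$$
of $V$. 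Consequently $M$ is conjugate over $\K$ to an upper triangular matrix, and therefore $\chi_M(Y)$ splits into linear factors over $\K$.

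It then remains to relate $\chi_M(Y)$ to $p(Y)$, and here I would compute the $8\times 8$ matrix $M$ directly and factor its characteristic polynomial, verifying that the self-reciprocal quadratic
$$
p(Y)=Y^2-(t_2a^2+t_1t_2b^2+t_1c^2-t_1t_2abc-2)Y+1
$$
is a divisor of $\chi_M(Y)$ (the constant term $1$ reflecting $M\in\mathsf{O}_8(\K)$, and the middle coefficient being the relevant trace-type invariant of the action on the natural module). Combining this with the previous paragraph gives the result: if $\theta$ fixes a chamber then $\chi_M(Y)$ splits completely over $\K$, so in particular its factor $p(Y)$ splits over $\K$.

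The hard part will be the explicit matrix computation: assembling $M$ from the twelve root-group factors of $u$, the four torus factors of $h$, and $w_{\sD_4}$, and then computing and factoring the degree-eight characteristic polynomial to exhibit $p(Y)$ as a divisor (most naturally carried out with the $\mathsf{MAGMA}$ sign conventions already fixed above). By contrast, the conceptual input—that a fixed chamber of the oriflamme complex yields a self-perpendicular maximal singular flag, hence full triangularisability of $M$—is routine once the $\sD_4$ oriflamme dictionary is made precise, and it is exactly what converts the algebraic divisibility $p\mid\chi_M$ into the desired splitting statement.
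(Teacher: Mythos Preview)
Your proposal addresses only the final sentence (which you acknowledge), and for that part it is correct and follows essentially the same route as the paper: reduce via \cref{lem:fixing} to the $\sD_4$ residue, observe that a fixed chamber forces triangularisability of the $8\times 8$ matrix $M$, hence $\chi_M$ splits over $\K$, and then compute that $p(Y)$ divides $\chi_M(Y)$. The paper phrases the triangularisability step more tersely (``$\theta$ can be conjugated in $G_{\sD_4}$ into the standard Borel subgroup''), whereas you spell out the oriflamme dictionary explicitly via the flag $V_1\subset\cdots\subset V_4=V_4^{\perp}\subset V_3^{\perp}\subset\cdots\subset V$; these are the same argument. The paper is slightly more precise at the end, computing $\chi_M(Y)=(Y-1)^4p(Y)^2$ rather than merely $p\mid\chi_M$, but for the stated conclusion your divisibility claim suffices.
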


\begin{proof}
By Theorem~\ref{thm:partgeneral}, and following the initial paragraph of the proof of Theorem~\ref{thm:E73chamberfixing}, if $\theta$ has opposition diagram $\sE_{7;4}$ then $\theta$ is conjugate to an element of the form 
\begin{align*}
\theta'&=u's_{\varphi_1}^{-1}s_{\varphi_2}^{-1}s_{\varphi_3}^{-1}s_{\varphi_4}^{-1}h_{\omega_1}(t_1)h_{\omega_3}(t_2)h_{\omega_4}(t_3)h_{\omega_6}(t_4),
\end{align*}
where $u'\in\Psi_J$ (with $J=\{1,3,4,6\}$, and $\varphi_1=\varphi_{\sE_7}$, $\varphi_2=\varphi_{\sD_6}$, $\varphi_3=\varphi_{\sD_4}$, and $\varphi_4=\alpha_3$).

We will now make explicit calculations to further restrict~$u'$. Write
\begin{align*}
u'&=x_{0112221}(a_1) x_{01112221}(a_2) x_{1 1 2 2 2 2 1}(a_3) x_{1 2 3 4 3 2 1}(a_4) x_{2 2 3 4 3 2 1}(a_5) x_{0 1 1 2 1 0 0}(a_6) x_{1 1 1 2 1 0 0}(a_7)\\
&\times x_{1 1 2 2 1 0 0}(a_8) x_{0 0 1 0 0 0 0}(a_9) x_{1 2 2 4 3 2 1}(a_{10})x_{1 0 1 0 0 0 0}(a_{11}) x_{1 0 0 0 0 0 0}(a_{12})
\end{align*}
(the roots appearing here are the twelve roots of $\Psi_J$). We consider the element $v\in W$ given by 
$$
Bx_{-\beta}(-1)x_{-\alpha}(-1)\theta' x_{-\alpha}(1)x_{-\beta}(1)B=BvB
$$
for various choices of $\alpha$ and $\beta$. Since $
v=\delta(x_{-\alpha}(1)x_{-\beta}(1)B,\theta' x_{-\alpha}(1)x_{-\beta}(1)B)$, if $\ell(v)>\ell(w_{S\backslash J}w_0)=60$ then we have a contradiction with the fact that $\theta$ has opposition diagram $\sE_{7;4}$. By computation (using $\mathsf{MAGMA}$ \cite{CMT:04}), if $\alpha=(0101000)$ and $\beta=(0111100)$ and $a_9\neq t_3a_6$ then $v=s_5s_7w_0$, a contradiction. Thus $a_9=t_3a_6$. Similarly, taking $\alpha=(0101000)$ and $\beta=(1111100)$ forces $a_{11}=t_3a_7$ (otherwise again $v=s_5s_7w_0$). Taking $\alpha=(0101000)$ and $\beta=(1223321)$ gives $a_{10}=t_3^{-1}a_3$ (otherwise again $v=s_5s_7w_0$). Taking $\alpha=(0000110)$ and $\beta=(0112211)$ forces $a_6=t_4a_1$ (otherwise $v=s_2s_7w_0$). Taking $\alpha=(0000110)$ and $\beta=(1112211)$ forces $a_7=t_4a_2$ (otherwise $v=s_2s_7w_0$). Taking $\alpha=(0000110)$ and $\beta=(1122211)$ forces $a_8=t_4a_3$ (otherwise $v=s_2s_7w_0$). Taking $\alpha=(0111000)$ and $\beta=(1111100)$ forces $a_{12}=t_2t_3t_4a_3$ (otherwise $v=s_5s_7w_0$). Taking $\alpha=(1111000)$ and $\beta=(1223321)$ forces $a_5= t_1^{-1}t_2^{-1}t_3^{-1}a_1$ (otherwise $v=s_5s_7w_0$). Finally, taking $\alpha=(0111000)$ and $\beta=(1223321)$ forces $a_4=-t_2^{-1}t_3^{-1}a_2$ (otherwise $v=s_5s_7w_0$).

Conjugating by the element $\su$ from Lemma~\ref{lem:magicelement} it follows that $\theta$ is conjugate to an element of the form $\theta''=u''w_{\sD_4}h$, where $h\in H_{\sD_4}$ is as in the statement of the theorem, and $u''\in U_{\sD_4}^+$ is of the form
\begin{align*}
u''&=x_{1000}(a_1) x_{1100}(-a_2) x_{1101}(a_3) x_{1111}(-t_2^{-1}t_3^{-1}a_2) x_{1211}(t_1^{-1}t_2^{-1}t_3^{-1}a_1)x_{0010}(t_4a_1)\\
&\times x_{0110}(-t_4a_2) x_{0111}(t_4a_3) x_{0001}(t_3t_4a_1)x_{1110}(-t_3^{-1}a_3)x_{0101}(t_3t_4a_2)x_{0100}(t_2t_3t_4a_3).
\end{align*} 
Finally, replacing $\theta''$ by $h^{-1}\theta'' h$, and setting $a=t_3t_4a_1$, $b=t_3t_4a_2$, and $c=t_2t_3t_4a_3$ we see that $\theta$ is conjugate to $uhw_{\sD_4}$, with $u$ and $h$ as in the statement of the theorem.

Now, if $\theta=uhw_{\sD_4}$ fixes a chamber of $\sE_7$ then by Lemma~\ref{lem:fixing} it also fixes a chamber of the $\sD_4$ residue. Thus $\theta$ can be conjugated (in $G_{\sD_4}$) into the standard Borel subgroup of $G_{\sD_4}$ consisting of lower triangular matrices (lower triangular due to the right action). Thus all eigenvalues of the $8\times 8$ matrix representing $\theta$ lie in~$\K$. Direct calculation, using the matrices listed above, gives
\begin{align*}
\det(\theta-\lambda I)&=(\lambda-1)^4p(\lambda)^2,
\end{align*}
where 
$
p(\lambda)=\lambda^2-(t_2a^2+t_1t_2b^2+t_1c^2-t_1t_2abc-2)\lambda+1
$, hence the result.
\end{proof}

\begin{remark}
In fact, $\theta$ fixes a chamber of $\sE_7(\K)$ if and only if the polynomial $p(Y)$ splits over~$\K$. The converse will be proved in the theorem below, where we will show that if $z\in \K$ satisfies $p(z)=0$ then $\theta$ can be conjugated into the standard Borel of $G_{\sD_4}$, and hence fixes a chamber of the $\sD_4$ residue (and hence the $\sE_7$ building).  
\end{remark}

The following lemmas will be used.

\begin{lemma}\label{lem:conjugate}
Let $A=\{\alpha\in\Phi_{\sD_4}^+\mid \alpha\geq \alpha_2\}=\Phi_{\sD_4}^+\backslash\{\alpha_1,\alpha_3,\alpha_4\}$ (a closed sets of roots). Suppose that $\theta\in U_A$ with $\theta\notin U_{A\backslash\{\alpha_2\}}$. Then $\theta$ is conjugate to an element of the form
$$
\theta'=x_{1111}(b_0)x_{0100}(b_1)x_{1110}(b_2)x_{1101}(b_3)x_{0111}(b_4)x_{1211}(b_5).
$$
If $b_0,b_1,b_2,b_3,b_4\neq 0$ then $\theta'$ is conjugate to
$$
\theta''=x_{1111}(1)x_{0100}(-b_0^2b_1b_2^{-1}b_3^{-1}b_4^{-1})x_{1110}(1)x_{1101}(1)x_{0111}(1).
$$
\end{lemma}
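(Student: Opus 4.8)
The plan is to prove Lemma~\ref{lem:conjugate} in two stages corresponding to its two sentences. First I would reduce a general element $\theta\in U_A\setminus U_{A\setminus\{\alpha_2\}}$ to the special form $\theta'$ by conjugating with elements of the root subgroups $U_{\alpha}$, $\alpha\in A$. The set $A$ consists of the six positive roots of $\Phi_{\sD_4}$ that dominate $\alpha_2=(0100)$, namely $\alpha_2=(0100)$, together with $(1110)$, $(1101)$, $(0111)$, $(1111)$, $(1211)$ (in the $\sD_4$ coordinates $\alpha_2=e_2-e_3$, so these are exactly the roots with a positive $e_2$ or appropriate component). The key structural fact is that $A$ is a closed set with $\alpha_2$ as its unique minimal element; consequently, writing $\theta=x_{\alpha_2}(a)\cdot(\text{higher terms})$ with $a\neq 0$ (this is the hypothesis $\theta\notin U_{A\setminus\{\alpha_2\}}$), I can conjugate by suitable $x_{\gamma}(t)$ with $\gamma\in\Phi_{\sD_4}^+$ to sweep out the higher root subgroups. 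The mechanism is the commutator relation: conjugating $x_{\alpha_2}(a)$ by $x_{\gamma}(t)$ produces correction terms in $U_{\alpha_2+\gamma}$ whenever $\alpha_2+\gamma\in\Phi$, and since $\alpha_2+\gamma$ ranges over the other five roots of $A$ as $\gamma$ runs over $\{\alpha_3,\alpha_4,(0110),(0011),\dots\}$, I can adjust the coefficients $b_1,\dots,b_5$ to a normal form. (In practice one normalises so that the relabelled coefficients are $b_0,\dots,b_5$ attached to the listed roots; the precise bookkeeping is a routine commutator computation in $\mathsf{SL}_2$-pairs.)

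For the second stage, assuming $b_0,b_1,b_2,b_3,b_4\neq 0$, I would exhibit an explicit conjugating element, built from torus elements $h_{\lambda}(t)$ and root group elements, taking $\theta'$ to $\theta''$. The torus part rescales the coefficients: conjugating by a product $h\in H$ acts on each $x_{\beta}(b)$ by $x_{\beta}(t^{\langle\beta,\lambda\rangle}b)$, and one checks that the four-dimensional torus of $G_{\sD_4}$ acts with enough freedom on the five roots $(1111),(1110),(1101),(0111)$ (and hence on $(1211)$ by closure) to normalise $b_0=b_2=b_3=b_4=1$ simultaneously. The surviving coefficient on $x_{0100}$ is then determined by an invariant of the torus action, and a direct computation of the relevant weight pairings yields the stated value $-b_0^2b_1b_2^{-1}b_3^{-1}b_4^{-1}$. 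Finally, the coefficient $b_5$ on $x_{1211}(b_5)$ must be removed; I would do this by a further conjugation by $x_{-\gamma}(z)$ for an appropriate $\gamma$, or observe that $x_{1211}$ lies in the centre of $U_A$ (since $(1211)$ is the highest root of $\Phi_{\sD_4}$) and can be absorbed using the commutator $[x_{\alpha_2}, x_{(1111)}]$ or similar, so that a suitable $x_{-\alpha}(z)$-conjugation clears it while fixing the already-normalised terms.

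The main obstacle I anticipate is the careful sign- and coefficient-tracking in the commutator relations, which depend on the structure constants $N_{\alpha\beta}$ fixed by the $\mathsf{MAGMA}$ conventions adopted in the paper. The five roots $(1110),(1101),(0111)$ lie at the ``middle'' of the $\sD_4$ root poset and their pairwise sums and differences must be checked against $\Phi_{\sD_4}$ to know which commutators are nontrivial; in particular $(1111)=(1110)+(0001)=(0111)+(1000)$ and $(1211)=(1110)+(0101)=\dots$ admit several decompositions, so the correction terms interfere and the coefficient $b_5$ is governed by a sum of products of the $b_i$. Verifying that the torus indeed acts transitively enough (that the $4\times 4$ matrix of weight pairings $\langle\beta,\omega_j\rangle$ restricted to the four normalised roots has full rank) is the crux; I expect this to come down to a short linear-algebra check over $\ZZ$, which I would carry out using the explicit simple-root identifications $\alpha_1'=e_1-e_2$, $\alpha_2'=e_2-e_3$, $\alpha_3'=e_3-e_4$, $\alpha_4'=e_3+e_4$ given just above the theorem. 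Once that rank condition is confirmed, the precise value of the residual coefficient follows by substitution, and the stated closed form $-b_0^2b_1b_2^{-1}b_3^{-1}b_4^{-1}$ is the expected monomial invariant.
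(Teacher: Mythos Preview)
Your proposal contains a genuine counting error that undermines the first stage. The set $A=\Phi_{\sD_4}^+\setminus\{\alpha_1,\alpha_3,\alpha_4\}$ has \emph{nine} elements, not six: you have omitted the three height-two roots $(1100),(0110),(0101)$. A general $\theta\in U_A$ thus carries nine parameters, and the content of the first sentence is precisely to eliminate those three extra terms. The paper does this by conjugating by
\[
g=x_{\alpha_1}(\pm a_{1100}/a_{0100})\,x_{\alpha_3}(\pm a_{0110}/a_{0100})\,x_{\alpha_4}(\pm a_{0101}/a_{0100}),
\]
where the conjugating simple roots $\alpha_1,\alpha_3,\alpha_4$ lie \emph{outside} $A$ (not inside, as you suggest). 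Each commutator $[x_{\alpha_i}(t),x_{\alpha_2}(a_{0100})]$ lands in $U_{\alpha_2+\alpha_i}$, and since $a_{0100}\neq 0$ one solves three independent linear equations in $t$ to kill the $(1100),(0110),(0101)$ coefficients; the three outer simple roots are mutually orthogonal, so there is no interference. (Incidentally, the $(0011)$ you mention is not a root of $\sD_4$.)

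Your second stage is essentially correct and matches the paper: a torus element normalises $b_0,b_2,b_3,b_4$ to $1$, leaving the $x_{0100}$ coefficient as the stated monomial $-b_0^2b_1b_2^{-1}b_3^{-1}b_4^{-1}$, and then the $x_{1211}(b_5)$ term is removed by conjugating with $x_{0100}(\pm b_5/b_0)$, using $(0100)+(1111)=(1211)$. No negative root subgroups are needed.
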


\begin{proof}
Write $\theta=\prod_{\alpha\in A}x_{\alpha}(a_{\alpha})$ (with the product taken in any fixed order). Since $\theta\notin U_{A\backslash \{\alpha_2\}}$ we have $a_{0100}\neq 0$. Conjugating $\theta$ by $g=x_1(\pm a_{1100}/a_{0100})x_3(\pm a_{0110}/a_{0100})x_4(\pm a_{0101}/a_{0100})$ for an appropriate choice of signs shows that $\theta\sim\theta'$. If $b_0,b_1,b_2,b_3,b_4\neq 0$ then conjugating $\theta'$ by 
$$h_{\alpha_2^{\vee}}(b_0)h_{\omega_1}(b_4)h_{\omega_2}(b_2^{-1}b_3^{-1}b_4^{-1})h_{\omega_3}(b_3)h_{\omega_4}(b_2)x_{0100}(\pm b_5/b_0)$$ for an appropriate sign shows that $\theta'\sim \theta''$.
\end{proof}

\begin{lemma}\label{lem:4perpD4}
Let $\beta_1,\beta_2,\beta_3,\beta_4\in\Phi_{\sD_4}^+$ be mutually perpendicular roots of the $\sD_4$ root system. If $\theta=x_{\beta_1}(a_1)x_{\beta_2}(a_2)x_{\beta_3}(a_3)x_{\beta_4}(a_4)$ with $b_1,b_2,b_3,b_4\neq 0$ then $\theta$ is conjugate to an element of the form $x_{0100}(a)x_{1110}(1)x_{1101}(1)x_{0111}(1)$ with $a\neq 0$. 
\end{lemma}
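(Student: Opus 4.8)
The plan is to exploit the fact that in a $\sD_4$ root system any set of four mutually perpendicular positive roots must be a maximal orthogonal set, and that the Weyl group $W(\sD_4)$ acts transitively on such sets. First I would classify the maximal orthogonal subsets of $\Phi_{\sD_4}^+$. Using the realisation $\alpha_1'=e_1-e_2$, $\alpha_2'=e_2-e_3$, $\alpha_3'=e_3-e_4$, $\alpha_4'=e_3+e_4$, the positive roots are the $\pm e_i \pm e_j$ with the appropriate sign/ordering conventions, and a quadruple of pairwise orthogonal roots must, up to the sign patterns available among positive roots, consist of the four roots $\{e_1-e_2,\,e_1+e_2,\,e_3-e_4,\,e_3+e_4\}$ or a Weyl-translate thereof. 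The key point is that $W(\sD_4)$ (which acts as even sign changes together with all coordinate permutations) is transitive on the collection of such maximal orthogonal quadruples, so after conjugating $\theta$ by a representative $n_w$ of a suitable $w\in W(\sD_4)$ I may assume that $\{\beta_1,\beta_2,\beta_3,\beta_4\}$ is one fixed such quadruple.

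Next I would identify a convenient target quadruple in the explicit $\sE_7$-labelled coordinates. The four roots $0100=e_1-e_2$ (recall $\alpha_2$ corresponds to $\alpha_1'=e_1-e_2$), together with $1110$, $1101$, $0111$, are pairwise perpendicular: a direct check in the $\sD_4$ inner product shows these correspond exactly to a maximal orthogonal quadruple (for instance $0100\leftrightarrow e_1-e_2$, and the other three to $e_1+e_2$, $e_3-e_4$, $e_3+e_4$ up to relabelling). Thus, after the Weyl-conjugation of the previous paragraph, $\theta$ is conjugate to $x_{0100}(a_1)x_{1110}(a_2)x_{1101}(a_3)x_{0111}(a_4)$ with all $a_i\neq 0$ (the nonvanishing being preserved since $n_w$ permutes the root subgroups and rescales the parameters by $\pm1$). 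Because these four root subgroups mutually commute (the roots being orthogonal, and no two summing to a root in $\sD_4$), the order of the factors is irrelevant.

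Finally I would normalise three of the four parameters to $1$ by conjugating with a torus element. Conjugation by $h_\lambda(t)$ sends $x_\beta(a)\mapsto x_\beta(t^{\langle\beta,\lambda\rangle}a)$, and since the four chosen roots are linearly independent over $\QQ$ (they span the $4$-dimensional root space) the four exponents $\langle\beta_i,\lambda\rangle$ can be chosen independently by varying $\lambda$ over the coweight lattice. Hence there is a torus element scaling $a_2,a_3,a_4$ to $1$ and sending $a_1$ to some nonzero value $a$, giving the claimed normal form $x_{0100}(a)x_{1110}(1)x_{1101}(1)x_{0111}(1)$ with $a\neq0$.

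The main obstacle I anticipate is the bookkeeping in the first step: verifying that $W(\sD_4)$ really is transitive on maximal orthogonal quadruples of \emph{positive} roots and that a Weyl representative can be chosen mapping an arbitrary such quadruple to the designated one while keeping track of the sign/scalar changes of the parameters (so that nonvanishing is preserved). The triality symmetry of $\sD_4$ and the explicit $8\times 8$ matrices listed earlier make this checkable, but care is needed because Weyl elements may send positive roots to negatives, in which case one absorbs the resulting sign and possible reordering into the parameters $a_i$ before proceeding to the torus normalisation.
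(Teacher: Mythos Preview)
Your approach is essentially the same as the paper's: reduce to a fixed orthogonal quadruple via Weyl conjugation, then normalise parameters with a torus element. The paper is more concrete --- it simply lists the three maximal orthogonal quadruples $X_1=\{(1000),(0010),(0001),(1211)\}$, $X_2=\{(1100),(0110),(0101),(1111)\}$, $X_3=\{(0100),(1110),(0111),(1101)\}$ and exhibits the simple reflections $s_2$ and $s_1$ mapping $X_1\to X_2\to X_3$ --- whereas you argue more abstractly via Weyl transitivity; this directly resolves the bookkeeping worry you flagged. One small imprecision: your claim that the four exponents $\langle\beta_i,\lambda\rangle$ can be chosen \emph{independently} over the torus is slightly too strong (the relevant coefficient matrix has determinant~$2$), but you only need to normalise three of the four parameters, and that underdetermined system is always solvable over $\K^\times$, so your conclusion stands.
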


\begin{proof}
The sets of $4$ mutually perpendicular roots of $\Phi_{\sD_4}^+$ are $X_1=\{(1000),(0010),(0001),(1211)\}$, $X_2=\{(1100),(0110),(0101),(1111)\}$, and $X_3=\{(0100),(1110),(0111),(1101)\}$. Then $s_2$ conjugates $X_1$ to $X_2$, and $s_1$ conjugates $X_2$ to $X_3$, and it follows that $\theta$ is conjugate to an element of the form $x_{0100}(\pm a_1)x_{1110}(\pm a_2)x_{1101}(\pm a_3)x_{0111}(\pm a_4)$. One can now conjugate by an appropriate element of~$H$. 
\end{proof}

We now prove Theorem~\ref{thm:E74chamberfixingmain1}, which we restate below. 

\begin{thm}\label{thm:E74chamberfixingmain}
An automorphism $\theta$ of the building $\sE_7(\K)$ with $|\K|>2$ has opposition diagram $\sE_{7;4}$ and fixes a chamber if and only if $\theta$ is conjugate to one of the following elements
\begin{compactenum}[$(1)$]
\item $x_{0100}(a)x_{1110}(1)x_{1101}(1)x_{0111}(1)$ with $a\neq 0$; 
\item $x_{1111}(1)x_{0100}(a)x_{1110}(1)x_{1101}(1)x_{0111}(1)$ with $a\neq 0$; 
\item $h_{\varphi^{\vee}}(c)$ with $c\neq 0,1$;
\item $x_{\varphi}(1)h_{\varphi^{\vee}}(-1)$ (with $\kar(\K)\neq 2$).
\end{compactenum}
\end{thm}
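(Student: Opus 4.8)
The plan is to prove Theorem~\ref{thm:E74chamberfixingmain} by combining Theorem~\ref{thm:E74chamberfixing} with an explicit case analysis inside the $\sD_4$ Chevalley group. The forward direction (that the four listed families are domestic with opposition diagram $\sE_{7;4}$ and fix a chamber) should follow, for cases $(1)$, $(3)$ by citing the existence results in \cite{PVMexc} (these were the previously known examples), while cases $(2)$ and $(4)$ are the new examples and must be verified directly; here I would simply exhibit a fixed chamber and compute the displacement of the base chamber (or equivalently read off the opposition diagram from the Weyl distance), using the explicit $8\times 8$ matrices listed before Theorem~\ref{thm:E74chamberfixing}. The harder converse direction is where the real work lies.

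For the converse, I would start from Theorem~\ref{thm:E74chamberfixing}: any $\theta$ with $\Diag(\theta)=\sE_{7;4}$ is conjugate to $uhw_{\sD_4}\in G_{\sD_4}$, and if it fixes a chamber then the polynomial $p(Y)=Y^2-(t_2a^2+t_1t_2b^2+t_1c^2-t_1t_2abc-2)Y+1$ splits over $\K$. Let $z\in\K$ be a root of $p$, so $z+z^{-1}$ equals the trace term. The strategy is to conjugate $\theta$ by a carefully chosen element of $G_{\sD_4}$ (analogous to the elements $g_1,g_2$ constructed in the proof of Theorem~\ref{thm:E73chamberfixing}) so as to land in the standard Borel of $G_{\sD_4}$, thereby confirming a chamber is fixed and simultaneously exposing the \emph{form} of the fixed-point action. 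The eigenvalue computation $\det(\theta-\lambda I)=(\lambda-1)^4p(\lambda)^2$ drives the case split: the repeated eigenvalue $1$ versus the pair of roots $z,z^{-1}$ of $p$, together with whether $z=1$ (equivalently whether $p$ has $1$ as a root, i.e. the trace term equals $2$) determines whether $\theta$ is unipotent or semisimple or a genuine mixture.

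The case analysis then proceeds as follows. If the trace term equals $2$ then $z=z^{-1}=1$ and $\theta$ is unipotent; in this situation the relevant data is captured by the component of $u$ lying in the root subgroups perpendicular to the $\sA_1$ generated by $\alpha_2$, and I would invoke Lemma~\ref{lem:4perpD4} and Lemma~\ref{lem:conjugate} to reduce the unipotent element to one of the normal forms $x_{0100}(a)x_{1110}(1)x_{1101}(1)x_{0111}(1)$ or $x_{1111}(1)x_{0100}(a)x_{1110}(1)x_{1101}(1)x_{0111}(1)$, giving cases $(1)$ and $(2)$; the dichotomy between these two reflects whether the leading $x_{1111}$-term survives the reduction, which is exactly the hypothesis distinguishing the two lemmas. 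If the trace term is $\neq 2$ then $\theta$ is semisimple with eigenvalues $1$ (multiplicity four) and $z,z^{-1}$ (each multiplicity two), so after conjugation into $H$-form $\theta$ is a diagonalisable torus element; identifying the coroot direction shows $\theta$ is conjugate to $h_{\varphi^\vee}(c)$ with $c=z^{-2}\neq0,1$, giving case $(3)$. The remaining borderline is the mixed case $\kar(\K)\neq 2$ with $z=-1$ (so the trace term is $-2$ and $p(Y)=(Y+1)^2$): here $\theta$ is neither unipotent nor a homology, and I expect this to reduce to $x_{\varphi}(1)h_{\varphi^\vee}(-1)$, case $(4)$.

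The main obstacle I anticipate is the bookkeeping in the unipotent case: showing that \emph{every} admissible $u$ (after imposing the constraints from Theorem~\ref{thm:E74chamberfixing} and specialising to trace$\,=2$) is conjugate into exactly one of the two normal forms $(1)$ or $(2)$, and that these two are genuinely inequivalent. This requires tracking which root-subgroup coefficients can be scaled or killed by conjugation by $H$ and by the $\sD_4$ Weyl elements $s_1,s_2$ (as in Lemma~\ref{lem:4perpD4}), and recognising the invariant $b_0^2b_1b_2^{-1}b_3^{-1}b_4^{-1}$ of Lemma~\ref{lem:conjugate} that detects the parameter $a\neq 0$. The semisimple and mixed cases are comparatively routine eigenvalue/coroot identifications once the splitting of $p(Y)$ is in hand, though verifying in case $(4)$ that the element truly fails to be conjugate to either a unipotent element or a homology — the phenomenon flagged as new in the introduction — will require a short separate argument, most cleanly via its Jordan decomposition (a nontrivial unipotent part together with a nontrivial semisimple part, both supported on the single $\sA_1$ through $\varphi$), which I would record as the content of Proposition~\ref{prop:extraauto}.
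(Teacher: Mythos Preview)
Your high-level strategy matches the paper's: reduce via Theorem~\ref{thm:E74chamberfixing} to $\theta=uhw_{\sD_4}$, split on the root $z$ of $p(Y)$, and in each case conjugate into the Borel of $G_{\sD_4}$ before applying Lemmas~\ref{lem:conjugate} and~\ref{lem:4perpD4}. The converse direction is also handled as you suggest, with Proposition~\ref{prop:extraauto} supplying case~$(4)$.

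However, you substantially underestimate the $z=1$ case. The analogy with the elements $g_1,g_2$ from the $\sE_{7;3}$ proof is misleading: there the conjugating elements were short explicit products of root and Weyl elements, whereas here the paper must construct an entire standard basis $v_1,\ldots,v_8$ of the $8$-dimensional orthogonal module (so that the chamber in the oriflamme complex is visibly fixed), compute $g\theta g^{-1}$ explicitly, and only \emph{then} recognise an element of $U_A$ to which Lemma~\ref{lem:conjugate} applies. This requires three separate subcases governed by the vanishing of $q=t_2b^2-c^2$ and $r=t_1c^2-4$, with different bases in each; the resulting parameter in form~$(2)$ is not read off from ``the component of $u$ perpendicular to $\alpha_2$'' but emerges as expressions such as $-t_1c^2$, $t_1c^2-4$, or $-(t_2a^2-2)^2$ depending on the subcase. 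Your sentence ``the relevant data is captured by the component of $u$'' is the gap: $\theta=uhw_{\sD_4}$ is not yet in $U_A$, and getting it there is the bulk of the work.

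Two smaller points. In the $z=-1$ case the outcome bifurcates: one obtains $x_{\varphi}(1)h_{\varphi^\vee}(-1)$ only when a certain parameter (in the paper, $a$) is nonzero, and otherwise the homology $h_{\varphi^\vee}(-1)$ of type~$(3)$; your write-up should not claim that $z=-1$ automatically yields~$(4)$. And for $z\neq\pm1$ the paper conjugates $\theta$ to $h_{\alpha^\vee}(z)$, not $h_{\varphi^\vee}(z^{-2})$; your eigenvalue bookkeeping slipped there.
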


\begin{proof}
Suppose that $\theta$ fixes a chamber and has opposition diagram~$\sE_{7;4}$. Then by Theorem~\ref{thm:E74chamberfixing} we may assume (up to conjugation) that $
\theta=uhw_{\sD_4},
$
where $u,h\in G_{\sD_4}$ are as in Theorem~\ref{thm:E74chamberfixing}, and moreover the polynomial $
p(Y)=Y^2-(t_2a^2+t_1t_2b^2+t_1c^2-t_1t_2abc-2)Y+1
$
has a root $z\in\K$. Then $z^{-1}\in\K$ is also a root of $p(Y)$. 

We shall now explicitly conjugate $\theta$ into the standard Borel (the subgroup of lower triangular matrices in $G_{\sD_4}$). The working depends on various cases for $z,z^{-1}$. 
\smallskip

\noindent \textbf{Case: $z=1$}. The equation $p(1)=0$ gives the equation
\begin{align}\label{eq:mainrelation}
t_2a^2+t_1t_2b^2+t_1c^2-t_1t_2abc-4=0,
\end{align}
and we make frequent use of this relation in the calculations below for this case. Let $(\rho,V)$ be the standard $8$-dimensional representation of $\sD_4$. By a \textit{standard basis} of $V$ we shall mean a basis $\{v_1,v_2,\ldots,v_8\}$ of $V$ with $f(v_i)=0$ for $1\leq i\leq 8$, and $(v_i,v_{8-i+1})=1$ for $1\leq i\leq 4$, and $(v_i,v_j)=0$ for all other pairs $i,j$. 
\smallskip

\noindent\textit{Subcase: $q=t_2b^2-c^2\neq 0$ and $r=t_1c^2-4\neq 0$.} Let 
\begin{align*}
v_1&=(-t_1t_2^2t_3^3t_4^2c,\,  t_1t_2^2t_3^2t_4b,\,   -t_2^2t_3^2t_4a,\, 2t_2t_3t_4,\,   2t_2t_3^2t_4, \,t_2t_3t_4a,\,  t_2t_3t_4(ac-b),\,   c)\\
v_2&=(-2t_1t_2^2t_3^3t_4^2,\,   t_1t_2^2t_3^2t_4a,\, t_1t_2^2t_3^2t_4(b-ac),\, 
    t_1t_2t_3t_4c,\,   t_1t_2t_3^2t_4c,\,   t_1t_2t_3t_4b,\, t_2t_3t_4a,\,   2)\\
    v_3&=(0,\,   0,\,   0,\,   0,\,  t_1t_2t_3^2t_4c,\,t_1t_2t_3t_4b,\,t_2t_3t_4a,\, 2)\\
v_4&=(t_1t_2t_3^2t_4c,\,   -t_1t_2t_3b,\,   t_2t_3a,\, -2,\,   0,\,   0,\,   0,\,   0).
\end{align*}
Then $f(v_i)=0$ for $1\leq i\leq 4$ and $(v_i,v_j)=0$ for $1\leq i,j\leq 4$. Moreover $v_1,v_2,v_3,v_4$ are linearly independent (using the assumption $q\neq 0$). Recalling that $\rho(\theta)$ acts on (row) vectors on the right, we note that $v_1$ and $v_2$ are $1$-eigenvectors of~$\theta$. Thus the points $P_i=\langle v_i\rangle$ for $1\leq i\leq 4$ generate a $3$-space stabilised by $\theta$ (with $P_1$ and $P_2$ generating a line full of fixed points). 

We now extend $v_1,v_2,v_3,v_4$ to a standard basis $v_1,\ldots,v_8$. There is considerable choice in doing this, however fixing the shape
\begin{align*}
v_5&=(0,0,*,*,0,0,*,*)\\
v_6&=(0,0,*,*,0,0,*,*)\\
v_7&=(0,0,0,0,0,0,*,*)\\
v_8&=(0,0,*,*,0,0,0,0)
\end{align*}
makes the choice unique (here the condition $r\neq 0$ is required to ensure the normalisation condition $(v_i,v_{8-i+1})=1$). Thus, assuming $q,r\neq 0$, the chamber 
\begin{align*}
\langle v_1\rangle\leq \langle v_1,v_2\rangle\leq \begin{aligned}&\langle v_1,v_2,v_3,v_4\rangle\\
&\langle v_1,v_2,v_3,v_5\rangle\end{aligned}
\end{align*}
of the $\sD_4$ oriflamme complex is fixed by $\theta$. Writing $g$ for the matrix with rows $v_1,\ldots,v_8$, the element $g\theta g^{-1}$ is necessarily in $B$. Calculation shows that $g\theta g^{-1}\in U_{A}$ with $A$ as in Lemma~\ref{lem:conjugate}. Carrying through the conjugations outlined in Lemma~\ref{lem:conjugate}, it turns out that $\theta$ is conjugate to
$$
x_{1111}(1)x_{0100}(-t_1c^2)x_{1110}(1)x_{1101}(1)x_{0111}(1),
$$
and element of the form (2).

\noindent\textit{Subcase: $q=0$.} Suppose that $q=0$. Then $t_2=t_0^2$ for some $t_0\in\K$, and $b=t_0^{-1}c$. Equation~(\ref{eq:mainrelation}) implies that either $a=2t_0^{-1}$ of $a=t_0^{-1}(t_1c^2-2)$, and we consider these cases in turn. If $a=2t_0^{-1}$ we define a standard basis by
\begin{align*}
v_1&=(-t_0^4t_1t_3^3t_4^2,\,t_0^3t_1t_3^2t_4,\,0,0,0,0,t_0t_3t_4,\,1)&
v_2&=(0,0,-t_0^2t_3,\,t_0,\,t_0t_3,\,1,0,0)\\
v_3&=(0,0,0,0,t_0t_3,\,1,0,0)&
v_4&=(0,0,0,0,0,0,t_0t_3t_4,\,1)\\
v_5&=(1,0,0,0,0,0,-t_0^{-3}t_1^{-1}t_3^{-2}t_4^{-1},\,0)&
v_6&=(0,0,1,0,-t_0^{-1},0,0,0)\\
v_7&=(0,0,0,0,t_0^{-1},0,0,0)&
v_8&=(0,0,0,0,0,0,t_0^{-3}t_1^{-1}t_3^{-2}t_4^{-1},0).
\end{align*}
Note that $v_1$ and $v_2$ are $1$-eigenvectors, and again we have a fixed chamber. Writing $g$ for the matrix with rows $v_1,\ldots,v_8$ we have (by calculation)
$$
g\theta g^{-1}=x_{0101}(-ct_0^3t_3^2t_4)x_{0111}(t_0^{-2}t_3^{-1})x_{1101}(t_0^{-4}t_1^{-1}t_3^{-3}t_4^{-2})x_{1211}(ct_0^{-3}t_3^{-2}t_4^{-1})x_{1110}(-1)x_{0100}(1).
$$
Carrying through the conjugations outlined in Lemma~\ref{lem:conjugate} we see that $\theta$ is conjugate to 
$$
x_{1111}(1)x_{0100}(-t_1c^2)x_{1110}(1)x_{1101}(1)x_{0111}(1),
$$
again an element of the form (2) (this time note that $t_1c^2=4$ is permitted).

Now suppose that $a=t_0^{-1}(t_1c^2-2)$. We take $v_1$ and $v_2$ as in the $q,r\neq 0$ case (these two fixed vectors remain linearly independent). If $r=0$ then $a=2t_0^{-1}$, and this case was dealt with in the above paragraph. Similarly if $c=0$ then $a=-2t_0^{-1}$, which is also dealt with above (replacing $t_0$ by $-t_0$). Thus we assume $r\neq 0$ and $c\neq 0$. Let
\begin{align*}
v_3&=(-t_0t_1t_3t_4c,0,1,t_0^{-1}t_3^{-1},0,0,0,0)&
v_4&=(-t_0t_3t_4(t_1c^2-1),1,0,t_0^{-1}t_3^{-1}c,0,0,0,0)\\
v_5&=(-t_0^3t_1t_3^2t_4(t_1c^2-1),0,t_0^2t_1t_3c,0,0,0,1,0)&
v_6&=(t_0^3t_1t_3^2t_4c(t_1c^2-2),-t_0^2t_1t_3c,0,-t_0,0,1,0,0)\\
v_7&=(-2r^{-1},0,0,t_0^{-2}t_3^{-2}t_4^{-1}cr^{-1},0,0,0,0)&
v_8&=(t_1cr^{-1},0,0,-2t_0^{-2}t_3^{-2}t_4^{-1}r^{-1},0,0,0,0).
\end{align*}
Then $v_1,\ldots,v_8$ is a standard basis, and hence we again have a fixed chamber. Writing $g$ for the matrix with rows $v_1,\ldots,v_8$ calculation shows that $g\theta g^{-1}\in U_A$, and carrying out the conjugations outlined in Lemma~\ref{lem:conjugate} gives
\begin{align*}
\theta\sim x_{1111}(1)x_{0100}(t_1c^2-4)x_{1110}(1)x_{1101}(1)x_{0111}(1),
\end{align*}
which is again of the form (2). 

\noindent\textit{Subcase: $r=0$.} If $r=0$ then $t_1=t_0'^2$ for some $t_0'\in\K$, and $c=2t_0'^{-1}$. Equation~(\ref{eq:mainrelation}) then gives $b=t_0'^{-1}a$. The following gives a standard basis
\begin{align*}
v_1&=(-t_0't_2t_3^2t_4,0,0,1,t_3,a,t_0'^{-1}a,t_0'^{-1}t_2^{-1}t_3^{-1}t_4^{-1})&
v_2&=(0,t_0',-1,0,0,-t_2^{-1}t_3^{-1},-t_0'^{-1}t_2^{-1}t_3^{-1},0)\\
v_3&=(0,0,0,0,t_0't_2t_3^2t_4,0,0,1)&
v_4&=(0,0,0,0,0,t_0',1,0)\\
v_5&=(0,1,0,0,-t_0'^{-1}a,-t_0'^{-1}t_2^{-1}t_3^{-1},0,0)&
v_6&=(1,0,0,0,-t_0'^{-1}t_2^{-1}t_3^{-1}t_4^{-1},0,0,0)\\
v_7&=(0,0,0,0,0,-1,0,0)&
v_8&=(0,0,0,0,1,0,0,0),
\end{align*}
with $v_1$ and $v_2$ fixed vectors. Writing $g$ for the matrix with rows $v_1,\ldots,v_8$ we again have $g\theta g^{-1}\in U_A$. If $a=0$ then $\theta\in U_{A\backslash\{\alpha_2\}}$, specifically
$$
g\theta g^{-1}=x_{0101}(t_0'^{-1})x_{1111}(-t_0'^{-1}t_2^{-1}t_3^{-2}t_4^{-1})x_{0110}(-t_0't_2t_3)x_{1100}(-t_0't_2t_3t_4).
$$
By Lemma~\ref{lem:4perpD4} this is conjugate to an element of the form (1).

If $a\neq 0$ then $g\theta g^{-1}\notin U_{A\backslash\{\alpha_2\}}$, and carrying through the first conjugation described in Lemma~\ref{lem:conjugate} we see that $\theta$ is conjugate to
\begin{align*}
\theta'&=x_{1111}(-(t_2a^2-2)a^{-2}t_0'^{-1}t_2^{-2}t_3^{-2}t_4^{-1})x_{0100}(at_0't_2^2t_3^2t_4)x_{1110}(-t_0'a^{-1})\\
&\qquad\times x_{1101}(t_0'^{-1}t_2^{-1}t_3^{-1}a^{-1})x_{0111}(t_0'^{-1}t_2^{-1}t_3^{-1}t_4^{-1}a^{-1})x_{1211}(-(t_2a^2-1)a^{-1}).
\end{align*}
If $t_2a^2=2$ then
\begin{align*}
\theta'&=x_{0100}(at_0't_2^2t_3^2t_4)x_{1110}(-t_0'a^{-1})x_{1101}(t_0'^{-1}t_2^{-1}t_3^{-1}a^{-1})x_{0111}(t_0'^{-1}t_2^{-1}t_3^{-1}t_4^{-1}a^{-1})x_{1211}(-a^{-1}).
\end{align*}
One can now conjugate by an element $x_{1100}(*)$ to remove the $x_{1211}(*)$ term, giving a product of $4$ mutually perpendicular root elations, which is hence of form (1) by Lemma~\ref{lem:4perpD4}. 

If $t_2a^2\neq 2$ then carrying through the second conjugation described in Lemma~\ref{lem:conjugate} we see that $\theta$ is conjugate to 
\begin{align*}
x_{1111}(1)x_{0100}(-(t_2a^2-2)^2)x_{1110}(1)x_{1101}(1)x_{0111}(1).
\end{align*}

\smallskip

\noindent\textbf{Case: $z= -1$.} We will assume $\kar(\K)\neq 2$ (otherwise we are in the previous case). If $z=-1$ (that is, $z=z^{-1}$, as $z\neq 1$) then the equation $p(-1)=0$ gives 
$$
t_2a^2+t_1t_2b^2+t_1c^2-t_1t_2abc=0.
$$
Let $s=t_2b^2+c^2$. Define isotropic vectors
\begin{align*}
v_1&=(-t_1t_2t_3^2t_4c,t_1t_2t_3b,-t_2t_3a,0,0,a,ac-b,t_2^{-1}t_3^{-1}t_4^{-1}c)\\
v_2&=(0,t_3^{-1}t_4^{-1}a,t_3^{-1}t_4^{-1}(b-ac),t_2^{-1}t_3^{-2}t_4^{-1}c,-t_2^{-1}t_3^{-1}t_4^{-1}c,-t_2^{-1}t_3^{-2}t_4^{-1}b,-t_1^{-1}t_2^{-1}t_3^{-2}t_4^{-1}a,0)\\
v_3&=(t_3t_4sc,-sb,t_1^{-1}sa,0,0,t_1^{-1}t_2^{-1}t_3^{-1}sa,t_1^{-1}t_2^{-1}t_3^{-1}s(ac-b),t_1^{-1}t_2^{-2}t_3^{-2}t_4^{-1}sc)\\
v_4&=(0,s,-t_1^{-1}t_2ab-c^3,t_3^{-1}bc^2-t_1^{-1}t_3^{-1}ac,bc^2-t_1^{-1}ac,t_3^{-1}b^2c-t_1^{-1}t_3^{-1}ab,t_1^{-1}t_2^{-1}t_3^{-1}s,0).
\end{align*}
Then $v_1$ and $v_2$ are $1$-eigenvectors of $\theta$, and $v_3,v_4$ are $(-1)$-eigenvectors of $\theta$. Assuming $s\neq 0$ and $c\neq 0$, there is a unique extension to a standard basis $v_1,\ldots,v_8$ of the form
\begin{align*}
v_5&=(0,0,0,*,0,0,*,*)&v_6&=(0,0,*,*,0,0,0,*)\\
v_7&=(0,0,0,*,0,0,*,*)&v_8&=(0,0,*,*,0,0,0,*).
\end{align*}
Writing $g$ for the matrix with rows $v_1,\ldots,v_8$ we see that $g\theta g^{-1}$ is of the form
\begin{align*}
g\theta g^{-1}=x_{0001}(*)x_{0101}(*)x_{0111}(*)x_{1101}(*)x_{1111}(*)h_{0001}(-1). 
\end{align*}
Conjugating by an appropriate element $x_{1000}(*)x_{0010}(*)$ we see that $\theta$ is conjugate to
\begin{align*}
x_{0001}(t_1t_2^2t_3ac^{-1}s^{-2}/2)x_{0101}(-t_2^2t_3^2t_4ac^{-2}s^{-1}/2)x_{1111}(t_1^{-1}t_2^2(t_1bc-a)^2s^{-3}/2)h_{0001}(-1)
\end{align*}
Conjugating by $x_{0101}(t_2^2t_3^2t_4ac^{-2}s^{-1}/4)x_{1111}(-t_1^{-1}t_2^2(t_1bc-a)^2s^{-3}/4)$ we see that $\theta$ is conjugate to 
\begin{align*}
\theta'=x_{0001}(t_1t_2^2t_3ac^{-1}s^{-2}/2)h_{0001}(-1)
\end{align*}
If $a\neq 0$ then conjugating by a suitable diagonal matrix we see that $\theta$ is conjugate to the element
$
x_{0001}(1)h_{0001}(-1)
$ (which is conjugate to the element of form (4)), and if $a=0$ then $\theta$ is conjugate to $h_{0001}(-1)$ (which is of form (3)). The cases where either $c=0$ or $s=0$ require minor modifications (analogous to the $z=1$ case), however no new conjugacy classes for $\theta$ are obtained.
\smallskip

\noindent\textbf{Case: $z\neq \pm1$.} Then $z\neq z^{-1}$. We have $\dim(\ker(\theta-I))=4$, and the vectors 
\begin{align*}
v_1&=(-t_1t_2^2t_3^3t_4^2c,t_1t_2^2t_3^2t_4b,-t_2^2t_3^2t_4a,t_2t_3t_4y,t_2t_3^2t_4y',t_2t_3t_4a,t_2t_3t_4(ac-b),c)\\
v_2&=(-t_1t_2^2t_3^3t_4^2c,t_1t_2^2t_3^2t_4b,-t_2^2t_3^2t_4a,t_2t_3t_4y',t_2t_3^2t_4y,t_2t_3t_4a,t_2t_3t_4(ac-b),c)\\
v_3&=(-t_1t_2^2t_3^3t_4^2y,t_1t_2^2t_3^2t_4a,t_1t_2^2t_3^2t_4(b-ac),t_1t_2t_3t_4c,ct_1t_2t_3^{2}t_4z,t_1t_2t_3t_4bz,t_2t_3t_4az,y)\\
v_4&=(-t_1t_2^2t_3^3t_4^2y',t_1t_2^2t_3^2t_4a,t_1t_2^2t_3^2t_4(b-ac),t_1t_2t_3t_4c,ct_1t_2t_3^{2}t_4z^{-1},t_1t_2t_3t_4bz^{-1},t_2t_3t_4az^{-1},y')
\end{align*}
(with $y=z+1$ and $y'=z^{-1}+1$) are linearly independent isotropic elements of $\ker(\theta-I)$, with $(v_1,v_4)=(v_2,v_3)=0$ (that is, $\ker(\theta-I)$ contains two lines full of fixed points). Moreover, $\ker(\theta-zI)$ and $\ker(\theta-z^{-1}I)$ are both $2$-dimensional, each consisting precisely of a line of the polar space, and if $v\in\ker(\theta-I)$ and $v'\in \ker(\theta-zI)\cup\ker(\theta-z^{-1}I)$ then $(v,v')=0$. 

Thus one can then choose (necessarily isotropic) vectors $v_1'\in\langle v_1,v_4\rangle$, $v_2'\in\ker(\theta-z^{-1}I)$, $v_3'\in\langle v_2,v_3\rangle$, $v_4'\in\ker(\theta-zI)$, $v_5'\in\ker(\theta-z^{-1}I)$, $v_6'\in\langle v_1,v_4\rangle$, $v_7'\in\ker(\theta-zI)$, $v_8'\in\langle v_2,v_3\rangle$ such that $v_1',\ldots,v_8'$ forms a standard basis with respect to the bilinear form. It follows that $\theta$ is conjugate in $\sD_4$ to the matrix
$$
\mathrm{diag}(1,z^{-1},1,z,z^{-1},1,z,1)
$$
and this is in turn conjugate to $h_{\alpha^{\vee}}(z)$ for any positive root $\alpha$, and hence is of form (3). 
\smallskip

\noindent\textbf{Converse:} 
We must now show that all elements of the forms (1), (2), (3), and (4) have opposition diagram~$\sE_{7;4}$. Reversing the analysis above, it follows that each such element is conjugate to an element of the form $\theta=uhw_{\sD_4}$ that we started with, and by Theorem~\ref{thm:E74chamberfixing} such $\theta$ is conjugate to an element of $Bs_{\varphi_1}s_{\varphi_2}s_{\varphi_3}s_{\varphi_4}B=Bs_2s_5s_7w_0B$, and hence this conjugate maps the base chamber $B$ to Weyl distance $s_2s_5s_7w_0$. This shows that $\theta$ maps some type $\{1,3,4,6\}$ simplex to an opposite, and hence $\theta$ either has diagram $\sE_{7;4}$, or $\theta$ is not domestic. Thus it is sufficient to prove that the elements of the form (1), (2), (3), and (4) are domestic.

Consider the element $\theta=x_{0100}(a)x_{1110}(1)x_{1101}(1)x_{0111}(1)$ of form (1). By Lemma~\ref{lem:4perpD4} this element is conjugate to an element of the form $\theta'=x_{1211}(a')x_{1000}(1)x_{0010}(1)x_{0001}(1)$ with $a'\neq 0$. Then, with $\su$ as in Lemma~\ref{lem:magicelement} we have
$
\su \theta'\su^{-1}=x_{\varphi_1}(a')x_{\varphi_2}(1)x_{\varphi_3}(1)x_{\varphi_4}(1),
$
with $\varphi_1=\varphi_{\sE_7}$, $\varphi_2=\varphi_{\sD_6}$, $\varphi_3=\varphi_{\sD_4}$, and $\varphi_4=\alpha_3$, which has opposition diagram $\sE_{7;4}$ by \cite[Theorem~3.1]{PVMexc}.

Consider the element $\theta=x_{1111}(1)x_{0100}(a)x_{1110}(1)x_{1101}(1)x_{0111}(1)$ with $a\neq 0$. Then
$$
\su \theta\su^{-1}=x_{1234321}(\pm 1)x_{1000000}(\pm a)x_{1224321}(\pm 1)x_{1122221}(\pm 1)x_{1122100}(\pm 1), 
$$
and thus by \cite[Lemma~3.4(2)(c)]{PVMexc} we have $\disp(\theta)\leq 60$. 

The element $h_{\varphi^{\vee}}(c)$, with $c\neq 0,1$, is proved to be domestic (with diagram $\sE_{7;4}$) in \cite[Lemma~4.5]{PVMexc} (note that $\omega_1=\varphi^{\vee}$). 

Finally, it remains to prove that elements of the form $x_{\varphi}(1)h_{\varphi^{\vee}}(-1)$, with $\kar(\K)\neq 2$, are domestic. Such elements have not arisen explicitly in our previous work, and so we will give a general analysis below, which completes the proof of the theorem.
\end{proof}

As noted above, to complete the proof of Theorem~\ref{thm:E74chamberfixingmain} we must show that the automorphism of $\sE_7(\K)$ given by $\theta=x_{\varphi}(1)h_{\varphi^{\vee}}(-1)$ has opposition diagram $\sE_{7;4}$ (for $\kar(\K)\neq 2$). We give an analysis of automorphisms of this form in general type, for later reference.

\begin{prop}\label{prop:extraauto}
Let $\Delta$ be the split building of a Chevalley group of irreducible type~$\Phi$ over a field $\K$ of characteristic not~$2$. Let $\theta=x_{\varphi}(1)h_{\varphi^{\vee}}(-1)$ and $\theta'=x_{\varphi'}(1)h_{\varphi'^{\vee}}(-1)$, with $\varphi$ and $\varphi'$ the highest root and highest short root of $\Phi$ (with $\theta'=\theta$ in the simply laced case).
\begin{compactenum}[$(1)$]
\item If $\Phi=\sA_n$ with $n\geq 5$ then $\theta$ is domestic with opposition digram ${^2}\sA_{n;2}$.
\item If $\Phi=\sB_n$ then $\theta$ (respectively $\theta'$) is domestic for $n\geq 5$ (respectively $n\geq 3$) with opposition diagram $\sB_{n;4}^1$ (respectively $\sB_{n;2}^1$). 
\item If $\Phi=\sC_n$ then $\theta$ (respectively $\theta'$) is domestic for $n\geq 3$ (respectively $n\geq 5$) with opposition diagram $\sC_{n;2}^1$ (respectively $\sC_{n;4}^1$). 
\item If $\Phi=\sD_n$ with $n\geq 6$ then $\theta$ is domestic with opposition diagram $\sD_{n;4}^1$. 
\item If $\Phi=\sE_n$ with $n=7,8$ then $\theta$ is domestic with opposition diagram $\sE_{n;4}$.
\end{compactenum}
In all other cases, neither $\theta$ nor $\theta'$ are domestic. 
\end{prop}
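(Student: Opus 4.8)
The plan is to exploit the identity $\langle\varphi,\varphi^{\vee}\rangle=\langle\varphi',\varphi'^{\vee}\rangle=2$, which gives $h_{\varphi^{\vee}}(-1)x_{\varphi}(1)h_{\varphi^{\vee}}(-1)^{-1}=x_{\varphi}((-1)^{2}\cdot 1)=x_{\varphi}(1)$, and likewise for $\theta'$. Thus the two factors of $\theta$ commute, and since $\kar(\K)\neq 2$ forces $|\K|\geq 3$ the element $h_{\varphi^{\vee}}(-1)$ is a nontrivial torus involution; this is precisely the Jordan decomposition of $\theta$, with semisimple part $h_{\varphi^{\vee}}(-1)$ and unipotent part $x_{\varphi}(1)$. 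In particular $\theta\in U^{+}H=B$, so $\theta$ fixes the base chamber, and the torus factor fixes the fundamental apartment chamberwise (as $w^{-1}Hw=H\subseteq B$). Since $|\K|>2$ the relevant buildings are large and $\theta$ is capped by the main result of \cite{PVM}; hence by \cite[Theorem~5]{PVMclass} the longest Weyl distance $w_{S\backslash J}w_{0}$ realised by $\theta$ determines the circled set $J$ of its opposition diagram, and $\theta$ is domestic precisely when this element is strictly shorter than $w_{0}$.

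First I would reduce the whole statement to a type-by-type determination of $J$. For the classical types I would realise $\theta$ (respectively $\theta'$) in the standard representation and read off its rational canonical form: the semisimple factor $h_{\varphi^{\vee}}(-1)$ acts as $-1$ on a small nondegenerate subspace $V_{-}$ whose dimension is governed by the support of $\varphi^{\vee}$ (respectively $\varphi'^{\vee}$) and as $+1$ on the complementary nondegenerate space, while $x_{\varphi}(1)$ contributes a single nontrivial Jordan block of eigenvalue $-1$ inside $V_{-}$. Using the dictionary between the action on singular subspaces and opposition in projective and polar buildings---the same correspondence already exploited in the $\sD_{n}$ analysis of Sections~\ref{sec:kang}--\ref{sec:kang2}, where a subspace is sent to an opposite exactly when it lies in general position with its image relative to the form---the set of types mapped to an opposite, and hence $J$, is read off from $\dim V_{-}$ and this Jordan structure (so that $\dim V_{-}$ produces the subscript $2$ or $4$). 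The rank thresholds $n\geq 5$, $n\geq 3$, $n\geq 6$ then arise as the exact ranks at which the $+1$-eigenspace becomes large enough to obstruct any chamber being mapped to an opposite; below the threshold I would exhibit an explicit singular flag carried into general position, proving non-domesticity. The duality $\sB_{n}\leftrightarrow\sC_{n}$, under which $\varphi$ and $\varphi'$ are interchanged, explains the symmetry between the two cases and halves the work.

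For the exceptional types I would proceed by explicit Chevalley-group computation. In the $\sE_{7}$ case the element $\theta=x_{\varphi}(1)h_{\varphi^{\vee}}(-1)$ is exactly the element of form~$(4)$ in Theorem~\ref{thm:E74chamberfixingmain}, and the ``Case $z=-1$'' analysis in its proof already conjugates $\theta$ into $G_{\sD_{4}}$ (to $x_{0001}(1)h_{0001}(-1)$); computing the rational canonical form of the corresponding $8\times 8$ matrix and transporting back through the element $\su$ of Lemma~\ref{lem:magicelement} yields $\Diag(\theta)=\sE_{7;4}$. For $\sE_{8}$ I would run the analogous computation after conjugating $\theta$ into a suitable $\sD_{n}$ subsystem and invoking the classical conclusion, or alternatively bound $\disp(\theta)$ directly using \cite[Lemma~3.4]{PVMexc}. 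For the remaining exceptional types $\sE_{6}$, $\sF_{4}$, $\sG_{2}$ (together with $\sD_{4}$ and $\sD_{5}$, and the small classical ranks) I would establish non-domesticity by producing, through a short Bruhat-cell computation of a conjugate $g^{-1}\theta g$ exactly as in the proof of Theorem~\ref{thm:partgeneral}, a chamber sent to Weyl distance $w_{0}$.

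The hard part will be twofold. First, pinning down the precise opposition diagram (not merely domesticity) in each classical type requires careful bookkeeping of how the single $(-1)$-Jordan block interacts with the bilinear or alternating form, which decides exactly which self-opposite or $\sigma$-paired node-orbits are circled and distinguishes the $\theta$ and $\theta'$ behaviour. Second, the non-domestic borderline ranks demand an explicit chamber mapped to an opposite, which is where sign conventions genuinely matter. Everything else is routine linear algebra or a finite computation, but these two points carry the real content of the proposition.
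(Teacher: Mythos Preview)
Your strategy for the classical types---reading off the fixed and opposite subspace structure from the Jordan form of $\theta$ in the natural representation---is essentially what the paper does (it cites \cite[Theorem~1.5(1)]{PVMclass} for $\sA_n$, \cite[Theorem~2(2)(b)]{PVMclass} for $\sC_n$, and analogous matrix computations for the rest). Likewise, for the non-domestic exceptional cases $\sE_6,\sF_4,\sG_2$ the paper simply invokes the complete classifications of domestic automorphisms from \cite[Theorems~7--9]{PVMexc}, whereas you propose a direct chamber computation; either route is fine.

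The genuine gap is your $\sE_7$ argument. Conjugating $\theta$ into $G_{\sD_4}$ and computing the rational canonical form of the $8\times 8$ matrix tells you only about the action of $\theta$ on the $\sD_4$ residue, not on the whole $\sE_7$ building: it can never furnish an \emph{upper} bound on $\disp(\theta)$, and an automorphism can be domestic on a residue while still mapping chambers of the ambient building to opposites. Moreover, invoking the ``$z=-1$'' case of Theorem~\ref{thm:E74chamberfixingmain} here is circular: that case takes $\Diag(\theta)=\sE_{7;4}$ as a hypothesis (inherited from Theorem~\ref{thm:E74chamberfixing}), and the converse direction of Theorem~\ref{thm:E74chamberfixingmain} explicitly defers the domesticity of form~(4) elements to the present proposition.

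The paper's approach for $\sE_n$ is quite different. One first observes that for $\alpha\in\Phi^+$ the coefficient $\langle\varphi^{\vee},\alpha\rangle$ of the polar simple root $\alpha_p$ is $0$, $1$, or $2$; hence $\theta$ commutes with $U^+_{\Phi_{S\setminus\wp}}$, and for any $g=uw_0$ with $u\in U^+$ one obtains $g^{-1}\theta g\in BU_{w_0\beta_1}\cdots U_{w_0\beta_k}B$ where the $\beta_i$ range over $\Phi^+\setminus\Phi_{S\setminus\wp}$. Applying \cite[Proposition~1.8]{PVMexc} with $w_1=w_{S\setminus\wp}w_0$ yields $\disp(\theta)\leq 2\ell(w_0)-2\ell(w_{S\setminus\wp})-1$. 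For $\sE_8$ this is $113<120$, and domesticity follows. For $\sE_7$ the bound is only $65$ (against $\ell(w_0)=63$), so one applies \cite[Lemma~3.4(2)(c)]{PVMexc} directly---the lemma you mention only as an $\sE_8$ fallback is in fact the essential tool for $\sE_7$. Once domesticity is established, the diagram $\sE_{n;4}$ is pinned down by elimination using the existing $\sE_{n;1}$, $\sE_{n;2}$ (and, for $n=7$, $\sE_{7;3}$) classifications.
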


\begin{proof}
The $\sA_n$ case follows from \cite[Theorem~1.5(1)]{PVMclass} and the statement for $\theta$ for type $\sC_n$ follows from \cite[Theorem~2(2)(b)]{PVMclass}. The remaining statements for classical types $\sB_n$, $\sC_n$ and $\sD_n$ are easily obtained using the explicit matrix realisations of these groups and making calculations analogous to those in \cite[Proposition~4.4]{PVMclass}, and we omit the details. 

Domestic collineations of split buildings of types $\sF_4$, $\sE_6$, and $\sG_2$ are classified in~\cite[Theorems~7, 8, and~9]{PVMexc} and it follows that neither $\theta$ nor $\theta'$ are  domestic in these cases. Collineations of $\sE_7$ and $\sE_8$ buildings with opposition diagrams $\sE_{7;1}$, $\sE_{7;2}$, $\sE_{8;1}$, or $\sE_{8;2}$ are classified in \cite{PVMexc}, and combining this with the classification of collineations of $\sE_7$ buildings with diagram $\sE_{7;3}$ fixing a chamber from Theorem~\ref{thm:E73Classification} and the classification of admissible diagrams we see that if $\theta$ is domestic in an $\sE_n$ building $(n=7,8)$ then it necessarily has opposition diagram $\sE_{n;4}$.

It remains to prove that $\theta$ is domestic in types $\sE_7$ and $\sE_8$. Thus suppose that $\Phi=\sE_7$ or $\Phi=\sE_8$. Let $\wp=\{p\}$ be the polar type (so $p=1$ for $\sE_7$, and $p=8$ for $\sE_8$). By \cite[VI, \S1.8]{Bou:02} we have
$\langle\varphi^{\vee},\alpha\rangle=0,1,2$ for $\alpha\in\Phi_{S\backslash \wp}^+$, $\alpha\in\Phi^+\backslash (\Phi_{S\backslash \wp}\cup\{\varphi\})$, and $\alpha=\varphi$ (respectively),
and it follows that the coefficient of $\alpha_p$ in any positive root $\alpha$ is either $0,1$, or $2$ (respectively). Thus if $\alpha,\beta\in\Phi^+\backslash \Phi_{S\backslash \wp}$ and $\alpha+\beta$ is a root then $\alpha+\beta=\varphi$, and so $x_{\alpha}(a)x_{\beta}(b)=x_{\beta}(b)x_{\alpha}(a)$, or $x_{\alpha}(a)x_{\beta}(b)=x_{\beta}(b)x_{\alpha}(a)x_{\varphi}(\pm ab)$. 

Consider $gB=uw_0B$ with $u\in U^+$. Write $u=u_1u_2$ with $u_1\in U_{\Phi_{S\backslash \wp}}^+$ and $u_2\in U_{\Phi\backslash \Phi_{S\backslash \wp}}^+$. Since $x_{\varphi}(1)$ is central in $U^+$, and since $h_{\varphi^{\vee}}(-1)x_{\alpha}(a)h_{\varphi^{\vee}}(-1)=x_{\alpha}((-1)^{\langle\varphi^{\vee},\alpha\rangle}a)$ we see that $\theta$ and $u_1$ commute. It follows from the above observations that
\begin{align*}
g^{-1}\theta g&=w_0^{-1}u_2^{-1}x_{\varphi}(1)h_{\varphi^{\vee}}(-1)u_2w_0\in BU_{w_0\beta_1}\cdots U_{w_0\beta_k}B,
\end{align*}
where $\beta_1,\ldots,\beta_k$ are the roots of $\Phi^+\backslash \Phi_{S\backslash\wp}$. By  \cite[Proposition~1.8]{PVMexc}, if there exists $w_1\in W$ with $w_1w_0\beta_j>0$ for all $j=1,\ldots,k$, then $\disp(\theta)\leq 2\ell(w_1)-1$. In particular, taking $w_1=w_{S\backslash\wp}w_0$ (note that $\Phi(w_{S\backslash \wp})=\Phi_{S\backslash \wp}^+$) we have 
$$
\disp(\theta)\leq 2\ell(w_0)-2\ell(w_{S\backslash\wp})-1.
$$
If $\Phi=\sE_8$ then $\ell(w_0)=120$ and $\ell(w_{S\backslash\wp})=63$, and so $\disp(\theta)\leq 113<120$, and so $\theta$ is domestic as required. If $\Phi=\sE_7$ then the inequality only yields $\disp(\theta)\leq 65$, which is not sufficient to prove domesticity. However in this case we apply Case (2)(c) of \cite[Lemma~3.4]{PVMexc}, showing that $\theta$ is domestic. 
\end{proof}

\begin{remark}
The element $\theta=x_{\alpha}(1)h_{\alpha^{\vee}}(-1)$ is not conjugate to an element of $H$ or $U^+$. To see this, consider the $\mathsf{SL}_2(\K)$ generated by $U_{\alpha}$ and $U_{-\alpha}$. The matrix of $\theta$ is $-x_{\varphi}(1)$ which shows that it is neither unipotent (eigenvalues distinct from $1$) nor a homology (as it is not diagonalisable). Thus in sufficiently high rank this automorphism gives a simple example of a chamber fixing domestic automorphism that is neither unipotent nor a homology. In contrast, note that if $c\neq 0,-1$ then $x_{\alpha}(1)h_{\alpha^{\vee}}(c)$ is conjugate to a homology as it is diagaonlisable in~$\mathsf{SL}_2(\K)$. 
\end{remark}

\begin{remark}\label{rem:perp}
For $1\leq k\leq 8$ let $\mathcal{P}_k$ denote the set of all sets of three mutually perpendicular positive roots of $\sE_7$. The Weyl group $W$ acts on $\mathcal{P}_k$ in the natural way, and it is not hard to see that for $k=1,2,3,4$ there are $1,1,2,4$ orbits for this action (respectively). Indeed, for any simply laced irreducible diagram, representatives for the action of $W$ on $\mathcal{P}_k$ can be found by running the following algorithm up to step~$k$: At each step, choose a connected component of the diagram, and modify the diagram by removing the nodes of the polar type of this component. The set of highest roots associated to the sequence of connected components chosen is a set of $k$ mutually perpendicular positive roots, and running the algorithm in all possible ways generates a set of representatives for the action of $W$ on $\mathcal{P}_k$. For example, for $\sE_{7;4}$ the $4$ representatives are obtained by:
\begin{align*}
&\begin{tikzpicture}[scale=0.5,baseline=-0.5ex]
\node at (0,0.8) {};
\node at (0,-0.8) {};
\node [inner sep=0.8pt,outer sep=0.8pt] at (-2,0) (1) {$\bullet$};
\node [inner sep=0.8pt,outer sep=0.8pt] at (-1,0) (3) {$\bullet$};
\node [inner sep=0.8pt,outer sep=0.8pt] at (0,0) (4) {$\bullet$};
\node [inner sep=0.8pt,outer sep=0.8pt] at (1,0) (5) {$\bullet$};
\node [inner sep=0.8pt,outer sep=0.8pt] at (2,0) (6) {$\bullet$};
\node [inner sep=0.8pt,outer sep=0.8pt] at (3,0) (7) {$\bullet$};
\node [inner sep=0.8pt,outer sep=0.8pt] at (0,-1) (2) {$\bullet$};
\draw (-2,0)--(3,0);
\draw (0,0)--(0,-1);
\draw [line width=0.5pt,line cap=round,rounded corners] (1.north west)  rectangle (1.south east);
%\draw [line width=0.5pt,line cap=round,rounded corners] (3.north west)  rectangle (3.south east);
%\draw [line width=0.5pt,line cap=round,rounded corners] (4.north west)  rectangle (4.south east);
%\draw [line width=0.5pt,line cap=round,rounded corners] (6.north west)  rectangle (6.south east);
\end{tikzpicture}\quad\mapsto\quad
\begin{tikzpicture}[scale=0.5,baseline=-0.5ex]
%\node at (0,0.8) {};
%\node at (0,-0.8) {};
%\node [inner sep=0.8pt,outer sep=0.8pt] at (-2,0) (1) {$\bullet$};
\node [inner sep=0.8pt,outer sep=0.8pt] at (-1,0) (3) {$\bullet$};
\node [inner sep=0.8pt,outer sep=0.8pt] at (0,0) (4) {$\bullet$};
\node [inner sep=0.8pt,outer sep=0.8pt] at (1,0) (5) {$\bullet$};
\node [inner sep=0.8pt,outer sep=0.8pt] at (2,0) (6) {$\bullet$};
\node [inner sep=0.8pt,outer sep=0.8pt] at (3,0) (7) {$\bullet$};
\node [inner sep=0.8pt,outer sep=0.8pt] at (0,-1) (2) {$\bullet$};
\draw (-1,0)--(3,0);
\draw (0,0)--(0,-1);
%\draw [line width=0.5pt,line cap=round,rounded corners] (1.north west)  rectangle (1.south east);
%\draw [line width=0.5pt,line cap=round,rounded corners] (3.north west)  rectangle (3.south east);
%\draw [line width=0.5pt,line cap=round,rounded corners] (4.north west)  rectangle (4.south east);
\draw [line width=0.5pt,line cap=round,rounded corners] (6.north west)  rectangle (6.south east);
\end{tikzpicture}\quad\mapsto\quad
\begin{tikzpicture}[scale=0.5,baseline=-0.5ex]
%\node at (0,0.8) {};
%\node at (0,-0.8) {};
%\node [inner sep=0.8pt,outer sep=0.8pt] at (-2,0) (1) {$\bullet$};
\node [inner sep=0.8pt,outer sep=0.8pt] at (-1,0) (3) {$\bullet$};
\node [inner sep=0.8pt,outer sep=0.8pt] at (0,0) (4) {$\bullet$};
\node [inner sep=0.8pt,outer sep=0.8pt] at (1,0) (5) {$\bullet$};
%\node [inner sep=0.8pt,outer sep=0.8pt] at (2,0) (6) {$\times$};
\node [inner sep=0.8pt,outer sep=0.8pt] at (3,0) (7) {$\bullet$};
\node [inner sep=0.8pt,outer sep=0.8pt] at (0,-1) (2) {$\bullet$};
\draw (-1,0)--(1,0);
\draw (0,0)--(0,-1);
%\draw [line width=0.5pt,line cap=round,rounded corners] (1.north west)  rectangle (1.south east);
%\draw [line width=0.5pt,line cap=round,rounded corners] (3.north west)  rectangle (3.south east);
\draw [line width=0.5pt,line cap=round,rounded corners] (4.north west)  rectangle (4.south east);
%\draw [line width=0.5pt,line cap=round,rounded corners] (6.north west)  rectangle (6.south east);
\end{tikzpicture}\quad\mapsto\quad
\begin{tikzpicture}[scale=0.5,baseline=-0.5ex]
%\node at (0,0.8) {};
%\node at (0,-0.8) {};
%\node [inner sep=0.8pt,outer sep=0.8pt] at (-2,0) (1) {$\bullet$};
\node [inner sep=0.8pt,outer sep=0.8pt] at (-1,0) (3) {$\bullet$};
%\node [inner sep=0.8pt,outer sep=0.8pt] at (0,0) (4) {$\bullet$};
\node [inner sep=0.8pt,outer sep=0.8pt] at (1,0) (5) {$\bullet$};
%\node [inner sep=0.8pt,outer sep=0.8pt] at (2,0) (6) {$\times$};
\node [inner sep=0.8pt,outer sep=0.8pt] at (3,0) (7) {$\bullet$};
\node [inner sep=0.8pt,outer sep=0.8pt] at (0,-1) (2) {$\bullet$};
%\draw (-1,0)--(1,0);
%\draw (0,0)--(0,-1);
\draw [line width=0.5pt,line cap=round,rounded corners] (2.north west)  rectangle (2.south east);
%\draw [line width=0.5pt,line cap=round,rounded corners] (3.north west)  rectangle (3.south east);
%\draw [line width=0.5pt,line cap=round,rounded corners] (4.north west)  rectangle (4.south east);
%\draw [line width=0.5pt,line cap=round,rounded corners] (6.north west)  rectangle (6.south east);
\end{tikzpicture}
\end{align*}
along with the variations with any one of the other nodes encircled at the fourth stage. Thus the representatives for the action of $W$ on $\mathcal{P}_4$ are $\{\varphi_{\sE_7},\varphi_{\sD_6},\varphi_{\sD_4},\alpha_2\}$, $\{\varphi_{\sE_7},\varphi_{\sD_6},\varphi_{\sD_4},\alpha_3\}$, $\{\varphi_{\sE_7},\varphi_{\sD_6},\varphi_{\sD_4},\alpha_5\}$, and $\{\varphi_{\sE_7},\varphi_{\sD_6},\varphi_{\sD_4},\alpha_7\}$ (note that one could also run the algorithm by picking the $\sA_1$ component at the third stage, and then the $\sD_4$ component is forced at the fourth stage, but this results in the set $\{\varphi_{\sE_7},\varphi_{\sD_6},\alpha_7,\varphi_{\sD_4}\}$ which was already found). 

Similarly, representatives for the action of $W$ on $\mathcal{P}_3$ are $\{\varphi_{\sE_7},\varphi_{\sD_6},\alpha_7\}$ and $\{\varphi_{\sE_7},\varphi_{\sD_6},\varphi_{\sD_4}\}$. By Theorem~\ref{thm:E73Classification}, only the first orbit gives rise to automorphisms with diagram $\sE_{7;3}$. For the second orbit, elements of the form $x_{\varphi_{\sE_7}}(a)x_{\varphi_{\sD_6}}(b)x_{\varphi_{\sD_4}}(c)$ with $a,b,c\neq 0$ are conjugate to $x_{1110}(1)x_{1101}(1)x_{0111}(1)$, and it turns out that this element is conjugate to $$x_{1111}(1)x_{0100}(-4)x_{1110}(1)x_{1101}(1)x_{0111}(1).$$ Thus, by Theorem~\ref{thm:E74chamberfixing}, the automorphisms  $\theta=x_{\varphi_{\sE_7}}(a)x_{\varphi_{\sD_6}}(b)x_{\varphi_{\sD_4}}(c)$ with $a,b,c\neq 0$ have opposition diagram $\sE_{7;4}$. Furthermore, by Theorem~\ref{thm:E74chamberfixing} we see that only one orbit of the action of $W$ on $\mathcal{P}_4$ gives rise to automorphisms with diagram $\sE_{7;4}$, and it turns out that for all other orbits the associated automorphisms are not domestic. 
\end{remark}

\end{document}